\tikzset{
	>=stealth',
	punktchain/.style={
		rectangle,
		rounded corners,
		draw=black, thick,
		minimum height=3em,
		text centered,
		on chain},
	line/.style={draw, thick, <-},
	element/.style={
		tape,
		top color=white,
		bottom color=blue!50!black!60!,
		minimum width=8em,
		draw=blue!40!black!90, very thick,
		text width=10em,
		minimum height=3.5em,
		text centered,
		on chain},
	every join/.style={->, thick,shorten >=1pt},
	decoration={brace},
	tuborg/.style={decorate},
	tubnode/.style={midway, right=2pt},
}
\pgfplotsset{compat=1.18}
\numberwithin{equation}{section}
\def\C{\ensuremath{\mathbb{C}}}
\def\H{\ensuremath{\mathbb{H}}}
\def\Q{\ensuremath{\mathbb{Q}}}
\def\R{\ensuremath{\mathbb{R}}}
\def\Z{\ensuremath{\mathbb{Z}}}
\def\bv{\ensuremath{\mathbf{v}}}
\def\Aut{\mathop{\mathrm{Aut}}\nolimits}
\def\ch{\mathop{\mathrm{ch}}\nolimits}
\def\Coh{\mathop{\mathrm{Coh}}\nolimits}
\def\deg{\mathop{\mathrm{deg}}}
\def\dim{\mathop{\mathrm{dim}}\nolimits}
\def\ev{\mathop{\mathrm{ev}}\nolimits}
\def\Eff{\mathop{\mathrm{Eff}}}
\def\Gr{\mathop{\mathrm{Gr}}\nolimits}
\def\GL{\mathop{\mathrm{GL}}\nolimits}
\def\Hom{\mathop{\mathrm{Hom}}\nolimits}
\def\id{\mathop{\mathrm{id}}\nolimits}
\def\Id{\mathop{\mathrm{Id}}\nolimits}
\def\Ker{\mathop{\mathrm{Ker}}\nolimits}
\def\Kn{\mathop{\mathrm{K}_{\mathrm{num}}}}
\def\min{\mathop{\mathrm{min}}\nolimits}
\def\Nef{\mathop{\mathrm{Nef}}\nolimits}
\def\NS{\mathop{\mathrm{NS}}\nolimits}
\def\Pic{\mathop{\mathrm{Pic}}}
\def\rk{\mathop{\mathrm{rk}}}
\def\supp{\mathop{\mathrm{supp}}}
\def\tgr{\ensuremath{{\mathrm{Gr}}}}
\def\Stb{\ensuremath{S_3}}
\def\link{\ensuremath{\bowtie}}
\def\linkless{\ensuremath{\triangleleft}}
\def\Root{\ensuremath{\mathsf{roots}}}
\def\Stab{\mathop{\mathrm{Stab}}\nolimits}
\def\glr{\ensuremath{\mathrm{GL}^+(2,\mathbb R)}}
\def\Db{\mathrm{D}^{b}}
\def\glt{\widetilde{\mathrm{GL}}^+(2,\R)}
\def\RHom{\mathrm{RHom}}
\def\ts{\ensuremath{\tilde{\sigma}}}
\def\ttau{\ensuremath{\tilde{\tau}}}
\def\spant#1#2{\ensuremath{\mathrm{Span}_{#1}(#2)}}
\def\spa#1{\ensuremath{\mathrm{Span}(#1)}}
\def\sbr{\ensuremath{SB}}
\newtheorem*{rep@theorem}{\rep@title}
\newcommand{\newreptheorem}[2]{%
\newenvironment{rep#1}[1]{%
 \def\rep@title{#2 \ref{##1}}%
 \begin{rep@theorem}}%
 {\end{rep@theorem}}}
\theoremstyle{definition}
\newtheorem{Thm}{Theorem}[section]
\newtheorem{Prop}[Thm]{Proposition}
\newtheorem{PropDef}[Thm]{Proposition and Definition}
\newtheorem{Lem}[Thm]{Lemma}
\newtheorem{Cor}[Thm]{Corollary}
\newtheorem{Con}[Thm]{Conjecture}
\newtheorem{Asp}[Thm]{Assumption}
\newtheorem{SubLem}[Thm]{Sublemma}
\newtheorem{thm-int}{Theorem}
\newtheorem{Def-s}[Thm]{Definition}
\newtheorem{Def}[Thm]{Definition}
\newtheorem{Rem}[Thm]{Remark}
\newtheorem{Ex}[Thm]{Example}
\newtheorem{Not}[Thm]{Notation}
\def\C{\ensuremath{\mathbb{C}}}
\def\H{\ensuremath{\mathbb{H}}}
\def\Q{\ensuremath{\mathbb{Q}}}
\def\R{\ensuremath{\mathbb{R}}}
\def\Z{\ensuremath{\mathbb{Z}}}
\def\cA{\ensuremath{\mathcal A}}
\def\cC{\ensuremath{\mathcal C}}
\def\cE{\ensuremath{\mathcal E}}
\def\cH{\ensuremath{\mathcal H}}
\def\cL{\ensuremath{\mathcal L}}
\def\cO{\ensuremath{\mathcal O}}
\def\cP{\ensuremath{\mathcal P}}
\def\cT{\ensuremath{\mathcal T}}
\def\cW{\ensuremath{\mathcal W}}
\def\bP{\ensuremath{\mathbf P}}
\def\ut{\ensuremath{\underline{t}}}
\def\um{\ensuremath{\underline{m}}}
\def\us{\ensuremath{\underline{s}}}
\def\vd{\ensuremath{\mathsf{B}}}
\def\vq{\ensuremath{\mathsf{Q}}}
\def\BBB{\mathfrak B}
\def\PPP{\mathfrak P}
\def\UUU{\mathfrak U}
\def\iff{\; \Longleftrightarrow \;}
\def\sab{\sigma_{\alpha, \beta}}
\def\kn{\mathrm{K}_{\mathrm{num}}}
\def\Cone{\mathrm{Cone}}
\def\hom{\mathrm{hom}}
\def\HN{\mathrm{HN}}
 \def\nega{\mathrm{neg}}
\def\sb{\mathrm{Sb}}
\def\ta{\ensuremath{\mathrm{Ta}}}
\def\Forg{\ensuremath{\mathrm{Forg}}}
\def\lbdd{\ensuremath{(\Lambda_{\mathbb R})^*}}
\def\sbr{\ensuremath{\mathrm{Sbr}}}
\def\hut{\ensuremath{\hat{\underline{t}}}}
\def\sc{\ensuremath{\mathrm{SC}}}
\def\sep{\ensuremath{\mathrm{sep}}}
\def\be{\mathbf e}
\def\lsm{\ensuremath{\lesssim}}
\def\lsmeq{\ensuremath{\lessapprox}}
\def\vperp{\ensuremath{v^\perp}}
\def\wperp{\ensuremath{w^\perp}}
\title{A Real Reduction of the Manifold of  Bridgeland Stability Conditions}
\author{Chunyi Li}
\address{C. L.:
Mathematics Institute, University of Warwick,
Coventry, CV4 7AL,
United Kingdom}
\email{C.Li.25@warwick.ac.uk}
\urladdr{https://sites.google.com/site/chunyili0401/}
\begin{document}
\begin{abstract}
Let $\mathcal{T}$ be a $k$-linear triangulated category. The space of Bridgeland stability conditions on $\mathcal{T}$, denoted by $\mathrm{Stab}(\mathcal{T})$, forms a complex manifold. In this paper, we introduce an equivalence relation $\sim$ on $\mathrm{Stab}(\mathcal{T})$ and study the quotient space $\mathrm{Sb}(\mathcal{T}) \coloneq \mathrm{Stab}(\mathcal{T})/\sim$, which parametrizes what we call reduced stability conditions. We show that $\mathrm{Sb}(\mathcal{T})$ admits the structure of a real (possibly non-Hausdorff) manifold of half the dimension of $\mathrm{Stab}(\mathcal{T})$. The space $\mathrm{Sb}(\mathcal{T})$ preserves the wall-and-chamber structure of $\mathrm{Stab}(\mathcal{T})$, but in a significantly simpler form. Moreover, we define a relation $\lesssim$ on $\mathrm{Sb}(\mathcal{T})$, and show that the full stability manifold $\mathrm{Stab}(\mathcal{T})$ can be reconstructed from the space $\mathrm{Sb}(\mathcal{T})$ together with the additional data $\lesssim$.

We then focus on the case where $\mathcal{T} = \mathrm{D}^b(X)$, the bounded derived category of coherent sheaves on a smooth polarized variety $(X, H)$. By explicitly describing $\mathrm{Sb}(X)$ for varieties $X$ of small dimension, we formulate two equivalent conjectures concerning a family of stability conditions $\mathrm{Stab}_H^*(X)$ and their reduced counterparts $\mathrm{Sb}_H^*(X)$ on $\mathrm{D}^b(X)$. We establish some desirable properties for both families. In particular, using a version of the restriction theorem formulated in terms of $\lesssim$, we show that the existence of $\mathrm{Stab}_H^*(X)$ implies the existence of stability conditions on every smooth subvariety of $X$.
\end{abstract}

\keywords{Derived category, Bridgeland stability conditions, interlaced polynomials}
\subjclass[2020]{14F08, 14K05, 14J60, 18G80}

\date{\today}

\maketitle

\setcounter{tocdepth}{1}
\tableofcontents

\maketitle

\section{Introduction}
Stability conditions on triangulated categories were introduced by Bridgeland in \cite{Bridgeland:Stab}. Despite progress in several special cases, the existence of stability conditions on the bounded derived category of a smooth projective variety remains an open problem. In particular, there is currently no precise conjectural framework that applies uniformly to smooth projective varieties in all dimensions.

One goal of this paper is to propose such a conjecture for a specific family of stability conditions on smooth projective varieties.

\subsection{Standard slice}
Let $(X,H)$ be an $n$-dimensional irreducible smooth polarized variety over $\C$. Define the $H$-polarized lattice $\Lambda_H$  via the map:
\begin{align*}
    \lambda_H:K(X)\to \Lambda_H:[E]\mapsto(H^n\ch_0(E),H^{n-1}\ch_1(E),\dots,\ch_n(E)).
\end{align*}

A stability condition $\sigma=(\cP,Z')$ on $\Db(X)$ consists of a slicing $\cP$ and a  group homomorphism $Z':K(X)\to \C$, called the central charge, satisfying certain compatibility assumptions and the support property with respect to a finite rank lattice $\lambda:K(X)\to \Lambda$. 

Let $\Stab_H(X)$ denote the set of all stability conditions on $\Db(X)$ with respect to the fixed lattice $\Lambda_H$. In particular, for each such stability condition, the central charge factors through $\lambda_H$: 
\begin{align*}
    Z':K(X)\xrightarrow{\lambda_H}\Lambda_H\xrightarrow{Z}\C.
\end{align*} 
By Bridgeland's seminal result, the space $\Stab_H(X)$, whenever non-empty, is a complex manifold of dimension $n+1$, with local coordinates given by the forgetful map to the space of central charges.

We formulate a conjecture describing a specific family of stability conditions on $\Db(X)$.
  
\begin{Con}\label{conj:intro}
   There exists a family of stability conditions $\Stab_H^*(X)$ on $\Db(X)$, defined  with respect to the $H$-polarized lattice $\Lambda_H$,  satisfying the following properties:
    \begin{enumerate}
        \item The forgetful map 
        \begin{align*}
            \Forg: \Stab_H^{*}(X)\to \Hom(\Lambda_H,\C): \sigma=(\cP,Z)\mapsto Z
        \end{align*}
        is a homeomorphism onto $\UUU_n$.
        \item The space $\Stab_H^*(X)$ is invariant under the $\otimes \cO_X(H)$-action. In other words,  for every $\sigma\in \Stab_H^{*}(X)$,   the stability condition $\sigma\otimes \cO_X(H)$ is  in $\Stab_H^{*}(X)$.
    \end{enumerate}
\end{Con}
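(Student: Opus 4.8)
The plan is to exhibit an explicit candidate for $\Stab_H^*(X)$ --- the family of ``geometric'' stability conditions produced by (iterated) tilting, in which all skyscraper sheaves are stable of a common phase --- and then to verify the two required properties for it. For the homeomorphism property, observe that by Bridgeland's theorem (recalled above) $\Forg$ is always a local homeomorphism, so it is a homeomorphism as soon as it is a bijection onto $\UUU_n$; hence it suffices to establish that bijectivity, and injectivity should follow because the heart of a geometric stability condition is determined by $Z$ up to the tilting data that $Z$ itself records. I would also exploit the equivalence announced in the abstract between Conjecture~\ref{conj:intro} and the corresponding statement about the reduced space $\mathrm{Sb}_H^*(X)$: since $\mathrm{Sb}_H^*(X)$ is $(n+1)$-real-dimensional and, by the reconstruction result, a family of reduced stability conditions carrying the prescribed $\lesssim$-datum determines a family in $\Stab_H(X)$, one can argue downstairs and then induct on $n=\dim X$ using the $\lesssim$-version of the restriction theorem. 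I should be candid, though, that for $n\geq 3$ the statement is essentially as hard as the open existence problem for Bridgeland stability conditions on $\Db(X)$; a realistic target is to prove it unconditionally for $n\leq 2$ and for the $X$ where the relevant Bogomolov--Gieseker-type inequality is known, and to reduce it to that inequality in general.

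For $n=1$ I would take the geometric locus on a smooth curve: all $\cO_x$ are stable of a common phase, the heart is a fixed tilt of $\Coh(X)$, and $\Forg$ restricts to a homeomorphism from that locus onto the explicit open region $\UUU_1\subset\Hom(\Lambda_H,\C)$ by the classical description of stability on curves (here $\Lambda_H$ has rank $2$, so $Z$ is determined by its values on $\rk$ and $\deg$); invariance under $\otimes\cO_X(H)$ holds because this autoequivalence fixes skyscrapers and acts on $Z$ by a unipotent shear preserving $\UUU_1$. For $n=2$ I would use Bridgeland's construction: tilt $\Coh(X)$ at the torsion pair defined by the $\mu_{H,\beta}$-slope, deform by the central charges $\Zab$, and pass to the $\glt$-orbit; the classical Bogomolov inequality for $H$-Gieseker-semistable sheaves supplies the support property, hence genuine stability conditions, one computes the image of $\Forg$ on this family and matches it with $\UUU_2$, and invariance holds because $\otimes\cO_X(H)$ merely reindexes the parameter $\beta$ and commutes with the $\glt$-action.

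For $n\geq 3$ I would build the candidate by iterating the tilt-stability construction, so that surjectivity of $\Forg$ onto $\UUU_n$ --- and hence the homeomorphism property --- reduces to a Bogomolov--Gieseker-type inequality controlling the higher Chern characters of tilt-semistable objects. This reduction, together with the verification of both properties once the inequality is granted, should be unconditional, whereas the inequality itself is the main obstacle: it is presently available only for special $X$ (projective spaces, quadrics, certain Fano and abelian varieties). I would therefore prove the conjecture unconditionally in those cases and, in general, prove the conditional statement that the construction yields $\Stab_H^*(X)$ as soon as the inequality holds. Invariance under $\otimes\cO_X(H)$ is the least delicate point in every dimension: this autoequivalence induces multiplication by $e^{H}$ on Chern characters, hence a fixed unipotent transformation $T$ of $\Hom(\Lambda_H,\C)$, so once $\UUU_n$ is pinned down by its defining interlacing-and-positivity inequalities it is a finite explicit check that $T(\UUU_n)=\UUU_n$. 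Finally, combining the above with the $\lesssim$-version of the restriction theorem would give the consequence advertised in the abstract, that the existence of $\Stab_H^*(X)$ forces the existence of stability conditions on every smooth subvariety of $X$.
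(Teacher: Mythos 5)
This statement is a \emph{Conjecture}, not a theorem; the paper does not prove it, and you are right to flag that for $n\geq 3$ it is essentially the open existence problem. Your survey of known cases ($n\leq 2$ unconditionally; $n=3$ conditionally on the generalized Bogomolov--Gieseker inequality of \cite{BMS:stabCY3s}) matches what the paper records in Remark~\ref{rem:intro}.(1), and your high-level strategy --- identify an explicit tilt-construction, reduce surjectivity of $\Forg$ onto $\UUU_n$ to a BG-type inequality, and exploit the equivalence with the reduced-stability formulation on $\sb_H^*(X)$ together with the $\lesssim$-restriction theorem --- is consistent with the framework the paper builds (Theorem~\ref{thm:equivalentconjs}, Proposition~\ref{prop:introreststab}, and the $\Stab^d$ variant in Section~\ref{sec:sm}).

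There is, however, a genuine gap in your injectivity argument. You assert that injectivity of $\Forg$ on the geometric locus ``should follow because the heart of a geometric stability condition is determined by $Z$ up to the tilting data that $Z$ itself records.'' This is false in dimension $\geq 3$: the paper explicitly notes (just before Corollary~\ref{cor:geodeterbyZ}) that there exist distinct geometric stability conditions $\sigma\neq\tau$ on $\Db(\bP^3)$ with $\phi_\sigma(\cO_p)=\phi_\tau(\cO_p)$ and $Z_\sigma=Z_\tau$. The paper's resolution is to impose the additional order-relation constraints $\sigma\lsmeq\sigma\otimes\cO_X(H)\lsm\sigma[1]$ and then prove uniqueness via the restriction theorem and Proposition~\ref{prop:reconfromrest}; your ``geometric'' candidate family must therefore be cut down by the $\lesssim$-conditions before injectivity can even be attempted, and the argument is delicate rather than automatic. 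Relatedly, the paper deliberately does \emph{not} expect Conjecture~\ref{conj:intro} to hold for all $(X,H)$ (Remark~\ref{rem:intro}.(2)) and introduces the weaker $\Stab^d$ Conjecture~\ref{conj:stabd} precisely to account for this; a complete treatment should address that modification rather than aim at $\UUU_n$ unconditionally.
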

\begin{Not}\label{not:intro}
Here the subspace $\UUU_n$ in $\Hom(\Lambda_H,\C)$ is defined as follows:
  \begin{align*}
      \UUU_n\coloneq \left\{c\vd_{\us}+id\vd_{\ut} \;\middle| \;\us,\ut\in\sbr_n,d>0;\;\begin{aligned}
          &\ut<\us<\ut[1] \text{ and } c<0\\
          \text{ or }&\us<\ut<\us[1]\text{ and } c>0
      \end{aligned}\right\},
  \end{align*}
where the parameter space $\sbr_n$ is given by:
\begin{align*}
     \sbr_n& \coloneq \{(t_1,t_2,\dots,t_n):t_1<t_2<\dots<t_n,t_i\in \R\text{ when }i\leq n-1, t_n\in\R\cup\{+\infty\}\}.
\end{align*}
For $\us=(s_1,\dots,s_n)$, $\ut=(t_1,\dots,t_n)$, we write
\begin{align*}
     \us<\ut<\us[1]& :\iff s_1<t_1<s_2<t_2<\dots<s_n<t_n. 
\end{align*}
 
Given $\ut\in\sbr_n$ with $t_n\neq+\infty$ and $\bv=(v_0,\dots,v_n)\in\Lambda_H$, the real-valued linear function $\vd_{\ut}$ is defined by the determinant:
\begin{align}\label{eq:Bvddefintro}
        \vd_{\ut}(\bv)\coloneq C_{\ut} \det\begin{vmatrix}
            1 & t_1 &\dots & \frac{t_1^n}{n!} \\ \dots & \dots & \dots & \dots \\ 1 & t_n &\dots & \frac{t_n^n}{n!}\\ v_0 & v_1 & \dots & v_n
        \end{vmatrix},
    \end{align}
where $C_{\ut}>0$ is a normalizing constant chosen so that the coefficient of $v_n$ in $\vd_{\ut}(\bv)$ is $1$; see its explicit definition in equation \eqref{eq:BvdC}. 

If $t_n=+\infty$, the function $\vd_{\ut}$ is defined inductively by $\vd_{\ut}(\bv)\coloneq -\vd_{t_1,\dots,t_{n-1}}(v_0,v_1,\dots,v_{n-1})$.

\noindent In particular, up to  scaling, the function $\vd_{\ut}$ is uniquely characterized by the vanishing condition  $\vd_{\ut}(\gamma_n(t_i))=0$ for all $t_i$'s, where
\begin{align*}
\gamma_n:\R\cup\{+\infty\}&\to\Lambda_H\otimes\R:
    \R\ni t \mapsto (1,t,\frac{t^2}{2!},\dots,\frac{t^n}{n!})\; ;\; +\infty \mapsto (0,\dots,0,1).
\end{align*}
\end{Not}

\noindent Rather than proving the conjecture for a specific variety, this paper focuses on establishing foundational properties of the space $\Stab^*_H(X)$. One key result is a restriction theorem, which shows that the existence of stability conditions on smooth subvarieties of $\bP^n$ can be deduced from Conjecture \ref{conj:intro} for $\bP^n$ itself.

\begin{Thm}\label{thm:intro}
    Assume Conjecture \ref{conj:intro} holds for  $(X,H)$. Then the following statements hold.
    \begin{enumerate}[(1)]
        \item (Uniqueness) The family of stability conditions described in Conjecture \ref{conj:intro} is unique up to a homological shift $[2k]$ for some $k\in \Z$.
        \item (Restriction) Let $Y$ be a smooth  subvariety of $X$. Then there exist stability conditions on $Y$. \\
        
    \noindent For the next three statements,  let $\sigma\in \Stab_H^*(X)$ and $E,F$ be two $\sigma$-stable objects. 
        \item (Geometric) Skyscraper sheaves are $\sigma$-stable. Conversely, if $\lambda_H([E])=(0,0,\dots,0,*)$, then $E=\cO_p[k]$ for some $p\in X$ and $k\in\Z$. 
        \item (Bayer Vanishing Lemma) Assume that $\phi_\sigma(E)\geq \phi_\sigma(F)$ and $E,F$ are not skyscraper sheaves up to a homological shift, then $\Hom(E\otimes \cO_X(mH),F)=0$ for every $m\in \Z_{\geq 1}$.
        \item (Bound on polarized character) There exists a unique $\ut\in\sbr_n$ satisfying $\vd_{\ut}(E)=0$. Moreover, the $H$-polarized character of $E$ satisfies  \begin{align}\label{eq11}
         \lambda_H([E])= \sum_{i=1}^n (-1)^ia_i\gamma_n(t_i)
     \end{align}
     where the  coefficients $a_i$ are either all non-negative or all non-positive.
    \end{enumerate}
\end{Thm}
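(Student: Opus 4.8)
\emph{Strategy and reductions.}
The plan has three parts: an elementary reduction, existence of $\ut$ by an intermediate-value argument, and then uniqueness together with the sign pattern --- the last being where the simplified wall-and-chamber structure of $\Stab_H^*(X)$ is needed. First, the skyscraper case: if $\lambda_H([E])=(0,\dots,0,*)$ then part~(3) gives $E\cong\cO_p[k]$ and $\lambda_H([E])=\pm\gamma_n(+\infty)$, recorded by $\ut=(t_1,\dots,t_{n-1},+\infty)$ (the inductive definition of $\vd$ in Notation~\ref{not:intro} making $\vd_{\ut}$ vanish there). So assume $E$ is not a skyscraper up to shift, i.e.\ $(v_0,\dots,v_{n-1})\neq 0$ where $\bv=\lambda_H([E])$. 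Replacing $E$ by an even shift $E[2k]$, which changes neither the $K$-class nor $\sigma$-stability, I may assume $\phi_\sigma(E)\in(-1,1]$; writing the central charge of $\sigma$ as $c\vd_{\us_0}+id\vd_{\ut_0}\in\UUU_n$, this gives $d\,\vd_{\ut_0}(\bv)=\operatorname{Im}Z_\sigma(E)$ of sign $\ge 0$ or $\le 0$ according as $\phi_\sigma(E)\in(0,1]$ or $\in(-1,0]$, the two cases producing the ``all non-negative'' and ``all non-positive'' alternatives of the conclusion; assume the first. I shall use the identity $\vd_{\us}(\bv)=\tfrac1{n!}\sum_{m=0}^{n}(-1)^{n+m}m!\,e_{n-m}(\us)\,v_m$, valid for $\us=(s_1,\dots,s_n)\in\sbr_n$ with $s_n$ finite, obtained by expanding the determinant in \eqref{eq:Bvddefintro} with the normalising constant $C_{\us}>0$; here $e_j$ is the $j$-th elementary symmetric polynomial, and in particular $\vd_{\us}(\gamma_n(t))=\tfrac1{n!}\prod_j(t-s_j)$.

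\emph{Existence of $\ut$, and the expansion.}
Set $F\colon\sbr_n\to\R$, $F(\us)=\vd_{\us}(\bv)$. By the identity above $F$ is a symmetric polynomial in $s_1,\dots,s_n$ whose top-degree part is a nonzero multiple of $e_{k^*}(\us)$, where $k^*=\max\{k\ge 1:v_{n-k}\neq 0\}$ (nonempty since $E$ is not a skyscraper); as $e_{k^*}$ takes both signs, $F$ is unbounded above and below on $\R^n$, hence, by symmetry, on the connected set $\sbr_n$. Since $F(\ut_0)=\tfrac1d\operatorname{Im}Z_\sigma(E)\ge 0$, the intermediate value theorem yields $\ut\in\sbr_n$ with $\vd_{\ut}(\bv)=0$. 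For any such $\ut$, the vectors $\gamma_n(t_1),\dots,\gamma_n(t_n)$ are linearly independent (Vandermonde) and span the hyperplane $\ker\vd_{\ut}\subset\Lambda_H\otimes\R$, which contains $\bv$; hence $\bv=\sum_{i=1}^n b_i\gamma_n(t_i)$ for unique reals $b_i$, and putting $a_i=(-1)^i b_i$ gives \eqref{eq11} for this $\ut$. The case $t_n=+\infty$ is handled by the inductive definition of $\vd$, passing to $(t_1,\dots,t_{n-1})$ and $(v_0,\dots,v_{n-1})$.

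\emph{Uniqueness and the sign pattern.}
Among the (generically many) zeros of $F$, the relevant $\ut$ is the one on the boundary of the locus $R_E\subset\sbr_n$ of imaginary parameters $\us$ for which $E$ stays semistable, of phase in $(0,1]$, in a stability condition of $\Stab_H^*(X)$ with imaginary parameter $\us$ reached by deforming $\sigma$; here $\ut_0\in\overline{R_E}$ (as $\phi_\sigma(E)\in(0,1]$), and on $R_E$ one has $\operatorname{Im}Z(E)=d\,\vd_{\us}(\bv)\ge 0$. Uniqueness of this $\ut$, that $\vd_{\us}(\bv)\ge 0$ throughout $R_E$, and that the directions $s_k\searrow t_k$ at $\ut$ point into $R_E$, are where I would invoke the body of the paper: the wall-and-chamber structure of $\Stab_H^*(X)$, simplified after passing to $\mathrm{Sb}_H^*(X)$, makes $R_E$ convex (equivalently monotone in the $\us$-coordinates) with $\partial R_E\cap\{F=0\}$ a single admissible point, and a $\lesssim$-comparison forces two competing choices of $\ut$ to agree. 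Granting this, the identity above gives $\partial_{s_k}\vd_{\us}(\bv)\big|_{\us=\ut}=-\tfrac1{n!}\,b_k\prod_{j\neq k}(t_k-t_j)$, of sign $-\operatorname{sign}(b_k)(-1)^{n-k}$ since $\prod_{j\neq k}(t_k-t_j)$ has sign $(-1)^{n-k}$; non-negativity of $\vd_{\us}(\bv)$ as $s_k\searrow t_k$ then forces $b_k(-1)^{n-k}=(-1)^n a_k\ge 0$ for every $k$, so all the $a_i$ share a common sign --- and, in the mirror case, the opposite one --- which is the assertion. Alternatively, one may obtain the same system of non-negativities from the Bayer Vanishing Lemma, part~(4), applied to $E$ against the relevant geometric objects, or from the support-property inequalities attached to the family $\Stab_H^*(X)$.

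\emph{Main obstacle.}
The crux is the input I deferred to the body: that $R_E$ is convex/monotone in the $\us$-coordinates, that $\vd_{\us}(\bv)\ge 0$ throughout it, and that $\partial R_E\cap\{F=0\}$ is a single admissible point --- equivalently, that $E$ cannot destabilise, as $\sigma$ is deformed to lower $\operatorname{Im}Z(E)$, before phase $1$ is reached, and that the terminal $\ut$ is path-independent. I would derive this from the explicit description of the walls of $E$ in $\mathrm{Sb}_H^*(X)$ (in the reduced coordinates they become pieces of hyperplanes, met at most once along a monotone path), an induction on the length of the Harder--Narasimhan filtration (or on mass) to handle a crossing, and the Bayer Vanishing Lemma to rule out the twists $E\otimes\cO(mH)$, $m\ge 1$, destabilising $E$ into the lower half-plane. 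Making this precise --- in particular excluding runaway destabilisation --- is the part I expect to demand the most care.
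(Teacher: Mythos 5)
Your proposal addresses only part~(5) of the five-part theorem; parts~(1)--(4) (uniqueness of $\Stab_H^*$, restriction to subvarieties, stability of skyscraper sheaves and its converse, and the Bayer Vanishing Lemma) are not touched, except that you quote parts~(3) and~(4) as black boxes. That already makes the proposal substantially incomplete as a proof of the stated theorem.

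Even for part~(5), there is a genuine gap that you flag yourself as the ``main obstacle'': the convexity/monotonicity of the locus $R_E$, that $\vd_{\us}(\bv)\geq 0$ throughout it, and that $\partial R_E\cap\{F=0\}$ is a single admissible point. These claims are not reduced to anything in the paper's toolkit and are, as you say, precisely where runaway destabilisation would need to be excluded. More fundamentally, the entire machinery of tracking a deformation path and its possible walls is unnecessary: the key observation the paper uses --- and which your proposal misses --- is that by the Bertram nested wall theorem (Lemma~\ref{lem:nested}), the stability of $E$ with $\Im Z_\sigma(E)=0$ is \emph{constant} across the whole fiber $\pi_\sim^{-1}(\ts_{\ut})$, i.e.\ for every $\sigma_{\us,\ut}$ with $\us<\ut<\us[1]$. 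There is no wall-crossing to analyse; $E$ is $\sigma_{\us,\ut}$-semistable for all such $\us$ for free. The support property then immediately gives $Z_{\us,\ut}(\lambda_H(E))\neq 0$, hence $\lambda_H(E)\notin\Ker\vd_{\us}$ for all $\us<\ut<\us[1]$, and the sign pattern of the coefficients $a_i$ drops out of the purely algebraic Lemma~\ref{lem:Epoly2} in the appendix. This is the argument of Proposition~\ref{prop:boundsforsbd}, and it replaces your entire third and fourth paragraphs with a two-line linear-algebra deduction.

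A secondary issue: your intermediate-value argument for ``existence of $\ut$'' and your remark about the ``generically many zeros of $F$'' reflect a literal reading of the phrase ``there exists a unique $\ut\in\sbr_n$ satisfying $\vd_{\ut}(E)=0$'' as a statement about the zero set of $\us\mapsto\vd_{\us}(\bv)$, which is a codimension-one set, not a point. The intended meaning (made precise in Proposition~\ref{prop:boundsforsbd} and Remark~\ref{rem:boundstab}) is that $\ut$ is the unique parameter with $\Im Z_{\sigma[\theta]}\in\R_{>0}\cdot\vd_{\ut}$ for the unique $\theta\in(0,1]$ rotating $E$ into phase $1$; its existence and uniqueness come from $\Forg\colon\Stab_H^*(X)\xrightarrow{\sim}\UUU_n$ and from the construction of the reduced stability manifold, not from an IVT argument. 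Both the IVT step and the subsequent appeal to uniqueness of a boundary point of $R_E$ are artefacts of that misreading.

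Finally, the closing suggestion that one could ``alternatively'' deduce the sign constraints from the support-property quadratic forms has the logical arrow reversed relative to the paper: Proposition~\ref{prop:quadonstab0} \emph{builds} the quadratic forms \emph{from} Proposition~\ref{prop:boundsforsbd}, not the other way around, so that route would be circular here.
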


\begin{Rem}\label{rem:intro}
We include some remarks related to Conjecture \ref{conj:intro} and Theorem \ref{thm:intro}.
\begin{enumerate}[(1)]
 \item Conjecture \ref{conj:intro} is known to hold when $X$ is a curve or a surface. In the case where $X$ is a threefold,  \cite[Conjecture 4.1]{BMS:stabCY3s} implies Conjecture \ref{conj:intro}. In particular, the conjecture is verified for $\bP^3$, abelian threefolds, most  Fano threefolds, and is known to hold or nearly hold for certain Calabi--Yau threefolds. See \cite{BMSZ:Fano3stab,Naoki:stabdoublesolids,Naoki:stabproduct,Naoki:Stabnefbundle,LC:Fano3,quintic,Shengxuan:thesis,Macri:P3,Benjamin:BGquadric,sun2021stabilityconditionsthreefoldsvanishing, Toda:BGCY3} for examples. As a consequence, Theorem \ref{thm:intro} holds for all of these cases. Moreover, in certain special cases, for example, when $X$ is a curve with genus non equal to zero, a surface with finite Albanese map, or an abelian threefold, we have the whole stability manifold given by $\Stab_H(X)=\coprod_{k\in\Z}\Stab_H^*(X)[k]$.
\item For threefolds, Conjecture \ref{conj:intro} is actually `equivalent' to \cite[Conjecture 4.1]{BMS:stabCY3s}, which is known to be too strong for some specific threefolds. Consequently, we do NOT expect  Conjecture \ref{conj:intro}  holds for ALL polarized varieties $(X,H)$. To address this issue,  we introduce in the main text  a modified version $\Stab^d$ Conjecture \ref{conj:stabd}, which is parametrized by a real variable $d\in\R_{\geq 0}$. The $\Stab^d$ Conjecture becomes weaker as $d$ increases, with $\Stab^0$ Conjecture \ref{conj:stabd} coinciding with the original Conjecture \ref{conj:intro}. Assuming any of the weaker conjectures, the statements in Theorem \ref{thm:intro} still holds with appropriate modifications. As the definitions involved are more technical, we defer further details to Section \ref{sec:sm}.

 Nevertheless, we  expect Conjecture \ref{conj:intro} to hold for certain important varieties, such as the projective space $\bP^n$ and abelian varieties. 

\item The restriction statement in Theorem \ref{thm:intro}.{(2)} is made more precise in Proposition \ref{prop:reststab}, Corollary \ref{cor:restrictionall}, and Remark \ref{rem:restcentralcharge}. Specifically, there exists an integer $m_Y\in\Z_{\geq 1}$ such that 
if the central charge of a stability condition in $\Stab_H^*(X)$ is of the form $\vd + i \vd_{\ut}$, with $\ut = (t_1, \dots, t_n)$ satisfying $t_{i+1} - t_i > m_Y$ for all $i$, then the corresponding stability condition restricts to a stability condition on $Y$. Moreover, the restricted stability condition also satisfies the geometric and vanishing properties stated in  Theorem \ref{thm:intro}.{(3)} and {(4)}.

\item Regarding Theorem \ref{thm:intro}.(5), for any object $E$ with $\vd_{\ut}(E) = 0$, by basic linear algebra, its $H$-polarized Chern character $\lambda_H([E])$ necessarily takes the form as that in \eqref{eq11} for some real coefficients $a_i$. The statement asserts that all $a_i \geq 0$, or all $a_i \leq 0$.

When $n = 2$, this condition is equivalent to the classical $H$-polarized Bogomolov inequality: $\Delta_H(E) \geq 0$. When $n = 3$, it is equivalent to a family of Bogomolov--Gieseker type inequalities \eqref{eq117} as that in \cite[Theorem 8.7 and Lemma 8.5]{BMS:stabCY3s}. We discuss this in more detail in Sections \ref{sec:3fold} and \ref{sec:sub:bounds}.
\end{enumerate}
\end{Rem}

\subsection{Real reduction of the stability manifold}
The rationale behind Conjecture \ref{conj:intro} and the proof of Theorem \ref{thm:intro} rely on a new concept within the theory of Bridgeland stability conditions. We introduce this framework in a general setting applicable to any $k$-linear triangulated category.
\subsubsection{Reduced stability conditions}
Let $\cT$ be a $k$-linear triangulated category and fix a lattice $\lambda:K(\cT)\to \Lambda$ of finite rank. Denote  by $\Stab_\Lambda(\cT)$  the space of stability conditions with respect to the lattice $\Lambda$; see Definitions \ref{def:prestab} and \ref{def:supportproperty} for a detailed review of these concepts.

\begin{Def}\label{def:eqintro}
We define an equivalence relation $\sim$ on $\Stab_\Lambda(\cT)$ as follows: two stability conditions $\sigma \sim \tau$ if
\begin{align*}    
\Im Z_\sigma = \Im Z_\tau \quad \text{and} \quad d(\mathcal{P}_\sigma, \mathcal{P}_\tau) < 1,
\end{align*}
where $d(-,-)$ denotes the standard metric on slicings (see \eqref{eq:defslicing}).
\end{Def}

Although this is not immediate from the definition, we will show in Proposition \ref{prop:convex} that $\sim$ indeed defines an equivalence relation.

We denote the resulting quotient space by

$$
\sb_\Lambda(\mathcal{T}) \coloneq \Stab_\Lambda(\mathcal{T}) / \sim,
$$

and equip it with the quotient topology induced from $\Stab_\Lambda(\mathcal{T})$.

\noindent An element $\ts \in \sb_\Lambda(\mathcal{T})$ will be referred to as a \emph{reduced stability condition}. By definition, the imaginary part of the central charge, $\Im Z_\sigma$, is well-defined for $\ts$, independent of the choice of representative $\sigma$. We denote this part by $B_{\ts}$ and call it a \emph{reduced central charge}.

The space $\sb_\Lambda(\cT)$ also admits a nice local topological structure.
\begin{Prop}\label{prop:intro2}
   The forgetful map 
\begin{align}
 \label{eq19}   \Forg: \sb_\Lambda(\cT) &\to \Hom_\Z(\Lambda,\R)\\
 \notag   \ts & \mapsto B_{\ts}
\end{align}
is a local homeomorphism.
\end{Prop}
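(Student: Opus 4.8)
The plan is to show that $\Forg\colon\sb_\Lambda(\cT)\to\Hom_\Z(\Lambda,\R)$ is both an open map and a local injection, and hence a local homeomorphism onto its image. The main input is Bridgeland's deformation theorem, which states that the full forgetful map $\Forg_{\mathrm{full}}\colon\Stab_\Lambda(\cT)\to\Hom_\Z(\Lambda,\C)$ is a local homeomorphism; composing with the $\R$-linear projection $\pi\colon\Hom_\Z(\Lambda,\C)\to\Hom_\Z(\Lambda,\R)$, $Z\mapsto\Im Z$, we get that $\pi\circ\Forg_{\mathrm{full}}=\Forg\circ q$ is an open map, where $q\colon\Stab_\Lambda(\cT)\to\sb_\Lambda(\cT)$ is the quotient map. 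Since $q$ is surjective and continuous and $\sb_\Lambda(\cT)$ carries the quotient topology, a subset $U\subseteq\sb_\Lambda(\cT)$ is open iff $q^{-1}(U)$ is open; combined with the fact that $q^{-1}(\Forg^{-1}(V))=\Forg_{\mathrm{full}}^{-1}(\pi^{-1}(V))$, one deduces both that $\Forg$ is continuous and that $\Forg$ is open. So openness is essentially formal once one has Bridgeland's theorem in hand.

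The substantive point is \emph{local injectivity}. Fix $\ts_0\in\sb_\Lambda(\cT)$ with representative $\sigma_0=(\cP_0,Z_0)$, and let $B_0=\Im Z_0$. I would first choose a small neighborhood $\cV$ of $\sigma_0$ in $\Stab_\Lambda(\cT)$ on which $\Forg_{\mathrm{full}}$ is a homeomorphism onto an open ball $\cB\subseteq\Hom_\Z(\Lambda,\C)$ around $Z_0$, small enough that Bridgeland's bound $\|Z_\sigma-Z_{\sigma_0}\|<\varepsilon\implies d(\cP_\sigma,\cP_{\sigma_0})<\tfrac14$ holds there (shrinking $\cB$ to radius $\varepsilon$). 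The key claim is then: for two stability conditions $\sigma_1,\sigma_2\in\cV$, if $\Im Z_{\sigma_1}=\Im Z_{\sigma_2}$ then $\sigma_1\sim\sigma_2$. Indeed, in that case both $\sigma_1$ and $\sigma_2$ have the same imaginary central charge, satisfying the first clause of Definition \ref{def:eqintro}; and by the triangle inequality $d(\cP_{\sigma_1},\cP_{\sigma_2})\le d(\cP_{\sigma_1},\cP_{\sigma_0})+d(\cP_{\sigma_0},\cP_{\sigma_2})<\tfrac14+\tfrac14=\tfrac12<1$. So $\sigma_1\sim\sigma_2$, meaning $q(\sigma_1)=q(\sigma_2)$. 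This shows that on the open set $q(\cV)$, the map $\Forg$ separates distinct points: if $\ts_1,\ts_2\in q(\cV)$ have $B_{\ts_1}=B_{\ts_2}$, lift them to $\sigma_1,\sigma_2\in\cV$, conclude $\sigma_1\sim\sigma_2$, hence $\ts_1=\ts_2$. Combined with continuity and openness of $\Forg$, and the observation that $q(\cV)$ is an open neighborhood of $\ts_0$, this gives that $\Forg$ restricted to $q(\cV)$ is a continuous open bijection onto the open set $\pi(\cB)=\{\,\Im Z : Z\in\cB\,\}$, i.e. a homeomorphism.

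One point that needs care: I must check that $q(\cV)$ is actually an open subset of $\sb_\Lambda(\cT)$, equivalently that $q^{-1}(q(\cV))$ is open in $\Stab_\Lambda(\cT)$. This set is the union of all $\sigma$ that are $\sim$-equivalent to some point of $\cV$; since $\sim$-equivalence is by Definition \ref{def:eqintro} an open condition near each $\sigma_0$ (the metric $d$ on slicings varies continuously and $\Im Z$ varies continuously, so $\{d(\cP_\cdot,\cP_\sigma)<1\}$ is open — here one uses that $d(\cP_{\sigma_0},\cP_\sigma)<1$ and $\Im Z$ is locally constant along the $\sim$-class), $\sim$ is an open equivalence relation and hence $q$ is an open map; this follows from Proposition \ref{prop:convex} and the discussion of the metric in \eqref{eq:defslicing}. (Alternatively, if one has already shown $q$ is open, then $q(\cV)$ is automatically open.) With $q$ open, the argument closes up cleanly.

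The main obstacle I anticipate is the local injectivity step, and specifically getting the metric estimates to line up: one needs to be sure that the deformation neighborhood $\cV$ furnished by Bridgeland's theorem can simultaneously be taken small enough that \emph{any two} of its points are within slicing-distance less than $1$ of each other — this is exactly why the constant $\tfrac14$ (or any constant below $\tfrac12$) is chosen in the radius of $\cB$. Everything else — continuity of $\Forg$, openness, the passage through the quotient topology — is soft point-set topology once Proposition \ref{prop:convex} (that $\sim$ is genuinely an equivalence relation, so that $\sb_\Lambda(\cT)$ and $q$ make sense) is available.
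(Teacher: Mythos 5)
Your strategy (openness $+$ local injectivity $=$ local homeomorphism) is sound, but there is a genuine gap in the openness of the quotient map $\pi_\sim$, and a circularity issue in your appeal to Proposition \ref{prop:convex}. In the paper, $\sim$ is \emph{defined} on $\Stab_\Lambda(\cT)$ by Definition \ref{def:sb} (same imaginary central charge and same path-component of the fiber), not by the metric condition of Definition \ref{def:eqintro}; the equivalence of the two is the content of Proposition \ref{prop:convex}, which is proved \emph{after} the statement you are trying to establish, and whose proof of (4)$\implies$(3) explicitly uses Corollary \ref{cor:localchart}, which is ``clear from the proof of Proposition \ref{prop:localisom}''. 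So using the $d<1$ characterization as a black box in this proof reverses the paper's logical order. For the local injectivity step this is easily avoidable: inside the chart $\cV$, two stability conditions with the same imaginary central charge are joined by the straight line of central charges in $\cB$ (imaginary part fixed, real part interpolating), whose lift is a path in the fiber of $\Forg_\Im$; so they are $\sim$-equivalent directly in the sense of Definition \ref{def:sb}, no triangle inequality required. (This is exactly what the paper does: ``As $W$ is assumed to be path-connected, we have $\sigma\sim\tau$ by definition.'')

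The real missing step is the one you yourself flag — that $q(\cV)$ (equivalently $\pi_\sim^{-1}(\pi_\sim(\cV))$) is open. Your stated justification does not work: the set of $\tau$ with $d(\cP_\tau,\cP_\sigma)<1$ is indeed open, but the equivalence class of $\sigma$ is that set intersected with the \emph{closed} constraint $\Im Z_\tau=\Im Z_\sigma$, so ``$\sim$ is an open condition'' is not a correct reading of Definition \ref{def:eqintro}. The correct argument must show that for $\sigma\sim\tau$ with $\tau\in\cV$, and for $\sigma'$ near $\sigma$, one can produce $\tau'$ near $\tau$ in $\cV$ with $\Im Z_{\tau'}=\Im Z_{\sigma'}$ and $\sigma'\sim\tau'$ — i.e.\ one must \emph{transport} the equivalence as the imaginary part of the central charge is perturbed. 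Working with Definition \ref{def:sb}, this requires transporting the whole connecting path $\gamma$ in the fiber, which is exactly what the paper does: it covers $\gamma$ by finitely many charts $U_{t_i}$ of the form $W_{t_i}\times V_{t_i}$, intersects the $V_{t_i}$, and propagates the deformed stability condition chart by chart. That finite-cover argument is the substantial content of the proof of Proposition \ref{prop:localisom}, and it is what is absent from your write-up. To salvage your metric approach without circularity, you would need to independently establish (before this proposition) that path-equivalent stability conditions in the same fiber satisfy $d<1$; the paper avoids this by deriving that fact, via Proposition \ref{prop:convex}, only after the local-homeomorphism result is in hand.
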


\subsubsection{The $\lsm$ relation}
In addition to the reduced central charge $B_{\ts}$, we will show that both the heart $\cA_{\ts}\coloneq\cP_{\ts}((0,1])$ and the slice $\cP_{\ts}(1)$ do not rely on the representative of $\ts$. This allows us to define a binary relation `$\lsm$' on reduced stability conditions via
\begin{align*}
    \ts\lsm\ttau:\iff \cA_{\ts}\subset \cP_{\ttau}(<1).
\end{align*}

In particular, whenever $\ts\lsm\ttau$, it follows that $\cA_{\ts}\subset \cA_{\ttau}[\leq 0]$. One of the main structural results is that the stability manifold $\Stab_\Lambda(\cT)$ can be recovered from $\sb_\Lambda(\cT)$ together with the relation $\lsm$. As topological spaces, we have the identification
\begin{align}\label{eq112}
    \Stab_\Lambda(\cT)\simeq \ta\sb_\Lambda(\cT),
\end{align}
where $\ta\sb_\Lambda(\cT)$ denotes a certain subspace of the tangent bundle of $\sb_\Lambda(\cT)$ determined by $\lsm$;  see \eqref{eq41226} for the formal definition. 
\subsubsection{Wall and chamber structure}
Given a character $v\in \Lambda$ and a reduced stability condition $\ts$ satisfying $B_{\ts}(v)=0$, the stability of an object $E$ with character $\lambda([E])=v$ is independent of the choice of  representative $\sigma\in\Stab_\Lambda(\cT)$ corresponding to $\ts$. Let $\sb_{\Lambda,v}(\cT)\subset \sb_\Lambda(\cT)$ denote the subspace consisting of reduced stability conditions $\ts$ with $B_{\ts}(v)=0$. Then the moduli space $M(v)$ of $v$ admits a wall and chamber structure over $\sb_{\Lambda,v}(\cT)$, given by a locally finite decomposition: 
\begin{align*}
        \sb_{\Lambda,v}(\cT)=\left(\bigcup_i \tilde\cW_i(v)\right)\coprod\left(\coprod_j \tilde\cC_j(v)\right),
\end{align*}
where the $\tilde{\mathcal{W}}_i(v)$ are walls  and the $\widetilde{\mathcal{C}}_j(v)$ are chambers (within which $M(v)$ is constant).

Under the local chart of $\sb_\Lambda(\cT)$ given by \eqref{eq19}, the subspace $\sb_{\Lambda,v}(\cT)$ is locally homeomorphic to the hyperplane $\vperp\subset \Hom(\Lambda,\R)$. Each wall $\Forg(\tilde\cW_i(v))$ lies in a real codimension-one linear subspace of $\vperp$. We have the following slogan-style statement.

\begin{Prop}\label{prop:intro}
The natural map $\pi_\sim:\Stab_\Lambda(\cT)\to \sb_\Lambda(\cT)$ has convex fibers and preserves all wall and chamber structures.
\end{Prop}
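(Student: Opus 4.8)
The plan is to prove the two assertions in turn. Convexity of the fibers is in essence Proposition~\ref{prop:convex} (whose content is that $\sim$ is transitive, i.e.\ that one can interpolate between $\sim$-equivalent stability conditions), so the first part amounts to phrasing that interpolation as a convexity statement; the wall-and-chamber part will then follow formally, from the constancy of the moduli $M(v)$ along the fibers of $\pi_\sim$ together with the openness of $\pi_\sim$.

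\emph{Convex fibers.} Fix $\ts\in\sb_\Lambda(\cT)$, and let $\cA\coloneq\cA_{\ts}$ and $B\coloneq B_{\ts}$ be its (representative-independent) heart and reduced central charge. Since a stability condition with heart $\cA$ is determined by its central charge — the slicing being reconstructed from Harder--Narasimhan filtrations inside $\cA$ — the forgetful map is injective on the fiber $F\coloneq\pi_\sim^{-1}(\ts)$ and identifies it with a subset of the affine subspace $\{Z\in\Hom(\Lambda,\C):\Im Z=B\}\cong\Hom_\Z(\Lambda,\R)$. Given $\sigma_0=(\cP_0,Z_0)$ and $\sigma_1=(\cP_1,Z_1)$ in $F$, I would look at the segment $Z_t=(1-t)Z_0+tZ_1$, which has $\Im Z_t=B$ throughout. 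That $(\cA,Z_t)$ is a stability function is immediate: $\Im Z_t=B\geq 0$ on $\cA$, while any $0\neq E\in\cA$ with $B(E)=0$ has $\Re Z_0(E)<0$ and $\Re Z_1(E)<0$, hence $\Re Z_t(E)<0$. That $(\cA,Z_t)$ also has the Harder--Narasimhan and support properties, and that the resulting slicing $\cP_t$ satisfies $d(\cP_t,\cP_0)<1$ — so that $\sigma_t\coloneq(\cP_t,Z_t)\in F$ — is exactly Proposition~\ref{prop:convex}; its proof moreover shows that the phase of any fixed object of $\cA$ moves monotonically from its $\sigma_0$-value to its $\sigma_1$-value as $t$ varies, which gives $d(\cP_t,\cP_0)\leq d(\cP_1,\cP_0)<1$. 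Hence $\Forg(F)$ is convex.

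\emph{Wall-and-chamber structures.} First I would note that $\pi_\sim$ is open: by Proposition~\ref{prop:intro2} the reduced forgetful map $\Forg_{\sb}\colon\sb_\Lambda(\cT)\to\Hom_\Z(\Lambda,\R)$ is a local homeomorphism, and $\Forg_{\sb}\circ\pi_\sim$ is the composite of the local homeomorphism $\Forg\colon\Stab_\Lambda(\cT)\to\Hom(\Lambda,\C)$ with the open $\R$-linear projection $Z\mapsto\Im Z$; hence $\Forg_{\sb}\circ\pi_\sim$ is open, and therefore so is $\pi_\sim$ (it is also continuous and surjective, with the convex cells above as fibers). Now fix $v\in\Lambda$ and put $\Stab_{\Lambda,v}(\cT)\coloneq\pi_\sim^{-1}(\sb_{\Lambda,v}(\cT))=\{\sigma:\Im Z_\sigma(v)=0\}$. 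For $\sigma$ in this locus, every $\sigma$-semistable object $E$ of class $v$ has $Z_\sigma(E)\in\R^\times$, hence, after a shift, lies in $\cP_\sigma(1)$, and it is $\sigma$-stable precisely when it is simple in the abelian category $\cP_\sigma(1)$; since both $\cA_\sigma$ and $\cP_\sigma(1)$ are constant along the fibers of $\pi_\sim$, the moduli space $M(v)$ is constant along fibers and thus factors through $\pi_\sim$. Because $\pi_\sim$ is continuous, open and surjective, the maximal open subsets of $\Stab_{\Lambda,v}(\cT)$ on which $M(v)$ is locally constant are exactly the preimages of the chambers $\tilde{\cC}_j(v)\subset\sb_{\Lambda,v}(\cT)$, and their complement — the walls — pulls back to the walls upstairs; local finiteness transfers since, in the charts provided by $\Forg$, $\pi_\sim$ is modelled on a projection $\vperp\times\R\to\vperp$. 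This gives the asserted compatibility, for every $v$.

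The main obstacle lies in the convexity half, and more precisely in checking that the Harder--Narasimhan and support properties survive the linear deformation $Z_t$ at fixed imaginary part — equivalently, that $\sim$ is transitive (the triangle inequality only yields $d<2$); this is exactly the work done in Proposition~\ref{prop:convex}, and it is the sole genuinely non-formal input. A minor point to keep straight is that the wall-and-chamber decomposition ``upstairs'' being matched is the one attached to the real hypersurface $\Stab_{\Lambda,v}(\cT)$, corresponding to the hypersurface $\sb_{\Lambda,v}(\cT)$ on which the decomposition ``downstairs'' is defined, rather than a decomposition of all of $\Stab_\Lambda(\cT)$.
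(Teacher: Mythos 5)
The convexity half is sound and follows the paper's route: it is precisely the content of Proposition~\ref{prop:convex}.(1), phrased affinely on the slice $\{\Im Z=B_{\ts}\}$. One caveat: your parenthetical claim that the proof of Proposition~\ref{prop:convex} ``shows that the phase of any fixed object of $\cA$ moves monotonically'' and hence $d(\cP_t,\cP_0)\le d(\cP_1,\cP_0)$ overreaches — the monotonicity of $\arg Z_t(E)$ for a fixed class is immediate, but the Harder--Narasimhan phases $\phi^\pm_t(E)$ need not inherit it, since the HN factors can change with $t$. This extra claim is not needed, because $\sigma_t\sim\sigma_0$ already gives $d(\cP_t,\cP_0)<1$ via Proposition~\ref{prop:convex}.(2), so it is harmless; just don't present it as established.

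The wall-and-chamber half, however, has a genuine gap, and you have in fact flagged it yourself and then misjudged its weight. You prove that $\pi_\sim$ restricted to the real hypersurface $\Stab_{\Lambda,v}(\cT)=\{\Im Z_\sigma(v)=0\}$ (real dimension $2\rho-1$) carries the restricted wall-and-chamber decomposition onto that of $\sb_{\Lambda,v}(\cT)$ (real dimension $\rho-1$). What the paper actually asserts — and what is needed for the downstream consequences stated immediately after the Proposition (Bertram nested wall theorem, walls as subsets of linear hyperplanes in $\vperp$, the phrase ``and hence on $\Stab_\Lambda(\cT)$'') — is that the wall-and-chamber structure of the full $\Stab_v(\cT)$ (real dimension $2\rho$, minus the degenerate locus) is already captured by $\sb^\dag_v(\cT)$. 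That reduction is mediated not by $\pi_\sim$ alone but by the map $\pi_v$ of Definition~\ref{def:piv}, namely $\sigma\mapsto\pi_\sim(\sigma[\theta])$, where $\theta$ rotates $\sigma$ into the hypersurface; and the key fact making this work is that every wall $\cW_i(v)$ and chamber $\cC_j(v)$ is invariant under the $\glt$-action (in particular under $[\theta]$), which you never invoke. Your argument therefore collapses only one of the two stages of dimension reduction.

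Two smaller omissions that the paper's Proposition~\ref{prop:wallchambonsb} handles and you do not: (i) the disjointness of the images $\tilde\cW_i(v)$, $\tilde\cC_j(v)$ is not automatic — the paper proves it by running Proposition~\ref{prop:convex}.(1) to build a path inside a fiber of $\pi_v$ avoiding walls; and (ii) the locus $\sb^\emptyset_v(\cT)$ (where some representative has $Z_\sigma(v)=0$, hence $M_\sigma(v)=\emptyset$) must be removed before $\pi_v$ is even well defined, which is why the target is $\sb^\dag_v(\cT)$ rather than $\sb_{\Lambda,v}(\cT)$. These are not fatal to the spirit of your argument but they are part of what makes the slogan precise.
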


More details on wall and chamber structures are provided in Section \ref{sec:wc}. Here, we highlight one immediate corollary. When the rank of $\Lambda$ is small, the wall and chamber structure on $\sb_{\Lambda,v}(\cT)$, and hence on $\Stab_\Lambda(\cT)$, is sufficiently simple to describe explicitly. For instance, when $\rk\Lambda = 3$, Proposition \ref{prop:intro} recovers the Bertram Nested Wall Theorem: all walls $\cW_i(v)$ in $\Stab_\Lambda(\cT)$ are pairwise disjoint. When $\rk\Lambda = 4$, the wall and chamber structure can be visualized on a real plane, with the walls corresponding to real lines; in particular, any two walls intersect at most once. For the case $\cT = \Db(\bP^3)$, we refer the reader to \cite{JM:walls3fold, JMM:wall3fold, Benjamin:wall3fold} for more details on this topic.

\subsubsection{Examples}
\begin{Ex}[Reduced stability conditions on curves]\label{ex:introcurvesb}
   Let $C$ be a smooth irreducible curve with $g\geq 1$. We have
    \begin{align*}
        \sb^*(C)=\left\{\ts_{t}\cdot c=(\Coh^{\sharp t}(C),e^{-c}\vd_{t})\mid c\in \R,t\in\R\cup\{+\infty\}\right\}, \text{ and }
        \sb(C)=\coprod_{n\in\Z} \sb^*(C)[n],
    \end{align*}
    where $\Coh^{\sharp t}(C)\coloneq\langle\Coh^{>t}(C),\Coh^{\leq t}(C)[1]\rangle$  for $t\neq +\infty$, and $\Coh^{\sharp +\infty}(C)\coloneq \Coh(C)[1]$. It is easy to observe that $\ts_s\lsm \ts_t$ when $s<t$.
\end{Ex}

\begin{Ex}[Reduced stability conditions on a polarized surface]\label{ex:introsurf}
Let $(S,H)$ be a smooth polarized surface. There is a family of reduced stability conditions given as follows:
 \begin{align*}
        &\sb^*_H(S)=\left\{\ts_{t_1,t_2}\cdot c=(\cA_{t_1,t_2},e^{-c}\vd_{t_1,t_2})\;\middle|\; c\in\R, t_1<t_2,t_2\in\R\cup\{+\infty\}
        \right\}.
    \end{align*}

   When $t_2=+\infty$,  the heart is given by $\cA_{t_1}\coloneq \Coh_H^{\sharp t_1}(S)[1]$.  When $t_2\neq +\infty$, the heart is defined as $\cA_{t_1,t_2}\coloneq (\cA_t[-1])^{\sharp0}_{\vd_{t_1,t_2}}$ as that in Notation \ref{not:tilt0heart} for any $t\in(t_1,t_2)$. In particular, it does not rely on the choice of $t$.

For the relation $\lsm$, we have 
\begin{align*}
    \ts_{t_1,t_2}\lsm \ts_{s_1,s_2} \text{ whenever }t_1<s_1\text{ and }t_2<s_2.
\end{align*}
\end{Ex}
Ignoring the scalar coefficient $c$, we may draw $\sb^*_H(S)$ using the linear coordinates $(t_1+t_2,t_1t_2)$ as that in Figure \ref{Figure:sbS}.

The image $\Forg(\sb^*_H(S))$ is the area strictly below the parabola.  For each character $v$ with $\Delta_H(v)\geq 0$, the subspace $\sb^*_{H,v}(S)$ is identified as $\vperp\cap \Forg(\sb^*_H(S))$  which is the union of two rays in this coordinate system.  One can describe the wall and chamber structure of $M(v)$ on $\vperp$. Each wall corresponds to a point given by the intersection $\vperp\cap \wperp$ for some character $w$. 
\begin{figure}[h]
    \centering

\scalebox{0.8}{
\begin{tikzpicture}[scale=1.8]
  \draw[->] (0,-1) -- (0,3.4) node[above] {$t_1t_2$};
  \draw[->] (-2.5,0) -- (2.5,0) node[right] {$t_1+t_2$};

  \draw[thick, domain=-2.5:2.5, smooth, variable=\x] 
    plot ({\x}, {0.5*\x*\x}) node[above]{$\{t_1=t_2\}$};

  \draw[thick] (-2.55,2.05) -- (0.5,-1) node[right]{$\sb^*_{H,v_2}(S)$}; 

  \draw[dashed] (-1.8,1.7) -- (1.8,1.7);
  \draw (-2.5,1.7)--(-1.8,1.7);
  \draw (1.8,1.7)--(2.8,1.7) node[below] {$\sb_{H,v_1}^*(S)$};

  \filldraw (0,0) circle (0.02) node[below right] {$O$};

  \node at (-2.3,0.5) {\Forg$(\sb_{H}^*(S))$};

  \node at (0,-1.4) {$v_1 = (1, 0, -c)$; $v_2 = (1, -H, \frac{H^2}{2})$};
\end{tikzpicture}
}

    \caption{Space of reduce central charges on a polarized surface.}
    \label{Figure:sbS}
\end{figure}
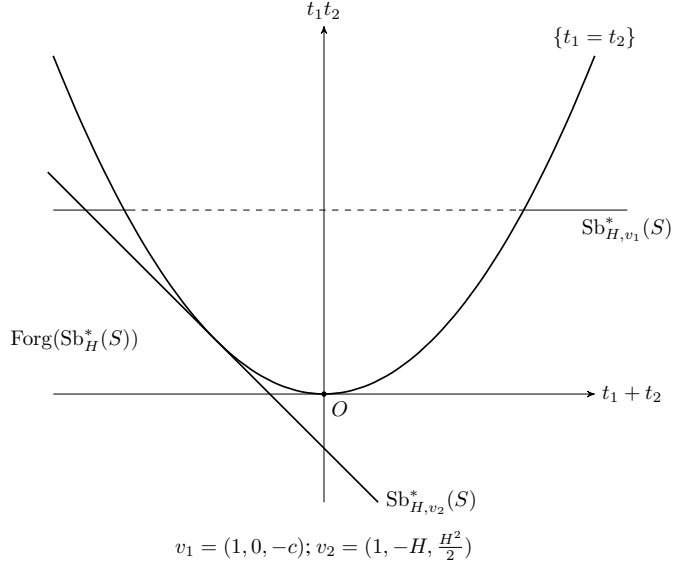
\begin{Rem}[Bousseau's scattering diagram]  
When $S$ is the projective plane $\bP^2$, the visualization described above has been used by Bousseau in \cite[Section 0.2.3]{Bousseau:P2} to interpret the scattering diagram in terms of of Bridgeland stability conditions. In Bousseau's framework, a stability condition $\sigma$ is reduced to its real part of the central charge $\Re Z_\sigma$, which corresponds to considering the image $\pi_\sim(\sigma[-\tfrac{1}{2}])$ in our terminology. The diagram in \cite[Figure 1 and 2]{Bousseau:P2} appears upside down compared of ours.

This perspective suggests a potential direction for generalizing the construction to threefolds via reduced stability conditions. 
\end{Rem}

\begin{Ex}[Theorem \ref{thm:sb3}, reduced stability conditions on a polarized threefold]\label{eg:introsb3}
     Let $(X,H)$ be a polarized smooth threefold satisfying \cite[Conjecture 4.1]{BMS:stabCY3s}. Then by \cite[Theorem 8.2]{BMS:stabCY3s}, there exists a family of stability conditions  $\tilde \PPP_3(X)$  on $X$. The corresponding family of reduced stability conditions on $\Db(X)$ can be parametrized by
     \begin{align*}
         \sb_H^*(X)\coloneq \left\{\ts_{\ut}\cdot c=(\cA_{\ut},e^{-c}\vd_{\ut})\mid c\in\R,\ut=(t_1,t_2,t_3)\in\sbr_3\right\}
     \end{align*}
  with the relation $\pi_\sim(\tilde {\mathfrak P}_3(X))=\coprod_{n\in\Z}\sb_H^*(X)[n]$. As that in the cases of curves and surfaces, we have the relation $\ts_{\us}\lsm \ts_{\ut}$, whenever $\us<\ut$.
\end{Ex}
\subsubsection{Bogomolov type inequality}\label{subsubsec:BG} Let $E$ be a $\sigma$-semistable with respect to some stability condition $\sigma\in\tilde \PPP_3(X)$. By \cite[Theorem 8.7]{BMS:stabCY3s}, its $H$-polarized character satisfies a family of quadratic Bogomolov-type inequalities
 \begin{align}\label{eq117}
     Q^\beta_K(E) \coloneq K\Delta_H(E)+\nabla^\beta_H(E)\geq 0
 \end{align} for any parameter $K$ in a certain interval $I$.
 
 Under the framework of reduced stability conditions, this can be reformulated as follows. The object $E\in\cP_{\ts_{\ut}}(1)$, where $\ts_{\ut}\in\sb^*_H(X)$ is the reduced stability condition from $\sigma[\theta]$. In particular, $\vd_{\ut}(E)=0$, the $H$-polarized character of $E$ satisfies
 \begin{align}\label{eq1018}
             \lambda_H(E)= \sum_{i=1}^3(-1)^ia_i\gamma_3(t_i)
 \end{align}
 for some real coefficients $a_i\in \R$. The family of inequalities in \eqref{eq117} is equivalent to the condition that  all $a_i$ are either non-negative or non-positive.

\subsubsection{Conjecture on reduced stability conditions}
Motivated by Examples \ref{ex:introcurvesb}, \ref{ex:introsurf} and \ref{eg:introsb3}, we define
\begin{align}\label{eq:defBn}
    \BBB_n\coloneq \{c\vd_{\ut}:c>0,\ut\in\sbr_n\}\subset\Lambda_n^*, \quad\pm\BBB_n\coloneq \{c\vd_{\ut}:c\neq0,\ut\in\sbr_n\},
\end{align}
and formulate the following conjecture on reduced stability conditions.
\begin{Con}\label{conj:mainsb}
   There exists a family of reduced stability conditions $\sb_H^{*}(X)$ with the following properties:
    \begin{enumerate}
        \item The forgetful map 
        \begin{align*}
            \Forg: \sb_H^{*}(X)\to \Hom(\Lambda_H,\R): \ts=(\cA,B)\mapsto B
        \end{align*}
        is a homeomorphism onto $\BBB_n^{}$. Moreover, the extended map $\Forg:\coprod_{n\in\Z}\sb_H^{*}(X)[n]\to \pm\BBB_n$ is a universal cover.
        \item The space $\sb_H^{*}(X)$ is preserved under the $\otimes \cO_X(H)$ action.
        \item For any $\ts_{\us},\ts_{\ut}\in\sb_H^{*}(X)$ with $\us<\ut<\us[1]$, the relation $\ts_{\us}\lsm\ts_{\ut}\lsm\ts_{\us}[1]$ holds.
    \end{enumerate}
\end{Con}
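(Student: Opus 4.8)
The plan is to derive Conjecture~\ref{conj:mainsb} from Conjecture~\ref{conj:intro} (which holds for curves, for surfaces, and for threefolds satisfying \cite[Conjecture~4.1]{BMS:stabCY3s}, so in those cases the argument is unconditional), by setting $\sb_H^{*}(X):=\pi_\sim(\Stab_H^{*}(X))\subseteq\sb_{\Lambda_H}(\Db(X))$ and verifying the three properties with the machinery already in place: the forgetful map is a local homeomorphism (Proposition~\ref{prop:intro2}), $\pi_\sim$ has convex fibres and preserves all wall-and-chamber structure (Proposition~\ref{prop:intro}), $\sim$ is genuinely an equivalence relation (Proposition~\ref{prop:convex}), and $\Stab_\Lambda(\cT)\simeq\ta\sb_\Lambda(\cT)$ as in~\eqref{eq112}. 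Throughout, $\ts_{\ut}$ denotes the reduced stability condition whose reduced central charge is a positive multiple of $\vd_{\ut}$, once it is shown to be well defined.

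\emph{Property~(1).} For $\sigma\in\Stab_H^{*}(X)$ with $Z_\sigma=c\vd_{\us}+id\vd_{\ut}\in\UUU_n$ ($d>0$), the reduced central charge is $B_{\pi_\sim(\sigma)}=\Im Z_\sigma=d\vd_{\ut}$; letting $\sigma$ vary, the pair $(\ut,d)$ sweeps out all of $\sbr_n\times\R_{>0}$, so $\Forg(\sb_H^{*}(X))=\BBB_n$ as sets. Since $\Forg$ is a local homeomorphism, it remains to prove injectivity of $\Forg|_{\sb_H^{*}(X)}$; by the definition of $\sim$ this amounts to showing that, for fixed $\ut$ and $d$, any two $\sigma,\tau\in\Stab_H^{*}(X)$ with $\Im Z_\sigma=\Im Z_\tau=d\vd_{\ut}$ satisfy $d(\cP_\sigma,\cP_\tau)<1$. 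These stability conditions form $\Forg^{-1}$ of the slice $\{c\vd_{\us}+id\vd_{\ut}\}\cap\UUU_n$ (the branch $\ut<\us<\ut[1],\,c<0$ together with the branch $\us<\ut<\us[1],\,c>0$, following Notation~\ref{not:intro}), and one checks that along this slice the heart $\cP_\sigma((0,1])$ and the slice $\cP_\sigma(1)$ do not move — which is exactly the representative-independence of $\cA_{\ts}$ and $\cP_{\ts}(1)$ established earlier. For the universal-cover refinement, run the same construction on $\coprod_{k}\Stab_H^{*}(X)[k]$, whose $\pi_\sim$-image is $\coprod_k\sb_H^{*}(X)[k]$; on it $\Forg$ is a local homeomorphism onto $\pm\BBB_n$ with fibres the $\Z$-torsor of shifts, and since $\pm\BBB_n$ is connected with $\pi_1\cong\Z$ (already visible for curves, where $\pm\BBB_1=\R^2\setminus\{0\}$) this exhibits the universal cover.

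\emph{Properties~(2) and~(3).} The autoequivalence $\otimes\cO_X(H)$ acts on $\Hom(\Lambda_H,\C)$ through a fixed automorphism of $\Lambda_H$ and acts isometrically on slicings, hence commutes with $\sim$ and descends to $\sb_{\Lambda_H}(\Db(X))$; invariance of $\sb_H^{*}(X)$ then follows from property~(2) of Conjecture~\ref{conj:intro}. For the $\lsm$ relations, fix $\us<\ut<\us[1]$ and pick $\sigma\in\Stab_H^{*}(X)$ with $Z_\sigma=c\vd_{\us}+id\vd_{\ut}$, $c>0$, $d>0$ — this lies in $\UUU_n$ by the branch $\us<\ut<\us[1]$. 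Then $\pi_\sim(\sigma)=\ts_{\ut}$ and $\cA_\sigma=\cA_{\ts_{\ut}}$, and reading the sign conditions of this branch through the description of the HN filtration of $\cA_\sigma$ relative to $\vd_{\us}$ yields simultaneously $\cA_{\ts_{\us}}\subset\cP_{\ts_{\ut}}(<1)$ and $\cA_{\ts_{\ut}}\subset\cP_{\ts_{\us}[1]}(<1)$, i.e.\ $\ts_{\us}\lsm\ts_{\ut}\lsm\ts_{\us}[1]$. Compatibility with Examples~\ref{ex:introcurvesb}, \ref{ex:introsurf} and~\ref{eg:introsb3}, where $\sb_H^{*}(X)$ and its $\lsm$ relations are already recorded, provides the dimension $1,2,3$ sanity checks.

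\emph{Main obstacle.} The crux is the injectivity step inside property~(1): one must show that the entire $\sim$-fibre over a reduced charge $d\vd_{\ut}$, viewed inside $\Stab_H^{*}(X)$, collapses to a single reduced stability condition — equivalently, that as the real part $c\vd_{\us}$ moves along the admissible arc the slicing never rotates by a full unit, so that $\cA_{\ts}$ and $\cP_{\ts}(1)$ truly depend only on $\ut$. This is the geometric statement that $\UUU_n$ is exactly the ``one clock rotation'' tube over $\BBB_n$, and proving it requires an explicit hold on the hearts $\cA_{\ut}=\cP_{\ts_{\ut}}((0,1])$: available in small dimension through the constructions behind Examples~\ref{ex:introcurvesb}--\ref{eg:introsb3}, and otherwise only through Conjecture~\ref{conj:intro} itself. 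Thus Conjecture~\ref{conj:mainsb} is open in exactly the same range as Conjecture~\ref{conj:intro}; granting the latter, the steps above complete the argument, the remaining points (openness of $\Forg|_{\sb_H^{*}(X)}$, local finiteness of the induced wall decomposition) being immediate from Propositions~\ref{prop:intro2} and~\ref{prop:intro}.
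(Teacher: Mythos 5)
Your overall route — set $\sb_H^{*}(X):=\pi_\sim(\Stab_H^{*}(X))$ and verify (a)--(c) assuming Conjecture \ref{conj:intro} — is exactly the `$\implies$' direction of the paper's Theorem \ref{thm:equivalentconjs} (with $d=0$), so the framing is right. But the two substantive steps are left unproved, and in both cases the missing ingredient is supplied in the paper by concrete lemmas that you neither invoke nor replace.

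First, the injectivity of $\Forg$ on $\pi_\sim(\Stab_H^{*}(X))$. You reduce it to showing $d(\cP_\sigma,\cP_\tau)<1$ for $\sigma,\tau$ with the same imaginary part, and then assert that ``along this slice the heart and $\cP_\sigma(1)$ do not move --- which is exactly the representative-independence of $\cA_{\ts}$ and $\cP_{\ts}(1)$.'' That is circular: representative-independence holds only for stability conditions already known to be $\sim$-equivalent, which is what you are trying to prove. The paper's argument avoids this by using Definition \ref{def:sb} directly: the set of admissible real parts over a fixed $c_2\vd_{\ut}$, namely $\{c\vd_{\us}:\us<\ut<\us[1],c>0\}\cup\{c\vd_{\us}:\ut<\us<\ut[1],c<0\}$, is \emph{convex} (Lemma \ref{lem:Epolyconvex}, resting on the interlacing-polynomial Lemma \ref{lem:E4}); combined with the homeomorphism in Conjecture \ref{conj:intro}.(a), the fiber of $\Forg_{\Im}$ inside $\Stab_H^{*}(X)$ is path-connected, so all its points are equivalent by definition of $\sim$. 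In particular your ``main obstacle'' paragraph is misleading: no extra control of the hearts beyond Conjecture \ref{conj:intro} is needed, only this linear-algebra convexity, which you never establish.

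Second, property (c). The relation $\ts_{\us}\lsm\ts_{\ut}$ compares the heart $\cA_{\ts_{\us}}$, defined through representatives with imaginary part proportional to $\vd_{\us}$, with the slicing of a stability condition $\sigma=(\cA_{\ut},c\vd_{\us}+id\vd_{\ut})$; ``reading the sign conditions through the HN filtration of $\cA_\sigma$ relative to $\vd_{\us}$'' does not connect these two objects. The paper's proof does: using that the whole pencil spanned by $\vd_{\us}$ and $\vd_{\ut}$ lies in $\pm\BBB_n$ (Lemma \ref{lem:Epoly}), the rotation path $\sigma_{-\ut,\us}[\theta]$, $\theta\in[0,\tfrac12]$, has central charges in $\UUU_n$, and unique path lifting through the homeomorphism $\Forg|_{\Stab_H^{*}}$ forces $\sigma_{\us,\ut}=\sigma_{-\ut,\us}[\tfrac12]$ inside $\Stab_H^{*}(X)$; only then does Lemma \ref{lem:partialorder}.(2) yield $\ts_{\us}\lsm\ts_{\ut}\lsm\ts_{\us}[1]$. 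Without this identification your argument does not rule out that $\pi_\sim(\sigma[-\tfrac12])$ differs from $\ts_{\us}$ by a deck transformation. (The same gluing identity $\sigma_{\us,\ut}[\tfrac12]=\sigma_{-\ut,\us}[1]$ is also what the paper uses to upgrade the local homeomorphism to the universal-cover statement in (a), which you only gesture at via $\pi_1(\pm\BBB_n)=\Z$.)
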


\noindent As we will show in Theorem \ref{thm:equivalentconjs}, for any polarized smooth variety, Conjecture \ref{conj:mainsb} is equivalent to Conjecture \ref{conj:intro}. In particular, Conjecture \ref{conj:mainsb} also implies  Theorem \ref{thm:intro}. The statement Theorem \ref{thm:intro}.{(5)} can be stated under the setup of reduced stability conditions as follows. 

For every reduced stability condition $\ts_{\ut}\in\sb^*_H(X)$ and $\ts_{\ut}$-semistable object $E$, its $H$-polarized character of $E$ is in the form of $\lambda_H([E])= \sum_{i=1}^n (-1)^ia_i\gamma_n(t_i)$ with coefficients $a_i\geq 0$ (or $\leq 0$) for all $i$. When $X$ is a threefold,  this is exactly the same as \eqref{eq1018} which is equivalent to the family of Bogomolov-type inequalities given in \eqref{eq117}.

To show that  Conjecture \ref{conj:mainsb} implies Conjecture \ref{conj:intro}, the key ingredient is the reconstruction formula as stated in equation \eqref{eq112}. 

Conversely, to  deduce  Conjecture \ref{conj:mainsb} from Conjecture \ref{conj:intro}, the main missing property is the comparison between reduced stability conditions as described in Conjecture \ref{conj:mainsb}.{(c)}. The proof relies on general structural results of reduced stability conditions and the relation $\lsm$. As a consequence, the Bayer Vanishing Lemma in Theorem \ref{thm:intro}.{(4)} essentially follows from the properties  that `$\ts_{\ut}\otimes \cO_X(H)=\ts_{\ut+1}$' in Conjecture \ref{conj:mainsb}.{(b)} and  `$\ts_{\ut}\lsm\ts_{\ut+1}$' (when $t_n\neq +\infty$) in Conjecture \ref{conj:mainsb}.{(c)}.

\subsection{Restriction of stability conditions}
To prove Theorem \ref{thm:intro}.{(2)}, we need a general result concerning the restriction of stability conditions from a variety to its hypersurfaces. To this end, we introduce an analogue of the relation `$\lsm$' on $\Stab(\cT)$, similar to the one defined on $\sb(\cT)$.

Given two stability conditions $\sigma, \tau \in \Stab(\cT)$, we define:

\begin{align*}
\sigma\lsm \tau & :\iff \cP_{\sigma}(\theta)\subset\cP_{\tau}(<\theta) \text{ for every }\theta\in \R;\\
\sigma\lsmeq \tau & :\iff \cP_{\sigma}(\theta)\subset\cP_{\tau}(\leq\theta) \text{ for every }\theta\in \R.
\end{align*}
For example, for every stability condition $\sigma$ in  $\Stab^*_H(X)$, we have the Bayer property $\sigma\lsmeq \sigma\otimes \cO_X(H)$.

The restriction theorem is stated as follows:
 \begin{Prop}[{\cite[Corollary 2.2.2]{Polishchuk:families-of-t-structures}}]\label{prop:introreststab}
     Let $\iota:Y\hookrightarrow X$ be an inclusion of smooth projective varieties, with $Y\in|D|$.  Let $\sigma=(\cP,Z)$ be a stability condition on $\Db(X)$ satisfying \begin{align}\label{eq:121}
         \sigma\otimes \cO_X(D)\lsm \sigma[1].
     \end{align} Then the datum
      \begin{align*}
          \sigma|_{\Db(Y)}\coloneq (\cP|_{\Db(Y)},Z\circ[\iota_*])
      \end{align*}
      defines a stability condition on $\Db(Y)$. Here the slicing is given by
    \begin{align*}
        \cP|_{\Db(Y)}(\theta)\coloneq \{E\in\Db(Y)\colon\iota_*E\in\cP_\sigma(\theta)\}\text{ for every $\theta\in \R$. }
    \end{align*} 
\end{Prop}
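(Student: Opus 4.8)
The plan is to verify the axioms of a Bridgeland stability condition for the datum $\sigma|_{\Db(Y)} = (\cP|_{\Db(Y)}, Z\circ[\iota_*])$, treating each axiom in turn. First I would check that $\cP|_{\Db(Y)}$ is a slicing of $\Db(Y)$. The compatibility with the shift functor and the orthogonality property $\Hom(E,F)=0$ for $E\in\cP|_{\Db(Y)}(\theta_1)$, $F\in\cP|_{\Db(Y)}(\theta_2)$ with $\theta_1>\theta_2$ are immediate, since $\iota_*$ is exact, fully faithful, and commutes with shifts, and these properties are inherited from $\cP$ on $\Db(X)$. The genuinely nontrivial point is the existence of Harder--Narasimhan filtrations in $\Db(Y)$: given $E\in\Db(Y)$, the object $\iota_* E$ has an HN filtration in $\Db(X)$, but a priori the HN factors need not lie in the essential image of $\iota_*$. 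This is exactly where hypothesis \eqref{eq:121} enters. The key structural input is the projection formula / adjunction: $\iota^*\iota_* E$ sits in a triangle with $E$ and $E\otimes\cO_Y(-D)[1]$, and more generally one controls $\iota^*\iota_*$ via the Koszul-type resolution of $\iota_*\cO_Y$ on $X$. The strategy, following Polishchuk, is to argue that $\cP|_{\Db(Y)}(\theta)$ as defined is closed under the operations needed so that one can build the HN filtration of $E\in\Db(Y)$ by descending the HN filtration of $\iota_* E$; the condition $\sigma\otimes\cO_X(D)\lsm\sigma[1]$, i.e. $\cP_\sigma(\theta)\otimes\cO_X(D)\subset\cP_\sigma(<\theta+1)$ for all $\theta$, guarantees that twisting by $\cO_X(-D)$ shifts phases by strictly less than $1$ in the correct direction, which is precisely what prevents the HN factors of $\iota_* E$ from ``spreading out'' across the boundary created by the triangle $\iota_*\iota^*(-)\to(-)\to(-)\otimes\cO(D)[2]$ on $X$-side objects supported on $Y$.

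Concretely, I would proceed as follows. Fix $E\in\Db(Y)$ and let $A_1\twoheadrightarrow\cdots$ with HN factors $A_1,\dots,A_k$ of phases $\phi_1>\cdots>\phi_k$ be the HN filtration of $\iota_* E$ in $\cP_\sigma$. One shows by induction that each $A_j$ is of the form $\iota_* B_j$ for some $B_j\in\Db(Y)$; this uses that the ``error'' in $\iota^*\iota_* A_j$ involves a twist by $\cO_X(-D)$, which by \eqref{eq:121} changes phase by strictly less than one, hence cannot create a nonzero $\Hom$ between consecutive HN factors that would obstruct the filtration from descending. Once the factors are of the form $\iota_* B_j$, the filtration $\iota_* E$ in $\Db(X)$ is the pushforward of a filtration of $E$ in $\Db(Y)$ (using full faithfulness of $\iota_*$ to lift the maps), with factors $B_j\in\cP|_{\Db(Y)}(\phi_j)$ and strictly decreasing phases. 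This gives the HN property. For the support property, one observes that $\lambda_Y:K(Y)\to\Lambda_Y$ factors appropriately and $Z\circ[\iota_*]$ has the required quadratic form bound because $[\iota_*]$ is a lattice map of finite rank and the support property on $X$ pulls back.

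I expect the main obstacle to be precisely the descent of HN factors: showing that each $A_j$ lies in the essential image of $\iota_*$. The subtlety is that a priori $\iota_*\Db(Y)$ is not a triangulated subcategory closed under cones of arbitrary morphisms in $\Db(X)$; one only controls objects ``supported on $Y$'', and one must use the semistability of $A_j$ together with the phase-shift bound from \eqref{eq:121} to rule out the components of $A_j$ that would fail to descend. This is the heart of Polishchuk's argument and I would follow \cite[Corollary 2.2.2]{Polishchuk:families-of-t-structures} closely here, spelling out that hypothesis \eqref{eq:121} — equivalently $\phi_\sigma^+(F\otimes\cO_X(D)) < \phi_\sigma^-(F)+1$ for all semistable $F$ — is exactly the inequality needed to make the inductive step go through. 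The remaining verifications (slicing axioms, local finiteness, support property) are routine transports of the corresponding facts on $\Db(X)$ along the exact fully faithful functor $\iota_*$, and I would dispatch them briefly.
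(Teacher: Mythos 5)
Your overall diagnosis is correct: the only genuinely nontrivial point is showing that the Harder--Narasimhan factors of $\iota_*E$ descend to $\Db(Y)$, and the hypothesis $\sigma\otimes\cO_X(D)\lsm\sigma[1]$ is exactly what makes this possible. You also correctly identify the adjunction triangle $E\otimes\cO_X(-D)[1]\to\iota^*\iota_*E\to E$ as the structural input, and you correctly reformulate the hypothesis as a phase bound. Since the proposition statement itself cites Polishchuk's Corollary 2.2.2, simply invoking that result would be a legitimate proof; and indeed the paper's own Remark \ref{rem:differentlattice}-adjacent remark (after the proof of Proposition \ref{prop:reststab}) explicitly acknowledges that the statement follows from Polishchuk, while the proof in the paper is deliberately a \emph{different}, self-contained route.

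Where your sketch falls short is in the descent step itself. You describe the mechanism as an induction in which twisting by $\cO_X(-D)$ ``cannot create a nonzero $\Hom$ between consecutive HN factors that would obstruct the filtration from descending.'' That is not the obstruction. The real issue, isolated in the paper's Lemma \ref{lem:660}, is that a truncation $F^+\to\iota_*E\to F^-\to$ produces a cone $F^-$ in $\Db(X)$, and the essential image of $\iota_*$ is not closed under cones of arbitrary morphisms. Showing $F^-=\iota_*E^-$ is a structural statement, not a $\Hom$-vanishing one. The paper proves it by (i) first using a $\Hom$-vanishing, forced by the hypothesis via Lemma \ref{lem:63}, to split $\iota_*\iota^*F^+\cong F^+\oplus(F^+\otimes\cO_Y(-D)[1])$; (ii) applying the octahedron axiom to $\iota_*\iota^*F^+\to\iota_*E\to F^-$ to produce a new triangle $F_1^+\to\iota_*E_1\to F^-\to$ in which the top cohomological amplitude of $F_1^+$ has strictly dropped; (iii) iterating until $F_m^+$ and $F^-$ decouple; and (iv) invoking the Krull--Schmidt property of $\Db(X)$. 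Only once $F^-$ is known to be a pushforward does a $\Hom$-surjectivity statement (Lemma \ref{lem:65}, in the spirit of what you wrote) show that the truncation morphism itself is a pushforward, and the induction on the number of HN factors (Lemma \ref{lem:666}) can run. Your sketch skips steps (ii)--(iv) entirely, and what you do say points at the wrong kind of argument; as written, the inductive step cannot be checked. If you want a complete proof rather than a citation, you would need to either reproduce Polishchuk's argument (which controls $\iota^*\iota_*$ via the Koszul resolution of $\iota_*\cO_Y$, as you gesture at) or supply something equivalent to Lemma \ref{lem:660}. Your remarks on the slicing axioms and the support property are fine and match the paper.
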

As we will see in the main text, when $D=mH$,  a stability condition  $\sigma$ in $\Stab^*_H(X)$ with central charge $\vd_{\us}+i\vd_{\ut}$ satisfies the condition \eqref{eq:121} $\sigma\otimes \cO_X(mH)\lsm \sigma[1]$ whenever
\begin{align}\label{eq:condm}
   \text{`every $\vd_{\ut'}$ in the pencil spanned by $\vd_{\us}$ and $\vd_{\ut}$ satisfies $t_{i+1}'-t_i'>m$ for every $i$.' }
\end{align}

For example, in the surface case, by definition of the restricted slicing, Proposition \ref{prop:introreststab} recovers the estimation of the first wall for $\iota_*E$.\\

\noindent The restricted stability condition preserves the property \eqref{eq:121}. In particular, we may keep restricting the stability condition to subvarieties in $|D_Y|$ on $Y$. To conclude Theorem \ref{thm:intro}.{(2)}, we need to deal with subvarieties that are not  complete intersections with respect to $D$. The following result enables us to modify the polarization accordingly and complete the argument.

\begin{Prop}\label{prop:introbayerall}
    Let $(X,H)$ be an irreducible smooth polarized variety over $\C$. Then for every divisor $D$ on $X$, there exists an integer $m(D)$ such that for every geometric stability condition $\sigma$ satisfying $\sigma\lsmeq \sigma\otimes \cO_X(H)$, we have
    \begin{align*}
        \sigma\lsmeq \sigma\otimes \cO_X\left(m(D)H+D\right).
    \end{align*}
\end{Prop}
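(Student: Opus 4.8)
The plan is to reduce the statement to a statement about the geometric stability condition $\sigma$ seeing only finitely many pieces of the cohomology of the sheaves $\cO_X(D)$ twisted against $H$. First I would fix the geometric stability condition $\sigma$, so that by the hypothesis $\sigma\lsmeq\sigma\otimes\cO_X(H)$ and the definition of $\lsmeq$ we have $\cP_\sigma(\theta)\otimes\cO_X(H)\subset\cP_\sigma(\leq\theta)$ for all $\theta$; iterating $k$ times gives $\cP_\sigma(\theta)\otimes\cO_X(kH)\subset\cP_\sigma(\leq\theta)$ for all $k\geq 0$. The goal is to show that tensoring by $\cO_X(m(D)H+D)$ still preserves $\cP_\sigma(\leq\theta)$ for a suitable integer $m(D)$. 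The key observation is that for $m\gg 0$ the sheaf $\cO_X(mH+D)$ is globally generated, so there is a short exact sequence of sheaves
\begin{align*}
    0\to \cK\to \cO_X^{\oplus N}\to \cO_X(mH+D)\to 0
\end{align*}
for some $N$, where $\cK$ is a vector bundle. Twisting any $E\in\cP_\sigma(\theta)$ by this sequence and using that $\sigma$ is geometric (so that $\cO_X^{\oplus N}$, and more generally $\cO_X(kH)$, lie in low-phase pieces relative to $E$ once we also control the dual sequence) should force $E\otimes\cO_X(mH+D)\in\cP_\sigma(\leq\theta)$.

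More precisely, the second step is to make the globally generated presentation work \emph{in reverse}: dualizing, $\cO_X(-mH-D)$ for $m\gg 0$ is a subsheaf-quotient built from copies of $\cO_X(-kH)$ with $0\leq k\leq$ (something depending only on $D$ and on how negative a twist of $\cO_X(D)$ one needs to make it anti-ample-generated), so that $\cO_X(mH+D)$ is resolved by a finite complex whose terms are direct sums of $\cO_X(k_jH)$ with $0\leq k_j\leq m(D)$. Tensoring $E$ with this complex and inducting on the length of the complex, using at each stage that $\cP_\sigma(\leq\theta)$ is closed under extensions and under the twists $\otimes\cO_X(k_jH)$ (the latter by the iterated Bayer property), yields $E\otimes\cO_X(mH+D)\in\cP_\sigma(\leq\theta)$. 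Taking $m=m(D)$ to be the largest twist appearing gives the claim; one has to be slightly careful that the complex computing $\otimes\cO_X(mH+D)$ has terms in both directions, i.e. it may be that $\cO_X(mH+D)$ sits in a triangle with $\cO_X(k_jH)$'s not just as a quotient but through a two-sided resolution, and one should phrase the argument so that only \emph{nonnegative} twists $\cO_X(kH)$ with $k\leq m(D)$ ever enter.

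The cleanest formulation avoiding sign headaches is probably: choose $m(D)$ large enough that both $\cO_X(m(D)H+D)$ and $\cO_X(m(D)H-D)$ are globally generated. Then from the two exact sequences
\begin{align*}
    0\to \cK_1\to \cO_X(-D)^{\oplus N_1}\to \cO_X(m(D)H)\to 0,\qquad
    0\to \cK_2\to \cO_X^{\oplus N_2}\to \cO_X(m(D)H-D)\to 0,
\end{align*}
(after twisting the first by $\cO_X(D)$, etc.) one extracts that $\cO_X(m(D)H+D)$ is obtained from $\cO_X(kH)$, $0\leq k\leq m(D)$, by finitely many extension/cone operations, all with nonnegative twists. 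I would then invoke that $\cP_\sigma(\leq\theta)$ is a pre-stable subcategory closed under extensions (part of the definition of a slicing) together with the iterated Bayer inclusion $\cP_\sigma(\leq\theta)\otimes\cO_X(kH)\subset\cP_\sigma(\leq\theta)$ to conclude. Since this holds for every $\theta$ and every $E\in\cP_\sigma(\theta)$, we get $\cP_\sigma(\theta)\otimes\cO_X(m(D)H+D)\subset\cP_\sigma(\leq\theta)$, which is exactly $\sigma\lsmeq\sigma\otimes\cO_X(m(D)H+D)$.

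\textbf{Main obstacle.} The delicate point is to run the resolution argument so that only nonnegative twists $\cO_X(kH)$ with $k\leq m(D)$ appear: a naive globally-generated presentation of $\cO_X(m(D)H+D)$ has kernel $\cK$ about which one knows little, and one cannot simply iterate because $\cK$ need not be a sum of line bundles. The fix is to work with a \emph{finite} (e.g. Koszul or Beilinson-type, or just a truncated locally free) resolution whose terms are explicitly sums of $\cO_X(k_jH)$ in the allowed range, and to check that every morphism in it lands in the expected phase range for the geometric $\sigma$ — here one genuinely uses that $\sigma$ is geometric, not merely that it satisfies the Bayer property, because skyscrapers and their twists are where the phase bookkeeping is tightest. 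Once the resolution has the right shape, the homological-algebra induction over $\cP_\sigma(\leq\theta)$ is routine.
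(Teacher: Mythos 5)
There is a genuine gap, and you have in fact identified it yourself in your ``Main obstacle'' paragraph; the problem is that the proposed fix does not repair it. You need a \emph{finite} resolution of $\cO_X(m(D)H+D)$ whose terms are direct sums of $\cO_X(k_jH)$ with $k_j$ in a bounded range. On a general smooth polarized variety $(X,H)$ such a resolution simply does not exist. For example, if $X$ is an abelian variety or a curve of positive genus and $D$ is a divisor with $\cO_X(D)$ of infinite order in $\Pic^0(X)$ modulo $\Z H$, there is no finite exact sequence of the form
\begin{align*}
0\to \cO_X(k_rH)^{\oplus N_r}\to\dots\to \cO_X(k_0H)^{\oplus N_0}\to \cO_X(mH+D)\to 0,
\end{align*}
because the terms of such a complex all lie in the subgroup $\Z H\subset\Pic(X)$, and the kernel of the evaluation map of a globally generated twist is a higher-rank bundle about which the iterated Bayer hypothesis says nothing. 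Your ``cleanest formulation'' with the two short exact sequences still has unknown kernels $\cK_1,\cK_2$, and twisting the first by $\cO_X(D)$ just trades the unknown $\cK_1$ for $\cK_1(D)$, which is no better. Appealing to Orlov-type strong generation would produce a construction of $\cO_X(mH+D)$ from the $\cO_X(kH)$ by cones and retracts, but cones shift the phase bound upward by one each time (if $A,B\in\cP_\sigma(\leq\theta)$ then $\Cone(A\to B)\in\cP_\sigma(\leq\theta+1)$ only), and you give no mechanism to absorb these shifts.

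The paper's proof (Proposition \ref{prop:bayerall}) avoids resolutions of line bundles entirely. The key idea you are missing is to replace the homological resolution by a \emph{geometric} one: choose $a$ so that $aH+D$ is very ample, take a Bertini chain of smooth subvarieties $X_n\subset\dots\subset X_0=X$ cut out by members of $|(aH+D)|_{X_i}|$, and use the adjunction triangles
\begin{align*}
\iota_{j*}\iota_j^*F\otimes\cO_X(-aH-D)\to \iota_{j*}\iota_j^*F\to \iota_{j+1*}\iota_{j+1}^*F\xrightarrow{+}
\end{align*}
to express $E\otimes\cO_X(MH+D)$ as an iterated extension of objects of the form $\iota_{j*}\iota_j^*E\otimes\cO_X((M-a)H)$ plus a term supported on a finite set of points (which is fixed by twists). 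The shifts that these triangles introduce are then controlled by a descending induction on the shift degree $k$, whose base case $k\geq\dim X$ holds for free because a geometric heart has homological amplitude $\dim X$. Your homological approach has no analogue of this termination mechanism, which is precisely what absorbs the shifts produced by the cone constructions.
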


As a direct corollary of Propositions \ref{prop:introreststab} and \ref{prop:introbayerall}, if for every $m\in\Z_{\geq 1}$ there exists a stability condition $\sigma_m$ on $X$ satisfying $\sigma_m\lsmeq \sigma_m\otimes \cO_X(H)$ and $\sigma_m\otimes \cO_X(mH)\lsm \sigma_m[1]$, then every smooth subvariety of $X$ admits stability conditions. Theorem \ref{thm:intro}.{(2)} is a special case of this corollary. \\

\noindent For a stability condition satisfying \eqref{eq:121} with $D$ a very ample divisor, we can successively restrict it down to points. In particular, all skyscraper sheaves become stable. Together with the Bayer property $\sigma\lsmeq \sigma\otimes \cO_X(H)$, the condition  \eqref{eq:121} can be viewed as a strong geometric assumption on $\sigma$ with respect to $X$. Another consequence of Proposition \ref{prop:introreststab} is that such stability conditions are entirely determined by their central charges.

\begin{Prop}\label{prop:introgeom}
    Let $(X,H)$ be an irreducible smooth variety with a very ample divisor $H$. Let $\sigma_1$, $\sigma_2$ be stability conditions on $X$ satisfying $\sigma_i\lsmeq\sigma_i\otimes \cO_X(H)\lsm\sigma_i[1]$ and  $Z_{\sigma_1}=Z_{\sigma_2}$. Then $\sigma_1=\sigma_2[2m]$ for some $m\in\Z$.
\end{Prop}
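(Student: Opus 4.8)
The plan is to exploit Proposition \ref{prop:introreststab} to restrict both stability conditions all the way down to points, and then propagate equality back up. First I would observe that the hypothesis $\sigma_i\lsmeq\sigma_i\otimes\cO_X(H)\lsm\sigma_i[1]$ is exactly the geometric condition \eqref{eq:121} (with $D=H$), so by repeatedly applying Proposition \ref{prop:introreststab} along a generic complete-intersection flag $X\supset Y_1\supset\cdots\supset Y_{n-1}\supset \{p\}$ cut out by divisors in $|H|$, each $\sigma_i$ restricts to a stability condition on every such $Y_k$, and finally to a stability condition on the point-category $\Db(\{p\})\cong \Db(\text{Vect})$. On a point there is essentially only one stability condition up to the $\glt$-action, and in fact up to even shift it is determined by the phase of $\cO_p$; since $Z_{\sigma_1}=Z_{\sigma_2}$ they have the same central charge $Z(\cO_p)$, so the restricted stability conditions on $\{p\}$ agree up to an even shift $[2m_p]$. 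The key point to extract here is that $\cO_p$ is $\sigma_i$-stable (this is part of the package that comes with condition \eqref{eq:121}, cf. the discussion after Proposition \ref{prop:introreststab}) with a well-defined phase $\phi_{\sigma_i}(\cO_p)$, and that $Z_{\sigma_1}=Z_{\sigma_2}$ forces $\phi_{\sigma_1}(\cO_p)\equiv\phi_{\sigma_2}(\cO_p)\pmod 2$.

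Next I would upgrade pointwise agreement to agreement of the slicings on all of $\Db(X)$. After replacing $\sigma_2$ by $\sigma_2[2m]$ for the appropriate $m$ (noting $m_p$ is independent of $p$ by connectedness of $X$ and continuity of phases of the stable objects $\cO_p$), we may assume $\phi_{\sigma_1}(\cO_p)=\phi_{\sigma_2}(\cO_p)$ for all $p\in X$. Now the central charges already coincide, so $\sigma_1$ and $\sigma_2$ have the same image under the forgetful map; by Bridgeland's deformation theorem the set of $\sigma$ with a prescribed central charge and lying in a fixed connected component is discrete, but that alone does not give equality, so instead I would argue directly with slicings. The objects $\cO_p$ for $p\in X$, together with their shifts, already pin down a great deal: an object $E$ lies in $\cP_{\sigma_i}(\le\phi)$ iff $\Hom(\cO_p[k],E)$ vanishes in the appropriate range, and symmetrically for $\cP_{\sigma_i}(\ge\phi)$, using that $\cO_p$ is a point-like (spherical-type, $\Hom^{\le 0}$-negative) object in a geometric stability condition. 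More cleanly: both $\sigma_1$ and $\sigma_2$ restrict to the \emph{same} stability condition on each point (same heart, same central charge), and by the compatibility of restriction with the slicing filtration, the Harder–Narasimhan filtration of any $E\in\Db(X)$ is controlled by those restrictions; combined with $Z_{\sigma_1}=Z_{\sigma_2}$ one concludes $\cP_{\sigma_1}(\phi)=\cP_{\sigma_2}(\phi)$ for every $\phi$, i.e. $\sigma_1=\sigma_2$.

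I expect the main obstacle to be this last step — turning the agreement of the restricted stability conditions on points into equality of the slicings upstairs. The subtlety is that restriction to a point loses information about objects supported on positive-dimensional subvarieties, so one cannot read off $\cP_{\sigma_i}(\phi)$ from the point-restrictions alone by a naive $\Hom$-vanishing criterion; instead one must use the full chain of intermediate restrictions $Y_1\supset\cdots\supset\{p\}$ and an inductive argument on $\dim Y_k$, showing at each stage that $\sigma_1|_{Y_k}=\sigma_2|_{Y_k}$ (up to the already-fixed even shift) implies $\sigma_1|_{Y_{k+1}}=\sigma_2|_{Y_{k+1}}$, and finally that agreement on all divisors in $|H|$ through a general point forces agreement on $X$. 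The input that makes the inductive step work is that a stability condition on $Y_k$ satisfying \eqref{eq:121} is itself geometric and its restriction to $Y_{k+1}$ is determined by its slicing together with the central charge on $\Db(Y_{k+1})$ — so the induction is really the statement of the proposition one dimension down, and the base case is the point, where it is elementary. An alternative, possibly cleaner, route for the final step is to invoke the reconstruction identity \eqref{eq112}: since $Z_{\sigma_1}=Z_{\sigma_2}$ the two reduced stability conditions $\pi_\sim(\sigma_i)$ have the same reduced central charge $B$, and the point-restriction argument shows they have the same heart $\cA$ and slice $\cP(1)$, hence $\pi_\sim(\sigma_1)=\pi_\sim(\sigma_2)$ in $\sb_\Lambda(\Db(X))$; then the $\lsm$-data recovering the fiber over a reduced stability condition is pinned down by $\Im Z$ and $\Re Z$, which agree, giving $\sigma_1=\sigma_2$ after the even shift.
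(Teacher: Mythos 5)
Your high-level strategy (restrict down a complete-intersection flag, fix the even shift by comparing $\phi_{\sigma_i}(\cO_p)$, then propagate equality back up) matches the paper's, and you are right that the entire difficulty is the upward step. But you do not actually supply a proof of that step, and the paper's handling of it requires a genuinely new idea that you would need to find. The paper isolates the upward step as a standalone result — Proposition~\ref{prop:reconfromrest} in Section~\ref{sec:reslem} — which says: if $\sigma,\tau$ on $\Db(Y)$ satisfy $\sigma\lsmeq\sigma\otimes\cO_Y(H)\lsm\sigma[1]$, $\tau\otimes\cO_Y(H)\lsm\tau[1]$, $Z_\sigma=Z_\tau$, and $\sigma|_{\Db(X)}=\tau|_{\Db(X)}$ for a single $X\in|H|$, then $\sigma=\tau$. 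Its proof is \emph{not} inductive and is \emph{not} a $\Hom$-vanishing-against-$\cO_p$ argument; it reduces (via \cite[Lemma~4.7]{FLZ:ab3}) to proving $d(\sigma,\tau)\le 1$ and then, for each $\tau$-stable $F$ not supported in dimension zero, uses the adjunction triangle $E\otimes\cO_Y(H)[-1]\to\iota_*\iota^*E\otimes\cO_Y(H)[-1]\to E$ applied to the top HN-factor $E=\HN^+_\sigma(F)$, together with the Bayer hypotheses, to squeeze $\phi^+_\sigma(F)-\phi_\tau(F)\le 1$. That manipulation of phases via the counit/unit triangles is the missing engine in your outline.

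Concretely, every route you sketch for the upward step begs the question at the same spot. Your first route (``induction, base case the point'') is just the unproved Proposition~\ref{prop:reconfromrest} renamed; you do not say how $\sigma_1|_{Y_{k+1}}=\sigma_2|_{Y_{k+1}}$ forces $\sigma_1|_{Y_k}=\sigma_2|_{Y_k}$. Your ``agreement on all divisors through a general point'' variant is not used in the paper and you give no argument for it. Your $\sb$-route asserts that ``the point-restriction argument shows they have the same heart $\cA$ and slice $\cP(1)$'' — but you had just (correctly) rejected the naive criterion that would establish exactly this, so the assertion is unsupported; once $\pi_\sim(\sigma_1)=\pi_\sim(\sigma_2)$ is granted the rest of that route is fine (the forgetful map is injective on fibers of $\pi_\sim$), but granting it is the whole point. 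One further minor divergence: the paper's Corollary~\ref{cor:geodeterbyZ} restricts only as far as a curve $C$, where the stability manifold is $\sigma\cdot\glt$ and equality follows from $Z$ and $\phi(\cO_p)$ alone, rather than going to a point; this is a small efficiency gain but not the essential difference. (Also, $\cO_p$ is generally not spherical — that aside is inaccurate, though not load-bearing.)
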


\subsection*{Organization of the paper}The main theoretical content of this paper is presented in the even-numbered sections. Readers interested primarily in the core arguments may safely skip the odd-numbered sections, which serve mainly to provide supplementary context and examples. Below is a detailed overview of the structure of the paper and its appendices.

Section \ref{sec:nested} introduces the concept of reduced stability conditions and defines the space $\sb(\cT)$. Additional technical details, including results on local topology and degeneracy loci, are developed in Appendices \ref{sec:degenloci} and \ref{sec:qudform}. Section \ref{sec:wc} investigates the wall and chamber structure of $\sb(\cT)$, with an example on the interpretation of the Bayer--Macr\`i divisor.

Section \ref{sec:order} introduces the notion $\lsm$ and studies its basic properties on both $\sb(\cT)$ and $\Stab(\cT)$. In Sections \ref{eg:cs}, \ref{sec:3fold}, and Appendix \ref{sec:appDunpolaized}, we study the family $\sb^*_H$ of reduced stability conditions in the cases of curves, surfaces, and polarized threefolds. We also explain that the classical conjecture concerning the existence of stability conditions on threefolds, which appears in \cite{BBMT:Fujita}, \cite{BMT:3folds-BG}, and \cite[Conjecture 4.1]{BMS:stabCY3s}, implies Conjecture \ref{conj:intro}.

Section \ref{sec:reslem} is devoted to the proof of Propositions \ref{prop:introreststab} and \ref{prop:introbayerall}, both of which are formulated and established using the $\lsm$ relation. In the final main section, Section \ref{sec:sm}, we state the conjectures on $\sb_H(X)$ and $\Stab_H(X)$, prove their equivalence, and complete the proof of Theorem \ref{thm:intro}. The arguments in this section rely on foundational, though nontrivial, results from linear algebra and interlaced polynomials, which are collected in Appendix \ref{sec:basicalgeb}.

\subsection*{Acknowledgements} The author would like to thank Arend Bayer, Yiran Cheng, Naoki Koseki, Wanmin Liu, Zhiyu Liu, Chunkai Xu, Qizheng Yin and Xiaolei Zhao for enlightening discussions. The author is supported by the Royal Society URF$\backslash$R1$\backslash$201129 “Stability condition and application in algebraic geometry”.\\

\section{Bertram Nested Wall Theorem} \label{sec:nested}
\subsection{Notions and definitions}\label{sec:recap}

We briefly recall some notions of Bridgeland stability conditions on a triangulated category.

\begin{Def}\label{def:slicing}
Let $\cT$ be a $k$-linear triangulated category. A \emph{slicing} $\cP$ on $\cT$ is a collection of full additive subcategories $\cP(\theta)\subset \cT$  indexed by $\theta\in \R$, satisfying the following conditions:
\begin{enumerate}
    \item For any $\theta\in \R$, we have $\cP(\theta)[1]=\cP(\theta+1)$.
    \item If $\theta_1>\theta_2$ and $F_i\in \operatorname{Obj}(\cP(\theta_i))$ for $i=1,2$, then $\Hom(F_1,F_2)=0$;
    \item \label{c}Every non-zero object $E \in \cT$ admits a finite sequence of distinguished triangles 
    \begin{equation*}
        \begin{tikzcd}[column sep=tiny]
0=E_0  \arrow[rr]& & E_1 \arrow[dl] \arrow[rr]& & E_2\arrow[r]\arrow [dl]& \cdots\arrow[r] &E_{m-1}\arrow [rr]& & E_m=E\arrow[dl]\\
& A_1 \arrow[ul,dashed, "+1"] && A_2\arrow[ul,dashed, "+1"] &&&& A_m \arrow[ul,dashed, "+1"]
\end{tikzcd}
    \end{equation*}
such that  each nonzero $A_i=\mathrm{Cone}(E_{i-1}\rightarrow E_i)$ belongs to $\cP(\theta_i)$ with real numbers $\theta_1>\dots >\theta_m$.
\end{enumerate}
\label{def:slicing}
\end{Def}

\begin{Not}\label{rem:slicing}
We call nonzero objects in $\cP(\theta)$ \emph{semistable} of \emph{phase} $\theta$, and refer to the simple objects in $\cP(\theta)$ as \emph{stable}. The sequence of triangles in Definition \ref{def:slicing}.\eqref{c} is unique up to isomorphism and is called the \emph{Harder–Narasimhan (HN) filtration} of an object $E\in\cT$. Each object $A_i$  in the filtration is called an \emph{HN factor} of $E$. We denote:
\[
\HN_\cP^+(E) \coloneqq A_1,\quad \HN_\cP^-(E) \coloneqq A_m,\quad \phi_\cP^+(E) \coloneqq \theta_1,\quad \phi_\cP^-(E) \coloneqq \theta_m.
\]
In particular, if $0\neq E\in \cP(\theta)$, its phase is written as $\phi_\cP(E)\coloneqq\theta$.

Given an interval $I \subset \R$, we define $\cP(I)$ to be the extension closure of the subcategories: $\{\cP(\theta):\theta\in I\}$. That is, $\cP(I)$ is the smallest full additive subcategory of $\cT$ containing all objects whose HN factors have phases in $I$. In particular, the slicing $\cP$ defines a bounded $t$-structure on $\cT$ whose heart is $\cP((0,1])$.

Given integers $a \leq b$ and a heart $\cA$ of a bounded $t$-structure on $\cT$, we denote by $\cA[a,b]$ the extension closure of  $\{\cA[k] : k \in [a,b]\}$. In particular, we have the equivalence
$\cA \subset \cA'[a,b] \iff\cA' \subset \cA[-b, -a]$.
\end{Not}

The distance between two slicings $\cP$ and $\cP'$ on $\cT$ is defined by
\begin{align}\label{eq:defslicing}
    d(\cP,\cP')\coloneq \sup_{0\neq E\in\cT}\{|\phi^+_\cP(E)-\phi^+_{\cP'}(E)|,|\phi^-_\cP(E)-\phi^-_{\cP'}(E)|\}\in[0,+\infty].
\end{align}

An equivalent expression, useful in applications, is the following (see \cite[Lemma 6.1]{Bridgeland:Stab}):
\begin{equation*}
    d(\cP,\cP')=\sup\{\phi^+_{\cP}(E)-\phi_{\cP'}(E),\phi_{\cP'}(E)-\phi^-_{\cP}(E)\colon 0\neq E\in\cP'(\theta)\text{ for some }\theta\in\R \}.
\end{equation*}
We denote by $\mathrm{K}(\cT)$ the Grothendieck group of $\cT$.

\begin{Def}
A \emph{Bridgeland pre-stability condition} on $\cT$ is a pair $\sigma=(\cP,Z)$, where 
\begin{itemize}
    \item $\cP$ is a slicing of $\cT$;
    \item $Z:\mathrm{K}(\cT)\rightarrow \C$ is a group homomorphism, called the \emph{central charge};
\end{itemize}
such that for any non-zero object $E$ in $\cP(\theta)$, we have \text{$Z([E])=m(E)e^{i\pi \theta}$ for some $m(E)\in \mathbb{R}_{>0}$.}
\label{def:prestab}
\end{Def}

Given a pre-stability condition $\sigma=(\cP_\sigma,Z_\sigma)$, we write $\HN^*_{\sigma}$ (resp. $\phi^*_{\sigma}$) for $\HN^*_{\cP_\sigma}$ (resp. $\phi^*_{\cP_\sigma}$).  We denote by $\cA_\sigma\coloneq \cP_\sigma((0,1])$ the heart of the associated bounded t-structure on $\cT$. The central charge satisfies  $Z_\sigma(\cA_\sigma\setminus\{0\})\subset \H\coloneq \{a+bi:b>0\text{ or } b=0>a\}$. A pre-stability condition $\sigma$ is uniquely determined by the datum $(\cA_\sigma,Z_\sigma)$, and we may freely refer to $\sigma$ as  $(\cA_\sigma,Z_\sigma)$ throughout the paper. \\

 Let  $\Lambda$ be a free abelian group of finite rank, and let $\lambda\colon \mathrm{K}(\cT)\twoheadrightarrow \Lambda$ be  a surjective  group homomorphism.
\begin{Def}[\!\cite{Bridgeland:Stab, Kontsevich-Soibelman:stability}]
A pre-stability condition $(\cP,Z')$ is said to  satisfy the \emph{support property} (with respect to $\Lambda$, or rather to $\lambda:\mathrm{K}(\cT)\twoheadrightarrow \Lambda$) if:
\begin{itemize}
\item the central charge $Z'$ factors through $\Lambda$, in other words, there exists a group homomorphism $Z\colon \Lambda \to \C$ such that $Z' = Z \circ \lambda$;
\item there exists a quadratic form $Q_\Lambda$ on $\Lambda_\R \coloneqq \Lambda \otimes \R$ such that:
\begin{enumerate}
\item  the kernel $\Ker Z \subset \Lambda_\R$ is negative definite with respect to $Q_\Lambda$;
\item  for every semistable object $E \in \cT$, we have $Q_\Lambda(\lambda([E])) \geq 0$.
\end{enumerate}
\end{itemize}
A pre-stability condition that satisfies the support property is called a \emph{(Bridgeland) stability condition} (with respect to $\Lambda$), and the collection of all such stability conditions is denoted by $\Stab_\Lambda(\cT)$.
\label{def:supportproperty}
\end{Def}

For every quadratic form $Q$ on $\Lambda_\R$, we define its negative cone as \begin{align*}
    \nega(Q)\coloneq \{v\in\Lambda_\R\;|\; Q(v,v)<0\}\cup \{0\}.
\end{align*}
\begin{Rem}[Support property]
We will also use the following equivalent formulation of support property, which is  the original version introduced in \cite{Bridgeland:Stab}:
\begin{center}
\emph{$\exists\; C > 0$ such that for every $\sigma$-semistable object $E \in \cT$, we have} $
    \|\lambda([E])\| \leq C \cdot |Z(\lambda([E]))|.
$
\end{center}

\noindent Here $||\bullet||$ is a norm on $\Lambda_\R$. The existence of $C$ does not rely on the choice of the norm.  By \cite[Lemma A.4]{BMS:stabCY3s}, these two definitions of support properties are equivalent.
\end{Rem}

When the lattice $\Lambda$ is not important in the statement, we will omit it and write $\Stab(\cT)$ for simplicity.

The set of all stability conditions carries a natural topology induced by the following generalized metric:
\[\mathrm{dist}(\sigma_1,\sigma_2)\coloneqq \max\left\{d(\cP_1,\cP_2), ||Z_1-Z_2||\right\}\in [0,+\infty].\]

\begin{Thm}[\!{\cite[Theorem 7.1]{Bridgeland:Stab}, \cite[Theorem 1.2]{Arend:shortproof}}]
 The map forgetting the slicing
\[\Forg_Z: \mathrm{Stab}_\Lambda(\cT) \rightarrow \mathrm{Hom}_{\mathbb Z}(\Lambda, \mathbb C)\colon \ \ \ \sigma =(\cP,Z) \mapsto Z \]
is a local isomorphism at every point of $\Stab_\Lambda(\cT)$.

In particular, whenever non-empty, the space $\Stab_\Lambda(\cT)$ is a complex manifold of dimension $\mathrm{rank}(\Lambda)$.
\label{thm:spaceasamfd}
\end{Thm}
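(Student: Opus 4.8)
\noindent\emph{Proof sketch.} The plan is to follow the deformation argument of Bridgeland, in the streamlined form of Bayer. The second assertion is formal once the first is known: $\Hom_\Z(\Lambda,\C)\cong\C^{\rk\Lambda}$ as a complex vector space, the local charts furnished by $\Forg_Z$ have transition maps equal to the identity (both charts are ``the central charge''), hence holomorphic, and $\Stab_\Lambda(\cT)$ is Hausdorff, carrying the generalized metric $\mathrm{dist}$; so $\Stab_\Lambda(\cT)$ inherits the structure of a complex manifold of dimension $\rk\Lambda$. Thus the content is: given $\sigma=(\cP,Z)\in\Stab_\Lambda(\cT)$ satisfying the support property with respect to a quadratic form $Q$, produce a $\mathrm{dist}$-ball $U\ni\sigma$ on which $\Forg_Z$ is injective with open image and continuous local inverse. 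The engine throughout is the support property: for every $\sigma$-semistable $E$ the class $\lambda([E])$ lies in the $Q$-non-negative cone, on which $\|{-}\|\leq C\,|Z({-})|$; hence replacing $Z$ by a central charge $W$ with $\|W-Z\|<\varepsilon$ moves the ``would-be phase'' of every semistable object by $O(\varepsilon)$, \emph{uniformly} in $E$, and keeps Harder--Narasimhan filtrations under control.

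\smallskip
\noindent\emph{Openness and the local section.} Fix $W$ with $\|W-Z\|<\varepsilon$, small enough that every $\Ker Z_t$ along the segment $Z_t\coloneq(1-t)Z+tW$ is $Q$-negative definite. I would build a slicing $\cQ$ with central charge $W$ slice by slice: for each $\phi$ the ``thin'' subcategory $\cP\big((\phi-\varepsilon',\phi+\varepsilon')\big)$ (with $\varepsilon'\asymp\varepsilon$) is quasi-abelian, and by the cone estimate $W$ maps its nonzero objects into an open half-plane of angular width $<\pi$ around $\R_{>0}\,e^{i\pi\phi}$; the induced stability function yields Harder--Narasimhan filtrations there, hence a refinement of the slicing on $\cP\big((\phi-\varepsilon',\phi+\varepsilon')\big)$ with phases confined to $(\phi-\varepsilon'',\phi+\varepsilon'')$, and these refinements glue over all $\phi$ into a pre-stability condition $\tau=(\cQ,W)$ with $d(\cP,\cQ)\leq\varepsilon''\to 0$ as $\varepsilon\to 0$. (Equivalently, one tilts the heart $\cA_\sigma$ at the torsion pair generated by the objects whose $W$-phase leaves $(0,1]$.) Two checks remain: that $\cQ$ is an honest slicing --- existence and, crucially, \emph{finiteness} of Harder--Narasimhan filtrations for arbitrary objects --- and that $(\cQ,W)$ again satisfies the support property with respect to the \emph{same} $Q$. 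For the latter I would argue by connectedness along $Z_t$: the set $T\subset[0,1]$ of $t$ for which there is a stability condition with central charge $Z_t$, support property with respect to $Q$, and $\mathrm{dist}$ close to $\sigma$, is non-empty, open (by the construction just described, which preserves $Q$), and closed (a sequence of such over $t_n\to t$ is $\mathrm{dist}$-Cauchy, so its slicings converge, and $Q\geq 0$ on semistable objects passes to the limit as a closed condition). Hence $1\in T$; the assignment $W\mapsto\tau$ is the desired continuous section and exhibits $\Forg_Z(U)$ as open.

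\smallskip
\noindent\emph{Injectivity.} Shrinking $U$ so that any two of its points have slicings within distance $<1$, local injectivity reduces to the rigidity statement: if $\sigma_1=(\cP_1,Z)$ and $\sigma_2=(\cP_2,Z)$ share a central charge and $d(\cP_1,\cP_2)<1$, then $\cP_1=\cP_2$. Given a $\sigma_1$-semistable $E$ of phase $\phi$, its $\cP_2$-Harder--Narasimhan factors have phases in a window of width $<1$ about $\phi$ and their classes lie on the corresponding rays; since $Z$ is common, the $Z$-images of these classes must sum to a vector on the ray $\R_{>0}\,Z(\lambda([E]))$, and comparing this with the heart structure of $\cP_2$ forces all the factors into $\cP_2(\phi)$, so $E$ is $\cP_2$-semistable of phase $\phi$; by symmetry $\cP_1=\cP_2$. (This is equivalently the uniqueness clause of the construction in the previous paragraph.) Combined with the open continuous section, $\Forg_Z|_U$ is a homeomorphism onto an open subset, which is the required local isomorphism.

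\smallskip
\noindent\emph{Main obstacle.} The hard part is everything in the second paragraph past the routine bookkeeping: the construction of the refined slicing $\cQ$, the existence and finiteness of its Harder--Narasimhan filtrations, and the preservation of the support property with the same $Q$. A priori a $\sigma$-semistable object may become $W$-unstable and its class may spread across many slices; what rescues the argument is exactly the support property --- the quadratic form $Q$ (equivalently the Bridgeland constant $C$) gives a uniform cone bound that simultaneously pins all phases within $O(\varepsilon)$ of their old values and caps the length of Harder--Narasimhan filtrations, so that $\cQ$ is a genuine slicing and one never leaves $\Stab_\Lambda(\cT)$ along the deformation. The open--closed argument along $Z_t$ is what upgrades these local deformations to the stated global statement.
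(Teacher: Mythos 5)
The paper does not prove Theorem~\ref{thm:spaceasamfd}; it is quoted from Bridgeland's and Bayer's cited papers, so there is no in-paper argument to compare your sketch against. Your account of the deformation/openness step is a fair summary of the cited strategy --- refine the slicing on the quasi-abelian subcategories $\cP((\phi-\varepsilon',\phi+\varepsilon'))$, control phases uniformly via the support-property cone bound, close up along a path of central charges --- and you correctly flag HN finiteness and preservation of $Q$ as the technical core. The injectivity paragraph, however, has a genuine gap. You observe that the $\cP_2$-HN factors $A_i$ of a $\sigma_1$-semistable $E$ have phases $\psi_i$ within distance $<1$ of $\phi$, that each $Z(A_i)\in\R_{>0}e^{i\pi\psi_i}$, and that $\sum_iZ(A_i)=Z(E)\in\R_{>0}e^{i\pi\phi}$; you then assert that ``comparing this with the heart structure of $\cP_2$'' forces every $\psi_i$ to equal $\phi$. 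That deduction does not follow from the ray condition: complex numbers whose arguments spread over an open window of width $<2\pi$ about $\pi\phi$ can perfectly well sum onto the $\phi$-ray without each summand lying on it. For example $e^{i\pi/4}+e^{3i\pi/4}=\sqrt2\,i$ lies on the $\phi=\tfrac12$ ray while the summands sit at phases $\tfrac14$ and $\tfrac34$, both within distance $<1$ of $\tfrac12$; these are exactly of the shape your argument would need to rule out.

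The rigidity you actually need (Bridgeland's Lemma~6.4; the weaker in-paper Lemma~\ref{lem:d<1}; the uniqueness result from FLZ invoked in Proposition~\ref{prop:reconfromrest}) is proved by a $\Hom$-vanishing argument, not by central-charge bookkeeping. In Lemma~\ref{lem:d<1}, for instance, one assumes $\cA_\sigma\not\subset\cA_\tau[-1,0]$, extracts an obstructing object $F\in\cA_\sigma$ with a nonzero piece $F_+\in\cA_\tau[1]$, forms $F'\coloneq\HN^+_\sigma(F_+)\in\cA_\sigma[1,2]$, shows $\Hom(F',F)\neq0$ from $F'$'s role at the top of the HN filtration, and separately shows $\Hom(F',F)=0$ from the $d<1$ shift bounds, reaching a contradiction. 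It is this $\Hom$-vanishing input that turns the numerical constraint into a categorical one, and it is precisely what your sketch skips over with the phrase ``comparing with the heart structure''; as written that step is not a valid inference.
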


\begin{Not}[$\glt$-action]\label{not:glt}
We will frequently use the $\glt$-action on (pre-)stability conditions throughout this paper. It is  worthwhile to recall some details of this action.

Let $\glr\coloneq \{M\in\mathrm{GL}(2,\mathbb R)\colon \det(M)>0\}$, and  let $\glt$ denote the universal cover of $\glr$.  We adopt the standard presentation of  $\glt$ as follows: an element $\widetilde{g}=(g,M) \in \glt$ consists of an element $M \in \glr$ together with a strictly increasing function $g\colon  \mathbb{R} \to \mathbb{R}$ satisfying \begin{align*}
    g(\phi +1 )= g(\phi) +1\text{ and }\begin{pmatrix}
    \cos g(\theta)\pi \\ \sin g(\theta)\pi
\end{pmatrix}= c_\theta M\begin{pmatrix}
    \cos \theta \pi \\ \sin \theta\pi
\end{pmatrix}
\end{align*}
for some $c_\theta\in\R_{>0}$.

There is a natural right group action of $\glt$ on the space of (pre-)stability conditions  defined by 
\[\sigma \cdot \widetilde{g} = (\mathcal{P}_\sigma((g(0), g(1)]), M^{-1} \circ Z_\sigma).\]

In particular, the new slicing $\cP_{\sigma\cdot \tilde g}(\theta)=\cP_\sigma(g(\theta))$. This action preserves any fixed lattice $\lambda:K(\cT)\to \Lambda$, and acts continuously on the space $\Stab_\Lambda(\cT)$.

The subgroup $\C=\R\oplus i\R\subset\glt$, corresponding to scaling and rotation, acts freely on $\Stab(\cT)$. For any $a+bi\in\C$ and stability condition $\sigma=(\cA,Z)$, the stability condition $\sigma\cdot(a+bi)$ is given by $\left(\cP_\sigma((b,b+1]),e^{-a-b\pi i}Z_\sigma\right)$. To simplify notation, for  $\theta\in \R$, we will write
\begin{align}\label{eq2310}
    \sigma[\theta]\coloneq \sigma\cdot (i\theta)=(\cP_\sigma((\theta,\theta+1]),e^{-\theta\pi i}Z_\sigma).
\end{align}
In particular, for $n\in \Z$, this gives $\sigma[n]=(\cA_\sigma[n],(-1)^nZ_\sigma$), which is consistent with the standard convention for shifts in triangulated categories.

Finally, for a stability condition whose central charge is written as $Z=g+if$ for some $f,g\in \Hom(\Lambda,\R)$, we will frequently use the notation \begin{align*}
    \sigma[\tfrac{1}{2}]=(\cP_\sigma((\tfrac{1}{2},\tfrac{3}{2}]),f-ig)
\end{align*} to denote the effect of half shift. \label{not:Caction} 
\end{Not}

\begin{Not}[Aut$(\cT)$-action]\label{not:actiononstab}
Let $\Phi$ be an exact autoequivalence of $\cT$, and denote by $\Phi_*:K(\cT)\to K(\cT)$ the induced isomorphism on the Grothendieck group. For a (pre-)stability condition $\sigma=(\cA,Z)$ on $\cT$, we define the action of $\Phi$ on $\sigma$ as  $\Phi\cdot \sigma\coloneq (\Phi(\cA),Z\circ \Phi^{-1}_*)$.

In general, this action does not preserve the fixed lattice $\lambda:K(\cT)\to \Lambda$. That is, even if $\sigma=(\cA,Z(\lambda(-)))\in\Stab_\Lambda(\cT)$, the transformed stability condition $\Phi\cdot \sigma=(\Phi(\cA),Z((\lambda\circ \Phi^{-1}_*)(-)))$ lies in $\Stab_{\Lambda'}(\cT)$, where the new lattice is given by $\lambda'=\lambda\circ \Phi^{-1}_*:K(\cT)\to \Lambda$. 

Assume now that the action of $\Phi$ is compatible with the lattice $\lambda$, meaning that there exists an isomorphism $\Phi_{\Lambda *}:\Lambda\to\Lambda$ such that $\Phi_{\Lambda *}\circ\lambda=\lambda\circ \Phi_*$. In this case,  $\Phi\cdot \sigma$ defines a stability condition in $\Stab_\Lambda(\cT)$ with central charge given by $Z\circ (\Phi_{\Lambda*})^{-1}\circ \lambda$. 
\end{Not}

\subsection{Nested wall theorem}
The following result, referred to as the nested wall theorem, serves as the starting point for taking a meaningful quotient of the stability manifold $\Stab(\cT)$. This phenomenon has been previously observed in the study of wall-crossing on stability conditions on surfaces; see, for example, \cite[Theorem 3.1]{Maciocia:walls}, as well as \cite{AB:Ktrivial, ABCH:MMP} for related developments. We will also present a version of this result in Corollary \ref{cor:nested} that closely aligns with the classical formulation.

\begin{Lem}[Bertram nested wall theorem]\label{lem:nested}
Let  $V\subset \Stab(\cT)$ be a path-connected subset such that  $\Im Z_\sigma=\Im Z_{\sigma'}\equiv\Im Z$ for every $\sigma,\sigma'\in V$. Then $\cP_\sigma(1)=\cP_{\sigma'}(1)$ for every $\sigma,\sigma'\in V$.
\end{Lem}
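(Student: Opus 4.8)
The plan is to show that the phase-$1$ slice $\cP_\sigma(1)$ is locally constant along paths in $V$, and then invoke path-connectedness. Fix $\sigma \in V$ and let $\sigma'$ be a nearby point of $V$, so $\mathrm{dist}(\sigma,\sigma') < \tfrac12$ and in particular $d(\cP_\sigma,\cP_{\sigma'}) < \tfrac12$. First I would observe that the hypothesis $\Im Z_\sigma = \Im Z_{\sigma'} \equiv \Im Z$ is the key structural input: for any object $E$ that is $\sigma$-semistable of phase exactly $1$, we have $Z_\sigma([E]) \in \R_{<0}$, hence $\Im Z_\sigma([E]) = 0$, hence $\Im Z_{\sigma'}([E]) = 0$ as well, so $Z_{\sigma'}([E]) \in \R$. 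Because $d(\cP_\sigma,\cP_{\sigma'}) < \tfrac12$, the $\sigma'$-phases of the HN factors of $E$ all lie within $\tfrac12$ of $1$, i.e.\ in the open interval $(\tfrac12,\tfrac32)$; but a $\sigma'$-semistable object whose central charge lands on $\R_{<0}$ must have phase an integer, and the only integer in $(\tfrac12,\tfrac32)$ is $1$. Therefore $E \in \cP_{\sigma'}(1)$. This shows $\cP_\sigma(1) \subset \cP_{\sigma'}(1)$, and by symmetry the reverse inclusion, so $\cP_\sigma(1) = \cP_{\sigma'}(1)$ for all $\sigma'$ in a $\tfrac12$-ball around $\sigma$ intersected with $V$.

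Next I would promote this local statement to a global one. The map $\sigma \mapsto \cP_\sigma(1)$, as a function on $V$ valued in the (discrete) collection of full subcategories of $\cT$, is by the previous paragraph locally constant on $V$ with respect to the subspace topology. Since $V$ is path-connected, it is connected, and a locally constant function on a connected space is constant. Hence $\cP_\sigma(1) = \cP_{\sigma'}(1)$ for all $\sigma,\sigma' \in V$, which is the claim. One small point to be careful about: I should make sure that "$Z_{\sigma'}$-central charge in $\R_{<0}$ forces integer phase" is applied correctly — a $\sigma'$-semistable object with $Z_{\sigma'}([E]) = m e^{i\pi\phi}$, $m>0$, and $Z_{\sigma'}([E]) \in \R_{<0}$ forces $e^{i\pi\phi} \in \R_{<0}$, i.e.\ $\phi \in 2\Z+1$; intersecting with $(\tfrac12,\tfrac32)$ gives $\phi = 1$. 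For a general (not necessarily semistable) $E \in \cP_\sigma(1)$ — but here $\cP_\sigma(1)$ consists exactly of the $\sigma$-semistable objects of phase $1$, so this is fine as stated; the HN filtration argument handles the passage through $\sigma'$.

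The main obstacle — really the only nontrivial ingredient — is the interplay between the metric bound $d(\cP_\sigma,\cP_{\sigma'}) < \tfrac12$ and the rigidity coming from $\Im Z$ being fixed. The bound $\tfrac12$ is exactly what is needed to separate phase $1$ from the neighbouring integer phases $0$ and $2$: a weaker bound would allow an object that is $\sigma$-semistable of phase $1$ to acquire $\sigma'$-HN factors of phase $0$ or $2$, both of which also have real central charge, and the argument would collapse. So the proof hinges on the fact that on a path-connected slice where $\Im Z$ is constant one can always move in steps small enough that $d < \tfrac12$; this is guaranteed because $\mathrm{dist}$ is a genuine (generalized) metric inducing the topology and $V$ is path-connected, so any path can be covered by finitely many such small balls and one chains the equalities $\cP_{\sigma_0}(1) = \cP_{\sigma_1}(1) = \cdots = \cP_{\sigma_k}(1)$ along a partition of the path. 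I would present the argument in this chaining form to avoid any appeal to sheaf-theoretic "locally constant $\Rightarrow$ constant" machinery, keeping everything elementary.
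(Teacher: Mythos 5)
Your overall strategy — prove a local equality $\cP_\sigma(1)=\cP_{\sigma'}(1)$ for nearby points of $V$, then chain along a path — matches the structure of the paper's argument, and is a sound plan. However, the local step as you have written it contains a genuine gap.

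You observe that for $E\in\cP_\sigma(1)$ one has $Z_{\sigma'}([E])\in\R_{<0}$, and that the $\sigma'$-HN factors of $E$ all have phase in $(\tfrac12,\tfrac32)$. From this you conclude $E\in\cP_{\sigma'}(1)$ by invoking \emph{``a $\sigma'$-semistable object with $Z_{\sigma'}\in\R_{<0}$ has integer phase''}. But that statement applies to \emph{$\sigma'$-semistable} objects, and you have not yet shown $E$ is $\sigma'$-semistable — that is precisely what you are trying to establish, so the appeal is circular. Nor can the integer-phase observation be applied to the individual $\sigma'$-HN factors $A_i$ of $E$: only the sum $\sum Z_{\sigma'}(A_i)=Z_{\sigma'}(E)$ is real; the individual $A_i$ may very well have $\Im Z_{\sigma'}(A_i)\neq 0$. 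Nothing in your bound $d(\cP_\sigma,\cP_{\sigma'})<\tfrac12$ rules out, for instance, $E$ acquiring two $\sigma'$-HN factors of phases $1\pm\epsilon$ whose central charges are complex conjugates, in which case $E$ would be $\sigma'$-unstable even though $Z_{\sigma'}(E)\in\R_{<0}$ and all HN phases lie in $(\tfrac12,\tfrac32)$.

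The paper closes this gap by working with a different decomposition and a different stability notion. It first reduces to the dichotomy stable / strictly semistable / unstable for a fixed object $E$ with $[E]\in\Ker(\Im Z)$ and notes that the stable and unstable loci in $\Stab(\cT)$ are open. The nontrivial case is strict semistability: there one uses the \emph{Jordan--H\"older} filtration of $E$ \emph{with respect to $\sigma$}, whose factors $E_1,\dots,E_m$ are $\sigma$-\emph{stable} and all lie in $\cP_\sigma(1)$, so that each $E_i$ individually satisfies $\Im Z(E_i)=0$. This is the piece of information your HN-factor argument lacks. Openness of stability then gives a neighbourhood $U\subset V$ of $\sigma$ on which each $E_i$ remains $\tau$-stable; since $\Im Z(E_i)=0$ and $Z_\tau(E_i)\neq 0$, the integer-phase observation \emph{does} apply to each $E_i$, forcing $E_i\in\cP_\tau(1)$, hence $E$ strictly $\tau$-semistable. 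In short: the integer-phase trick is valid, but the objects it must be applied to are the $\sigma$-stable JH constituents of $E$ (which manifestly have vanishing $\Im Z$), not the $\sigma'$-HN factors (which need not).
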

\begin{proof}
It suffices to show that for any object $E \in \cT$ with nonzero class $v \in \Ker(\Im Z)$, the stability type of $E$ - stable, strictly semistable, or unstable - is the same with respect to any two stability conditions $\sigma, \sigma' \in V$.

Both subsets
\[
\{\tau \in \Stab(\cT) \mid E \text{ is } \tau\text{-stable}\} \quad \text{and} \quad \{\tau \in \Stab(\cT) \mid E \text{ is } \tau\text{-unstable}\}
\]
are open, and hence remain open when restricted to $V$. Therefore, it remains to show that the set
\begin{align}\label{eq:tauinV}
\{\tau \in V \mid E \text{ is strictly } \tau\text{-semistable}\}
\end{align}
is also open in $V$.

Assume that $E$ is strictly $\sigma$-semistable for some $\sigma \in V$. In particular, $Z_\sigma(E) \neq 0$, and since $\Im Z(E) = 0$, we may apply a shift to assume $E \in \cP_\sigma(1)$. By the support property, $E$ admits a Jordan--H\"older filtration with $\sigma$-stable factors $E_1, \dots, E_m$, each lying in $\cP_\sigma(1)$.

Stability of each $E_i$ is an open condition on $\Stab(\cT)$, so there exists an open neighborhood $U \subset V$ of $\sigma$ such that every $E_i$ remains $\tau$-stable for all $\tau \in U$. Since $\Im Z(E_i) = 0$ and $Z_\tau(E_i) \neq 0$, we also have $E_i \in \cP_\tau(1)$. Thus, for every $\tau \in U$, the object $E$ is strictly $\tau$-semistable.

Hence, the subset \eqref{eq:tauinV} is open in $V$. As $V$ is path-connected, the stability type of $E$ with $\Im Z(E)=0$ is constant across all $\sigma \in V$. This implies that the slices $\cP_\sigma(1)$ and $\cP_{\sigma'}(1)$ coincide for all $\sigma, \sigma' \in V$, completing the proof.
\end{proof}

\begin{Lem}\label{lem:sameImimpliesheart}
    Let $V\subset \Stab(\cT)$ be a path-connected subset such that  $\cP_\sigma(1)=\cP_{\sigma'}(1)$ for every $\sigma,\sigma'\in V$. Then $\cA_\sigma=\cA_{\sigma'}$ for every $\sigma,\sigma'\in V$. 
\end{Lem}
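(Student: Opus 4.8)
The plan is to fix an arbitrary object $F\in\cT$ and show that the set $V_F\coloneq\{\tau\in V\colon F\in\cA_\tau\}$ is both open and closed in $V$. Since $V$ is connected this forces $V_F\in\{\emptyset,V\}$ for every $F$; as a full subcategory is determined by its objects and $\cA_\tau=\{F\in\cT\colon F\in\cA_\tau\}$, it follows that $\cA_\tau$ is the same subcategory for all $\tau\in V$, which is the assertion. I may assume $F\neq0$, and I will only use that a nonzero morphism $X\to Y$ forces $\phi^-(X)\le\phi^+(Y)$; that $d(\cP_\sigma,\cP_\tau)\ge|\phi^+_\sigma(E)-\phi^+_\tau(E)|$ and the analogue for $\phi^-$; and that a triangle $A\to B\to C$ satisfies $\phi^+(B)\le\max\{\phi^+(A),\phi^+(C)\}$.

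For openness of $V_F$, fix $\tau_0\in V_F$, so $\phi^+_{\tau_0}(F)\le1$ and $\phi^-_{\tau_0}(F)>0$. The strict bound $\phi^-_{\tau}(F)>0$ persists for $\tau$ close to $\tau_0$ by the metric estimate, and so does $\phi^+_\tau(F)\le1$ when $\phi^+_{\tau_0}(F)<1$. If instead $\phi^+_{\tau_0}(F)=1$, take the triangle $G\to F\to F_{>}$ with $G=\HN^+_{\tau_0}(F)\in\cP_{\tau_0}(1)$ and $F_{>}=\cone(G\to F)$ assembled from the remaining Harder--Narasimhan factors, so that $\phi^+_{\tau_0}(F_{>})<1$ (or $F_{>}=0$). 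Here the hypothesis enters: $G\in\cP_{\tau_0}(1)=\cP_\tau(1)$, so $\phi^+_\tau(G)=1$, whereas $\phi^+_\tau(F_{>})<1$ for $\tau$ close to $\tau_0$; the triangle then gives $\phi^+_\tau(F)\le1$. Hence $F\in\cA_\tau$ for all $\tau$ in a neighbourhood of $\tau_0$ in $V$.

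For openness of $V\setminus V_F$, fix $\tau_0\notin V_F$. If $\phi^+_{\tau_0}(F)>1$, then $G=\HN^+_{\tau_0}(F)$ is $\tau_0$-semistable of phase $>1$, hence $\phi^-_\tau(G)>1$ for $\tau$ close to $\tau_0$, and the nonzero HN map $G\to F$ gives $\phi^+_\tau(F)\ge\phi^-_\tau(G)>1$. If $\phi^-_{\tau_0}(F)<0$, the dual argument applied to $H=\HN^-_{\tau_0}(F)$ and the nonzero HN map $F\to H$ gives $\phi^-_\tau(F)<0$ for $\tau$ close to $\tau_0$. If $\phi^-_{\tau_0}(F)=0$, then $H=\HN^-_{\tau_0}(F)\in\cP_{\tau_0}(0)=\cP_{\tau_0}(1)[-1]=\cP_\tau(1)[-1]=\cP_\tau(0)$ for \emph{every} $\tau\in V$ by the hypothesis, so $H$ is $\tau$-semistable of phase $0$ and $F\to H$ gives $\phi^-_\tau(F)\le0$, i.e.\ $\tau\notin V_F$. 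In each case $\tau_0$ admits a neighbourhood in $V$ disjoint from $V_F$, completing the argument.

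I do not expect a genuine obstacle: after the reduction to the per-object statement, each step is a one-line use of the see-saw inequalities for phases, and all topology is handled by the metric on slicings. The only essential use of the hypothesis --- and the only point that would fail for an arbitrary path-connected $V$ --- is in the boundary cases where the extreme $\tau_0$-phase of $F$ is exactly $1$ or exactly $0$: there the constancy of $\cP_\bullet(1)$ (hence of $\cP_\bullet(0)$) along $V$ is precisely what pins the relevant Harder--Narasimhan factor at phase $1$ (resp.\ $0$) and prevents it from drifting across the integer as $\tau$ moves.
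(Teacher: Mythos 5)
Your proof is correct, and it rests on the same three ingredients as the paper's argument: the continuity of $\phi^{\pm}_\tau(F)$ in $\tau$ via the slicing metric, the nonvanishing of the Harder--Narasimhan maps $\HN^+_{\tau_0}(F)\to F$ and $F\to\HN^-_{\tau_0}(F)$, and the hypothesis that $\cP_\tau(1)$ (hence $\cP_\tau(0)=\cP_\tau(1)[-1]$) is constant along $V$, which is used precisely to pin down the phase of an HN factor sitting on an integer. The packaging differs, though. The paper fixes $E\in\cP_\sigma((0,1))$ --- the \emph{open} interval --- follows $\tau\mapsto\phi^+_\tau(E)$ along a path from $\sigma$ to $\sigma'$, and applies the intermediate value theorem to locate a $\tau$ with $\phi^+_\tau(E)=1$; at that point $\HN^+_\tau(E)\in\cP_\tau(1)=\cP_\sigma(1)$ contradicts $\Hom(\HN^+_\tau(E),E)\neq0$ because $E\in\cP_\sigma((0,1))$. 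This yields $\cP_\sigma((0,1))=\cP_{\sigma'}((0,1))$, which together with the hypothesis $\cP_\sigma(1)=\cP_{\sigma'}(1)$ gives $\cA_\sigma=\cA_{\sigma'}$. You instead work with the half-open interval $\cA_\tau=\cP_\tau((0,1])$ directly and run a clopen argument on $V_F=\{\tau:F\in\cA_\tau\}$, which forces you to treat the two boundary phases $\phi^+=1$ and $\phi^-=0$ separately; the constancy of $\cP(1)$ and of $\cP(0)$ enters exactly there (via the triangle $G\to F\to F_>$ for $\phi^+=1$, and via $\HN^-_{\tau_0}(F)\in\cP_\tau(0)$ for $\phi^-=0$). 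The paper's reduction to the open interval avoids half of this case analysis at the price of a small gluing step at the end; your clopen argument produces $\cA_\sigma=\cA_{\sigma'}$ in one pass but needs both boundary cases. Both are valid, and neither is more general than the other.
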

\begin{proof}
    Let $\Gamma\subset V$ be a path from $\sigma$ to $\sigma'$. Then for every object $E\in\cP_\sigma((0,1))$, the function 
    $$f:\Gamma\to \R: \tau\mapsto \phi^+_\tau(E)$$ 
    is continuous by the definition of the topological structure on $\Stab(\cT)$.\\
    
    Suppose, for contradiction, that $f(\sigma')\geq 1$. Then by continuity, there exists $\tau\in \Gamma$ such that $f(\tau)=1$. By the assumption, we have
    \begin{align*}
        \HN^+_\tau(E)\in\cP_\tau(1)=\cP_\sigma(1).
    \end{align*} 
    Note that $E\in\cP_\sigma((0,1))$, we have $\Hom(\HN^+_\tau(E),E)= 0$. 
    
    On the other hand, the object $\HN^+_\tau(E)$ is the first HN-factor of $E$ with respect to $\tau$, so $\Hom(\HN^+_\tau(E),E)\neq0$. This lead to the contradiction. So we must have $f(\sigma')<1$, in other words, we have $\phi^+_{\sigma'}(E)<1$.\\

    By the same argument, we have $\phi^-_{\sigma'}(E)>0$. Therefore, we have $E\in\cP_{\sigma'}((0,1))$ for every $E\in \cP_\sigma((0,1))$. It follows that $\cP_\sigma((0,1))\subseteq\cP_{\sigma'}((0,1))$. 
    
   Reversing the rule of $\sigma$ and $\sigma'$, the same argument yields the reverse inclusion, so $\cP_\sigma((0,1))\supseteq\cP_{\sigma'}((0,1))$. Therefore, the heart $\cA_\sigma=\cA_{\sigma'}$ for every $\sigma,\sigma'\in V$.
\end{proof}

  \subsection{Reduced stability conditions}
  Let $\Forg_{\Im}: \Stab_\Lambda(\cT)\to \Hom(\Lambda,\R):(\cA,Z)\mapsto \Im Z$  be the natural forgetful map to the imaginary part of the central charge. We define an equivalent relation  on $\Stab_\Lambda(\cT)$ as follows.
\begin{Def}\label{def:sb}
    Two stability conditions $\sigma=(\cP,Z)$ and $\sigma'=(\cP',Z')$ are said to be equivalent, written as $\sigma\sim \sigma'$ if
\begin{enumerate}
    \item [(1)\;] $\Im Z=\Im Z'$;
    \item [(2)\;] $\sigma$ and $\sigma'$ lie in the same path-connected component of the fiber $(\Forg_{\Im})^{-1}(\Im Z)$.
\end{enumerate}
\end{Def}
It is clear from the definition that $\sim$ defines an equivalent relation on $\Stab_\Lambda(\cT)$. We define the quotient space  
\begin{align*}
    \sb_\Lambda(\cT)\coloneq \Stab_\Lambda(\cT)/\sim
\end{align*} 
equipped with the quotient topology induced from $\Stab_\Lambda(\cT)$. We call an element $\ts$ in $\sb_\Lambda(\cT)$ a \emph{reduced stability condition} on $\cT$.

We denote the natural quotient map by
\begin{align*}
    \pi_\sim:\Stab_\Lambda(\cT)\to \sb_\Lambda(\cT).
\end{align*}

Any stability condition $\sigma\in (\pi_\sim)^{-1}(\ts)$ is refer to as a representative of $\ts$.

\noindent By Lemmas \ref{lem:nested} and \ref{lem:sameImimpliesheart}, for any reduced stability condition $\ts \in \sb_\Lambda(\cT)$, the following data are independent of the choice of representative $\sigma \in \pi_\sim^{-1}(\ts)$:
\begin{itemize}
    \item the imaginary part of the central charge, denoted by $B_{\ts} \coloneq \Im Z_{\sigma}$,
\item the heart of the t-structure, $\cA_{\ts} \coloneq \cA_{\sigma}$, and
\item  the slice $\cP_{\ts}(1) \coloneq \cP_{\sigma}(1)$.
\end{itemize}
We refer to $B_{\ts}$ as the reduced central charge of $\ts$. Accordingly, other notions such as $\cP_{\ts}(<1)$, $\cP_{\ts}(>0)$, and $\cA_{\ts}[\leq 1]$ are also well-defined for reduced stability conditions.

We will show in Proposition \ref{prop:convex} that  Definition \ref{def:sb}.(2) can be replaced by other equivalent conditions. For example, one may require that $d(\cP_\sigma,\cP_{\sigma'}) < 1$, or that every linear combination $aZ + bZ'$ (with $a,b \in \R_{>0}$) defines a stability condition on a fixed heart $\cA$. We adopt condition (2) as the definition because  it is an equivalent relation directly and the quotient topology is easy to describe.

\subsection{Local chart for $\sb(\cT)$} 
 
\begin{Prop}\label{prop:localisom}
    The forgetful map 
    \begin{align*}
        \Forg:\sb(\cT)\to \lbdd: \ts\mapsto B_{\ts}
    \end{align*}
    is a local homeomorphism.
\end{Prop}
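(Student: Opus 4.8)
The plan is to reduce the statement to Bridgeland's deformation theorem (Theorem~\ref{thm:spaceasamfd}) by factoring the map $\Forg_{\Im}$ through $\Forg_Z$. Recall $\Forg_Z:\Stab_\Lambda(\cT)\to\Hom_\Z(\Lambda,\C)$ is a local isomorphism, and $\Forg_{\Im}$ is the composition of $\Forg_Z$ with the (real-linear, open, continuous) projection $p:\Hom_\Z(\Lambda,\C)\to\Hom_\Z(\Lambda,\R)$ onto the imaginary part. The point is that, locally, $\Forg_Z$ identifies a neighborhood of $\sigma$ in $\Stab_\Lambda(\cT)$ with a neighborhood of $Z_\sigma$ in $\Hom_\Z(\Lambda,\C)$, and the equivalence relation $\sim$ collapses exactly the ``real direction'' of the central charge within a path-connected fiber.

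First I would fix $\ts\in\sb_\Lambda(\cT)$ with a representative $\sigma$. By Theorem~\ref{thm:spaceasamfd} choose an open neighborhood $U\ni\sigma$ in $\Stab_\Lambda(\cT)$ such that $\Forg_Z|_U$ is a homeomorphism onto an open set $W\subset\Hom_\Z(\Lambda,\C)$; shrinking, I may assume $W=W'\times(-\epsilon,\epsilon)$-type product is unnecessary, but I do want $W$ small enough to control path-components of fibers. The key claim to establish is: for $U$ small enough, two points $\sigma_1,\sigma_2\in U$ satisfy $\sigma_1\sim\sigma_2$ if and only if $\Im Z_{\sigma_1}=\Im Z_{\sigma_2}$. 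The ``only if'' is immediate from Definition~\ref{def:sb}(1). For the ``if'' direction: if $\Im Z_{\sigma_1}=\Im Z_{\sigma_2}$, then the straight-line path $t\mapsto Z_{\sigma_1}+t(Z_{\sigma_2}-Z_{\sigma_1})$ in $\Hom_\Z(\Lambda,\C)$ stays in $W$ (after shrinking $W$ to be convex), so its lift via $(\Forg_Z|_U)^{-1}$ is a path in $U$ from $\sigma_1$ to $\sigma_2$; along this path the imaginary part of the central charge is constant (the line moves only in the real direction), so the path lies in a single fiber of $\Forg_{\Im}$, witnessing $\sigma_1\sim\sigma_2$ via Definition~\ref{def:sb}(2). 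This gives that $\pi_\sim|_U:U\to\pi_\sim(U)$ has the property that its fibers are precisely $U\cap(\Forg_{\Im})^{-1}(\text{point})$, and that $\pi_\sim(U)$ is open in $\sb_\Lambda(\cT)$ (since $\pi_\sim^{-1}(\pi_\sim(U))$ is a union of $\sim$-saturated sets — here one must be slightly careful, see below).

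With the claim in hand, $\Forg:\sb_\Lambda(\cT)\to\Hom_\Z(\Lambda,\R)$ restricted to $\pi_\sim(U)$ is a continuous bijection onto $p(W)$, which is open in $\Hom_\Z(\Lambda,\R)$ since $p$ is an open map. To see it is a homeomorphism I would exhibit a continuous local section: pick any continuous real-linear splitting $s:\Hom_\Z(\Lambda,\R)\to\Hom_\Z(\Lambda,\C)$ of $p$ (e.g. $B\mapsto B$ viewed as purely imaginary), then on $p(W)$ the composite $\pi_\sim\circ(\Forg_Z|_U)^{-1}\circ s$ — after further shrinking so $s(p(W))\subset W$ — is continuous and inverse to $\Forg$ on $\pi_\sim(U)$. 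Openness of $\Forg$ then also follows, or can be seen directly since $\Forg\circ\pi_\sim=p\circ\Forg_Z$ is open and $\pi_\sim$ is a quotient map.

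The main obstacle I anticipate is the subtle point about \emph{saturation} and \emph{path-components of fibers}: the set $U$ I choose is not $\sim$-saturated, and a priori two points of $U$ with the same imaginary central charge could fail to be $\sim$-equivalent if the only path between them in their common global fiber leaves $U$ — but actually the argument above shows the straight-line lift stays in $U$, so within $U$ same-imaginary-part already forces equivalence; the real worry is the reverse, that $\pi_\sim(U)$ might not be open because $\pi_\sim^{-1}(\pi_\sim(U))$ could be strictly larger than $U$ in a bad way. This is handled by noting $\pi_\sim^{-1}(\pi_\sim(U))=\bigcup\{[\sigma']:\sigma'\in U\}$ and each class $[\sigma']$ is a path-component of a fiber of the open continuous map $\Forg_{\Im}$; using that $\Forg_{\Im}$ is an open map (being $p\circ\Forg_Z$) together with local triviality of $\Forg_Z$, one shows $\pi_\sim^{-1}(\pi_\sim(U))$ is open, hence $\pi_\sim(U)$ is open in the quotient topology. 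Making this fiber-component bookkeeping fully rigorous — essentially that the partition into $\sim$-classes is locally a ``trivial foliation by the real directions'' — is the one place where care is genuinely required; everything else is a formal consequence of Bridgeland's local homeomorphism.
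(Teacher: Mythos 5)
Your proposal follows the same overall strategy as the paper: factor $\Forg_{\Im}=p\circ\Forg_Z$, use Bridgeland's deformation theorem to get a local chart $U$ in which $\Forg_Z$ is a homeomorphism onto a convex $W$, and observe that within such a $U$ two stability conditions are $\sim$-equivalent if and only if they have the same imaginary central charge (your straight-line lift correctly witnesses the path in a single fiber required by Definition~\ref{def:sb}(2)). The injectivity and surjectivity of $\Forg|_{\pi_\sim(U)}$ onto $p(W)$ then fall out, as does openness of $\Forg$ itself from $\Forg\circ\pi_\sim=p\circ\Forg_Z$ being open and $\pi_\sim$ surjective continuous.

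The genuine gap is exactly the step you flag as ``the one place where care is genuinely required'': you assert that ``one shows $\pi_\sim^{-1}(\pi_\sim(U))$ is open'' but do not show it, and the deduction is not formal. Openness of $\Forg_{\Im}$ alone does not imply that the partition into path-components of fibers has open saturations: a point $\sigma$ equivalent to some $\tau\in U$ may only be joined to $\tau$ by a path inside the fiber that wanders far outside $U$, and one must argue that every $\sigma'$ sufficiently close to $\sigma$ (with nearby imaginary part) can be transported along a nearby path back into $U$. Without $\pi_\sim(U)$ being open, your section argument only gives that $\Forg$ restricted to $\pi_\sim(U)$ with the subspace topology is a homeomorphism onto $p(W)$; it does not yet give a \emph{local} homeomorphism of $\sb(\cT)$. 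The paper closes this by a compactness argument that you should reproduce: cover the connecting path $\gamma$ in the fiber by finitely many Bridgeland charts $U_{t_0},\dots,U_{t_n}$, each shrunk so that $\Forg'(U_{t_i})=W_{t_i}\times V_{t_i}$ is a genuine product (this product structure, which you dismiss as ``unnecessary'', is in fact the tool), with $\sigma\in U_{t_0}$ and $U_{t_n}\subset U$; then $V'\coloneq\bigcap_i V_{t_i}$ is an open neighborhood of $\Im Z_\sigma$, and every $\sigma'\in(\Forg')^{-1}(W_{t_0}\times V')\cap U_{t_0}$ can be slid along the chain of product charts with its imaginary part held fixed to land in $U_{t_n}\subset U$, giving $\sigma'\sim\tau'$ for some $\tau'\in U$. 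That produces the required open neighborhood of $\sigma$ inside $\pi_\sim^{-1}(\pi_\sim(U))$.

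A small separate error: the example splitting $s(B)=iB$ does not satisfy $s(p(W))\subset W$ after any shrinking unless $\Re Z_{\sigma_0}=0$, since no neighborhood of $Z_{\sigma_0}$ contains $i\Im Z_{\sigma_0}$ when $\Re Z_{\sigma_0}\neq 0$. You want the affine section $s(B)=\Re Z_{\sigma_0}+iB$, or (once you have the product chart $W\times V$) simply $B\mapsto f_0+iB$ for a fixed $f_0\in W$.
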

The argument is by basic point set topology.
\begin{proof}
Consider the commutative diagram
    \begin{equation}
        \begin{tikzcd}\label{diag:1}
            \Stab(\cT)\arrow{r}{\pi_\sim}\ar{d}{\Forg'} & \sb(\cT) \ar{d}{\Forg}\\
           \begin{array}{cc}
                 \Hom(\Lambda,\C) \\
               Z= Z_R+iZ_I
           \end{array} \ar{r}{\pi_{\Im}}& \begin{array}{cc}
              \lbdd \\
              Z_I
           \end{array}.
        \end{tikzcd}
    \end{equation}
    Let $\ts_0\in \sb(\cT)$ with a representative $\sigma_0$. By \cite[Theorem 1.2]{Bridgeland:Stab}, there exists an open neighborhood $U$ of $\sigma_0$ such that $\Forg'|_U$ is a homeomorphism onto its image. Shrinking $U$ if necessary, we may assume that
     \begin{align*}
        \Forg'(U)=W\times V\subset \Hom(\Lambda,\R)\times i\Hom(\Lambda,\R)=\Hom(\Lambda,\C)
    \end{align*} 
   with $W$ and $V$ both open and path-connected.\\
    
\noindent  We first show that $\pi_\sim(U)$ is open. As $\sb(\cT)$ adopts the quotient topology, that is just to show that $(\pi_\sim)^{-1}(\pi_\sim(U))$ is open in $\Stab(\cT)$.
    
    For any $\sigma\in (\pi_\sim)^{-1}(\pi_\sim(U))$, by definition there exists $\tau\in U$ with $\tau\sim \sigma$. Let $\gamma$ be a path in $(\Forg_{\Im})^{-1}(\Im Z_\sigma)$ connecting $\sigma$ and $\tau$. Then for every point $\sigma_t\in \gamma$, by \cite[Theorem 1.2]{Bridgeland:Stab}, there exists an  open neighborhood $U_t$ of $\sigma_t$ for which $\Forg'|_{U_t}$ is a homeomorphism. We may shrink $U_t$ so that 
    \begin{align*}
        \Forg'(U_t)=W_t\times V_t\subset \Hom(\Lambda,\R)\times i\Hom(\Lambda,\R)=\Hom(\Lambda,\C)
    \end{align*} 
     with $W_t$ and $V_t\subset V$ both open and path-connected. Moreover, we may assume that the open neighborhood of $\tau$ is contained in $U$.
     
   As $\gamma$ is compact, the curve can be covered by finitely many $U_{t_i}$ with $U_{t_0}\ni \sigma$ and $U_{t_n}\ni \tau$. In particular, the subset $V'\coloneqq\cap V_{t_i}$ is open and it contains $\Im Z_\sigma$. 

    For every $\sigma'\in (\Forg')^{-1}(W_{t_0}\times V')\cap U_{t_0}$, by the construction, we have $\sigma'\sim \tau'$ for some $\tau'\in U_{t_n}\subset U$. So $(\Forg')^{-1}(W_{t_0}\times V')\cap U_{t_0}$ is open and contained in $ (\pi_\sim)^{-1}(\pi_\sim(U))$. So $(\pi_\sim)^{-1}(\pi_\sim(U))$ is open.\\
    
  \noindent We then show that the  map $\Forg|_{\pi_\sim(U)}:\pi_\sim(U)\to V$ is a homeomorphism.
  \begin{itemize}
      \item The map  $\Forg$ is continuous since both $\Forg'$ and $\pi_{\Im}$ are continuous and $\pi_\sim$ is a quotient map.
      \item The map $\Forg|_{\pi_\sim(U)}$ is onto $V$ since $V=\pi_{\Im}(\Forg'(U))=\Forg(\pi_\sim(U))$.
      \item  For every $\ts,\ttau\in \pi_\sim(U)$ with $\Forg(\ts)=\Forg(\ttau)$, we may choose $\sigma$ and $\tau$ in $U$ being representatives of them respectively. In particular, we have $B\coloneq \Im Z_\sigma=\Im Z_\tau$ and $\sigma,\ttau\in (\Forg'|_U)^{-1}(W\times \{B\})$. As $W$ is assumed to be path-connected, we have $\sigma\sim \tau$ by definition. So the map $\Forg|_{\pi_\sim(U)}$ is one-to-one.
      \item For every open subset $X\subset \pi_\sim(U)$, the subset $(\pi_\sim)^{-1}(X)\cap U$ is open. Since $\Forg'|_U$ is a homeomorphism from  $U$ to $\Forg'(U)$ and $\Forg|_{\pi_\sim(U)}$ is one-to-one, the subset $(\pi_{\Im}|_{\Forg'(U)})^{-1}(\Forg(X))=\Forg'((\pi_\sim)^{-1}(X)\cap U)$ is open. By the choice of $U$, the topology on $V$ is also the quotient topology induced from $\Forg'(U)$, so the subset $\Forg(X)$ is open.   
\end{itemize}
 
To sum up, the set $\pi_\sim(U)$ is an open neighborhood of $\ts_0$ in $\sb(\cT)$ with the map $\Forg|_{\pi_\sim(U)}$ being a homeomorphism onto $V$. The statement holds.
 \end{proof}

\begin{Rem}[Actions on $\sb$]\label{rem:Ractiononsb}
    There is an $\R$-action on $\sb(\cT)$, defined by $\ts\cdot c\coloneqq \pi_\sim(\sigma\cdot c)$. The action is just to scaling the reduced central charge by $e^{-c}$.

    The group $\Aut(\cT)$  acts from the left on $\sb(\cT)$ as $\Phi\cdot \ts\coloneq \pi_\sim(\Phi\cdot \sigma)$ which does not rely on the choice of $\sigma$. In particular, we have $\cA_{\Phi\cdot\ts}=\Phi(\cA_{\ts})$ and  $B_{\Phi\cdot\ts}=B_{\ts}\circ( \Phi_*)^{-1}$.
\end{Rem}

 The following statement is clear from the proof of Proposition \ref{prop:localisom} and the diagram \eqref{diag:1}.
 \begin{Cor}\label{cor:localchart}
     Let $\sigma_1\sim \sigma_2$, then there exist open neighborhoods $U_i$ of $\sigma_i$ in $\Stab(\cT)$ and homeomorphism $f:U_1\to U_2$ such that
     \begin{itemize}
         \item for each $i$, the map $\Forg'|_{U_i}$ is a homeomorphism from $U_i$ to $\Forg'(U_i)$;
         \item $f(\sigma_1)=\sigma_2$;
         \item $\cA_\tau=\cA_{f(\tau)}$ and $Z_\tau-Z_{f(\tau)}\equiv Z_{\sigma_1}-Z_{\sigma_2}$ for every $\tau\in U_1$.
     \end{itemize}
     
     In other words, there is an open neighborhood $U$ of $0$ in $\Hom_{\Z}(\Lambda,\C)$ commutes the following diagram of homeomorphisms:
     \begin{equation*}
         \begin{tikzcd}
             (U_1,\sigma_1)\ar{r}{f}\ar[d,"\Forg-Z_{\sigma_1}"'] & (U_2,\sigma_2)\subset \Stab(\cT) \arrow{dl}{\Forg-Z_{\sigma_2}}\\
              (U,0).
         \end{tikzcd}
     \end{equation*}
 \end{Cor}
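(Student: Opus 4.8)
The plan is to exploit that $\sigma_1\sim\sigma_2$ forces these two stability conditions to share both their heart and the imaginary part of their central charge, so that $f$ can simply be taken to be the real translation $T_{-\delta}\colon Z\mapsto Z-\delta$ (with $\delta\coloneq Z_{\sigma_1}-Z_{\sigma_2}$) read off in Bridgeland charts at $\sigma_1$ and $\sigma_2$; the three asserted properties are then formal consequences. Concretely, Definition \ref{def:sb} gives $B\coloneq\Im Z_{\sigma_1}=\Im Z_{\sigma_2}$ and a path $\gamma$ from $\sigma_1$ to $\sigma_2$ inside $(\Forg_{\Im})^{-1}(B)$; by Lemmas \ref{lem:nested} and \ref{lem:sameImimpliesheart} every point of $\gamma$ has the common heart $\cA\coloneq\cA_{\sigma_1}=\cA_{\sigma_2}$, and $\delta$ is purely real.

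The first step is a local observation: \emph{inside a product Bridgeland chart the heart depends only on $\Im Z$}. If $\Forg'$ maps an open $U\subset\Stab(\cT)$ homeomorphically onto $W\times V$ with $W\subset\Hom(\Lambda,\R)$ and $V\subset\lbdd$ path-connected opens (such charts exist by Theorem \ref{thm:spaceasamfd}, exactly as used in the proof of Proposition \ref{prop:localisom}), then for $\tau,\tau'\in U$ with $\Im Z_\tau=\Im Z_{\tau'}$ the slice $W\times\{\Im Z_\tau\}$ pulls back to a path-connected, constant-$\Im Z$ subset of $U$, so $\cA_\tau=\cA_{\tau'}$ by Lemmas \ref{lem:nested} and \ref{lem:sameImimpliesheart}; thus $\tau\mapsto\cA_\tau$ on $U$ factors through a map $V\to\{\text{hearts}\}$. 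The second step transports this along $\gamma$: cover $\gamma$ by finitely many such charts $U_{t_0},\dots,U_{t_n}$ (with $\sigma_1\in U_{t_0}$, $\sigma_2\in U_{t_n}$, consecutive ones sharing a point of $\gamma$, all imaginary factors containing a common open $V'\ni B$, as in the proof of Proposition \ref{prop:localisom}), and for each $C\in V'$ close to $B$ slide the real part of $Z$ along $\gamma$ inside these charts (that is, replace $\gamma(t)$ by the stability condition of the relevant chart with central charge $\Re Z_{\gamma(t)}+iC$) to obtain a path $\gamma_C$ of constant imaginary part $C$ from a point over $\Re Z_{\sigma_1}+iC$ in $U_{t_0}$ to a point over $\Re Z_{\sigma_2}+iC$ in $U_{t_n}$; well-definedness over the overlaps is forced by injectivity of $\Forg'$ on each chart, and continuity is a concatenation of continuous arcs. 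Applying Lemmas \ref{lem:nested} and \ref{lem:sameImimpliesheart} to $\gamma_C$, together with the local observation, then shows that the heart on $U_{t_0}$ and the heart on $U_{t_n}$ are governed by one and the same function $\cA\colon V'\to\{\text{hearts}\}$ of $\Im Z$.

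With this in hand, shrink the charts to $U_1\ni\sigma_1$ and $U_2\ni\sigma_2$ with $\Forg'(U_i)=W_i\times V'$ and $W_1-\Re Z_{\sigma_1}=W_2-\Re Z_{\sigma_2}$, and set $f\coloneq(\Forg'|_{U_2})^{-1}\circ T_{-\delta}\circ(\Forg'|_{U_1})$. Since $\delta$ is real, $T_{-\delta}$ carries $W_1\times V'$ onto $W_2\times V'$, so $f$ is a homeomorphism $U_1\to U_2$ with $f(\sigma_1)=\sigma_2$ (by injectivity of $\Forg'|_{U_2}$ and $Z_{\sigma_1}-\delta=Z_{\sigma_2}$) and $Z_\tau-Z_{f(\tau)}\equiv\delta=Z_{\sigma_1}-Z_{\sigma_2}$; and because $T_{-\delta}$ preserves $\Im Z$, the common function $\cA$ yields $\cA_{f(\tau)}=\cA(\Im Z_{f(\tau)})=\cA(\Im Z_\tau)=\cA_\tau$. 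Taking $U\coloneq(W_1-\Re Z_{\sigma_1})\times(V'-B)\subset\Hom_\Z(\Lambda,\C)$, the maps $\Forg'-Z_{\sigma_i}\colon U_i\to U$ are homeomorphisms intertwining $f$ with $\id_U$, which is precisely the commuting triangle in the statement; all of this is the point-set structure already implicit in the proof of Proposition \ref{prop:localisom} and diagram \eqref{diag:1}. I expect the one substantive obstacle to be the second step: promoting the pointwise coincidence $\cA_{\sigma_1}=\cA_{\sigma_2}$ along $\gamma$ to an identification of the two heart functions on a common neighbourhood of $B$ — equivalently, making the ``parallel transport'' of Bridgeland charts along $\gamma$ well defined through the overlaps; the compatible shrinking of $W_1,W_2,V'$ and the openness of the $U_i$ are routine.
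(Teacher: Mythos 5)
Your argument is correct and expands, along the lines the paper itself indicates (the proof of Proposition \ref{prop:localisom} and diagram \eqref{diag:1}), exactly what the text leaves implicit: cover $\gamma$ by finitely many product Bridgeland charts with a common imaginary factor $V'$, slide the imaginary part to $C\in V'$ to get a constant-$\Im Z$ path $\gamma_C$ from $\sigma_1^C$ to $\sigma_2^C$, use Lemmas \ref{lem:nested} and \ref{lem:sameImimpliesheart} on $\gamma_C$ and on the constant-$\Im Z$ slices of $U_1,U_2$ to transfer the heart, and read $f$ off as the translate $T_{-\delta}$ of the purely real $\delta=Z_{\sigma_1}-Z_{\sigma_2}$. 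The only phrase worth tightening is that well-definedness of $\gamma_C$ across an overlap $U_{t_i}\cap U_{t_{i+1}}$ is not ``forced by injectivity of $\Forg'$ on each chart'' alone: one must first shrink $V'$ (possible since $\gamma$ is covered by finitely many charts) so that the $C$-perturbed preimage taken in $U_{t_i}$ also lies in $U_{t_{i+1}}$, after which injectivity of $\Forg'|_{U_{t_{i+1}}}$ identifies the two preimages.
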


 \begin{Lem}\label{lem:d<1}
     Let $\sigma$ and $\tau$ be two stability conditions satisfying  $\Im Z_\sigma=\Im Z_\tau$ and $d(\cP_\sigma,\cP_\tau)<1$.     Then $\cA_\sigma=\cA_\tau$.
 \end{Lem}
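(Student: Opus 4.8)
The plan is to build a path in $\Stab(\cT)$ joining $\sigma$ to $\tau$ along which the imaginary part of the central charge stays constant, and then to invoke Lemmas \ref{lem:nested} and \ref{lem:sameImimpliesheart}. Write $B\coloneq\Im Z_\sigma=\Im Z_\tau$ and consider the segment of central charges $Z_t\coloneq(1-t)Z_\sigma+tZ_\tau$ for $t\in[0,1]$; since $\Im Z_\sigma=\Im Z_\tau$, we have $\Im Z_t\equiv B$. If the family $(Z_t)_{t\in[0,1]}$ lifts to a continuous path $(\sigma_t)_{t\in[0,1]}$ in $\Stab(\cT)$ with $\sigma_0=\sigma$ and $\sigma_1=\tau$, then $V\coloneq\{\sigma_t:t\in[0,1]\}$ is path connected with $\Im Z_{\sigma_t}\equiv B$, so Lemma \ref{lem:nested} gives $\cP_{\sigma_t}(1)=\cP_\sigma(1)$ for all $t$, and Lemma \ref{lem:sameImimpliesheart} then gives $\cA_\tau=\cA_{\sigma_1}=\cA_\sigma$, which is the claim.

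So the work is in constructing the lift. Using that $\Forg_Z$ is a local homeomorphism (Theorem \ref{thm:spaceasamfd}), the segment lifts uniquely on a maximal interval $[0,T)$ on which $t\mapsto\sigma_t$ is continuous; one must show $T=1$ and that the lift extends to $t=1$ with value $\tau$. The hypothesis $d(\cP_\sigma,\cP_\tau)<1$ should be used here to keep the lifted path under control: because for each fixed $E$ the point $Z_t(E)$ moves along the horizontal line $\{\Im=B(E)\}$, the phase $\phi_{\sigma_t}(E)$ of any object that stays $\sigma_t$-semistable on a subinterval varies monotonically there — this is the mechanism behind the nested wall theorem — so that, tracking the extremal Harder--Narasimhan phases, the slicings $\cP_{\sigma_t}$ cannot move faster than allowed by $d(\cP_\sigma,\cP_\tau)<1$; combined with the boundedness of the central charges $Z_t$ and a uniform support property along the path, this confines $(\sigma_t)$ to a region of $\Stab(\cT)$ from which it cannot escape. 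One then concludes that $\lim_{t\to T^-}\sigma_t$ exists in $\Stab(\cT)$, forcing $T=1$, and finally identifies $\sigma_1$ with $\tau$: both are stability conditions over $Z_1$, and the construction keeps them at slicing distance $<1$, which by local injectivity of $\Forg_Z$ forces $\sigma_1=\tau$.

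The step I expect to be hardest is exactly this non-escape point: that the lifted path has a limit as $t\to T^-$ rather than running off the ``edge'' of the manifold $\Stab(\cT)$. This is precisely where the strict inequality $d(\cP_\sigma,\cP_\tau)<1$ is indispensable, and it is sharp: if $d(\cP_\sigma,\cP_\tau)=1$ the conclusion fails, since a $\sigma$-semistable object of phase $1$ may acquire $\tau$-phase $0$ or $2$, i.e.\ $\cA_\tau$ is then a genuine tilt of $\cA_\sigma$. If one prefers to avoid the path, an alternative route is to check directly that $Z_\tau(\cA_\sigma\setminus\{0\})\subseteq\H$ — the heart $\cA_\sigma$ is generated under extensions by the $\cP_\sigma(\theta)$, $\theta\in(0,1]$, and $\H$ is stable under addition, so it suffices to treat $\sigma$-semistable objects of phase $\theta\in(0,1]$, where the only nontrivial point is $\Re Z_\tau(E)<0$ for $0\ne E\in\cP_\sigma(1)$, which in turn uses the bound $\cP_\sigma(1)\subseteq\cP_\tau([1-\delta,1+\delta])$ with $\delta\coloneq d(\cP_\sigma,\cP_\tau)<1$ on the $\tau$-phases of such $E$ — and then to identify the resulting stability condition $(\cA_\sigma,Z_\tau)$ with $\tau$. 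I nonetheless favour the path version, as it isolates the single analytic difficulty above.
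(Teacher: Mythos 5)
You correctly identify the ``non-escape'' step as the crux, but the proposal does not close it, and in this paper the logic runs the other way. Lifting the segment $Z_t=(1-t)Z_\sigma+tZ_\tau$ requires a quadratic form that stays negative definite on $\Ker Z_t$ for every $t\in[0,1]$ together with control over which classes can remain semistable along the path; that is precisely the content of Lemma~\ref{lem:B10} and of the implication $(2)\Rightarrow(1)$ in Proposition~\ref{prop:convex}, whose proof \emph{begins} by invoking the present Lemma~\ref{lem:d<1} to obtain $\cA_\sigma=\cA_\tau$. So the path route, carried out honestly, reproduces the hardest implication of Proposition~\ref{prop:convex} and is circular as written. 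The monotonicity of $\phi_{\sigma_t}(E)$ on subintervals of constant semistability, while true, does not by itself give a uniform support property and hence does not stop the lift from running off the boundary.

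Your alternative route also has a gap at what you call the ``only nontrivial point''. With $\delta=d(\cP_\sigma,\cP_\tau)\in[\tfrac12,1)$, the containment $\cP_\sigma(1)\subset\cP_\tau\bigl((1-\delta,1+\delta)\bigr)$ alone does not force $\Re Z_\tau(E)<0$ for $E\in\cP_\sigma(1)$: the arc of phases $(1-\delta,1+\delta)$ no longer sits in a half-plane, so the $\tau$-HN factors of $E$ can have phases near $1+\delta>\tfrac32$ and near $1-\delta<\tfrac12$ whose central charges cancel in the imaginary part while summing to a \emph{positive} real number. And even granting $Z_\tau(\cA_\sigma\setminus\{0\})\subset\H$, you would still need to identify the resulting pair $(\cA_\sigma,Z_\tau)$ with $\tau$, which is a further step.

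The paper's proof avoids deformation theory entirely and is a short homological argument. From $d(\cP_\sigma,\cP_\tau)<1$ one has $\cA_\sigma\subset\cA_\tau[-1,0,1]$. If some $F\in\cA_\sigma$ had nonzero top truncation $F_+$ in $\cA_\tau[1]$ (in fact in $\cP_\tau((1,2))$, since $d<1$), then $\Im Z_\tau(F_+)<0$; because $\Im Z_\sigma=\Im Z_\tau$, the top $\sigma$-HN factor $F'$ of $F_+$ must lie in $\cA_\sigma[\geq 1]$, and applying $\Hom(F',-)$ to the truncation triangle forces $\Hom(F',F_+)=0$, contradicting that $F'$ is the top HN factor of $F_+$. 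Hence $\cA_\sigma\subset\cA_\tau[-1,0]$ and symmetrically $\cA_\tau\subset\cA_\sigma[-1,0]$, and these two inclusions kill the remaining lower truncation, giving $\cA_\sigma=\cA_\tau$. Look for this kind of direct $\Hom$-vanishing argument rather than reaching for path-lifting.
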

\begin{proof}
    As $d(\cP_\sigma,\cP_\tau)<1$, we have $\cA_\sigma\subset \cA_\tau[-1,0,1]$ and $\cA_\tau\subset \cA_\sigma[-1,0,1]$. \\
    
   We first show that $\cA_\sigma\subset \cA_\tau[-1,0]$. Suppose $\cA_\sigma\not\subset\cA_\tau[-1,0]$, then there exists an object $F\in\cA_\sigma$ fitting into the distinguished triangle
    \begin{equation}\label{eq25}
        F_+\to F\to F_-\xrightarrow{+}
    \end{equation}
    for some non-zero $F_+\in \cA_\tau[1]$ and $F_-\in\cA_\tau[-1,0]$. As $d(\cP_\sigma,\cP_\tau)<1$, we have $F_+\notin \cP_\tau(2)$. It follows that $\Im Z_\tau(F_+)<0$. 

    Since $F_+\in \cA_\tau[1]\subset \cA_\sigma[0,1,2]$ and $\Im Z_\tau(F_+)<0$, the object $F'\coloneq \HN^+_\sigma(F_+)$ is in $\cA_\sigma[1,2]$. As $F'$ is the first HN-factor of $F$, we have $\Hom(F',F)\neq0$. 
    
    Applying $\Hom(F',-) $ to \eqref{eq25}, we get the long exact sequence:
    \begin{equation*}
        \dots \to \Hom(F',F_-[-1])\to \Hom(F',F_+)\to \Hom(F',F)\to \dots.
    \end{equation*}
    
    As $F_-[-1]\in\cA_\tau[-2,-1]\subset\cA_\sigma[-3,-2,-1,0]$, we have $\Hom(F',F_-[-1])=0$.
    
    As $F\in \cA_\sigma$, we have $\Hom(F',F)=0$.
    
    This leads to the contradiction with $\Hom(F',F)\neq0$. \\
    
    So we must have $\cA_\sigma\subset \cA_\tau[-1,0]$. Due to the same reason, we have $\cA_\tau\subset \cA_\sigma[-1,0]$.

   Given an object $F\in\cA_\sigma$, then it fits into the distinguished triangle as that of \eqref{eq25} for some $F_+\in \cA_\tau$ and $F_-\in \cA_\tau[-1]$. However, as $\cA_\tau[-1]\subset\cA_\sigma[-2,-1]$, we have $\Hom(F,F_-)=0$. It follows that $F_-=0$. So $F\in \cA_\tau$. 
    
    For the same reason $\cA_\tau\subset \cA_\sigma$. The statement holds. 
\end{proof}

\subsection{Convexity of the fiber of $\pi_\sim$}
\begin{Prop}\label{prop:convex}
    Let $\sigma=(\cA,Z)$ and $\sigma'=(\cA',Z')$ be two stability conditions with $\Im Z=\Im Z'$, then the following statements are equivalent:
    \begin{enumerate}[(1)]
        \item $\cA=\cA'$ and the pair of datum $(\cA,aZ+bZ')$ is a stability condition for every $a,b\in\R_{>0}$.
        \item $d(\cP_\sigma,\cP_{\sigma'})<1$.
        \item $\exists$ open neighborhoods $U$ and $U'$ of $\sigma$ and $\sigma'$ respectively in $\Stab(\cT)$ and homeomorphism $f:U\to U'$ satisfying $f(\sigma)=\sigma'$ and $\cA_{f(\tau)}=\cA_\tau$ for every $\tau\in U$.
        \item $\sigma\sim \sigma'$.
    \end{enumerate}
\end{Prop}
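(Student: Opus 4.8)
The plan is to establish all four equivalences as the cycle $(2)\Rightarrow(1)\Rightarrow(4)\Rightarrow(3)\Rightarrow(2)$. Essentially all of the content lies in $(2)\Rightarrow(1)$, which is a convexity statement for stability conditions supported on a fixed heart; the other three implications are formal consequences of Lemmas \ref{lem:nested}, \ref{lem:sameImimpliesheart}, \ref{lem:d<1}, Corollary \ref{cor:localchart}, and the local homeomorphism property of $\Forg_Z$ (Theorem \ref{thm:spaceasamfd}).

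\emph{The formal implications.} For $(1)\Rightarrow(4)$: by hypothesis $\cA=\cA'$; for $t\in[0,1]$ put $Z_t:=(1-t)Z+tZ'$, so that $\sigma_t:=(\cA,Z_t)$ is a stability condition (for $t\in(0,1)$ it is $(\cA,aZ+bZ')$ with $a=1-t$, $b=t$, while $\sigma_0=\sigma$ and $\sigma_1=(\cA',Z')=\sigma'$). Since the heart is fixed, $t\mapsto\sigma_t$ is continuous into $\Stab_\Lambda(\cT)$: its image under $\Forg_Z$ is the segment $t\mapsto Z_t$, and the support property of $\sigma_{t_0}$ gives $|Z_t(\lambda E)-Z_{t_0}(\lambda E)|=O(|t-t_0|)\,|Z_{t_0}(\lambda E)|$ uniformly over $\sigma_{t_0}$-semistable $E$, hence $d(\cP_{\sigma_t},\cP_{\sigma_{t_0}})\to0$. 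Moreover $\Im Z_{\sigma_t}=\Im Z$ for all $t$, since $\Im Z=\Im Z'$. Thus $\{\sigma_t\}$ is a path in the fibre $\Forg_{\Im}^{-1}(\Im Z)$ from $\sigma$ to $\sigma'$, i.e. $\sigma\sim\sigma'$. For $(4)\Rightarrow(3)$: this is Corollary \ref{cor:localchart}. Finally, at the start of $(2)\Rightarrow(1)$, Lemma \ref{lem:d<1} gives $\cA=\cA'$ from $d(\cP_\sigma,\cP_{\sigma'})<1$ and $\Im Z=\Im Z'$; write $\cA$ for this common heart.

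\emph{The main step $(2)\Rightarrow(1)$.} It suffices to prove that the whole segment $\sigma_t=(\cA,Z_t)$, $t\in[0,1]$, lies in $\Stab_\Lambda(\cT)$, since then $aZ+bZ'=(a+b)Z_{b/(a+b)}$ is a positive rescaling of one of them. Note first that $Z_t(\cA\setminus\{0\})\subset\H$ for every $t$: if $\Im Z(\lambda E)>0$ this is clear, and if $\Im Z(\lambda E)=0$ then $E\in\cA$ forces $Z(\lambda E),Z'(\lambda E)\in\R_{<0}$, hence $Z_t(\lambda E)\in\R_{<0}$. So every $\sigma_t$ is at least a stability function on $\cA$, and what is at stake is the Harder--Narasimhan and support properties. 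Run an open--closed argument on $T:=\{t\in[0,1]:\sigma_t\in\Stab_\Lambda(\cT)\}$, which contains $0$ and $1$. Openness uses only the manifold structure: if $t_0\in T$, Theorem \ref{thm:spaceasamfd} produces for $t$ near $t_0$ stability conditions $\tau_t$ near $\sigma_{t_0}$ with $Z_{\tau_t}=Z_t$; since $\Im Z_{\tau_t}=\Im Z_t=\Im Z$ and $\{\tau_t\}$ is a small path of constant imaginary part through $\sigma_{t_0}$, Lemmas \ref{lem:nested} and \ref{lem:sameImimpliesheart} give $\cA_{\tau_t}=\cA$, so $\tau_t=\sigma_t$ and $t\in T$. \emph{Closedness is the crux and the expected main obstacle}: one must show that if $Z_t\to Z_{t_*}$ with $\sigma_t\in\Stab_\Lambda(\cT)$ (all of heart $\cA$) for $t<t_*$, then the limit is again a stability condition. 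This is the delicate half of Bridgeland's deformation theorem, and the hypothesis $d(\cP_\sigma,\cP_{\sigma'})<1$ is exactly what prevents the family from degenerating: it keeps all phases within $1$ of both the $\sigma$- and the $\sigma'$-phases, so the support-property quadratic forms may be chosen uniform along $[0,t_*]$ (equivalently, the support constant of $\sigma$ governs the entire segment), and one extracts a limiting stability condition — contradicting $t_*<1$ by openness.

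\emph{Closing the cycle, $(3)\Rightarrow(2)$.} From $f(\sigma)=\sigma'$ and $\cA_{f(\tau)}=\cA_\tau$ we get $\cA_\sigma=\cA_{\sigma'}$, so $d(\cP_\sigma,\cP_{\sigma'})\le1$ automatically (every object of the common heart has all of its $\sigma$- and $\sigma'$-phases in $(0,1]$); the point is to exclude equality. Shrink $U,U'$ using Theorem \ref{thm:spaceasamfd} so that $d(\cP_\tau,\cP_\sigma)<\tfrac14$ on $U$, $d(\cP_\tau,\cP_{\sigma'})<\tfrac14$ on $U'$, and $f(U)\subseteq U'$. The heart-preserving homeomorphism $f$ then identifies the local wall-and-chamber structures around $\sigma$ and around $\sigma'$: near $\sigma$ the walls are pieces of the hyperplanes $\{W:\Im W(\lambda E)=0\}$ with $\lambda E\in\Ker(\Im Z_\sigma)$, with the chamber of $\cA$ cut out by $\Im W(\lambda E)\ge0$, and likewise near $\sigma'$; hence the classes of the objects "about to destabilise" at $\sigma$ and at $\sigma'$ coincide. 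Together with $\Im Z_\sigma=\Im Z_{\sigma'}$ and the support property, this yields the strict inequality $d(\cP_\sigma,\cP_{\sigma'})<1$, closing the cycle. I expect this last step to be routine once the local structure of $\Stab_\Lambda(\cT)$ near a point is in hand, the genuine difficulty being the closedness in $(2)\Rightarrow(1)$.
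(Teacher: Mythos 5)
There is a genuine gap in the crux of $(2)\Rightarrow(1)$, which you flag yourself but do not fill. Your open--closed argument along the segment $\sigma_t=(\cA,Z_t)$ is a reasonable skeleton, but the closedness step is where all the content lives, and the claim that ``the support-property quadratic forms may be chosen uniform along $[0,t_*]$'' is precisely what must be proved, not assumed. The condition $d(\cP_\sigma,\cP_{\sigma'})<1$ constrains only the two endpoints; nothing in it directly controls the support constants of the intermediate $\sigma_t$ as $t\to t_*$. The paper avoids the limit extraction entirely: it builds, in one shot, a single quadratic form $\tilde Q$ (via a rather delicate linear-algebra lemma, Lemma \ref{lem:B10}) that is negative definite on $\Ker(Z+tZ')$ for \emph{all} $t\in[0,2]$ and is nonnegative on all $\sigma$-semistable characters — the last fact using that $d(\cP_\sigma,\cP_{\sigma'})<1$ forces the negative locus of $\tilde Q$ to avoid semistable classes. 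With this uniform $\tilde Q$ one deforms across the entire segment by \cite[Prop.\ A.5]{BMS:stabCY3s}, and the constancy of the heart is then an application of Lemmas \ref{lem:nested} and \ref{lem:sameImimpliesheart}, not an extra hypothesis. Without producing such a uniform quadratic form (or an equivalent uniform estimate), your closedness step does not go through. Note also that the paper separately treats the degenerate cases $\Im Z=0$ and $\mathrm{rk}\{\Re Z,\Re Z',\Im Z\}<3$ before the main construction; your sketch does not address these.

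Your $(3)\Rightarrow(2)$ is also too thin. Showing $d(\cP_\sigma,\cP_{\sigma'})\le 1$ from $\cA_\sigma=\cA_{\sigma'}$ is fine, but excluding equality is not a soft wall-and-chamber statement. The paper's argument is concrete: assume $d=1$, extract a sequence of $\sigma$-stable $E_n\in\cA$ with $\phi_\sigma(E_n)\to 1$ and $\phi^+_{\sigma'}(E_n)\to 0$ (so $\Re Z(E_n)/\Im Z(E_n)\to -\infty$); use the heart-preserving homeomorphism $f$ together with the $d<\tfrac14$ shrinking of $U,U'$ to get $E_n\in\cA_\tau$ for \emph{every} $\tau\in U$ when $n\gg 0$; then perturb the imaginary part of the central charge to $\Im Z+\delta\Re Z$ (which stays inside $\Forg(U)$ for small $\delta>0$) and observe that for $n$ large this makes $\Im Z_\tau(E_n)<0$, contradicting $E_n\in\cA_\tau$. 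Your appeal to ``identifying local wall-and-chamber structures'' gestures at the right phenomenon but does not produce this contradiction. So neither of the two implications you regard as routine is in fact closed by your argument.
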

\begin{proof}
{(1)}$\implies${(4)}: The path $\gamma:[0,1]\to \Stab(\cT): t\mapsto (\cA,(1-t)Z+tZ')$ connects $\sigma$ and $\sigma'$ satisfying $\Im Z_{\gamma(t)}=(1-t)\Im Z+t\Im Z'=\Im Z$. By definition, we have $\sigma\sim \sigma'$.

\noindent {(4)}$\implies${(3)}: This follows directly from Corollary \ref{cor:localchart}.\\

   \noindent {(3)}$\implies${(2)}: Since $\cA=\cA'$, we have $d(\cP_\sigma,\cP_{\sigma'})\leq 1$.
   
   Suppose for contradiction that the equality  holds, in other words, $d(\cP_\sigma,\cP_{\sigma'})= 1$. Note that for any subobject (resp. quotient object) $F$ of $E$ in $\cA$, we have $\phi^+_{\sigma'}(E)\geq \phi^+_{\sigma'}(F)$ (resp. $\phi^-_{\sigma'}(E)\leq \phi^-_{\sigma'}(F)$). So by taking the Harder--Narasimhan or Jordan--H\"older factors if necessary, there exists an infinite sequence of $\sigma$-stable object $E_1,\dots, E_n,\dots$ with $\lim\phi_\sigma(E_n)=1$ (or resp. $=0$) and $\lim \phi^+_{\sigma'}(E_n)=0$ (resp. $\phi^-_{\sigma'}(E_n)=1$). Without loss of generality, we only prove that the $\lim\phi_\sigma(E_n)=1$ case will lead to a contradiction. 
   
   By definition, we have  \begin{equation}\label{eq:21}
        \lim_{n\to +\infty}\arg(Z(E_n))=\pi \text{ and }\lim_{n\to +\infty} \arg( Z'(E_n))=0.
    \end{equation}
    We may choose open neighborhoods $U$ and $U'$ of $\sigma$ and $\sigma'$ respectively satisfying the assumption as that in {(3)}. In addition, we may require that for every $\tau\in U$ and $\tau'\in U'$, the distance $d(\cP_\sigma,\cP_\tau)<\tfrac{1}{4}$ and $d(\cP_{\sigma'},\cP_{\tau'})<\tfrac{1}{4}$.

    When $n$ is sufficiently large, for every $\tau\in U$, we have $\phi^\pm_\tau(E_n)\in (\tfrac{1}{2},\tfrac{3}{2})$. For every $\tau'\in U'$, $\phi^\pm_{\tau'}(E_n)\in (-\tfrac{1}{2},\tfrac{1}{2})$. It follows that 
    \begin{align*}
        E_n\in\langle \cA_\tau,\cA_\tau[1]\rangle\cap\langle \cA_{\tau'}[-1],\cA_{\tau'}\rangle.
    \end{align*}
    Let $\tau'=f(\tau)$. It follows that
    \begin{align}\label{eq:EnincA}
        E_n\in\cA_\tau\text{ for every }\tau\in U.
    \end{align} 

\noindent    On the other hand, by \eqref{eq:21}, we have $ \lim_{n\to +\infty}\frac{\Re Z(E_n)}{\Im Z(E_n)}=-\infty$. Note that there exists $\delta>0$ such that $Z+i\delta\Re Z\in \Forg(U)$. We may let $\tau\in U$  with $\Forg(\tau)=Z+i\delta \Re Z$. Then when $n$ is sufficiently large, we have $\Im Z_\tau(E_n)=\Im Z(E_n)+\delta \Re Z(E_n)<0$. 
    
    This leads to the contradiction with \eqref{eq:EnincA} that $E_n\in \cA_\tau$. Therefore, we must have $d(\cP_\sigma,\cP_{\sigma'})<1$.\\

\noindent    {(2)}$\implies${(1)}: By Lemma \ref{lem:d<1}, we have $\cA=\cA'$.
    
  We first deal with the degenerate cases.  If $\Im Z=0$, then $\cA=\cP_\sigma(1)$. In addition, an object is $\sigma$-semistable $\iff$ $\sigma'$-semistable $\iff$ non-zero in $\cA$.
  
  By the support property, there exists a constant $C>0$ such that for every object $0\neq E\in \cA$, $C|Z(E)|>||\lambda(E)||$ and $C|Z'(E)|>||\lambda(E)||$.  Note that $\Re Z(E)<0$ and $\Re Z'(E)<0$. So $\Re(aZ+bZ')(E)<0$ and $C|(aZ+bZ')(E)|>\min\{a,b\}||\lambda(E)||$ for every $a,b>0$. Therefore, the pair of datum $(\cA,aZ+bZ')$ is a stability condition.
    
    If $\Im Z\neq 0$ and the rank of $\{\Re Z,\Re Z',\Im Z\}$ is not $3$, then for every $a,b>0$, the pair of datum $(\cA,aZ+bZ')$ is on the $\glt$-orbit of $(\cA,Z)$ and is a stability condition. \\

\noindent    Now we may assume that $\Re Z,\Re Z',\Im Z$ are linearly independent. 
    
For every $v\in \Lambda_\R\setminus \Ker Z$, there is a unique $\phi(v)\in(-1,1]$ satisfying $Z(v)\in \R_{>0}\cdot e^{\pi i\phi(v)}$. Similarly, we may define $\phi'(v)$ with respect to $Z'$. Assume that $d(\cP_\sigma,\cP_{\sigma'})=1-\delta$ for some $\delta>0$. Denote by
    \begin{align}\label{eq:2M}
        M\coloneq \{0\neq v\in\Lambda_\R\setminus(\Ker Z\cup\Ker Z')\;:\; |\phi(v)-\phi'(v)|>1-\delta\}.
    \end{align} 
    As $\Re Z,\Re Z',\Im Z$ are linearly independent,  for every $a,b>0$ and $v\in \Ker(aZ+bZ')\setminus(\Ker Z\cup\Ker Z')$, $|\phi(v)-\phi'(v)|=1$. In particular, we have $M\supset \Ker(aZ+bZ')\setminus(\Ker Z\cup\Ker Z')$ .

     We may apply Lemma \ref{lem:B10} by setting $\Im Z=\Im Z'=h$, $\Re Z=f_1$, $\Re Z'=f_2$. There is then a value $d>0$ only depending on $\delta$ so that the $M_d$ as that in Lemma \ref{lem:B10} is contained in the set $M$ as that in \eqref{eq:2M}.\\

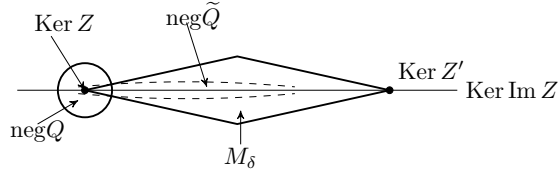
\begin{figure}[h]
    \centering
\scalebox{0.9}{ 
\begin{tikzpicture}[scale=1, transform shape]

  \draw (-2.5,0) -- (4,0) node[right] {$\Ker \Im Z$};

  \filldraw (-1.5,0) circle (0.05) node[below left] {};

  \filldraw (3,0) circle (0.05);
  \node[right] at (3,0.3) {$\Ker Z'$};

  \draw[thick] (-1.5,0) -- (0.75,0.5)--(3,0);
  \draw[thick] (-1.5,0) -- (0.75,-0.5)--(3,0);
  \node at (0.8,-1) {$M_\delta$};
  \draw[->](0.8,-0.8)--(0.8,-0.3);


  \draw[dashed] (-1.6,0.05) .. controls (-0.5,0.15) and (0.5,0.15) .. (1.6,0.05);
  \draw[dashed] (-1.6,-0.05) .. controls (-0.5,-0.15) and (0.5,-0.15) .. (1.6,-0.05);

  \draw[thick] (-1.5,0) circle (0.4);
  \node at (-2.2,-0.6) {$\nega Q$};
  
  \draw[->] (-2.2,-0.5)  -- (-1.6,-0.15);

  \node at (-1.8,1) {$\Ker Z$};
  \draw[->] (-2,0.8)  -- (-1.5,0);

  \draw[->] (0,1) -- (0.3,0.05);
  \node at (0.1,1.1) {$\nega \widetilde{Q}$};

\end{tikzpicture}
}

    \caption{Deform the kernel of central charge.}
    \label{fig:2}
\end{figure} 
     
     Let $Q$ (resp. $Q'$) be a quadratic form with signature $(2,\rho-2)$ for the support properties of $\sigma$ (resp. $\sigma'$). Then by Lemma \ref{lem:B10}, there exists a quadratic form $\tilde Q$ (resp. $\tilde Q'$) such that 
     \begin{align} \label{eq299}
         & \cup_{a\in \R,0\leq t\leq 2}\Ker (Z+tZ') \subset \nega(\tilde Q)\subset M\cup \nega(Q);\\
         & \cup_{a\in \R,0\leq t\leq 2}\Ker (Z'+tZ) \subset \nega(\tilde Q')\subset M\cup \nega(Q').\notag
     \end{align}

\noindent \textit{Claim}:    The quadratic form $\tilde Q$ gives the support property for  $\sigma$ as that in Definition \ref{def:supportproperty}.
    
\begin{proof}[Proof of the claim] (a) As $\Ker Z\subset \nega(\tilde Q)$, the restricted quadratic form $\tilde Q|_{\Ker Z}$ is negative definite.
    
\noindent   (b) For a $\sigma$-semistable object $E\in \cA$, suppose $\tilde Q(E)<0$, then by \eqref{eq299}, the character $\lambda(E)\in M$ or $\nega(Q)$. As $Q$ is for the support property of $\sigma$, we have $E\in M$. Note that $\cA=\cA'$, it follows that $\phi_{\sigma'}^-(E)\leq \phi'(\lambda(E))\leq \phi_{\sigma'}^+(E)$. Therefore, we have
    \begin{align*}
        d(\cP_\sigma,\cP_{\sigma'})\geq \max\{|\phi_\sigma(E)-\phi_{\sigma'}^-(E)|,|\phi_\sigma(E)-\phi_{\sigma'}^+(E)|\}\geq |\phi(\lambda(E))-\phi'(\lambda(E))|>1-\delta.
    \end{align*}
This leads to the contradiction. So for every $\sigma$-semistable $0\neq E\in \cA$, we have $\tilde Q(E)\geq 0$. 
\end{proof}

\noindent    Now by \eqref{eq299}, the restricted quadratic form $\tilde Q|_{\Ker(Z+tZ')}$ is negative definite for every $t\in[0,2]$. By \cite[Proposition A.5]{BMS:stabCY3s}, the stability condition $\sigma$ deforms to stability conditions with central charges $Z+tZ'$. By Lemma \ref{lem:nested} and \ref{lem:sameImimpliesheart}, the heart structures are the same as $\sigma$. By rescaling the central charges, we get stability conditions $(\cA,aZ+taZ')$ for all $a>0$ and $t\in[0,2]$.

    Repeat the above argument for $Q'$,  we get stability conditions $(\cA,tbZ+bZ')$ for all $b>0$ and $t\in[0,2]$. The statement holds.
\end{proof}

 \begin{Lem}\label{lem:B10}
     Let $h,f_1,f_2\in\lbdd$ be linearly independent elements and $d>0$. Let $Q$ be a quadratic form with signature $(2,\rho-2)$ and negative definite on $\Ker h\cap \Ker f_1$. Let 
     \begin{align*}
        M_d\coloneq \left\{v\in\Lambda_\R: f_1(v)f_2(v)<0,h(v)^2-df_1(v)^2<0,h(v)^2-df_2(v)^2<0\right\}.
    \end{align*}
       Then for every $N>0$, there exists a quadratic form $\tilde Q$  with signature $(2,\rho-2)$ such that 
          \begin{align} \label{eqB299}
        \Ker h\bigcap \left(\cup_{0\leq t\leq N}\Ker (f_1+tf_2)\right) \subset \nega(\tilde Q)\subset M_d\cup \nega(Q).
     \end{align}
     \end{Lem}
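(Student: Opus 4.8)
The strategy I would use is to reduce the two inclusions to a statement about the projective space $\mathbb{P}(\Lambda_\R)$, and then to build $\tilde Q$ from scratch out of an explicit ``wedge form'' rather than by perturbing $Q$. Write $\rho:=\rk\Lambda$ (so $\rho\ge 3$, since $h,f_1,f_2$ are linearly independent) and introduce the quadratic form $\Theta(v):=f_1(v)\bigl(f_1(v)+Nf_2(v)\bigr)$. An elementary discussion of when the affine function $t\mapsto f_1(v)+tf_2(v)$ has a zero in $[0,N]$ shows that
\[
\Ker h\cap\Bigl(\textstyle\bigcup_{0\le t\le N}\Ker(f_1+tf_2)\Bigr)=\Ker h\cap\{\,\Theta\le 0\,\}=:W ,
\]
a closed cone; so it suffices to produce $\tilde Q$ of signature $(2,\rho-2)$ with $W\subset\nega(\tilde Q)\subset M_d\cup\nega(Q)$. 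First I would record the projective picture: $\mathbb{P}(W)$ is compact, while $\mathbb{P}(M_d)$ (cut out by strict homogeneous inequalities) and $\mathbb{P}(\nega(Q))=\{[v]:Q(v)<0\}$ are open, and in fact $\mathbb{P}(W)\subset\mathbb{P}(M_d)\cup\mathbb{P}(\nega(Q))$. Indeed, for $0\ne v\in W$: if $f_1(v)=0$ then $v\in\Ker h\cap\Ker f_1$, where $Q$ is negative definite, so $Q(v)<0$; if $f_1(v)\ne 0$ then $\Theta(v)\le 0$ forces $f_2(v)$ to have the strictly opposite sign, so $f_1(v)f_2(v)<0$, and since $h(v)=0$ with $f_1(v),f_2(v)\ne 0$ we get $h(v)^2-df_i(v)^2=-df_i(v)^2<0$ for $i=1,2$, i.e.\ $v\in M_d$. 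Fixing a metric $\mathrm{dist}$ on $\mathbb{P}(\Lambda_\R)$, compactness then yields $\varepsilon>0$ whose $\varepsilon$-neighbourhood of $\mathbb{P}(W)$ still lies in the open set $\mathbb{P}(M_d)\cup\mathbb{P}(\nega(Q))$.

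Next I would construct the candidate family. Choose a splitting $\Lambda_\R=\Ker h\oplus\R w$ with $h(w)=1$, let $\pi:\Lambda_\R\to\Ker h$ be the projection, fix a positive definite form $Q_0$ on $\Ker h$, and for $\mu>0$, $K>0$ put
\[
\tilde Q_{\mu,K}(v):=\Theta(\pi v)-\mu\,Q_0(\pi v)+K\,h(v)^2 .
\]
Because $(f_1,f_2)$ maps $\Ker h$ onto $\R^2$ with a $(\rho-3)$-dimensional kernel and the binary form $a(a+Nb)$ has signature $(1,1)$, the form $\Theta|_{\Ker h}$ has signature $(1,1)$ with a $(\rho-3)$-dimensional radical; hence $\Theta-\mu Q_0$ has signature $(1,\rho-2)$ on $\Ker h$ for all small $\mu>0$, and since $\tilde Q_{\mu,K}$ is block diagonal for the splitting $\Ker h\oplus\R w$ it then has signature $(2,\rho-2)$. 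Moreover on $W$ one has $h=0$ and $\Theta\le 0$, so $\tilde Q_{\mu,K}(v)=\Theta(v)-\mu Q_0(v)\le-\mu Q_0(v)<0$ for $0\ne v\in W$; thus $W\subset\nega(\tilde Q_{\mu,K})$ for every $\mu,K>0$. It then remains to arrange the upper inclusion, which I would do by a compactness argument: if no choice of small $\mu$ and large $K$ worked, there would be $\mu_k\to 0$, $K_k\to\infty$ and unit vectors $v_k$ with $\tilde Q_{\mu_k,K_k}(v_k)<0$ but $\mathrm{dist}([v_k],\mathbb{P}(W))\ge\varepsilon$; passing to a convergent subsequence $v_k\to v_\infty$, the inequality $K_k h(v_k)^2<\mu_k Q_0(\pi v_k)-\Theta(\pi v_k)=O(\mu_k)$ forces $h(v_\infty)=0$ (so $\pi v_\infty=v_\infty$) and then $\Theta(v_\infty)\le 0$, i.e.\ $v_\infty\in W$, a contradiction. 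So some $\tilde Q:=\tilde Q_{\mu,K}$ has $\mathbb{P}(\nega(\tilde Q))$ inside the $\varepsilon$-neighbourhood of $\mathbb{P}(W)$, and hence $\nega(\tilde Q)\subset M_d\cup\nega(Q)$, as required.

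The main obstacle I anticipate is reconciling ``signature exactly $(2,\rho-2)$'' with ``$W\subset\nega(\tilde Q)$'': the negative cone of a signature $(2,\rho-2)$ form is only a neighbourhood of a single linear $\mathbb{P}^{\rho-3}$, so one cannot reach the full wedge $W$ by perturbing $Q$, and the point of the explicit wedge form $\Theta$ (which contributes exactly one positive direction inside $\Ker h$) together with the block‑diagonal $+Kh^2$ term (supplying the second positive direction) is precisely to make both requirements hold simultaneously — verifying the signature claim carefully (via Weyl/interlacing or by exhibiting an explicit $(\rho-2)$-dimensional negative definite subspace) is where the real work lies. Once the model is in place, the sandwich inclusion follows routinely from the compactness statement, and the hypothesis that $Q$ is negative definite on $\Ker h\cap\Ker f_1$ enters only to handle the ``$t=0$'' part of $W$, which is the part of $W$ not contained in $M_d$.
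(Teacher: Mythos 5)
Your argument is correct, and the construction is structurally the same as the paper's: both build $\tilde Q$ from the ``wedge form'' $\Theta=f_1(f_1+Nf_2)$ (in the paper's coordinates $x_2(x_2+Nx_3)$), a small negative-definite perturbation, and a large multiple of $h^2$; the hypothesis that $Q$ is negative definite on $\Ker h\cap\Ker f_1$ is likewise used only to absorb the $f_1=0$ slice of $W$, which is the part of $W$ missed by $M_d$. Where you genuinely differ is in verifying the upper inclusion $\nega(\tilde Q)\subset M_d\cup\nega(Q)$. The paper normalizes $Q$ to an explicit diagonal model (after shrinking $\nega(Q)$), writes $\tilde Q$ in the same coordinates, and deduces $v\in M_d$ from $\tilde Q(v)-Q(v)<0$ by direct inequality-chasing on the coefficients. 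You instead record the clean identity $W=\Ker h\cap\{\Theta\le 0\}$, observe that the compact set $\mathbb P(W)$ sits in the open set $\mathbb P(M_d)\cup\mathbb P(\nega(Q))$, extract an $\varepsilon$-collar, and then force $\mathbb P(\nega(\tilde Q_{\mu,K}))$ into that collar by a convergent-subsequence contradiction as $\mu\to 0$, $K\to\infty$. The compactness route is softer and avoids tracking constants; the paper's computation is more explicit and gives a concrete admissible range of parameters. One small slip: you wrote $\mu_k Q_0(\pi v_k)-\Theta(\pi v_k)=O(\mu_k)$, but $-\Theta(\pi v_k)$ is only $O(1)$ on the unit sphere; the conclusion $h(v_\infty)=0$ still follows since $K_k h(v_k)^2$ stays bounded while $K_k\to\infty$, so this does not affect the proof.
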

     \begin{proof}
   By the assumption, we may choose basis $\{\be_1,\dots,\be_\rho\}$ for $\Lambda_\R$ with dual basis $\{\be^*_1,\dots,\be^*_\rho\}$ such that $h=\be^*_1$, $f_1=\be^*_2$, $f_2=\be^*_3$. The set $M_d$ is then given as
   \begin{align*}
        M_d= \left\{\sum x_i\be_i: x_2x_3<0,x_1^2-dx_2^2<0,x_1^2-dx_3^2<0\right\}.
    \end{align*}
   
   By shrinking $\nega(Q)$ if necessary, we may assume the quadratic form $Q=Dx_1^2+Dx_2^2-x^2_3-\dots-x^2_\rho$ for some large $D>1$. 

    We may consider \begin{align*}
        \tilde Q=\tilde D_1x^2_1+\tilde D_2x_2(x_2+Nx_3)-x_4^2-\dots-x_\rho^2-\epsilon(x^2_2+(x_2+Nx_3)^2)
    \end{align*} for some $\tilde D_i>0$ and $0<\epsilon\ll 1$. When $\epsilon$ is sufficiently small, the form $\tilde Q$ is with signature $(2,\rho-2)$. 

    For $0\neq v\in\Ker h\cap\Ker(f_1+tf_2)$ with $0\leq t\leq N$, we have $v=(0,x,y,x_4,\dots,x_\rho)$ for some $x+yt=0$. It is clear that $\tilde Q(v)= (\tilde D_2t(t-N)-\epsilon t^2-\epsilon(t-N)^2)y^2-\sum_{i=4}^\rho x_i^2<0$. The first `$\subset$' in \eqref{eqB299} holds. \\

  \noindent  To show the second `$\subset$' in \eqref{eqB299}, we consider any $v\in\nega (\tilde Q)\setminus \nega(Q)$, then $\tilde Q(v)-Q(v)<0$, which implies
    \begin{align}\label{eqB2133}
        (\tilde D_1-D)x^2_1+(\tilde D_2-D-2\epsilon)x^2_2+(1-N^2\epsilon)x^2_3+N(\tilde D_2-2\epsilon)x_2x_3<0.
    \end{align}
     We may set $\epsilon$ sufficiently small so that $1>N^2\epsilon$; set $\tilde D_2=D+1>D+2\epsilon$, and $\tilde D_1>D$. It is then followed by \eqref{eqB2133} that $x_2x_3<0$. 
     
     Ignoring the $\epsilon$'s, the inequality \eqref{eqB2133} implies \begin{align*}
       &  (\tilde D_1-D)x_1^2+(x_3+\tfrac{1}{2}N(D+1)x_2)^2-(\tfrac{1}{4}N^2(D+1)^2-1)x_2^2<_\epsilon 0;\\
      \text{ and } &  (\tilde D_1-D)x_1^2+(x_2+\tfrac{1}{2}N(D+1)x_3)^2-(\tfrac{1}{4}N^2(D+1)^2-1)x_3^2<_\epsilon 0.
     \end{align*}
     By further letting $\tilde D_1>_\epsilon (\tfrac{1}{4}N^2(D+1)^2-1)/d+D$, it is then clear that $x_1^2-dx^2_2<0$ and $x_1^2-dx^2_3<0$. It follows that $v\in M_d$. Therefore, we have $\nega(\tilde Q)\subset M_d\cup \nega(Q)$.
    \end{proof}

\begin{Rem}[Convex Hull]
    One can apply Proposition \ref{prop:convex} to construct new stability conditions from old ones. For every subset of stability conditions $U\subset\Stab(\cT)$, we may define its \emph{convex hull} $\spa U$ as the smallest subset in $\Stab(\cT)$ closed under taking the $\glt$-action and operation as that in Proposition \ref{prop:convex}.{(1)}. More precisely, it can be defined as follows:
\begin{equation*}
    \spant1U\coloneq \left\{\sigma=(\cA,Z)\;\middle|\;\begin{array}{cc}
        Z=tZ_1+(1-t)Z_2\text{ for some }t\in[0,1], \\
         \text{ and }\sigma_i=(\cA,Z_i)\in U\cdot\glt,\sigma_1\sim\sigma_2
    \end{array} \right\}.
\end{equation*}
Define $\spant{n+1}U\coloneq \spant1{\spant nU}$ and $\spa U\coloneq \cup_{n=1}^{+\infty}\spant nU$. 
\end{Rem}

\begin{Lem}\label{lem:213}
    Assume that $E$ is $\sigma$-(semi)stable for every $\sigma\in U$, then $E$ is $\sigma$-(semi)stable for every $\sigma\in\spa U$.  
\end{Lem}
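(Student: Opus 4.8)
The plan is to reduce the statement to a single \emph{convexity step} via the inductive definition $\spa U=\bigcup_{n\ge 1}\spant nU$, $\spant{n+1}U=\spant1{\spant nU}$: it suffices to show that if $E$ is $\sigma$-(semi)stable for every $\sigma$ in a subset $V\subset\Stab(\cT)$, then $E$ is $\sigma$-(semi)stable for every $\sigma\in\spant1V$, and then to induct on $n$. Since the $\glt$-action sends semistable (resp.\ stable) objects to semistable (resp.\ stable) objects --- because $\cP_{\sigma\cdot\tilde g}(\theta)=\cP_\sigma(g(\theta))$ identifies the relevant abelian subcategories, cf.\ Notation \ref{not:glt} --- $E$ is already $\sigma$-(semi)stable for every $\sigma\in V\cdot\glt$. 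So the remaining case is $\sigma=(\cA,Z)\in\spant1V$ with $Z=tZ_1+(1-t)Z_2$, $t\in[0,1]$, where $\sigma_i=(\cA,Z_i)\in V\cdot\glt$ are (semi)stabilizing for $E$ and $\sigma_1\sim\sigma_2$. By Definition \ref{def:sb} this forces $\Im Z_1=\Im Z_2=:h$, and by Proposition \ref{prop:convex} the datum $\sigma$ is a genuine stability condition with heart $\cA$; the endpoints $t\in\{0,1\}$ are trivial, so we may assume $t\in(0,1)$.

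After replacing $E$ by a shift $E[n]$ --- which affects none of the three (semi)stability statements --- we may assume $E\in\cA$. I would then invoke the standard criterion: an object $E\in\cA$ is $\sigma_j$-semistable (resp.\ stable) if and only if every subobject $0\neq A\subsetneq E$ in $\cA$ satisfies $\arg Z_j(A)\le\arg Z_j(E)$ (resp.\ $<$) in $(0,\pi]$; equivalently, setting $D_j(A):=\Im\!\big(\overline{Z_j(A)}\,Z_j(E)\big)$, if and only if $D_j(A)\ge 0$ (resp.\ $>0$) for all such $A$. No degenerate case occurs, since $Z_j(A)\ne 0$ for $A\in\cA\setminus\{0\}$. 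By hypothesis $D_1(A),D_2(A)\ge 0$ (resp.\ $>0$) for every subobject $0\ne A\subsetneq E$.

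The crux is an elementary identity that is exactly what the condition $\Im Z_1=\Im Z_2$ buys us. Writing $Z_j=f_j+ih$, one computes $\Im\!\big(\overline{Z_1(A)}Z_2(E)\big)=f_1(A)h(E)-h(A)f_2(E)$ and $\Im\!\big(\overline{Z_2(A)}Z_1(E)\big)=f_2(A)h(E)-h(A)f_1(E)$, so that
\[
\Im\!\big(\overline{Z_1(A)}Z_2(E)\big)+\Im\!\big(\overline{Z_2(A)}Z_1(E)\big)=D_1(A)+D_2(A).
\]
Expanding $Z=tZ_1+(1-t)Z_2$ then gives
\[
\Im\!\big(\overline{Z(A)}\,Z(E)\big)=t^2 D_1(A)+(1-t)^2 D_2(A)+t(1-t)\big(D_1(A)+D_2(A)\big)=t\,D_1(A)+(1-t)\,D_2(A),
\]
which is $\ge 0$ (resp.\ $>0$ for every proper nonzero $A$). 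Hence $E$ is $\sigma$-(semi)stable, completing the convexity step, and the induction on $n$ finishes the proof.

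I expect the genuine mathematical content to be confined to the one-line identity above; the only obstacle, such as it is, is bookkeeping: carefully justifying the subobject criterion for (semi)stability inside the common heart (this is where one uses that $\sigma_1$, $\sigma_2$ and $\sigma$ literally share the heart $\cA$, i.e.\ Proposition \ref{prop:convex}), the shift reduction to $E\in\cA$, and ruling out the boundary ambiguities when comparing arguments of central charges.
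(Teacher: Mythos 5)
Your proof is correct, and it follows the same basic strategy as the paper's: reduce to the single convexity step $\sigma=(\cA,tZ_1+(1-t)Z_2)$, pass to a subobject criterion in the common heart, and exploit that $\Im Z_1=\Im Z_2$. The main difference is in how the subobject criterion is packaged. The paper compares slopes $\Re Z_j/\Im Z$ directly, which is linear in $Z_j$ once the denominator $\Im Z$ is fixed; this requires treating the degenerate case $\Im Z(E)=0$ separately (the paper handles it by invoking Lemma~\ref{lem:nested}). Your version uses the sign of $D_j(A)=\Im\!\bigl(\overline{Z_j(A)}\,Z_j(E)\bigr)$, and the identity $\Im(\overline{Z(A)}Z(E))=tD_1(A)+(1-t)D_2(A)$ — which is valid precisely because $\Im Z_1=\Im Z_2$ makes the cross terms symmetrize — gives a single uniform computation with no case split and no appeal to the nested wall theorem. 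This is a mild but genuine streamlining; the one point worth spelling out a bit more is the equivalence of the $D_j$-sign criterion with phase comparison, which hinges on all phases lying in $(0,1]$ so that $\sin(\pi(\phi_E-\phi_A))$ has the same sign as $\phi_E-\phi_A$. Everything else (the $\glt$-invariance, the shift to $E\in\cA$, the reliance on Proposition~\ref{prop:convex} for the common heart, and the induction over $\spant nU$) matches the paper.
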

\begin{proof}
    It is clear that the $\glt$-action does not affect the stability of any object. So we only need to show that if $E$ is $\sigma_i$-(semi)stable, then it is $\tau$-(semi)stable with respect to  $\tau=(\cA,tZ_1+(1-t)Z_2)$ for every $t\in(0,1)$. By shifting $E$ if necessary, we may assume $E\in\cA$. 

    If $\Im Z(E)=0$, then the statement follows from Lemma \ref{lem:nested}. Otherwise, for every $0\neq F\hookrightarrow E$ in $\cA$, we have
    \begin{align*}
        \frac{\Re (tZ_1+(1-t)Z_2)}{\Im Z}(F)=t\frac{\Re Z_1}{\Im Z}(F)+(1-t)\frac{\Re Z_2}{\Im Z}(F)< (\leq) \frac{\Re (tZ_1+(1-t)Z_2)}{\Im Z}(E).
    \end{align*}
    So $E$ is $\tau$-(semi)stable.
\end{proof}

\begin{Ex}[Beilinson quiver stability]\label{eg:beilinsonquiver}
Let $\bP^n$ be the $n$-dimensional projective space, then we may consider the stability conditions offered by the Beilinson quiver, see \cite{Beilinson:EquivalencePn}. More precisely, for every $m\in \Z$, there is a   heart of bounded t-structure given by the extension closure:
\begin{align*}
    \cA_m\coloneq \langle\cO_{\bP^n}(m)[n],\cO_{\bP^n}(m+1)[n-1],\dots,\cO_{\bP^n}(m+n)\rangle.
\end{align*}

For every $(n+1)$-tuple of complex numbers $\underline{v}=(z_0,z_1,\dots, z_n)$ with every $z_i\in \H$, there is a unique central charge $Z_{\underline{v}}$ on $\Kn(\bP^n)$ by assigning $Z_{\underline{v}}(\cO_{\bP^n}(m+i)[n-i])=z_i$. The pair of datum $(\cA_m,Z_{\underline{v}})$ is a stability condition for every $m\in\Z$ and $\underline{v}\in\H^{n+1}$. We may consider the following set of stability conditions
\begin{align*}
    U\coloneq \{(\cA_m,Z_{\underline{v}}):m\in\Z, \underline{v}\in\H^{n+1}\}\cap \Stab^{\text{Geo}}(\bP^n),
\end{align*}
where $\Stab^{\text{Geo}}(\bP^n)$ stands for the space of geometric stability conditions as that in Definition \ref{def:geostab}.

Then when $n\leq 2$, by Lemma \ref{lem:213}, it is not difficult see that $\spa U=\Stab^{\text{Geo}}(\bP^n)$. When $n\geq 3$, the space $\spa U$ is strictly larger than $U$ but, unfortunately, is a proper subset of  $\Stab^{\text{Geo}}(\bP^n)$. To get the full family of stability conditions as that in Conjecture \ref{conj:intro}, one needs more tools, for instance, Proposition \ref{prop:lifttostab}, to extend $\spa U$. We leave this direction to a future project. 
\end{Ex}

\section{Wall and Chamber structure}\label{sec:wc}
In this section, we set up some notions for the wall and chamber structure on $\sb(\cT)$. The first difference from $\Stab(\cT)$ is that the $\ts$-stability depends on the representatives of $\sigma$ in general. However, by Lemma \ref{lem:nested}, the slice $\cP_{\ts}(1)$, or more generally $\cP_{\ts}(m)$ with $m\in \Z$, does not depend on the representatives. So for a given character $v$, it makes sense to define the $\ts$-stability for objects with character $v$ when $B_{\ts}(v)=0$. This leads to the following notion.
\begin{Def}\label{def:redstab}
Let $E$ be an object in $\cT$ with $B_{\ts}(E)=0$. We say that $E$ is $\ts$-\emph{(semi)stable} if it is $\sigma$-(semi)stable for a representative of $\ts$.

In particular, by Lemma \ref{lem:nested}, the object $E$ is $\ts$-(semi)stable if it is $\sigma$-(semi)stable for one representative of $\ts$.
\end{Def}
 
For every non-zero character $v\in\Lambda$, we define 
\begin{align*}
    \sb_v(\cT)\coloneq \{\ts\in\sb(\cT)\;|\; B_{\ts}(v)=0\}.
\end{align*}
Denote by 
\begin{align}\label{eq:defvstar}
    \vperp\coloneq \{f\in \lbdd\mid f(v)=0\}.
\end{align}
It is clear that the forgetful map  $\Forg:\sb_v(\cT)\to \vperp$ is a local homeomorphism.

We denote $M_{\ts}(v)$ the moduli space parametrizing $\ts$-semistable objects in $\cA_{\ts}$ with class $v$. By Lemma \ref{lem:nested} and \ref{lem:sameImimpliesheart}, the space $M_{\ts}(v)=M_\sigma(v)$ for every representative $\sigma$ of $\ts$.

\subsection{Removing the  locus with empty moduli} 

To relate the wall and chamber structures on $\Stab$ and $\sb$, for every $v$, we need a quotient map from $\Stab$ to $\sb_v$. However, $\sb_v$ is not a quotient space of $\sb$ in general. To solve this issue, we notice that the homological shift $\R$-action (resp. $\Z$) on $\Stab$ (resp. $\sb_v$) does not affect the stability of objects at all. This leads to an `expected map' from  $\Stab/\R$ to $\sb_v/\Z$. However, such a map still does not exist in general as it is not well-defined on $\sigma$ with $Z_\sigma(v)=0$. On the other hand, for such a stability condition, the moduli space $M_\sigma(v)$ is always empty. So removing them does not cause much problem. Accordingly,  we also remove the locus on $\sb_v$  where $M_{\ts}(v)$ is for sure to be empty.

More precisely, we denote\begin{align*}
    \sb_v^{\emptyset}(\cT)\coloneq \{\ts\in\sb_v(\cT) \colon \exists \text{ a representative }\sigma \text{ with } Z_\sigma(v)=0\}.
\end{align*} 

By the support property, there is no $\sigma$-semistable object with class $v$. In other words, the space $M_{\ts}(v)=\emptyset$ for every $\ts\in\sb_v^{\emptyset}(\cT)$.

Both spaces $\sb_v(\cT)$ and $\sb_v^{\emptyset}(\cT)$ are invariant under the homological shift $\Z$-action.

For every element $\ts$ in $\sb_v(\cT)\setminus \sb_v^{\emptyset}(\cT)$, by Proposition \ref{prop:convex}.{(1)}, the sign of $\Re Z_\sigma(v)$ does not rely on the choice of the representative $\sigma$. It follows that on each connected component of $(\pi_\sim)^{-1}(\sb_v(\cT)\setminus \sb_v^{\emptyset}(\cT))$, the sign of $\Re Z_\sigma(v)$ does not change. We denote $(\sb_v(\cT)\setminus \sb_v^{\emptyset}(\cT))^-$ as the component where $\Re Z_\sigma(v)<0$.

\begin{Not}\label{not:3.3}
    We denote
    \begin{align*}
    \sb^\dag_v(\cT)\coloneq (\sb_v(\cT)\setminus \sb_v^{\emptyset}(\cT))^-/(2\Z),
\end{align*} 
where $2\Z$ stands for the homological shift action $[2n]$ with an even degree. It is clear that the forgetful map $\Forg$ to the reduced central charge is well-defined on $\sb^\dag_v(\cT)$ and is a local isomorphism to $\vperp$.
\end{Not}

For every $\sigma\in \Stab(\cT)$ with $Z_\sigma(v)\neq 0$, there is a unique $\theta\in \R/\Z$ with $e^{-i\pi \theta}Z_\sigma(v)\in \R$. In particular, we have $\pi_\sim(\sigma[\theta])\in \sb_v(\cT)$. Denote 
\begin{align*}
    \Stab_v(\cT)\coloneq \{\sigma\in \Stab(\cT):Z_\sigma(v)\neq0, \pi_\sim(\sigma[\theta])\not\in \sb_v^{\emptyset}(\cT)\}.
\end{align*}

\begin{Rem}\label{rem:stabv}
Note that the $\glt$-action preserves the stability of objects. In particular, an object is $\sigma$-semistable if and only if it is $\sigma[\theta]$-semistable. So for every $\sigma\not\in\Stab_v(\cT)$, the space $M_\sigma(v)=\emptyset$.   
\end{Rem}
\begin{Def}\label{def:piv}
We define the map $\pi_v$ as:
 \begin{align*}
    \pi_v: \Stab_v(\cT) & \to \sb_v^\dag(\cT) \\
    \sigma & \mapsto \pi_\sim(\sigma[\theta]),
\end{align*}
where $\theta$ is the unique element in $\R/2\Z$ with $e^{-i\pi \theta}Z_\sigma(v)\in \R_{<0}$.   
\end{Def}

\subsection{Wall and Chamber structure}\label{sec:wcstructure}
For every non-zero character $v\in\Lambda$, one may consider the set of $\sigma$-semistable objects $E\in\cT$  with class $v$ as $\sigma$ varies.   The manifold $\Stab(\cT)$  admits a wall and chamber decomposition such that for every chamber $\cC_j(v)$, the space $M_\sigma(v)$ is independent of the choice of $\sigma\in\cC_j(v)$. 

We recall the following proposition/definition for walls and chambers. More details can be found in \cite[Section 9]{Bridgeland:K3}, \cite[Proposition 2.8]{Toda:K3},  \cite[Proposition 3.3]{localP2}, 
\cite{MYY2}, and \cite{MYY:wallcrossingK3}.

\begin{Prop}[{\cite[Proposition 2.3]{BM:projectivity}}]\label{prop:wallchambonstab}
    There exists a locally finite set of walls, real codimension one submanifolds $\cW_i(v)$'s with boundary, in $\Stab(\cT)$, depending only on $v$:
    \begin{align}\label{eq:wall}
        \Stab(\cT)=\left(\bigcup_i \cW_i(v)\right)\coprod\left(\coprod_j \cC_j(v)\right)
    \end{align}
    with the following properties:
    \begin{enumerate}
        \item Each chamber $\cC_j$ is open and path-connected. The space $M_\sigma(v)$ is independent with the  \textbf{generic} choice $\sigma$ within $\cC_j$.
\item When $\sigma$ lies on a single wall $\cW_i$, then there is a $\sigma$-semistable object that
is unstable in one of the adjacent chambers, and semistable in the other adjacent
chamber.
\item When we restrict to an intersection of finitely many walls $\cW_1,\dots,\cW_k$, we obtain a
wall-and-chamber decomposition on  $\cW_1\cap\dots\cap\cW_k$ with the same properties, where
the walls are given by the intersections $\cW\cap \cW_1\cap \dots \cap \cW_k$  for any of the walls $\cW\subset \Stab(\cT)$ with respect to v.
\end{enumerate}
\end{Prop}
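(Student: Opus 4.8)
The plan is to prove a local version around an arbitrary point $\sigma_0 \in \Stab(\cT)$ and then glue. Fix $v \neq 0$ and choose a small open ball $B \ni \sigma_0$ on which $\Forg_Z$ restricts to a homeomorphism onto a convex open subset of $\Hom_\Z(\Lambda,\C)$, on which $d(\cP_\sigma,\cP_{\sigma_0}) < \tfrac14$, and on which a single support-property constant $C>0$ works, so that $\|\lambda([E])\| \le C\,|Z_\sigma(\lambda([E]))|$ for every $\sigma \in B$ and every $\sigma$-semistable $E$. The whole construction is carried out inside $B$; since $\Stab(\cT)$ is covered by such balls, local finiteness of the global decomposition will follow once we show $B$ meets only finitely many walls. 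Note also that the construction will only use $v$, $\Lambda$, and the support property, so the walls depend only on $v$.

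First I would isolate a finite set of \emph{destabilizing classes}. Given $\sigma \in B$ and a $\sigma$-semistable $E$ with $\lambda([E]) = v$, after a shift assume $E \in \cP_\sigma((0,1])$, so $Z_\sigma(v)$ lies on a ray of length at most $R := \sup_{\sigma \in \overline B}|Z_\sigma(v)|$. Any Harder--Narasimhan factor (over any $\tau \in B$) or Jordan--H\"older factor $F$ of $E$ is again semistable, has phase within $1$ of that of $E$, and, since the central charges of the successive terms of the relevant filtration in a common heart add up with nonnegative imaginary part, satisfies $|Z_\tau(\lambda([F]))| \le C' R$ for a constant $C'$ depending only on $C$. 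Combined with the support property this gives $\|\lambda([F])\| \le C C' R$, so $\lambda([F])$ ranges over the finite set $\cS = \cS_B(v) := \{\, w \in \Lambda : \|w\| \le C C' R \,\}$. Controlling $|Z_\tau(\lambda([F]))|$ uniformly in $\tau \in B$ without assuming a priori that $\lambda([F])$ is ``between'' $0$ and $v$ is the delicate point here, resting on $d(\cP_\sigma,\cP_{\sigma_0}) < \tfrac14$ and convexity of $\Forg_Z(B)$; I expect this bookkeeping to be the main technical obstacle.

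Next I would define the numerical walls. For $w \in \cS$ not proportional to $v$, set
\[
\cW_w := \{\, \sigma \in B : Z_\sigma(v) \neq 0 \ \text{and}\ \Im\big(\overline{Z_\sigma(v)}\,Z_\sigma(w)\big) = 0 \,\},
\]
the locus where $\phi_\sigma(w) \equiv \phi_\sigma(v) \bmod \Z$. In the coordinate $Z = \Forg_Z(\sigma)$ this is the zero set of a real-analytic function (bilinear in $(Z(v),Z(w))$) whose gradient vanishes only where $Z(v) = Z(w) = 0$; since $v,w$ are independent the gradient is therefore nonzero throughout $\{Z_\sigma(v) \neq 0\}$, so each $\cW_w$ is a real codimension-one submanifold there, acquiring boundary only along $\overline{\cW_w} \cap \{Z_\sigma(v)=0\}$ and along $\partial B$. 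No object of class $v$ is semistable where $Z_\sigma(v)=0$, so adjoining that locus does no harm. The connected components of the finite union $\bigcup_{w \in \cS}\cW_w$ on which genuine wall-crossing occurs (see property (2) below) are the $\cW_i(v)$ — the remaining components being ``fake walls'' that are discarded — and the chambers $\cC_j(v)$ are the connected components of the complement in $B$.

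Finally I would verify (1)--(3). For (1): for a fixed object $E$ with $\lambda([E]) = v$, the set of $\sigma \in B$ where $E$ is $\sigma$-semistable is open; it is also closed within a chamber $\cC_j$, for if $\sigma_n \to \sigma_\infty \in \cC_j$ with $E$ being $\sigma_n$-semistable but $\sigma_\infty$-unstable, then $F := \HN^+_{\sigma_\infty}(E)$ has $\lambda([F]) \in \cS$ and $\phi_{\sigma_\infty}(F) > \phi_{\sigma_\infty}(v)$, while $E$-semistability at each $\sigma_n$ forbids $\phi_\sigma(F) - \phi_\sigma(v) > 0$ there, so this continuous quantity vanishes somewhere along a path in $\cC_j$, placing a point of $\cC_j$ on $\cW_{\lambda([F])}$ — a contradiction. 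The same phase-comparison shows the Jordan--H\"older factors, hence the S-equivalence class, hence $M_\sigma(v)$, are constant on $\cC_j$; the word ``generic'' is needed only to discard strictly semistable behaviour on the walls themselves. Property (2): on a wall $\cW_i$ realized by a class $w \in \cS$ there is, by construction, a class-$v$ object possessing a subobject of class $w$ whose phase equals that of $v$ along $\cW_i$; this subobject destabilizes it on the side where $\phi(w)>\phi(v)$ and not on the other, giving the stated adjacency behaviour. Property (3): rerun the construction with $B$ replaced by the submanifold $\cW_1 \cap \dots \cap \cW_k$ — the real-analytic functions cutting out the remaining $\cW_w$ restrict to it and again have nonvanishing gradient generically, yielding the induced wall-and-chamber decomposition. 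Gluing over a locally finite cover of $\Stab(\cT)$ by such balls produces the global locally finite structure, completing the proof.
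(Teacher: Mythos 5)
The paper does not prove this proposition; it cites it from Bayer--Macr\`i \cite[Proposition~2.3]{BM:projectivity} (who in turn refer to Toda and Bridgeland), so there is no internal argument to compare against. Your proof is the standard one from that literature: a uniform local support constant, a half-plane mass estimate bounding Harder--Narasimhan and Jordan--H\"older factors to produce the finite set $\cS$ of potential destabilizing classes, numerical walls cut out by $\Im\big(\overline{Z(v)}Z(w)\big)=0$, and a continuity-of-phases argument on chambers. One claim in your verification of (1) is false as stated, though easily repaired: for a fixed $E$ of class $v$, the locus $\{\sigma \in B : E \text{ is } \sigma\text{-semistable}\}$ is \emph{not} open in $B$ in general --- if $E$ is strictly $\sigma$-semistable with Jordan--H\"older factors of non-proportional classes, it becomes unstable at generic nearby $\tau$. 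What you actually need, and what holds, is openness inside a chamber $\cC_j$: because $\cC_j$ avoids every wall $\cW_w$ with $w \in \cS$ not proportional to $v$, any Jordan--H\"older factor of a $\sigma$-semistable $E$ of class $v$ at $\sigma \in \cC_j$ must have class a rational multiple of $v$, hence all factors retain a common phase at nearby $\tau \in \cC_j$ and $E$ stays $\tau$-semistable. With that repair the open-and-closed argument gives constancy of the Jordan--H\"older types on $\cC_j$; the ``generic'' caveat in (1), explained in the paper's Remark immediately after the statement, addresses the possibility of isolated strictly semistable objects appearing at special loci, which a fixed-$E$ clopen argument does not detect on its own.
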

\begin{Rem}[Isolated strictly semistable objects]
    For the sake of accuracy, we add the `generic' assumption on $\sigma$ in Proposition \ref{prop:wallchambonstab}.{(a)}, because the statement will fail otherwise in many cases of $\cT$. 
    
    For example, we may consider the category $\cT=\Db(\bP^2)$ and the heart structure $\cA $ generated by $\cO[4]$, $\cO(1)[2]$, and $\cO(2)$. In particular, an object is in $\cA$ if and only if it is the direct sum of these three generators. Consider all the stability conditions $\sigma$ on $\cA$ and the character $v=[\cO]+[\cO(1)]+[\cO(2)]$. It is clear that $M_\sigma(v)\neq \emptyset$ when and only when $\phi_\sigma(\cO[4])=\phi_\sigma(\cO(1)[2])=\phi_\sigma(\cO(2))$, which is a real codimension two condition. 

    On the other hand, these `isolated' strictly semistable objects do not affect any of the wall-crossing procedures. In particular, if an object is $\sigma$-semistable for generic $\sigma\in \cC$, then it is $\sigma$-semistable for all $\sigma\in\cC$.
\end{Rem}
\begin{Not}\label{not:Mchamber}
For every chamber $\cC$ as that in \eqref{eq:wall}, we denote by the set $M_\cC(v)\coloneq M_\sigma(v)$ for a generic $\sigma\in\cC$.
\end{Not}

\begin{Prop}(Wall and chamber structure on $\sb_v^\dag(\cT)$)\label{prop:wallchambonsb}
    The map $\pi_v:\Stab_v(\cT)\to \sb^\dag_v(\cT)$ preserves the wall and chamber structure and all chambers with non-empty moduli as that in Proposition \ref{prop:wallchambonstab}. More precisely, we set $\tilde\cW_i(v)\coloneq \pi_v(\cW_i(v)\cap\Stab_v(\cT))$ and $\tilde\cC_j(v)\coloneq \pi_v(\cC_j(v)\cap\Stab_v(\cT))$. Then 
    \begin{align}\label{eq:wallsb}
        \sb_v^\dag(\cT)=\left(\bigcup_i \tilde\cW_i(v)\right)\coprod\left(\coprod_j \tilde\cC_j(v)\right)
    \end{align}
    with the following properties:
    \begin{enumerate}
        \item Each wall $\tilde \cW_i(v)$ is a non-empty real codimension one submanifold with boundary. On each open local chart \[\sb_v^\dag(\cT)\supset U\xhookrightarrow{\Forg}\vperp\subset \lbdd,\]
        the image of the wall $\Forg(\tilde \cW_i(v)\cap U)$ is a subset of real codimension one linear subspace $\vperp\cap \wperp\subset \vperp$ for some $w\in \Kn(\cT)$.
        \item For every chamber $\cC_j(v)$ with $M_{\cC_j(v)}(v)\neq \emptyset$, the chamber $\tilde \cC_j(v)$ is non-empty, open and path-connected. The space $M_{\ts}(v)$ is independent with the  generic choice $\ts$ within $\cC_j(v)$.
    \end{enumerate}    
\end{Prop}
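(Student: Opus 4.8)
The plan is to transport the wall-and-chamber decomposition \eqref{eq:wall} of $\Stab(\cT)$ down along $\pi_v$. For this I first want to pin down three features of $\pi_v\colon\Stab_v(\cT)\to\sb_v^\dag(\cT)$: that it is continuous, open and surjective; that its fibres are connected; and that each fibre lies in a single stratum of \eqref{eq:wall}. \emph{Fibres.} Given $\ts\in\sb_v^\dag(\cT)$, let $\ts_0$ be the representative with $\Im Z(v)=0$, $\Re Z(v)<0$, and set $F_0\coloneq\pi_\sim^{-1}(\ts_0)$; unwinding Definition \ref{def:piv} I expect $\pi_v^{-1}(\ts)=F_0\cdot\R$, the orbit of $F_0$ under the rotation action $[\theta]$. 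Since $F_0$ is a path-component of a fibre of $\Forg_{\Im}$ it is path-connected, so $\pi_v^{-1}(\ts)$ is path-connected. As $\Im Z$ is constant and $Z(v)\in\R_{<0}$ on $F_0$, Lemma \ref{lem:nested} makes the slice $\cP(1)$ constant on $F_0$, hence the set $S(\ts)$ of $\ts$-semistable objects of class $v$, the stable ones among them, and $M_{\ts}(v)$ are all constant on $F_0$; rotation does not affect stability, so they are constant on all of $\pi_v^{-1}(\ts)$, and in particular $M_{\ts}(v)$ in the statement is well-defined.

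\emph{Openness.} Here I would work in the chart $\Forg_Z$ near a point with $Z(v)\in\R_{<0}$: the composite $q\coloneq\Forg\circ\pi_v\colon\Stab_v(\cT)\to\vperp$ becomes $Z\mapsto\sin(\arg Z(v))\,\Re Z-\cos(\arg Z(v))\,\Im Z$, and computing its differential (choosing test directions $\dot Z$ with $\dot Z(v)=0$) should show it is a submersion onto $\vperp$, hence an open map. Since $\Forg\colon\sb_v^\dag(\cT)\to\vperp$ is a local homeomorphism (Notation \ref{not:3.3}), factoring $\pi_v$ locally as $(\Forg|_N)^{-1}\circ q$ on small neighbourhoods then gives openness of $\pi_v$; continuity and surjectivity are routine (for surjectivity, lift $\ts_0$ as above, for which $\theta=0$ works), and local finiteness of the walls downstairs is inherited from upstairs through these charts.

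\emph{One stratum per fibre, and properties (a)--(b).} Suppose $\pi_v^{-1}(\ts)$ meets a chamber $\cC_j(v)$; I claim it lies inside $\cC_j(v)$. The set $\pi_v^{-1}(\ts)\cap\cC_j(v)$ is open and nonempty in the connected space $\pi_v^{-1}(\ts)$, so it suffices to rule out a frontier point $\tau$; such a $\tau$ would lie on a wall $\cW_i(v)$ adjacent to $\cC_j(v)$, and because semistability is a closed condition while crossing onto a wall strictly enlarges the set of $v$-semistable objects, one gets $S(\ts)=S_{\cC_j(v)}\subsetneq S_\tau=S(\ts)$, a contradiction. Hence $\pi_v^{-1}(\tilde\cC_j(v))=\cC_j(v)\cap\Stab_v(\cT)$, the sets $\tilde\cC_j(v)$ are pairwise disjoint and disjoint from the $\tilde\cW_i(v)$, and together with surjectivity of $\pi_v$ this yields \eqref{eq:wallsb}. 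For (b): if $M_{\cC_j(v)}(v)\neq\emptyset$ then every $\sigma\in\cC_j(v)$ carries a $v$-semistable object (by the Remark following Proposition \ref{prop:wallchambonstab}), so $\cC_j(v)\subseteq\Stab_v(\cT)$ and $\tilde\cC_j(v)=\pi_v(\cC_j(v))$ is nonempty, open and path-connected with $M_{\ts}(v)\equiv M_{\cC_j(v)}(v)$ on it. For (a): in the $\Forg_Z$-chart a wall $\cW_i(v)$ is, near a generic point, the locus $\{Z(w)/Z(v)\in\R_{>0}\}$ for the class $w$ of a destabilizing subobject; rotating $Z(v)$ onto $\R_{<0}$ turns this into $\{\Im Z(w)=0,\ \Re Z(w)<0\}$, so in the chart $\Forg$ the image $\tilde\cW_i(v)$ lands inside $\wperp\cap\vperp$, and by the submersion property of $q$ restricted to $\cW_i(v)$ (whose fibres, by the previous step, coincide with those of $q$) it is a nonempty real codimension-one submanifold with boundary of $\sb_v^\dag(\cT)$.

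\emph{Main obstacle.} The technical heart is the chart analysis of the second step: verifying that $q$ is a submersion so that $\pi_v$ is open, and that the images of the $\cW_i(v)$ are again submanifolds-with-boundary of the expected codimension. The other delicate input is the ``honest wall'' property invoked in the third step --- that on a wall the set of semistable objects of class $v$ strictly contains that of every adjacent chamber --- since it is exactly this that lets Lemma \ref{lem:nested} force each fibre of $\pi_v$ into a single stratum.
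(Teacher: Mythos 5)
Your proposal takes a genuinely different route from the paper's: you set up $\pi_v$ as an open submersion-like map, analyse its fibres, and argue each fibre lies in a single stratum of the upstairs decomposition, whereas the paper goes straight for pairwise disjointness of $\tilde\cW_i(v)$ and $\tilde\cC_j(v)$, using Lemma~\ref{lem:nested} to rule out a wall point and a chamber point in the same $\sim$-class, and Proposition~\ref{prop:convex}.(1) to build a wall-avoiding straight-line path of central charges between two chamber points in the same fibre. Openness of the $\tilde\cC_j(v)$ then comes for free from the decomposition, so the submersion computation you propose is heavier machinery than is actually needed; for (a), the paper replaces your submersion-restricted-to-$\cW_i(v)$ argument with a dimension count ($\pi_v$ has equal-dimensional fibres and the image sits inside a codimension-one linear subspace).

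The genuine gap is in the ``honest wall'' step. You claim that crossing onto a wall strictly enlarges the set $S$ of $v$-semistable objects, so that a frontier point $\tau\in\cW_i(v)$ would give $S_{\cC_j(v)}\subsetneq S_\tau$. Proposition~\ref{prop:wallchambonstab}.(b) does not give this: it only supplies a $\sigma$-semistable object, of \emph{unspecified} numerical class, that is unstable in \emph{one} of the two adjacent chambers. If instead the wall is where a $v$-class object $E$ passes from stable in $\cC_j(v)$ to strictly semistable on $\cW_i(v)$ (with the destabilising subquotients of classes other than $v$), the set $S$ of semistable $v$-objects need not grow at all, and your contradiction evaporates. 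What Lemma~\ref{lem:nested} actually gives you -- and what you already record in your fibre analysis -- is stronger and of the right shape: the full stability \emph{type} (stable / strictly semistable / unstable) of every $v$-class object, and hence the moduli $M_{\ts}(v)$, is constant on the fibre. That is what must be played against the wall structure; tracking the stable locus, or $M(v)$ directly, in place of the semistable set would close the step, but as written the argument does not go through.
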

\begin{proof}
    Note that each $\cC_i(v)$ and $\cW_j(v)$ as that in \eqref{eq:wall} is invariant under the $\glt$-action, in particular, the action by $[\theta]$. The map $\pi_v$ reduces to a map from  $\Stab_v(\cT)/i\R$ to $\sb^\dag_v(\cT)$.
    
   We first show that the sets on the right hand side of \eqref{eq:wallsb} are disjoint.
   
   Suppose $\pi_v(\sigma)=\pi_v(\tau)$ for some $\sigma\in\cW_i(v)$  and $\tau\in\cC_j(v)$, then by taking a shift $[\theta]$ if necessary, we may assume that $\sigma\sim \tau$ with $\pi_\sim(\sigma)\in (\sb_v(\cT)\setminus \sb_v^{\emptyset}(\cT))^-$. This then contradicts to Lemma \ref{lem:nested} and Proposition \ref{prop:wallchambonstab}.
    
   Suppose $\pi_v(\sigma)=\pi_v(\tau)$ for some $\sigma\in\cC_{j'}(v)$ and $\tau\in\cC_j(v)$, then by taking a shift $[\theta]$ if necessary, we may assume that $\sigma\sim \tau$ with $\pi_\sim(\sigma)\in (\sb_v(\cT)\setminus \sb_v^{\emptyset}(\cT))^-$. In particular, we have $\Re Z_\sigma(v)\cdot\Re Z_\tau(v)>0$. By Proposition \ref{prop:convex}.{(1)}, the path $\gamma(t)=(\cA_\sigma,tZ_\sigma+(1-t)Z_\tau)$ is contained in $\Stab_v(\cT)$ for $0\leq t\leq 1$. Note that $\gamma(t)\sim \sigma$, by the argument above, the path does not intersect any of the walls. It follows that the path is contained in the same chamber, in particular, $\tilde\cC_{j'}(v)=\tilde\cC_j(v)$.

    To sum up, the disjoint union formula \eqref{eq:wallsb} holds.\\

   \noindent {(a)} By the construction of walls, each $\cW_i(v)$ is contained in a numerical wall $\cW(v,w)\coloneq \{\sigma\in\Stab(\cT): \arg Z(w)=\arg Z(v)\}$ for some non-zero character $w\in \Lambda$ and $[w],[v]$ linear independent in $\Lambda_\R$. So the set $\tilde \cW_i(v)$ is  contained in $\pi_v(\cW(v,w))$. It follows that
    \begin{align*}
        \tilde \cW_i(v) & \subseteq \pi_v(\cW(v,w))\\
        & =\{\ts\in \sb_v^\dag(\cT)\mid\arg Z_{\sigma}(w)=\arg Z_{\sigma}(v)\text{ for some representative }\sigma\text{ of }\ts\}\\
        & \subseteq \{\ts\in \sb_v^\dag(\cT)\mid 0=\Im Z_\sigma(w)=B_{\ts}(w)\} = \sb^\dag_v(\cT)\cap \sb_w(\cT).
    \end{align*}
On each open local chart $U\xhookrightarrow{\Forg}\vperp$ of $\sb^\dag_v(\cT)$, the subset $ \Forg(\sb_w(\cT)\cap U)=(\vperp\cap \wperp)\cap \Forg(U)$. As $[w],[v]$ are linear independent in $\Lambda_\R$, the linear subspace $\vperp\cap \wperp$ is of real codimension one in $\vperp$. The second part of the statement holds.

By Proposition \ref{prop:wallchambonstab}.{(b)}, we have $M_\sigma(v)\neq \emptyset$  for every $\sigma$ on the wall $\cW_i(v)$. By Remark \ref{rem:stabv}, each whole wall $\cW_i(v)\subset \Stab_v$. It follows that the wall $\tilde \cW_i(v)=\pi_v(\cW_i(v))$.

Note that $\cW_i(v)$ is with real codimension one and the map $\pi_v$ is with equal dimensional fibers, so $\tilde \cW_i(v)$ is with codimension at most one. As on each open local chart, the wall $\tilde \cW_i(v)$ is contained in a real codimension one linear subspace. It follows that globally $\tilde \cW_i(v)$ is a real codimension one submanifold with boundary.\\

\noindent { (b)} Note that for every chamber $\cC_k(v)\not\subset \Stab_v(\cT)$, by Remark \ref{rem:stabv},  there exists $\sigma\in\cC_k(v)$ such that $M_\sigma(v)=\emptyset$. It follows that $M_{\cC_k(v)}(v)=\emptyset$. 

Therefore, the chamber $\cC_j(v)$ as that in the statement is contained in $\Stab_v(\cT)$. The chamber $\tilde \cC_j(v)$ is non-empty and path-connected. By \eqref{eq:wallsb}, each chamber $\tilde \cC_j(v)$ is open. By Lemma \ref{lem:nested}, the last part of the statement holds.
\end{proof}

\begin{Rem}\label{rem:wcsb}
Let $\Stab^{\text{nd}}(\cT)\coloneq \{\sigma\in \Stab(\cT)\colon \Re Z_\sigma, \Im Z_\sigma\text{ linear independent.}\}$ be the submanifold of non-degenerate stability conditions and $\Stab_v^{\text{nd}}(\cT)\coloneq \Stab^{\text{nd}}(\cT)\cap \Stab_v(\cT)$. Then the image of $\Stab^{\text{nd}}_v(\cT)$ under $\Forg\circ \pi_v$ is in $\vperp\setminus\{0\}$. 
Indeed, if $\Forg(\pi_v(\sigma))=0$, then there exists $\theta\in\R$ such that $\Im(e^{-i\pi \theta}Z_\sigma)=0$. It follows that $\sigma\in\Stab^{\text{nd}}(\cT)$. 

In general, walls on the non-degenerate locus $\Stab^{\text{nd}}(\cT)$ are with the most interests, respectively, walls on $\sb^\dag_v(\cT)\setminus\{\ts:B_{\ts}=0\}$. 

The scaling $\R$-action acts freely on $\sb^\dag_v(\cT)\setminus\{\ts:B_{\ts}=0\}$. Note that each wall and chamber is invariant under the scaling $\R$-action. On every open local chart, we can further projectivize the wall and chamber on $\bP(\vperp)$. Each wall is then the subset of a hyperplane. When $\rk\Lambda= 4$, the wall and chamber structure can be displayed on a plane.
\end{Rem}

When $\rk\Lambda =3$, by the observation above, one can interpret  Proposition \ref{prop:wallchambonsb}.{(a)} as the Bertram nested wall theorem which has been broadly used in the wall-crossing on the stability manifold of a polarized surface.

\begin{Cor}[Bertram nested wall theorem]\label{cor:nested}
    Assume that $\rk\Lambda =3$, then for every non-zero $v\in\Lambda$,  the walls $\cW_i(v)$ on $\Stab^{\text{nd}}(\cT)$ are all disjoint from each other.
\end{Cor}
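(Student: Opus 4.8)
The plan is to transport the statement to the reduced space $\sb_v^\dag(\cT)$, where in rank three a local chart is two-dimensional and, after quotienting by the scaling action, one-dimensional, so that each wall --- lying in a line through the origin --- collapses to a point; equivalently, one can run the argument directly on $\Stab^{\text{nd}}_v(\cT)$ using that $\Ker Z_\sigma$ is a line.

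First I would set up the contradiction: suppose $\sigma\in\Stab^{\text{nd}}(\cT)$ lies on two walls $\cW_i(v)$ and $\cW_j(v)$ with $i\neq j$. Recall from the construction in the proof of Proposition~\ref{prop:wallchambonsb}.(a), following \cite[Proposition~2.3]{BM:projectivity}, that each $\cW_k(v)$ is contained in a numerical wall $\cW(v,w_k)=\{\tau\in\Stab(\cT):\arg Z_\tau(w_k)=\arg Z_\tau(v)\}$ for a character $w_k$ whose image in $\Lambda_\R$ is not proportional to $v$, and that the wall $\cW_k(v)$ is attached to the primitive rank-two sublattice of $\Lambda$ generated by $v$ and $w_k$. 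By Proposition~\ref{prop:wallchambonstab}.(b) there is a $\sigma$-semistable object of class $v$, so by the support property $Z_\sigma(v)\neq 0$; and since $\sigma$ is nondegenerate and $\rk\Lambda=3$, the map $Z_\sigma\colon\Lambda_\R\to\C$ is surjective, so $\Ker Z_\sigma$ is a line in $\Lambda_\R$, and $v\notin\Ker Z_\sigma$.

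The key step is a linear-algebra computation. Since $\sigma\in\cW(v,w_i)$ the numbers $Z_\sigma(w_i),Z_\sigma(v)$ are nonzero with equal argument, so $Z_\sigma(w_i)=\lambda_i Z_\sigma(v)$ for some $\lambda_i\in\R_{>0}$; hence $0\neq w_i-\lambda_i v\in\Ker Z_\sigma$, so $\Ker Z_\sigma=\R(w_i-\lambda_i v)$ and therefore
\[
\mathrm{span}_\R(v,w_i)=\R v\oplus\Ker Z_\sigma .
\]
The right-hand side depends only on $\sigma$ and $v$, not on $i$, so the same computation with $w_j$ gives $\mathrm{span}_\R(v,w_j)=\mathrm{span}_\R(v,w_i)$. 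As $v,w_i,w_j$ are integral, the sublattices $\langle v,w_i\rangle$ and $\langle v,w_j\rangle$ have the same $\Q$-span, hence the same primitive closure in $\Lambda$; by the indexing of walls this forces $\cW_i(v)=\cW_j(v)$, a contradiction. I would then note that this is exactly the reduced picture of Remark~\ref{rem:wcsb}: there $\Forg\circ\pi_v$ maps $\Stab^{\text{nd}}_v(\cT)$ into $\vperp\setminus\{0\}$, the scaling $\R$-action is free on $\sb^\dag_v(\cT)\setminus\{\ts:B_{\ts}=0\}$, each wall $\tilde\cW_i(v)$ lies in the line $\vperp\cap w_i^{\perp}$ through the origin by Proposition~\ref{prop:wallchambonsb}.(a), hence projectivizes to a single point of $\bP(\vperp)$, and the computation above says distinct walls give distinct points.

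The step I expect to be the main obstacle is the last identification --- that two walls sharing the same potential $2$-plane must coincide. This does not follow from Propositions~\ref{prop:wallchambonstab} and \ref{prop:wallchambonsb} alone, since a priori distinct walls may overlap; it uses the finer structure of \cite[Proposition~2.3]{BM:projectivity}, where a wall for $v$ is indexed by a primitive rank-two sublattice of $\Lambda$, together with the elementary fact that integral rank-two lattices with the same $\R$-span have a common primitive closure. Everything else is linear algebra over $\Lambda_\R$, and the hypothesis $\rk\Lambda=3$ is used exactly once, to guarantee $\dim_\R\Ker Z_\sigma=1$ for a nondegenerate $\sigma$.
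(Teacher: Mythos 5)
Your proposal is correct and is essentially the paper's argument, just unpacked. The paper's proof (``on every local chart of $\sb^\dag_v(\cT)$, a projectivized wall is a point on $\bP(\vperp)$; therefore the walls are disjoint'') is compressing exactly the linear-algebra step you spell out: that for $\sigma$ on the numerical wall $\cW(v,w_i)$, one has $\mathrm{span}_\R(v,w_i)=\R v\oplus\Ker Z_\sigma$, so the line $\vperp\cap w_i^\perp$ inside the two-dimensional $\vperp$ is determined by $\sigma$ alone. Phrasing it via $\Ker Z_\sigma$ in $\Stab^{\text{nd}}_v$ and phrasing it via the one-dimensional subspace $\vperp\cap w_i^\perp$ of $\vperp$ through Proposition~\ref{prop:wallchambonsb}.(a) and Remark~\ref{rem:wcsb} are two descriptions of the same computation, and you correctly identify the dictionary in your final paragraph.

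You are also right that the genuinely delicate step --- turning ``two intersecting walls lie in the same potential two-plane $\R v\oplus\Ker Z_\sigma$'' into ``the two walls coincide'' --- is not covered by Propositions~\ref{prop:wallchambonstab} or \ref{prop:wallchambonsb} as stated, and the paper's three-sentence proof leaves it implicit as well. The cleanest way to close it inside the paper's framework is slightly different from the lattice-indexing argument you sketch: since both $\tilde\cW_i(v)$ and $\tilde\cW_j(v)$ are real codimension-one submanifolds with boundary of the two-dimensional $\sb^\dag_v(\cT)$, are invariant under the free scaling $\R$-action near a point $\ts$ with $B_{\ts}\neq 0$, and are both contained in the same line $\vperp\cap w^\perp$ through that point, they contain a common open arc of that line; a point-wise comparison of $M_{\ts'}(v)$ and its Jordan--H\"older behaviour on that arc, using Proposition~\ref{prop:wallchambonstab}.(b)--(c) as in \cite{BM:projectivity}, then shows they are the same wall. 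That said, your identification of where the work lies is exactly right, and your span computation is the correct and most transparent way to present the rank-three dimension count.
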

\begin{proof}
  By assumption, the real linear space $\vperp$ is with dimension two. By Remark \ref{rem:wcsb} and Proposition \ref{prop:wallchambonsb}, on every local chart of $\sb^\dag_v(\cT)$, a projectivized wall is a point on $\bP(\vperp)$. Therefore, the walls are all disjoint from each other.
\end{proof}

\subsection{Remark: Bayer--Macr\`i divisor} As a remark, we explain that the notion of the Cartier divisor class $\ell_{\sigma,\cE}$ on the moduli space $M_\sigma(v)$ as that in \cite[Proposition and Definition 3.2]{BM:projectivity} perfectly matches with the notion of reduced stability condition.

More precisely, one can make the following notion.
 \begin{Def}[Bayer--Macr\`i divisor]\label{def:BMdiv}
Let $X$ be a smooth projective variety over $\C$ and $v\in\Kn(X)$ be a non-zero numerical class. Assume that the lattice factors via the numerical Grothendieck group $\mathrm{K}(X)\to\Kn(X)\twoheadrightarrow\Lambda$. For every reduced stability condition $\ts\in\sb^\dag_v(X)$, assume that there is a family $\cE\in\Db(T\times X)$ of $\ts$-semistable objects of class $v$ parameterized by a proper algebraic space $T$ of finite type over $\C$.

The \emph{Bayer--Macr\`i divisor} $\ell_{\ts,\cE}$ on $T$ is defined as follows:  for every projective integral curve $C\subset T$, we set
\begin{align*}
    \ell_{\ts,\cE}([C])\coloneq B_{\ts} \left((p_X)_*\cE|_{C\times X}\right).
\end{align*}
\end{Def}

 \begin{Thm}[{\cite[Theorem 1.1]{BM:projectivity}} Positivity Lemma] \label{thm:bmdivnef}
     The divisor class $\ell_{\ts,\cE}$ is nef.
 \end{Thm}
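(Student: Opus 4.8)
The plan is to reduce the statement to the classical Positivity Lemma of Bayer--Macr\`i by exhibiting, for each reduced stability condition $\ts$ together with a chosen family $\cE$, an actual (honest) stability condition $\sigma$ on $\cT = \Db(X)$ for which $\cE$ is a family of $\sigma$-semistable objects and for which the Bayer--Macr\`i divisor $\ell_{\sigma,\cE}$ in the sense of \cite[Proposition and Definition 3.2]{BM:projectivity} agrees with $\ell_{\ts,\cE}$. The first step is therefore to pick a representative. Since $\ts \in \sb^\dag_v(X)$, by Notation \ref{not:3.3} there is a representative $\sigma$ of (a lift of) $\ts$ with $Z_\sigma(v) \in \R_{<0}$, i.e. $\Im Z_\sigma(v) = B_{\ts}(v) = 0$ and $\Re Z_\sigma(v) < 0$; after a shift $[\theta]$ we may normalize so that $v$ sits on the slice $\cP_\sigma(1)$. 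By Lemma \ref{lem:nested}, $\ts$-semistability of an object of class $v$ coincides with $\sigma$-semistability for \emph{any} such representative, so the family $\cE$ of $\ts$-semistable objects of class $v$ parametrized by $T$ is precisely a family of $\sigma$-semistable objects of class $v$ in the usual sense.

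The second step is to compare the two divisor classes. In \cite[Proposition and Definition 3.2]{BM:projectivity}, for a projective integral curve $C \subset T$ one sets
\begin{align*}
  \ell_{\sigma,\cE}([C]) \coloneq \Im\!\left(-\frac{Z_\sigma\!\left((p_X)_*\cE|_{C\times X}\right)}{Z_\sigma(v)}\right).
\end{align*}
Because $Z_\sigma(v)$ is a negative real number, dividing by it and taking $-\Im$ simply rescales $\Im Z_\sigma = B_{\ts}$ by the positive real constant $1/|Z_\sigma(v)|$. Hence $\ell_{\sigma,\cE} = \tfrac{1}{|Z_\sigma(v)|}\,\ell_{\ts,\cE}$ as divisor classes on $T$, up to this positive scalar. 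Nefness is insensitive to scaling by a positive constant, so it suffices to know $\ell_{\sigma,\cE}$ is nef. That is exactly the content of the Positivity Lemma \cite[Theorem 1.1]{BM:projectivity}, whose hypotheses are met here: $\sigma$ is a genuine Bridgeland stability condition (with respect to the lattice $\Lambda$, which by assumption factors through $\Kn(X)$), $X$ is smooth projective, and $\cE$ is a family of $\sigma$-semistable objects of class $v$ over the proper algebraic space $T$ of finite type. Invoking that theorem gives $\ell_{\ts,\cE}$ nef.

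The only genuinely delicate point — and the step I would watch most carefully — is the well-definedness bookkeeping: one must check that the chosen representative $\sigma$, its shift $[\theta]$, and the resulting identification of $\ell_{\sigma,\cE}$ with a positive multiple of $\ell_{\ts,\cE}$ do not depend on choices in a way that could flip a sign. The homological shift by $2\Z$ quotiented out in $\sb^\dag_v(X)$ acts by $Z_\sigma \mapsto Z_\sigma$ on the central charge (even shifts), so it changes neither the sign of $\Re Z_\sigma(v)$ nor $B_{\ts}$; and any two representatives $\sigma, \sigma'$ of the same $\ts$ with $\Im Z_\sigma = \Im Z_{\sigma'}$ and the same connected component of the fiber of $\Forg_{\Im}$ satisfy $d(\cP_\sigma, \cP_{\sigma'}) < 1$ by Proposition \ref{prop:convex}, hence give the same heart and same semistable objects of class $v$ by Lemma \ref{lem:nested} and Lemma \ref{lem:sameImimpliesheart}. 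Thus the comparison is canonical, and the argument is complete. Everything beyond this reduction is \cite[Theorem 1.1]{BM:projectivity} applied verbatim.
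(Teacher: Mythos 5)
Your reduction to \cite[Theorem 1.1]{BM:projectivity} is correct and is precisely what the paper intends: the theorem is stated as a direct citation, and your calculation makes explicit that $\ell_{\ts,\cE}=|Z_\sigma(v)|\cdot\ell_{\sigma,\cE}$ for any representative $\sigma$ with $Z_\sigma(v)\in\R_{<0}$ (guaranteed by Notation \ref{not:3.3}), so nefness transfers under the positive rescaling. The well-definedness checks you flag at the end are exactly those built into the definition of $\sb^\dag_v$ via Lemma \ref{lem:nested} and Proposition \ref{prop:convex}.
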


Let $\ts$ be a generic reduced stability condition in a chamber $\tilde \cC(v)$, then by \cite[Theorem 1.1]{BM:projectivity}, every reduced stability condition in $\tilde \cC(v)$  associates a nef divisor on $T$. 
\def\BM{\ensuremath{\mathsf{BM}_\cE}}
\begin{align}\label{eq377}
   \BM': \tilde\cC(v)\subset\sb^\dag_v(X) & \to \Nef^0(T)\subset N^1(T)\\
   \ts & \mapsto \ell_{\ts,\cE} \notag
\end{align}
It is clear that the divisor $\ell_{\ts,\cE}$ above only relies on $B_{\ts}$ and $\cE$. Moreover, for every $a,b\in\R$, $B_{\ts}$, and $B_{\ttau}$, we have $\ell_{aB_{\ts}+bB_{\ttau},\cE}([C])=a\ell_{B_{\ts},\cE}([C])+b\ell_{B_{\ttau},\cE}([C])$. So the map extends to a $\R$-linear map on $\vperp$, which can be viewed as an analogue to the Donaldson morphism:

    \begin{align}\label{eq:bm}
        \BM:\vperp & \to N^1(T): f\mapsto \ell_{f,\cE}.
    \end{align}

\begin{Rem} [MMP via wall-crossing]
    To describe the minimal model program of moduli spaces via wall-crossing, one may explore examples for which the following two properties hold. 
    \begin{enumerate}
        \item The map $\BM':\tilde \cC(v)\to \Nef^0(T)$ is an isomorphism.
        \item There are chambers $\tilde \cC_i(v)$ such that the extended map
        \begin{align*}
            \BM': \overline{\coprod \tilde \cC_i(v)}\xrightarrow{\Forg}\vperp\xrightarrow{\BM}\overline{\mathrm{Mov}(T)}
        \end{align*}
        is an isomorphism.  Chambers $\tilde \cC_i(v)$ are one-to-one corresponding to chambers $\cC_i$ of the movable cone of $T$.
    \end{enumerate}
    
    When $X$ is a K3 surface, abelian surface, the projective plane, Enriques surface, etc, one may consider $\Lambda=\kn(X)$, class $v$ with $\dim M(v)\geq4$, and $T=M(v)$. Then for most of the chambers, both properties hold, see \cite{ABCH:MMP,BM:walls,BM:projectivity,LZ:HilbP2,LZ:P2MMP,Wanmin:BM, Nuer:stabEnriques,MYY:wallcrossingK3} for more details of these examples.
\end{Rem}

\begin{Rem}[Strange duality]\label{rem:sd}
Let the lattice $\Lambda=\kn(X)$ and fix an open  subset $U\subset \sb(X)$ on which $\Forg:U\hookrightarrow \lbdd$ is an inclusion.

    Given a non-degenerate quadratic form $Q$ on $\Lambda$, the induced linear map $\tilde Q:\Lambda_\R\to \lbdd: w\mapsto Q(w,-)$  identifies $\Lambda_\R$ with $\lbdd$. For example, when $Q$ is the Euler pairing $\chi(-\otimes -)$, as that proved in \cite[Proposition 4.4]{BM:projectivity}, the map as that in \eqref{eq:bm} can also be expressed as
   \begin{align*}
  \BM=\lambda_\cE\circ \tilde Q^{-1} ,
\end{align*}
    where $\lambda_\cE$ is the Donaldson morphism as that in \cite[Definition 4.3]{BM:projectivity}.

    For a pair of reduced stability conditions $\ts_v,\ts_w\in U$ with $B_{\ts_v}=\tilde Q(v)$ and $B_{\ts_w}=\tilde Q(w)$ satisfying $Q(v,w)=0$, by definition, we have  $\ts_v\in \sb_w(X)$ and $\ts_w\in \sb_v(X)$.   Denote the Bayer--Macr\`i divisor on $M_{\ts_w}(v)$ (resp. $M_{\ts_v}(w)$) as $\ell_{w}$ (resp. $\ell_v$). One may ask under what kind of assumptions  there is an equality $h^0(M_{\ts_w}(v),\ell_w)=h^0(M_{\ts_v}(w),\ell_v)$. 
\end{Rem}

\section{Comparing reduced stability conditions}\label{sec:order}
In this section, we discuss a natural and simple relation $\lsm$ on reduced stability conditions.

\begin{Def}
Given two reduced stability conditions $\tilde{\sigma},\tilde{\tau}\in\sb(\cT)$, we define
\begin{align*}
\ts\lsm \ttau\colon\iff \cA_{\ts}\subset\cP_{\ttau}(<1).
\end{align*}
\end{Def}

We denote by $\ts<\ttau$ if $\ts\lsm\ttau$ and $\ttau\not\lsm\ts$.

Similar notion on hearts also appears in other literature such as \cite[Remark 2.3]{KQ:extquiver}. 

\begin{Lem}\label{lem:posaction}  
Let  $\ts\in\sb(\cT)$, $E\in\cP_{\ts}(1)$ and $F\in\cA_{\ts}[\leq 0]$. Let $\Phi\in\Aut(\cT)$ such that $\ts\lsm\Phi(\ts)$. Then we have $\Hom(\Phi(E),F)=0$.
\end{Lem}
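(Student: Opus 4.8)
\textbf{Proof proposal for Lemma \ref{lem:posaction}.}

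The plan is to unwind the definitions of the three hypotheses and combine them with the orthogonality axiom of a slicing. First I would record what $\ts\lsm\Phi(\ts)$ gives us: by definition, $\cA_{\ts}\subset \cP_{\Phi(\ts)}(<1)$, i.e. every object of the heart $\cA_{\ts}$ has all Harder--Narasimhan factors (with respect to the slicing $\cP_{\Phi(\ts)}$) of phase strictly less than $1$. Since $\cP_{\Phi(\ts)} = \Phi(\cP_{\ts})$ and $\cA_{\Phi(\ts)}=\Phi(\cA_{\ts})$, the object $\Phi(E)$ lies in $\cP_{\Phi(\ts)}(1)$, because $E\in\cP_{\ts}(1)$ and an autoequivalence carries the slice of phase $1$ to the slice of phase $1$. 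So the task reduces to showing $\Hom(G,F)=0$ whenever $G\in\cP_{\Phi(\ts)}(1)$ and $F\in\cA_{\ts}[\leq 0]$.

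The key step is then to control the phases of the HN factors of $F$ with respect to $\cP_{\Phi(\ts)}$. Since $\cA_{\ts}\subset\cP_{\Phi(\ts)}(<1)$ — more precisely $\cA_{\ts}\subset\cP_{\Phi(\ts)}((0,1))$, because $\cA_{\ts}=\cP_{\ts}((0,1])$ maps into $\cP_{\Phi(\ts)}(>0)$ automatically (hearts of bounded t-structures related by $\cA\subset\cA'[\le 0]$-type containments keep phases positive) and into $\cP_{\Phi(\ts)}(<1)$ by hypothesis — every object of $\cA_{\ts}[\leq 0]$ has all HN factors with respect to $\cP_{\Phi(\ts)}$ of phase $<1$; shifting down by $k\geq 0$ only decreases phases. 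Hence $\phi^+_{\Phi(\ts)}(F) < 1$. Now the slicing axiom Definition \ref{def:slicing}.(2) applies: $G$ is semistable of phase $1$ and $F$ has top phase strictly below $1$, so $\Hom(G,F)=0$. More carefully, I would filter $F$ by its HN factors $F_1,\dots,F_m$ with phases $\theta_1>\dots>\theta_m$, all $<1$; applying $\Hom(G,-)$ to the triangles and using $\Hom(\cP(1),\cP(\theta_j))=0$ for each $\theta_j<1$ gives $\Hom(G,F)=0$ by induction on $m$.

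The only genuine subtlety — the step I would be most careful about — is verifying that $\cA_{\ts}\subset\cP_{\Phi(\ts)}(<1)$ already forces $\cA_{\ts}\subset\cP_{\Phi(\ts)}((0,1))$, i.e. that the phases are also bounded below by $0$; equivalently that $\ts\lsm\Phi(\ts)$ implies $\cA_{\ts}\subset\cA_{\Phi(\ts)}[\leq 0]$, which is asserted in the text right after the definition of $\lsm$ ("whenever $\ts\lsm\ttau$, it follows that $\cA_{\ts}\subset\cA_{\ttau}[\leq 0]$"). Granting that remark, the argument is routine. Concretely: $\cA_{\ts}\subset\cP_{\Phi(\ts)}(<1)$ and $\cA_{\ts}\subset\cA_{\Phi(\ts)}[\leq 0]=\cP_{\Phi(\ts)}((-\infty,1])$ together with the fact that $\cA_{\ts}$ is a heart (so closed under no shifts) pin the HN phases of any $0\neq A\in\cA_{\ts}$ into $(0,1)$ — the lower bound $>0$ because $A\in\cA_{\Phi(\ts)}[\le 0]$ would be violated on a negative-phase factor only if... here I would simply cite the displayed consequence in the excerpt rather than reprove it. Then $F\in\cA_{\ts}[\le 0]$ has all $\cP_{\Phi(\ts)}$-phases $<1$, and the orthogonality of the slicing finishes the proof.
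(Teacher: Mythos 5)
Your proof is correct and takes the same route as the paper's (which is a one-liner modulo what appears to be a typo: the paper writes ``$F\in\cP_{\ts}(1)$'' where it should read ``$F\in\cA_{\ts}[\le 0]$''). However, the paragraph you flag as ``the only genuine subtlety'' is in fact a detour: you do not need $\cA_{\ts}\subset\cP_{\Phi(\ts)}((0,1))$, i.e.\ you never need the lower bound on phases. The definition of $\lsm$ gives $\cA_{\ts}\subset\cP_{\Phi(\ts)}(<1)$, and since $\cP_{\Phi(\ts)}(<1)$ is shift-down stable and extension-closed, it contains $\cA_{\ts}[\le 0]$ directly: for $k\ge 0$, $\cA_{\ts}[-k]\subset\cP_{\Phi(\ts)}(<1-k)\subset\cP_{\Phi(\ts)}(<1)$, and the extension closure stays inside. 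So $\phi^+_{\Phi(\ts)}(F)<1$ holds without invoking the remark ``$\cA_{\ts}\subset\cA_{\ttau}[\le 0]$'', and the orthogonality axiom of the slicing finishes the proof exactly as you describe. The essential ingredient is the single containment plus shift-monotonicity of $\cP(<1)$, nothing more.
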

\begin{proof}
    By the assumption, we have  $\Phi(E)\in \cP_{\Phi(\ts)}(1)$. As $\ts\lsm \Phi(\ts)$, by definition, we have $F\in\cP_{\ts}(1)\subset \cA_{\ts}\subset\cP_{\Phi(\ts)}(<1)$. It follows that $\Hom(\Phi(E),F)=0$.
\end{proof}

\begin{Not}
For a reduced stability condition $\ts\in \sb(\cT)$, we denote 
\begin{align}\label{eq:tadef}
    \ta(\ts)\coloneqq\{h\in\lbdd\mid(\cA_{\ts},h+iB_{\ts})\text{ is a representative of }\ts\}.
\end{align}
By Proposition \ref{prop:localisom}, we may let $U$ be an open neighborhood of $\ts$ such that $\Forg|_U:U\to \lbdd$ is homeomorphic onto its image. For $h\in\lbdd$ and $\delta\in \R$ with $|\delta|$ small enough, we denote 
\begin{align*}
    \ts+\delta h\coloneq (\Forg|_U)^{-1}(B_{\ts}+\delta h)
\end{align*}
the deformed reduced stability of $\ts$ along the direction $h$. In particular, when $|\delta|$ is sufficiently small, the reduced stability condition $\ts+\delta h$ is well-defined for all directions $h$ with $||h||\leq 1$ and does not rely on the choice of $U$.
\end{Not}

\begin{Lem}\label{lem:partialorder}
The relation $\lsm$ is transitive. Let $\ts$ and $\ttau$ be two reduced stability conditions. The following statements hold.
    \begin{enumerate}[(1)]
        \item If $\ts\lsm\ttau\lsm\ts$, then $\cA_{\ts}=\cA_{\ttau}$ and $\cP_{\ts}(1)=\cP_{\ttau}(1)=\emptyset$.
        \item Assume that there are representatives $\sigma$ and $\tau$ of $\ts$ and $\ttau$ respectively satisfying $\sigma=\tau\cdot \tilde g$ for some $\tilde g=(g,M)\in \glt$, see Notation \ref{not:glt}, with $g(0)<0$ (resp. $g(0)>0$), then $\ts\lsm \ttau$ (resp. $\ttau\lsm \ts$). In particular, $\pi_\sim(\sigma)\lsm\pi_\sim(\sigma[\theta])$ when $\theta>0$.
        \item Assume that $h\in\ta(\ts)$, then $\ts+\delta h\lsm \ts\lsm\ts-\delta h$ for  $\delta>0$ sufficiently small.
        \item Assume that $\ts$ is non-degenerate, then $\ts<\ts-\delta h$ for  $\delta>0$ sufficiently small.
    \end{enumerate}
\end{Lem}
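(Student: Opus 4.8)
The plan is to treat transitivity and the four items in order, each time unwinding the definition $\ts\lsm\ttau\iff\cA_{\ts}\subset\cP_{\ttau}(<1)$ together with the inclusion $\cP_{\ttau}(<1)\subset\cA_{\ttau}[\leq 0]$ already recorded in the introduction.

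\emph{Transitivity and (1).} If $\ts\lsm\ttau\lsm\tilde\rho$, then $\cA_{\ts}\subset\cP_{\ttau}(<1)\subset\cA_{\ttau}[\leq 0]$; since $\ttau\lsm\tilde\rho$ gives $\cA_{\ttau}\subset\cP_{\tilde\rho}(<1)$ and the latter is extension--closed and stable under negative shifts, we get $\cA_{\ttau}[\leq 0]\subset\cP_{\tilde\rho}(<1)$, hence $\cA_{\ts}\subset\cP_{\tilde\rho}(<1)$, i.e. $\ts\lsm\tilde\rho$. For (1), from $\ts\lsm\ttau\lsm\ts$ we obtain $\cA_{\ts}\subset\cA_{\ttau}[\leq 0]$ and $\cA_{\ttau}\subset\cA_{\ts}[\leq 0]$; the equivalence $\cA\subset\cA'[a,b]\iff\cA'\subset\cA[-b,-a]$ turns the second inclusion into $\cA_{\ts}\subset\cA_{\ttau}[\geq 0]$, so $\cA_{\ts}\subset\cA_{\ttau}[\leq 0]\cap\cA_{\ttau}[\geq 0]=\cA_{\ttau}$, and symmetrically $\cA_{\ttau}\subset\cA_{\ts}$; hence $\cA_{\ts}=\cA_{\ttau}$. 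Then $\ttau\lsm\ts$ reads $\cA_{\ts}=\cA_{\ttau}\subset\cP_{\ts}(<1)$; since $\cP_{\ts}(1)\subset\cA_{\ts}$ and no nonzero object can simultaneously satisfy $\phi=1$ and $\phi^+<1$, this forces $\cP_{\ts}(1)=\emptyset$, and symmetrically $\cP_{\ttau}(1)=\emptyset$.

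\emph{(2).} Writing $\sigma=\tau\cdot\tilde g$ with $\tilde g=(g,M)$ one has $\cP_\sigma(\theta)=\cP_\tau(g(\theta))$, so $\cA_{\ts}=\cP_\tau((g(0),g(1)])$ with $g(1)=g(0)+1$. If $g(0)<0$ then $g(1)<1$, so every object of $\cA_{\ts}$ has $\tau$-HN phases $<1$, i.e. $\cA_{\ts}\subset\cP_\tau(<1)=\cP_{\ttau}(<1)$ and $\ts\lsm\ttau$. If $g(0)>0$ then $g(1)>1$ and, $g$ being an increasing bijection, $\cP_\sigma(<1)=\cP_\tau(<g(1))\supset\cP_\tau((0,1])=\cA_{\ttau}$, giving $\ttau\lsm\ts$. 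For the last clause, $\sigma[\theta]=\sigma\cdot\tilde g$ with $g(\psi)=\psi+\theta$, hence $g(0)=\theta>0$; applying the case just proved with $\tau=\sigma$ and "$\sigma$"$=\sigma[\theta]$ yields $\pi_\sim(\sigma)\lsm\pi_\sim(\sigma[\theta])$.

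\emph{(3).} Since $h\in\ta(\ts)$, the datum $\sigma:=(\cA_{\ts},h+iB_{\ts})$ is a stability condition representing $\ts$. For $\delta>0$ small let $\tilde g_\pm=(g_\pm,M_\pm)$ be the lift near the identity of $M_\pm=\begin{pmatrix}1&0\\\mp\delta&1\end{pmatrix}\in\glr$; then $\sigma\cdot\tilde g_\pm$ has central charge $M_\pm^{-1}\circ Z_\sigma=h+i(B_{\ts}\pm\delta h)$ and, lying close to $\sigma$, represents $\ts\pm\delta h$ by Proposition \ref{prop:localisom}. From $\begin{pmatrix}\cos g_\pm(0)\pi\\\sin g_\pm(0)\pi\end{pmatrix}=c_0 M_\pm\begin{pmatrix}1\\0\end{pmatrix}=c_0\begin{pmatrix}1\\\mp\delta\end{pmatrix}$ with $c_0>0$ and $g_\pm(0)$ close to $0$, one reads off $g_+(0)<0$ and $g_-(0)>0$. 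Feeding the pairs $(\sigma\cdot\tilde g_+,\sigma)$ and $(\sigma\cdot\tilde g_-,\sigma)$ into (2) then gives $\ts+\delta h\lsm\ts$ and $\ts\lsm\ts-\delta h$ respectively.

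\emph{(4) and the main difficulty.} By (3) we already have $\ts\lsm\ts-\delta h$, so only $\ts-\delta h\not\lsm\ts$ remains. If instead $\ts-\delta h\lsm\ts$ held, then (1) applied to the pair $\ts$, $\ts-\delta h$ would force $\cP_{\ts}(1)=\emptyset$; this is impossible when $\ts$ is non-degenerate, since non-degeneracy supplies a nonzero $\ts$-semistable object of phase $1$. Hence $\ts<\ts-\delta h$. The step requiring the most care is (3): establishing that the deformation $\ts\mapsto\ts\pm\delta h$ is realized by the $\glt$-shear $\tilde g_\pm$ (via the uniqueness in Bridgeland's local homeomorphism) and correctly extracting the sign of $g_\pm(0)$ from the $\glt$-presentation; once (3) is available, (4) is immediate from (1), the only delicate point being the precise sense in which "non-degenerate" rules out $\cP_{\ts}(1)=\emptyset$.
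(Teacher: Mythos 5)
Your arguments for transitivity and items (1)--(3) are correct and essentially parallel the paper's, with only cosmetic variations: for transitivity you pass through $\cA_{\ttau}[\leq 0]$ rather than observing $\cA_{\ts}\subset\cP_{\ttau}(<1)\iff\cP_{\ts}(\leq 1)\subset\cP_{\ttau}(<1)$; for (1) you combine the two inclusions via $\cA_{\ttau}[\leq 0]\cap\cA_{\ttau}[\geq 0]=\cA_{\ttau}$ instead of sandwiching $\cP_{\ts}(\leq 1)=\cP_{\ttau}(<1)=\cP_{\ts}(<1)$; and (2), (3) match the paper's computations with the $\glt$-shear.

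Item (4) has a genuine gap. You assert that non-degeneracy of $\ts$ forces $\cP_{\ts}(1)\neq\emptyset$; this is false. Non-degeneracy (Definition \ref{def:degeneratesb}) means only that $\ts$ is not in the closure of the locus where the heart is locally constant, and there is no implication that semistable objects exist at the single phase $1$. Concretely, on a curve of genus $\geq 1$ take $\ts_t$ with $t$ irrational: the reduced central charge $\vd_t=\deg-t\rk$ vanishes on no nonzero integral class, so $\cP_{\ts_t}(1)=\emptyset$, yet $\ts_t$ is non-degenerate because the heart $\Coh^{\sharp t}(C)$ changes as $t$ crosses any rational in every neighborhood. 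Emptiness of a single slice $\cP_\sigma(\theta_0)$ never gives degeneracy; the relevant criterion is Lemma \ref{lem:degenerateno1phase}, which requires a whole \emph{open interval} of empty slices $\cP_\sigma((-\theta,\theta))=\emptyset$.

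The paper's proof of (4) is accordingly more work than ``(1) plus the definition of non-degenerate.'' Assuming $\ts-\delta_0 h\lsm\ts$, one iterates (3) along the segment to obtain $\ts\lsm\ts-\delta h\lsm\ts-\delta_0 h\lsm\ts$ for \emph{every} $\delta\in[0,\delta_0]$, then applies (1) to each pair to get $\cP_{\ts-\delta h}(1)=\emptyset$ for all $\delta$ in the interval. Rotating the representative $\sigma_t=(\cA,h+i(B_{\ts}-th))$ by a small $[\theta]$ moves its imaginary part to a multiple of $B_{\ts}-\delta h$ for some nearby $\delta$, so the emptiness of all those $\cP_{\ts-\delta h}(1)$ translates into an open gap $\cP_{\sigma_t}((1-\theta_0,1+\theta_0))=\emptyset$; only then does Lemma \ref{lem:degenerateno1phase} give that $\ts-th$ is degenerate for $t\in(0,\delta_0)$, and letting $t\to 0$ puts $\ts$ in the closure of the degenerate locus, contradicting non-degeneracy. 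You need to reproduce this whole chain; the single-slice observation does not suffice.
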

\begin{proof}
Note that $\cA_{\ts}\subset\cP_{\ttau}(<1)$ is equivalent to the condition that $\cP_{\ts}(\leq 1)\subseteq \cP_{\ttau}(<1)$. So the relation is transitive.

\noindent {(1)} It follows that $\cP_{\ts}(\leq 1)\subseteq \cP_{\ttau}(<1)\subseteq\cP_{\ts}(<1)$. Therefore, we must have $\cP_{\ts}(\leq 1)= \cP_{\ttau}(<1)=\cP_{\ts}(<1)$. So $\cP_{\ts}(1)=\emptyset$, $\cP_{\ts}((0,1))=\cA_{\ts}$, and $\cP_{\ts}((0,1))=\cP_{\ttau}((0,1))$.\\

\noindent {(2)} Assume that $g(0)<0$, then
\begin{align*}
    \cA_{\ts}=\cA_\sigma=\cP_{\tau\cdot \tilde g}((0,1])=\cP_{\ttau}((g(0),g(1)])\subset \cP_{\ttau}(<1).
\end{align*}
The statement holds. 

If $g(0)>0$, then $\tau=\sigma\cdot \tilde g^{-1}$. Note that $\tilde g^{-1}=(M,g')$ for some $g'(0)<0$, the statement holds.\\

\noindent { (3)} Let $\sigma$ be the representative of $\ts$ with central charge $Z_\sigma=h+iB_{\ts}$, then there exists an open neighborhood $U$ of $\sigma$ such that $\Forg|_U$ is homeomorphic onto its image. It follows that 
\begin{align}\label{eq433}
    (\Forg|_U)^{-1}(h+i(B_{\ts}-\delta h))=\sigma \cdot \tilde g,
\end{align}
where $\glt\ni \tilde g=(g,\begin{pmatrix}
        1 & 0 \\ \delta & 1
    \end{pmatrix})$ with $g(0)=\frac{1}{\pi}\arctan(\delta)>0$. 

    When $\delta$ is sufficiently small, by Proposition \ref{prop:localisom}, the reduced stability condition $\pi_\sim( (\Forg|_U)^{-1}(h+i(B_{\ts}-\delta h)))=\ts-\delta h$. By  statement {(2)}, we have $\ts\lsm\ts-\delta h$.\\

    \noindent { (4)} We adopt the notion from the last statement and continue the argument. 
    
    Suppose, for contradiction, that there exists $\delta_0>0$ sufficiently small such that $\ts\not<\ts-\delta_0 h$. Then it must be that $\ts-\delta_0 h\lsm \ts$. By statement {(3)}, we may assume that for all   $0\leq \delta\leq \delta_0$, the following relation holds: $\ts\lsm \ts-\delta h\lsm \ts-\delta_0 h\lsm \ts$. 

    It then follows by statement {(1)} that for every $0\leq \delta\leq \delta_0$, we have  $\cA_{\ts}=\cA_{\ts-\delta h}$ and 
    \begin{align}\label{eq444}
        \cP_{\ts}(1)=\cP_{\ts-\delta h}(1)=\emptyset. 
    \end{align}
    
    For every $t\in(0,\delta_0)$, we denote $\sigma_t\coloneq  (\Forg|_U)^{-1}(h+i(B_{\ts}-t h))$ as the deformed stability conditions of $\sigma$ along the direction $-ih$.
    
   By \eqref{eq433} and \eqref{eq444}, for every $t\in(0,\delta_0)$, there exists $\theta_0(t,\delta_0)>0$ sufficiently small such that for every $|\theta|<\theta_0(t,\delta_0)$, we have
    \begin{align}\label{eq455}
        \cP_{\sigma_t}(\theta)[1]=\cP_{\sigma_t}(1+\theta)=\cP_{\sigma_{\delta}}(1)=\cP_{\ts-\delta h}(1)=\emptyset,
    \end{align}
for some $\delta\in(0,\delta_0)$.

By Lemma \ref{lem:degenerateno1phase}, the reduced stability condition $\pi_\sim(\sigma_t)$ is degenerate for every $t\in(0,\delta_0)$. This contradicts the non-degenerate assumption on $\ts$. So the statement holds.
\end{proof}

\begin{Prop}\label{prop:lifttostab}
     Let $\ts$ be a  reduced stability condition and $0\neq h_0\in\lbdd$. Assume that there is an open neighborhood $ W\subset \lbdd$ of $h_0$ and $\delta>0$ such that for every $h\in W$, we have $\ts+th\lsm \ts$ (resp. $\ts\lsm \ts+th$) for every $0<t<\delta$ (resp. $-\delta<t<0$). Then $h_0\in\ta(\ts)$. 
\end{Prop}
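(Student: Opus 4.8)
The plan is to realize $(\cA_\ts, h_0 + iB_\ts)$ as a genuine stability condition representing $\ts$, by deforming a fixed representative of $\ts$ within $\Stab(\cT)$. Fix a representative $\sigma_0 = (\cA_\ts, h_{\ts} + iB_\ts)$ of $\ts$ together with a quadratic form $Q_0$ of signature $(2, \rho - 2)$ witnessing its support property, where $\rho = \rk\Lambda$; after shrinking $W$ we may assume it is a bounded convex neighbourhood of $h_0$, and the two halves of the hypothesis then combine to $\ts + th \lsm \ts \lsm \ts - th$ for all $h \in W$ and $0 < t < \delta$. Consider the segment $Z_s := g_s + iB_\ts$ with $g_s := (1 - s) h_{\ts} + s h_0$, $s \in [0, 1]$ (if $h_0 = h_{\ts}$ there is nothing to prove). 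It suffices to show that $\sigma_0$ deforms within $\Stab(\cT)$ along the path $(Z_s)_{s \in [0,1]}$: by Lemmas \ref{lem:nested} and \ref{lem:sameImimpliesheart} every lift then has heart $\cA_\ts$ and imaginary part $B_\ts$, so the path lies in one fibre of $\Forg_{\Im}$ through $\sigma_0$, whence the endpoint $(\cA_\ts, h_0 + iB_\ts)$ represents $\ts$, i.e. $h_0 \in \ta(\ts)$.

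First I check that each $Z_s$ is a stability function on $\cA_\ts$. Since $B_\ts = \Im Z_{\sigma_0}$ and $\cA_\ts = \cA_{\sigma_0}$, one has $B_\ts \geq 0$ on $\cA_\ts$ with $B_\ts(E) = 0$ only for $0 \neq E \in \cP_\ts(1)$, and for such $E$ also $h_{\ts}(E) = \Re Z_{\sigma_0}(E) < 0$; so it is enough to show $h_0(E) < 0$ for every $0 \neq E \in \cP_\ts(1)$. This is where $\ts + th \lsm \ts$ enters. Given such $E$, pick $t > 0$ small and let $\tau$ be the representative of $\ts + th_0$ with central charge $h_{\ts} + i(B_\ts + th_0)$ obtained from $\sigma_0$ via the local homeomorphism $\Forg'$, so that $d(\cP_\tau, \cP_{\sigma_0})$ is small. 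The relation $\cA_\tau \subset \cP_\ts(<1)$ — the meaning of $\ts + th_0 \lsm \ts$ — gives, after applying shifts, $\cP_\tau(\leq 1) \subset \cP_\ts(<1)$, hence $\Hom(E, F) = 0$ for every $F \in \cP_\tau(\leq 1)$. As $E$ admits a $\tau$-Harder--Narasimhan filtration, $\phi^-_\tau(E) \leq 1$ would force $\HN^-_\tau(E) \in \cP_\ts(<1)$ to receive a nonzero map from $E \in \cP_\ts(1)$, a contradiction; therefore $\phi^-_\tau(E) > 1$, while $\phi^+_\tau(E) \leq \phi^+_{\sigma_0}(E) + d(\cP_\tau, \cP_{\sigma_0}) < 2$. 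Thus $E \in \cP_\tau((1, 2))$, so $\Im Z_\tau(E) < 0$, i.e. $(B_\ts + th_0)(E) < 0$, i.e. $h_0(E) < 0$ since $B_\ts(E) = 0$. The same argument with an arbitrary $h \in W$ in place of $h_0$ yields $h(E) < 0$ for all $h \in W$; this neighbourhood statement is used below.

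It remains to produce a single quadratic form $\widetilde Q$ of signature $(2, \rho - 2)$ witnessing the support property of $\sigma_0$ and negative definite on $\Ker Z_s$ for every $s \in [0, 1]$; then \cite[Proposition A.5]{BMS:stabCY3s} deforms $\sigma_0$ to stability conditions $\sigma_s = (\Forg')^{-1}(Z_s)$ for all $s$, and the argument of the first paragraph finishes the proof. I expect the construction of $\widetilde Q$ to be the main obstacle. It should follow the scheme of Lemma \ref{lem:B10} and the proof of Proposition \ref{prop:convex}: the kernels $\Ker Z_s = \Ker g_s \cap \Ker B_\ts$ form a one-parameter family of codimension-one subspaces of the hyperplane $\Ker B_\ts$, and one enlarges $\nega(Q_0)$ to a cone $\nega(\widetilde Q)$ of the correct signature containing all of them and still contained in $\nega(Q_0) \cup M$, where $M$ is an open region disjoint from $\{\lambda([E]) : E \text{ is } \sigma_0\text{-semistable}\}$. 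The existence of such an $M$ is exactly where the hypothesis — and its validity over the whole neighbourhood $W$ — is needed: a $\sigma_0$-semistable $E$ with $\lambda([E])$ near $\bigcup_s \Ker Z_s$ has $|Z_{\sigma_0}([E])|$ small relative to $\|\lambda([E])\|$, which by the support property of $\sigma_0$ confines $E$ to $\sigma_0$-phase near $0$ or $1$ and to a bounded range of the parameter $s$ — a range outside which $Q_0$ itself is already negative definite on $\Ker Z_s$ — and the inequalities $h|_{\cP_\ts(1)} < 0$ for $h \in W$ from the previous paragraph are what make this bookkeeping close up, playing the role that the phase-gap estimate $|\phi - \phi'| > 1 - \delta$ played in the proof of Proposition \ref{prop:convex}. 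Carrying out this construction uniformly over $s \in [0,1]$ is the technically delicate step.
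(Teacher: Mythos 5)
Your first paragraph is correct: for $0 \neq E \in \cP_{\ts}(1)$ and any $h \in W$, picking the representative $\tau$ of $\ts + th_0$ close to $\sigma_0$ and using $\cA_\tau \subset \cP_\ts(<1)$ does force $\phi^-_\tau(E) > 1$ while $\phi^+_\tau(E) < 2$, hence $(B_\ts + th)(E) < 0$ and so $h(E) < 0$. This observation is genuinely part of what is needed (it rules out characters in $\Ker B_\ts$ from the dangerous region). You also correctly locate the tools — Lemma~\ref{lem:B10}, \cite[Proposition A.5]{BMS:stabCY3s} — and the overall strategy of deforming along the segment from $Z_{\sigma_0}$ to $h_0 + iB_\ts$ matches the paper.

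However, the proof has a genuine gap at exactly the place you flag as ``the technically delicate step,'' and the sketch you give of how to close it does not work. What must be shown is the paper's Lemma~\ref{lem:p44}: no $(\sigma + ith)$-\emph{stable} object, for \emph{any} $t \in (-\delta, \delta)$, has character in the region
\[
M_h = \Bigl(\textstyle\bigcup_{|a|<\delta} \Ker(B_\ts + ah)\Bigr) \cap \{v : h(v)f(v) < 0\},
\]
which crucially contains characters $v$ with $B_\ts(v) \neq 0$. Your step-one inequality $h|_{\cP_\ts(1)} < 0$ only excludes the slice $M_h \cap \Ker B_\ts$ (and only for $\sigma_0$, not the nearby stability conditions). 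To cover the rest, the paper runs an induction on $Q_0(\lambda(E))$ (Sublemma~\ref{sublem1}) to propagate stability across the whole family $\{\sigma + ith\}_{|t|<\delta}$, and then a case analysis on the sign of the parameter $r_0$ with $(B_\ts + r_0 h)(E) = 0$, each case producing a contradiction with one of the $\lsm$ relations $\ts + th \lsm \ts$ or $\ts \lsm \ts + th$. Nothing in your sketch reproduces this: the heuristic ``$E$ with $\lambda([E])$ near $\bigcup_s \Ker Z_s$ has $|Z_{\sigma_0}([E])|$ small relative to $\|\lambda([E])\|$'' is actually backwards — the support property for $\sigma_0$ gives a \emph{lower} bound on $|Z_{\sigma_0}([E])|/\|\lambda([E])\|$ for semistable $E$, so that ratio is never small; it is $|Z_s([E])|$ (or $|B_\ts([E])|$) that can be small — and the appeal to a ``bounded range of the parameter $s$'' does not translate into an exclusion of $M_h$. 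So the analogue of the phase-gap argument from Proposition~\ref{prop:convex} that you invoke does not carry over, and the key lemma is simply absent. Separately, the degenerate configurations that the paper treats individually (notably $B_\ts = 0$, and $h_0 \in \mathrm{span}\{\Re Z_{\sigma_0}, B_\ts\}$, where one instead deduces degeneracy via Lemma~\ref{lem:degenerateno1phase} and Proposition~\ref{prop:degeneratesb}) are not addressed and would break the straight-line deformation as written.
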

\begin{proof}
    Let the stability condition $\sigma=(\cA,f+i B)$ be a representative of $\ts$. Let $Q$ be a $\Q$-coefficient quadratic form on the lattice $\Lambda$ satisfying the support property for $\sigma$. 

We give the proof according to different cases of the linear relation of $B,h_0$, and $f$. Firstly,  we may assume that $f\neq cB$ for any $c\in \R$, since otherwise $0\in \ta(\ts)$. By Proposition \ref{prop:degeneratesb}, the space $\ta(\ts)=\lbdd$. So $h_0$ is automatically contained in $\ta(\ts)$.\\

\noindent\textbf{Case I}:  We deal with the case that $\dim\mathrm{span}_\R\{h_0,B\}=2$. 

\textbf{Case I.1}: Assume that $h_0\in \mathrm{span}_{\R}\{f,B\}$. Taking account of the $\glt$-action on $\sigma$, we know that $h_0\in\pm\ta(\ts)$. Assume $h_0\in -\ta(\ts)$, then by Lemma \ref{lem:partialorder}.{(3)}, we have $\ts\lsm \ts +\delta h_0$ and $\ts-\delta h_0\lsm \ts$ for $\delta>0$ sufficiently small. Together with the assumption on $h_0$ and Lemma \ref{lem:partialorder}.{(1)}, we have $\cP_{\ts+\delta h_0}(1)=\emptyset$ when $|\delta|$ is sufficiently small. By \eqref{eq455} and Lemma  \ref{lem:degenerateno1phase}, the reduced stability condition $\ts$ is degenerate. By Proposition \ref{prop:degeneratesb}, we have $h_0\in\lbdd=\ta(\ts)$.\\

\textbf{The Main Case I.2}: We may now assume that $\dim \mathrm{span}_\R\{f,h_0,B\}=3$. We may shrink the neighborhood $W$ and $\delta$ if necessary so that 
  \begin{itemize}
      \item $Q|_{\Ker f\cap \Ker (B+th)}$ is negative definite for every $h\in W$ and $|t|<\delta$;
      \item and $W\cap \mathrm{span}_{\R}(f,B)=\emptyset$.
  \end{itemize}
   In particular,  we have $f\in\ta(\ts+t h)$ for every $|t|<\delta$. More precisely, by \cite[Proposition A.5]{BMS:stabCY3s}, see also Proposition and Definition  \ref{propdef:stabQsigma}, there is a connected subspace $S\subset \Stab(Q,\sigma,\cT)$ containing $\sigma$ such that $\Forg|_S$ is homeomorphic onto $\{f+i(B+th)\colon h\in W,|t|<\delta\}$. We denote by $\sigma+ith\coloneq (\Forg|_S)^{-1}\left(f+i(B+th)\right)$.
   
   Denote by
    \begin{align}\label{eq:4.6}
       M_h\coloneq \left(\cup_{a\in(-\delta,\delta)}\Ker(B+ah)\right)\cap \{v\in\Lambda_\R:h(v)f(v)<0\}. 
    \end{align} 
    \begin{Lem}\label{lem:p44}
       Let $E$  be a $(\sigma+ith)$-stable object  for some $t\in(-\delta,\delta)$, then the character $[E]\notin M_h$.
    \end{Lem}
   We postpone the proof of Lemma \ref{lem:p44} after the proof of the proposition. \\

Let $s>0$ be sufficiently small so that $h_0-sf\in W$. We may apply Lemma \ref{lem:B10} by setting $h=B$, $f_1=f$, $f_2=h_0-sf$. There exists $d>0$ sufficiently small so that the set $M_d$ as that in Lemma \ref{lem:B10} is contained in $M_{f_2}$ as that defined in \eqref{eq:4.6}. 

Let $N=\frac{1}{s}$, then by Lemma \ref{lem:B10}, there exists a quadratic form $\tilde Q$ such that 
 \begin{align} \label{eq4499}
       \Ker B\cap \left(\cup_{0\leq t\leq N}\Ker (f+t(h_0-sf))\right)  \subset \nega(\tilde Q)\subset M_{f_2}\cup \nega(Q).
     \end{align}

By Lemma \ref{lem:p44}, there is no $\sigma$-stable object $E$ with character in $M_{f_2}$. So the quadratic form $\tilde Q$ gives the support property for $\sigma$.

By \cite[Proposition A.5]{BMS:stabCY3s} and \eqref{eq4499}, the stability condition $\sigma$ deforms to stability conditions with central charges $f+t(h_0-sf)+iB$, for $0\leq t \leq \tfrac{1}{s}$.  By Proposition \ref{prop:convex}, we have $h_0=f+\tfrac{1}{s}(h_0-sf)\in \ta(\ts)$. The statement holds in this case.\\

 \noindent \textbf{Case II}: We then deal with the remaining case that $\dim\mathrm{span}_\R\{h_0,B\}= 1$.
  
 \textbf{Case II.1}: Assume that $B\neq 0$. We may choose $g\in \lbdd$ linear independent of $B$, the assumption in the proposition holds for $h_0\pm \epsilon g$ when $\epsilon>0$ is sufficiently small. By Case I, we have $h_0\pm\epsilon g\in\ta(\ts)$. By Proposition \ref{prop:convex}, we have $h_0\in \ta(\ts)$. Or actually by Proposition \ref{prop:degeneratesb}, we have $\ta(\ts)=\lbdd$, the reduce stability condition is degenerate in this case.

 \textbf{Case II.2}: Assume that $B=0$. We have $\cA=\cP_{\ts}(1)$ and $f(\cA_{\ts})<0$. For every $h\in \lbdd$, the assumption that $\ts\lsm \ts-th$ for $t$ sufficiently small implies that $\cP_{\ts}(1)\subset \cA_{\ts-th}((0,1))$. In particular, we have $(B-th)(\cA)>0$. It follows that $\sigma_0=(\cA_0=\cP_{\ts}(1),h_0+iB)$ is a pre-stability condition. Note that $h(\cA)<0$ for $h$ in an open neighborhood of $h_0$, so $\sigma_0$ satisfies the support property. Finally, it is clear that $d(\cP_\sigma,\cP_{\sigma_0})=0$, by Proposition \ref{prop:convex}, we have $\sigma\sim \sigma_0$. Therefore, we have $h_0\in\ta(\ts)$.
\end{proof}
\begin{proof}[Proof of Lemma \ref{lem:p44}]
 Suppose there exists a $(\sigma+ith)$-stable object  for some $t\in(-\delta,\delta)$ with character in $M_h$. Then the set 
 \begin{align*}
     S=\{Q(E):[E]\in M_h, E\text{ is $(\sigma+ith)$-stable object  for some $t\in(-\delta,\delta)$}\}
 \end{align*}
 is nonempty.
 Note that the quadratic from $Q$ is with $\Q$-coefficient, the values $\{Q(E): E\in \cT\}$ are discrete. As $Q$ is a quadratic form for the support property of every $\{\sigma+ith:t\in(-\delta,\delta)\}$, we have $s\geq0$ for every $s\in S$. So there exists a minimum value $s_0$ in $S$. \\

Let $E$ be a $(\sigma+it_0h)$-stable object  for some $t_0\in(-\delta,\delta)$ with character $[E]\in M_h$ and $Q(E)=s_0$.
\begin{SubLem}\label{sublem1}
     The object $E$ is $(\sigma+ith)$-stable   for every $t\in(-\delta,\delta)$.
\end{SubLem}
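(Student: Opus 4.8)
The plan is to show that $T_E\coloneqq\{t\in(-\delta,\delta)\colon E\text{ is }(\sigma+ith)\text{-stable}\}$ is both open and closed in $(-\delta,\delta)$; since $t_0\in T_E$ and $(-\delta,\delta)$ is connected this forces $T_E=(-\delta,\delta)$, which is the assertion. Openness is the standard fact that stability is an open condition on $\Stab(\cT)$ (see the proof of Lemma~\ref{lem:nested}), applied to the path $t\mapsto\sigma+ith$ inside the deformation space $S$ and using that all these stability conditions share the single support-property form $Q$. For closedness, let $t_*\in(-\delta,\delta)$ lie in the closure of $T_E$. Since instability is also an open condition, $E$ is $(\sigma+it_*h)$-semistable; and if $t_*\notin T_E$ then, $T_E$ being open, $E$ must be \emph{strictly} $(\sigma+it_*h)$-semistable. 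I will show this is impossible.

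Write $Z_*\coloneqq Z_{\sigma+it_*h}$ and $\psi\coloneqq\phi_{\sigma+it_*h}(E)$. The Jordan--H\"older filtration of $E\in\cP_{\sigma+it_*h}(\psi)$ provides $(\sigma+it_*h)$-stable factors $E_1,\dots,E_k$ with $k\geq2$, $\sum_i[E_i]=[E]$, and $Z_*([E_i])=\mu_i\,Z_*([E])$ where $\mu_i\in(0,1)$ and $\sum_i\mu_i=1$. Each $E_i$ is $(\sigma+it_*h)$-stable with $t_*\in(-\delta,\delta)$, hence $Q(E_i)\geq0$; and since $Q$ is negative definite on $\Ker Z_*$ while the $Z_*([E_i])$ are positively proportional, the super-additivity of the support-property form along a ray gives $\textstyle\sum_iQ(E_i)\leq Q(\sum_i[E_i])=Q(E)=s_0$. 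As $k\geq2$ and every $Q(E_i)\geq0$, some factor, say $E_1$, has $Q(E_1)<s_0$; the boundary case $s_0=0$ requires a separate, easier argument, in which all $Q(E_i)$ vanish, the $[E_i]$ are forced to be positive multiples of $[E]$, and $E_1$ then destabilizes $E$ already at $t_0$, contradicting $(\sigma+it_0h)$-stability of $E$.

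To conclude it remains to check that $[E_1]\in M_h$: then $E_1$ is $(\sigma+it_*h)$-stable with $[E_1]\in M_h$ and $Q(E_1)<s_0$, contradicting $s_0=\min S$. Set $w_1\coloneqq[E_1]-\mu_1[E]$; then $Z_*(w_1)=0$, so $w_1\in\Ker Z_*$, on which $-Q$ is positive definite, and expanding $0\le Q(E_1)=\mu_1^2Q(E)+2\mu_1Q([E],w_1)+Q(w_1)$ together with $Q(E)=s_0$ shows $\|w_1\|\le C\mu_1$ for a constant $C$ depending only on $[E]$, $s_0$ and $Q$; hence $[E_1]$ is a positive multiple of $[E]+w_1/\mu_1$ with $w_1/\mu_1$ confined to a fixed bounded set of directions in $\Ker Z_*$. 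Showing that this forces $[E_1]\in M_h$ — i.e.\ that a destabilizing Jordan--H\"older factor of an object whose class lies in the open cone $M_h$ again has class in $M_h$ — is the crux of the argument and the step I expect to be the main obstacle; it should follow from the fact that $M_h$ is open and is cut out only by the linear functionals $f$ and $B+ah$, combined with the quantitative control just obtained and the choices of $W$ and $\delta$, but it is the delicate point. Granting it, the contradiction with $s_0=\min S$ shows $t_*\notin T_E$ is impossible, so $T_E$ is closed and equals $(-\delta,\delta)$.
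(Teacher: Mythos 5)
Your open-closed connectedness framing, and the observation that some Jordan--H\"older factor $E_1$ must satisfy $Q(E_1)<s_0$, are both sound and parallel what the paper does (the paper invokes \cite[Lemma 3.9]{BMS:stabCY3s} for the strict drop when $Q(E)>0$, and \cite[Proposition A.8]{BMS:stabCY3s} for the degenerate $Q(E)=0$ case rather than your ad hoc argument). But you have a genuine gap exactly where you say you do: the claim $[E_1]\in M_h$ is not established, and the quantitative bound $\|w_1\|\le C\mu_1$ you derive does not help. That bound says $[E_1]/\mu_1$ lies in a $C$-ball around $[E]$ inside $\Ker Z_*$-translates, but $M_h$ is an open cone cut out by \emph{sign} conditions ($f(v)h(v)<0$ together with the condition that some $B+ah$ with $|a|<\delta$ vanishes on $v$), and being within a fixed bounded distance of $[E]$ gives no control on signs. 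Indeed $M_h$ need not contain any neighborhood of $[E]$ scaled into $\Ker Z_*$-directions, so ``close in direction'' is not a usable substitute.

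The paper closes this gap by a sign analysis rather than a magnitude estimate: writing $(B+s_0 h+bf)([E])=0$, applying a suitable $\widetilde{\mathrm{GL}}^+(2,\R)$ element so all factors $E_j$ satisfy $(B+s_0 h+bf)(E_j)=0$ and $f(E_j)>0$, and then splitting on $b=0$ versus $b\ne 0$. In the first case, $h([E])<0$ with $[E]=\sum[E_j]$ forces some $h(E_k)<0$ directly; in the second, the comparison $\frac{b}{r_0-s_0}=\frac{h([E])}{f([E])}<0$ together with $[E]=\sum[E_j]$ produces a $k$ with $\frac{h([E_k])}{f([E_k])}\le\frac{h([E])}{f([E])}$, from which they compute $(B+r_0h)([E_k])/(B+s_0h)([E_k])\le0$ and conclude $(B+th)([E_k])=0$ for some $t$ between $r_0$ and $s_0$. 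This is the step you would need to supply; it cannot be replaced by the norm bound. Absent it, the minimality of $s_0$ is never contradicted and the sublemma does not follow.
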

\begin{proof}[Proof of Sublemma \ref{sublem1}]
 If $Q(E)=0$, then by  \cite[Proposition A.8]{BMS:stabCY3s}, the object $E$ is $\tau$-stable for all $\tau\in\Stab(Q,\sigma,\cT)$ and in particular for all $(\sigma+ith)$ with $t\in(-\delta,\delta)$. So we may assume $Q(E)>0$,
 
  Suppose $E$ is not $(\sigma+ith)$-stable  for some $t\in(-\delta,\delta)$, then there exists $s_0\in(-\delta,\delta)$ such that $E$ is strictly $(\sigma+is_0h)$-semistable. Denote by $E_1,\dots,E_m$ the Jordan--H\"older factors of $E$ with respect to $\sigma+is_0h$. In particular, as $Q(E)>0$, by \cite[Lemma 3.9]{BMS:stabCY3s}, we have $Q(E_j)<Q(E)$ for every $1\leq j\leq m$.
 
 As $[E]\in M_h$, we may assume $f([E])>0$ and
 \begin{align}\label{eq41692}
     (B+r_0h)([E])=0
 \end{align} for some $|r_0|<\delta$. 
 
 Assume
 \begin{align}\label{eq41697}
     (B+s_0h+bf)([E])=0
 \end{align}
 for some $b\in \R$, then there exists $g\in\glt$ such that the central charge of $(\sigma+is_0h)\cdot g$ is $f+i(B+s_0h+bf)$. Note that the phases of $E_j$ and $E$ are the same with respect to $(\sigma+is_0h)\cdot g$, we have  \begin{align}\label{eq:47}
     (B+s_0h+bf)(E_j)=0 \text{ and } f(E_j)>0 \text{ for all }1\leq j\leq m.
 \end{align}
As $[E]\in M_h$, we have $h([E])<0$. If $b=0$, then as $[E]=\sum [E_j]$, there exist $E_k$ with $h(E_k)<0$. In particular, together with \eqref{eq:47}, we have $[E_k]\in M_h$, which contradicts to the minimum assumption on $Q(E)$.

Otherwise $b\neq 0$, by \eqref{eq41692} and \eqref{eq41697}, we have 
\begin{align*}
    \frac{b}{r_0-s_0}=\frac{h([E])}{f([E])}<0.
\end{align*}

As $[E]=\sum [E_j]$, there exists $1\leq k\leq m$ such that
\begin{align}\label{eq:48}
    \frac{h([E_k])}{f([E_k])}\leq \frac{h([E])}{f([E])}=\frac{b}{r_0-s_0}<0\implies \frac{r_0-s_0}b\frac{h([E_k])}{f([E_k])}\geq 1.
\end{align}

Together with \eqref{eq:47}, it follows that 
\begin{align*}
   \frac{(B+r_0h)([E_k])}{(B+s_0h)([E_k])}=\frac{\left((r_0-s_0)h-bf\right)([E_k])}{-bf([E_k])}=-\frac{r_0-s_0}b\frac{h([E_k])}{f([E_k])}+1\leq 0.
\end{align*}
   So there exist $t\in [r_0,s_0)$ (or $(s_0,r_0]$) such that $(B+th)([E_k])=0$. Together with \eqref{eq:48}, we have $[E_k]\in M_h$. This contradicts with the minimum assumption on $Q(E)$. So the statement holds.  
\end{proof}

\begin{figure}[h]
    \centering
\scalebox{0.8}{
\begin{tikzpicture}[scale=1.5]

  \draw (-3,0) -- (4.5,0) node[below right] {$\Ker B$};

  \filldraw (-1.5,0) circle (0.05);
  \draw[->] (-1.5,-1) node[below]{$\Ker (B+if)$}--(-1.5,-0.08) ;
  \draw[thick] (-1.5,0) circle (0.5);

  \filldraw (3,0) circle (0.05);
  \node at (3,0.3) {$\Ker B \cap \Ker h$};

  \draw[thick] (-3,0.4) node[above]{$\Ker(B+\delta h)$} -- (3.6,-0.03);

  \draw[thick] (-3,-0.4) node[below]{$\Ker(B-\delta h)$}-- (3.6,0.04);

  \draw[->] (0,-1) node[below]{$[E]$} -- (-0.3,0.15);
\end{tikzpicture}
}
    \caption{The stable character $[E]$ invalidates $\ts+ith\lsm\ts$.}
    \label{fig:3}
\end{figure}

\noindent {\em Back to the proof of Lemma \ref{lem:p44}}: As $[E]\in M_h$, there exists $|r_0|<\delta$ such that $(B+r_0h)(E)=0$ as that in \eqref{eq41692}.  Replacing $E$ by $E[1]$ if necessary, we may assume that $f([E])>0$, then $h([E])<0$. By Sublemma \ref{sublem1} and replacing $E$ by $E[2m]$ if necessary,  we may assume 
\begin{align}\label{eq41723}
    E\in \cP_{\sigma+ir_0h}(0).
\end{align} For every $|t|<\delta$ the object $E\in\cP_{\sigma+ith}(\theta)$ for some $|\theta|<\tfrac{1}{2}$. By \eqref{eq41692}, we have $B(E)>0$ (resp. $<0,=0$) when and only when $r_0>0$ (resp. $<0,=0$). Therefore, the object $E\in \cP_\sigma(\theta)$ for some $\theta>0$ (resp. $<0$) when $B(E)>0$ (resp. $<0$).\\

When $r_0>0$, by \eqref{eq41723}, we have $E[1]\in \cP_{\sigma+ir_0h}(1)\subset \cA_{\sigma+ir_0h}=\cA_{\ts+r_0h}$. On the other hand, we have $E\in\cP_\sigma(\theta)$ for some $\theta>0$, so $E[1]\notin\cA_\sigma[\leq 0]=\cA_{\ts}[\leq 0]$. It follows that $\cA_{\ts+r_0h}\not\subseteq\cA_{\ts}[\leq 0]$, which contradicts to the assumption that $\ts+r_0h\lsm \ts$. 

When $r_0<0$, by \eqref{eq41723}, we have $E[1]\notin\cP_{\sigma+ir_0h}(<1)=\cP_{\ts+r_0h}(<1)$. On the other hand, we have $E[1]\in \cP_\sigma(\theta+1)\subset\cA_{\sigma}=\cA_{\ts}$.  It follows that $\cA_{\ts}\not\subseteq\cP_{\ts+r_0h}(<1)$, which contradicts to the assumption that $\ts\lsm \ts+r_0h$. 

When $r_0=0$, then by Sublemma \ref{sublem1}, for every $0<t<\delta$, we have $E\in \cP_{\sigma+ith}(\theta)$ for some $\theta<0$. It follows that $E[1]\in \cA_{\ts+th}$ and $E[1]\notin \cP_{\ts}(<1)$. Therefore, we have $\cA_{\ts+th}\not\subseteq \cP_{\ts}(<1)$, which contradicts to the assumption that $\ts+th\lsm \ts$.

As a summary, in every case of $r_0$, we get the contradiction. So when $|t|<\delta$, there is no $(\sigma+th)$-stable object with character in $M_h$. The statement hold.
\end{proof}

\noindent By Proposition \ref{prop:lifttostab}, the missing information in $\sb(\cT)$ from $\Stab(\cT)$ can be recovered by the relation $\lsm$. Therefore, the whole stability manifold $\Stab(\cT)$ can be reconstruct from $(\sb(\cT),\lsm)$ as a topological space. More precisely, we make the following notion:
\begin{align}
 \notag   \ta_{\ts}\sb_\Lambda(\cT)&\coloneq \left\{h\in \Hom(\Lambda,\R)\colon\begin{aligned}
        \exists\text{ open }W\ni h\text{ in }\Hom(\Lambda,\R)\text{ and }\delta>0 \text{ such that }\\ \ts+tg\lsm \ts\lsm \ts-tg,\; \forall g\in W\text{ and } 0<t<\delta
    \end{aligned}\right\};\\ \label{eq41226}
    \ta\sb_\Lambda(\cT)&\coloneq \left\{ (\ts,h)\in\sb_\Lambda(\cT)\times \Hom(\Lambda,\R): h\in \ta_{\ts}\sb_\Lambda(\cT)\right\}.
\end{align}

\begin{Cor}\label{cor:recon}
The map $\pi_\sim\times \Forg_{\Re Z}:\Stab(\cT)\to \ta\sb(\cT)$ is a homeomorphism.
\end{Cor}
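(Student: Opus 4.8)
\textbf{Proof plan for Corollary \ref{cor:recon}.}

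The plan is to exhibit the map $\Psi \coloneq \pi_\sim \times \Forg_{\Re Z} \colon \Stab(\cT) \to \ta\sb(\cT)$ as a continuous bijection with continuous inverse, working through the local charts already built. First I would check that $\Psi$ lands in $\ta\sb(\cT)$: given $\sigma = (\cA, Z)$ with $Z = h + iB$ (so $B_{\pi_\sim(\sigma)} = B$ and $h = \Re Z_\sigma$), I must produce an open $W \ni h$ and $\delta > 0$ with $\pi_\sim(\sigma) + tg \lsm \pi_\sim(\sigma) \lsm \pi_\sim(\sigma) - tg$ for all $g \in W$, $0 < t < \delta$. Using Proposition \ref{prop:localisom} to get a local homeomorphic chart around $\pi_\sim(\sigma)$, each $h + i(B + \delta g)$ corresponds, via the local chart of $\Stab(\cT)$ at $\sigma$ (Theorem \ref{thm:spaceasamfd}), to a deformation $\sigma \cdot \tilde g$ for a $\tilde g = (g_0, \begin{pmatrix} 1 & 0 \\ -\delta & 1 \end{pmatrix})$-type element with $g_0(0) < 0$ (as in the computation \eqref{eq433} in the proof of Lemma \ref{lem:partialorder}); by Lemma \ref{lem:partialorder}.(2) this gives $\pi_\sim(\sigma) + \delta g \lsm \pi_\sim(\sigma)$, and the reversed sign gives the other relation. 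Since the local chart is a homeomorphism onto an open set, the same $\delta$ works uniformly for $g$ in a small ball $W$, so indeed $h \in \ta_{\pi_\sim(\sigma)}\sb(\cT)$ and $\Psi$ is well-defined; it is continuous because $\pi_\sim$ and $\Forg_{\Re Z}$ both are.

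Next I would show $\Psi$ is injective. If $\Psi(\sigma_1) = \Psi(\sigma_2)$ then $\pi_\sim(\sigma_1) = \pi_\sim(\sigma_2)$ (so $\sigma_1 \sim \sigma_2$, hence $\Im Z_{\sigma_1} = \Im Z_{\sigma_2}$ and $\cA_{\sigma_1} = \cA_{\sigma_2}$ by Lemmas \ref{lem:nested}, \ref{lem:sameImimpliesheart}) and $\Re Z_{\sigma_1} = \Re Z_{\sigma_2}$; thus $Z_{\sigma_1} = Z_{\sigma_2}$ and $\cA_{\sigma_1} = \cA_{\sigma_2}$, which forces $\sigma_1 = \sigma_2$ since a pre-stability condition is determined by $(\cA, Z)$. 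For surjectivity, take $(\ts, h) \in \ta\sb(\cT)$; the defining condition on $h$ is exactly the hypothesis of Proposition \ref{prop:lifttostab} (with $h_0 = h$), so $h \in \ta(\ts)$, i.e.\ there is a representative $\sigma = (\cA_{\ts}, h + iB_{\ts})$ of $\ts$, and then $\Psi(\sigma) = (\ts, h)$. Thus $\Psi$ is a continuous bijection.

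Finally I would verify that $\Psi^{-1}$ is continuous, which is the step I expect to require the most care. The idea is to check this locally: around any $(\ts_0, h_0) \in \ta\sb(\cT)$, pick a representative $\sigma_0$ of $\ts_0$ with $Z_{\sigma_0} = h_0 + iB_{\ts_0}$, and let $U_0 \subset \Stab(\cT)$ be a neighborhood on which $\Forg_Z$ is a homeomorphism onto an open $W_0 \times iV_0 \subset \Hom(\Lambda, \R) \oplus i\Hom(\Lambda,\R)$ (Theorem \ref{thm:spaceasamfd}), shrunk so that Proposition \ref{prop:localisom} identifies $\pi_\sim(U_0)$ homeomorphically with $V_0$ via $\Forg$. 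I claim that on a neighborhood of $(\ts_0, h_0)$ inside $\ta\sb(\cT)$, the map $(\ts, h) \mapsto (\Forg(\ts), h) \in V_0 \times W_0$ is a homeomorphism onto an open subset, and that $\Psi \circ \Forg_Z^{-1}|_{U_0}$ agrees with this identification; then $\Psi^{-1}$ restricted to this neighborhood is $\Forg_Z^{-1}|_{U_0}$ composed with $(\ts,h) \mapsto (\Forg(\ts), h)$, hence continuous. The main technical point is to confirm that the subset of $\ta\sb(\cT)$ lying over $U_0$ is precisely $\{(\ts, h) : \Forg(\ts) \in V_0,\ h \in W_0,\ (\cA_{\ts}, h + iB_{\ts}) \in U_0\}$ and that this is open and homeomorphic to an open subset of $V_0 \times W_0$ — i.e.\ that the fiberwise condition $h \in \ta_{\ts}\sb(\cT)$ cuts out an open (in fact locally a full neighborhood, using Proposition \ref{prop:lifttostab} together with the openness of $U_0$) condition compatibly across nearby $\ts$. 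Granting the diagram \eqref{diag:1} and Corollary \ref{cor:localchart}, this reduces to the observation that $\Forg_Z|_{U_0}$ simultaneously trivializes the $\pi_\sim$-fibers (their $\Re Z$-coordinate) and the $\Forg$-chart on $\sb(\cT)$, so both coordinates of $\Psi$ become the literal coordinate projections of $W_0 \times iV_0$, making $\Psi|_{U_0}$ a homeomorphism onto its image; patching over all such $U_0$ (which cover $\Stab(\cT)$) finishes the proof.
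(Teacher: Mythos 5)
Your proposal is correct and follows essentially the same route as the paper: the identity $\ta_{\ts}\sb(\cT)=\ta(\ts)$ (one inclusion from Proposition~\ref{prop:lifttostab}, the other from Lemma~\ref{lem:partialorder}.(3) together with the openness/uniformity content of Lemma~\ref{lem:smalldeform}, which you invoke implicitly when choosing a ball $W\subset\ta(\ts)$ with a uniform $\delta$), followed by the local-chart comparison via Proposition~\ref{prop:localisom} and diagram~\eqref{diag:1}. You simply expand into explicit well-definedness, bijectivity, and inverse-continuity steps what the paper compresses into two citation lines; the underlying argument is the same.
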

\begin{proof}
    By Proposition \ref{prop:lifttostab}, Lemma \ref{lem:partialorder}.{(3)} and Lemma \ref{lem:smalldeform}, for every $\ts\in\sb(\cT)$, we have $\ta_{\ts}\sb_\Lambda(\cT)=\ta(\ts)$. By  Proposition \ref{prop:localisom} and the diagram \eqref{diag:1}, the statement holds.
\end{proof}

\begin{Rem}
    There are some natural questions on the notion `$\lsm$' concerning the topology and compactification of the space $\sb(\cT)$. As they are away from the main topic of this paper, we just post two questions here without further comments.
    \begin{enumerate}[(1)]
        \item Let $\ts,\ttau\in\sb(\cT)$ be on the same connected component. Assume that there exist open neighborhoods $U\ni \ts$ and $V\ni \ttau$ such that $\ts'\lsm \ttau'$ for every $\ts'\in U$ and $\ttau'\in V$.
         Does there exists a path $\gamma:[0,1]\to \sb(\cT)$ with $\gamma(0)=\ts$, $\gamma(1)=\ttau$ such that $\gamma(t_1)\lsm\gamma(t_2)$ for every $t_1<t_2$? Are all such paths homotopic to each other?
         \item  Let $\gamma:[0,1)\to \sb(\cT)$ be a `bounded' path of `increasing' (decreasing) reduced stability conditions. In other words, there exists $\ts,\ttau$ such that $\ts\lsm \gamma(t_1)\lsm \gamma(t_2)\lsm \ttau$ for every $t_1<t_2$ (or for every $t_1>t_2$). Then does there exist a limit of weak reduced stability condition $\gamma(1)=(\cA,Z_I)$?
    \end{enumerate}
\end{Rem}

We may also make a similar \emph{binary relation}  `$\lsm$' on $\Stab(\cT)$ as that on $\sb(\cT)$. 
\begin{Def}
For two stability conditions $\sigma,\tau\in\Stab(\cT)$, we define
\begin{align*}
\sigma\lsm \tau & :\iff \cP_{\sigma}(\theta)\subset\cP_{\tau}(<\theta) \text{ for every }\theta\in \R.\\
\sigma\lsmeq \tau & :\iff \cP_{\sigma}(\theta)\subset\cP_{\tau}(\leq\theta) \text{ for every }\theta\in \R.
\end{align*}
\end{Def}
\noindent It is usually difficult for two stability conditions to be comparable. In many cases, in the small neighborhood of a stability conditions $\sigma$, another stability condition $\tau$ satisfies the relation $\sigma\lsm \tau$ when and only when $\sigma=\tau\cdot \tilde g$ for some $\tilde g=(g,M)$ with $g(0)< 0$. 

\noindent It is worth mentioning that the definition makes sense for stability conditions with respect to different lattices as well.

The notion will be useful in setting up the restriction lemma in Section \ref{sec:reslem}. We set up some of its first properties here.
\begin{Lem}\label{lem:eqdefforlsmonstab}
Let $\sigma,\tau\in\Stab(\cT)$ and $\Phi$ an exact autoequivalence on $\cT$. Then the following statements are equivalent:
\begin{enumerate}[(1)]
    \item $\sigma\lsm \tau$.
    \item For every non-zero $E\in \cT$, $\phi^+_\sigma(E)>\phi^+_\tau(E)$ and $\phi^-_\sigma(E)>\phi^-_\tau(E)$.
    \item For every $\sigma$-stable $E\in \cT$, $\phi_\sigma(E)>\phi^+_\tau(E)$. Or for every $\tau$-stable object $E\in\cT$, $\phi_\tau(E)<\phi^-_\sigma(E)$.
    \item $\cP_\tau(\theta)\subset \cP_\sigma(>\theta)$ for every $\theta\in \R$.
    \item $\Phi(\sigma)\lsm\Phi(\tau)$.
    \item $\pi_\sim(\sigma\cdot \tilde g)\lsm \pi_\sim(\tau\cdot \tilde g)$ for every $\tilde g=(g,M)\in\glt$.
    \item $\pi_\sim(\sigma[\theta])\lsm \pi_\sim(\tau[\theta])$ for every $\theta\in(0,1]$.
\end{enumerate}
Same statements  hold for $\lsmeq$ by replacing $>$ with $\geq$. 
\end{Lem}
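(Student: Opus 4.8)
The plan is to prove the chain of equivalences by establishing enough implications to connect all seven conditions, then noting that every step works verbatim with non-strict inequalities. I would organize the argument around condition (2) as a hub, since it is the most concrete reformulation: translate the containment $\cP_\sigma(\theta)\subset\cP_\tau(<\theta)$ into a statement about phases, then leverage the existence of Harder--Narasimhan filtrations to pass between ``all objects'', ``semistable objects'', and ``stable objects''.

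First I would prove $(1)\iff(2)$. For $(1)\implies(2)$: given a nonzero $E$, consider its HN filtration with respect to $\sigma$; its top HN factor $\HN^+_\sigma(E)$ lies in $\cP_\sigma(\phi^+_\sigma(E))\subset\cP_\tau(<\phi^+_\sigma(E))$, so $\phi^+_\tau(\HN^+_\sigma(E))<\phi^+_\sigma(E)$; since $\phi^+_\tau(E)\le\phi^+_\tau(\HN^+_\sigma(E))$ (the HN factors of $E$ are all in phases $\le\phi^+_\sigma(E)$, and by the seesaw/triangle argument $\phi^+_\tau$ of an extension is bounded by the max of the $\phi^+_\tau$ of the pieces), we get $\phi^+_\tau(E)<\phi^+_\sigma(E)$. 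Actually I must be careful here: it is cleaner to argue $\phi^+_\tau(E)\le\max_i\phi^+_\tau(A_i)$ where $A_i$ are the $\sigma$-HN factors, each $A_i\in\cP_\sigma(\theta_i)$ with $\theta_i\le\phi^+_\sigma(E)$, hence $\phi^+_\tau(A_i)<\theta_i\le\phi^+_\sigma(E)$. The $\phi^-$ statement is dual. For $(2)\implies(1)$: if $E\in\cP_\sigma(\theta)$ then $\phi^+_\sigma(E)=\phi^-_\sigma(E)=\theta$, so $\phi^+_\tau(E)<\theta$, i.e. $E\in\cP_\tau(<\theta)$. Next, $(2)\iff(3)$: $(2)\implies(3)$ is immediate by restricting to stable objects (using $\phi^+=\phi$ for stable objects and the given inequality). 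For $(3)\implies(2)$ I would take HN then JH factors: any nonzero $E$ has $\phi^+_\sigma(E)=\phi_\sigma(S)$ for some $\sigma$-stable $S$ occurring as the top stable factor of $\HN^+_\sigma(E)$, and $\phi^+_\tau(E)\le\phi^+_\tau(\text{its }\sigma\text{-HN factors})$, each of which is $<$ the corresponding $\sigma$-phase by applying the stable-object hypothesis to all $\sigma$-JH factors. The ``or'' clause in (3) is handled by the symmetric argument with the roles of $\sigma,\tau$ swapped, which connects to (4).

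Then I would dispatch the easier equivalences. $(1)\iff(4)$: $\cP_\sigma(\theta)\subset\cP_\tau(<\theta)$ for all $\theta$ is equivalent to $\cP_\tau(\theta)\subset\cP_\sigma(>\theta)$ for all $\theta$ --- this is a formal ``transpose'' of the containment, provable directly: if $F\in\cP_\tau(\theta)$ had some $\sigma$-HN factor $A\in\cP_\sigma(\psi)$ with $\psi\le\theta$, then $A\in\cP_\tau(<\psi)\subset\cP_\tau(<\theta)$ contradicting $\Hom(F,A)\ne0$ when $A$ is the bottom factor (phases of $\tau$-HN factors of $A$ are all $<\psi\le\theta$, but $A$ receives a nonzero map from $F\in\cP_\tau(\theta)$, forcing $\phi^-_\tau(A)\ge\theta$). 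The reverse is symmetric. $(1)\iff(5)$: $\Phi$ being an exact autoequivalence sends $\cP_\sigma(\theta)$ to $\cP_{\Phi(\sigma)}(\theta)$ and likewise for $\tau$ (by Notation \ref{not:actiononstab}), and $\Phi$ is fully faithful, so the containment is preserved in both directions. $(1)\iff(6)$: by Notation \ref{not:glt}, $\cP_{\sigma\cdot\tilde g}(\theta)=\cP_\sigma(g(\theta))$ and $\cP_{\tau\cdot\tilde g}(\theta)=\cP_\tau(g(\theta))$; since $\pi_\sim$ only collapses slicings at distance $<1$ with the same $\Im Z$, and here we should instead directly unwind $\pi_\sim(\sigma\cdot\tilde g)\lsm\pi_\sim(\tau\cdot\tilde g)$ via the $\sb$-definition $\cA_{\pi_\sim(\sigma\cdot\tilde g)}\subset\cP_{\pi_\sim(\tau\cdot\tilde g)}(<1)$, which reads $\cP_\sigma((g(0),g(1)])\subset\cP_\tau(<g(1))$. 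A short computation shows this holds for every $\tilde g$ iff $\cP_\sigma(\theta)\subset\cP_\tau(<\theta)$ for every $\theta$: the ``if'' is clear since $\cP_\sigma((g(0),g(1)])$ consists of objects with $\sigma$-phases $\le g(1)$, hence $\tau$-phases $<g(1)$; the ``only if'' follows by choosing $\tilde g$ so that $g(1)$ equals a prescribed $\theta$ and $g(0)$ is just below $\theta$, capturing $\cP_\sigma(\theta)$ alone. Finally $(6)\implies(7)$ is specialization to $\tilde g$ of the form $[\theta]$ with $\theta\in(0,1]$, and $(7)\implies(1)$ runs the same unwinding: $\pi_\sim(\sigma[\theta])\lsm\pi_\sim(\tau[\theta])$ means $\cP_\sigma((\theta,\theta+1])\subset\cP_\tau(<\theta+1)$, and letting $\theta$ range over $(0,1]$ these containments assemble to $\cP_\sigma(\psi)\subset\cP_\tau(<\psi)$ for all $\psi\in\R$ by periodicity $\cP(\psi+1)=\cP(\psi)[1]$.

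The main obstacle I anticipate is the bookkeeping in $(3)\implies(2)$ and in the ``transpose'' $(1)\iff(4)$: one must carefully argue that an inequality of top/bottom phases propagates correctly through extensions and through the HN/JH filtrations, and that taking the supremum/infimum over filtration factors interacts correctly with the strict inequality (strictness survives because there are only finitely many factors). Everything else is essentially formal manipulation of slicing axioms. I would remark once, at the end, that replacing every ``$<$'' by ``$\le$'' and every ``$>$'' by ``$\ge$'' leaves all arguments valid --- strictness was used only cosmetically, never to break a limit --- so the parallel statement for $\lsmeq$ follows by the identical proof.
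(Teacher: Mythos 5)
Your overall plan matches the paper's: use (2) as the computational hub, translate the categorical containments into inequalities on $\phi^\pm$, pass to HN and JH factors to handle (3), and dispatch (4)--(7) by the $\glt$-action formulas and Hom-vanishing.

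There is one genuine gap, in the $\phi^-$ part of $(1)\implies(2)$. You write ``the $\phi^-$ statement is dual,'' but the argument you give for $\phi^+$ does \emph{not} dualize. For $\phi^+$, each $\sigma$-HN factor $A_i\in\cP_\sigma(\theta_i)$ lands by (1) in $\cP_\tau(<\theta_i)$, which gives an \emph{upper} bound $\phi^+_\tau(A_i)<\theta_i$, and these upper bounds propagate through extensions to bound $\phi^+_\tau(E)$. The dual picture would need \emph{lower} bounds on $\phi^-_\tau(A_i)$, but (1) says nothing of the sort: an object of $\cP_\sigma(\theta_i)$ sits in $\cP_\tau(<\theta_i)$, which imposes no floor on its $\tau$-phases, so $\min_i\phi^-_\tau(A_i)$ can be arbitrarily small. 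To get $\phi^-_\sigma(E)>\phi^-_\tau(E)$ you need the Hom-argument: set $F=\HN^-_\sigma(E)$; the nonzero projection $E\to F$ forces $\phi^-_\tau(E)\le\phi^+_\tau(F)$, and applying (1) to the $\sigma$-semistable $F$ gives $\phi^+_\tau(F)<\phi_\sigma(F)=\phi^-_\sigma(E)$. This is exactly what the paper does. You actually use this Hom-nonvanishing device in your $(1)\iff(4)$ step, so you have the tool; it just has to be invoked in $(1)\implies(2)$, or else $(1)\implies(4)$ must be proved first and the $\phi^-$ inequality deduced from (4) by the genuinely dual extension-closure argument applied to $\cP_\tau(\ge\theta)\subseteq\cP_\sigma(>\theta)$. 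Two small slips elsewhere: in your $(1)\iff(4)$ step, the nonzero map $F\to A$ with $F\in\cP_\tau(\theta)$ forces $\phi^+_\tau(A)\ge\theta$, not $\phi^-_\tau(A)\ge\theta$; and your closing claim that strictness ``was used only cosmetically'' is misleading, since the proof really does rely on finiteness of the HN filtration to preserve strictness under a max.
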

\begin{proof}
    {(1)}$\iff${(3)} is directly from the definition. It is also clear that {(2)}$\implies${(3)}. 

    {(1)}$\implies${(2)}: Note that $\cP_\sigma(\leq \theta)\subseteq\cP_\tau(<\theta)$, so $\phi^+_\sigma(E)>\phi^+_\tau(E)$. 

    By {(1)}, we have $\phi^-_\sigma(E)=\phi_\sigma(\HN^-_\sigma(E))>\phi^+_\tau(\HN^-_\sigma(E))$. As $\Hom(E,\HN^-_\sigma(E))\neq 0 $, we have $\phi^-_\tau(E)<\phi^+_\tau(\HN^-_\sigma(E))$. Combining these two observations, we have $\phi^-_\sigma(E)>\phi^-_\tau(E)$.

    {(4)}$\iff${(2)} follows the same argument as that for {(1)}$\iff${(2)}.\\
    
   {(1)}$\implies${(5)}: By {(1)}$\iff${(2)}, for every non-zero $E\in \cT$, we have $\phi^\pm_{\Phi(\sigma)}(E)=\phi^\pm_\sigma(\Phi^{-1}(E))>\phi^\pm_\tau(\Phi^{-1}(E))=\phi^\pm_{\Phi(\tau)}(E)$. It follows that $\Phi(\sigma)\lsm \Phi(\tau)$. The other direction {(5)}$\implies${(1)} is by noticing that $\Phi^{-1}$ is an autoequivalence as well.\\

    {(1)}$\iff${(6)}: By definition, we have  
    \begin{align}\label{eq499}
       \pi_\sim(\sigma\cdot\tilde g)\lsm \pi_\sim(\tau\cdot \tilde g)\iff  \cP_{\sigma\cdot \tilde g}((0,1])\subset\cP_{\tau\cdot \tilde g}(<1)\iff  \cP_\sigma((g(0),g(1)])\subset\cP_\tau(<g(1)).
    \end{align}
    Note that {(1)} implies $\cP_\sigma(\leq (\theta-1,\theta])\subseteq\cP_\tau(<\theta)$ for every $\theta\in \R$ and $g(1)$ can be any real number. The statement holds.

    {(1)}$\iff${(7)}: As $\cP(\theta+1)=\cP(\theta)[1]$, we have {(1)}$\iff \cP_{\sigma}(\theta)\subset\cP_{\tau}(<\theta)$ for every $\theta\in(0,1]$. The statement follows by \eqref{eq499}.
\end{proof}

\begin{Lem}\label{lem:lsmeqlem}
    Let $\sigma,\tau\in\Stab(\cT)$. If $\pi_\sim(\sigma[\theta])\lsm \pi_\sim(\tau[\theta])$ for every $\theta\in(0,1]\setminus\{\theta_1,\dots,\theta_n\}$. Then $\sigma\lsmeq\tau$.
\end{Lem}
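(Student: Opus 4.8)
The plan is to reduce the statement about all $\theta \in (0,1]$ to the finitely many exceptional phases $\theta_1,\dots,\theta_n$ by a limiting argument. First I would recall, via Lemma \ref{lem:eqdefforlsmonstab}(7), that $\sigma \lsmeq \tau$ is equivalent to $\pi_\sim(\sigma[\theta]) \lsmeq \pi_\sim(\tau[\theta])$ holding for every $\theta \in (0,1]$; so it suffices to verify the inclusion $\cA_{\sigma[\theta]} \subset \cP_{\tau[\theta]}(\leq 1)$ (equivalently $\cP_\sigma(\psi) \subset \cP_\tau(\leq \psi)$ for all $\psi \in \R$, after shifting) only at the exceptional values $\theta = \theta_i$, since for all other $\theta$ we even have the strict relation $\lsm$, which implies $\lsmeq$. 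Concretely, it is enough to show: for every nonzero $E \in \cP_\sigma(\psi)$, one has $\phi_\tau^+(E) \leq \psi$.

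Fix a nonzero $E \in \cP_\sigma(\psi)$ for some $\psi \in \R$; I want $\phi_\tau^+(E) \leq \psi$. Choose a sequence $\psi_k \uparrow \psi$ avoiding the (countable, discrete modulo $\Z$) set of exceptional phases $\{\theta_i + m : m \in \Z\}$; more precisely, pick $\psi_k < \psi$ with $\psi_k \to \psi$ such that $\psi_k \not\equiv \theta_i \pmod 1$ for all $i,k$. The object $E$ lies in $\cP_\sigma(\psi) \subset \cP_\sigma(>\psi_k)$, hence $\phi_\sigma^-(E) = \psi > \psi_k$, so in particular every HN factor of $E$ with respect to $\sigma$ has phase $> \psi_k$; thus $E \in \cP_\sigma(>\psi_k)$. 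By the hypothesis $\pi_\sim(\sigma[\psi_k']) \lsm \pi_\sim(\tau[\psi_k'])$ at a non-exceptional phase (unwinding the $\glt$-action as in \eqref{eq499} of Lemma \ref{lem:eqdefforlsmonstab}, or directly from $\sigma \lsm \tau$ read off at that phase), we get $\cP_\sigma(>\psi_k) \subset \cP_\tau(>\psi_k)$ — more carefully, the relation $\pi_\sim(\sigma[\theta]) \lsm \pi_\sim(\tau[\theta])$ for a single non-exceptional $\theta$ gives $\cP_\sigma(\theta') \subset \cP_\tau(<\theta')$ only for $\theta'$ in a specific range, so I would instead phrase the hypothesis as: for each $\psi_k$ there is $\theta \in (0,1] \setminus \{\theta_i\}$ and an integer shift realizing the needed inclusion, using that the exceptional set modulo $\Z$ is finite so every residue class not among the $\theta_i$ is available. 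Granting $E \in \cP_\tau(>\psi_k)$ for every $k$, we conclude $\phi_\tau^-(E) \geq \sup_k \psi_k = \psi$...

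— wait, this gives a bound on $\phi_\tau^-$, not $\phi_\tau^+$; the correct move is the dual one. Let me instead run the argument with $E \in \cP_\sigma(\psi)$ and the \emph{upper} approximation: the relation $\sigma \lsmeq \tau$ I am proving is $\cP_\sigma(\psi) \subset \cP_\tau(\leq \psi)$, i.e. $\phi_\tau^+(E) \leq \psi$. From $\pi_\sim(\sigma[\theta]) \lsm \pi_\sim(\tau[\theta])$ at non-exceptional $\theta$ we have, for all $\psi' \neq \theta_i \pmod 1$, the inclusion $\cP_\sigma(\leq \psi') \subset \cP_\tau(< \psi')$. Apply this with $\psi' = \psi_k \downarrow \psi$, $\psi_k > \psi$, $\psi_k$ non-exceptional: since $E \in \cP_\sigma(\psi) \subset \cP_\sigma(\leq \psi_k)$, we get $E \in \cP_\tau(< \psi_k)$, hence $\phi_\tau^+(E) < \psi_k$ for all $k$. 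Letting $k \to \infty$ yields $\phi_\tau^+(E) \leq \psi$, which is exactly $\cP_\sigma(\psi) \subset \cP_\tau(\leq \psi)$. Since this holds for every $\psi$ and every such $E$, we obtain $\sigma \lsmeq \tau$.

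The one subtle point, which I regard as the main obstacle, is justifying that $\pi_\sim(\sigma[\theta]) \lsm \pi_\sim(\tau[\theta])$ for $\theta \in (0,1]\setminus\{\theta_i\}$ really does deliver $\cP_\sigma(\leq \psi') \subset \cP_\tau(<\psi')$ for \emph{every} non-exceptional real $\psi'$, not just those in $(0,1]$. This follows because the excluded set $\{\theta_1,\dots,\theta_n\} \subset (0,1]$ is finite, so its preimage under $\R \to \R/\Z$ is $\{\theta_i + m\}$, and for any real $\psi'$ outside this set we may write $\psi' \equiv \theta \pmod 1$ with $\theta \in (0,1]$ non-exceptional; then by \eqref{eq499} in the proof of Lemma \ref{lem:eqdefforlsmonstab}, $\pi_\sim(\sigma[\theta]) \lsm \pi_\sim(\tau[\theta])$ unwinds (using $\cP(\theta + m) = \cP(\theta)[m]$) to precisely $\cP_\sigma(\psi') \subset \cP_\tau(<\psi')$, and then also $\cP_\sigma(\leq\psi')\subset\cP_\tau(<\psi')$ by applying it to each phase $\le \psi'$ via the same residue-class trick, or more simply by first noting $\cP_\sigma(\le \psi') = \langle \cP_\sigma(\psi'') : \psi'' \le \psi'\rangle$ and $\cP_\tau(<\psi')$ is extension-closed. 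Assembling these inclusions and taking the limit $\psi_k \downarrow \psi$ as above completes the proof.
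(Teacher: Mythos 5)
Your final argument is correct and matches the paper's: at non-exceptional phases the hypothesis directly yields $\cP_\sigma(\leq\psi')\subset\cP_\tau(<\psi')$, and at an exceptional phase one approximates from above through non-exceptional values and passes to the limit to get $\phi^+_\tau(E)\leq\psi$. The aborted upward approximation in the middle of your write-up was indeed a dead end (it controls $\phi^-_\tau$ rather than $\phi^+_\tau$), but you correctly diagnosed this, and the downward argument you settle on is precisely the paper's.
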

\begin{proof}
    It is clear that $\cP_{\sigma}(\theta)\subseteq\cP_{\tau}(<\theta)$ for every $\theta\in(0,1]\setminus\{\theta_1,\dots,\theta_n\}$. Note that $\cP_{\sigma}(<\theta_i)\subseteq\cP_{\sigma}(s)\subseteq\cP_{\tau}(s)$ for every $s>\theta_i$, it follows that $\cP_\sigma(<\theta_i)\subseteq\cP_{\tau}(\leq \theta_i)$. The statement holds.
\end{proof}

\section{Reduced stability conditions on curves and polarized surfaces}\label{eg:cs}
From now on, we will focus on the geometric case. Let $X$ be an irreducible smooth projective variety. We will consider (reduced) stability conditions on $\Db(X)$, the bounded derived category of coherent sheaves on  $X$. 

 Denote by $\Stab(X)=\Stab(\Db(X))$ and $\sb(X)=\sb(\Db(X))$ for simplicity.
\begin{Rem}
     In general, it is difficult to know the whole space of $\Stab(X)$ beforehand. In this paper, we always focus on a subset $W$ of $\Stab(X)$ with the property that every fiber of the forgetful map
\begin{center}
    $\Forg:W\to \{$hearts of bounded t-structure on $\Db(X)\}\times \lbdd:\sigma=(\cA,Z)\mapsto(\cA,\Im Z)$
\end{center}
is path-connected.

By Definition \ref{def:sb}, two stability conditions $\sigma$ and $\tau$ in $W$ satisfy $\sigma\sim \tau$ if and only if $\cA_\sigma=\cA_\tau$ and $\Im Z_\sigma=\Im Z_\tau$. In particular, there is no ambiguity to denote a reduced stability condition $\ts$ as $(\cA_{\ts},B_{\ts})$.
\end{Rem} 

In explicit examples, we will frequently use the torsion pair to construct the heart of a bounded t-structure. 

\begin{Not}\label{not:tilt0heart}
Let $\cA$ be the heart of a bounded t-structure and $h:\Lambda\to \R\cup\{+\infty\}$ be a real-valued function, we denote by  \begin{align*}
    \cA^{\sharp 0}_h\coloneq \langle \cA^{> 0}_h,\cA^{\leq 0}_h[1]\rangle
\end{align*} the extension closure of 
 \begin{align*}
        \cA^{> 0}_h&\coloneq \{E\in \cA_{\ts}:h(F)>0\text{ for every }E\twoheadrightarrow F\text{ in }\cA_{\ts}\};\\
        \cA^{\leq 0}_h&\coloneq \{E\in \cA_{\ts}:h(F)\leq 0\text{ for every }F\hookrightarrow E\text{ in }\cA_{\ts}\}.
    \end{align*}
\end{Not}
For example, if $\sigma=(\cA,-h+iB)$ is a stability condition, then $\cA_h^{\sharp 0}$ is the heart of a bounded t-structure. The stability condition $\sigma[\tfrac{1}{2}]$ is given as $\left(\cA^{\sharp 0}_h,B+ih\right)$.

Here we do not require $h$ to be linear so that we may avoid some heavy notions later. For instance, let $\cA$ be the heart of a bounded t-structure, and $Z=-h_0+ig$ be a so-called weak stability function on $\cA$. Then one may define $h(v)=h_0(v)$ when $g(v)\neq0$ and $h(v)=+\infty$ when $g=0$. We get the tilting heart $\cA^{\sharp 0}_h$ as that with respect to $Z$.

\subsection{Reduced stability conditions on curves}
Let $C$ be an irreducible smooth curve with genus $g\geq 1$.
Let the lattice $\Lambda$ be $\Kn(C)$. Then the classical slope stability $\sigma=(\Coh(C),Z=-\deg+i\rk)$ is a stability condition on $\Db(C)$. Moreover, by \cite{Bridgeland:Stab,Macri:curves}, the whole space $\Stab(C)=\sigma\cdot\glt$.

\begin{Ex}[Reduced stability conditions on curves]\label{ex:sbcurve}
    The forgetful map $\Forg: \sb(C)\to \lbdd$ is a universal cover onto the image $\lbdd\setminus\{0\}$. In terms of a parametrized space, we have
    \begin{align*}
        \sb^*(C)&=\left\{\ts_{t}\cdot c=(\cA_t,e^{-c}\vd_{t}):c\in \R,t\in\R\cup\{+\infty\}\right\}\text{ and } \sb(C)=\coprod_{n\in\Z} \sb^*(C)[n].
    \end{align*}
    Here the reduced central charge is given as:
    $\vd_{t}(\rk,\deg)\coloneqq \deg-t\rk $
    when $t\in \R$; and  $\vd_{t}(\rk,\deg)\coloneqq-\rk$ when $t=+\infty$. 
    The heart \begin{align*}
         \cA_t\coloneq \langle \Coh^{>t}(C),\Coh^{\leq t}(C)[1]\rangle=\Coh^{\sharp 0}_{\vd_t}(C)
    \end{align*} when $t\neq+\infty$; and $\cA_t\coloneq \Coh(C)[1]$ when $t=+\infty$. 
\end{Ex}
    It is clear from the definition that $\ts_{s}\lsm \ts_{t}$ when $s<t$. As $g\geq 1$, for every non-zero $v\in\kn(C)$, there exist $\sigma$-semistable objects with character $v$.  We have $\ts_{s}\lsm \ts_{t}$ if and only if $s<t$.

\subsection{Polarized surface}\label{subsec:redsbonsurface}

Let $(S,H)$ be a smooth polarized surface. Fix the $H$-polarized lattice
\begin{align*}
    \lambda_H=H^{2-i}\ch_i:\Kn(S)\to \Lambda_H:[E]\mapsto(H^2\rk(E),H\ch_1(E),\ch_2(E)).
\end{align*}
The $H$-discriminant $\Delta_H=(H\ch_1)^2-2H^2\rk\ch_2$ can be viewed as a quadratic form on $\Lambda_\R\coloneq\Lambda_H\otimes \R$. By the Bogomolov inequality, for every $H$-semistable coherent sheaf $E$ on $S$, we have $\Delta_H(E)\geq 0$.

\noindent We  briefly recall the construction of some stability conditions on $S$. Let $\Coh^{\sharp0}_H(S) \coloneq \langle\Coh^{>0}_H(S),\Coh^{\leq 0}_H(S)[1]\rangle$ be a tilted heart.  Then the pair of datum
\begin{align*}
    \sigma_0\coloneqq(\Coh^{\sharp0}_H(S),Z\coloneqq  -\ch_2+H^2\rk+iH\ch_1)
\end{align*}
is a stability condition on $\Db(S)$.

The quadratic form $\Delta_H$ gives the support property for $\sigma_0$. Indeed, for every $\sigma_0$-semistable object $E$, we have $\Delta_H(E)\geq 0$. The space $\Ker Z$ is spanned by $(1,0,1)$. It is clear that $\Delta_H|_{\Ker Z}$ is negative definite. 

By \cite[Proposition A.5]{BMS:stabCY3s}, see also Proposition and Definition \ref{propdef:stabQsigma}, there is an open family of stability conditions $\Stab(\Delta_H,\sigma_0,\Db(S))$.

\begin{Rem}
 Classically, up to a twist of parameters, the subspace $\Stab(\Delta_H,\sigma_0,\Db(S))$ is by firstly constructing a real $2$-dimensional slice of stability conditions:
\begin{align}\label{eq:zabsurface}
    \sigma_{\alpha,\beta}\coloneq (\Coh^{\sharp \beta}_H(S),Z_{\alpha,\beta}=-\ch_2+\alpha H^2\rk+i(H\ch_1-\beta H^2\rk)),
\end{align}
where $\beta\in \R$ and $\alpha>\frac{\beta^2}{2}$. 

Then by taking the $\glt$-action, we get
\begin{align*}
    \Stab(\Delta_H,\sigma_0,\Db(S))=\{\sigma_{\alpha,\beta}:\beta\in \R,\alpha>\frac{\beta^2}{2}\}\cdot\glt.
\end{align*}

In different contexts, the central charge might be in slightly different format. For instance, it can be given as \begin{align*}
    Z'_{\alpha',\beta}=\ch_2^{\beta-i\alpha'}=\ch^{\beta}_2-\tfrac{\alpha'^2}{2}H^2\rk+i\alpha'H\ch_1^\beta,
\end{align*} 
where $\alpha'>0$ and $\beta\in\R$. After taking the $\glt$-action, they give the same family of stability conditions. Under different parameterizations, the numerical walls of a fixed character are `nested line segments' and `nested semicircles' respectively, see Corollary \ref{cor:nested}.
\end{Rem}
To describe the space of reduced stability conditions from $\Stab(\Delta_H,\sigma_0,\Db(S))$, we set \begin{align}
  \notag  U(\Delta_H)\coloneq &\{B\in\lbdd\colon\Delta_H|_{\Ker B}\text{ is with signature }(1,1)\}\\
     = &\{B\in\lbdd \colon\Delta_H^{-1}(0)\cap\Ker B\text{ is the union of two lines.}\}\label{eq512}
\end{align}
as that in Notation \ref{not:UQ}. By Proposition \ref{prop:sbQsigma}, we may describe a family of reduced stability conditions on $S$ as follows:
\begin{Ex}[Reduced stability conditions on a polarized surface]\label{eg:stabsurface}
    The forgetful map \begin{align*}
        \Forg: \sb(\Delta_H,\ts_0,\Db(S))\to U(\Delta_H)
    \end{align*}
    is a universal cover. In terms of a parametrized space, we may write
    \begin{align}\label{eq5611}
        &\sb^*_H(S)=\left\{\ts_{t_1,t_2}\cdot c=(\cA_{t_1,t_2},e^{-c}\vd_{t_1,t_2})\colon c\in\R, t_1<t_2\in\R\cup\{+\infty\}
        \right\}; \\
    \notag    &\sb(\Delta_H,\ts_0,\Db(S))  =\coprod_{n\in \Z}\sb^*_H(S)[n].
    \end{align}

   When $t_2=+\infty$, the reduced central charge $\vd_{t_1,t_2}=-H\ch_1+t_1H^2\rk$; the heart $\cA_{t_1}\coloneq \Coh_H^{\sharp t_1}(S)[1]$. 
     
   When $t_2\neq +\infty$, the reduced central charge \begin{align*}
        \vd_{t_1,t_2}(H^2\rk,H\ch_1,\ch_2)=\ch_2-\tfrac{1}{2}(t_1+t_2)H\ch_1+\tfrac{1}{2}t_1t_2H^2\rk.
   \end{align*} In \eqref{eq512}, we have $\Delta_H^{-1}(0)\cap \Ker \vd_{t_1,t_2}=\cup_{i=1,2}\R\cdot(1,t_i,t_i^2/2)$. 
    
    The heart $\cA_{t_1,t_2}\coloneq (\cA_t[-1])^{\sharp0}_{B_{t_1,t_2}}$ as that in Notation \ref{not:tilt0heart} for any $t\in(t_1,t_2)$. In particular, it does not rely on the choice of $t$.
\end{Ex}    

\begin{Rem}    
    When $S$ is an abelian surface, or the Albanese map of $S$ is finite, by \cite{FLZ:ab3} and \cite{LahozRojaz}, the space $\sb_H(S)=\sb(\Delta_H,\ts_0,\Db(S))$.
\end{Rem}

\subsection{Bayer Vanishing Lemma}
The parameter $(t_1,t_2)$ is convenient for comparing reduced stability conditions. It helps us to set up the following neat vanishing theorem which is difficult to prove due to subtlety of the heart structure. This vanishing theorem has been used in the $\bP^2$ case, see \cite{LZ:P2MMP,FLLQ:P2}, and then later on for other polarized surfaces.

We give a reprove for the polarized surface case by using the language of reduce stability conditions and the `$\lsm$' relation. 

\begin{Prop}[Bayer Vanishing Lemma]\label{prop:bayerlemsurface}
    Let $(S,H)$ be a polarized surface and $\ts=\ts_{t_1,t_2}$ be a reduced stability condition as that in \eqref{eq5611} with $t_2\neq +\infty$. Then $\ts\lsm\ts\otimes\cO(H)$.
    
    In particular, if $E,F\in\cP_{\ts}(1)$, then $\Hom(E(mH),F)=0$ for every $m>0$.
\end{Prop}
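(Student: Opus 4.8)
The plan is to establish the relation $\ts\lsm\ts\otimes\cO(H)$ directly and then derive the vanishing as a formal consequence. First I would recall that by Remark~\ref{rem:Ractiononsb} the action of $\otimes\cO(H)$ on reduced stability conditions is induced from the autoequivalence $-\otimes\cO(H)$ on $\Db(S)$, so $\cA_{\ts\otimes\cO(H)}=\cA_{\ts}\otimes\cO(H)$ and $B_{\ts\otimes\cO(H)}=B_{\ts}\circ(\otimes\cO(H))_*^{-1}$. In the surface parametrization of Example~\ref{eg:stabsurface}, a direct Chern-character computation shows that tensoring by $\cO(H)$ shifts the parameters by $1$: one checks $\vd_{t_1,t_2}\circ(\otimes\cO(H))_*^{-1}=\vd_{t_1+1,t_2+1}$ using $\ch(E\otimes\cO(H))=\ch(E)\cdot e^{H}$ and the determinantal/interlaced form of $\vd$. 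Hence $\ts_{t_1,t_2}\otimes\cO(H)=\ts_{t_1+1,t_2+1}$, and the comparison statement recorded in Example~\ref{ex:introsurf}, namely $\ts_{t_1,t_2}\lsm\ts_{s_1,s_2}$ whenever $t_1<s_1$ and $t_2<s_2$, applies with $(s_1,s_2)=(t_1+1,t_2+1)$ to give $\ts\lsm\ts\otimes\cO(H)$.

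For the vanishing: suppose $E,F\in\cP_{\ts}(1)$. By Lemma~\ref{lem:posaction}, with $\Phi=-\otimes\cO(mH)$, it suffices to know $\ts\lsm\Phi(\ts)=\ts\otimes\cO(mH)$ and that $E\in\cP_{\ts}(1)$, $F\in\cA_{\ts}[\leq 0]$; indeed $F\in\cP_{\ts}(1)\subset\cA_{\ts}$ gives $F\in\cA_{\ts}[\leq 0]$, and $E\in\cP_{\ts}(1)$ is exactly the hypothesis. So I am reduced to showing $\ts\lsm\ts\otimes\cO(mH)$ for all $m\geq 1$. By the same parameter computation $\ts\otimes\cO(mH)=\ts_{t_1+m,t_2+m}$, and since $t_1<t_1+m$ and $t_2<t_2+m$ for $m\geq 1$, the comparison relation again yields $\ts\lsm\ts\otimes\cO(mH)$. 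Then Lemma~\ref{lem:posaction} gives $\Hom(E(mH),F)=\Hom(\Phi(E),F)=0$, as desired. Alternatively, one could iterate: $\ts\lsm\ts\otimes\cO(H)\lsm\ts\otimes\cO(2H)\lsm\cdots$, using transitivity of $\lsm$ from Lemma~\ref{lem:partialorder}, since the relation is preserved under the autoequivalence $\otimes\cO(H)$ by Lemma~\ref{lem:eqdefforlsmonstab}.{(5)} (passed through $\pi_\sim$).

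The step I expect to require the most care is the verification that the comparison relation $\ts_{t_1,t_2}\lsm\ts_{s_1,s_2}$ actually holds whenever $t_1<s_1$ and $t_2<s_2$ — that is, Example~\ref{ex:introsurf}'s assertion — at the level needed here, i.e.\ understanding the heart $\cA_{t_1,t_2}=(\cA_t[-1])^{\sharp 0}_{\vd_{t_1,t_2}}$ precisely enough to conclude $\cA_{t_1,t_2}\subset\cP_{\ts_{s_1,s_2}}(<1)$. The subtlety is that the double-tilted heart $\cA_{t_1,t_2}$ does not have an obvious description in terms of standard cohomology sheaves, so showing containment in $\cP_{\ts_{s_1,s_2}}(<1)$ means controlling the phases of all HN factors with respect to the second reduced stability condition. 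Concretely I would argue by examining the torsion-pair structure: an object of $\cA_{t_1,t_2}$ is an extension of $\cA_t^{\leq 0}[1]$ by $\cA_t^{>0}$ where $\cA_t=\Coh^{\sharp t}_H(S)[1]$, and one tracks how the ordinary slope-stability filtration of a coherent sheaf interacts with the two slicing parameters; because the walls in $\sb^*_{H,v}(S)$ are nested (Corollary~\ref{cor:nested}), moving both parameters up strictly cannot cross a wall in the wrong direction. If a fully self-contained argument for the comparison is wanted rather than citing Example~\ref{ex:introsurf}, this wall-nestedness plus the explicit identification $\ts\otimes\cO(H)=\ts_{t_1+1,t_2+1}$ is the route I would take; everything else is bookkeeping with Chern characters and the formal properties of $\lsm$ already established in Section~\ref{sec:order}.
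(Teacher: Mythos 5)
Your core argument matches the paper's proof: the identity $\ts_{t_1,t_2}\otimes\cO(H)=\ts_{t_1+1,t_2+1}$, the comparison $\ts_{t_1,t_2}\lsm\ts_{s_1,s_2}$ when $t_i<s_i$, transitivity to get $\ts\lsm\ts\otimes\cO(mH)$, and then Lemma~\ref{lem:posaction}. This is exactly the paper's route, which cites Lemma~\ref{lem:comparesurfacesb}.(2) for the comparison (the rigorous version of the assertion recorded in Example~\ref{ex:introsurf}).

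The one place your proposal goes astray is the speculative third paragraph, where you sketch a self-contained proof of the comparison $\ts_{t_1,t_2}\lsm\ts_{s_1,s_2}$ via torsion-pair bookkeeping and wall nestedness. That route does not work as stated. Wall nestedness (Corollary~\ref{cor:nested}) is a statement about walls for a \emph{fixed} character $v$, while $\lsm$ asserts a containment $\cA_{t_1,t_2}\subset\cP_{\ts_{s_1,s_2}}(<1)$ quantifying over \emph{all} objects; the first does not directly control the second. The paper's actual argument for Lemma~\ref{lem:comparesurfacesb}.(2) is cleaner and of a different nature: when the parameters strictly interlace $t_1<s_1<t_2<s_2$, the discriminant $\Delta_H$ is negative definite on $\Ker\vd_{t_1,t_2}\cap\Ker\vd_{s_1,s_2}$ (a projective-plane picture of the parabola $\gamma_2$), so $\sigma\coloneq(\cA_{t_1,t_2},\,-\vd_{s_1,s_2}+i\vd_{t_1,t_2})$ is an honest stability condition with $\pi_\sim(\sigma)=\ts_{t_1,t_2}$ and $\pi_\sim(\sigma[\tfrac12])=\ts_{s_1,s_2}$; then Lemma~\ref{lem:partialorder}.(2) (a positive $\glt$-rotation implies $\lsm$) gives the comparison, and the non-interlaced case $t_i<s_i$ is handled by interpolating through a strictly interlaced pair and using transitivity. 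If you want the self-contained version, this is the argument to reproduce; the torsion-pair / HN-factor tracking you outline would require you to describe $\cA_{t_1,t_2}$ explicitly enough to see phase bounds, which is exactly what the rotation trick lets you avoid.
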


\begin{Lem}\label{lem:comparesurfacesb}
    Let $t_i,s_i\in\R\cup\{+\infty\}$. Then 
    \begin{enumerate}[(1)]
        \item The restricted quadratic form $\Delta_H|_{\Ker B_{t_1,t_2}\cap \Ker B_{s_1,s_2}}$ is negative definite if and only if $t_1<s_1<t_2<s_2$ or $s_1<t_1<s_2<t_2$. When $t_1<s_1<t_2<s_2$, we have $-\vd_{s_1,s_2}\in\ta(\ts_{t_1,t_2})$; when $s_1<t_1<s_2<t_2$, we have $\vd_{s_1,s_2}\in\ta(\ts_{t_1,t_2})$.
        \item If $t_i<s_i$, then $\ts_{t_1,t_2}\lsm \ts_{s_1,s_2}$.
        \item If $t_1<s_2$, then $\ts_{t_1,t_2}\lsm\ts_{s_1,s_2}[1]$.
    \end{enumerate}
\end{Lem}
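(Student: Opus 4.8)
The plan is to establish the negative-definiteness part of (1) first, then (2) and (3), and finally the $\ta$-membership in (1), since that last assertion is most cleanly deduced from (2).

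\emph{Part (1), the quadratic form.} Recall that $\Ker\vd_{t_1,t_2}=\spa{\gamma_2(t_1),\gamma_2(t_2)}$ in $\Lambda_\R$ (with $\gamma_2(+\infty)=(0,0,1)$) and that every $\gamma_2(t)$ is isotropic for $\Delta_H$. A one-line computation gives the polarization $B_{\Delta_H}(\gamma_2(t),\gamma_2(s))=-\tfrac12(t-s)^2$ (and $B_{\Delta_H}(\gamma_2(t),\gamma_2(\infty))=-1$), together with $\vd_{s_1,s_2}(\gamma_2(t))=\tfrac12(t-s_1)(t-s_2)$. For distinct unordered pairs $\{t_1,t_2\}\neq\{s_1,s_2\}$ the line $\Ker\vd_{t_1,t_2}\cap\Ker\vd_{s_1,s_2}$ is spanned by $w=a\,\gamma_2(t_1)+b\,\gamma_2(t_2)$ with $a/b=-(t_2-s_1)(t_2-s_2)\big/(t_1-s_1)(t_1-s_2)$, whence
\begin{align*}
\Delta_H(w)=2ab\,B_{\Delta_H}(\gamma_2(t_1),\gamma_2(t_2))=-ab\,(t_1-t_2)^2 .
\end{align*}
Thus $\Delta_H$ is negative on this line iff $ab>0$ iff $(t_1-s_1)(t_1-s_2)(t_2-s_1)(t_2-s_2)<0$ iff exactly one of $t_1,t_2$ lies strictly between $s_1$ and $s_2$; given $t_1<t_2$ and $s_1<s_2$ this is precisely $t_1<s_1<t_2<s_2$ or $s_1<t_1<s_2<t_2$. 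The cases involving $+\infty$ follow by the same computation with $\gamma_2(\infty)$, or by continuity.

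\emph{Parts (2) and (3).} For these I would argue directly from the double-tilt descriptions of Example \ref{eg:stabsurface}: $\cA_{t_1,t_2}=(\Coh^{\sharp t}_H(S))^{\sharp 0}_{\vd_{t_1,t_2}}$ for any $t\in(t_1,t_2)$ (and $\cA_{t_1}=\Coh^{\sharp t_1}_H(S)[1]$ when $t_2=+\infty$), likewise for $(s_1,s_2)$. Using $\cP_{\ts_{s_1,s_2}}(<1)=\langle\cP_{\ts_{s_1,s_2}}((0,1)),\ \cA_{s_1,s_2}[\le -1]\rangle$ and the fact that both $\cA_{t_1,t_2}$ and $\cP_{\ts_{s_1,s_2}}(<1)$ are extension-closed, it suffices to check that each building block of an object of $\cA_{t_1,t_2}$ — a $\mu_H$-semistable sheaf $G$ together with the sign of $\vd_{t_1,t_2}(G)$ — lands in the correct part of the $\ts_{s_1,s_2}$-slicing. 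The hypotheses $t_i<s_i$ are exactly what make the two successive torsion pairs defining $\cA_{t_1,t_2}$ (the slope tilt at $t$ versus $s$, then the $\vd_{t_1,t_2}$-tilt versus the $\vd_{s_1,s_2}$-tilt) compatible, and, together with the Bogomolov inequality $\Delta_H(G)\ge 0$ and Part (1), this pins each block into $\cP_{\ts_{s_1,s_2}}(<1)$, giving (2). Part (3) is the same analysis under the weaker hypothesis $t_1<s_2$, which only forces $\cA_{t_1,t_2}\subset\cP_{\ts_{s_1,s_2}}(<2)=\cP_{\ts_{s_1,s_2}[1]}(<1)$. I expect this bookkeeping to be the main obstacle — in particular controlling the boundary case $t_2=+\infty$ and verifying, for (3), that no object of $\cA_{t_1,t_2}$ acquires a $\ts_{s_1,s_2}$-Harder--Narasimhan factor of phase $\ge 2$ once only $t_1<s_2$ is assumed.

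\emph{Part (1), the $\ta$-membership.} Suppose $t_1<s_1<t_2<s_2$; I claim $-\vd_{s_1,s_2}\in\ta(\ts_{t_1,t_2})$, deduced from (2) via Proposition \ref{prop:lifttostab}. For $h=-\vd_{s_1',s_2'}$ with $(s_1',s_2')$ near $(s_1,s_2)$ and $|t|$ small, the reduced central charge $\vd_{t_1,t_2}+t\,h$ has nonzero $\ch_2$-coefficient, so rescaling it gives $\vd_{u_1(t,h),u_2(t,h)}$ with $(u_1,u_2)$ real (the discriminant at $(t_1,t_2)$ being $(t_2-t_1)^2>0$) and close to $(t_1,t_2)$; by Proposition \ref{prop:localisom}, and since rescaling preserves the heart and hence $\lsm$, this identifies $\ts_{t_1,t_2}+t\,h$ with $\ts_{u_1(t,h),u_2(t,h)}$. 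Writing $P=u_1+u_2$, $Q=u_1u_2$, an elementary computation gives
\begin{align*}
\partial_t u_1|_{t=0}&=\frac{Q'(0)-t_1P'(0)}{t_2-t_1}=-\frac{(t_1-s_1')(t_1-s_2')}{t_2-t_1},\\
\partial_t u_2|_{t=0}&=\frac{t_2P'(0)-Q'(0)}{t_2-t_1}=\frac{(t_2-s_1')(t_2-s_2')}{t_2-t_1},
\end{align*}
with $P'(0)=(t_1-s_1')+(t_2-s_2')$ and $Q'(0)=t_1t_2-s_1's_2'$; for $t_1<s_1'<t_2<s_2'$ both are strictly negative. Hence $u_i(t,h)<t_i$ for small $t>0$ and $u_i(t,h)>t_i$ for small $t<0$, so by (2) we get $\ts_{t_1,t_2}+t\,h\lsm\ts_{t_1,t_2}$ for $0<t$ small and $\ts_{t_1,t_2}\lsm\ts_{t_1,t_2}+t\,h$ for $t<0$ small — exactly the hypothesis of Proposition \ref{prop:lifttostab}, giving $-\vd_{s_1,s_2}\in\ta(\ts_{t_1,t_2})$. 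The case $s_1<t_1<s_2<t_2$ is identical with $h_0=+\vd_{s_1,s_2}$ in place of $-\vd_{s_1,s_2}$ (the analogous computation again yields $\partial_t u_i|_{t=0}<0$). Edge cases involving $+\infty$ are handled by working in the chart at infinity, or by a limiting argument from finite parameters.
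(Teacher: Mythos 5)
Your direct computation for the negative-definiteness criterion in (1) is correct and is just an algebraic rephrasing of the paper's geometric picture (intersection of two secant lines of the parabola lying inside it); the formula $\Delta_H(w)=-ab(t_1-t_2)^2$ and the interlacing conclusion check out. The calculus argument at the end — computing $\partial_t u_i|_0$ and feeding it into Proposition \ref{prop:lifttostab} — is also a correct derivation, \emph{assuming} (2) is available. So the real issue is not with the formulas you compute.

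The genuine gap is that (2) and (3) are never actually proved. What you offer for them is a plan (``argue directly from the double-tilt descriptions'', ``each building block\ldots lands in the correct part of the slicing'', ``this pins each block into $\cP_{\ts_{s_1,s_2}}(<1)$'') followed by an explicit admission that ``I expect this bookkeeping to be the main obstacle.'' That bookkeeping is precisely the hard content of the lemma: the objects of the double-tilted heart $\cA_{t_1,t_2}$ are built from $\mu_H$-semistable sheaves by two successive extension closures, and tracking how every Harder--Narasimhan piece of every such object sits relative to the $\ts_{s_1,s_2}$-slicing requires a case analysis you do not carry out (and which is genuinely delicate near $\vd_{t_1,t_2}=0$ and at $t_2=+\infty$). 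Because the $\ta$-membership claim in your write-up depends on (2) through Proposition \ref{prop:lifttostab}, the gap in (2) propagates to (1) as well, so in fact only the quadratic-form half of (1) is established.

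The paper's proof inverts your logical order and, in doing so, bypasses the bookkeeping entirely. It first derives the $\ta$-membership in (1) directly from the negative-definiteness of $\Delta_H$ on $\Ker B_{t_1,t_2}\cap\Ker B_{s_1,s_2}$, using Proposition \ref{prop:sbQsigma} together with equation \eqref{eq213}: negative definiteness means $-B_{s_1,s_2}+iB_{t_1,t_2}\in W(\Delta_H)$, which lifts to a stability condition $\sigma=(\cA_{t_1,t_2},-B_{s_1,s_2}+iB_{t_1,t_2})$, i.e.\ $-\vd_{s_1,s_2}\in\ta(\ts_{t_1,t_2})$. Then (2) in the interlaced case $t_1<s_1<t_2<s_2$ is immediate: $\pi_\sim(\sigma)=\ts_{t_1,t_2}$ and $\pi_\sim(\sigma[\tfrac12])=\ts_{s_1,s_2}$, and Lemma \ref{lem:partialorder}.(2) gives $\ts_{t_1,t_2}\lsm\ts_{s_1,s_2}$; the general case of (2), and (3), follow by chaining through intermediate parameters $\um$, $\us'$. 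This makes the $\glt$-action do all the categorical work. If you want to keep your approach, you would need to supply the full case analysis for (2); otherwise, replacing your plan for (2)/(3) with the paper's argument (and deriving the $\ta$-membership from Proposition \ref{prop:sbQsigma} rather than Proposition \ref{prop:lifttostab}) is both shorter and closes the gap.
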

\begin{proof}
    {(1)}  On the projective plane $\bP(\Lambda_\R)$, the kernel space $\Ker \vd_{a,b}$ corresponds to the line passing through the points $\gamma_2(a)=(1,a,a^2/2)$ and $\gamma_2(b)$. The curve $\Delta_H^{-1}(0)$ is given by the `parabola' $\{\gamma_2(t):t\in \R\cup\{+\infty\}\}$.  A point on $\bP(\Lambda_\R)$ is in  $\nega(\Delta_H)$ if and only if  it is inside the parabola.

    We may assume $(t_1,t_2)\neq (s_1,s_2)$. Then the point $\Ker \vd_{t_1,t_2}\cap\Ker \vd_{s_1,s_2}$ is the intersection of lines through the pairs of points $\gamma_2(t_i)$ and  $\gamma_2(s_i)$, respectively. Drawing this on a plane, it is clear that the intersection point lies inside the parabola if and only if $s_1<t_1<s_2<t_2$ or  $t_1<s_1<t_2<s_2$.
    
    The rest of the statement then follows from Proposition \ref{prop:sbQsigma} and \eqref{eq213}.\\

    \noindent {(2)} If $t_1<s_1<t_2<s_2$, then by {(1)}, we may consider the stability condition $\sigma=(\cA_{t_1,t_2},-B_{s_1,s_2}+iB_{t_1,t_2})$. By definition, we have $\pi_\sim(\sigma)=\ts_{t_1,t_2}$ and $\pi_\sim(\sigma[\tfrac{1}{2}])=\ts_{s_1,s_2}$. By Lemma \ref{lem:partialorder}.{(2)}, we have the relation $\ts_{t_1,t_2}\lsm \ts_{s_1,s_2}$.

    In the general case, there always exists $m_i\in\R$ satisfying $t_1<m_1<t_2<m_2$ and $m_1<s_1<m_2<s_2$. By the first part of the argument, we have $\ts_{t_1,t_2}\lsm\ts_{m_1,m_2}\lsm\ts_{s_1,s_2}$. The statement holds.\\

    \noindent {(3)} By assumption, there exists $s'_i\in\R$ such that
    \begin{align*}
        s'_1<s_1,\; s'_2<s_2, \text{ and } s'_1<t_1<s'_2<t_2.
    \end{align*}
    By {(1)} and  Proposition \ref{prop:sbQsigma}, we may consider the stability condition $\sigma=(\cA_{t_1,t_2},B_{s'_1,s'_2}+iB_{t_1,t_2})$. By definition, we have $\pi_\sim(\sigma)=\ts_{t_1,t_2}$ and  $\pi_\sim(\sigma[\tfrac{1}{2}])=\ts_{s'_1,s'_2}[1]$. By Lemma \ref{lem:partialorder}.{(2)}, we have the relation $\ts_{t_1,t_2}\lsm \ts_{s'_1,s'_2}[1]$. 

    By {(2)}, we have $\ts_{s'_1,s'_2}\lsm\ts_{s_1,s_2}$. The statement holds.
\end{proof}

\begin{proof}[Proof of Proposition \ref{prop:bayerlemsurface}]
    Note that $\ts_{t_1,t_2}\otimes\cO(H)=\ts_{t_1+1,t_2+1}$, the first statement follows from Lemma \ref{lem:comparesurfacesb}.{(2)}. Moreover, we have $\ts_{t_1,t_2}\lsm\ts_{t_1,t_2}\otimes\cO(mH)$ for every positive integer $m$ as well. The second statement follows from Lemma \ref{lem:posaction}.
\end{proof}

\begin{Rem}[Stability conditions to reduced stability conditions]\label{rem:surfacestabtosb}
For a stability condition $\sigma_{\alpha,\beta}$ as that in \eqref{eq:zabsurface}, the reduced stability condition $\pi_\sim(\sigma_{\alpha,\beta})=\ts_{\beta,+\infty}[-1]$. 

 The kernel of its central charge  $\Ker Z_{\alpha,\beta}$ in $\bP(\Lambda_\R)$ is the point $p_{\alpha,\beta}=[1,\beta,\alpha]$. For $\theta\in\R$, the kernel of reduced central charge $\pi_\sim(\sigma_{\alpha,\beta}[\theta])$ in $\bP(\Lambda_\R)$  is a projective line through $p_{\alpha,\beta}$. While $\theta$ is chosen among all values in $(0,1]$, we get the whole pencil of lines through $p_{\alpha,\beta}$.
 
As $\alpha>\frac{\beta^2}2$, when $\theta\notin\Z$, each plane intersects with the parabola $\{\gamma_2(t)\}_{t\in\R}$ at two points $\gamma_2(t_i)$ for some $t_2>t_1$. The reduced stability condition 
\begin{align*}
    \pi_\sim(\sigma_{\alpha,\beta}[\theta])= \ts_{t_1,t_2}[m]\cdot c
\end{align*}
for some $c\in \R$ and $m\in\Z$. While $\theta$ is chosen among all values in $(0,1)$,  we get all the parameters $(t_1,t_2)$ satisfying
\begin{align*}
   \vd_{t_1,t_2}(1,\beta,\alpha)=\alpha-(t_1+t_2)\beta/2+t_1t_2/2=0.
\end{align*}
    One may interpret Proposition \ref{prop:bayerlemsurface} to the classical version with respect to Bridgeland stability conditions as follows.\\

\noindent\textit{Claim:}    Let $E$ and $F$ be  $\sigma_{\alpha,\beta}$-semistable objects satisfying $\phi_{\sigma_{\alpha,\beta}}(E)\geq \phi_{\sigma_{\alpha,\beta}}(F)$ and $H\ch^\beta_1(E)\neq 0$,  then the vanishing \begin{align*}
        \Hom(E(mH),F)=0
    \end{align*} holds for every $m>0$.
\end{Rem}
\begin{proof}
    By the assumption, we may assume that $E\in \cP_{\sigma_{\alpha,\beta}}(\theta)$ for some $\theta\in(0,1)$. Here $\theta\neq 1$ because of $H\ch^\beta_1(E)\neq 0$. So  $E\in \cP_{\ts}(0)$ and $F\in\cP_{\ts}(\leq 0)$, where $\ts=\pi_\sim(\sigma_{\alpha,\beta}[\theta])$. As $\theta\notin \Z$, the reduced stability condition $\ts$ is in the form of $\ts_{t_1,t_2}$ for some $t_2\neq +\infty$. The statement follows by Lemma \ref{lem:posaction}.  
\end{proof}
\section{Restriction theorem}\label{sec:reslem}

\subsection{Heart version}
Let $Y$ be a smooth projective variety and $X\in|D|$ be a smooth subvariety of $Y$ for some divisor $D$ on $Y$. 
Denote by $\iota:X\hookrightarrow Y$ the inclusion morphism, $\iota_*:\Db(X)\to\Db(Y)$ the push-forward functor, $\iota^*$ the derived pull-back functor. The induced map $[\iota_*]:\Kn(X)\to \Kn(Y):[E]\mapsto[\iota_*E]$ is well-defined.

We will use the following two distinguished triangles by adjunction in the arguments later,  see \cite[Corollary 11.4]{Huybrechts:FMtransformation}, \cite[Lemma 2.8]{KP:Serre} or \cite[Proposition 3.4]{Kuz:CYfracCY} for reference.

For every object $E\in \Db(X)$, we have
\begin{align}\label{eq610}
    E\otimes \cO_X(-D)[1]\to \iota^*\iota_*E\xrightarrow{\epsilon_E}E \to E\otimes \cO_X(-D)[2],
\end{align}
where $\epsilon_E$ is the counit morphism of adjunction.

For every object $F\in \Db(Y)$, we have
\begin{align}
 \label{eq612}
    F\otimes \cO_Y(-D)\xrightarrow[]{h_F}F\xrightarrow{\eta_F}\iota_*\iota^*F\to F\otimes \cO_Y(-D)[1],
\end{align}
where $\eta_E$ is the unit morphism of adjunction.

\begin{Lem}\label{lem:63}
Adopt notions as above. Let $\cA$ be the heart of a bounded t-structure on $\Db(Y)$ satisfying 
      \begin{align}\label{eq622}
          \cA\otimes \cO_Y(D)\subset \cA[\leq 1].
      \end{align} 
      Then for every $E,F\in\Db(X)$ with $\iota_*E\in\cA[\geq0]$ and $\iota_*F\in \cA[\leq 0]$, we have $\Hom_X(E[m],F)=0$ for every $m\in\Z_{\geq1}$.
\end{Lem}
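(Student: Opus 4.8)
The plan is to reduce the vanishing $\Hom_X(E[m],F)=0$ to a statement in $\Db(Y)$ by using the push-forward $\iota_*$ and the adjunction triangle \eqref{eq610}, then feed in the hypothesis \eqref{eq622} on the heart $\cA$. First I would note that $\iota_*$ is fully faithful only up to the twist, so instead of comparing $\Hom_X(E[m],F)$ with $\Hom_Y(\iota_*E[m],\iota_*F)$ directly I will use the adjunction isomorphism $\Hom_X(E[m],F)\cong\Hom_Y(\iota_*E[m],\iota_*\iota^*\iota_*F)$ together with the triangle \eqref{eq610} applied to $F$, namely $F\otimes\cO_X(-D)[1]\to\iota^*\iota_*F\to F\to$. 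Pushing forward and using $\iota_*(F\otimes\cO_X(-D))\cong\iota_*F\otimes\cO_Y(-D)$ (projection formula), I get a triangle in $\Db(Y)$ relating $\iota_*\iota^*\iota_*F$, $\iota_*F$, and $\iota_*F\otimes\cO_Y(-D)[1]$.

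Concretely, I would apply $\Hom_Y(\iota_*E[m],-)$ to that triangle and obtain a long exact sequence
\begin{align*}
\cdots\to\Hom_Y(\iota_*E[m],\iota_*F\otimes\cO_Y(-D)[1])\to\Hom_Y(\iota_*E[m],\iota_*\iota^*\iota_*F)\to\Hom_Y(\iota_*E[m],\iota_*F\otimes\cO_Y(-D)[2])\to\cdots
\end{align*}
Wait — it is cleaner to work on $X$: apply $\Hom_X(E[m],-)$ to \eqref{eq610} for $F$, giving
\begin{align*}
\cdots\to\Hom_X(E[m],F\otimes\cO_X(-D)[1])\to\Hom_X(E[m],\iota^*\iota_*F)\to\Hom_X(E[m],F)\to\Hom_X(E[m],F\otimes\cO_X(-D)[2])\to\cdots
\end{align*}
and then use adjunction $\Hom_X(E[m],\iota^*\iota_*F)\cong\Hom_Y(\iota_*E[m],\iota_*F)$. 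So it suffices to show the two outer-neighbouring groups $\Hom_X(E[m],F\otimes\cO_X(-D)[1])$ (equivalently $\Hom_X(E[m-1],F\otimes\cO_X(-D))$) and $\Hom_Y(\iota_*E[m],\iota_*F)$ both vanish, the former for all $m\geq 1$ and the latter for $m\geq 1$. The group $\Hom_Y(\iota_*E[m],\iota_*F)=\Hom_Y(\iota_*E[m],\iota_*F)$: since $\iota_*E\in\cA[\geq 0]$ and $\iota_*F\in\cA[\leq 0]$, we have $\iota_*E[m]\in\cA[\geq m]\subset\cA[\geq 1]$, so $\Hom_Y(\iota_*E[m],\iota_*F)=0$ because there are no negative Exts between a complex in $\cA[\geq 1]$ and one in $\cA[\leq 0]$. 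This disposes of one term.

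For the remaining term $\Hom_X(E[m-1],F\otimes\cO_X(-D))$, the natural move is induction on $m$ together with the twist. Apply $-\otimes\cO_X(-D)$ to the relevant objects and use the hypothesis \eqref{eq622}: from $\cA\otimes\cO_Y(D)\subset\cA[\leq 1]$ we get, by the projection formula $\iota_*(G\otimes\cO_X(D))\cong\iota_*G\otimes\cO_Y(D)$, that $\iota_*E\in\cA[\geq 0]$ implies $\iota_*(E\otimes\cO_X(D))=\iota_*E\otimes\cO_Y(D)\in(\cA[\geq 0])\otimes\cO_Y(D)$, and dually $\iota_*(F\otimes\cO_X(-D))=\iota_*F\otimes\cO_Y(-D)$. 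The cleanest bookkeeping is: rewrite $\Hom_X(E[m-1],F\otimes\cO_X(-D))=\Hom_X(E\otimes\cO_X(D)[m-1],F)$, and observe that $\iota_*(E\otimes\cO_X(D))\in\cA[\geq 0]\otimes\cO_Y(D)$; I need this to sit in $\cA[\geq -1]$ — but that is not quite what \eqref{eq622} gives directly (it controls $\cA\otimes\cO_Y(D)$ upward, i.e. $\cA[\leq 1]$, equivalently $\cA\otimes\cO_Y(-D)\subset\cA[\geq -1]$). So $\iota_*E\otimes\cO_Y(-D)\in\cA[\geq -1]$, giving $\iota_*(E\otimes\cO_X(-D))\in\cA[\geq -1]$, hence $\iota_*(E\otimes\cO_X(-D))[m]\in\cA[\geq m-1]$. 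Then I set up a decreasing induction: the term $\Hom_X(E[m],F\otimes\cO_X(-D)[1])=\Hom_X((E\otimes\cO_X(-D))[m-1],F)$, and now $E\otimes\cO_X(-D)$ plays the role of $E$ but with $\iota_*$-image only in $\cA[\geq -1]$, so one repeats: apply \eqref{eq610} again to shift the twist. Iterating $k$ times, the obstruction term becomes $\Hom_X((E\otimes\cO_X(-kD))[m-k],F)$ with $\iota_*(E\otimes\cO_X(-kD))\in\cA[\geq -k]$, so $\iota_*(E\otimes\cO_X(-kD))[m-k]\in\cA[\geq m-2k]$; meanwhile the "diagonal" terms that split off at each stage are of the form $\Hom_Y(\iota_*(E\otimes\cO_X(-jD))[m-j],\iota_*F)$ with $\iota_*(\cdots)[m-j]\in\cA[\geq m-2j]$, which vanishes as long as $m-2j\geq 1$, i.e. $j\leq (m-1)/2$. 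This is where the argument gets delicate — one must check that the twisting-down process terminates or that the Ext groups eventually vanish for dimension reasons; since $X$ is finite-dimensional, for $k$ large enough $\Hom_X(G[m-k],F)=0$ automatically (no negative-degree Homs below the cohomological amplitude), so the induction closes after finitely many steps.

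The main obstacle, I expect, is controlling this iterated twisting cleanly: each application of the adjunction triangle \eqref{eq610} trades the problem for a problem with $E$ replaced by $E\otimes\cO_X(-D)$ and $m$ decreased by $1$, but the heart-membership bound $\cA[\geq \bullet]$ degrades by one each time, so a naive induction does not obviously terminate — one needs the finite cohomological dimension of $\Db(X)$ (equivalently, $\Db(\Coh X)$ has finite global dimension for $X$ smooth) to guarantee that $\Hom_X(G[N],F)=0$ once $N$ exceeds the length of $F$ plus $\dim X$, which forces the induction to bottom out. An alternative, possibly cleaner, route is to prove the contrapositive directly: a nonzero map $E[m]\to F$ with $m\geq 1$ would, via $\iota_*$ and adjunction, produce a nonzero map in $\Hom_Y(\iota_*E[m],\iota_*\iota^*\iota_*F)$, and then chase through the two triangles \eqref{eq610}, \eqref{eq612} to land a nonzero negative Ext in the heart $\cA$ (using \eqref{eq622} to bound where all the twisted pieces live), contradicting the t-structure axiom. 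Either way the crux is the combinatorial bookkeeping of the shifts under repeated twisting by $\cO(D)$, and the key external input beyond \eqref{eq622} is simply that $\Db(X)$ has finite cohomological amplitude.
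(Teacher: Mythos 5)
Your overall strategy — reduce $\Hom_X(E[m],F)$ via the adjunction triangle \eqref{eq610}, feed in \eqref{eq622}, iterate, and close the induction via boundedness of the heart — is the right shape, and it is essentially what the paper does. But there are two concrete problems in your write-up that make the argument, as written, fail.

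First, the adjunction you invoke is not valid. You apply \eqref{eq610} to $F$, produce the term $\Hom_X(E[m],\iota^*\iota_*F)$, and claim this equals $\Hom_Y(\iota_*E[m],\iota_*F)$ by adjunction. The adjunction $\iota^*\dashv\iota_*$ gives $\Hom_X(\iota^*G,H)\cong\Hom_Y(G,\iota_*H)$: the $\iota^*$ must sit in the \emph{contravariant} (source) slot. With $\iota^*\iota_*F$ in the \emph{covariant} slot, the relevant adjoint is the right adjoint $\iota^!$, and $\iota^!\neq\iota^*$ (for a divisor, $\iota^!=\iota^*(-)\otimes\cO_X(D)[-1]$), so the twist and shift you are dropping are exactly what create the obstruction term you would need to kill. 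The same issue invalidates your earlier claim $\Hom_X(E[m],F)\cong\Hom_Y(\iota_*E[m],\iota_*\iota^*\iota_*F)$. The fix, which is what the paper does in \eqref{eq641}, is to apply \eqref{eq610} to $E$ rather than to $F$: then the contravariant $\Hom(-,F)$ produces $\Hom_X(\iota^*\iota_*E[m],F)$, which \emph{is} $\Hom_Y(\iota_*E[m],\iota_*F)$ by the correct adjunction, and this vanishes because $\iota_*E[m]\in\cA[\geq m]\subset\cA[\geq 1]$ while $\iota_*F\in\cA[\leq 0]$.

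Second, even setting the adjunction aside, your iteration does not close. After one step you replace $E$ by $E\otimes\cO_X(-D)$ and decrement $m$ to $m-1$; the heart bound degrades from $\cA[\geq 0]$ to $\cA[\geq -1]$. These two degradations compound, so as you compute yourself the ``diagonal'' vanishings only hold for $j\leq(m-1)/2$. For the very case you most need, $m=1$, that is zero iterations, and the scheme never gets off the ground; the finite cohomological amplitude of $\Db(X)$ does not rescue it because you never reach a regime where it applies. What makes the paper's induction close is a sign-and-shift difference you got backwards: after applying \eqref{eq610} to $E$, the obstruction term is $\Hom_X(E\otimes\cO_X(-D)[m+2],F)=\Hom_X\bigl((E\otimes\cO_X(-D)[1])[m+1],F\bigr)$. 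By the paper's Claim \eqref{eq:claim65} (that $\cA[\geq 1]\otimes\cO_Y(-D)\subset\cA[\geq 0]$, which follows from \eqref{eq622}), one has $\iota_*(E\otimes\cO_X(-D)[1])\in\cA[\geq 0]$, so the pair $(E',F)=(E\otimes\cO_X(-D)[1],F)$ again satisfies the hypotheses of the lemma, but now at shift $m+1$. Thus $m$ \emph{increases} by one at each step while the heart bound is restored, and the descending induction terminates at the base case $m\geq 2N+1$ given by $\cA\subset\Coh(Y)[-N,N]$ and the $t$-exactness of $\iota_*$. Your ``decreasing induction'' goes the wrong way and hence cannot reach that base case.
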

\begin{proof}
We make (descending) induction on $m$. Note that $\cA\subset\Coh(Y)[-N,N]$ for some $N$ large enough  and $\iota_*:\Coh(X)\to \Coh(Y)$ is exact. When $m\geq 2N+1$, the object $E[m]\in\Coh(Y)[\geq N+1]$ and $F\in\Coh(Y)[\leq N]$. It follows that $\Hom(E[m],F)=0$. In other words, the statement holds for all $m\geq 2N+1$.\\

Assume that the statement holds for all $m\geq k+1$ for some $k\geq 1$, we are going to prove the statement for $m=k$.

To do so, applying $\Hom_X(-,F)$ to \eqref{eq610}, we get the long exact sequence:
\begin{align}\label{eq641}
    \dots \to \Hom_X(E\otimes \cO_X(-D)[m+2],F)\to \Hom_X(E[m],F)\to \Hom_X(\iota^*\iota_*E[m],F)\to \dots
\end{align}

By the adjointness of functors, we have $\Hom_X(\iota^*\iota_*E[m],F)=\Hom_Y(\iota_*E[m],\iota_*F)=0$ as  $\iota_*E[m]\in\cA[\geq m]$ and $\iota_*F\in\cA[\leq 0]$ with $m\geq 1$ by assumption.

By \eqref{eq641}, to show $\Hom_X(E[m],F)=0$, we only need to show $\Hom_X(E\otimes \cO_X(-D)[m+2],F)=0$.\\
\begin{align}\label{eq:claim65}
    \text{
\noindent \textit{Claim}: The assumption \eqref{eq622} $\cA\otimes \cO_Y(D)\subset \cA[\leq 1]$ $\implies$ $\cA[\geq 1]\otimes \cO_Y(-D)\subset \cA[\geq0]$.}
\end{align}
\begin{proof}[Proof of the claim]
    Let $F\in \cA[\geq 1]$. Then we have the distinguished triangle  $G_+\to F\otimes \cO_Y(-D)\to G_-\xrightarrow{+}$ for some $G_+\in \cA[\geq 0]$ and $G_-\in \cA[\leq -1]$. By \eqref{eq622}, we have $G_-\otimes \cO_Y(D)\in \cA[\leq 0]$. It follows that $\Hom(F\otimes \cO_Y(-D),G_-)=0$. So $G_-=0$, in other words, we have $F\otimes \cO_Y(-D)\in \cA[\geq 0]$.
\end{proof}

Back to the proof of the lemma. By the claim, we have
\begin{align*}
    \iota_*(E\otimes \cO_X(-D)[1])= \iota_*E\otimes \cO_Y(-D)[1]\in \cA[\geq1]\otimes\cO_Y(-D)\subset \cA[\geq0].
\end{align*}

By the induction on $m$, we have $\Hom((E\otimes \cO_X(-D)[1])[m+1],F)=0$. So the statement holds for all $m\geq k$.

Therefore, the statement holds by the descending induction on $m$.
\end{proof}

\begin{Lem}\label{lem:660}
    Adopt the assumptions as that in Lemma \ref{lem:63}. Let $E\in\Db(X)$ and 
    \begin{align}\label{eq6661}
       F^-[-1]\xrightarrow{k}F^+\xrightarrow{f}\iota_*E\xrightarrow{f'}F^-
    \end{align}
    be the distinguished triangle with $F^+\in \cA[\geq m]$ and $F^-\in\cA[\leq m-1]$. Then we have
    \begin{enumerate}[(1)]
        \item $\iota_*\iota^*F^+=F^+\oplus (F^+\otimes \cO_Y(-D)[1])$.
        \item $F^-=\iota_*E^-$ for some $E^-\in \Db(X)$.
    \end{enumerate}
\end{Lem}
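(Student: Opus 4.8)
The plan is to exploit the adjunction triangles \eqref{eq610} and \eqref{eq612} together with the heart condition \eqref{eq622} in order to extract $E^{\pm}$ on $X$ from the truncation triangle on $Y$.

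\medskip

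\textbf{Step 1: $\iota^*$ applied to $F^+$ decomposes.} First I would apply $\iota^*$ to \eqref{eq6661}. Since $\iota_*$ is fully faithful with $\iota^*\iota_*\cong \id \oplus (-\otimes\cO_X(-D)[2])$ (from \eqref{eq610}), we get $\iota^*\iota_*E$ surrounded by $\iota^*F^+$ and $\iota^*F^-$. The key point is that $F^+\in\cA[\geq m]$ and, by the Claim \eqref{eq:claim65}, $F^+\otimes\cO_Y(-D)\in\cA[\geq m-1]$ (apply the claim after shifting; more precisely $\cA[\geq 1]\otimes\cO_Y(-D)\subset\cA[\geq 0]$ gives $\cA[\geq m]\otimes\cO_Y(-D)\subset\cA[\geq m-1]$). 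Meanwhile $F^-\in\cA[\leq m-1]$, so $\iota_*$ of its restriction lands in the right range to see that the counit $\epsilon_{F^+}$ and unit $\eta_{F^+}$ morphisms split. Concretely, I would show $\Hom_Y(F^+, F^+\otimes\cO_Y(-D)[1]) = 0$: this follows because $F^+\otimes\cO_Y(-D)[1]\in\cA[\geq m]$ while we need to compare with $F^+\in\cA[\geq m]$ — this alone is not enough, so instead I would use that $F^+$ is the $\geq m$ truncation of $\iota_*E$ and argue via \eqref{eq610} applied to $E$ that the relevant $\Hom$ vanishes, giving that the triangle \eqref{eq612} for $F=F^+$ splits: $\iota_*\iota^*F^+\cong F^+\oplus(F^+\otimes\cO_Y(-D)[1])$. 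This is statement (1).

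\medskip

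\textbf{Step 2: $F^-$ comes from $X$.} For statement (2), I would apply $\iota_*\iota^*$ to the triangle \eqref{eq6661}, or equivalently use \eqref{eq612} with $F = F^-$ and show the connecting map $F^-\otimes\cO_Y(-D)\to F^-$ is zero, so that $\iota_*\iota^*F^-\cong F^-\oplus(F^-\otimes\cO_Y(-D)[1])$ as well, hence $F^-$ is a summand of something of the form $\iota_*(\text{object})$. The cleaner route: since $\iota_*E$ is supported on $X$, i.e. $\iota_*E\cong\iota_*\iota^*\iota_*E$ up to the $\cO(-D)$ correction, and $F^+=\iota_*\iota^*(\cdot)$-type by Step 1, the octahedral axiom applied to $F^+\to\iota_*E\to F^-$ forces $F^-\cong\iota_*E^-$ where $E^-=\Cone(\iota^*F^+\to E)$ (using $\iota^*\iota_*E\to E$ via the counit). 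I would verify that this cone, pushed forward, reproduces $F^-$ by comparing the two triangles and using that $F^-\in\cA[\leq m-1]$ kills the error term $F^+\otimes\cO_Y(-D)[\text{shift}]$ that lies in $\cA[\geq m-1]$ — a $\Hom$ vanishing in the wrong degree range.

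\medskip

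\textbf{Main obstacle.} The delicate point is bookkeeping the precise cohomological amplitudes after twisting by $\cO_Y(-D)$ and shifting, to ensure the relevant $\Hom$-groups between pieces of $\cA[\geq m]$ and $\cA[\leq m-1]$ (and their $\cO_Y(\pm D)$-twists) genuinely vanish; the heart hypothesis \eqref{eq622} only gives a one-step shift, so one must be careful that no twist pushes an object across the cutoff in the wrong direction. I expect that $(1)$ requires showing the self-$\Hom$ $\Hom_Y(F^+, F^+\otimes\cO_Y(-D)[1])=0$, which needs \eqref{eq610} for $E$ and the truncation property of $F^+$, rather than \eqref{eq622} alone — identifying the right argument there is the crux. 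Once the vanishing statements are in place, the octahedral/triangle-chasing to produce $E^-$ is routine.
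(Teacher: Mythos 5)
There is a genuine gap in both parts. For (1), you are aiming at the wrong vanishing: you try to show $\Hom_Y(F^+, F^+\otimes\cO_Y(-D)[1]) = 0$, which is not what makes the triangle $F^+\otimes\cO_Y(-D)\xrightarrow{h_{F^+}}F^+\to\iota_*\iota^*F^+$ split, and indeed one should not expect that group to vanish — both $F^+$ and its twist sit in roughly the same range $\cA[\geq m]$. What needs to be zero is the single morphism $h_{F^+}\in\Hom(F^+\otimes\cO_Y(-D),F^+)$, and the paper does not prove any blanket $\Hom$ vanishing between twists of $F^+$. Instead it argues by a factoring trick: by adjunction $f = \iota_*(g)\circ\eta_{F^+}$ for a unique $g\colon\iota^*F^+\to E$, and $\eta_{F^+}\circ h_{F^+}=0$ automatically (consecutive maps in \eqref{eq612}), so $f\circ h_{F^+}=0$. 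Then \eqref{eq6661} forces $h_{F^+}$ to factor through $F^-[-1]$, and only at that point does the amplitude hypothesis enter: $\Hom(F^+\otimes\cO_Y(-D),F^-[-1])=0$ because $F^+\otimes\cO_Y(-D)\in\cA[\geq m-1]$ while $F^-[-1]\in\cA[\leq m-2]$. You correctly suspected that \eqref{eq622} alone cannot produce a self-$\Hom$ vanishing and that something involving \eqref{eq6661} is needed, but you did not find that the resolution is a factoring argument rather than a vanishing argument.

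For (2), the ``cleaner route'' you describe does not close. The octahedron applied to $\iota_*\iota^*F^+\xrightarrow{\iota_*(g)}\iota_*E\xrightarrow{f'}F^-$ (using that the composite vanishes, by part (1) and the same amplitude bound) does \emph{not} hand you $F^-\cong\iota_*\Cone(g)$; it hands you a new triangle $F^+_1\to\iota_*E_1\to F^-$ with $\iota_*E_1 = \iota_*\Cone(g)$ and $F^+_1=\Cone\bigl(\iota_*\iota^*F^+\xrightarrow{t}F^+\bigr)$ still lying in $\cA[\geq m]$. There is an ``error term'' $F^+_1$ in the way, and its coherent-cohomology amplitude still overlaps with that of $F^-$, so you cannot conclude the triangle splits. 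The paper's actual proof exploits that the top cohomological degree of $F^+_1$ dropped by one, iterates the construction to get $F^+_m\to\iota_*E_m\to F^-$ with $\supp\cH^*_{\Coh}(F^+_m)$ eventually disjoint from that of $F^-$, concludes $\iota_*E_m\cong F^+_m\oplus F^-$, and then uses that $\iota_*$ commutes with $\cH^*_{\Coh}$ together with the Krull--Schmidt property of $\Db(X)$ to split off $F^-=\iota_*E^-$. Dismissing the triangle-chase as ``routine'' skips precisely this inductive descent, which is the heart of part (2).
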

\begin{proof}  
{ (1)} In \eqref{eq612}, by the adjunction property, for every $f\in \Hom(F^+,\iota_*E)$, there exists a unique $g\in\Hom(\iota^*F,E)$ commuting the diagram. 
 \begin{equation}\label{eq6767}
         \begin{tikzcd}
              & \iota_*\iota^*F^+ \arrow[dashed,d]{}{\iota_*(g)}\\
              F^+ \arrow{ur}{\eta_{F^+}} \ar{r}{f} & \iota_*E.
         \end{tikzcd}
     \end{equation}

Let $f$ be as that in \eqref{eq6661} and $h_{F^+}$ be as that in \eqref{eq612}. Then we have 
\begin{align*}
    f\circ h_{F^+}=(\iota_*(g)\circ \eta_{F^+})\circ h_{F^+} =\iota_*(g)\circ( \eta_{F^+}\circ h_{F^+}) =0.
\end{align*} 
It follows by \eqref{eq6661} that \begin{align}\label{eq6008}
    h_{F^+}=k\circ g'\text{ for some }g'\in \Hom(F^+\otimes \cO_Y(-D),F^-[-1]).
\end{align} 

By Claim \eqref{eq:claim65}, we have $F^+\otimes \cO_Y(-D)\in\cA[\geq m-1]$. As  ${F^-}[-1]\in\cA[\leq m-2]$, we have
\begin{align}\label{eq667}
    \Hom({F^+}\otimes \cO_Y(-D),F^-[-1])=0.
\end{align} In particular, we have $g'=0$ as that in \eqref{eq6008}. By \eqref{eq6008}, we have $h_{F^+}=0$. 

By \eqref{eq612}, we have $\iota_*\iota^*{F^+} = {F^+}\oplus ({F^+}\otimes \cO_Y(-D)[1])$.\\

\noindent{(2)} By the statement {(1)} and \eqref{eq667}, we have $\Hom(\iota_*\iota^*{F^+},F^-)=0$. Applying the Octahedron Axiom to the composition of morphisms $\iota_*\iota^*{F^+}\xrightarrow{\iota_*(g)}\iota_*E \xrightarrow{f^-}F^-$, we get the following diagram of distinguished triangles (arrows `$\xrightarrow{+}$' are all omitted to simplified the notion):

    \begin{center}
	\begin{tikzcd}
		\iota_*\iota^*F^+ \arrow{r}{t} \arrow[d,equal]
		& F^+ \arrow{d}{f} \arrow{r} &F^+_1\arrow{d}\\
		\iota_*\iota^*F^+ \arrow{r}{\iota_*(g)} \arrow{d}& \iota_*E
		\arrow{d}{f'} \arrow{r}{} & \iota_*E_1 \arrow[d]\\
		0 \arrow{r} & F^-\arrow[r,equal] & F^-.
	\end{tikzcd}
\end{center}
By statement { (1)} and the diagram \eqref{eq6767}, the morphism $t$ is given as $(\id_{F^+},*):\iota_*\iota^*F^+=F^+\oplus (F^+\otimes \cO_Y(-D)[1])\xrightarrow{}F^+$. 

We may assume that $\cH^q_{\Coh(Y)}(F^+)\neq0$ and $\cH^i_{\Coh(Y)}(F^+)=0$ when $i\geq q+1$. It is clear then $\cH^{i}_{\Coh(Y)}(F^+\otimes \cO_Y(-D)[1])=0$ when $i\geq q$. Applying $\cH^{i}_{\Coh(Y)}(-)$ to the distinguished triangle on the top, we have 
\begin{align}
   \dots\to \cH^{q}_{\Coh(Y)}(\iota_*\iota^*F^+)\xrightarrow{\cH^q(t)} \cH^{q}_{\Coh(Y)}(F^+)\to \cH^{q}_{\Coh(Y)}(F^+_1)\to \cH^{q+1}_{\Coh(Y)}(\iota_*\iota^*F^+)=0
\end{align}
As $\cH^{q}_{\Coh(Y)}(\iota_*\iota^*F^+)=\cH^{q}_{\Coh(Y)}(F^+)$, we get $\cH^{i}_{\Coh(Y)}(F^+_1)=0$ when $i\geq q$.\\

\noindent Note that $F_1^+$ fits into the distinguished triangle on the top, we have 
\begin{align*}
    F_1^+&\in\langle F^+,\iota_*\iota^*F^+[1]\rangle\in\langle\cA[\geq m],F^+[1],F^+\otimes \cO_Y(-D)[2]\rangle \\
   & \subset \langle\cA[\geq m],\cA[\geq m+1],\cA[\geq m+1]\rangle=\cA[\geq m].
\end{align*}
Here the `$\subset$' on the second line follows from Claim \eqref{eq:claim65}. It follows that the distinguished triangle $F^+_1\to \iota_*E_1\to F^-\xrightarrow{+}$ also satisfies the assumption as that in \eqref{eq6661} but with $\cH^{i}_{\Coh(Y)}(F^+_1)=0$ when $i\geq q$, decreased by $1$ comparing with that of $F^+$. \\

\noindent We run this whole procedure to get a series of distinguished triangles $F^+_m\to \iota_*E_m\to F^-\xrightarrow{+}$ satisfying the assumption as that in \eqref{eq6661}. In particular, we have $\cH^{i}_{\Coh(Y)}(F^+_m)=0$ when $i\geq q-m+1$, in other words, the object $F_m^+\in \Coh(Y)[m-q]$. 

Assume that $F^-\in \Coh(Y)[-N,N]$ for some $N$, we may let $m>q+N+\dim Y$. In particular, we get $\iota_*E_m=F^+_m\oplus F^-$. As $\iota_*$ commutes with $\cH^{i}_{\Coh}(-)$, we also have 
\begin{align*}
    \iota_*(\cH^{i}_{\Coh(X)}(E_m))=\cH^{i}_{\Coh(Y)}(\iota_*E_m)=0 \text{ when } i\in [N+1, N+\dim Y].
\end{align*}
As $X$ is smooth of dimension $\dim Y-1$, we have $E_m=E^+_m\oplus E^-$ for some $E^+_m\in\Coh(X)[\geq N+\dim Y+1]$ and $E^-\in \Coh(X)[\leq N]$. It follows that $\iota_*E_m=\iota_*E^+_m\oplus \iota_* E^-$. As $\Db(X)$ is Karoubian satisfying the Krull--Schmidt property, see  \cite{ChenLe:Karoubian}, we must have $F^-=\iota_*E^-$.
\end{proof}
\subsection{Restrict stability conditions to a hypersurface}

\begin{Def}\label{def:restrictredstab}
    Let $\cA$ be the heart of a bounded t-structure on $\Db(Y)$, we denote by
    \begin{align*}
        \cA|_{\Db(X)}\coloneq \{E\in\Db(X):\iota_*E\in\cA\}
    \end{align*}
    the full subcategory in $\Db(X)$.
\end{Def}

\begin{Prop}\label{prop:reststab}
     Let $Y$ be a smooth projective variety and $X\in|D|$ be a smooth subvariety of $Y$ for some divisor $D$ on $Y$.  Let $\sigma=(\cA,Z)$ be a stability condition on $\Db(Y)$ satisfying \begin{align*}
         \sigma\otimes \cO_Y(D)\lsm \sigma[1].
     \end{align*} Then 
      \begin{align*}
          \sigma|_{\Db(X)}\coloneq (\cA|_{\Db(X)},Z\circ[\iota_*])
      \end{align*}
      is a stability condition on $\Db(X)$.

      Moreover, an object $E\in \Db(X)$ is $\sigma|_{\Db(X)}$-(semi)stable  if and only if  $\iota_*E$ is $\sigma$-(semi)stable. If $\sigma\otimes \cO_Y(D')\lsm \sigma[1]$ (resp. $\sigma\lsmeq\sigma\otimes \cO_Y(D')$) for some divisor $D'$, then the restricted stability condition also satisfies $\sigma|_{\Db(X)}\otimes \cO_X(D')\lsm \sigma|_{\Db(X)}[1]$ (resp. $\sigma|_{\Db(X)}\lsmeq \sigma|_{\Db(X)}\otimes \cO_X(D')$).
\end{Prop}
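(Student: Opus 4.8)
The plan is to verify the axioms of a stability condition for the restricted datum $\sigma|_{\Db(X)} = (\cA|_{\Db(X)}, Z\circ[\iota_*])$ in three stages: first that $\cA|_{\Db(X)}$ is the heart of a bounded t-structure on $\Db(X)$, then that $Z\circ[\iota_*]$ is a stability function on it with HN filtrations, and finally the support property. First I would note that the hypothesis $\sigma\otimes\cO_Y(D)\lsm\sigma[1]$ translates, via Lemma \ref{lem:eqdefforlsmonstab}, into $\cP_\sigma(\theta)\otimes\cO_Y(D)\subset\cP_\sigma(<\theta+1)$ for all $\theta$, equivalently at the level of the heart $\cA\otimes\cO_Y(D)\subset\cA[\leq 1]$, which is precisely the hypothesis \eqref{eq622} feeding Lemmas \ref{lem:63} and \ref{lem:660}. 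So the structural input from those two lemmas is available: $\Hom_X(E[m],F)=0$ for $m\geq 1$ when $\iota_*E\in\cA[\geq 0]$, $\iota_*F\in\cA[\leq 0]$, and the key ``descent'' statement that the negative HN piece $F^-$ of $\iota_*E$ relative to $\cA[\geq m]/\cA[\leq m-1]$ is itself of the form $\iota_*E^-$.

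Next I would check $\cA|_{\Db(X)}$ is a heart. That it is closed under extensions and $[-1]$-negative shifts being killed is formal from $\cA$ being a heart and $\iota_*$ exact for the standard t-structures; the real content is that every $E\in\Db(X)$ admits a filtration with graded pieces in $\cA|_{\Db(X)}[k]$. For this I would take the cohomology objects $\cH^k_{\cA}(\iota_*E)$ of $\iota_*E$ with respect to $\cA$ and use Lemma \ref{lem:660}.(2) inductively: writing the truncation triangle $F^+\to\iota_*E\to F^-$ with $F^+\in\cA[\geq m]$, $F^-\in\cA[\leq m-1]$, Lemma \ref{lem:660}.(2) gives $F^-=\iota_*E^-$, and then the octahedron / the triangle $F^+\to\iota_*E\to\iota_*E^-$ forces $F^+=\iota_*E^+$ as well (since $\iota_*$ is fully faithful on a suitable subcategory, or by a direct diagram chase as in the proof of \ref{lem:660}), so the $\cA$-filtration of $\iota_*E$ descends to an $\cA|_{\Db(X)}$-filtration of $E$; boundedness follows from boundedness of the $\cA$-filtration of $\iota_*E$. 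Then $Z\circ[\iota_*]$ takes $\cA|_{\Db(X)}\setminus\{0\}$ into $\overline{\H}$ because $\iota_*$ of a nonzero object is nonzero and lies in $\cA\setminus\{0\}$, on which $Z$ is a stability function; HN filtrations on $\cA|_{\Db(X)}$ are inherited from those of $\iota_*E$ in $\cA$ by the same descent argument applied slicewise, noting that $\phi$ is read off from $Z\circ[\iota_*]$ so the HN factors in $\cA$ of a pushed-forward object descend. For the support property, I would transport the quadratic form $Q_\Lambda$ on $\Lambda_Y$ to $Q_X\coloneq Q_\Lambda\circ[\iota_*]$ on $\Lambda_X$: $\Ker(Z\circ[\iota_*])\subset[\iota_*]^{-1}(\Ker Z)$ is negative definite for $Q_X$ since $\Ker Z$ is for $Q_\Lambda$, and $Q_X(\lambda_X([E]))=Q_\Lambda(\lambda_Y([\iota_*E]))\geq 0$ for $\sigma|_{\Db(X)}$-semistable $E$ once we know $\iota_*E$ is $\sigma$-semistable.

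That last implication is what I expect to be the main obstacle, and it is also where the ``moreover'' statements come from. To show $E$ is $\sigma|_{\Db(X)}$-semistable $\iff$ $\iota_*E$ is $\sigma$-semistable: the $\Leftarrow$ direction is easy since a destabilizing subobject $F\hookrightarrow E$ in $\cA|_{\Db(X)}$ pushes to a destabilizing $\iota_*F\hookrightarrow\iota_*E$ in $\cA$ (phases match because $Z\circ[\iota_*]$ is literally $Z$ evaluated on $\iota_*$). For $\Rightarrow$, suppose $\iota_*E$ is strictly $\sigma$-destabilized; after shifting assume $\iota_*E\in\cP_\sigma((\theta-1,\theta])$ for suitable $\theta$ is not the situation — rather $\iota_*E$ has HN factors $A_1,\dots,A_r$ of decreasing phases, and I must show the first factor $A_1$ (phase $>\phi(\iota_*E)$) is of the form $\iota_*E_1$ with $E_1\hookrightarrow E$; this is exactly the content of Lemma \ref{lem:660}.(2) applied with $m$ chosen so $\cA[\geq m]$ picks out $A_1$ — it says $F^-$, hence by the companion argument $F^+$, descends, and $F^+=\iota_*(\text{first HN piece of }E)$. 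Finally, for the preservation of $\lsm$ and $\lsmeq$: the restricted slicing satisfies $\cP_{\sigma|_{\Db(X)}}(\theta)=\{E:\iota_*E\in\cP_\sigma(\theta)\}$, and $\iota_*(E\otimes\cO_X(D'))=\iota_*E\otimes\cO_Y(D')$, so $\cP_{\sigma|_{\Db(X)}}(\theta)\otimes\cO_X(D')\subset\{E:\iota_*E\otimes\cO_Y(D')\in\cP_\sigma(<\theta+1)\}$ follows directly from $\cP_\sigma(\theta)\otimes\cO_Y(D')\subset\cP_\sigma(<\theta+1)$, i.e. from $\sigma\otimes\cO_Y(D')\lsm\sigma[1]$ via Lemma \ref{lem:eqdefforlsmonstab}; replacing $<$ by $\leq$ throughout gives the $\lsmeq$ version. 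I would carry these out in the order: (i) reinterpret $\lsm$ hypothesis as \eqref{eq622}; (ii) $\cA|_{\Db(X)}$ is a heart, via descent of $\cA$-cohomology using \ref{lem:660}; (iii) stability function and HN property; (iv) the stable/semistable equivalence via \ref{lem:660}.(2), which also yields support property; (v) preservation of $\lsm$, $\lsmeq$ by direct slicing computation.
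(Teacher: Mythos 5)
Your proposal correctly identifies the reformulation of the hypothesis as $\cA\otimes\cO_Y(D)\subset\cA[\leq 1]$ and the structural role of Lemmas \ref{lem:63} and \ref{lem:660}, and the way you handle the preservation of $\lsm$/$\lsmeq$ (by rewriting the restricted slicing and pushing the tensor through $\iota_*$) is essentially the paper's. But there are two genuine gaps in the middle of the argument, both stemming from trying to work entirely with integer shifts of the fixed heart $\cA$.

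First, to conclude that a filtration of $\iota_*E$ descends to one of $E$, it is not enough to know that each successive cone $F^-$ is of the form $\iota_*E^-$: one must also know that the connecting morphism $\iota_*E\to F^-$ is of the form $\iota_*(f_X)$, otherwise the other term $F^+$ of the triangle is not a pushforward. You gesture at this with ``$\iota_*$ is fully faithful on a suitable subcategory,'' but $\iota_*$ is not fully faithful on derived categories for a divisorial embedding (this is exactly what the adjunction triangle \eqref{eq610} measures). What the paper actually proves (Lemma \ref{lem:65}) is the surjectivity of $\iota_*\colon\Hom_X(E,F)\to\Hom_Y(\iota_*E,\iota_*F)$ under the phase hypothesis $\phi^-_\sigma(\iota_*E)\geq\phi^+_\sigma(\iota_*F)$, obtained by applying $\Hom_X(-,F)$ to \eqref{eq610} and killing the obstruction term $\Hom_X(E\otimes\cO_X(-D)[1],F)$ via a slicing-level $\Hom$-vanishing (Lemma \ref{lem:644}). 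For your $\cA$-cohomology truncation $F^+\to\iota_*E\to F^-$ with $F^-\in\cA[\leq m-1]$, the needed phase condition fails: $\phi^-_\sigma(\iota_*E)=\phi^-_\sigma(F^-)\leq\phi^+_\sigma(F^-)$, so the obstruction term does not vanish. Hence your claim that ``$F^+$ descends as well, by the companion argument'' is not available.

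Second, your step (iv) asserts that for a suitable integer $m$, the truncation $\cA[\geq m]$ ``picks out'' the first HN factor $A_1$ of $\iota_*E$. This cannot be arranged: HN factors are separated by arbitrary real phases, not by integer shifts of the heart, so no integer $m$ isolates $A_1$ unless $\iota_*E$ happens to live in a single heart shift. The paper's resolution (Lemma \ref{lem:666}) is to rotate the stability condition to $\tau=\sigma[\theta]$ with $\theta=\phi^-_\sigma(\iota_*E)$, so the \emph{bottom} HN factor of $\iota_*E$ becomes precisely the negative cohomological truncation for $\cA_\tau$ (with $m=0$), at which point Lemma \ref{lem:660} applies to $\cA_\tau$ and Lemma \ref{lem:65} applies because $\phi^-_\sigma(\iota_*E)=\phi^+_\sigma(\iota_*E^-)$ (equality). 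The point you must notice but did not is that the rotation is legitimate because the $\lsm$ hypothesis is $\glt$-equivariant (Lemma \ref{lem:eqdefforlsmonstab}.(6)), so $\tau\otimes\cO_Y(D)\lsm\tau[1]$ also holds. Without the rotation and without the surjectivity of $\iota_*$ on $\Hom$, the filtration you need --- both to identify $\cA|_{\Db(X)}$ as a heart and to prove the (semi)stability equivalence --- does not actually descend.
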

\begin{Lem}\label{lem:644}
Adopt the assumptions as that in Proposition \ref{prop:reststab}, then for every $E,F\in\Db(X)$ satisfying $\phi^-_\sigma(\iota_*E)>\phi^+_\sigma(\iota_*F)$, we have $\Hom_X(E,F)=0$.
\end{Lem}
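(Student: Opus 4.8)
The plan is to reduce the statement, after a suitable \emph{real} shift of $\sigma$, to Lemma~\ref{lem:63} with $m=1$; the adjunction triangle \eqref{eq610} then does the actual work. We may assume $E,F\neq 0$, and since $\iota_*$ is faithful, $\iota_*E$ and $\iota_*F$ are then nonzero, so their phases are defined.

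First I would check that the hypothesis of Proposition~\ref{prop:reststab} is stable under the $\glt$-shift $[\theta]$: one has $(\sigma\otimes\cO_Y(D))[\theta]=(\sigma[\theta])\otimes\cO_Y(D)$ and $(\sigma[1])[\theta]=(\sigma[\theta])[1]$, while $\sigma_1\lsm\sigma_2$ implies $\sigma_1[\theta]\lsm\sigma_2[\theta]$ directly from $\cP_{\sigma_i[\theta]}(\psi)=\cP_{\sigma_i}(\psi+\theta)$. Hence $\sigma[\theta]\otimes\cO_Y(D)\lsm\sigma[\theta][1]$ for every $\theta\in\R$. Unwinding this last relation gives $\cP_{\sigma[\theta]}(\psi)\otimes\cO_Y(D)\subset\cP_{\sigma[\theta]}(<\psi+1)$ for all $\psi$, and taking extension closures over $\psi\in(0,1]$ yields
\[
\cA_{\sigma[\theta]}\otimes\cO_Y(D)\ \subset\ \cP_{\sigma[\theta]}\bigl((-\infty,2)\bigr)\ \subset\ \cA_{\sigma[\theta]}[\leq 1],
\]
which is precisely hypothesis \eqref{eq622} of Lemma~\ref{lem:63} for the heart $\cA_{\sigma[\theta]}$.

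Now, because $\phi^-_\sigma(\iota_*E)>\phi^+_\sigma(\iota_*F)$, the interval $\bigl[\phi^+_\sigma(\iota_*F)-1,\ \phi^-_\sigma(\iota_*E)-1\bigr)$ is nonempty; I would pick $\theta$ in it and set $\sigma'=\sigma[\theta]$. Then $\phi^-_{\sigma'}(\iota_*E)=\phi^-_\sigma(\iota_*E)-\theta>1$, so $\iota_*(E[-1])\in\cP_{\sigma'}(>0)=\cA_{\sigma'}[\geq 0]$, and $\phi^+_{\sigma'}(\iota_*F)=\phi^+_\sigma(\iota_*F)-\theta\leq 1$, so $\iota_*F\in\cA_{\sigma'}[\leq 0]$. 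Applying Lemma~\ref{lem:63} to $E[-1]$ and $F$ with $m=1$ then gives $\Hom_X(E,F)=\Hom_X\bigl((E[-1])[1],F\bigr)=0$, as claimed.

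The genuinely non-bookkeeping point is the implication ``$\sigma'\otimes\cO_Y(D)\lsm\sigma'[1]\Rightarrow\cA_{\sigma'}\otimes\cO_Y(D)\subset\cA_{\sigma'}[\leq 1]$'', together with the observation that, although the gap $\phi^-_\sigma(\iota_*E)-\phi^+_\sigma(\iota_*F)$ may be arbitrarily small, some real (not merely integral) shift $[\theta]$ always slips an integer phase level strictly between $\iota_*E$ and $\iota_*F$ — this is exactly what makes the heart hypothesis of Lemma~\ref{lem:63} available. If one prefers to bypass Lemma~\ref{lem:63}, the same conclusion can be reached directly: with $E_m\coloneq E\otimes\cO_X(-mD)[2m]$, the triangle \eqref{eq610} for $E_m$ and the projection formula $\iota_*E_m\cong\iota_*E\otimes\cO_Y(-mD)[2m]$ show that $\Hom_X(E_m,F)$ is a quotient of $\Hom_X(E_{m+1},F)$ whenever $\Hom_Y(\iota_*E_m,\iota_*F)=\Hom_X(\iota^*\iota_*E_m,F)=0$; this vanishing holds for all $m\geq 0$ because the hypothesis forces $\phi^-_\sigma(G\otimes\cO_Y(-D))>\phi^-_\sigma(G)-1$ for every $G$, whence $\phi^-_\sigma(\iota_*E_m)\geq\phi^-_\sigma(\iota_*E)>\phi^+_\sigma(\iota_*F)$; and $\Hom_X(E_m,F)=0$ for $m\gg 0$ since $E_m\in\Coh(X)[-N-2m,\,N-2m]$ for $N$ chosen with $E,F\in\Coh(X)[-N,N]$. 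A descending induction on $m$ then yields $\Hom_X(E,F)=0$.
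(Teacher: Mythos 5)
Your main argument is correct and follows essentially the same route as the paper: rotate $\sigma$ by a suitable real $\theta$ (yours is the paper's choice shifted by $-1$, so that $\iota_*E[-1]\in\cA_{\sigma[\theta]}[\geq 0]$ and $\iota_*F\in\cA_{\sigma[\theta]}[\leq 0]$ rather than $\iota_*E\in\cA_\tau[\geq 0]$ and $\iota_*F\in\cA_\tau[\leq -1]$), observe that $\sigma\otimes\cO_Y(D)\lsm\sigma[1]$ is preserved under the $\glt$-shift and implies the heart inclusion $\cA\otimes\cO_Y(D)\subset\cA[\leq 1]$, and then invoke Lemma~\ref{lem:63} with $m=1$. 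You actually spell out the step ``$\sigma'\otimes\cO_Y(D)\lsm\sigma'[1]\Rightarrow\cA_{\sigma'}\otimes\cO_Y(D)\subset\cA_{\sigma'}[\leq 1]$'' which the paper leaves implicit. One small slip in your optional direct argument at the end: with $E_m=E\otimes\cO_X(-mD)[2m]$ and $E\in\Coh(X)[-N,N]$ one gets $E_m\in\Coh(X)[-N+2m,\,N+2m]$, not $[-N-2m,\,N-2m]$; the intended conclusion $\Hom_X(E_m,F)=0$ for $m\gg 0$ still holds, since $E_m$ is pushed into arbitrarily high shifts while $F$ is fixed.
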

\begin{proof}
    By rotating the stability condition $\sigma$ to $\tau=\sigma[\theta]$ for some $\theta \in \left(\phi^+_\sigma(\iota_*F),\phi^-_\sigma(\iota_*E)\right)$, we have $\iota_*F\in\cA_\tau[\leq -1]$ and $\iota_*E\in\cA_\tau[\geq0]$. Note that the $\glt$ and $\Aut(\cT)$ act on different sides of $\Stab(\cT)$, in particular, they commute with each other. By Lemma \ref{lem:eqdefforlsmonstab}, we have\begin{align*}
         \tau\otimes \cO_Y(D)=\sigma[\theta]\otimes\cO_Y(D)=(\sigma\otimes\cO_Y(D))[\theta]\lsm \sigma[1+\theta]=\tau[1].
    \end{align*}
    The statement then follows from Lemma \ref{lem:63}.
\end{proof}

\begin{Lem}\label{lem:65}
Adopt the assumptions as that in  Proposition \ref{prop:reststab}. Let $E,F\in\Db(X)$ satisfying $\phi^-_\sigma(\iota_*E)\geq \phi^+_\sigma(\iota_*F)$, then the map $\iota_*:\Hom_X(E,F)\to \Hom_Y(\iota_*E,\iota_*F)$ is surjective.
\end{Lem}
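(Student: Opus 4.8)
The plan is to identify the pushforward $\iota_*$, under adjunction, with precomposition by the counit, and then read off surjectivity from the counit triangle \eqref{eq610}; throughout one may assume $E,F\neq 0$, as otherwise the claim is trivial. First I would use the adjunction $\iota^*\dashv\iota_*$: the natural isomorphism $\Hom_Y(\iota_*E,\iota_*F)\cong\Hom_X(\iota^*\iota_*E,F)$ carries the map $\iota_*\colon\Hom_X(E,F)\to\Hom_Y(\iota_*E,\iota_*F)$ to the map $\epsilon_E^*\colon\Hom_X(E,F)\to\Hom_X(\iota^*\iota_*E,F)$ given by precomposition with the counit $\epsilon_E\colon\iota^*\iota_*E\to E$ of \eqref{eq610}; this identification is just the triangle identity $\iota_*(\epsilon_E)\circ\eta_{\iota_*E}=\id_{\iota_*E}$. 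So it suffices to prove that $\epsilon_E^*$ is surjective.

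Next I would apply the contravariant functor $\Hom_X(-,F)$ to the counit triangle $E\otimes\cO_X(-D)[1]\to\iota^*\iota_*E\xrightarrow{\epsilon_E}E\xrightarrow{+}$ of \eqref{eq610}. The resulting long exact sequence exhibits the cokernel of $\epsilon_E^*$ as a subgroup of $\Hom_X\!\big(E\otimes\cO_X(-D)[1],F\big)$, so the whole lemma reduces to proving the single vanishing
\[
\Hom_X\!\big(E\otimes\cO_X(-D)[1],F\big)=0 .
\]

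To establish that vanishing I would apply Lemma \ref{lem:644} with $E\otimes\cO_X(-D)[1]$ in place of $E$; its hypothesis requires the strict phase inequality $\phi^-_\sigma\big(\iota_*(E\otimes\cO_X(-D)[1])\big)>\phi^+_\sigma(\iota_*F)$. This is where the assumption $\sigma\otimes\cO_Y(D)\lsm\sigma[1]$ is used: by Lemma \ref{lem:eqdefforlsmonstab} it is equivalent to the inequality $\phi^-_\sigma(G\otimes\cO_Y(-D))>\phi^-_\sigma(G)-1$ for every non-zero $G\in\Db(Y)$. Taking $G=\iota_*E$, using the projection formula $\iota_*(E\otimes\cO_X(-D))=\iota_*E\otimes\cO_Y(-D)$, and combining with the standing hypothesis $\phi^-_\sigma(\iota_*E)\geq\phi^+_\sigma(\iota_*F)$ gives $\phi^-_\sigma\big(\iota_*(E\otimes\cO_X(-D)[1])\big)=\phi^-_\sigma(\iota_*E\otimes\cO_Y(-D))+1>\phi^+_\sigma(\iota_*F)$, as required. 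Lemma \ref{lem:644} then yields the vanishing, so $\epsilon_E^*$, hence $\iota_*$, is surjective.

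The argument is almost entirely formal. The point that deserves the most care — and the only real obstacle — is translating the relation $\lsm$ on $\Stab(\Db(Y))$ through the twist $\otimes\cO_Y(D)$ and the homological shift so that the phase inequalities line up exactly (this is precisely the role of Lemma \ref{lem:eqdefforlsmonstab}), together with verifying that the adjunction isomorphism really intertwines $\iota_*$ with $\epsilon_E^*$ and not with a shifted variant.
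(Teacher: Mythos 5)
Your proof is correct and follows essentially the same route as the paper: applying $\Hom_X(-,F)$ to the counit triangle \eqref{eq610}, identifying $\iota_*$ with $(-\circ\epsilon_E)$ via adjunction, reducing surjectivity to the vanishing $\Hom_X(E\otimes\cO_X(-D)[1],F)=0$, and obtaining that vanishing from Lemma \ref{lem:644} after translating the hypothesis $\sigma\otimes\cO_Y(D)\lsm\sigma[1]$ through Lemma \ref{lem:eqdefforlsmonstab} into a strict $\phi^-$ inequality. The only cosmetic deviations are that you establish the adjunction compatibility via the triangle identity rather than naturality of $\epsilon$ (both standard and correct), and you describe the $\phi^-$ inequality as ``equivalent'' to $\sigma\otimes\cO_Y(D)\lsm\sigma[1]$ when it is only a consequence (the $\phi^+$ half is also needed for equivalence), but since you only use the implication in the direction you need, the argument stands.
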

\begin{proof}
Apply $\Hom_X(-,F)$ to \eqref{eq610}, we get the long exact sequence:
\begin{align}\label{eq6146}
    \dots\to \Hom_X(E,F)\xrightarrow{-\circ\epsilon_E} \Hom_X(\iota^*\iota_*E,F)\to \Hom_X(E\otimes \cO_X(-D)[1],F) \to \dots
\end{align}
For every $f\in\Hom(E,F)$, as $\epsilon_E$ is a natural transformation, we have \begin{align*}
    f\circ \epsilon_E=\epsilon_F\circ \iota^*(\iota_*(f))=\Phi^{-1}_{\iota_*E,F}(\iota_*(f)),
\end{align*} where $\Phi_{\iota_*E,F}:\Hom(\iota^*\iota_*E,F)\to\Hom(\iota_*E,\iota_*F)$ is the natural isomorphism. So the statement is equivalent to show that $-\circ \epsilon_E$ is surjective. As that in \eqref{eq6146}, it is enough to show $\Hom(E\otimes \cO_X(-D)[1],F)=0$. \\

By the assumption that $\sigma\otimes \cO_Y(D)\lsm \sigma[1]$ and Lemma \ref{lem:eqdefforlsmonstab}.{(2)}, we have
\begin{align*}
    &\phi^-_{\sigma}(\iota_*(E\otimes \cO_X(-D)))=\phi^-_{\sigma}(\iota_*E\otimes \cO_Y(-D))=\phi^-_{\sigma\otimes \cO_Y(D)}(\iota_*E)\\
   > &\phi^-_{\sigma[1]}(\iota_*E)=\phi^-_{\sigma}(\iota_*E[-1])\geq \phi^+_{\sigma}(\iota_*F[-1]).
\end{align*}
By Lemma \ref{lem:644}, we have $\Hom(E\otimes \cO_X(-D)[1],F)=0$. The statement holds.
\end{proof}

\begin{Lem}\label{lem:666}
    Adopt the assumptions as that in Proposition \ref{prop:reststab} and let $E\in\Db(X)$. Then every Harder--Narasimhan factor of $\iota_*E$ with respect to $\sigma$ is $\iota_*E_m$ for some $E_m\in\Db(X)$.
\end{Lem}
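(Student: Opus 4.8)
The plan is to descend to $\Db(X)$ the entire Harder--Narasimhan filtration of $\iota_*E$ by an inductive argument on the HN factors, using Lemma \ref{lem:660} as the key structural input. Recall that $\iota_*E$ admits a finite HN filtration with respect to $\sigma$: there are distinguished triangles $E_{i-1}\to E_i\to A_i\xrightarrow{+}$ with $A_i\in\cP_\sigma(\theta_i)$ and $\theta_1>\dots>\theta_m$, and $E_m=\iota_*E$. I want to show that each $E_i$ (and hence each $A_i$) lies in the essential image of $\iota_*$.

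First I would argue that it suffices to treat the first HN factor and then iterate. Indeed, once we know $A_1=\HN^+_\sigma(\iota_*E)=\iota_*A_1'$ for some $A_1'\in\Db(X)$, we can consider the distinguished triangle $A_1\to \iota_*E\to C\xrightarrow{+}$ where $C=E_m/E_1$ in the truncated sense, i.e.\ $C\in\cP_\sigma((-\infty,\theta_2])$ has HN factors $A_2,\dots,A_m$. The cone of $\iota_*A_1'\to\iota_*E$ is computed as $\iota_*(\mathrm{Cone}(A_1'\to E))$ since $\iota_*$ is exact and triangulated, so $C=\iota_*C'$ for $C'=\mathrm{Cone}(A_1'\to E)\in\Db(X)$. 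Then I would apply the same reasoning to $C'$, whose pushforward has strictly fewer HN factors, and conclude by induction on $m$. The base case $m=1$ is the statement that $\iota_*E$ semistable forces nothing extra — trivially $\iota_*E=\iota_*E$.

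The heart of the matter is thus: \emph{the first HN factor of $\iota_*E$ lies in $\iota_*\Db(X)$.} Here is where Lemma \ref{lem:660} enters. Rotate $\sigma$ by $[\theta]$ for a suitable $\theta$ (noting the $\glt$-action preserves the hypothesis $\sigma\otimes\cO_Y(D)\lsm\sigma[1]$, as in Lemma \ref{lem:eqdefforlsmonstab}) so that, writing $\cA=\cA_{\sigma[\theta]}$, the object $\iota_*E$ has its top HN factor sitting in degree $m_0$ with respect to the $t$-structure: that is, there is a distinguished triangle $F^+\to\iota_*E\to F^-\xrightarrow{+}$ with $F^+\in\cA[\geq m_0]$, $F^-\in\cA[\leq m_0-1]$, and $F^+$ is (the shift of) the maximal-phase part. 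The hypothesis \eqref{eq622} $\cA\otimes\cO_Y(D)\subset\cA[\leq 1]$ holds because $\sigma\otimes\cO_Y(D)\lsm\sigma[1]$ translates, via $\cP_{\sigma\otimes\cO_Y(D)}(\theta)\subset\cP_{\sigma}(<\theta+1)$, exactly into this containment of hearts after rotation. Lemma \ref{lem:660}.(2) then gives $F^-=\iota_*E^-$ for some $E^-\in\Db(X)$, and therefore $F^+=\mathrm{Cone}(F^-[-1]\to\iota_*E)=\iota_*(\mathrm{Cone}(E^-[-1]\to E))=\iota_*E^+$ as well. Iterating this within $\cP_\sigma(\theta_1)$ — that is, refining $F^+$ by the finitely many distinct phases $\geq\theta_2$, or simply observing that one extraction per HN factor suffices if we set up the triangle so $F^+$ is pure of phase $\theta_1$ — yields $A_1=\iota_*A_1'$.

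The main obstacle I anticipate is the bookkeeping needed to guarantee that the truncation triangle $F^+\to\iota_*E\to F^-$ whose existence Lemma \ref{lem:660} assumes can be chosen so that $F^+$ is \emph{exactly} the first HN factor $A_1$ (pure of a single phase), rather than an a priori coarser aggregate of high-phase factors. The clean way is to apply Lemma \ref{lem:660} repeatedly: extract the portion of phase $>\theta$ for decreasing thresholds $\theta$, at each stage getting that the extracted sub and the remaining quotient are both pushforwards, and peel off one HN factor at a time. One must also check that the Krull--Schmidt / Karoubian decompositions used inside Lemma \ref{lem:660} propagate, but that is already handled there, so the remaining work is purely the reduction described above, plus verifying that the hypothesis $\sigma\otimes\cO_Y(D)\lsm\sigma[1]$ is stable under the rotations $\sigma\rightsquigarrow\sigma[\theta]$, which is immediate from Lemma \ref{lem:eqdefforlsmonstab}.(7) and the commutation of the $\glt$- and $\Aut$-actions.
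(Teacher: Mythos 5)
Your overall strategy — peel off a pushforward piece of the HN filtration using Lemma~\ref{lem:660} and iterate — matches the paper's. But there is a genuine gap in the step you repeat twice: you assert that once $F^-=\iota_*E^-$ is known, the cone
$F^+=\mathrm{Cone}(F^-[-1]\to\iota_*E)$ equals $\iota_*(\mathrm{Cone}(E^-[-1]\to E))$ ``since $\iota_*$ is exact and triangulated.'' That reasoning is incomplete. A triangulated functor commutes with cones of morphisms \emph{in its source category}, but here the connecting morphism $\iota_*E^-[-1]\to\iota_*E$ (equivalently $\iota_*E\to\iota_*E^-$) is a morphism in $\Db(Y)$, and there is no a priori reason it should lift to a morphism in $\Db(X)$. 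The pushforward $\iota_*$ along a divisor inclusion is \emph{not} fully faithful: by adjunction $\Hom_Y(\iota_*A,\iota_*B)=\Hom_X(\iota^*\iota_*A,B)$, and the triangle \eqref{eq610} shows $\iota^*\iota_*A$ contains the extra summand $A\otimes\cO_X(-D)[1]$, so there genuinely are maps between pushforwards that do not come from $\Db(X)$. If the connecting morphism does not lift, the cone $F^+$ need not be a pushforward at all, and the induction stalls.

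The missing ingredient is Lemma~\ref{lem:65}, which you never invoke: under the phase condition $\phi^-_\sigma(\iota_*E)\geq\phi^+_\sigma(\iota_*F)$ (which holds for the HN truncation triangle, with equality) the map $\iota_*:\Hom_X(E,E^-)\to\Hom_Y(\iota_*E,\iota_*E^-)$ is \emph{surjective}, so the morphism $f$ in the truncation triangle can be written as $\iota_*f_X$, and only then does the cone identification go through. The paper's proof does exactly this: take $F^-=\HN^-_\sigma(\iota_*E)$ the lowest-phase factor, get $F^-=\iota_*E^-$ from Lemma~\ref{lem:660}(2), then use Lemma~\ref{lem:65} to lift $f:\iota_*E\to F^-$ to $f_X:E\to E^-$, conclude $F^+=\iota_*E^+$, and induct on the number of HN factors. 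Supplying this one extra citation (and the accompanying observation that the phase hypothesis of Lemma~\ref{lem:65} is met) repairs your argument; without it, the key step is unjustified.
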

\begin{proof}  
Let $F^-=\HN^-_\sigma(\iota_* E)$ be the HN factor of $E$ with minimum phase. By rotating the stability condition $\sigma$ to $\tau=\sigma[\theta]$ with $\theta =\phi^-_\sigma(\iota_*E)$, we get a distinguished triangle 
\begin{align}\label{eq6612}
    F^+\to \iota_*E\xrightarrow{f} F^-\xrightarrow{+}
\end{align} with $F^+\in \cA_{\tau}[\geq 0]$ and $F^-\in\cA_{\tau}[\leq -1]$. By Lemma \ref{lem:660}, we get $F^-=\iota_*E^-$ for some $E^-\in\Db(X)$.

Note that $\phi_\sigma(\iota_*E^-)=\phi^-_\sigma(\iota_*E)$. By Lemma \ref{lem:65}, the morphism $f$ in the HN filtration distinguished triangle as that in \eqref{eq6612} is of the form $\iota_*f_X$ for some $f_X\in\Hom_{\Db(X)}(E,E^-)$. Therefore, the object $F^+$ is also of the form $\iota_*E^+$ for some $E^+\in \Db(X)$. By induction on the number of HN factors, the statement holds.
\end{proof}

Now we can finish the proof for the restriction of stability conditions.
\begin{proof}[Proof of Proposition \ref{prop:reststab}]
By Lemma \ref{lem:65} and \ref{lem:666}, for every $E\in\Db(X)$, the Harder--Narasimhan filtration of $\iota_*E$ with respect to $\sigma$ is in the form of
 \begin{equation*}
        \begin{tikzcd}[column sep=tiny]
0=\iota_*F_0  \arrow[rr]& & \iota_*F_1 \arrow[dl,"\iota_*f_1"] \arrow[rr]& & \iota_*F_2\arrow[r]\arrow [dl,"\iota_*f_2"]& \cdots\arrow[r] &\iota_*F_{m-1}\arrow [rr]& & \iota_* F_m=\iota_*E\arrow[dl,"\iota_*f_m"]\\
& \iota_*E_1 \arrow[ul,dashed] && \iota_*E_2\arrow[ul,dashed] &&&& \iota_*E_m \arrow[ul,dashed]
\end{tikzcd}
    \end{equation*}
for some $E_i$, $F_i$, $f_i$ in the category $\Db(X)$. Together with Lemma \ref{lem:644}, it follows that $\cP|_{\Db(X)}(\theta)\coloneqq\{E\in\Db(X)\colon\iota_*E\in\cP_\sigma(\theta)\}$ is a slicing on $\Db(X)$.   In particular, an object $E\in\Db(X)$ is $\sigma|_{\Db(X)}$-semistable if and only if $\iota_*E$ is $\sigma$-semistable.

It is also clear that if  $\iota_*E$ is $\sigma$-stable then $E$ is $\sigma|_{\Db(X)}$-stable. For the remaining statement, suppose that $E$ is $\sigma|_{\Db(X)}$-stable but $\iota^*E$ is strictly $\sigma$-semistable. Then we get a distinguished triangle $F_1\to \iota_*E\to F_2\xrightarrow{+}$ with $F_i$ $\sigma$-semistable and $\phi_\sigma(F_i)=\phi_\sigma(\iota_*E)$. By the same argument as that in  Lemma \ref{lem:660}, we get $F_2=\iota_* E_2$ for some $E_2\in\Db(X)$. By Lemma \ref{lem:65}, this contradicts with the assumption that $\iota_*E$ is  $\sigma|_{\Db(X)}$-stable. So  an object $E\in\Db(X)$ is $\sigma|_{\Db(X)}$-stable if and only if $\iota_*E$ is $\sigma$-stable.

By Lemma \ref{lem:63} and \ref{lem:666}, the category $\cA|_{\Db(X)}$ is the heart of the bounded t-structure associated with $\cP|_{\Db(X)}$. 

    Denote the lattice of $\sigma$ as $\lambda:K(Y)\to \Lambda_Y$. By the support property of $\sigma$, given a norm $||\bullet||$ on $\Lambda_Y\otimes \R$, there exists a constant $c>0$ such that $|Z(\lambda([F]))|\geq c||\lambda([F])||$ for every $\sigma$-semistable object $F$.
    
    We denote the sublattice $\Lambda_X$ as the image of $\lambda_X\coloneqq\lambda\circ [\iota_*]$ in $\Lambda_Y$. The norm $||\bullet||$  restricts to a norm on $\Lambda_X$. For every $\sigma|_X$-semistable object $E$, we have that the object $\iota_*E$ is $\sigma$-semistable. It follows that $|Z(\lambda_X([E]))|=|Z(\lambda([\iota_*]([E])))|=|Z(\lambda([\iota_*E]))|\geq c||\lambda([\iota_*E])||$. 
    
    So $\sigma|_{\Db(X)}$ admits the support property as well, it is a stability condition on $\Db(X)$.\\

   To see the last statement, we only need to show that  
   \begin{align*}
       \left(\sigma\otimes \cO_Y(D')\right)|_{\Db(X)}=\sigma|_{\Db(X)}\otimes \cO_X(D').
   \end{align*}
   Indeed, an object $E\in \Db(X)$ is in $ \left(\sigma\otimes \cO_Y(D')\right)|_{\Db(X)}$ if and only if 
   \begin{align*}
      & \iota_*E\in\cA_{\sigma\otimes \cO_Y(D')}\iff \iota_*E\otimes \cO_Y(-D)\in\cA_\sigma\iff \iota_*(E\otimes \cO_X(-D'))\in\cA_\sigma\\
      \iff & E\otimes \cO_X(-D')\in \cA_\sigma|_{\Db(X)}\iff E\in \cA_\sigma|_{\Db(X)}\otimes \cO_X(D').
   \end{align*}
   This finishes the claim.
\end{proof}

\begin{Rem}[Autoequivalence as spherical twist]
    Note that $\iota_*:\Db(X)\to \Db(Y)$ is a spherical function associated with the spherical twist $\otimes \cO_Y(D):\Db(Y)\to \Db(Y)$ in the sense of \cite[Definition 2.1]{Segal:sptwist} and \cite{AL:sphericalonDG,ST:sttwist}. In particular, the formula
    \eqref{eq610} and \eqref{eq612}, which plays the essential role in the proof of Proposition \ref{prop:reststab}  also hold for other example spherical functors.

    After completing the proof of Proposition \ref{prop:reststab}, we noticed that the statement also follows from \cite[Corollary 2.2.2]{Polishchuk:families-of-t-structures}. Nevertheless, we include our argument here as it is self-contained and takes a completely different approach from those in \cite{Polishchuk:families-of-t-structures} and \cite{TLS:constructtstructure}.
\end{Rem}

\subsection{The restricted data determines the original one}
\begin{Def}\label{def:geostab}
We call a stability condition $\sigma$ on $\Db(X)$ \emph{geometric} (with respect to $X$) if for each point $p\in X$, the skyscraper sheaf $\cO_p$ is   $\sigma$-stable, and all skyscraper sheaves are of the same phase.
\end{Def}
\begin{Cor}\label{cor:niceimplygeom}
    Let $(X,H)$ be an irreducible smooth variety with a very ample divisor $H$. Let $\sigma$ be a stability condition on $X$ satisfying $\sigma\otimes \cO_X(H)\lsm\sigma[1]$, then $\sigma$ is geometric.
\end{Cor}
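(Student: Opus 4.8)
The plan is to reduce Corollary \ref{cor:niceimplygeom} to Proposition \ref{prop:reststab} by iteratively restricting $\sigma$ to smaller and smaller linear sections, until we reach a point. First I would note that since $\sigma\otimes\cO_X(H)\lsm\sigma[1]$ and $H$ is very ample, we may pick a general smooth hyperplane section $X_1\in|H|$; by Proposition \ref{prop:reststab} (applied with $D=H$), the datum $\sigma|_{\Db(X_1)}$ is a stability condition on $\Db(X_1)$, and moreover — since $\sigma\otimes\cO_X(H)\lsm\sigma[1]$ — the restricted stability condition satisfies $\sigma|_{\Db(X_1)}\otimes\cO_{X_1}(H|_{X_1})\lsm\sigma|_{\Db(X_1)}[1]$, because $H|_{X_1}$ is the restriction of the very ample divisor $H$ and the $\lsm$ relation is preserved under restriction (last sentence of Proposition \ref{prop:reststab}). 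So the hypothesis of the corollary is inherited by the restriction. Iterating $n=\dim X$ times, taking general smooth sections $X\supset X_1\supset\cdots\supset X_n$ with $\dim X_n=0$, we arrive at a stability condition on $\Db(X_n)$ where $X_n$ is a finite set of reduced points — in fact, choosing the sections generically through a prescribed point $p\in X$, we may arrange $p\in X_n$.

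Next I would identify the skyscraper sheaf $\cO_p$ with an object of $\Db(X_n)$ (it is the structure sheaf of the point $p$ viewed inside the zero-dimensional scheme $X_n$) and observe that under the chain of push-forwards $\iota_*$, the object $\cO_p$ on $X_n$ pushes forward to the skyscraper sheaf $\cO_p$ on $X$. By the last paragraph of Proposition \ref{prop:reststab}, an object $E\in\Db(X_n)$ is $\sigma|_{\Db(X_n)}$-(semi)stable if and only if its push-forward to $\Db(X)$ is $\sigma$-(semi)stable. Thus it suffices to show $\cO_p$ is $\sigma|_{\Db(X_n)}$-stable. But on a zero-dimensional variety, the only stability conditions (up to $\glt$) are the trivial ones on $\Db(X_n)\simeq\bigoplus_{\text{pts}}\Db(\mathrm{pt})$: every point-sheaf $\cO_q$ with $q\in X_n$ is a stable object of any stability condition, since it is a simple object in the heart (it has no proper subobjects or quotients in the relevant abelian category, being supported at a single reduced point and the category splitting as a direct sum over the points). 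Hence $\cO_p$ is $\sigma|_{\Db(X_n)}$-stable, and pushing forward, $\cO_p$ is $\sigma$-stable on $X$.

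Finally I would check that all skyscraper sheaves have the same phase. Running the above argument, for two points $p,q\in X$ one can choose (by generality of the sections) a single chain of linear sections $X\supset X_1\supset\cdots\supset X_n$ with both $p$ and $q$ lying on $X_n$ — or more simply, since $X$ is irreducible and $H$ very ample, one can connect $p$ and $q$ within a chain whose last member $X_{n-1}$ is an irreducible curve containing both $p$ and $q$ (Bertini), and the restriction $\sigma|_{\Db(X_{n-1})}$ is a stability condition on a curve for which skyscrapers $\cO_p$, $\cO_q$ are both stable and lie in the same $\glt$-orbit class, so they have equal phase; equivalently, $Z_{\sigma}(\iota_*\cO_p)=Z_{\sigma}(\iota_*\cO_q)$ is forced by $[\iota_*\cO_p]=[\iota_*\cO_q]$ in $\Lambda$ together with stability, whence $\phi_\sigma(\cO_p)=\phi_\sigma(\cO_q)$. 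Therefore $\sigma$ is geometric in the sense of Definition \ref{def:geostab}.

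The main obstacle I anticipate is the bookkeeping needed to justify that the chain of general smooth sections can be chosen to pass through any prescribed point (or pair of points) while remaining smooth at each stage — this is a standard Bertini-type argument, but one must be slightly careful that $H$ very ample on $X$ restricts to a very ample divisor on each $X_i$ so that Proposition \ref{prop:reststab}'s hypothesis $\sigma|_{\Db(X_i)}\otimes\cO_{X_i}(H)\lsm\sigma|_{\Db(X_i)}[1]$ genuinely propagates, and that the base locus issues do not obstruct finding a section through $p$. Everything else is a direct application of Proposition \ref{prop:reststab} and the triviality of stability on zero-dimensional schemes.
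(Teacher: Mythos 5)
Your proof is correct and takes essentially the same route as the paper: apply Bertini plus Proposition \ref{prop:reststab} to restrict $\sigma$ through a chain of smooth sections down to a zero-dimensional scheme containing a prescribed point, note that the hypothesis $\sigma\otimes\cO(H)\lsm\sigma[1]$ propagates by the last clause of Proposition \ref{prop:reststab}, conclude that each $\cO_p$ is stable because stability is trivial on a zero-dimensional scheme and pushforward preserves it, and then use connectivity through a curve section to pin down the common phase. One small caveat: the clause you flag as ``equivalent'' --- that $[\iota_*\cO_p]=[\iota_*\cO_q]$ in $\Lambda$ plus stability forces $\phi_\sigma(\cO_p)=\phi_\sigma(\cO_q)$ --- is \emph{not} sufficient on its own, since equal central charge and stability only determine the phase modulo~$2$; the curve-connectivity argument (which you also give, and which the paper uses) is what actually resolves the integer ambiguity, so the ``equivalently'' should be dropped.
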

\begin{proof}
    By Bertini Theorem, for every closed point $p\in X$, there exists a sequence of varieties $p\in X_1\subset \dots \subset X_n=X$ such that every $X_i$ is smooth with $X_i\in|H_{X_{i+1}}|$.  By Proposition \ref{prop:reststab}, the stability condition $\sigma$ restricts to a stability condition on $X_1$. It further restricts to a stability condition on a zero dimensional subvariety $Z\ni p$ in $|H_{X_1}|$. So every skyscraper sheaf $\cO_p$ is $\sigma$-stable. As any pair of points can be connected by a sequence of such kind of curve $X_1$, their phases are the same.
\end{proof}
\begin{Prop}\label{prop:reconfromrest}Let $Y$ be an irreducible smooth projective variety and $X\in|H|$ be a smooth subvariety of $Y$ for some very ample divisor $H$ on $Y$. Let $\sigma$ and $\tau$ be two stability conditions on $\Db(Y)$ such that:
    \begin{align}\label{eq:6233}
   \sigma\lsmeq \sigma\otimes \cO_Y(H)\lsm \sigma[1], \tau\otimes \cO_Y(H)\lsm \tau[1],\sigma|_{\Db(X)}=\tau|_{\Db(X)}\text{ and } Z_\sigma=Z_\tau.
    \end{align}
    Then $\sigma=\tau$.
\end{Prop}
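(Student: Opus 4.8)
The plan is to show that $\sigma$ and $\tau$ have the same heart; once $\cA_\sigma = \cA_\tau$ and $Z_\sigma = Z_\tau$, a pre-stability condition is determined by $(\cA, Z)$, so $\sigma = \tau$ follows at once. The strategy is to reconstruct the heart $\cA_\sigma$ from the restricted datum $\sigma|_{\Db(X)}$, which by hypothesis equals $\tau|_{\Db(X)}$, together with the common central charge. First I would record the structural consequences of \eqref{eq:6233}: by Corollary \ref{cor:niceimplygeom}, both $\sigma$ and $\tau$ are geometric on $Y$, and iterating the Bertini/restriction argument in that corollary, skyscraper sheaves $\cO_p$ of points $p\in Y$ are $\sigma$- and $\tau$-stable of a fixed phase; I would normalize (using that phases are pinned down by $Z_\sigma = Z_\tau$) so that $\cO_p\in\cP_\sigma(\theta_0)=\cP_\tau(\theta_0)$ for the \emph{same} $\theta_0$.

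The key step is a ``reconstruction'' lemma along the hypersurface $\iota\colon X\hookrightarrow Y$. By Proposition \ref{prop:reststab}, an object $F\in\Db(Y)$ of the form $\iota_*E$ is $\sigma$-semistable of phase $\phi$ iff $E$ is $\sigma|_{\Db(X)}$-semistable of phase $\phi$ — and the HN filtration of $\iota_*E$ with respect to $\sigma$ is the pushforward of the HN filtration of $E$ with respect to $\sigma|_{\Db(X)}$ (Lemma \ref{lem:666}). Since $\sigma|_{\Db(X)} = \tau|_{\Db(X)}$ and $Z_\sigma = Z_\tau$ (hence $Z_\sigma\circ[\iota_*] = Z_\tau\circ[\iota_*]$), we get $\phi^{\pm}_\sigma(\iota_*E) = \phi^{\pm}_\tau(\iota_*E)$ for \emph{every} $E\in\Db(X)$. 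In particular $\cP_\sigma(\theta)$ and $\cP_\tau(\theta)$ contain exactly the same objects of the shape $\iota_*E$. The plan is then to leverage that the essential image of $\iota_*$ (together with shifts and the action of $\otimes\cO_Y(H)$, via the triangles \eqref{eq610}, \eqref{eq612}) generates $\Db(Y)$: for $F\in\cA_\sigma$, I would use the unit triangle $F\otimes\cO_Y(-H)\xrightarrow{h_F} F\to\iota_*\iota^*F\to$ and the fact, from \eqref{eq:6233} and Claim \eqref{eq:claim65}-type bookkeeping, that $\cA_\sigma\otimes\cO_Y(H)\subset\cA_\sigma[\le 1]$ while $\cA_\tau\otimes\cO_Y(H)\subset\cA_\tau[\le 1]$, to control how $F$, $F\otimes\cO_Y(-H)$, and $\iota_*\iota^*F$ sit relative to both hearts. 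Concretely, since $\iota_*\iota^*F$ has the same $\phi^{\pm}_\sigma$ and $\phi^{\pm}_\tau$, and the phases of $F\otimes\cO_Y(-H)$ differ from those of $F$ in a uniformly bounded way (by the $\lsm$ relations), I would run a descending induction on the ``$\otimes\cO_Y(H)$-amplitude'' of $F$ — exactly as in the proofs of Lemmas \ref{lem:63} and \ref{lem:660} — to conclude $\phi^{\pm}_\sigma(F) = \phi^{\pm}_\tau(F)$ for all $F$, whence $\cP_\sigma(\theta) = \cP_\tau(\theta)$ for all $\theta$, so $\cA_\sigma = \cA_\tau$.

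To make the induction start, I would use that for $F\in\Coh(Y)[N]$ with $N$ large (resp. very negative), $F$ lies in a slice determined purely by $\cO_p$-stability and the central charge — more precisely, for $F$ a twist $\cO_Y(mH)$ with $m\gg 0$, semistability and phase are forced by the geometric structure (global generation, the Bayer-type vanishing $\Hom(\cO_Y(mH)\otimes\cO_Y(H),\text{-})$ killing destabilizers) which is common to $\sigma$ and $\tau$; and general $F$ is built from such line bundles by finitely many extensions and cones via a resolution, on which $\phi^{\pm}$ is controlled by the triangle inequality for phases. The main obstacle I anticipate is precisely this bootstrapping: showing that the common restricted datum plus $\cO_p$-stability plus the $\lsm$-inequalities genuinely pin down the phase of \emph{every} object of $\Db(Y)$, not merely of objects pushed forward from $X$ or of line bundles. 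The delicate point is that $\iota_*$ is not essentially surjective, so one must propagate phase information across the triangles \eqref{eq610}–\eqref{eq612} while the amplitude bookkeeping stays finite; I expect this to require the same Karoubian/Krull–Schmidt and cohomological-amplitude arguments used in Lemma \ref{lem:660}, applied symmetrically to $\sigma$ and $\tau$, together with a uniform bound (independent of $F$) on $|\phi^{\pm}_\sigma(F) - \phi^{\pm}_\tau(F)|$ obtained from the hypotheses, which can then be improved to $0$ by replacing $F$ with high twists $F\otimes\cO_Y(mH)$ and using that $\phi^{\pm}_\sigma$ and $\phi^{\pm}_\tau$ both shift compatibly.
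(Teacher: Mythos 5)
Your proposal correctly identifies several of the right ingredients — that $\phi^\pm_\sigma(\iota_*E) = \phi^\pm_\tau(\iota_*E)$ for every $E \in \Db(X)$ by Proposition \ref{prop:reststab} and the hypothesis $\sigma|_{\Db(X)} = \tau|_{\Db(X)}$, and that the adjunction triangles \eqref{eq610}, \eqref{eq612} are the vehicle for propagating phase information from $X$ up to $Y$. But the argument as you describe it has a genuine gap, and you flag it yourself: the descending-induction bookkeeping on amplitude that works in Lemmas \ref{lem:63} and \ref{lem:660} can only bound the difference $|\phi^\pm_\sigma(F) - \phi^\pm_\tau(F)|$ by a fixed constant — it has no mechanism to force this difference to zero directly. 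Your closing hope of "improving to $0$ by replacing $F$ with high twists" doesn't work without an additional idea, since the $\lsm$ relations move both $\phi^\pm_\sigma$ and $\phi^\pm_\tau$ in the same direction, so twisting leaves the gap fixed rather than shrinking it.

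The missing step that closes the argument in the paper is an auxiliary rigidity result, \cite[Lemma 4.7]{FLZ:ab3}: two stability conditions with the same central charge and slicing distance $d(\sigma,\tau) \leq 1$ are in fact equal. This converts the proposition into showing merely $d(\sigma,\tau) \leq 1$, and a bound of $1$ is exactly what the adjunction triangle gives with no iteration needed. Concretely, for a $\tau$-stable $F$, one lets $E = \HN^+_\sigma(F)$; if $E$ is a skyscraper the bound is immediate from the common geometric structure, and otherwise $\supp(E) \cap X \neq \emptyset$ gives a nonzero map $\iota_*\iota^*(E \otimes \cO_Y(H))[-1] \to F$ after killing the term through $G$ using \eqref{eq:6233}, whence
\begin{align*}
\phi_\tau(F) \;\geq\; \phi^-_\tau(\iota_*\iota^*E\otimes\cO_Y(H)[-1]) \;=\; \phi^-_\sigma(\iota_*\iota^*E\otimes\cO_Y(H)[-1]) \;\geq\; \phi_\sigma(E) - 1 \;=\; \phi^+_\sigma(F) - 1,
\end{align*}
where the middle equality uses $\sigma|_{\Db(X)} = \tau|_{\Db(X)}$ via Proposition \ref{prop:reststab}. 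A dual argument with $G = \HN^-_\sigma(F)$ gives $\phi_\tau(F) - \phi^-_\sigma(F) \leq 1$, and \cite[Lemma 6.1]{Bridgeland:Stab} then gives $d(\sigma,\tau) \leq 1$. Without the reduction to $d \leq 1$, your direct reconstruction of the heart from the restricted data is, as you suspected, not something the listed techniques give you.
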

\begin{proof}
By \cite[Lemma 4.7]{FLZ:ab3}, we only need to show
 $d(\sigma,\tau)\leq 1$.

\noindent We first show that for every $\tau$-stable object $F$, the difference 
\begin{align}\label{eq6156}
    \phi^+_\sigma(F)-\phi_\tau(F)\leq1.
\end{align}

To see this, we consider  the distinguished triangle
\begin{equation}\label{eq6166}
    E\rightarrow F\rightarrow G\xrightarrow{+},  \end{equation}
    
 where $E=\mathrm{HN}^{+}_{\sigma}(F)$ is HN-factor with maximum phase  and $G= \mathrm{HN}^{<\phi^+_\sigma(F)}_{\sigma}(F)$.

By Corollary \ref{cor:niceimplygeom} and the assumption that $\sigma|_{\Db(X)}=\tau|_{\Db(X)}$, all skyscraper sheaves are both $\sigma$ and $\tau$ stable with the same phase. If $\dim\supp(E)=0$, then $E$ must be the extension of skyscraper sheaves with the same homological shift. In particular, we have $\phi_\sigma^+(F)=\phi_\sigma(E)=\phi_\tau(E)\leq \phi_\tau(F)$. The inequality \eqref{eq6156} holds automatically.  We may therefore assume that $F$ is not supported on any $0$-dimensional subscheme. As $H$ is very ample, in particular, we have $\supp(E)\cap X\neq \emptyset$.\\

 As that in \eqref{eq612}, we may consider the distinguished triangle 
 \begin{align*}
     E\otimes \cO_Y(H)[-1]\xrightarrow{\eta} \iota_*\iota^* E\otimes \cO_Y(H)[-1]\xrightarrow{f} E\xrightarrow[]{+}.
 \end{align*} 
Since $\supp(E)\cap X\neq \emptyset$, the morphism $f$ above is not $0$.  In particular, we have
\begin{align}\label{eq6243}
    \Hom(\iota_*\iota^*(E\otimes \cO_Y(H))[-1],E)\neq 0. 
\end{align}
 
 It also follows that 
 \begin{align}\label{eq617}
    & \phi^-_\sigma(\iota_*\iota^*E\otimes \cO_Y(H)[-1])\geq\min\{\phi^-_\sigma(E\otimes \cO_Y(H)[-1]),\phi_\sigma(E)\}\\= &\min\{\phi^-_{\sigma\otimes\cO_Y(-H)}(E)-1,\phi_\sigma(E)\}     
    \geq \phi_\sigma(E)-1>\phi^+_\sigma(G[-1]).\notag
 \end{align}
Here for the `$\geq$' in the second line, we use the assumption that $\sigma\otimes \cO_Y(-H)\lsmeq \sigma$ and Lemma \ref{lem:eqdefforlsmonstab}.

 Applying $\Hom(\iota_*\iota^*E\otimes \cO_Y(H)[-1],-)$ to \eqref{eq6166}, we have an exact sequence 
 \[\dots\rightarrow \Hom(\iota_*\iota^*E(H)[-1],G[-1])\rightarrow\Hom(\iota_*\iota^*E(H)[-1],E)\rightarrow\Hom(\iota_*\iota^*E(H)[-1],F)\rightarrow\dots\]
By \eqref{eq617}, the term $\Hom(\iota_*\iota^*E\otimes \cO_Y(H)[-1],G[-1])=0$. It follows by \eqref{eq6243} that \begin{align*}
    \Hom(\iota_*\iota^*E\otimes \cO_Y(H)[-1],F)\neq 0.
\end{align*}
Therefore, we have
\begin{align*}
    &\phi_\tau(F)\geq \phi^-_\tau(\iota_*\iota^*E\otimes \cO_Y(H)[-1])=\phi^-_{\tau|_{\Db(X)}}(\iota^*E\otimes \cO_X(H)[-1]) \\ 
    =& \phi^-_{\sigma|_{\Db(X)}}(\iota^*E\otimes \cO_X(H)[-1])=\phi^-_\sigma(\iota_*\iota^*E\otimes \cO_Y(H)[-1])\geq \phi_\sigma(E)-1=\phi^+_\sigma(F)-1.
\end{align*}

Here the `$=$' in the first line is due to the assumption that $\tau\otimes \cO_Y(H)\lsm \tau[1]$ and Proposition \ref{prop:reststab}. The first `$=$'in the second line is due to the assumption that $\sigma|_{\Db(X)}=\tau|_{\Db(X)}$. The `$\geq$' is due to \eqref{eq617}. To sum up, the relation \eqref{eq6156} holds.\\

We then show that for every  $\tau$-stable object $F$, the difference  $ \phi_\tau(F)-\phi^-_\sigma(F)\leq1.$ 

To see this, we consider a similar distinguished triangle as that of \eqref{eq6166} requiring $G=\mathrm{HN}^{\phi^-_\sigma(F)}_{\sigma}(F)$  and $E= \mathrm{HN}^{>\phi^-_\sigma(F)}_{\sigma}(F)$.

We consider the  distinguished triangle $G\to\iota_*\iota^*G\to G(-H)[1]$ as that in \eqref{eq610}. Note that  $\Hom(G,\iota_*\iota^*G)\neq 0$. By the same argument as that for \eqref{eq617}, we get $\phi^+_\sigma(\iota_*\iota^*G)\leq \phi_\sigma(G)+1$.

Apply $\Hom(-,\iota_*\iota^*G)$ to \eqref{eq6166}, by the same argument above, it follows that $\Hom(E[1],\iota_*\iota^*G)=0$ and $\Hom(F,\iota_*\iota^*G)\neq0$. Therefore, we get
\begin{align*}
\phi_\tau(F)\leq \phi^+_\tau(\iota_*\iota^*G)=\phi^+_\sigma(\iota_*\iota^*G)\leq\phi_\sigma(G)+1=\phi^-_\sigma(F)+1.
\end{align*}

To sum up, we have $|\phi^\pm_\sigma(F)-\phi_\tau(F)|\leq1$ for every $\tau$-stable object $F$. By \cite[Lemma 6.1]{Bridgeland:Stab}, we have $d(\sigma,\tau)\leq 1$. The statement follows from \cite[Lemma 4.7]{FLZ:ab3}.
\end{proof}
\begin{Rem}\label{rem:differentlattice}
Note that the assumption in Proposition \ref{prop:reconfromrest} does not require $\sigma$ and $\tau$ are with respect to the same lattice in prior. More precisely, in the statement, we may write the central charge $Z_\sigma$ (resp. $Z_\tau$) as the composition $K(Y)\xrightarrow{\lambda_\sigma}\Lambda_\sigma \xrightarrow{Z_{\sigma,\lambda_\sigma}}\C$ (resp. $\lambda_\tau,\Lambda_\tau,Z_{\tau,\lambda_\tau}$). Then Proposition \ref{prop:reconfromrest} says that $\cA_\sigma=\cA_\tau$ and $Z_{\sigma,\lambda_\sigma}\circ \lambda_\sigma=Z_{\tau,\lambda_\tau}\circ \lambda_\tau$.
\end{Rem}

It is known that a geometric stability condition on a surface is determined by its phase on the skyscraper sheaf and central charge, see for example \cite{Bridgeland:K3}. In other words, two geometric stability conditions $\sigma$, $\tau$ on $\Db(S)$ with $\phi_\sigma(\cO_p)=\phi_\tau(\cO_p)$ are  the same if and only if $Z_\sigma=Z_\tau$. This is also true for some higher dimensional varieties, for instance, abelian threefolds. However, there are also examples of different geometric stability conditions $\sigma\neq\tau$ on $\Db(\bP^3)$ with $\phi_\sigma(\cO_p)=\phi_\tau(\cO_p)$ and $Z_\sigma=Z_\tau$. The new assumption that $\sigma\lsmeq\sigma\otimes \cO_X(H)\lsm \sigma[1]$ is a solution to this issue.

\begin{Cor}\label{cor:geodeterbyZ}
    Let $(X,H)$ be an irreducible smooth variety with a very ample divisor $H$. Let  $\sigma$ and $\tau$ be two geometric  stability conditions satisfying
    \begin{enumerate}
        \item $\phi_\sigma(\cO_p)=\phi_\tau(\cO_p)$ \text{ and } $Z_\sigma=Z_\tau$;
        \item $\sigma\lsmeq \sigma\otimes \cO_X(H)\lsm\sigma[1]$ and $\tau\otimes \cO_X(H)\lsm \tau[1]$.
    \end{enumerate}
    Then $\sigma=\tau$.
\end{Cor}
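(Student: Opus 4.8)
The plan is to reduce Corollary \ref{cor:geodeterbyZ} to Proposition \ref{prop:reconfromrest} by exhibiting a smooth hyperplane section $X' \in |H|$ and checking that the hypotheses there are met. First I would apply the Bertini theorem to choose a smooth $X' \in |H|$; since $H$ is very ample this is possible. By Corollary \ref{cor:niceimplygeom}, the condition $\sigma \otimes \cO_X(H) \lsm \sigma[1]$ (which is part of hypothesis (2)) already forces $\sigma$ to be geometric, and likewise for $\tau$; so the assumption that $\sigma,\tau$ are geometric with $\phi_\sigma(\cO_p) = \phi_\tau(\cO_p)$ is the piece of data that will let us compare the restrictions. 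By Proposition \ref{prop:reststab}, the restricted data $\sigma|_{\Db(X')}$ and $\tau|_{\Db(X')}$ are genuine stability conditions on $\Db(X')$, and the relations $\sigma \lsmeq \sigma \otimes \cO_X(H) \lsm \sigma[1]$ and $\tau \otimes \cO_X(H) \lsm \tau[1]$ pass to the restrictions (again Proposition \ref{prop:reststab}, last sentence), with $H$ replaced by $H|_{X'}$.

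Next I would induct on $\dim X$. In the base case $\dim X = 0$ there is nothing to prove (a point has an essentially unique stability condition up to the data recorded), or more usefully one can take $\dim X \le 2$ as the base case, where the classical fact quoted in the text — a geometric stability condition on a curve or surface is determined by $\phi(\cO_p)$ together with $Z$ — gives the conclusion directly. For the inductive step, assume the corollary holds in dimension $\dim X - 1$. We have $Z_\sigma = Z_\tau$, so the central charges of the restrictions agree: $Z_{\sigma|_{\Db(X')}} = Z_\sigma \circ [\iota_*] = Z_\tau \circ [\iota_*] = Z_{\tau|_{\Db(X')}}$. The skyscraper sheaf $\cO_q$ on $X'$ pushes forward to $\cO_q$ on $X$, so $\phi_{\sigma|_{\Db(X')}}(\cO_q) = \phi_\sigma(\iota_* \cO_q) = \phi_\tau(\iota_* \cO_q) = \phi_{\tau|_{\Db(X')}}(\cO_q)$, and the restrictions are geometric (they restrict stable skyscrapers to stable skyscrapers, all of the same phase). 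Hence by the inductive hypothesis applied to $(X', H|_{X'})$ we get $\sigma|_{\Db(X')} = \tau|_{\Db(X')}$.

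At this point all four hypotheses of Proposition \ref{prop:reconfromrest} are in place for the pair $(Y, X) = (X, X')$: namely $\sigma \lsmeq \sigma \otimes \cO_X(H) \lsm \sigma[1]$, $\tau \otimes \cO_X(H) \lsm \tau[1]$, $\sigma|_{\Db(X')} = \tau|_{\Db(X')}$, and $Z_\sigma = Z_\tau$. Proposition \ref{prop:reconfromrest} then yields $\sigma = \tau$, completing the induction.

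The main obstacle I anticipate is the base case and making sure the hypotheses are genuinely preserved under restriction, rather than the inductive bookkeeping. In particular one must check carefully that geometricity descends: that $\sigma|_{\Db(X')}$ sends every $\cO_q$ ($q \in X'$) to a stable object of a single phase. This follows because $\iota_* \cO_q = \cO_q$ is $\sigma$-stable by Corollary \ref{cor:niceimplygeom} applied to $\sigma$, and Proposition \ref{prop:reststab} says $E$ is $\sigma|_{\Db(X')}$-stable iff $\iota_* E$ is $\sigma$-stable; the constancy of the phase over $X'$ is inherited from its constancy over $X$. One should also confirm that $H|_{X'}$ remains very ample so that the inductive hypothesis literally applies — this is immediate since restriction of a very ample divisor to a subvariety is very ample. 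A minor point is that the classical surface/curve statement cited ("determined by $\phi(\cO_p)$ and $Z$") should be invoked only in its geometric form, which is exactly what we have arranged.
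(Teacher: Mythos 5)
Your proposal is correct and matches the paper's argument: the paper likewise restricts via Bertini sections to a curve cut out by $\dim X - 1$ hyperplanes (where geometric stability conditions are determined by the central charge and the phase of a skyscraper), and then climbs back up by iterating Proposition~\ref{prop:reconfromrest}, exactly the induction you spell out. Your version is just a more careful unpacking of the paper's compressed one-paragraph proof, with the hypothesis verification at each restriction step made explicit.
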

Note that the geometric assumption is implied by { (b)} according to Corollary \ref{cor:niceimplygeom}.
\begin{proof}
    By Bertini Theorem, there is a smooth connected curve $C$ on $X$ cutting out by $\dim X-1$ hyperplanes in $|H|$.  By Proposition \ref{prop:reststab}, there are restricted stability conditions $\sigma|_{\Db(C)}$ and $\tau|_{\Db(C)}$ with the same central charge. Moreover, they are both geometric with the same phase on skyscraper sheaves. It follows that $\sigma|_{\Db(C)}=\tau|_{\Db(C)}$. The statement then follows by Proposition \ref{prop:reconfromrest}.
\end{proof}

\subsection{Example: Polarized surface case}
We may have an immediate application of Proposition \ref{prop:reststab} in the surface case. Here we discuss the polarized surface case. The unpolarized case is discussed in Proposition \ref{prop:D13} and Appendix \ref{sec:appDunpolaized}.

Let $(S,H)$ be a smooth polarized surface and $C\in|dH|$ be a smooth curve. Adopt the notion of stability conditions $\sigma_{\alpha,\beta}=(\cA_\beta,Z_{\alpha,\beta})$ and reduced stability conditions $\ts_{t_1,t_2}=(\cA_{t_1,t_2},B_{t_1,t_2})$ as that in Section \ref{subsec:redsbonsurface}. 
\begin{Ex}
We have the following statement on the restricted stability conditions:
    \begin{enumerate}[(1)]
        \item When $\alpha>\frac{\beta^2}{2}+\frac{d^2}{8}$, we have $\sigma_{\alpha,\beta}\otimes \cO_S(dH)\lsm\sigma_{\alpha,\beta}[1]$. The restricted stability condition $\sigma_{\alpha,\beta}|_{\Db(C)}=(\Coh(C),Z)$ and is equivalent to the slope stability on $C$.
        In other words, a vector bundle $E$ on $C$ is slope (semi)stable if and only if $\iota_*E$ is $\sigma_{\alpha,\beta}$-(semi)stable.
        \item When $t_2-t_1>d$, the heart $\cA_{t_1,t_2}|_{\Db(C)}=\Coh^{\sharp t}(C)$, where $t=\tfrac{1}{2}(d^2+dt_1+dt_2)H^2$.
    \end{enumerate}
\end{Ex}

The second part in statement {(1)}  has been made use in the study of Brill--Noether theory via stability conditions, see \cite{Arend:BN,BL:BN, Soheyla:Mukai1,Soheyla:Mukai2, FL:CliffordK3} for more examples.

\begin{proof}
    {(1)} For every $\theta\in(0,1)$, by Remark \ref{rem:surfacestabtosb}, the reduced stability condition $\pi_\sim(\sigma_{\alpha,\beta}[\theta])=\ts_{t_1,t_2}[m]\cdot c$ for some $t_1<t_2$ satisfying 
    \begin{align*}
        0=2\alpha-(t_1+t_2)\beta+t_1t_2>\frac{d^2}{4}+(\beta-t_1)(\beta-t_2).
    \end{align*}
       It follows that $t_2-t_1=(t_2-\beta)+(\beta-t_1)>2\sqrt{\tfrac{d^2}{4}}=d$. By Lemma \ref{lem:comparesurfacesb}.{(3)}, we have \begin{align*}
           \ts_{t_1,t_2}\otimes \cO(dH)=\ts_{t_1+d,t_2+d}\lsm \ts_{t_1,t_2}[1].
       \end{align*}
 For $\theta=1$, we have $\pi_\sim(\sigma_{\alpha,\beta}[1])=\ts_{\beta,+\infty}[2]$. By Lemma \ref{lem:comparesurfacesb}.{(3)}, we also have $\ts_{\beta,+\infty}\otimes \cO(dH)\lsm \ts_{\beta,+\infty}[1]$.

 By Lemma \ref{lem:eqdefforlsmonstab}, we have  $\sigma_{\alpha,\beta}\otimes \cO(dH)\lsm \sigma_{\alpha,\beta}[1]$.

\noindent By Proposition \ref{prop:reststab},  the restricted 
 stability condition $\sigma_{\alpha,\beta}|_{\Db(C)}=(\Coh_H^{\sharp \beta}(S)|_{\Db(C)},Z_{\alpha,\beta}\circ \iota_*)$.  By Definition \ref{def:restrictredstab}, the heart $\Coh_H^{\sharp \beta}(S)|_{\Db(C)}=\Coh(C)$. By a direct computation, we have \begin{align}\label{eq6161}
     H^{2-\bullet}\ch_i(\iota_*-)=(0, dH^2\rk(-),\deg(-)-\tfrac{1}{2}d^2H^2\rk(-)).
 \end{align}
 The central charge $Z=Z_{\alpha,\beta}\circ\iota_*=-\deg+\tfrac{1}{2}d^2H^2\rk+idH^2\rk$, which is clear the same as $-\deg+i\rk$ up to a linear transformation. \\
       
\noindent    {(2)} By Lemma \ref{lem:comparesurfacesb}, we have stability condition $\tau=(\cA_{t_1,t_2},-\vd_{\frac{t_1+t_2}{2},+\infty}+i\vd_{t_1,t_2})$.

By statement {(1)},  $\tau\otimes \cO_{S}(D)\lsm \tau[1]$. By Proposition \ref{prop:reststab}, the heart $\cA_\tau=\cA_{t_1,t_2}$ restricts to $\Db(C)$.

By \eqref{eq6161}, for every $E\in\Db(C)$,
    \begin{align*}
        \tfrac{1}{t_2-t_1}\vd_{t_1,t_2}(\iota_*E)=\ch_2(\iota_*E)-\tfrac{1}{2}(t_1+t_2)H\ch_1(\iota_*E) =\deg(E)-\tfrac{1}{2}(d^2+dt_1+dt_2)H^2\rk(E).
    \end{align*}
        The statement follows.
\end{proof}

\subsection{Bayer Lemma for non-paralleled divisor}
We have seen in previous sections that the stability condition on $(S,H)$ constructed from the geometric perspective satisfies the Bayer Vanishing property:
\begin{align}\label{eq636}
    \sigma\lsmeq \sigma\otimes \cO_S(H).
\end{align}
By Lemma \ref{lem:eqdefforlsmonstab}, the property \eqref{eq636} implies $\sigma\lsmeq \sigma\otimes \cO_S(mH)$ for every $m\in\Z_{\geq 1}$.

In this section, we strengthen this result to divisors not parallel to $H$.
\begin{Prop}\label{thm:bayerall}
    Let $(X,H)$ be an irreducible smooth polarized variety over $\C$. Then for every divisor $D$ on $X$, there exists an integer $m(D)$ such that for every geometric stability condition $\sigma$ satisfying $\sigma\lsmeq \sigma\otimes \cO_X(H)$, we have
    \begin{align*}
        \sigma\lsmeq \sigma\otimes \cO_X\left(m(D)H+D\right).
    \end{align*}
\end{Prop}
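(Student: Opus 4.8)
The plan is to bootstrap from the hypothesis $\sigma \lsmeq \sigma\otimes\cO_X(H)$ (equivalently, by Lemma~\ref{lem:eqdefforlsmonstab}, $\sigma\lsmeq\sigma\otimes\cO_X(mH)$ for every $m\geq1$) to the twisted comparison $\sigma\lsmeq\sigma\otimes\cO_X(m(D)H+D)$, using the very ampleness of $H$ and the fact that $\sigma$ is geometric. First I would reduce to the case where $D$ is very ample: for a general divisor $D$, write $D = A - B$ with $A$ and $B$ both very ample (possible since $H$ is ample), so $m(D)H + D = (m(D)H + A) - B$; if the statement is known for very ample divisors then choosing $m(D)$ large makes $m(D)H+A$ very ample, and one handles the subtraction of $B$ by the same mechanism applied in the opposite direction — so it suffices to prove both $\sigma\lsmeq\sigma\otimes\cO_X(D')$ and $\sigma\otimes\cO_X(-D'')\lsmeq\sigma$ for $D',D''$ very ample after a large $H$-twist. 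In fact the cleanest formulation: show there is $m_0$ with $\sigma\lsmeq\sigma\otimes\cO_X(mH+D)$ whenever $mH+D$ is generated by enough sections, and a dual statement.

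The core step, for $D' \coloneq m(D)H + D$ very ample, is to show $\sigma \lsmeq \sigma\otimes\cO_X(D')$, i.e. (by Lemma~\ref{lem:eqdefforlsmonstab}.(2)) that $\phi^\pm_{\sigma\otimes\cO_X(D')}(E) \geq \phi^\pm_\sigma(E)$ — equivalently $\phi^\pm_\sigma(E(-D')) \geq \phi^\pm_\sigma(E)$ for all $E$ — wait, more precisely $\phi^\pm_{\sigma}(E) \geq \phi^\pm_{\sigma\otimes\cO_X(D')^{-1}}(E)$, i.e. $\phi^\pm_\sigma(E) \geq \phi^\pm_\sigma(E\otimes\cO_X(D'))$ is false in general; the right inequality is $\cP_\sigma(\theta)\otimes\cO_X(D') \subseteq \cP_\sigma(\leq\theta)$. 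The strategy I would use: take a $\sigma$-semistable object $E$, of phase $\theta$; we must show every HN factor of $E(-D')$ has phase $\leq\theta$. Since $D'$ is very ample, pick $X' \in |D'|$ smooth (Bertini) with $\iota: X'\hookrightarrow X$, and use the adjunction triangle \eqref{eq612} for the divisor $D'$: $E(-D') \to E \to \iota_*\iota^*E \to E(-D')[1]$. Applying $\Hom(-, F)$ for $F$ a hypothetical HN factor of $E(-D')$ of phase $>\theta$, and estimating $\phi^\pm_\sigma(\iota_*\iota^*E)$ via the geometric property of $\sigma$ (skyscrapers on $X'$ are $\sigma$-stable of a fixed phase, so $\iota_*G$ for $G\in\Db(X')$ has phases controlled by a bounded window around $\theta$, with the bound depending on $D'$ only through the dimension of $X'$ which is fixed), I would derive a contradiction. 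This is exactly the mechanism already used in the proof of Proposition~\ref{prop:reconfromrest}, and the role of $m(D)$: making $m(D)H$ large enough so that $\iota^*\cO_X(D')$ is itself very ample on $X'$, allowing the restricted-to-$X'$ estimates and the induction on $\dim X$ (restricting further, via Proposition~\ref{prop:reststab}, which applies once we know $\sigma\otimes\cO_X(H)\lsm\sigma[1]$ — but here we only have $\lsmeq$; so actually the induction uses Corollary~\ref{cor:niceimplygeom}-type reasoning replaced by the direct geometric hypothesis).

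Here is how I would organize the induction. Induct on $n = \dim X$. For $n=0$ there is nothing to prove. For the inductive step: choose $m(D)$ so large that $D'\coloneq m(D)H+D$ is very ample and, moreover, for $X'\in|H|$ general and smooth, $\iota^*D' - m(D_{X'})H_{X'}$ is still effective for the inductively-chosen constant on $X'$ — i.e. $m(D)$ dominates all the finitely many constants produced by restricting. Given a $\sigma$-semistable $E$ of phase $\theta$, assume WLOG (rotating $\sigma$) that $E\in\cA_\sigma$ with $\theta\in(0,1]$; I want $E(-D')\in\cP_\sigma(\leq\theta)$. Using \eqref{eq612} with $X'\in|H|$ and twisting: one passes from $D'$ to $D'-H = (m(D)-1)H+D$ at the cost of one $\iota_*\iota^*$ term supported on $X'$, whose HN phases I bound using that $\sigma|_{\Db(X')}$ exists (needs $\sigma\otimes\cO_X(H)\lsm\sigma[1]$, which does NOT follow from $\lsmeq$ alone!).

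I should be honest that this last point is the main obstacle: Proposition~\ref{prop:reststab} requires the strict relation $\sigma\otimes\cO_X(H)\lsm\sigma[1]$, whereas the hypothesis here is only $\sigma\lsmeq\sigma\otimes\cO_X(H)$ plus "geometric". So the restriction-to-$X'$ machinery is not directly available. The repair: work with the adjunction triangles directly on $X$, never restricting, and use the geometric hypothesis to control $\iota_*\iota^*E$ for $\iota: X'\hookrightarrow X$ with $X'\in|D'|$ smooth. Concretely, $\iota^*E\in\Db(X')$, and although we lack a stability condition on $X'$, we do know that $\iota_*(\text{skyscraper})$ is $\sigma$-stable of fixed phase, hence by the argument in Lemma~\ref{lem:63}/Lemma~\ref{lem:660} style bookkeeping, $\iota_*G$ for any $G\in\Db(X')$ concentrated in cohomological degrees $[a,b]$ has $\phi^+_\sigma, \phi^-_\sigma$ within $[a', b']$ where $b'-a'$ depends only on $n$ and the $\sigma$-phase window of skyscrapers — and crucially $E$ itself, being $\sigma$-semistable of phase $\theta$, has $\iota^*E$ concentrated in a cohomological-degree window of size $\leq n$ (since $E\in\Coh(X)[\theta-c, \theta+c]$ roughly, with $c$ bounded, because $\sigma$ is geometric so $\cP_\sigma((0,1])$ is close to $\Coh(X)$... this needs care). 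Feeding these bounds into the long exact sequence from applying $\Hom(-, \HN^{>\theta}(E(-D')))$ to \eqref{eq612} forces the unwanted HN factor to vanish once $D'$ is large enough — "large enough" being precisely the definition of $m(D)$.

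\begin{Rem}
The heart of the argument is identical in spirit to the proof of Proposition~\ref{prop:reconfromrest}: one plays off the adjunction triangle \eqref{eq612} for a very ample divisor against the boundedness of phases of complexes pushed forward from a hyperplane section, the latter being guaranteed by the geometric hypothesis. The only genuinely new input is a uniform (in $E$) estimate on the cohomological amplitude of $\iota^*E$ for $\sigma$-semistable $E$, which follows from the fact that a geometric stability condition's heart $\cP_\sigma((0,1])$ sits inside $\Coh(X)[-n, n]$ — or more precisely inside $\langle\Coh(X)[0],\dots,\Coh(X)[n]\rangle$ up to shift — combined with exactness of $\iota^*$ in each cohomological degree on a smooth hypersurface.
\end{Rem}
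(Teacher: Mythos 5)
Your proposal identifies the right primitives (the adjunction triangle \eqref{eq612}, the geometric hypothesis giving fixed phase to skyscrapers, and the boundedness of the heart amplitude for geometric stability conditions) and correctly observes the central obstacle: Proposition~\ref{prop:reststab} requires the strict $\lsm$ relation, which the hypothesis $\sigma\lsmeq\sigma\otimes\cO_X(H)$ does not supply, so you cannot literally restrict $\sigma$ to a hypersurface. However, there are two genuine gaps.

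First, the initial reduction to $D$ very ample does not work. The relation $\lsmeq$ is one-directional, so writing $D=A-B$ with $A,B$ very ample does not help: from $\sigma\lsmeq\sigma\otimes\cO_X(m_1H+A)$ and $\sigma\lsmeq\sigma\otimes\cO_X(m_2H+B)$, tensoring and composing only yields a bound for $A+B$ (with additional $H$-twists), never for $A-B$. Subtracting an effective divisor pushes phases up, not down, and there is no dual statement to exploit — $\sigma\otimes\cO_X(-D'')\lsmeq\sigma$ is literally the same inequality as $\sigma\lsmeq\sigma\otimes\cO_X(D'')$ after a twist. The case of non-effective $D$ must be addressed head-on.

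Second and more fundamentally, you propose an induction on $\dim X$, but the paper's actual proof never restricts to a lower-dimensional variety: it stays on $X$ throughout, working with push-forward complexes $\iota_{t*}\iota_t^*E$ for a chain of subvarieties $X_n\subset\cdots\subset X_0=X$. The essential structural idea you are missing is to strengthen the statement to include a homological shift parameter $k$, proving $\sigma\lsmeq\sigma\otimes\cO_X(m(D,k)H+D)[k]$ for every $k\geq 0$, and to run a \emph{decreasing} induction on $k$. The base case is $k\geq\dim X$, where the inequality is vacuous precisely because of the bounded heart amplitude you mention (using \cite[Lemma 2.11]{FLZ:ab3}). The inductive step iterates the triangle $\iota_{j*}\iota_j^*F\otimes\cO_X(-aH-D)\to\iota_{j*}\iota_j^*F\to\iota_{j+1*}\iota_{j+1}^*F$, and each iteration produces a factor carrying both a negative twist by $\cO_X(-aH-D)$ and a $[+1]$ shift; it is exactly the $[+1]$ that raises the parameter $k$ by one, so the otherwise uncontrollable negative twist is absorbed by the inductive hypothesis at level $k+1$. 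Without this shift bookkeeping, your iteration accumulates negative twists with no mechanism to bound them — your phrase ``forces the unwanted HN factor to vanish once $D'$ is large enough'' is precisely where the argument stalls, because largeness of $D'$ in the effective direction does not offset the negative twists that appear in the $\iota^*$ direction. The $k$-induction is the device that makes ``large enough'' meaningful.
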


For technical reason, to make induction, we are going to prove the following statement. The theorem follows by the case that $k=0$.
\begin{Prop}\label{prop:bayerall}
    Let $(X,H)$ be an irreducible smooth polarized variety over $\C$. Then for every divisor $D$ on $X$ and $k\in\Z_{\geq 0}$, there exist $m(D,k)\in\Z$ such that for every geometric stability condition $\sigma$ satisfying $\sigma\lsmeq \sigma\otimes \cO_X(H)$, we have
    \begin{align*}
        \sigma\lsmeq \sigma\otimes \cO_X\left(m(D,k)H+D\right)[k].
    \end{align*}
\end{Prop}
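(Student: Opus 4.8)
\textbf{Proof proposal for Proposition \ref{prop:bayerall}.}

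The plan is to run a downward induction on $k$, starting from large $k$ where the statement is almost vacuous. First I would fix a very ample divisor, say $H$ itself (after replacing $H$ by a multiple if needed, which only rescales $m(D,k)$), and observe that since $\sigma$ is geometric, all skyscraper sheaves $\cO_p$ are $\sigma$-stable of a common phase, which after a shift we may normalize to lie in the range $(0,1]$; since $\cA_\sigma \subset \Coh(X)[-N,N]$ for some $N$ depending only on $X$ (this is automatic for any bounded t-structure compatible with the geometric structure), there is an integer $k_0 = k_0(X)$ such that for every $k \geq k_0$ and every divisor $D'$, the relation $\sigma \lsmeq \sigma \otimes \cO_X(D')[k]$ holds automatically, simply because $\cP_\sigma(\theta) \subset \Coh(X)[a,b]$ for a bounded window and twisting by a line bundle does not change the homological position of a sheaf. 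This gives the base case, taking $m(D,k) = 0$ for $k \geq k_0$.

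For the inductive step, assume the statement holds for $k+1$ (with some integer $m(D,k+1)$) and all divisors $D$, and I want to establish it for $k$. The key tool is the adjunction triangle \eqref{eq612} applied to a smooth hypersurface: by Bertini, choose a smooth $Y \in |aH|$ for $a$ large enough that $Y$ is smooth and $m(D,k+1)H + D$ restricts nicely; the triangle reads $G \otimes \cO_X(-aH) \to G \to \iota_*\iota^* G \to G\otimes \cO_X(-aH)[1]$ for $G \in \Db(X)$. The strategy is: to prove $\sigma \lsmeq \sigma \otimes \cO_X(mH+D)[k]$, i.e. $\cP_\sigma(\theta) \subset \cP_{\sigma\otimes\cO_X(mH+D)}(\geq\theta)[k]$ — equivalently, by Lemma \ref{lem:eqdefforlsmonstab}, a phase inequality $\phi^+_{\sigma\otimes\cO_X(mH+D)}(E) \leq \phi^-_\sigma(E) + k$ for every $E$ — it suffices to check it on $\sigma$-semistable objects $E$, and then to ``move'' the twist $\cO_X(mH+D)$ onto the hypersurface $Y$. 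Concretely, twisting the triangle for $G = E \otimes \cO_X(mH+D)$ and using that $\sigma|_{\Db(Y)}$ is a stability condition on $Y$ via Proposition \ref{prop:reststab} (which applies once $m$ is large enough that the hypothesis $\sigma \otimes \cO_X(aH) \lsm \sigma[1]$-type condition on $Y$ is met — this is where Corollary \ref{cor:niceimplygeom} and the geometric hypothesis feed in), the restricted stability condition $\sigma|_{\Db(Y)}$ is again geometric and satisfies $\sigma|_{\Db(Y)} \lsmeq \sigma|_{\Db(Y)} \otimes \cO_Y(H)$, so by the inductive hypothesis applied to the divisor $D|_Y$ on $Y$ with parameter $k+1$, we get $\sigma|_{\Db(Y)} \lsmeq \sigma|_{\Db(Y)} \otimes \cO_Y(m(D|_Y, k+1)H + D|_Y)[k+1]$. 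Pushing forward and reassembling the triangle then converts this $(k+1)$-shift estimate on $Y$ (a variety of dimension $\dim X - 1$) into a $k$-shift estimate on $X$, because the adjunction triangle trades one homological degree: the term $G \otimes \cO_X(-aH)[1]$ contributes the extra shift. One tracks the phases through the long exact sequence obtained by applying $\Hom(-, E')$ (for a suitable test object $E'$) to the twisted triangle, exactly as in the proof of Lemma \ref{lem:63} and Proposition \ref{prop:reconfromrest}, choosing $m(D,k)$ large enough — in terms of $a$, $k_0$, $m(D|_Y,k+1)$, $N$, and the intersection numbers of $D$, $H$ — to absorb all the error terms.

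The main obstacle I anticipate is bookkeeping the interaction between the twist parameter $m$ and the choice of hypersurface degree $a$: one needs $a$ large enough that Bertini gives a smooth $Y \in |aH|$ with $\supp(E) \cap Y \neq \emptyset$ for the relevant $E$ (so the restriction is non-trivial, as in the proof of Proposition \ref{prop:reconfromrest}), yet the twist $mH + D$ must still be ``positive enough relative to $aH$'' for the restriction Proposition \ref{prop:reststab} to apply — and crucially $m(D,k)$ must be chosen \emph{uniformly} over all geometric $\sigma$ with $\sigma \lsmeq \sigma\otimes\cO_X(H)$, not depending on the particular $\sigma$. This uniformity is what makes the downward induction viable: since $k_0$, $N$, and the combinatorial constants depend only on $(X,H)$ and $D$, the recursion $m(D,k) \rightsquigarrow m(D,k+1)$ terminates at $k_0$ with a well-defined integer. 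A secondary subtlety is that one must handle objects $E$ with $\dim \supp E = 0$ separately (as in Proposition \ref{prop:reconfromrest}, where such $E$ are extensions of skyscrapers and the phase inequality is automatic), so that the ``$\supp(E) \cap Y \neq \emptyset$'' reduction is legitimate for the remaining objects.
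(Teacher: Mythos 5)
Your base case is fine (it matches the paper's Step 1, though the paper uses the sharper $\cA_\sigma\subset\Coh(X)[0,n-1]$ for geometric $\sigma$, so that $m(D,k)=-\infty$ for $k\geq\dim X$). The inductive step, however, has a genuine gap that your approach cannot repair without abandoning its central move.

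You propose to restrict $\sigma$ to a smooth hypersurface $Y\in|aH|$ via Proposition~\ref{prop:reststab} and then invoke the inductive hypothesis on $Y$. But Proposition~\ref{prop:reststab} requires the hypothesis $\sigma\otimes\cO_X(aH)\lsm\sigma[1]$, and this is \emph{not} available. The only assumptions on $\sigma$ are that it is geometric and satisfies $\sigma\lsmeq\sigma\otimes\cO_X(H)$. You cite Corollary~\ref{cor:niceimplygeom} as if it supplied the needed condition, but that corollary runs in the opposite direction: $\sigma\otimes\cO_X(H)\lsm\sigma[1]$ \emph{implies} geometricity, not the converse. Concretely, already on a surface the reduced stability conditions $\ts_{t_1,t_2}$ with $t_2-t_1$ arbitrarily small are geometric and satisfy the Bayer condition, yet fail $\sigma\otimes\cO_S(aH)\lsm\sigma[1]$ for any fixed $a$; so no choice of $a$ (hence no choice of $m(D,k)$) can make the restriction step available uniformly over the $\sigma$ allowed by the statement. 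Since the whole point of the proposition is that $m(D,k)$ is independent of $\sigma$, this is fatal to the restriction-based route. A secondary issue: the inductive hypothesis you invoke on $Y$ is for a \emph{different variety} of lower dimension, so you would need a simultaneous induction on $(\dim X, k)$, which you have not set up; the proposition as stated is relative to a fixed $(X,H)$.

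The paper's proof takes a materially different tack and never restricts $\sigma$ at all. It keeps $\sigma$ on $X$ throughout and works only with objects $\iota_{t*}\iota_t^*F\in\Db(X)$ pushed forward from a chain $X_n\subset\dots\subset X_0=X$ of smooth subvarieties in $|(aH+D)|_{X_i}|$, estimating $\sigma$-phases on $X$ via the adjunction triangles \eqref{eq:break}. The downward induction on $k$ is internal to $X$: the inductive hypothesis is applied to the \emph{same} variety but with the divisor replaced by $-sD$ and the shift raised to $k+s$, which is why the constant $M$ is built out of $m(-sD,k+s)$ for $s\geq 1$, and why no restriction of $\sigma$ (hence no $\lsm[1]$ hypothesis) is ever needed. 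Lemma~\ref{lem:617} carries a nested induction on the twist level $d$ and the chain depth $t$, all with $\phi^\pm_\sigma$-estimates on $X$. That is the device your outline is missing: replacing ``restrict $\sigma$ and induct on $Y$'' by ``push objects forward and induct on $X$.''
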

\begin{proof}
Firstly, we may  shift $\sigma$ to $\sigma[\theta]$ if necessary so that the phase of all skyscraper sheaves is $1$. As $[\theta]$ commutes with the action of $\otimes \cO_X(-)[-]$,  by Lemma \ref{lem:eqdefforlsmonstab}, we may always assume $\phi_\sigma(\cO_p)=1$.\\

\noindent We make decreasing induction on $k$:

\noindent\textbf{Step 1}: We first deal the case when $k\geq \dim X$.

As $\phi_\sigma(\cO_p)=1$, by \cite[Lemma 2.11]{FLZ:ab3}, we have $\cA_\sigma\subset \Coh(X)[0,n-1]$ and $\Coh(X)\subset\cA_\sigma[1-n,0]$. As $\Coh(X)$ is $\otimes \cO_X(-)$-invariant, for every non-zero object $F\in \cA_\sigma$ and divisor $D'$, we have\begin{align*}
    F\otimes \cO_X(D')[k]\in \Coh(X)[k,k+n-1]\subset \cA_\sigma[k-n+1,k].
\end{align*}  It follows that \begin{align*}
    \phi^+_{\sigma}(F)\leq 1\leq n-k+1<\phi_{\sigma}^-(F\otimes \cO_X(D')[k])=\phi^-_{\sigma\otimes \cO_X(-D')[-k]}(F).
\end{align*} By Lemma \ref{lem:eqdefforlsmonstab}, we have
\begin{align*}
    \sigma\lsm \sigma\otimes \cO_X(D')[k], \text{ when }k\geq \dim X.
\end{align*}
We may set $m(D,k)=-\infty$ when $k\geq \dim X$.\\

\noindent\textbf{Step 2}:    Assume the statement holds for all $s\geq k+1$. 
    
    Let $a\in\Z$ such that $aH+D$ is very ample. By Bertini theorem, we may choose a sequence of smooth varieties:
    \begin{align*}
        X_n\subset X_{n-1}\subset\dots \subset X_2\subset X_1\subset X_0=X
    \end{align*}
    such that each $X_{i+1}$ is a smooth subvariety in $|(aH+D)|_{X_i}|$.
    Denote by $\iota_j:X_j\hookrightarrow X$ the embedding morphism. 
Then for every $0\leq j\leq n-1$ and $F\in \Db(X)$, we have the distinguished triangle
  \begin{align}\label{eq:break}
      \iota_{j*}\iota_j^*F\otimes \cO_X(-aH-D)\to \iota_{j*}\iota_j^*F\to \iota_{j+1*}\iota_{j+1}^*F\xrightarrow{}\iota_{j*}\iota_j^*F\otimes \cO_X(-aH-D)[1].
  \end{align}
  
    Let 
    \begin{align}\label{eqM}
        M=\max\{a+1, m(-sD,k+s)+(s+1)a: s\in\Z_{\geq 1}\}.
    \end{align}
Since $m(-sD,k+s)=-\infty$ when $k+s\geq n$, the number $M$ is well-defined.\\
    
  \noindent \textbf{Step 3}: We will show that $\phi_\sigma(E)\leq \phi^-_\sigma(E\otimes \cO_X(MH+D)[k])$ for every $\sigma$-stable object $E$. The strategy is by dividing $E\otimes \cO_X(MH+D)$ into smaller pieces.

  \begin{Lem}\label{lem:617}
      For every $d\in \Z_{\geq 0}$ and $0\leq t\leq n$, we have
      \begin{align}\label{eq:643}
          \phi_\sigma(E)\leq \phi_\sigma^-\left(\iota_{t*}\iota_t^*E\otimes \cO_X((M-(d+1)a)H-dD)[d+k]\right).
      \end{align}
  \end{Lem}
  \begin{proof}[Proof of Lemma \ref{lem:617}]
\noindent    \textbf{Step 3.1}: We first prove the case when $t=0$.

When $t=0$ and $d=0$, the functor $\iota_{0*}\iota_0^*$ is just the identity. Note that $k\geq 0$. By the assumption \eqref{eq636}, the assumption \eqref{eqM} that $M\geq a+1$ and Lemma \ref{lem:eqdefforlsmonstab}, we have $\phi_\sigma(E)\leq \phi_\sigma^-(E\otimes \cO_X((M-a)H))\leq \phi_\sigma^-(E\otimes \cO_X((M-a)H)[k])$. The statement holds.\\

      When $t=0$ and $d\geq 1$, by the assumption \eqref{eqM} that $M\geq m(-dD,k+d)+(d+1)a$, the induction on $k$, and the assumption \eqref{eq636}, we have
      \begin{align*}
          \sigma\lsmeq \sigma\otimes \cO_X(m(-dD,k+d)H-dD)[d+k]\lsmeq \sigma\otimes \cO_X((M-(d+1)a)H-dD)[d+k]
      \end{align*}
      By Lemma \ref{lem:eqdefforlsmonstab}, we have $\phi_\sigma(E)\leq  \phi_\sigma^-(E\otimes  \cO_X((M-(d+1)a)H-dD)[d+k])$.\\

      \noindent\textbf{Step 3.2}: We make induction on $t$, assume the statement holds for $t-1$.

      Apply \eqref{eq:break} by letting $j=t-1$ and $F=E\otimes \cO_X((M-(d+1)a)H-dD)[d+k]$, we have  the distinguished triangle
      \begin{align*}
      \iota_{t-1*}\iota_{t-1}^*F\to \iota_{t*}\iota_{t}^*F\xrightarrow{}\iota_{t-1*}\iota_{t-1}^*E\otimes \cO_X((M-(d+2)a)H-(d+1)D)[d+1+k]\xrightarrow{+}.
  \end{align*}
  By the induction on $t$, we have $\phi_\sigma(E)\leq \phi_\sigma^-(\iota_{t-1*}\iota_{t-1}^*F)$. Note that $d+1\geq 0$ as well, by the induction on $t$, we also have $\phi_\sigma(E)\leq \phi_\sigma^-(\iota_{t-1*}\iota_{t-1}^*E\otimes \cO_X((M-(d+2)a)H-(d+1)D)[d+1+k])$. 

  It follows that $\phi_\sigma(E)\leq \phi_\sigma^-(\iota_{t*}\iota_{t}^*F)$. The lemma holds by induction.
  \end{proof}
\noindent  \textit{Back to the proof of Proposition \ref{prop:bayerall}}:  Lemma \ref{lem:617} implies that
 \begin{align}\label{eq:6456}
          \phi_\sigma(E)\leq \phi_\sigma^-\left(\iota_{t*}\iota_t^*E\otimes \cO_X((M-a)H)[k]\right).
      \end{align}
      for every $0\leq t\leq n$.\\

\noindent\textbf{Step 4}:      We apply \eqref{eq:break} by letting $F=E\otimes \cO_X(MH+D)$ and $j=0,1,\dots,n-1$. This gives the following distinguished triangles:
        \begin{align*}
      E\otimes \cO_X((M-a)H)\to& F\to \iota_{1*}\iota_{1}^*F\xrightarrow{+}.\\
      \iota_{1*}\iota_1^*E\otimes \cO_X((M-a)H)\to &\iota_{1*}\iota_1^*F\to \iota_{2*}\iota_{2}^*F\xrightarrow{+}.\\
    &  \dots \\
     \iota_{n-1*}\iota_{n-1}^*E\otimes \cO_X((M-a)H)\to& \iota_{n-1*}\iota_{n-1}^*F\to \iota_{n*}\iota_{n}^*F\xrightarrow{+}.\\
  \end{align*}
  It follows that 
  \begin{align*}
      \phi^-_\sigma(F)&\geq \min\{\phi^-_\sigma(\iota_{1*}\iota_{1}^*F),\phi^-_\sigma(\iota_{0*}\iota_0^*E\otimes \cO_X((M-a)H))\}.\\
      &\geq \min\{\phi^-_\sigma(\iota_{2*}\iota_{2}^*F),\phi^-_\sigma(\iota_{j*}\iota_j^*E\otimes \cO_X((M-a)H)):0\leq j\leq 1\}.\\
      &\dots\\
      & \geq \min\{\phi^-_\sigma(\iota_{n*}\iota_{n}^*F),\phi^-_\sigma(\iota_{j*}\iota_j^*E\otimes \cO_X((M-a)H)):0\leq j\leq n-1\}\\
       & = \min\{\phi^-_\sigma(\iota_{j*}\iota_j^*E\otimes \cO_X((M-a)H)):0\leq j\leq n\}
  \end{align*}
  The `$=$' is by noticing that $\iota_{n*}\iota_{n}^*F$ is supported on a zero dimensional subvariety, therefore fixed by taking  tensor of line bundles.

  Substitute this back to \eqref{eq:6456}, we get $\phi_\sigma(E)\leq\phi_\sigma^-( E\otimes \cO_X(MH+D)[k])$. As this holds for all $\sigma$-stable object $E$, by Lemma \ref{lem:eqdefforlsmonstab}, we have $\sigma\lsmeq \sigma\otimes \cO_X(MH+D)[k]$. We may let $m(D,k)=M$. 
  
  The statement holds by induction.
\end{proof}

\section{Threefold cases}\label{sec:3fold}
In this section, we describe a family of reduced stability conditions on a smooth polarized threefold which satisfies the conjecture in \cite{BBMT:Fujita}, \cite{BMT:3folds-BG} or equivalently \cite[Conjecture 4.1]{BMS:stabCY3s}. One goal is to show that \cite[Conjecture 4.1]{BMS:stabCY3s} implies Conjecture \ref{conj:intro} when $X$ is a threefold.

We first briefly recap the construction of stability conditions on a threefold. More details are referred to \cite{BBMT:Fujita}, \cite{BMT:3folds-BG}, \cite{BMS:stabCY3s}, and \cite{DulipToda:DTinv}.
\subsection{Recap: Stability conditions on a polarize threefold} Let $(X,H)$ be a polarized smooth threefold. We fix the $H$-polarized lattice:
\begin{align*}
    \lambda_H: \Kn(X)\to \Lambda_H: [E]\mapsto(H^3\ch_0(E)),H^2\ch_1(E),H\ch_2(E),\ch_3(E)).
\end{align*}
To simplify the notion, we will denote by $\Lambda_\R\coloneq \Lambda_H\otimes \R$.  The twisted Chern characters are denoted by:
\begin{align*}
    \ch^{\beta}_3&=\ch_3-\beta H\ch_2+\frac{\beta^2}{2}H^2\ch_1-\frac{\beta^3}{6}H^3\ch_0;\\
    \ch^{\beta}_2&=\ch_2-\beta H\ch_1+\frac{\beta^2}{2}H^2\ch_0;\;\;\;
    \ch^{\beta}_1=\ch_1-\beta H\ch_0;\;\;\;
    \ch^{\beta}_0=\ch_0. 
\end{align*}

The $H$-discriminant is $\Delta_H=(H^2\ch_1)^2-2H^3\ch_0(H\ch_2)$. Recall the notion of higher discriminant:
\begin{align*}
    \nabla^\beta_H\coloneqq 4(H\ch_2^{\beta })^2-6(H^2\ch_1^{\beta H})\ch_3^{\beta }.
\end{align*}

For every $\beta\in \R$ and $\alpha>0$, we consider the  heart $\Coh^{\sharp \beta}_H(X)$, which admits a slope function given as 
\begin{align*}
    \nu_{\alpha,\beta}\coloneqq\frac{H\ch_2^{\beta }-\tfrac{1}{2}\alpha^2H^3\ch_0}{H^2\ch_1^{\beta }}.  
\end{align*}
Here we set $\nu_{\alpha,\beta}(E)\coloneq+\infty$ if $H^2\ch_1^{\beta }(E)=0$.

Denote \begin{align*}
 \cA_{\alpha,\beta}(X)\coloneqq(\Coh_H^{\sharp\beta})_{\nu_{\alpha,\beta}}^{\sharp 0}.
\end{align*}

Recall the following theorem on the existence of stability conditions on threefolds:
\begin{Thm}[\!{\cite[Theorem 8.2, Lemma 8.3]{BMS:stabCY3s}}]
   Let $(X,H)$ be a polarized smooth threefold satisfying \cite[Conjecture 4.1]{BMS:stabCY3s}. Then there is a slice of stability conditions on $\Db(X)$
   \begin{align}\label{eqstab3}
       \Stb(X)\coloneq\left\{\sigma^{a,b}_{\alpha,\beta}=(\cA_{\alpha,\beta},Z^{a,b}_{\alpha,\beta}):\alpha,\beta,a,b\in \R,\, \alpha>0,\, a>\tfrac{1}{6}\alpha^2+\tfrac{1}{2}|b|\alpha\right\}.
   \end{align}
\end{Thm}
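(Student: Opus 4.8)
\textbf{Proof plan for the Theorem on existence of stability conditions on threefolds (Bayer–Macrì–Stellari, cited as \cite[Theorem 8.2, Lemma 8.3]{BMS:stabCY3s}).}

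The plan is to reduce the construction of the $4$-parameter family $\Stb(X)$ in \eqref{eqstab3} to a deformation argument starting from the tilt-stability heart, exactly as in \cite{BMS:stabCY3s}. First I would recall the \emph{double-tilt} construction: starting from $\Coh(X)$, tilt at the $\mu_H$-slope with respect to the torsion pair cut out by $\beta$ to obtain $\Coh^{\sharp\beta}_H(X)$; equip this heart with the tilt-slope $\nu_{\alpha,\beta}$ (a weak stability function whose kernel is spanned by the classes with $H^2\ch_1^\beta=0$), and tilt a second time at slope $0$ to obtain the heart $\cA_{\alpha,\beta}(X)=(\Coh^{\sharp\beta}_H)^{\sharp 0}_{\nu_{\alpha,\beta}}$. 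One then defines the central charge
\begin{align*}
 Z^{a,b}_{\alpha,\beta} = -\ch_3^\beta + b\,H\ch_2^\beta + a\,H^2\ch_1^\beta + i\bigl(H\ch_2^\beta - \tfrac{1}{2}\alpha^2 H^3\ch_0\bigr),
\end{align*}
and checks directly that $(\cA_{\alpha,\beta},Z^{a,b}_{\alpha,\beta})$ is a pre-stability condition: the imaginary part is, by construction of the first tilt, non-negative on $\Coh^{\sharp\beta}_H(X)$ and its vanishing locus is where the second tilt occurs, so after the second tilt $Z^{a,b}_{\alpha,\beta}$ maps the heart $\cA_{\alpha,\beta}(X)$ into the closed upper half plane $\overline{\H}$ with the correct boundary behaviour. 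This part is formal once the two weak stability functions are in place.

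The substance of the argument — and the place where \cite[Conjecture 4.1]{BMS:stabCY3s} is used — is the support property. I would invoke the quadratic form
\begin{align*}
 Q^{a,b}_{\alpha,\beta} = \alpha^2 \Delta_H + 4(H\ch_2^\beta)^2 - 6(H^2\ch_1^\beta)\ch_3^\beta = \alpha^2\Delta_H + \nabla^\beta_H
\end{align*}
(up to the precise normalization and the $a,b$-dependent shift used in \cite[Lemma 8.3]{BMS:stabCY3s}). The Bogomolov–Gieseker type inequality conjectured in \cite[Conjecture 4.1]{BMS:stabCY3s} asserts exactly that $Q^{a,b}_{\alpha,\beta}(E)\ge 0$ for every $\nu_{\alpha,\beta}$-tilt-semistable object $E$ with $\nu_{\alpha,\beta}(E)=0$; combined with the classical Bogomolov inequality $\Delta_H(E)\ge 0$ for tilt-semistable objects (which is \cite[Theorem 7.3.1]{BMT:3folds-BG}, independent of the conjecture), this yields $Q\ge 0$ on all $\sigma^{a,b}_{\alpha,\beta}$-semistable objects. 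One then has to verify the complementary condition that $Q^{a,b}_{\alpha,\beta}$ is negative definite on $\ker Z^{a,b}_{\alpha,\beta}\subset\Lambda_\R$; this is the computation in \cite[Lemma 8.3]{BMS:stabCY3s}, and it is precisely where the numerical constraint $a>\tfrac{1}{6}\alpha^2+\tfrac{1}{2}|b|\alpha$ on the parameters enters — it is exactly the open region in which the signature of $Q$ restricted to the rank-$2$ kernel is $(0,2)$. Granting these two facts, the support property holds and each $\sigma^{a,b}_{\alpha,\beta}$ is a genuine Bridgeland stability condition.

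Finally I would address that $\Stb(X)$ is a \emph{slice} in the sense of being a $4$-real-dimensional locally closed submanifold: the map $(\alpha,\beta,a,b)\mapsto \sigma^{a,b}_{\alpha,\beta}$ is injective (recover $\beta$ from $\ker\Im Z$, then $\alpha$ from the same, then $(a,b)$ from $\Re Z$) and continuous, and openness of the parameter region together with \cite[Theorem 7.1]{Bridgeland:Stab} gives that it lands in $\Stab_{\Lambda_H}(X)$ homeomorphically onto its image. The main obstacle in a self-contained writeup would be the explicit signature computation for $Q^{a,b}_{\alpha,\beta}|_{\ker Z^{a,b}_{\alpha,\beta}}$ — it is elementary linear algebra in four variables but the bookkeeping of the twisted Chern characters $\ch^\beta_i$ is delicate, and getting the exact shape of the parameter region $a>\tfrac16\alpha^2+\tfrac12|b|\alpha$ requires care. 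Since this is quoted verbatim from \cite[Theorem 8.2, Lemma 8.3]{BMS:stabCY3s}, in the present paper I would simply cite it and record the resulting family, deferring the proof to the reference.
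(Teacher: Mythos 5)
Your proposal correctly recognizes that this theorem is quoted verbatim from \cite[Theorem 8.2, Lemma 8.3]{BMS:stabCY3s}, and the present paper likewise merely cites it as a starting point rather than reproving it, so your conclusion that one should simply cite and record the family $\Stb(X)$ is exactly what the paper does. Your sketch of the underlying Bayer--Macr\`i--Stellari argument (double tilt, support property via a Bogomolov--Gieseker-type quadratic form, signature computation on $\ker Z^{a,b}_{\alpha,\beta}$ giving the constraint $a>\tfrac16\alpha^2+\tfrac12|b|\alpha$) is accurate; the one imprecision is the constant multiplying $\Delta_H$, which in BMS is $K=\tfrac12(\alpha^2+6a)$ rather than $\alpha^2$, as the present paper records in Remark \ref{rem:stabX}, though you flagged the normalization ambiguity yourself.
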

Here the central charge is given as:
\begin{align}\label{eq777}
    Z^{a,b}_{\alpha,\beta}\coloneq\left[-\ch_3^{\beta }+bH\ch_2^{\beta }+aH^2\ch_1^{\beta }\right]+i\left[H\ch_2^{\beta }-\tfrac{1}{2}\alpha^2H^3\ch_0\right].
\end{align}

\begin{Rem}\label{rem:stabX}
    We summarize some other known facts about $\Stb(X)$ that will be useful later.
    \begin{enumerate}[(1)]
        \item Let $E$ be a $\sigma_{\alpha,\beta}^{a,b}$-semistable object, then $Q_{K,\beta}(E)\coloneq K\Delta_H(E)+\nabla^\beta_H(E)\geq 0$ for $K=\tfrac{1}{2}(\alpha^2+6a)$.
        \item Line bundles $\cO_X(mH)$ and skyscraper sheaves are stable with respect to all stability conditions in $\Stb(X)$. The phase of a skyscraper sheaf is always $1$.
    \end{enumerate}
\end{Rem}

\subsection{Reduced stability conditions on the polarized threefold}
Recall the notion $\gamma_3(t)\coloneq(1,t,\frac{t^2}{2},\frac{t^3}{6})$ and  $\vd_{\ut}(v)\coloneq C_{\ut}\det\left(\gamma_3(t_1)^T\;\gamma_3(t_2)^T\;\gamma_3(t_3)^T\;v^T\right)$ for every $\ut=(t_1,t_2,t_3)\in\sbr_3$ as that in \eqref{eq:Bvddefintro}. 

We define
\begin{align*}
\Stab^*_H(X)&\coloneq\Stb(X)\cdot GL_2\text{ and }\tilde {\mathfrak P}_3(X) \coloneq\Stb(X)\cdot \glt=\coprod_{n\in\Z}\Stab_H^*(X)[n], 
\end{align*}
where $GL_2=\{\tilde g=(g,M)\in \glt: g(0)\in(0,1]\}$. The notion $\tilde {\mathfrak P}_3(X)$ adopts from \cite{BMS:stabCY3s}. We will also see that the notion $\Stab_H^*(X)$ is compatible with that stated in Conjecture \ref{conj:intro}.
\begin{Thm}\label{thm:sb3}
     Let $(X,H)$ be a polarized smooth threefold satisfying \cite[Conjecture 4.1]{BMS:stabCY3s}. Then there is a family of reduced stability conditions on $\Db(X)$ given as:
     \begin{align}\label{eq799}
         \sb_H^*(X)\coloneq\left\{\ts_{\ut}\cdot c=(\cA_{\ut},e^{-c}\vd_{\ut}):c\in\R,\ut\in\sbr_3\right\}
     \end{align}
    satisfying the following properties:
    \begin{enumerate}[(1)]
        \item $\sb_H^*(X)=\pi_\sim(\Stab^*_H(X))$. In particular,  $\pi_\sim(\tilde {\mathfrak P}_3(X))=\coprod_{n\in\Z}\sb_H^*(X)[n]$.
        \item For every $m\in \Z$ and $t_1<t_2<t_3\in\R\cup\{+\infty\}$, we have \begin{align}\label{eq7times}
            \ts_{t_1,t_2,t_3}\otimes \cO_X(mH)=\ts_{t_1+m,t_2+m,t_3+m}.
        \end{align}
        
        When $t_3\neq +\infty$, all skyscraper sheaves are in $\cA_{t_1,t_2,t_3}$. The shifted line bundle $\cO(mH)[3-i]\in \cA_{t_1,t_2,t_3}$, when $m\in(t_i,t_{i+1}]$. Here we set $t_0=-\infty$ and $t_4=+\infty$.
         \item Let $E\in\cP_{\ut}(1)$, then its $H$-polarized character \begin{align}\label{eq710}
             \lambda_H(E)= \sum_{i=1}^3(-1)^ia_i\gamma_3(t_i) 
         \end{align}
         for some $a_i\geq 0$.
         \item If $s_1<t_1<s_2<t_2<s_3<t_3$, then $-\vd_{\ut}\in\ta(\ts_{\us})$ and $\vd_{\us}\in\ta(\ts_{\ut})$.
        \item  If $s_i<t_i$ for all $i=1,2,3$, then $\ts_{\us}\lsm \ts_{\ut}$.\\
         If $s_1<t_2$ and $s_2<t_3$, then $\ts_{\us}\lsm \ts_{\ut}[1]$. If $s_1<t_3$, then $\ts_{\us}\lsm \ts_{\ut}[2]$.
    \end{enumerate}
\end{Thm}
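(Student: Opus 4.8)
\textbf{Proof plan for Theorem \ref{thm:sb3}.}

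The strategy is to transfer everything through the quotient map $\pi_\sim$ from the known slice $\Stb(X)$ and to use the linear-algebraic description of $\vd_{\ut}$. First I would set up the dictionary between the parameters $(\alpha,\beta,a,b)$ of the central charge $Z^{a,b}_{\alpha,\beta}$ in \eqref{eq777} and a choice $\ut=(t_1,t_2,t_3)\in\sbr_3$. For a stability condition $\sigma=(\cA_{\alpha,\beta},Z^{a,b}_{\alpha,\beta})$, after a $\widetilde{\mathrm{GL}}^+(2,\R)$-action the reduced central charge $B_{\ts}=\Im(e^{-i\pi\theta}Z^{a,b}_{\alpha,\beta})$ is, up to positive scalar, a linear functional on $\Lambda_\R$ vanishing on a real codimension-one subspace; by the determinantal characterization \eqref{eq:Bvddefintro} this functional is $c\,\vd_{\ut}$ precisely when the kernel plane meets the twisted cubic $\{\gamma_3(t)\}$ in the three points $\gamma_3(t_i)$. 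The condition $a>\tfrac16\alpha^2+\tfrac12|b|\alpha$ defining $\Stb(X)$ should translate exactly into the condition that the kernel plane of $Z^{a,b}_{\alpha,\beta}$ hits the cubic in three distinct real points (with $t_3$ possibly $+\infty$), i.e.\ into $\ut\in\sbr_3$; this is a computation with a cubic in one variable whose roots are the $t_i$, and is the technical heart of part (1). Granting this, part (1) follows: $\pi_\sim$ sends the $\widetilde{\mathrm{GL}}^+(2,\R)$-orbit $\tilde{\mathfrak P}_3(X)$ onto the set of reduced stability conditions whose reduced central charge is a nonzero scalar multiple of some $\vd_{\ut}$, and the heart $\cA_{\ut}$ is read off as $\cA_{\ts}$ for the representative with $B_{\ts}=\vd_{\ut}$; the $[n]$-decomposition is automatic from the $\C\subset\widetilde{\mathrm{GL}}^+(2,\R)$-action (Notation \ref{not:glt}). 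The identification $\cA_{\ut}=\cP_{\ts_{\ut}}((0,1])$ with the concrete $(\cdot)^{\sharp0}$-tilt claimed in \eqref{eq799} is checked by matching slicings.

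Part (2): the formula $\ts_{t_1,t_2,t_3}\otimes\cO_X(mH)=\ts_{t_1+m,t_2+m,t_3+m}$ is purely linear-algebraic once (1) is in place. The autoequivalence $\otimes\cO_X(mH)$ acts on $\Lambda_\R$ by the change of variables $t\mapsto t+m$ on the cubic $\gamma_3$ (this is the standard effect of twisting on twisted Chern characters), so it carries $\vd_{\ut}$ to a positive multiple of $\vd_{\ut+m}$ and $\cA_{\ut}$ to $\cA_{\ut+m}$; by Remark \ref{rem:Ractiononsb}, $B_{\Phi\cdot\ts}=B_{\ts}\circ(\Phi_*)^{-1}$, giving \eqref{eq7times}. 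Stability of skyscraper sheaves and line bundles comes from Remark \ref{rem:stabX}.(2); that $\cO_p$ has phase $1$ forces $\cO_p\in\cP_{\ts_{\ut}}(1)\subset\cA_{\ut}$ when $t_3\ne+\infty$, and locating $\cO(mH)[3-i]$ in $\cA_{\ut}$ for $m\in(t_i,t_{i+1}]$ is a phase computation: $\cO(mH)$ is stable and one evaluates $\vd_{\ut}$ and the ``real part'' functional on $\gamma_3(m)=\lambda_H(\cO(mH))$, using that $\gamma_3(m)$ lies on the appropriate arc of the cubic relative to the three kernel points.

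Part (3): Remark \ref{rem:stabX}.(1) says every $\sigma^{a,b}_{\alpha,\beta}$-semistable $E$ satisfies the family of quadratic inequalities $Q_{K,\beta}(E)\ge0$ for $K=\tfrac12(\alpha^2+6a)$ ranging over an interval. For $E\in\cP_{\ts_{\ut}}(1)$ we have $\vd_{\ut}(E)=0$, so by linear algebra $\lambda_H(E)=\sum(-1)^ia_i\gamma_3(t_i)$ for unique real $a_i$. I would then show, by the argument indicated in \S\ref{subsubsec:BG}, that the full family $\{Q_{K,\beta}(E)\ge0\}$ is \emph{equivalent} to the sign condition that all $a_i$ have the same sign — this is where Appendix \ref{sec:qudform} / the interlaced-polynomial machinery of Appendix \ref{sec:basicalgeb} enters: the quadratic forms $Q_{K,\beta}$ vanish on the plane spanned by two of the $\gamma_3(t_i)$, so nonnegativity of all of them pins down the coordinates $a_i$ to a single octant. (Semistable objects in $\cP_{\ts_{\ut}}(1)$ are $\sigma$-semistable for the corresponding $\sigma\in\Stb(X)[n]$, so Remark \ref{rem:stabX}.(1) applies after a shift, which only changes the overall sign, hence the ``all $a_i\ge0$ (or all $\le0$)''; to normalize to ``all $a_i\ge0$'' one uses that $\Im Z$ of a nonzero object in the heart is $\ge0$.)

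Parts (4) and (5) are deductions from the general theory of \S\ref{sec:order} and Appendix \ref{sec:qudform}, exactly parallel to the surface case (Lemma \ref{lem:comparesurfacesb}). For (4), the condition $s_1<t_1<s_2<t_2<s_3<t_3$ says the two kernel planes of $\vd_{\us}$ and $\vd_{\ut}$ meet the cubic in interlacing triples; drawing this (or computing the signature) shows their intersection line lies in $\nega(\Delta_H)$ — more precisely in the negative cone of the relevant quadratic form governing $\Stb(X)$ — so by Proposition \ref{prop:sbQsigma} (the $\sb(Q,\sigma,\cT)$ chart) and the deformation result \cite[Proposition A.5]{BMS:stabCY3s} one gets $-\vd_{\ut}\in\ta(\ts_{\us})$ and $\vd_{\us}\in\ta(\ts_{\ut})$. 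Given (4), part (5) follows as in the proof of Lemma \ref{lem:comparesurfacesb}: when $s_i<t_i$ for all $i$ one inserts an intermediate $\um$ with $s_i<m_i<t_i$ and $m_1<t_1<m_2<t_2<m_3<t_3$ (possible since the three open conditions overlap), uses (4) to build the stability condition $\sigma=(\cA_{\us},-\vd_{\um}+i\vd_{\us})$, and applies Lemma \ref{lem:partialorder}.(2) to get $\ts_{\us}\lsm\ts_{\um}\lsm\ts_{\ut}$ by transitivity; the shifted statements $\ts_{\us}\lsm\ts_{\ut}[1]$ (when $s_1<t_2,\ s_2<t_3$) and $\ts_{\us}\lsm\ts_{\ut}[2]$ (when $s_1<t_3$) come from the same construction applied to $\ts_{\ut}[1]$ resp.\ $\ts_{\ut}[2]$, whose parameters are the shifted/reindexed triples, together with Lemma \ref{lem:partialorder}.(2) and transitivity.

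\textbf{Main obstacle.} The delicate point is part (1): proving that the parameter region $\{\alpha>0,\ a>\tfrac16\alpha^2+\tfrac12|b|\alpha\}$ of $\Stb(X)$ corresponds \emph{exactly}, under $\pi_\sim$ composed with the $\widetilde{\mathrm{GL}}^+(2,\R)$-action, to the full parameter space $\sbr_3$ of ordered triples — i.e.\ that every $\ut\in\sbr_3$ is hit and that the heart attached this way is the claimed tilt $\cA_{\ut}$. This requires carefully analyzing when the kernel plane of \eqref{eq777} meets the twisted cubic in three real points and checking surjectivity of the resulting map $(\alpha,\beta,a,b)\mapsto\ut$, plus verifying the heart identification via the iterated tilts $(\Coh^{\sharp\beta}_H)^{\sharp0}_{\nu_{\alpha,\beta}}$ matches $\cA_{\ut}$. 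Part (3)'s equivalence of the $Q_{K,\beta}$-family with the octant condition is the second substantive point, but it is isolated in the interlaced-polynomial appendices.
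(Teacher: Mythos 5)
Your overall strategy matches the paper's: reduce to the image under $\pi_\sim$ of the $\glt$-orbit of $\Stb(X)$, translate the parameter region into interlacing conditions on roots of cubics, and deduce (4)–(5) from the local-chart/deformation machinery. But there are two substantive issues.

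First, for part (5) your single intermediate $\um$ does not suffice. To build $\sigma=(\cA_{\us},-\vd_{\um}+i\vd_{\us})$ via (4) you need $\us\link\um$, i.e.\ $s_1<m_1<s_2<m_2<s_3<m_3$. Your constraints force $m_2>t_1$, so if $t_1>s_3$ (which is allowed under only $s_i<t_i$) you cannot also have $m_2<s_3$, and $\us\link\um$ fails. The paper avoids this by inserting \emph{two} intermediates $\aa,\bb$ with $\us\link\aa$, $\aa\link\bb$, $\bb\link\ut$ and the right orderings; a short check shows that chain is always realizable, whereas a single interlacing step from $\us$ to something interlacing $\ut$ exists only when $t_1<s_3$. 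You should carry the two-step construction, and likewise produce explicit intermediate chains for the shifted claims $\ts_{\us}\lsm\ts_{\ut}[1]$ and $\ts_{\us}\lsm\ts_{\ut}[2]$ rather than invoking them as formal consequences of the unshifted case.

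Second, for part (3) you propose routing through the family of quadratic inequalities $Q_{K,\beta}(E)\ge0$ and then proving the equivalence of that family with the octant condition on the coordinates $a_i$. The paper does something more elementary and shorter: since $E\in\cP_{\ts_{\ut}}(1)$ is semistable for \emph{every} representative $\sigma$ of $\ts_{\ut}$, one has $Z_\sigma(\lambda_H(E))\ne0$, and as $\Im Z_\sigma(\lambda_H(E))=\vd_{\ut}(\lambda_H(E))=0$ this forces $\lambda_H(E)\notin\Ker\vd_{\us}$ for every $\us$ with $\us\link\ut$ (using Lemma \ref{lem:72}'s description of the set of real parts). Lemma \ref{lem:Epoly2} then pins down the sign pattern directly. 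Your route is not wrong, but the equivalence you want to use is stated in the paper only as a remark and is never proved in the course of Theorem \ref{thm:sb3}; you would have to supply it, and it is harder than the direct argument. Beyond these two points, for (1) you correctly identify the Lemma \ref{lem:71}/\ref{lem:72} computation as the technical heart but leave it entirely undone (note that the relevant condition is interlacing between the root sets of $\Re Z$ and $\Im Z$ along the twisted cubic, not merely that a single kernel hyperplane meets the cubic in three real points), and for (2) you should note explicitly that $\sigma^{a,b}_{\alpha,\beta}\otimes\cO_X(mH)=\sigma^{a,b}_{\alpha,\beta+m}$, which is what guarantees the tensored reduced stability condition stays in $\sb^*_H(X)$.
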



\noindent Recall the definition of $\BBB_3\coloneq\{c\vd_{\ut}:c> 0,\ut\in\sbr_3\}\subset \lbdd$  and $\pm\BBB_3\coloneq\BBB_3\cup (-\BBB_3)$ as that in \eqref{eq:defBn}. We denote by $\ut\link\us$ if $t_1<s_1<t_2<\dots<s_3$ or $s_1<t_1\dots<t_3$, see \eqref{eq:defuslinkut}.
\begin{Lem}\label{lem:71}
    $\Forg(\pi_\sim(\tilde{\mathfrak P}_3(X)))=\pm\BBB_3$.
    \end{Lem}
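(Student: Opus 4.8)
\textbf{Proof strategy for Lemma \ref{lem:71}.}

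The plan is to reduce the statement to a concrete computation of the imaginary part of the central charge $Z^{a,b}_{\alpha,\beta}$ after acting by $\glt$, and to match it against the defining description of $\pm\BBB_3$. First I would recall that by definition $\tilde{\mathfrak P}_3(X) = \Stb(X)\cdot\glt$, and that $\Forg(\pi_\sim(\sigma)) = \Im Z_\sigma$ is the reduced central charge. Since the $\glt$-action includes the rotation subgroup $\C\subset\glt$, as $\sigma$ ranges over $\Stb(X)\cdot\glt$ the reduced central charge $\Im Z_\sigma$ ranges over all the real linear forms $\Real(e^{i\pi\theta}Z^{a,b}_{\alpha,\beta})$ for $\theta\in\R$ and the parameters $(\alpha,\beta,a,b)$ in the admissible range $\alpha>0$, $a>\tfrac16\alpha^2+\tfrac12|b|\alpha$. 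So it suffices to show that the set of all such real linear forms, up to positive scaling, is exactly $\pm\BBB_3 = \{c\vd_{\ut} : c\neq 0,\ \ut\in\sbr_3\}$.

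The key step is a kernel-matching argument. A nonzero real linear form $f$ on $\Lambda_\R$ lies in $\pm\BBB_3$ (after scaling) precisely when its kernel hyperplane in $\bP(\Lambda_\R)$ is a hyperplane spanned by three points on the rational normal curve $\{\gamma_3(t) : t\in\R\cup\{+\infty\}\}$ with $t_1<t_2<t_3$ — this is the linear-algebra characterization of $\vd_{\ut}$ recorded in Notation \ref{not:intro}, namely $\vd_{\ut}$ is determined up to scale by vanishing on $\gamma_3(t_1),\gamma_3(t_2),\gamma_3(t_3)$. On the other side, for a stability condition $\sigma^{a,b}_{\alpha,\beta}\in\Stb(X)$ the span of the real and imaginary parts of $Z^{a,b}_{\alpha,\beta}$ is a plane $\Pi\subset\lbdd$; the kernels of the linear forms $\Real(e^{i\pi\theta}Z^{a,b}_{\alpha,\beta})$ are exactly the hyperplanes containing the codimension-$2$ subspace $\Ker Z^{a,b}_{\alpha,\beta}\subset\Lambda_\R$. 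So I must show: as $(\alpha,\beta,a,b)$ ranges over the admissible region, the subspace $\Ker Z^{a,b}_{\alpha,\beta}$ ranges over exactly those real $2$-planes in $\Lambda_\R$ which meet the rational normal curve cone in (at least) the expected configuration, equivalently, the $2$-planes $V$ such that the pencil of hyperplanes through $V$ consists exactly of the $\vd_{\ut}$ with a fixed "interlacing" constraint. Concretely, writing $Z^{a,b}_{\alpha,\beta}$ in the basis dual to $\gamma_3$-type coordinates, one checks by direct computation that $\Im Z^{a,b}_{\alpha,\beta} = H\ch_2^\beta - \tfrac12\alpha^2 H^3\ch_0$ is a positive multiple of $\vd_{\beta-\alpha,\beta+\alpha,+\infty}$ (its kernel passes through $\gamma_3(\beta\pm\alpha)$ and $\gamma_3(+\infty)$), and that the real part contributes the third root; the condition $a>\tfrac16\alpha^2+\tfrac12|b|\alpha$ translates precisely into the interlacing inequality $t_1<t_2<t_3$ defining $\sbr_3$, with no further constraint as $\theta$ varies. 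Hence $\Forg(\pi_\sim(\Stb(X))) = \BBB_3$ (taking $\theta\in(0,1]$ for one sheet and the shift $[1]$ for the sign flip gives $\pm\BBB_3$ on all of $\tilde{\mathfrak P}_3(X)$).

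The last point to nail down is surjectivity onto all of $\pm\BBB_3$: given an arbitrary $\ut=(t_1,t_2,t_3)\in\sbr_3$ I need to exhibit parameters $(\alpha,\beta,a,b,\theta)$ realizing $c\vd_{\ut}$. When $t_3=+\infty$ this is immediate from the formula for $\Im Z^{a,b}_{\alpha,\beta}$ above by solving $\beta-\alpha=t_1$, $\beta+\alpha=t_2$. When $t_3\neq+\infty$ one takes a rotation $\theta\notin\Z$ so that $\Real(e^{i\pi\theta}Z^{a,b}_{\alpha,\beta})$ has kernel meeting the curve in three finite points, and solves the resulting system of equations in $(\alpha,\beta,a,b,\theta)$ — a finite-dimensional algebraic problem whose solvability in the admissible region is a routine but slightly fiddly computation, identical in spirit to the surface computation in Remark \ref{rem:surfacestabtosb}. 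I expect \emph{this explicit parametrization}, i.e.\ checking that the admissibility inequality $a>\tfrac16\alpha^2+\tfrac12|b|\alpha$ matches the open interlacing condition exactly (and not a proper subset of it), to be the main obstacle: one has to verify that no $\ut\in\sbr_3$ is missed and that boundary configurations (two roots colliding, or $t_3\to+\infty$) correspond to boundary parameters, so that the image is the full open set $\pm\BBB_3$ and the forgetful map is a bijection there.
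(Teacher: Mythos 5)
Your strategy is correct and is in fact the same as the paper's: reduce to studying the pencil of real linear forms spanned by $\Re Z^{a,b}_{\alpha,\beta}$ and $\Im Z^{a,b}_{\alpha,\beta}$, and match the admissibility condition on $(\alpha,a,b)$ against an interlacing-of-roots condition via the identification of $\pm\BBB_3$ with polynomials having distinct real roots. But you have correctly flagged, and then left undone, the two computations on which the whole lemma rests, and one of your intermediate claims is stated imprecisely enough to hide the real content.

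For the inclusion $\subseteq$, you assert that ``the condition $a>\tfrac{1}{6}\alpha^2+\tfrac{1}{2}|b|\alpha$ translates precisely into the interlacing inequality $t_1<t_2<t_3$ defining $\sbr_3$.'' This is not quite the right statement and, written this way, it bypasses the point. Each individual form $\Im(e^{i\pi\theta}Z)$ having three distinct roots is not enough; what is needed is that the roots of $\Re Z^{a,b}_{\alpha,\beta}$ \emph{strictly interlace} those of $\Im Z^{a,b}_{\alpha,\beta}$, because only then does the interlacing criterion of Lemma~\ref{lem:Epoly} guarantee that \emph{every} nonzero combination $c_1\Re Z + c_2\Im Z$ lies in $\pm\BBB_3$. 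Concretely: $\Re Z^{a,b}_{\alpha,\beta}(\gamma_3(t+\beta)) = -\tfrac{1}{6}t^3+\tfrac{1}{2}bt^2+at$ has roots $\tfrac{1}{2}(3b-\sqrt{9b^2+24a})$, $0$, $\tfrac{1}{2}(3b+\sqrt{9b^2+24a})$, while $\Im Z^{a,b}_{\alpha,\beta}(\gamma_3(t+\beta))$ has roots $\pm\alpha$ and $+\infty$; one then verifies by an elementary manipulation that $a>\tfrac{1}{6}\alpha^2+\tfrac{1}{2}|b|\alpha$ is equivalent to $\tfrac{1}{2}(3b-\sqrt{9b^2+24a})<-\alpha<0<\alpha<\tfrac{1}{2}(3b+\sqrt{9b^2+24a})$. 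That equivalence is the lemma; you need to prove it, not just state that some translation exists. You should also cite Lemma~\ref{lem:Epoly} explicitly rather than appeal informally to a ``kernel-matching argument'': it is doing real work here, since it converts ``interlacing for one pair in the pencil'' into ``membership in $\pm\BBB_3$ for the whole pencil.''

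For surjectivity, the $t_3=+\infty$ case you handle. For $t_3\neq+\infty$ you leave the parametrization as ``a routine but slightly fiddly computation'' and even call it ``the main obstacle.'' It is indeed the other half of the proof and must be exhibited. The clean choice is to realize $\vd_{\ut}$ as (a multiple of) $\Re Z^{a,b}_{\alpha,\beta}$, i.e.\ as the reduced central charge of $\sigma[\tfrac{1}{2}]$, rather than fishing for a generic $\theta$: set $\beta=t_2$ (so $0$ becomes a root of the translated cubic), $3b=t_1+t_3-2t_2$, $24a=(t_3-t_1)^2-9b^2$ (matching the other two roots), and then take $\alpha>0$ small enough that the admissibility inequality holds. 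This pins down the parameters explicitly and shows nothing in $\sbr_3$ is missed. Your worry about boundary configurations then evaporates: $\UUU_3$ and $\pm\BBB_3$ are both defined with strict inequalities, so one never needs to track degenerations.
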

    \begin{proof}
        We first show the `$\subseteq$' direction. The equation \begin{align*}
            \Re Z_{\alpha,\beta}^{a,b}(\gamma_3(t+\beta))=-\tfrac{1}{6}t^3+\tfrac{1}{2}bt^2+at=0
        \end{align*} of $t$  has three distinct roots with order given as:
        \begin{align}\label{eq711}
            \tfrac{1}{2}(3b-\sqrt{9b^2+24a}),\; 0 \text{ and } \tfrac{1}{2}(3b+\sqrt{9b^2+24a}).
        \end{align}
        The equation $\Im Z_{\alpha,\beta}^{a,b}(\gamma_3(t+\beta))=0\cdot t^3+\tfrac{1}{2}t^2-\tfrac{1}{2}\alpha^2=0$ has two distinct roots: $\pm\alpha$.

       Note that $\alpha>0$, the assumption that $a>\tfrac{1}{6}\alpha^2+\tfrac{1}{2}|b|\alpha$ in \eqref{eqstab3} is equivalent to
        \begin{align*}
           & 9b^2+24a>9b^2+4\alpha^2+12|b|\alpha\iff \sqrt{9b^2+24a}>2\alpha\pm 3b\\
            \iff & \tfrac{1}{2}(3b-\sqrt{9b^2+24a})<-\alpha<0<\alpha < \tfrac{1}{2}(3b+\sqrt{9b^2+24a}).
        \end{align*}    
    By Lemma \ref{lem:Epoly}, we have $c_1\Re Z_{\alpha,\beta}^{a,b}+c_2\Im Z_{\alpha,\beta}^{a,b}\in\pm\BBB_3$ for every $c_1,c_2\neq 0$. It follows that\begin{align}\label{eq714}
         \Forg(\pi_\sim(\tilde {\mathfrak P}_3(X)))=\{\Im (zZ_{\alpha,\beta}^{a,b})\mid 0\neq z\in\C, \alpha>0,a>\tfrac{1}{6}\alpha^2+\tfrac{1}{2}|b|\alpha\}\subseteq \pm\BBB_3.
     \end{align} \\

   \noindent  We then show the `$\supseteq$' direction. For every $t_1<t_2\in\R$,  we may consider 
     \begin{align}\label{eq715}
          \alpha=\tfrac{1}{2}(t_2-t_1)>0, \beta=\tfrac{1}{2}(t_2+t_1)
     \end{align}
     and any $a,b\in \R$ satisfying the assumption $a>\tfrac{1}{6}\alpha^2+\tfrac{1}{2}|b|\alpha$. Then the imaginary part  $c\Im Z_{\alpha,\beta}^{a,b}=  \vd_{t_1,t_2,+\infty}$ for some scalar $c\in\R$. In particular, we have $\vd_{t_1,t_2,+\infty}=c\Forg(\pi_\sim(\sab^{a,b}))\in \Forg(\pi_\sim(\tilde{\mathfrak P}_3(X)))$.

     For every $t_1<t_2<t_3\in\R$, we may consider
     \begin{align}\label{eq716}
         \beta=t_2, 3b=t_1+t_3-2t_2, 24a=(t_3-t_1)^2-9b^2=(t_3-t_1)^2-(t_1+t_3-2t_2)^2>0,
     \end{align}
     and $\alpha>0$ sufficiently small so that  $a>\tfrac{1}{6}\alpha^2+\tfrac{1}{2}|b|\alpha$. Then by \eqref{eq711}, we have $\Re Z_{\alpha,\beta}^{a,b}=\vd_{\ut}$ up to a constant. In particular, we have $\vd_{\ut}=c\Forg(\pi_\sim(\sab^{a,b}[\tfrac{1}{2}]))\in \Forg(\pi_\sim(\tilde{\mathfrak P}))$.

     To sum up, we have $\Forg(\pi_\sim(\tilde{\mathfrak P}))\supseteq\pm\BBB_3$. Together with \eqref{eq714}, the statement follows.
    \end{proof}

\begin{Lem}\label{lem:72}
The forgetful map $\Forg':\Stab^*_H(X)\rightarrow\Hom(\Lambda_H,\C): \sigma\mapsto Z_\sigma$ is injective.

For every $\vd_{\ut}\in \BBB_3$,  the fiber  space    $\Forg'(\Stab^*_H(X))\cap (\pi_{\Im})^{-1}(\vd_{\ut})$ is given as $\{-c\vd_{\us}+i\vd_{\ut}:\ut<\us<\ut[1],c>0\}\cup\{c\vd_{\us}+i\vd_{\ut}:\us<\ut<\us[1],c>0\}$ and it is convex in $\lbdd$.
\end{Lem}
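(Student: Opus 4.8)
\textbf{Proof plan for Lemma \ref{lem:72}.}
The plan is to reduce everything to the description of the $GL_2$-orbit of the slice $\Stb(X)$ together with Lemma \ref{lem:71} and the parametrisations \eqref{eq715}, \eqref{eq716}. First I would establish injectivity of $\Forg'$ on $\Stab^*_H(X)$. Recall that $\Stab^*_H(X)=\Stb(X)\cdot GL_2$, where $GL_2=\{\tilde g=(g,M)\in\glt:g(0)\in(0,1]\}$. By Remark \ref{rem:stabX}.(2), every $\sigma\in\Stb(X)$ has the property that skyscraper sheaves $\cO_p$ are $\sigma$-stable of phase $1$; hence every $\sigma\in\Stab^*_H(X)$ is a geometric stability condition. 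Since $H$ is very ample on $X$ (up to replacing $H$ by a multiple — or arguing on a suitable subvariety as in the proof of Corollary \ref{cor:niceimplygeom}), and since, by the inequality $a>\tfrac16\alpha^2+\tfrac12|b|\alpha$ together with the computation in Lemma \ref{lem:71}, the stability conditions in $\Stb(X)$ satisfy the Bayer-type relation of the form $\sigma\lsmeq\sigma\otimes\cO_X(H)\lsm\sigma[1]$ (this follows from the interlacing statement $\ut<\us<\ut[1]$ for the roots established in Lemma \ref{lem:71} combined with Theorem \ref{thm:sb3}.(5), applied after the half-shift), Corollary \ref{cor:geodeterbyZ} applies: a geometric stability condition satisfying that relation is determined by its central charge. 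Thus $\Forg'$ is injective on $\Stab^*_H(X)$, which is the first assertion. The main subtlety here is checking that the $GL_2$-action does not destroy the hypotheses of Corollary \ref{cor:geodeterbyZ}; but $GL_2$ only affects the phase function $g$ with $g(0)\in(0,1]$, and by Lemma \ref{lem:eqdefforlsmonstab}.(6)-(7) the relations $\lsmeq$ and $\lsm$ are stable under $\pi_\sim(-\cdot\tilde g)$, so the relation persists along the orbit up to a uniform shift, which is harmless for the uniqueness statement.

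Next I would compute the fibre of $\Forg'$ over a fixed imaginary part $\vd_{\ut}\in\BBB_3$. By Lemma \ref{lem:71}, $\Forg(\pi_\sim(\tilde{\mathfrak P}_3(X)))=\pm\BBB_3$, and the computation there shows that for $\sigma^{a,b}_{\alpha,\beta}\in\Stb(X)$ one has $c_1\Re Z^{a,b}_{\alpha,\beta}+c_2\Im Z^{a,b}_{\alpha,\beta}\in\pm\BBB_3$ for all $c_1,c_2\neq 0$, via Lemma \ref{lem:Epoly}; that is, \emph{every} nonzero real combination of the real and imaginary parts is of the form $\pm c\vd_{\us}$ for some $\us\in\sbr_3$. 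Now fix $\vd_{\ut}$ with $\ut=(t_1,t_2,t_3)$, $t_3\neq+\infty$ (the case $t_3=+\infty$ is handled by the parametrisation \eqref{eq715}, which realises $\vd_{t_1,t_2,+\infty}$ as $c\Im Z^{a,b}_{\alpha,\beta}$). Using \eqref{eq716} to pick $\alpha,\beta,a,b$ with $\Re Z^{a,b}_{\alpha,\beta}=\vd_{\ut}$ up to scalar, the whole $GL_2$-orbit of this one point has central charges of the form $f+iB$ where $B$ runs over scalar multiples of $\vd_{\ut}$ and $f$ runs over the corresponding real combinations; intersecting with $\pi_\Im^{-1}(\vd_{\ut})$ exactly and using that each such $f$ equals $\pm c\vd_{\us}$ (by Lemma \ref{lem:71}), I would identify the fibre as a subset of $\{\pm c\vd_{\us}+i\vd_{\ut}:c>0,\ \us\in\sbr_3\}$. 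To pin down \emph{which} $\us$ appear, I would use that a nonzero linear combination $c_1\vd_{\ut}+c_2\vd_{\us}$ (or $-$) is again some $\pm c'\vd_{\us'}$ only for $\us'$ in the pencil through $\ut$, and by the root-interlacing description in Lemma \ref{lem:Epoly} the condition that $Z=\pm c\vd_{\us}+i\vd_{\ut}$ be a genuine stability condition in $\Stab^*_H(X)$ (equivalently: in the $GL_2$-orbit of $\Stb(X)$, equivalently the kernel of $Z$ lies in the appropriate region) translates precisely into $\ut<\us<\ut[1]$ in the $-$ case and $\us<\ut<\us[1]$ in the $+$ case. This gives the claimed set equality.

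Finally I would prove convexity of the fibre in $\lbdd$. The fibre is $F_{\ut}=\{-c\vd_{\us}+i\vd_{\ut}:\ut<\us<\ut[1],c>0\}\cup\{c\vd_{\us}+i\vd_{\ut}:\us<\ut<\us[1],c>0\}$; projecting away the fixed imaginary part, it suffices to show the subset $\{-c\vd_{\us}:\ut<\us<\ut[1],\ c>0\}\cup\{c\vd_{\us}:\us<\ut<\us[1],\ c>0\}$ of $\lbdd$ is convex. I would argue this via the geometry of the moment-type curve: writing a functional in the pencil through $\vd_{\ut}$ via its three roots, the map $\us\mapsto\vd_{\us}$ (suitably normalised) traces a curve, and the two root-configurations $\ut<\us<\ut[1]$ and $\us<\ut<\us[1]$ are exactly the two ``sides'' of this pencil that interlace $\ut$ from the left and from the right; the cone they generate (allowing the sign flip that glues the two families at the limit $\us\to\ut[1]^-$ matching $\us\to\ut^+$ after a shift, i.e.\ passing through the degenerate member $\vd$ parallel to $\vd_{\ut}$) is an open half-line-swept convex region. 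More concretely, I would exhibit two affine functionals $\ell_1,\ell_2$ on $\lbdd$ (coming from evaluating at $\gamma_3(t_i)$ and at a fourth interlacing point) cutting out $F_{\ut}$ as $\{h:\ell_1(h)>0,\ \ell_2(h)>0,\ \mathrm{pr}(h)=i\vd_{\ut}\}$, which is manifestly convex; the interlacing inequalities $\ut<\us<\ut[1]$ and $\us<\ut<\us[1]$ are linear conditions on the coefficients of $\vd_{\us}$ once $\vd_{\ut}$ is fixed, by the standard fact that interlacing of real-rooted polynomials is equivalent to positivity of a linear pencil (this is exactly the content of the interlaced-polynomials results in Appendix \ref{sec:basicalgeb}). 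The hardest part will be this last step — verifying that the union of the two sign-families really is \emph{convex} rather than merely each piece being convex, i.e.\ that the gluing at the two degenerate boundary functionals ($\vd$ parallel to $\vd_{\ut}$, and the ``$t_n=+\infty$'' member) is convex; I expect to handle it by a direct two-parameter computation in the pencil coordinates, or equivalently by invoking Proposition \ref{prop:convex} which already guarantees that the fibre of $\pi_\sim$ — hence $F_{\ut}$, being the $\Forg'$-image of such a fibre by injectivity — is convex, reducing convexity to an appeal to a result already in hand.
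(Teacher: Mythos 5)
Your proof of injectivity is circular: you invoke Theorem~\ref{thm:sb3}.(5) to get the Bayer-type relation $\sigma\lsmeq\sigma\otimes\cO_X(H)\lsm\sigma[1]$ on $\Stb(X)$, but Theorem~\ref{thm:sb3}.(5) is itself deduced in the paper \emph{from} Lemma~\ref{lem:72}. The paper's injectivity argument is much lighter and bypasses restriction theory entirely: if $Z_\tau=Z_{\tau'}$ for $\tau,\tau'\in\Stab^*_H(X)$, then by \cite[Theorem~8.2]{BMS:stabCY3s} one has $\tau'=\tau\cdot\tilde g$ with $\tilde g=(g,\Id_2)$, forcing $g(\theta)=\theta+2k$; both being in $\Stb(X)\cdot GL_2$ bounds $|g(0)|<2$, so $k=0$. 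Even apart from circularity, Corollary~\ref{cor:geodeterbyZ} would not apply uniformly: it needs $H$ very ample (here $H$ is only a polarization), and the strict relation $\sigma\otimes\cO_X(H)\lsm\sigma[1]$ requires the root separation of the pencil to exceed $1$ (cf.\ Proposition~\ref{prop:stabdprop}.(4)), which fails for $\sigma^{a,b}_{\alpha,\beta}\in\Stb(X)$ with $\alpha$ small; so you would only get injectivity on a proper subset.

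On the fiber computation, your sketch of the ``$\supseteq$'' direction is too thin. Given $\vd_{\ut}$ and $\us\link\ut$, one must \emph{construct} parameters $\alpha,\beta,a,b$ whose $\Re Z$ and $\Im Z$ span the line through $\vd_{\ut}$ and $\vd_{\us}$ \emph{and verify} $a>\tfrac16\alpha^2+\tfrac12|b|\alpha$; the paper does this by reducing to the inequality $4xz+2y(x+z-|x-z|)>0$ in gap coordinates. Your appeal to \eqref{eq716} places only one point of the $GL_2$-orbit over $\vd_{\ut}$ in the picture. The sign of the coefficient ($c<0$ when $\ut<\us$, $c>0$ when $\us<\ut$) is likewise not automatic; the paper extracts it by tracking the argument of $Z(\gamma_3(t_3))$.

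Finally, your closing move — deducing convexity from Proposition~\ref{prop:convex} — has the same circularity problem: to use it you would need to know in advance that all stability conditions in the fiber of $\Forg'$ over $\vd_{\ut}$ are $\sim$-equivalent, so that their central charges all lie in a single $\ta(\ts)$. But that equivalence is exactly what the paper derives \emph{from} Lemma~\ref{lem:72} in the proof of Theorem~\ref{thm:sb3}.(1). The non-circular route is the one the paper takes: convexity is a pure linear-algebra fact about interlacing polynomials, Lemma~\ref{lem:Epolyconvex} (ultimately Lemma~\ref{lem:E4}), established independently of any stability-theoretic input. Your intuition that convexity of the union of the two sign-families across the degenerate member is the real content is correct, but the paper handles it by the elementary sum-of-interlacing-polynomials argument, not by an appeal to $\pi_\sim$-fibers.
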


\begin{proof} Assume that $Z_\tau=Z_{\tau'}$ for some $\tau,\tau'\in\Stab^*_H(X)$, then by \cite[Theorem 8.2]{BMS:stabCY3s}, $\tau'=\tau\cdot \tilde g$, where $\tilde g=(g,\Id_2)$. As $|g(0)|<2$, we must have $g(0)=0$. It follows that $\tau=\tau'$.\\

    We then show the `$\supseteq$' direction, in other words, the fiber image space contains the central charges as that in the set. 

    For any $\us$ and $\ut$ with $\us\link \ut$, by Lemma \ref{lem:Epoly}, the whole pencil $M\coloneq\{c_1\vd_{\us}+c_2\vd_{\ut}:[c_1,c_2]\in\bP^1\}\subset \pm\BBB_3$, and there exists unique $r_1$ and $r_2$ such that $\vd_{r_1,r_2,r_3=+\infty}\in M$. As that in \eqref{eq715}, we may set
    $ \alpha=\tfrac{1}{2}(r_2-r_1), \beta=\tfrac{1}{2}(r_2+r_1)$.

    Note that $\beta<r_2$, by Lemma \ref{lem:Epoly} again, there exists unique  $q_1<q_2=\beta<q_3$ such that $\vd_{q_1,q_2,q_3}\in M$. As that in \eqref{eq716}, we set $3b=q_1+q_3-2\beta, 24a=(q_3-q_1)^2-9b^2.$
    
     Moreover, the parameters satisfy $q_1<r_1<r_2<q_3$ with gaps $x\coloneq r_1-q_1$, $y=r_2-r_1$, and $z\coloneq q_3-r_2$. It is clear that $\alpha>0$. By a direct computation, we have
     \begin{align*}
         &24a-4\alpha^2-12|b|\alpha \\
         =&(q_3-q_1)^2-(q_1+q_3-r_2-r_1)^2-(r_2-r_1)^2-2|q_1+q_3-r_2-r_1|(r_2-r_1)\\
         =& (x+y+z)^2-(z-x)^2-y^2-2y|x-z|=4xz+2y(x+z-|x-z|)>0.
     \end{align*}
     So $\sigma_{\alpha,\beta}^{a,b}$ is a stability condition in $\Stb(X)$. By the choice of the parameters, we have $\vd_{r_1,r_2,+\infty}=c \Im Z_{\alpha,\beta}^{a,b}$ and $\vd_{q_1,q_2,q_3}=c'\Re Z_{\alpha,\beta}^{a,b}$ for some scalars $c,c'\in\R$. In particular, we have $M=\{c_1\Re Z_{\alpha,\beta}^{a,b}+c_2\Im Z_{\alpha,\beta}^{a,b}:[c_1,c_2]\in \bP^1\}$. 

    \noindent \textit{Claim}: $\Forg'(\sab^{a,b}\cdot GL_2)\supset \{-c\vd_{\us}+i\vd_{\ut}:\ut<\us<\us[1],c>0\}\cup\{c\vd_{\us}+i\vd_{\ut}:\us<\ut<\us[1],c>0\}$.
\begin{proof}[Proof of the claim]
     When $t_3\neq+\infty$, note that $\vd_{\ut}(0,0,0,1)>0$, so there exists $\tilde g\in\glt$ with $g(0)\in(0,1)$ such that the central charge of $\sigma_{\alpha,\beta}^{a,b}\cdot \tilde g$ is of the form $c\vd_{\us}+i\vd_{\ut}$ for some non-zero  $c\in\R$. 
     
     Note that $t_3>r_2$, so $\Im Z_{\alpha,\beta}^{a,b}(\gamma_3(t_3))>0$. As $g(0)\in(0,1)$, the augment of $Z_{\sigma_{\alpha,\beta}^{a,b}\cdot \tilde g}(\gamma_3(t_3))$ cannot be $0$. Note that $\vd_{\ut}(\gamma_3(t_3))=0$ and $\vd_{\us}(\gamma_3(t_3))>0$ (resp. $<0$) when $s_3>t_3$ (resp. $s_3<t_3$). So the coefficient $c<0$ (resp. $c>0$) when $s_3<t_3$ (resp. $s_3>t_3$).

     When $t_3=+\infty$, we have $-\vd_{\ut}= Z_{\alpha,\beta}^{a,b}$ up to a positive scalar, so there exists  $\tilde g\in\glt$ with $g(0)=1$ such that the central charge of $\sigma_{\alpha,\beta}^{a,b}\cdot \tilde g$ is of the form $c\vd_{\us}+i\vd_{\ut}$ for some non-zero  $c\in\R$. 
     Note that  $Z_{\sigma_{\alpha,\beta}^{a,b}\cdot \tilde g}(0,0,0,1)\in\R_{<0}$, so the coefficient $c<0$.   
\end{proof}
To sum up, the `$\supseteq$' direction holds.  \\

     Finally, we show the `$\subseteq$' direction. By Lemma \ref{lem:71}, we have $\Forg'(\Stab^*_H(X))\subseteq \{Z: c_1 \Re Z+ c_2\Im Z\in \pm\BBB_3, \forall\; [c_1,c_2]\in\bP^1\}$. By Lemma \ref{lem:Epoly}, $\Forg'(\Stab^*_H(X))\cap (\pi_{\Im})^{-1}(\vd_{\ut})\subseteq \{c\vd_{\us}+i\vd_{\ut}:\ut\link\us, c\neq 0\}$. By the last paragraphs in the argument for the `$\supseteq$' direction, the sign of the coefficient must be as that in the statement.

     The convexity follows from Lemma \ref{lem:Epolyconvex}. 
 \end{proof}
\begin{proof}[Proof of Theorem \ref{thm:sb3}] Firstly, we  adopt \begin{align*}
    \sb^*_H(X)\coloneq \pi_\sim(\Stab^*_H(X)).
\end{align*} 
as the definition of $\sb^*_H(X)$. Our task is to show that $\sb^*_H(X)$ admits a parametrization as that in \eqref{eq799} and satisfies other properties stated in the theorem.\\

\noindent {(1)} Note that $\tilde {\mathfrak P}_3(X)=\coprod_{n\in\Z} (\Stab^*_H(X))[n]$, it is clear that $\pi_\sim(\tilde{\mathfrak P}_3(X))=\coprod_{n\in\Z}\sb^*_H(X)[n]$. 

By Lemma \ref{lem:72},   the fiber space  $\Forg'(\Stab^*_H(X))\cap (\pi_{\Im})^{-1}(\vd_{\ut})$ is convex and the map $\Forg'|_{\Stab^*_H(X)}$ is injective. Therefore, two stability conditions $\sigma,\tau\in \Stab^*_H(X)$ satisfy $\sigma\sim\tau$ if and only if $\Im Z_\tau=\Im Z_\sigma$. Moreover,  the forgetful map \begin{align}\label{eq723}
    \Forg:\sb^*_H(X)\rightarrow \lbdd
\end{align} is injective. 

By Lemma \ref{lem:71} and $\pi_\sim(\tilde{\mathfrak P}_3(X))=\coprod_{n\in\Z}\sb^*_H(X)[n]$, we have 
\begin{align*}
    \Forg(\sb^*_H(X))= \BBB_3.
\end{align*}

Note that $Z_\sigma(\cO_p)\in \H$, so we have \begin{align*}
    \Forg(\sb^*_H(X))=\{c\vd_{\ut}:c>0, \ut\in\sbr_3\}.
\end{align*}  
Note that $c$ is from the $\R$-action and does not affect the heart structure. We get a parametrizing space for $\sb^*_H(X)$ as that in  \eqref{eq799}.    \\

\noindent {(2)} By the construction of $\sigma_{\alpha,\beta}^{a,b}$, we have $\sigma_{\alpha,\beta}^{a,b}\otimes \cO_X(mH)=\sigma_{\alpha,\beta+m}^{a,b}$. As the $\otimes \cO_X(mH)$- action commutes with the $GL_2$-action, for a reduced stability condition $\ts_{\ut}=\pi_\sim(\sigma_{\alpha,\beta}^{a,b}\cdot g)$, we have \begin{align*}
\ts_{\ut}\otimes \cO_X(mH)=\pi_\sim((\sigma_{\alpha,\beta}^{a,b}\cdot g)\otimes \cO_X(mH))=\pi_\sim(\sigma_{\alpha,\beta+m}^{a,b}\cdot g)\in\sb^*_H(X).   
\end{align*}
Note that $\vd_{\ut}\otimes \cO_X(mH)=\vd_{t_1+m,t_2+m,t_3+m}$, by \eqref{eq723}, we have \eqref{eq7times}.\\

By Remark \ref{rem:stabX}, skyscraper sheaves and lines bundles $\cO_X(mH)$ are in the heart up to a homological shift. Note that $\vd_{\ut}(\gamma_3(x))>0$ when and only when $x\in(t_1,t_2)\cup (t_3+\infty)$. Also note that if $\cO_{X}(mH)\in\cA_{\ut}[s]$, then $\cO_{X}(mH)\in\cA_{\ut'}[s-1,s,s+1]$ for $\ut'$ in a small open neighborhood of $\ut$. The statement holds.\\

\noindent{(4)} follows from  Lemma \ref{lem:72}.

\noindent{(3)} If $E\in\cP_{\ut}(1)$, then $\lambda_H(E)\in\Ker \vd_{\ut}$. Note that $E$ is semistable with respect to every representative $\sigma$ of $\ts_{\ut}$, so in particular $Z_\sigma(\lambda_H(E))\neq 0$. It follows that 
\begin{align*}
    \lambda_H(E)\notin \Ker \vd_{\us}
\end{align*}
for every $s_1<t_1<s_2<t_2<s_3<t_3$. By Lemma \ref{lem:Epoly2}, the character $\lambda_H(E)$ is in the form of \eqref{eq710} for some $a_i$ all $\geq 0$ or all $\leq 0$. 

Note that $\vd_{\us}(\lambda_H(E))>0$, by \eqref{eqE55}, we must have $a_i\geq 0$ for all $i$.\\

\noindent{(5)} By Lemma \ref{lem:72}, Lemma \ref{lem:partialorder},  and the definition of $\ts_{\ut}$, for every $s_1<t_1<s_2<t_2<s_3<t_3$, we have $\ts_{\us}\lsm \ts_{\ut}$.

If $s_i<t_i$, then there exist $a_i$ and $b_i$ such that 
\begin{align*}  s_1<a_1<s_2<a_2<s_3<a_3;\;\;a_1<b_1<a_2<b_2<a_3<b_3;\;\;b_1<t_1<b_2<t_2<b_3<t_3.
\end{align*}
It follows that   $ \ts_{\us}\lsm\ts_{a_1,a_2,a_3}\lsm\ts_{b_1,b_2,b_3}\lsm\ts_{\ut}.$ 

If $s_1<t_2$ and $s_2<t_3$, then there exist $w_i$ such that $w_1<s_1<w_2<s_2<w_3<s_3\text{ and } w_i<t_i.$
It follows that $\ts_{\us}\lsm \ts_{w_1,w_2,w_3}[1]\lsm \ts_{\ut}[1]$.

If $s_1<t_3$, then there exist $u_i$ and $v_i$ such that
\begin{align*}
    u_1<s_1<u_2<s_2<u_3<s_3; \;\;v_1<u_1<v_2<u_2<v_3<u_3;\;\; \text{ and } v_i<t_i.
\end{align*}
It follows that $\ts_{\us}\lsm\ts_{u_1,u_2,u_3}[1]\lsm\ts_{v_1,v_2,v_3}[2]\lsm\ts_{\ut}[2]$.
\end{proof}

\subsection{Bayer Vanishing Lemma and Restriction Theorem}
 By the same argument as that for Proposition \ref{prop:bayerlemsurface} and Remark \ref{rem:surfacestabtosb}, we have the following corollary from Theorem \ref{thm:sb3}.{(2)} and {(5)}.
\begin{Cor}\label{cor:bayer3}
 Let $(X,H)$ be a polarized threefold satisfying \cite[Conjecture 4.1]{BMS:stabCY3s} and $E,F$ be two objects in $\Db(X)$. Then under either of following conditions:
 \begin{enumerate}[(1)]
     \item Assume there exists $\ts_{\ut}$  as that in \eqref{eq799} satisfying $t_3\neq +\infty$ and $E,F\in\cP_{\ts_{\ut}}(1)$. 
     \item Assume there exists $\sigma=\sigma_{\alpha,\beta}^{a,b}$ as that in \eqref{eqstab3} satisfying $\phi_{\sigma}(E)\geq\phi_{\sigma}^+(F)$ and $\Im Z_\sigma(E)\neq 0$.
 \end{enumerate}  
  We have the vanishing $        \Hom(E(mH),F)=0$  for every $m>0$.
\end{Cor}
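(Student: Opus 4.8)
The statement is Corollary \ref{cor:bayer3}, a Bayer Vanishing Lemma for threefolds, and the plan is to reduce both cases to an application of Lemma \ref{lem:posaction} via the comparison relation $\lsm$ established in Theorem \ref{thm:sb3}.{(2)} and {(5)}. The structure is deliberately parallel to the proof of Proposition \ref{prop:bayerlemsurface} together with the dictionary of Remark \ref{rem:surfacestabtosb}, so most of the work has already been done; what remains is to translate the hypotheses into the form required by Lemma \ref{lem:posaction}.

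\textbf{Case (1).} Here we are handed $\ts_{\ut}\in\sb^*_H(X)$ with $t_3\neq+\infty$ and $E,F\in\cP_{\ts_{\ut}}(1)$. First I would observe, using Theorem \ref{thm:sb3}.{(2)}, that $\ts_{\ut}\otimes\cO_X(mH)=\ts_{t_1+m,t_2+m,t_3+m}=\ts_{\ut+m}$ for every $m\in\Z_{\geq1}$. Since $t_i<t_i+m$ coordinatewise, the first bullet of Theorem \ref{thm:sb3}.{(5)} gives $\ts_{\ut}\lsm\ts_{\ut+m}=\ts_{\ut}\otimes\cO_X(mH)$. Now $E\in\cP_{\ts_{\ut}}(1)$ and $F\in\cP_{\ts_{\ut}}(1)\subset\cA_{\ts_{\ut}}\subset\cA_{\ts_{\ut}}[\leq0]$, so the hypotheses of Lemma \ref{lem:posaction} are met with $\Phi=\otimes\cO_X(mH)$ (which is an autoequivalence of $\Db(X)$), and we conclude $\Hom(E(mH),F)=\Hom(\Phi(E),F)=0$. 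I should double-check that the $\otimes\cO_X(H)$-action on $\sb_H^*(X)$ in Remark \ref{rem:Ractiononsb} is compatible with $\Phi$, i.e. $\cA_{\Phi\cdot\ts}=\Phi(\cA_{\ts})$ and $\Phi$ sends $\cP_{\ts}(1)$ to $\cP_{\Phi\cdot\ts}(1)$; this is immediate since tensoring by a line bundle is exact and commutes with all the tilting constructions defining $\cA_{\ut}$.

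\textbf{Case (2).} Here the data are a Bridgeland stability condition $\sigma=\sigma^{a,b}_{\alpha,\beta}\in\Stb(X)$ with $\phi_\sigma(E)\geq\phi^+_\sigma(F)$ and $\Im Z_\sigma(E)\neq0$. The plan is to pass to a reduced stability condition by rotating: after acting by some $[\theta]$ with $\theta\in(0,1)$ chosen so that $e^{-i\pi\theta}Z_\sigma(E)\in\R_{<0}$, the object $E$ lands in $\cP_{\ts}(1)$ where $\ts=\pi_\sim(\sigma[\theta])\in\sb^*_H(X)$, and the condition $\Im Z_\sigma(E)\neq0$ forces $\theta\notin\Z$, hence (by Theorem \ref{thm:sb3}.{(1)} and the parametrization \eqref{eq799}) $\ts=\ts_{\ut}$ for some $\ut\in\sbr_3$ with $t_3\neq+\infty$ — indeed $\ut=+\infty$ in the third slot corresponds precisely to $\cP(1)$ containing objects of rank-type class, i.e. to $\Im Z$ degenerating, which is excluded. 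The phase inequality $\phi_\sigma(E)\geq\phi^+_\sigma(F)$ then gives $F\in\cP_\sigma(\leq\theta)$, so after the same rotation $F\in\cP_{\ts}(\leq1)=\cA_{\ts}[\leq0]$. We are now in the situation of Case (1), and the same application of Lemma \ref{lem:posaction} with $\Phi=\otimes\cO_X(mH)$ finishes the argument.

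\textbf{Main obstacle.} The only genuinely delicate point is the bookkeeping in Case (2): one must verify that the rotation angle $\theta$ can be chosen in $(0,1)$ (not merely in $(0,1]$) precisely because $\Im Z_\sigma(E)\neq0$, and that this rules out the degenerate parameter $t_3=+\infty$ so that Theorem \ref{thm:sb3}.{(2)}'s identity $\ts_{\ut}\otimes\cO_X(mH)=\ts_{\ut+m}$ applies in the non-degenerate form needed for the strict $\lsm$. This is exactly the subtlety already handled in the proof of the claim at the end of Remark \ref{rem:surfacestabtosb} for surfaces, so I would simply invoke that argument mutatis mutandis; no new ideas are required beyond Theorem \ref{thm:sb3}. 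Everything else is a routine reduction, which is why the corollary is stated without a separate proof.
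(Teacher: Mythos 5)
The proposal is correct and is essentially the paper's intended argument (the paper gives none explicitly, pointing only to Proposition~\ref{prop:bayerlemsurface} and Remark~\ref{rem:surfacestabtosb}): Case (1) is a direct application of Lemma~\ref{lem:posaction} via Theorem~\ref{thm:sb3}.(2) and the first bullet of (5), and Case (2) rotates into Case (1). Two small points of wording in Case (2): you should first replace $(E,F)$ by $(E[2k],F[2k])$ so that $\phi_\sigma(E)\in(1,2)$ before claiming that $E$ lands in $\cP_{\ts}(1)$ — as stated, a $\theta\in(0,1)$ with $e^{-i\pi\theta}Z_\sigma(E)\in\R_{<0}$ need not exist, and even when it does, $E$ lands in $\cP_{\ts}(2k+1)$ rather than $\cP_{\ts}(1)$; and the class $(0,0,0,1)=\gamma_3(+\infty)$ killed by $\vd_{\ut}$ when $t_3=+\infty$ is the class of a skyscraper sheaf (a point class), not a ``rank-type class.''
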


We may also apply Proposition \ref{prop:reststab} in the threefold case.
\begin{Ex}\label{cor:rest3to2}
   Let $(X,H)$ be a polarized threefold satisfying \cite[Conjecture 4.1]{BMS:stabCY3s}, $S\in|dH|$ be a smooth subvariety of $X$.
   \begin{enumerate}[(1)]
       \item Assume that the parameters $\alpha,a,b$ satisfy
       \begin{align}\label{eq:asp735}
          2\alpha>d \text{ and } \sep(x^3-(3b+c)x^2-6ax+c\alpha^2)>d\text{ for every }c\in\R.
       \end{align}
      Then the stability condition $\sigma_{\alpha,\beta}^{a,b}$ as that in \eqref{eqstab3} restricts to a stability condition on $\Db(S)$.
      
       \item Assume $t_3-t_2>d$ and $t_2-t_1>d$, then $\cA_{\ut}|_{\Db(S)}=\cA_{s_1,s_2}$, which is the heart on $\Db(S)$ as that in \eqref{eq5611} with respect to the polarization $H|_S$. The parameters $s_1,s_2$ are given as
       \begin{align*}
          & \frac{1}{6}\left(2\sum t_i +3d\pm\sqrt{2\sum(t_i-t_j)^2-3d^2}\right), &\text{ when }t_3\neq +\infty;\\
        \text{ and }   & s_1=(t_1+t_2+d)/{2}, s_2=+\infty, & \text{ when }t_3=+\infty.
       \end{align*}
   \end{enumerate}
\end{Ex}
\begin{proof}
    {(1)} By the assumption \eqref{eq:asp735}, for every $\theta\in(0,1]$, we have 
    \begin{align*}
       \pi_\sim( \sigma_{\alpha,\beta}^{a,b}[\theta])=\ts_{\ut}
    \end{align*}
    for some $t_3-t_2,t_2-t_1>d$. 

    By Theorem \ref{thm:sb3}.{(5)}, we have $\ts_{\ut}\otimes \cO_X(dH)=\ts_{t_1+d,t_2+d,t_3+d}\lsm \ts_{\ut}[1]$. By Lemma \ref{lem:eqdefforlsmonstab}, we have $\sigma_{\alpha,\beta}^{a,b}\otimes \cO_X(dH)\lsm \sigma_{\alpha,\beta}^{a,b}[1]$. The statement follows from Proposition \ref{prop:reststab}.\\

    {(2)} By Lemma \ref{lem:Epoly7}, there exists  $\vd_{\um}\in\ta(\ts_{\ut})$ and $\sep(c\vd_{\um}+d\vd_{\ut})>d$ for every $[c:d]\in\bP_{\R}^1$.
    
    Let $\sigma=(\cA_{\ut},\vd_{\um}+i\vd_{\ut})$, then by Theorem \ref{thm:sb3}, for every $\theta\in(0,1]$, the reduced stability condition $\pi_\sim(\sigma[\theta])=\ts_{p_1,p_2,p_3}$ for some $p_i$ such that both $p_3-p_2$ and $p_2-p_1>d$. So $\pi_\sim(\sigma[\theta])\otimes \cO_X(dH)\lsm \pi_\sim(\sigma[\theta])[1]$. By Lemma \ref{lem:eqdefforlsmonstab}, we have $\sigma\otimes \cO_X(dH)\lsm \sigma[1]$.
    
    By Proposition \ref{prop:reststab} and Corollary \ref{cor:geodeterbyZ}, $\sigma|_{\Db(S)}$ is a geometric stability condition and is determined by its central charge. In particular, the heart $\cA_{\ut}|_{\Db(S)}$ is in the form of $\cA_{s_1,s_2}$.

    The parameters $s_i$ can be computed via the property that:
    \begin{align}\label{eq740}
        \vd_{\ut}([\iota_*](\gamma_2(s_i)))=0.
    \end{align}
    Note that $[\iota_*](\gamma_2(s))=\gamma_3(s)-\gamma_3(s-d)$. When $t_3\neq +\infty$, the equation above is given as
    \begin{align*}
        0=\prod(s-t_i)-\prod(s-t_i-d)=3ds^2-(3d^2+2d\sum t_i)s+d\sum t_it_j +d^2\sum t_i +d^3.
    \end{align*}
    When $t_3=+\infty$, the equation \eqref{eq740} is 
    \begin{align*}
        0=(s-t_1)(s-t_2)-(s-t_1-d)(s-t_2-d)=2ds-(d(t_1+t_2)+d^2).
    \end{align*}
    The formula of $s_i$ is by solving these equations.
\end{proof}

\begin{Ex}
Adopt the assumption as that in Example \ref{cor:rest3to2}, we give two examples that condition \eqref{eq:asp735} holds.

    Assume $\alpha>\frac{\sqrt{3}}{3}d$ and $a=\frac{\alpha^2}{2}$, then $\sigma_{\alpha,0}^{a,0}$ restricts to a stability condition on $\Db(S)$. To see this, by Example\ref{cor:rest3to2}, we only need to check \eqref{eq:asp735}. Note that $(x^3-6ax)'=3x^2-6a=3(x^2-\alpha^2)$, the statement follows by Lemma \ref{lem:E3}.

    Assume $\alpha\geq d$ and $a\geq \tfrac{1}{6}(\alpha+d)^2+\tfrac{1}{2}(\alpha+d)|b|$, then $\sigma_{\alpha,\beta}^{a,b}$ restricts to a stability condition on $\Db(S)$. To see this, note that the assumption says that the gap of roots of $x^3-3bx^2-6ax=0$ and $x^2-\alpha^2=0$ are not less than $d$. So $\sep(f)>d$ for every $f$ in the pencil spanned by them.
\end{Ex}

\subsection{Example: Wall-crossing on $\sb^*(\bP^3)$}
For the rest part of this section, we fix the threefold to be the projective space $\bP^3$ and discuss a few more properties about the wall-crossing behavior under the theory of reduced stability conditions. A detailed study of specific examples of moduli spaces will be deferred to future work.\\

\noindent Let $\sb^*(\bP^3)$ be the manifold of reduced stability conditions as that in Theorem \ref{thm:sb3} and $0\neq v\in\Kn(\bP^3)$. Recall the general setup as that in Section \ref{sec:wc}, we may define 
\begin{align*}
    \sb^\dag_v(\bP^3)\coloneq \left\{c\cdot \ts_{\ut}\in\sb^*(\bP^3):\begin{aligned}
         &c>0,\vd_{\ut}(v)=0,\vd_{\us}(v)\neq 0\\
         &\text{ for every } \us<\ut<\us[1].
    \end{aligned}\right\}
\end{align*}

Then by Proposition \ref{prop:wallchambonsb}, see also Definition \ref{def:piv}, the map:
\begin{align*}
    \pi_v:\Stab^*_v(\bP^3)\to \sb^\dag_v(\bP^3): \sigma\mapsto\pi_\sim(\sigma[\theta]),
\end{align*}
where $\theta\in(0,1]$ is the value such that $e^{-i\pi \theta}Z_\sigma(v)\in \R_{\neq 0}$, is well-defined on every chamber in which $M_\sigma(v)\neq \emptyset$. The map preserves all walls and chambers for $M(v)\neq\emptyset$ on $\Stab^*(\bP^3)$.

By Lemma \ref{lem:72}, the forgetful map:
\begin{align*}
    \Forg: \sb^\dag_v(\bP^3)\to \vperp: \ts\mapsto B_{\ts}
\end{align*}
is injective. For each wall $W\subset \sb^\dag_v(\bP^3)$, its image $\Forg(W)$ is contained in a real codimension one linear subspace $\wperp\cap \vperp$ for some $0\neq w\in\Kn(\bP^3)$.

\begin{Ex}(Hilbert scheme of points)\label{eg:wallhilbpts}
    Let $v=(1,0,0,-m)$ for some $m\in\Z_{\geq1}$. When $t_3\neq +\infty$, we have \begin{align}\label{eq738}
        \vd_{\ut}(v)=C(-m-\tfrac{1}{6}t_1t_2t_3),
    \end{align}
    where $C=\prod_{i<j}(t_j-t_i)^{-1}$. The value $\vd_{\ut}(v)$ equals $0$ if and only if $t_1t_2t_3=-6m$. If in addition, both $t_2,t_3>0$, then there always exists $s_1<t_1<s_2<t_2<s_3<t_3$ such that $s_1s_2s_3=-6m$. In particular, such a $\ts_{\ut}$ is in $\Stab^{\emptyset}_v(\bP^3)$, in other words, the space $M_{\ts}(v)=\emptyset$. So one can describe all walls and chambers for $M(v)$ on the following space:
    \begin{align*}
        \sb^\dag_{(1,0,0,-m)}(\bP^3)=\{c\ts_{\ut}:c>0,t_1<t_2<t_3<0,t_1t_2t_3=-6m\}.
    \end{align*}
    Together with Lemma \ref{lem:78} below, one can draw the stage for the wall and chamber of character $(1,0,0,-m)$ as that in Figure \ref{fig:sbp3}.
       \begin{figure}[h]
        \centering
\begin{tikzpicture}
  \begin{axis}[
    axis lines = middle,
    xmin=0, xmax=100,
    ymin=0, ymax=100,
    samples = 300,
    domain = 0.3:10,
    xtick=\empty,
    ytick=\empty,
    xlabel={$-\sum t_i$},
    ylabel={$\sum t_it_j$},
    axis line style={->},
    width=10cm,
    height=10cm,
    tick style={draw=none},
    legend style={fill=none, at={(1.35,0.45)}, anchor=north east}
  ]
    \addplot[
      blue,
      thick, dotted,
    ] 
    ({2*x + 12/(x^2)}, {x^2 + 24/x});
    \addlegendentry{boundary of $\sb^\dag_{v}(\bP^3)$}

    \addplot[red, ultra thick, domain=4:50] 
      ({4 + x}, {3 + 4*x});
    \addlegendentry{boundary wall $t_1=-M$}

    \addplot[green!60!black,  domain=7:80] 
      ({1 + x}, {12 + x});
    \addlegendentry{boundary wall $t_3=-N$}

    \addplot[dashed, domain=4:100] 
      ({2 + x}, {12/2 + 2*x}); 
    \addplot[dashed, domain=5:100] 
      ({3 + x}, {12/3 + 3*x}); 
      \addplot[dashed, domain=9:100] 
      ({0.5 + x}, {12/0.5 + 0.5*x});\addplot[dashed, domain=11:50] 
      ({0.3 + x}, {12/0.3 + 0.3*x});\addplot[dashed, domain=13:70] 
      ({0.2 + x}, {12/0.2 + 0.2*x});
      \addplot[dashed, domain=1.5:100] ({20 + x}, {12/20 + 20*x});
      \addplot[dashed, domain=1.1:100] 
      ({40 + x}, {12/40 + 40*x});\addlegendentry{ potential walls}
  \end{axis}
  \node[below,blue] at (9,5.5) {
$\left(2t + \frac{6m}{t^2},\, t^2 + \frac{12m}{t}\right)$};
\end{tikzpicture}

        \caption{Walls and chamber structures for $v=(1,0,0,-m)$. All walls and chambers for $M(v)$ in  $\Stab^*(\bP^3)$  are described in the region $\sb^\dag_v(\bP^3)$, which lies above the blue curve parametrized by $(2t + \frac{6m}{t^2},\, t^2 + \frac{12m}{t})$. Moreover, the moduli space $M_{\ts}(v)$ is empty whenever $\ts$ lies below the green line or above the red line.}
        \label{fig:sbp3}
    \end{figure}
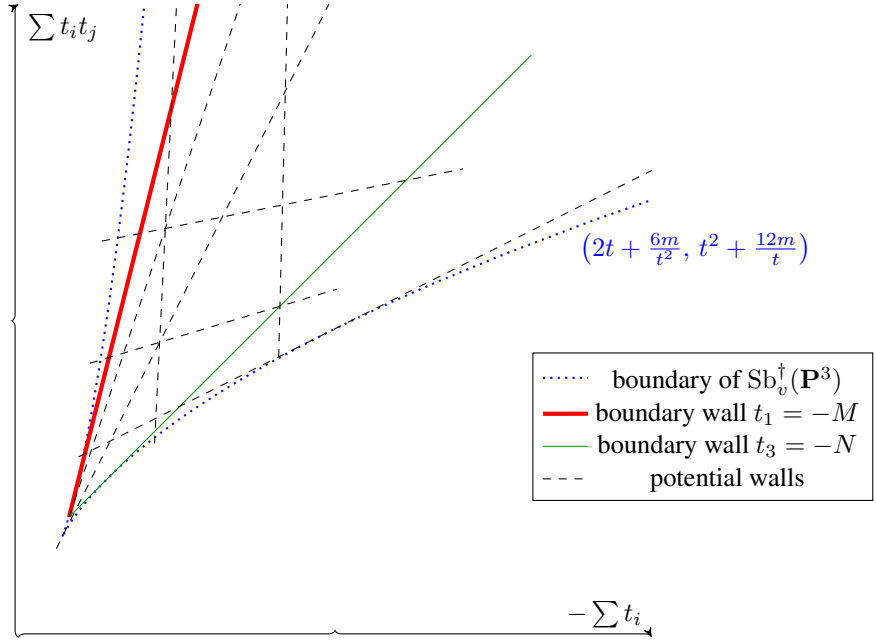
\end{Ex}
\begin{Lem}\label{lem:78}
 There is no $\ts_{\ut}$-semistable object with character $(1,0,0,-m)$ if the parameter satisfies either of the following condition:
 \begin{enumerate}[(1)]
     \item $t_3<-N$, where $N$ is the smallest positive integer satisfying $(N+1)(N+2)(N+3)>6m$.
     \item $t_1>-M$, where $M$ is largest positive integer satisfying $M^2(M-4)<6m$ and $M\leq m+2$.
 \end{enumerate}  
\end{Lem}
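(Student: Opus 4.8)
\textbf{Proof proposal for Lemma \ref{lem:78}.}

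The plan is to use the restriction structure of $\ts_{\ut}$-semistable objects of character $v=(1,0,0,-m)$, together with the fact that line bundles and skyscraper sheaves are stable, to pin down which shifted line bundles appear in the heart $\cA_{\ut}$ and thereby force an object of the given character to be destabilized. First I would recall from Theorem \ref{thm:sb3}.(2) that for $\ut$ with $t_3\neq+\infty$ the shifted line bundle $\cO_{\bP^3}(sH)[3-i]$ lies in $\cA_{\ut}$ exactly when $s\in(t_i,t_{i+1}]$ (with $t_0=-\infty$, $t_4=+\infty$), and that all skyscraper sheaves lie in $\cA_{\ut}$. An object $E$ with $\lambda_H(E)=(1,0,0,-m)$ is, up to shift, the ideal sheaf $I_Z$ of a length-$m$ subscheme $Z$ when it is a sheaf, so one natural candidate destabilizing sequence comes from $0\to I_Z\to\cO_{\bP^3}\to\cO_Z\to 0$, i.e. the standard triangle $\cO_Z[-1]\to I_Z\to\cO_{\bP^3}$.

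For part (1): suppose $t_3<-N$. Then every $\cO_{\bP^3}(sH)$ with $s\le -N$ — in particular $s=0$ after twisting appropriately, but more to the point all the line bundles that can be ``seen'' — I would argue that if $\ts_{\ut}$-semistable objects of class $v$ existed, then by Theorem \ref{thm:sb3}.(3) the character would be $\lambda_H(E)=\sum_{i=1}^3(-1)^ia_i\gamma_3(t_i)$ with all $a_i\ge 0$. Writing out the three coordinate equations from $(1,0,0,-m)=\sum(-1)^i a_i\gamma_3(t_i)$ and using $t_1<t_2<t_3<-N<0$, I would derive a bound on $m$ of the shape $6m=-t_1t_2t_3$-type inequality combined with the constraint that $a_1,a_2,a_3\ge 0$ forces $|t_i|$ to be not too small; quantitatively, the worst case is $t_1,t_2,t_3$ all close to integers $\le -(N+1)$ (since between two consecutive integers $\le -N$ the relevant line bundles would give subobjects/quotients of the correct slope), yielding $(N+1)(N+2)(N+3)\le 6m$, contradicting the choice of $N$ as the \emph{smallest} positive integer with $(N+1)(N+2)(N+3)>6m$. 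The cleaner route is probably: show that $\cO_{\bP^3}(-\lceil -t_3\rceil H)$ or a twist sits in $\cA_{\ut}$ with the same or larger phase and admits a nonzero map to/from $E$, using the Bayer Vanishing Lemma (Corollary \ref{cor:bayer3}) to rule out the maps that would be compatible with semistability, leaving only a destabilizing one.

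For part (2): suppose $t_1>-M$. Here the idea is dual — the ``large'' end of the nested interval is unconstrained but the small end $t_1$ being too close to $0$ means the line bundles $\cO_{\bP^3}$, $\cO_{\bP^3}(-H)$, $\dots$, $\cO_{\bP^3}(-(\lfloor -t_1\rfloor)H)$ cannot all be arranged to have phase consistent with $E$ being semistable of class $(1,0,0,-m)$; using $0\to I_Z\to \cO_{\bP^3}\to\cO_Z\to 0$ and its twists by $\cO(-kH)$ for $0\le k< M$, and comparing phases via the nested-wall description (Corollary \ref{cor:nested}, Proposition \ref{prop:wallchambonsb}), one gets that semistability forces $M^2(M-4)\ge 6m$ (the precise polynomial $M^2(M-4)$ presumably coming from evaluating $\vd_{\ut}$ or the Euler characteristic $\chi(I_Z(kH))$ at the boundary $t_1\to -M$, $t_2,t_3\to$ optimal), again contradicting the maximality of $M$; the auxiliary bound $M\le m+2$ should drop out of positivity of the relevant $\chi$ or of the $a_i$.

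The main obstacle I expect is the second part: part (1) is essentially a one-variable estimate (the product $t_1t_2t_3$ is the only thing that matters, by \eqref{eq738}), but part (2) involves genuinely using the \emph{heart} structure near $t_1=0$ and identifying the correct destabilizing subobject among the twisted line bundles and the ideal-sheaf/structure-sheaf sequence, which is where the somewhat unusual polynomial $M^2(M-4)$ and the side condition $M\le m+2$ come from — getting those two constants sharp, rather than just ``some bound'', will require carefully tracking the Harder--Narasimhan phases of $\cO_{\bP^3}(-kH)$ inside $\cA_{\ut}$ as $\ut$ varies over the slice $t_1t_2t_3=-6m$, and invoking Corollary \ref{cor:bayer3} to kill the ``wrong-direction'' Hom's. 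I would do part (1) first to fix the method, then adapt it to part (2).
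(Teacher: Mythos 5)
Your sketch gestures at several of the right ingredients (heart membership of twisted line bundles via Theorem \ref{thm:sb3}.(2), the constraint $t_1t_2t_3=-6m$ from \eqref{eq738}, Euler characteristics), but it misidentifies the mechanism of the proof and leaves the crux unresolved.

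The paper's argument is \emph{not} to produce a destabilizing subobject of $E$ (your ideal-sheaf sequence $0\to I_Z\to\cO\to\cO_Z\to 0$ plays no role, and indeed one cannot assume $E$ is a sheaf). Instead it is an Euler-characteristic contradiction driven by Serre duality. The key steps you are missing are: (i) for part (1), first place $E$ in the slice $\cP_{\ts_{\ut}}(0)$ rather than some unknown phase — this uses that $\vd_{\us}(1,0,0,-m)>0$ for every $\us\in\ta(\ts_{\ut})$ with $\us\link\ut$, which fixes the sign; (ii) extract from $t_1t_2t_3=-6m$, $t_3<-N$, and the minimality of $N$ the estimate $t_2>-N-4$, so that the \emph{Serre-dual pair} $\cO(-N)$ and $\cO(-4-N)[2]$ both lie in $\cP_{\ts_{\ut}}((0,1))$; (iii) combine the phase separation $\Hom(\cO(-N)[i],E)=0$ and $\Hom(E,\cO(-4-N)[1-i])=0$ for $i\ge 0$ with Serre duality on $\bP^3$ to kill $\Hom(\cO(-N),E[i])$ for \emph{all} $i\ne 1$; then $0\le\hom(\cO(-N),E[1])=-\chi(\cO(-N),E)=m-\tfrac16(N+1)(N+2)(N+3)<0$ is the contradiction. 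Part (2) is parallel: $t_1>-M$ plus $t_1t_2t_3=-6m$ plus maximality of $M$ forces $t_3<-M+4$, so $\cO(-M)[3]$ and $\cO(-M+4)$ lie in $\cP_{\ts_{\ut}}((0,1))$, Serre duality kills all Ext's from $\cO(-M+4)$ to $E$, and $0=\chi(\cO(-M+4),E)=\tfrac16(M-3)(M-2)(M-1)-m$ contradicts $(M-3)(M-2)(M-1)<6m$ (the side condition $M\le m+2$ is only there to make that last inequality hold when $m=1$, forcing $M=3$ rather than $M=4$).

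Your proposal points at the Bayer Vanishing Lemma (Corollary \ref{cor:bayer3}) and the ``$a_i\ge0$'' bound (Theorem \ref{thm:sb3}.(3)) as the engine, but neither is what is actually used; the proof is elementary given heart membership and Serre duality. The precise polynomials $(N+1)(N+2)(N+3)$ and $M^2(M-4)$ are not ``boundary limits'' of some optimization — the first is literally $-6\chi(\cO(-N),I_Z)$ for a length-$m$ $Z$, and the second arises purely from bounding $6m=(-t_1)(-t_2)(-t_3)$ under the hypothesized inequalities on $\ut$, via the elementary comparison $N(N+4)^2>(N+1)(N+2)(N+3)$. Without identifying the Serre-dual pair of line bundles in the heart and the resulting Euler-characteristic contradiction, your outline would not close.
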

\begin{proof}
 {(1)}   Note that $t_3<0$, so  when $s_1<t_1<s_2<t_2<s_3<t_3$, by \eqref{eq738}, we have $\vd_{\us}(1,0,0,-m)>0$. Note that $\vd_{\us}\in \ta(\ts_{\ut})$, so if an object $E$ is $\ts_{\ut}$-semistable with character $(1,0,0,-m)$, then it must be in $\cP_{\ut}(0)$.
 
 Note that $N(N+4)^2>(N+1)(N+2)(N+3)$ for every $N\geq 1$. It follows that $t_2>-N-4$. By Theorem \ref{thm:sb3}.{(2)}, $\cP_{\ut}((0,1))$ contains $\cO(-N)$ and  $\cO(-4-N)[2]$. 

    So for every $E\in\cP_{\ut}(0)$ and $i\geq 0$, we have \[\Hom(\cO(-N)[i],E)=0=\Hom(E,\cO(-4-N)[1-i]).\]  By Serre duality, $\Hom(\cO(-N),E[i])=0$ for every $i\neq 1$.

    It follows that 
    \begin{align*}
        0\leq \hom(\cO(-N),E[1])=-\chi(\cO(-N),E)=m-\tfrac{1}{6}(N+1)(N+2)(N+3)<0,
    \end{align*}
    which leads to the contradiction. So there is no  $\ts_{\ut}$-semistable object with character $(1,0,0,-m)$.\\

\noindent  {(2)}   By the assumption, we have $t_3<-M+4$. By Theorem \ref{thm:sb3}.{(2)}, $\cP_{\ut}((0,1))$ contains the objects $\cO(-M)[3]$ and  $\cO(-M+4)$. 

   Suppose there is a $\ts_{\ut}$-semistable object $E$ with character $(1,0,0,-m)$. Then by Serre duality, we have  $\Hom(\cO(-M+4),E[i])=0$ for every $i\in\Z$. 
    
    It follows that 
    \begin{align*}
        0=\chi(\cO(-M+4),E)=\tfrac{1}{6}(M-3)(M-2)(M-1)-m<0,
    \end{align*}
    which leads to the contradiction. Note that the extra assumption $M\leq m+2$ is just saying that when $m=1$, we set $M=3$. Otherwise, $M=4$ will fail the last inequality.
    
    So there is no  $\ts_{\ut}$-semistable object with character $(1,0,0,-m)$.
\end{proof}

\begin{Ex}\label{eg:vanP31}
    Let $X=\bP^3$ and  $t_i$ be real numbers satisfying $n-4<t_1<t_2<t_3<n$ for some $n\in \Z$. Then $\cP_{\ts_{\ut}}(1)\subseteq \cO(n)^\perp=\{E\in\Db(\bP^3):\RHom(\cO(n),E)=0\}$. If in addition $n-3<t_1<n-2<t_2<n-1<t_3<n$, then $\cP_{\ts_{\ut}}(1)=\emptyset$.
\end{Ex}
\begin{proof}
By Theorem \ref{thm:sb3}.{(2)}, $\cP_{\ut}((0,1))$ contains $\cO(n)$ and  $\cO(n-4)[3]$. So $\Hom(\cO(n)[i],E)=0=\Hom(E,\cO(n-4)[4-i])$ for every $i\geq 1$ and $E\in\cP_{\ut}(1)$. The statement follows by Serre duality.

If in addition $n-3<t_1<n-2<t_2<n-1<t_3<n$, then $\cA_{\ut}$ contains $\cO(n-i)[i]$ for $i=0,\dots,3$. So it must be the Beilinson heart $\langle \cO(n-3)[3],\cO(n-2)[2],\cO(n-1)[1],\cO(n)\rangle$. Note that $\vd_{\ut}(\cO(n-i)[i])>0$, so $\vd_{\ut}(\cA_{\ut})>0$. The statement follows.
\end{proof}

\section{Standard Model}\label{sec:sm}
Throughout this section, we assume that $(X, H)$ is an $n$-dimensional smooth projective irreducible variety over $\mathbb{C}$, equipped with a polarization $H$. We fix the $H$-polarized lattice:
\begin{align*}
    \lambda_H:\Kn(X)\to \Lambda_H:[E]\mapsto(H^n\ch_0(E),H^{n-1}\ch_1(E),\dots,\ch_n(E)).
\end{align*}
We denote by $\Lambda_\R\coloneq \Lambda_H\otimes \R$ and define the $n$-twisted vectors as 
\begin{align*}
\gamma_n:\R\cup\{+\infty\}&\to\Lambda_\R:
    \R\ni t \mapsto (1,t,\frac{t^2}{2!},\dots,\frac{t^n}{n!})\; ;\; +\infty \mapsto (0,\dots,0,1).
\end{align*}
Let $d\geq 0$. We define a subspace of $\lbdd$ as:
\begin{align}
\label{eq84}    \mathfrak B_n^{>d}\coloneq\{c\vd_{\ut}:\ut\in\sbr_n, c>0, \sep(\ut)>d\}.
\end{align}
When $d=0$, we will write $\BBB_n\coloneq \BBB_n^{>0}$ to simplify the notion.
  
Here the parameter space  in \eqref{eq84} is 
\begin{align*}
    \sbr_n&\coloneq \{\ut=(t_1,t_2,\dots,t_n):t_1<t_2<\dots<t_n,t_n\in\R\cup\{+\infty\}\}.\\
    \sep(\ut)&\coloneq \min_{1\leq i\leq n-1}\{t_{i+1}-t_i\}.
\end{align*}
For every $\ut=(t_1,\dots,t_n)\in\sbr_n$ and $\bv=(v_0,\dots,v_n)\in\Lambda_\R$, the reduce central charge is given as 

\begin{align}\label{eq:Bvddef}
        \vd_{\ut}(\bv)\coloneq C_{\ut}\det\begin{vmatrix}
            \gamma_n(t_1) \\ \dots \\ \gamma_n(t_n)\\ \bv
        \end{vmatrix}=C_{\ut}\det\begin{vmatrix}
            1 & t_1 &\dots & \frac{t_1^n}{n!} \\ \dots & \dots & \dots & \dots \\ 1 & t_n &\dots & \frac{t_n^n}{n!}\\ v_0 & v_1 & \dots & v_n
        \end{vmatrix}.
    \end{align}
where the normalizing coefficient $C_{\ut}$ is defined as
\begin{align}\label{eq:BvdC}
    &\prod_{1\leq k\leq n-1}k!\cdot\prod_{1\leq i<j\leq n}(t_j-t_i)^{-1}, &\text{ when }t_n\neq +\infty;\\
    &\prod_{1\leq k\leq n-2}k!\cdot\prod_{1\leq i<j\leq n-1}(t_j-t_i)^{-1}, &\text{ when }t_n= +\infty.\notag
\end{align}
    
  By the property of Vandermonde matrix, when $t_n\neq +\infty$, we have $\vd_{\ut}((0,\dots,0,1))=1$. 
  
  When $t_n=+\infty$, the determinant $\vd_{\ut}(\bv)=-\vd_{t_1,\dots,t_{n-1}}(v_0,v_1,\dots,v_{n-1})$. In particular, we have $\vd_{\ut}((0,\dots,0,1))=0$, and $\vd_{\ut}((0,\dots,0,1,0))=-1$.

We define some more notions on $\sbr_n$ and $\lbdd$ as follows:
\begin{align*}
\ut+a&\coloneq (t_1+a,t_2+a,\dots,t_n+a).\\
    \us<\ut&:\iff s_i<t_i \text{ for every }i=1,\dots,n. \;\;\\
    \us<\ut[k]&:\iff s_i<t_{i+k}\text{ for every }i=1,\dots,n-k.\\
    \us\link\ut&:\iff \us<\ut<\us[1]\text{ or }\ut<\us<\ut[1].\\
    \ell(\us,\ut)&\coloneq \{a\vd_{\us}+b\vd_{\ut}:(a,b)\neq (0,0)\}\subset \lbdd, \text{ when } \us\link\ut.\\
    \sep(\vd_{\ut})&\coloneq\sep(\ut)=\min\{t_{i+1}-t_i:1\leq i\leq n-1\}.\\
    \sep(\ell)&\coloneq \min\{\sep(\vd):\vd\in\ell\}.
\end{align*}

\noindent Note that by Lemma \ref{lem:Epoly}, the whole line $\ell(\us,\ut)\subset \pm\BBB_n$ when and only when $\us\link \ut$. 

  For every $d\geq 0$, we define a subspace of  central charges as follows:
  \begin{align}\label{eq812}
      \UUU_n^{>d}\coloneq \left\{c_1\vd_{\us}+ic_2\vd_{\ut}:\begin{aligned}
          &\ut<\us<\ut[1]\\
          \text{ or }&\us<\ut<\us[1]
      \end{aligned},\;
         \sep(\ell(\us,\ut))>d,\; c_2>0, \text{ and } \begin{aligned}
          c_1<0 \text{ if }\ut<\us;\\\; c_1>0 \text{ if }\us<\ut.
      \end{aligned} \right\}.
  \end{align}

When $d=0$, by Lemma \ref{lem:Epoly}, the condition $\sep(\ell(\us,\ut))>0$ holds automatically and can be dropped. We will write $\UUU_n=\UUU^{>0}_n$ to simplify the notion.
\subsection{Conjectures}
 Let $(X,H)$ be a smooth polarized variety over $\C$ and $d\geq 0$. Our main conjectures are stated as follows.
  
\begin{Con}[$\Stab^d$ Conjecture]\label{conj:stabd}
   There exists a family of stability conditions $\Stab_H^{*>d}(X)$ with respect to the $H$-polarized lattice $\Lambda_H$ satisfying:
    \begin{enumerate}
        \item The forgetful map 
        \begin{align*}
            \Forg: \Stab_H^{*>d}(X)\to \Hom(\Lambda_H,\C): \sigma=(\cA,Z)\mapsto Z
        \end{align*}
        is a homeomorphism onto $\UUU_n^{>d}$.
        \item For any $\sigma\in \Stab_H^{*>d}(X)$, 
        the stability condition $\sigma\otimes \cO_X(H)$ is also in $\Stab_H^{*>d}(X)$.
    \end{enumerate}
\end{Con}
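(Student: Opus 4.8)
\textbf{Proof proposal for Conjecture \ref{conj:stabd} ($\Stab^d$ Conjecture).}

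Strictly speaking Conjecture \ref{conj:stabd} is a conjecture and not a theorem, so the realistic target is to describe how one would \emph{reduce} it to the existence of a single geometric ingredient and to the machinery already developed in the even-numbered sections — in the spirit of Theorem \ref{thm:sb3}, which establishes exactly the analogue of this statement for threefolds satisfying \cite[Conjecture 4.1]{BMS:stabCY3s}. The plan is to first pass to the reduced side: by Corollary \ref{cor:recon} the space $\Stab_\Lambda(\cT)$ is identified with $\ta\sb_\Lambda(\cT)$, so constructing a family $\Stab_H^{*>d}(X)$ with the prescribed central charge image $\UUU_n^{>d}$ is equivalent (via the forgetful maps and the diagram \eqref{diag:1}) to constructing a family of reduced stability conditions $\sb_H^{*>d}(X)$ whose forgetful image is $\BBB_n^{>d}$ together with the comparison relation $\ts_{\us}\lsm\ts_{\ut}$ for $\us<\ut<\us[1]$ — this is precisely the content of Conjecture \ref{conj:mainsb}, and the equivalence \enquote{Conjecture \ref{conj:intro} $\Leftrightarrow$ Conjecture \ref{conj:mainsb}} will be proved in Theorem \ref{thm:equivalentconjs}. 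So the first reduction I would carry out is: build the reduced family, reading off the real-codimension-one linear structure of $\BBB_n^{>d}$ from the interlacing combinatorics of $\sbr_n$ via Lemma \ref{lem:Epoly} and its companions in Appendix \ref{sec:basicalgeb}.

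Second, the actual construction of the reduced family proceeds by induction on $n$ together with restriction. The base case $n=1$ (curves, Example \ref{ex:sbcurve}) and $n=2$ (surfaces, Example \ref{eg:stabsurface}) are known; the threefold case is Theorem \ref{thm:sb3}. For the inductive step one starts from a geometric heart — the Beilinson-type heart on $\bP^n$, or more generally a heart built by iterated tilting as in Notation \ref{not:tilt0heart} — and a weak stability function, and one needs to check the support property with respect to $\Delta_H$-type quadratic forms; the key technical input here is Lemma \ref{lem:B10}, which lets one deform the quadratic form along the pencil $\ell(\us,\ut)\subset\pm\BBB_n$ while keeping the kernel negative definite, combined with \cite[Proposition A.5]{BMS:stabCY3s} to open up the deformation to the full slice. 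The $\otimes\cO_X(H)$-invariance (part (2)) is essentially formal once the parametrization $\ts_{\ut}\otimes\cO_X(H)=\ts_{\ut+1}$ holds, which follows from the translation-equivariance of $\vd_{\ut}$ under $\ut\mapsto\ut+1$ and the functoriality of $\pi_\sim$ as in Remark \ref{rem:Ractiononsb}; this is exactly how it is checked in Theorem \ref{thm:sb3}.(2).

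Third, to verify that the forgetful map is a \emph{homeomorphism} onto $\UUU_n^{>d}$ — not merely a continuous bijection — I would argue exactly as in the proof of Lemma \ref{lem:72}: show the fibers of $\Forg'$ over a fixed imaginary part $\vd_{\ut}$ are convex (this uses Lemma \ref{lem:Epolyconvex}, the convexity of the interlaced-polynomial cone), deduce via Proposition \ref{prop:convex} that two stability conditions in the family with the same imaginary part of the central charge are $\sim$-equivalent, and then invoke Proposition \ref{prop:localisom} together with the local-chart argument of Proposition \ref{prop:localisom}'s proof to upgrade the bijection to a local homeomorphism; injectivity of $\Forg'$ on $\Stab_H^{*>d}(X)$ forces it to be a global homeomorphism onto its image, and the image is computed to be $\UUU_n^{>d}$ by the same root-counting as in Lemma \ref{lem:71}. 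Openness of $\UUU_n^{>d}$ inside $\Hom(\Lambda_H,\C)$ and that it is exactly the set of central charges whose real and imaginary projectivized kernels are interlaced with separation $>d$ is a purely linear-algebra statement handled in Appendix \ref{sec:basicalgeb}.

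\textbf{Main obstacle.} The genuine difficulty — the reason this remains a conjecture for $n\geq 4$ — is the \emph{existence} of even one stability condition in $\Stab_H^{*>d}(X)$, equivalently the Bogomolov--Gieseker-type inequality of Theorem \ref{thm:intro}.(5): that the $H$-polarized character of every semistable object lies in the cone $\sum_i(-1)^ia_i\gamma_n(t_i)$ with all $a_i$ of one sign. For threefolds this is precisely \cite[Conjecture 4.1]{BMS:stabCY3s}; in higher dimension there is no proof, and the inductive/restriction strategy above does not produce it by itself — restriction (Proposition \ref{prop:reststab}) goes \emph{down} in dimension, so it can only propagate existence, not create it. Thus the honest scope of what can be proved here is the conditional statement: the $\Stab^d$ Conjecture for $(X,H)$ is equivalent to Conjecture \ref{conj:mainsb} (via Theorem \ref{thm:equivalentconjs}), and it holds unconditionally for $n\leq 2$, and for $n=3$ under \cite[Conjecture 4.1]{BMS:stabCY3s} (Theorem \ref{thm:sb3}); everything else in the statement — uniqueness, the $\lsm$-comparisons, the homeomorphism property — is then a consequence of the general structural results in Sections \ref{sec:nested}--\ref{sec:reslem}.
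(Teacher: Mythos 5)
You have correctly recognized that \ref{conj:stabd} is a conjecture, not a theorem: the paper does not prove it, it states it and then establishes (i) its equivalence with the $\sb^d$ Conjecture via Theorem \ref{thm:equivalentconjs}, (ii) the cases $n\leq 2$ unconditionally and $n=3$ assuming \cite[Conjecture 4.1]{BMS:stabCY3s} (Theorem \ref{thm:sb3}), and (iii) structural consequences (uniqueness, restriction, geometry of skyscraper sheaves) in Proposition \ref{prop:stabdprop} and Corollaries \ref{cor:stabdunique}--\ref{cor:restrictionall}. Your identification of the essential obstruction — a Bogomolov--Gieseker-type bound as in Theorem \ref{thm:intro}.(5)/Proposition \ref{prop:boundsforsbd}, which restriction can propagate downward but not produce — is exactly the paper's own framing.

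Two small cautions on your sketch of a construction route. First, the statement \enquote{Conjecture \ref{conj:intro} $\Leftrightarrow$ Conjecture \ref{conj:mainsb}} you cite is the $d=0$ specialization of Theorem \ref{thm:equivalentconjs}, which is stated and proved for all $d\geq 0$; the general $d$ is what one needs here. Second, the quadratic forms $\tilde\vq_{\us,\ut}$ of Proposition \ref{prop:quadonstab0}, which would be the natural candidates for the support property on the slice, are \emph{constructed assuming} the conjecture (they rely on Proposition \ref{prop:boundsforsbd}), so the plan \enquote{check the support property against $\Delta_H$-type forms using Lemma \ref{lem:B10} and \cite[Proposition A.5]{BMS:stabCY3s}} is circular as stated; Lemma \ref{lem:B10} is what drives Proposition \ref{prop:convex} and Proposition \ref{prop:lifttostab}, i.e.\ it lets one \emph{extend} and \emph{reconstruct}, not bootstrap existence. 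The paper itself acknowledges this in Example \ref{eg:beilinsonquiver}: taking the convex hull of Beilinson quiver stability conditions and applying Proposition \ref{prop:lifttostab} does not yet fill out $\Stab^{\mathrm{Geo}}(\bP^n)$ for $n\geq 3$, and closing this gap is explicitly deferred to future work. Your conclusion is therefore correct: the only proof obtainable from the paper's machinery is the conditional equivalence with the $\sb^d$ Conjecture plus the low-dimensional cases, and everything downstream (the homeomorphism onto $\UUU_n^{>d}$, the $\otimes\cO_X(H)$-invariance, the $\lsm$-comparisons) is a formal consequence once a single point of $\Stab_H^{*>d}(X)$ is produced.
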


\begin{Con}[$\sb^d$ Conjecture]\label{conj:sbd}    
    There exists a family of reduced stability conditions $\sb_H^{*>d}(X)$ with respect to the $H$-polarized lattice $\Lambda_H$ satisfying:
    \begin{enumerate}
        \item The forgetful map 
        \begin{align*}
            \Forg: \sb_H^{*>d}(X)\to \lbdd: \ts=(\cA,B)\mapsto B
        \end{align*}
        is a homeomorphism onto $\BBB_n^{>d}$. The map $\Forg:\coprod_{n\in\Z}\sb_H^{*>d}(X)[n]\to \pm\BBB_n^{>d}$ is a universal cover.
        
        For every $\vd_{\ut}\in \BBB_n^{>d}$, we may denote its preimage of $\Forg$ as $\ts_{\ut}=(\cA_{\ut},\vd_{\ut})$.
        \item For any $\ts_{\ut}\in \sb_H^{*>d}(X)$,
        the reduced stability condition $\ts_{\ut}\otimes \cO_X(H)$ is also in $\sb_H^{*>d}(X)$.
        
        \item Let $\ts_{\us},\ts_{\ut}\in\sb_H^{*>d}(X)$ satisfying $\us<\ut<\us[1]$ and $\sep(\ell(\us,\ut))>d$, then 
        \begin{align*}
            \ts_{\us}\lsm\ts_{\ut}\lsm\ts_{\us}[1].
        \end{align*}
    \end{enumerate}
\end{Con}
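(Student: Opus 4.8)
The plan is to derive Conjecture \ref{conj:sbd} from Conjecture \ref{conj:stabd} for an arbitrary polarized smooth variety, by setting $\sb_H^{*>d}(X)\coloneq\pi_\sim\big(\Stab_H^{*>d}(X)\big)$ and verifying (a)--(c); this is the honest content, since $\Stab_H^{*>d}(X)$ is only known to exist for $n\le 3$ (for $n=1,2$ with any $d\ge0$, and for $n=3$ under \cite[Conjecture 4.1]{BMS:stabCY3s}), where in fact properties (a), (b), (c) follow from Theorem \ref{thm:sb3}.{(1)}, {(2)} and {(5)}.

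First I would prove (a). Since $\Forg'$ is a homeomorphism of $\Stab_H^{*>d}(X)$ onto $\UUU_n^{>d}$, for each $B\in\lbdd$ the fiber $\Forg_{\Im}^{-1}(B)\cap\Stab_H^{*>d}(X)$ is carried by $\Forg'$ onto $\pi_{\Im}^{-1}(B)\cap\UUU_n^{>d}$; the interlaced-polynomial lemmas of Appendix \ref{sec:basicalgeb} (Lemma \ref{lem:Epoly}, Lemma \ref{lem:Epolyconvex}) show that $\pi_{\Im}(\UUU_n^{>d})=\BBB_n^{>d}$ and that each such fiber is a convex subset of $\lbdd$. Convexity, together with Proposition \ref{prop:convex} (and Lemmas \ref{lem:nested}, \ref{lem:sameImimpliesheart} to see the common heart along a segment), forces any two stability conditions in one fiber to be $\sim$-equivalent, so $\Forg$ is injective on $\sb_H^{*>d}(X)$ with image $\BBB_n^{>d}$; Proposition \ref{prop:localisom} upgrades this to a homeomorphism, and the universal-cover claim for $\coprod_m\sb_H^{*>d}(X)[m]\to\pm\BBB_n^{>d}$ follows because, after quotienting by the scaling $\R$-action, the homological-shift $\Z$-action is free and transitive on the fibers of $\pm\BBB_n^{>d}\to\BBB_n^{>d}$. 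We then name the unique $\Forg$-preimage of $\vd_{\ut}$ by $\ts_{\ut}=(\cA_{\ut},\vd_{\ut})$. Property (b) is formal: by Remark \ref{rem:Ractiononsb} the $\otimes\cO_X(H)$-action on $\sb(X)$ descends from $\Stab(X)$, so $\ts_{\ut}\otimes\cO_X(H)=\pi_\sim(\sigma\otimes\cO_X(H))\in\sb_H^{*>d}(X)$, and since $\vd_{\ut}\otimes\cO_X(H)=\vd_{\ut+1}$, the injectivity from (a) gives $\ts_{\ut}\otimes\cO_X(H)=\ts_{\ut+1}$.

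The substantive point is (c). Given $\us<\ut<\us[1]$ with $\sep(\ell(\us,\ut))>d$, the character $-\vd_{\ut}+i\vd_{\us}$ satisfies exactly the defining inequalities of $\UUU_n^{>d}$ (compare Lemma \ref{lem:72}), so Conjecture \ref{conj:stabd}.(1) provides $\sigma\in\Stab_H^{*>d}(X)$ with $Z_\sigma=-\vd_{\ut}+i\vd_{\us}$. Then $Z_{\sigma[1/2]}=\vd_{\us}+i\vd_{\ut}$ again lies in $\UUU_n^{>d}$, hence $\sigma[1/2]\in\Stab_H^{*>d}(X)$; comparing imaginary parts and using the injectivity from (a) yields $\pi_\sim(\sigma)=\ts_{\us}$, $\pi_\sim(\sigma[1/2])=\ts_{\ut}$, and (by $\pi_\sim$-equivariance of the shift) $\pi_\sim(\sigma[1])=\ts_{\us}[1]$. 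Since $\sigma[1/2]=\sigma\cdot(i/2)$ and $\sigma[1]=\sigma[1/2]\cdot(i/2)$, with $g(0)=1/2>0$ in both cases, Lemma \ref{lem:partialorder}.{(2)} gives $\ts_{\us}\lsm\ts_{\ut}$ and $\ts_{\ut}\lsm\ts_{\us}[1]$, which is (c). The place I expect the real work to sit is (a) — the interlaced-polynomial combinatorics identifying $\pi_{\Im}(\UUU_n^{>d})$ and proving convexity of the fibers uniformly in $d$ — and, more fundamentally, the fact that for $n\ge4$ Conjecture \ref{conj:stabd} is itself open, so the argument above only \emph{reduces} Conjecture \ref{conj:sbd} to it rather than establishing it outright; the equivalence in the reverse direction (building $\Stab_H^{*>d}(X)$ from $\sb_H^{*>d}(X)$ via the reconstruction $\Stab\simeq\ta\sb$) is what makes the two conjectures genuinely interchangeable.
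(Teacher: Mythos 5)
Since the statement you were asked to prove is a conjecture, the paper itself does not establish it unconditionally; what it proves is the equivalence with Conjecture \ref{conj:stabd} (Theorem \ref{thm:equivalentconjs}), and your proposal is exactly the forward implication of that theorem, executed with the same strategy the paper uses: set $\sb_H^{*>d}(X)\coloneq\pi_\sim(\Stab_H^{*>d}(X))$, deduce (a) from convexity of the fibers $\{B: B+ic\vd_{\ut}\in\UUU_n^{>d}\}$ together with $\pi_{\Im}(\UUU_n^{>d})=\BBB_n^{>d}$, get (b) formally from equivariance of $\pi_\sim$, and get (c) from Lemma \ref{lem:partialorder}.{(2)}. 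Two small inaccuracies in (a): the identification $\pi_{\Im}(\UUU_n^{>d})=\BBB_n^{>d}$ for general $d$ is Lemma \ref{lem:Epoly7} (Lemmas \ref{lem:Epoly} and \ref{lem:Epolyconvex} alone only cover $d=0$ and convexity), and your justification of the universal-cover claim (``the shift action is free and transitive on the fibers of $\pm\BBB_n^{>d}\to\BBB_n^{>d}$'') does not really parse; the paper instead glues two consecutive shifted copies, shows $\Forg$ is a homeomorphism onto $\pm\BBB_n^{>d}$, and invokes $\pi_1(\pm\BBB_n^{>d})=\Z$.

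The genuine gap is in (c), at the step ``$Z_{\sigma[1/2]}=\vd_{\us}+i\vd_{\ut}$ lies in $\UUU_n^{>d}$, hence $\sigma[1/2]\in\Stab_H^{*>d}(X)$.'' Conjecture \ref{conj:stabd} only asserts that $\Forg$ restricts to a homeomorphism from the family $\Stab_H^{*>d}(X)$ onto $\UUU_n^{>d}$; it does not say that every stability condition whose central charge lies in $\UUU_n^{>d}$ belongs to the family. A priori $\sigma[1/2]$ could be a different stability condition (with a different slicing) from the unique element $\sigma_{\us,\ut}\in\Stab_H^{*>d}(X)$ having that central charge, in which case $\pi_\sim(\sigma[1/2])$ need not equal $\ts_{\ut}$ and the relation $\ts_{\us}\lsm\ts_{\ut}$ would not follow. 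The paper closes exactly this hole with a path-lifting argument: the hypothesis $\sep(\ell(\us,\ut))>d$ guarantees that $Z_{\sigma[\theta]}\in\UUU_n^{>d}$ for \emph{all} $\theta\in[0,\tfrac12]$, and since $\Forg$ is a local homeomorphism on the Hausdorff space $\Stab_H(X)$, the lift of this path of central charges starting at $\sigma$ is unique; it therefore coincides both with $\{\sigma[\theta]\}$ and with the lift inside $\Stab_H^{*>d}(X)$ furnished by the conjectural homeomorphism, giving $\sigma[\tfrac12]=\sigma_{\us,\ut}\in\Stab_H^{*>d}(X)$ (compare also Lemma \ref{lem:72} in the threefold case). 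Only then does your application of Lemma \ref{lem:partialorder}.{(2)} yield $\ts_{\us}\lsm\ts_{\ut}\lsm\ts_{\us}[1]$. A telltale sign of the omission is that your argument uses $\sep(\ell(\us,\ut))>d$ only at the two endpoints of the rotation, whereas the hypothesis is needed precisely to keep the entire pencil inside $\UUU_n^{>d}$ along the way.
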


\begin{Rem}\label{rem:onstabmodelconj}
We make some comments on these two conjectures.
\begin{enumerate}[(1)]
    \item 
    By definition, when $d>d'\geq0$, it is clear that $\BBB_n^{>d}\subset \BBB_n^{>d'}$ and $\UUU_n^{>d}\subset \UUU^{>d'}$. Note that for every $\sigma\in \Stab^{>d'}_H(X)$ with central charge in $\UUU_n^{>d}$, the central charge of $\sigma\otimes \cO_X(H)$  is also in $\UUU_n^{>d}$.     So for every $(X,H)$, $\Stab^{d'}$ (resp. $\sb^{d'}$) Conjecture implies $\Stab^{d}$ (resp. $\sb^d$) Conjecture. 
    
    The strongest form is when $d=0$.  We will omit the $>0$ to simplify the notion. Also, by Lemma \ref{lem:Epoly}, when $d=0$, the condition $\sep(\ell(\us,\ut))>0$ in $\sb^0_H$ Conjecture.(c)  can be dropped.

 \item   We have seen in Section \ref{eg:cs} that the $\Stab_H^0$ Conjecture is known to be true when $X$ is a curve or surface. When $X$ is a threefold, by Theorem \ref{thm:sb3}, \cite[Conjecture 4.1]{BMS:stabCY3s} implies the $\Stab_H^0$ Conjecture. 

  \item  The $\Stab^{0}_H$  Conjecture does not hold for all $(X,H)$. In the threefold case, one can find counter-examples such as the blown-up at a point on $\bP^3$. More discussions are referred to \cite{Benjamin:counterexample,BMSZ:Fano3stab}. However, we expect for every $(X,H)$, the $\Stab^{d}_H$  Conjecture holds when $d$ is large enough. We also expect $\Stab^{0}_H$  Conjecture holds for  many important examples such as $\bP^n$ and polarized abelian varieties.
\end{enumerate}
\end{Rem}

\subsection{Properties of $\Stab^*_H$ and $\sb^*_H$}
\begin{Thm}\label{thm:equivalentconjs}
    $\Stab^d$ Conjecture \ref{conj:stabd} holds for $(X,H)$ if and only if $\sb^d$ Conjecture \ref{conj:sbd} holds for $(X,H)$.
\end{Thm}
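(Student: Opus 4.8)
\textbf{Proof proposal for Theorem \ref{thm:equivalentconjs}.}

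The plan is to prove the two implications separately, using the reconstruction machinery from Section \ref{sec:order} as the bridge.

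\emph{From $\sb^d$ to $\Stab^d$.} Assume Conjecture \ref{conj:sbd} holds, giving a family $\sb_H^{*>d}(X)$ with the three stated properties. The central construction is to produce stability conditions from reduced ones using Proposition \ref{prop:lifttostab} and Corollary \ref{cor:recon}. Concretely, for a pair $\us \link \ut$ with the appropriate interlacing ($\ut<\us<\ut[1]$ or $\us<\ut<\us[1]$) and $\sep(\ell(\us,\ut))>d$, I would set $\sigma \coloneq (\cA_{\ut}, c_1\vd_{\us} + i c_2\vd_{\ut})$ with the sign of $c_1$ dictated as in \eqref{eq812}, and show that this is indeed a stability condition, i.e.\ that $c_1\vd_{\us} \in \ta(\ts_{\ut})$ (up to the $c_2$ rescaling, which is absorbed by the $\glt$-action). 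The key input is that, for every intermediate $\vd_{\ut'}$ in the pencil $\ell(\us,\ut)$, one has $\ut' \link \ut$ or rather $\ut < \ut' < \ut[1]$ along the pencil, so Conjecture \ref{conj:sbd}.(c) gives $\ts_{\ut} \lsm \ts_{\ut'} \lsm \ts_{\ut}[1]$, and one can feed this family of comparisons into the hypothesis of Proposition \ref{prop:lifttostab} (with $h_0 = \pm\vd_{\us}$) to conclude $\pm\vd_{\us} \in \ta(\ts_{\ut})$. This requires an elementary but careful lemma about the geometry of pencils $\ell(\us,\ut)$ in $\pm\BBB_n$ — that moving along the pencil from $\vd_{\ut}$ toward $\vd_{\us}$ corresponds to applying a $\tilde g$ with $g(0)$ of a definite sign, and that the interlacing/$\sep$ conditions are preserved — which should follow from the interlaced-polynomial results in Appendix \ref{sec:basicalgeb} (Lemma \ref{lem:Epoly} and its companions). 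Having built the map $\UUU_n^{>d} \to \Stab(X)$, I would check it is a homeomorphism onto its image by comparing with the local chart of $\sb_H^{*>d}(X)$ via the commutative diagram \eqref{diag:1} and Corollary \ref{cor:recon}: the total space is $\ta\sb_\Lambda(\cT)$ restricted to $\sb_H^{*>d}(X)$, and one verifies this restricted tangent-type space is exactly $\UUU_n^{>d}$. Property (2), $\otimes\cO_X(H)$-invariance, is then immediate from Conjecture \ref{conj:sbd}.(2) together with the compatibility $\pi_\sim(\sigma \otimes \cO_X(H)) = \pi_\sim(\sigma) \otimes \cO_X(H)$ and the fact that $\ta$ is equivariant.

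\emph{From $\Stab^d$ to $\sb^d$.} Assume Conjecture \ref{conj:stabd}, giving $\Stab_H^{*>d}(X)$ with $\Forg$ a homeomorphism onto $\UUU_n^{>d}$. Set $\sb_H^{*>d}(X) \coloneq \pi_\sim(\Stab_H^{*>d}(X))$. The fiber of $\Forg_Z$ over a fixed imaginary part $\vd_{\ut}$ inside $\Stab_H^{*>d}(X)$ is $\{c_1\vd_{\us} + i\vd_{\ut}\}$ ranging over admissible $\us$ with fixed sign of $c_1$; by Lemma \ref{lem:Epolyconvex} (or the analogue of Lemma \ref{lem:72}) this fiber is convex in $\lbdd$, hence path-connected, so two stability conditions in $\Stab_H^{*>d}(X)$ with the same $\Im Z$ are $\sim$-equivalent. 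Therefore $\pi_\sim$ restricted to $\Stab_H^{*>d}(X)$ has connected fibers and $\Forg: \sb_H^{*>d}(X) \to \lbdd$ is injective with image $\BBB_n^{>d}$ — giving property (1); the universal-cover statement for $\coprod_n \sb_H^{*>d}(X)[n] \to \pm\BBB_n^{>d}$ follows since the shift action is free and properly discontinuous and $\pm\BBB_n^{>d}$ is the union of the $\Forg$-images of the shifts. Property (2) transfers directly from Conjecture \ref{conj:stabd}.(2). The only genuinely new content is property (c): given $\ts_{\us}, \ts_{\ut}$ with $\us < \ut < \us[1]$ and $\sep(\ell(\us,\ut)) > d$, I must show $\ts_{\us} \lsm \ts_{\ut} \lsm \ts_{\us}[1]$. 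For this I would pick the stability condition $\sigma = (\cA_{\us}, c_1\vd_{\ut} + i\vd_{\us}) \in \Stab_H^{*>d}(X)$ (legitimate since the central charge lies in $\UUU_n^{>d}$ by the interlacing $\us < \ut < \us[1]$), observe $\pi_\sim(\sigma) = \ts_{\us}$ and $\pi_\sim(\sigma[\tfrac12]) = \ts_{\ut}$ (up to a scalar and shift, after identifying the kernel of $c_1\vd_{\ut}+i\vd_{\us}$ rotated by a half-shift with $\Ker\vd_{\ut}$), and then invoke Lemma \ref{lem:partialorder}.(2) to get $\ts_{\us} \lsm \ts_{\ut}$; the relation $\ts_{\ut} \lsm \ts_{\us}[1]$ comes from the same $\sigma$ by going the other direction, using that $\pi_\sim(\sigma[\theta])$ for $\theta$ slightly larger than $\tfrac12$ still lands in $\sb_H^{*>d}(X)$ up to shift and that $\ts_{\ut} = \pi_\sim(\sigma[\tfrac12]) \lsm \pi_\sim(\sigma[1]) = \ts_{\us}[1]$ by Lemma \ref{lem:partialorder}.(2) again with $g(0)=\tfrac12>0$.

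\emph{Main obstacle.} The hard part will be the first implication — specifically, verifying that the pencil $\ell(\us,\ut)$ genuinely satisfies the hypotheses of Proposition \ref{prop:lifttostab}, which requires showing that \emph{every} intermediate member of the pencil gives a valid comparison $\lsm$ in both directions, and that the $\sep > d$ condition propagates along the whole pencil rather than just at the endpoints. This is where the combinatorics of interlaced polynomials in Appendix \ref{sec:basicalgeb} must be marshalled carefully; one must also handle the $t_n = +\infty$ degenerations, where $\vd_{\ut}$ is defined inductively and the "line at infinity" in $\bP(\Lambda_\R)$ plays a special role, as a separate (easier) case. A secondary technical point is ensuring the homeomorphism in (1) is global, not just a local homeomorphism — this needs the injectivity of $\Forg$ on $\sb_H^{*>d}(X)$, which I would extract from the convexity of fibers exactly as in Lemma \ref{lem:72}.
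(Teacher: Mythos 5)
Your proposal follows the paper's actual proof closely in both directions, and the overall strategy is correct: Proposition~\ref{prop:lifttostab} (using that the $\sep$-conditions are open and that Conjecture~\ref{conj:sbd}.(c) supplies the required comparisons $\ts+ah\lsm\ts+bh$ along every pencil near $\ell(\us,\ut)$) for $\sb^d\Rightarrow\Stab^d$, and convexity of the $\Forg$-fibers (as in Lemma~\ref{lem:Epolyconvex}/Lemma~\ref{lem:72}) plus rotating by a half-shift for $\Stab^d\Rightarrow\sb^d$. There is, however, one gap in the $\Stab^d\Rightarrow\sb^d$ verification of property~(c) that ``identifying kernels'' does not close. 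To conclude $\pi_\sim(\sigma[\tfrac12])=\ts_{\ut}$ from $\sigma=(\cA_{\us},c_1\vd_{\ut}+i\vd_{\us})$, you need to show that the stability condition $\sigma[\tfrac12]$ itself lies in $\Stab_H^{*>d}(X)$ and therefore coincides with the element $\sigma_{\us,\ut}\in\Stab_H^{*>d}(X)$ whose central charge is $\vd_{\us}+i\vd_{\ut}$; matching central charges (or kernels) alone does not pin down the slicing. The way to do this is to note that the curve $\theta\mapsto Z_{\sigma[\theta]}$, $\theta\in[0,\tfrac12]$, lies entirely in $\UUU_n^{>d}$ (this is where the $\sep(\ell(\us,\ut))>d$ hypothesis is used) and hence lifts uniquely through the homeomorphism $\Forg:\Stab_H^{*>d}(X)\to\UUU_n^{>d}$ to a path in $\Stab_H^{*>d}(X)$ starting at $\sigma$; by uniqueness of lifts (as $\Forg$ is also locally homeomorphic on all of $\Stab_H(X)$ and the latter is Hausdorff), this lift must be $\theta\mapsto\sigma[\theta]$, giving $\sigma[\tfrac12]\in\Stab_H^{*>d}(X)$ as required. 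Two minor additional remarks: the sign of $c_1$ in $\sigma=(\cA_{\us},c_1\vd_{\ut}+i\vd_{\us})$ has to be negative for the central charge to lie in $\UUU_n^{>d}$ under the convention of \eqref{eq812}; and your argument for the universal-cover assertion in property~(a), invoking ``free and properly discontinuous'' action, should instead go through the observation that $\Forg:\sb_H^{*>d}(X)\coprod\sb_H^{*>d}(X)[1]\to\pm\BBB_n^{>d}$ is a homeomorphism (by the gluing along $t_n=+\infty$ and the preceding bijectivity argument) combined with $\pi_1(\pm\BBB_n^{>d})\cong\Z$.
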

\begin{proof}
    `$\implies$': Let $\Stab_H^{*>d}(X)$ be a family of stability conditions as that in Conjecture \ref{conj:stabd}. We claim that the image family $\pi_\sim(\Stab_H^{*>d})$ satisfies all properties in Conjecture \ref{conj:sbd}.\\

    { (a)} For every $c_2\vd_{\ut}$, by definition, we have the identification
    \begin{align*}
        \{\Re Z_\sigma \mid \sigma\in \Stab^{*>d}_H(X),\Im Z_\sigma=c_2\vd_{\ut} \}=\{B\mid B+ic_2\vd_{\ut}\in \UUU^{>d}_n\}.
    \end{align*}
 By Lemma \ref{lem:Epolyconvex}, this is a convex subset in $\lbdd$. By Proposition \ref{prop:localisom}, all stability conditions $\sigma=(\cA,Z)\in\Stab_H^{*>d}(X)$ with $\Im Z=c_2\vd_{\ut}$ are with the same $\pi_\sim(\sigma)$. We have the commutative diagram:
      \begin{equation}
        \begin{tikzcd}
            \Stab_H^{*>d}(X)\arrow{r}{\pi_\sim}\ar[d,"\cong" description, "\Forg'"{xshift=3pt}]& \sb_H(X) \ar{d}{\Forg}\\
           \begin{array}{cc}
                 \UUU_n^{>d} \\
               c_1\vd_{\us}+ic_2\vd_{\ut}
           \end{array} \ar{r}{\pi_{\Im}}& \begin{array}{cc}
              \lbdd \\
              c_2\vd_{\ut}
           \end{array}.
        \end{tikzcd}
    \end{equation}
    Moreover, the map $\Forg$ is a homeomorphism from $\pi_\sim(\Stab_H^{*>d}(X))$ onto $\pi_{\Im}(\UUU^{>d}_n)$.
    By Lemma \ref{lem:Epoly7}, we have $\pi_{\Im}(\UUU^{>d}_n)=\BBB^{>d}_n$. So $\pi_\sim(\Stab_H^{*>d}(X))$ satisfies the first part of  $\sb^d$ Conjecture \ref{conj:sbd}.(a).
    \\

   { (b)} For every $\ts\in\pi_\sim(\Stab_H^{*>d}(X))$, we have $\ts\otimes \cO_X(H)=\pi_\sim(\sigma)\otimes \cO_X(H)=\pi_\sim(\sigma\otimes \cO_X(H))\in\pi_\sim(\Stab_H^{*>d}(X))$. The property follows.\\

    { (c)} By assumption, there are $\sigma_{\us,\ut}=(\cA_{\ut},\vd_{\us}+i\vd_{\ut})$ and $\sigma_{-\ut,\us}=(\cA_{\us},-\vd_{\ut}+i\vd_{\us})$  in $\Stab_H^{*>d}(X)$.  Moreover, by the assumption that $\sep(\ell(\us,\ut))>d$, for every $\theta\in[0,\tfrac{1}{2}]$, the central charge of $\sigma_{-\ut,\us}[\theta]$ is in $\UUU_H^{>d}$. 
    
    As the forgetful map $\Forg:\Stab_H(X)\to \Hom(\Lambda_H,\C)$ is locally homeomorphic and $\Stab_H(X)$ is Hausdorff, the curve of central charges $\{Z_{\sigma_{-\ut,\us}[\theta]}:\theta\in[0,\tfrac{1}{2}]\}\subset \UUU_n^{>d}$ uniquely lifts to a curve of stability conditions $\{\sigma_\theta:\theta\in[0,\tfrac{1}{2}]\}$ in $\Stab_H(X)$ with $\sigma_0=\sigma_{-\ut,\us}$. Note that this curve is just $\{\sigma_{-\ut,\us}[\theta]:\theta\in[0,\tfrac{1}{2}]\}$, so it must be contained in $\Stab_H^{*>d}(X)$. In particular, by comparing the central charge,  we have
    \begin{align*}
        \sigma_{{\us},{\ut}}=\sigma_{-{\ut},{\us}}[\tfrac{1}{2}].
    \end{align*}
   According to the previous construction, in $\pi_\sim(\Stab_H^{*>d}(X))$, we have $\ts_{\ut}=\pi_\sim(\sigma_{\us,\ut})$ and $\ts_{\us}=\pi_\sim(\sigma_{-\ut,\us})$. By Lemma \ref{lem:partialorder}, we have
    \begin{align*}
       \ts_{\us}= \pi_\sim(\sigma_{-\ut,\us})\lsm \pi_\sim(\sigma_{-\ut,\us}[\tfrac{1}{2}])= \pi_\sim(\sigma_{\us,\ut})=\ts_{\ut}\lsm\pi_\sim(\sigma_{-\ut,\us}[1])=\ts_{\us}[1].
    \end{align*}
    So the family $\pi_\sim(\Stab_H^{*>d}(X))$ satisfies property in the $\sb^d$ Conjecture \ref{conj:sbd}.(c).\\

    Finally, note that $\sigma_{\us,\ut}[\tfrac{1}{2}]=\sigma_{-\ut,\us}[1]$. So the map $\Forg^{\pm}:\Stab_H^{*>d}(X)\coprod\Stab_H^{*>d}(X)[1]\to \pm \UUU_n^{>d}$ is by `gluing' along the locus on $\Stab_H^{*>d}(X)$ in the form of $(\cA_{\ut},c_1\vd_{\us}+ic_2\vd_{\ut})$ with $t_n=+\infty$ to the boundary of $\Stab_H^{*>d}(X)[1]$ in the form of $(\cA_{\ut'},c_1\vd_{\us'}+ic_2\vd_{\ut'})$ with $t_1\to -\infty$. In particular, the map $\Forg^{\pm}$ is a homeomorphism as well.  It follows by Proposition \ref{prop:localisom} that the induced map $\Forg:\pi_\sim(\Stab_H^{*>d}(X))\coprod\pi_\sim(\Stab_H^{*>d}(X))[1]\to \pm\BBB_n^{>d}$ is also a homeomorphism. As $\pi_1(\pm\BBB_n^{>d})=\Z$, the map $\Forg:\coprod_{n\in\Z}\pi_\sim(\Stab_H^{*>d}(X))[n]\to \pm\BBB_n^{>d}$ is a universal cover.

    To sum up, the family $\pi_\sim(\Stab_H^{*>d}(X))$ satisfies all properties for $\sb_H^{*>d}(X)$ as that in the $\sb^d$ Conjecture \ref{conj:sbd}.\\

    `$\impliedby$': We first apply Proposition \ref{prop:lifttostab} to construct a family of stability conditions with central charges in $\UUU_n^{>d}$.  We will use  $\sb^d$ Conjecture \ref{conj:sbd}.(c) to check the assumption on the tangent direction at each point in $\sb^{*>d}_H(X)$.\\
    
    For every $c_1\vd_{\us}+ic_2\vd_{\ut}\in\UUU_n^{>d}$, by rescaling the coefficient, we may assume $c_2=1$. As $\sep(\ell(\us,\ut))$ is a continuous function on $\UUU_n$, all conditions on the parameters are open. In particular, there exists an open neighborhood $W$ of $c_1\vd_{\us}$ in $\lbdd$ such that $h+i\vd_{\ut}\in\UUU_n^{>d}$ for every $h=c'\vd_{\us'}\in W$.

   When $t_n\neq +\infty$, by  $\sb^d$ Conjecture \ref{conj:sbd}.(a), for every $|b|$ sufficiently small, the reduced stability condition $\ts_{\ut}+bh$ has reduced central charge $\vd_{\ut}+bc'\vd_{\us'}=c_b\vd_{\us_b}$ for some $c_b>0$ and $\us_b\in\sbr_n$. The reduced stability condition is given as $\ts_{\us_b}\cdot c'_b\in \sb_H^{*>d}(X)$.  All such $\us_b$ are on the  line $\ell(\ut,\us')$.
    
    By Lemma \ref{lem:Epoly}, for $b<a$ with $|a|$ sufficiently small, we have $\us_a<\us_b<\us_a[1]$. As the line $\ell(\us_a,\us_b)=\ell(\ut,\us')$, we have $\sep(\ell(\us_a,\us_b))>d$. By $\sb^d$ Conjecture \ref{conj:sbd}.(c), we have 
    \begin{align}\label{eq819}
        \ts_{\ut}+ah=\ts_{\us_a}\cdot c'_a\lsm \ts_{\us_b}\cdot c'_b=\ts_{\ut}+bh.
    \end{align}

    When $t_n=+\infty$, the only difference is that when $b<0$, the reduced central charge $\vd_{\ut}+bc'\vd_{\us'}=c_b\vd_{\us_b}$ is with coefficient $c_b<0$, and is in $-\BBB_n^{>d}$. By the universal cover assumption in Conjecture \ref{conj:sbd}.{(a)}, locally, the reduced stability condition with the reduced central charge $\vd_{\ut}+bc'\vd_{\us'}$ is $(\ts_{\us_b}\cdot c_b')[1]$, which is in $\sb_H^*(X)[1]$.

    For $a$ and $b$ with the same sign, the formula \eqref{eq819} holds in exactly the same way. For $b<0<a$, similarly, we have
     \begin{align}\label{eq820}
        \ts_{\ut}+ah=\ts_{\us_a}\cdot c_a'\lsm (\vd_{\us_b}\cdot c'_b)[1]=\ts_{\ut}+bh.
    \end{align}
    
 { (a)} Now by Proposition \ref{prop:lifttostab}, we have $c_1\vd_{\us}\in\ta( \vd_{\ut})$. In other words, the datum $(\cA_{\ut},c_1\vd_{\us}+ic_2 \vd_{\ut})$ is a stability condition on $X$. Denote the set of such stability conditions as  $T(\sb^{*>d}_H(X))$.  Note that this holds for every $c_1\vd_{\us}+ic_2\vd_{\ut}\in \UUU_n^{>d}$. The forgetful map $\Forg:T(\sb^{*>d}_H(X))\to \UUU_n^{>d}$ is set-theoretically one-to-one.
  
   For every open neighborhood $W$ of $c_1\vd_{\us}$, there is an open neighborhood $W'$ of $c_2\vd_{\ut}$ such that $(\cA_{\ut'},f+ig)$ is a stability condition for every $f\in W$, $g=c'\vd_{\ut'}\in W'$. By Proposition \ref{prop:localisom}, the forgetful map is a homeomorphism.

  { (b)}  By  $\sb^d$ Conjecture \ref{conj:sbd}.{(a)} and {(b)}, we may compare the reduced central charge after taking tensor product with $\cO_X(H)$. It follows that the heart 
   \begin{align*}
       \cA_{\ut}\otimes \cO_X(H)=\cA_{\ut+1}.
   \end{align*} 

   Therefore, we have $\sigma\otimes \cO_X(H)=(\cA_{\ut+1},c_1\vd_{\us+1}+ic_2\vd_{\ut+1})$ and it is in $T(\sb_H^{*>d}(X))$.

   To sum up the family $T(\sb_H^{*>d}(X))$ of stability conditions satisfies both properties assumed in the $\Stab^d$ Conjecture \ref{conj:stabd}. The statement holds.
\end{proof}

\begin{Prop}\label{prop:stabdprop}
   Let $(X,H)$ be a smooth polarized variety satisfying $\Stab^d$ Conjecture \ref{conj:stabd}. Then the following properties hold.
   \begin{enumerate}[(1)]
      \item Let $\ts_{\ut}\in \sb_H^{*>d}(X)$ with $t_n\neq +\infty$, then for every $m\in\R_{>0}$, we have $\ts_{\ut}\lsm \ts_{\ut+m}$. In particular, we have
      \begin{align}\label{eq8221}
          \ts_{\ut}\lsm \ts_{\ut}\otimes \cO_X(H).
      \end{align}
      \item Let $m\in\R_{>0}$, assume $\sep(\ut)>m$ and $\sep(\ell(\ut,\ut+m))>d$, (here when $t_n=+\infty$, we set $\ut'=(t_1,t_2,\dots,t_{n-1})\in\sbr_{n-1}$ and assume $\sep(\ell(\ut',\ut'+m))>d$, same convention applies to {(4)}), then 
      \begin{align}\label{eq823}
          \ts_{\ut+m}\lsm \ts_{\ut}[1].
      \end{align}
      In particular, if $\sep(\ut)>\max\{m+d,2d\}$, then \eqref{eq823} holds.
       \item Let $\sigma\in\Stab_H^{*>d}(X)$, then $\sigma\lsmeq\sigma\otimes \cO_X(H)$.
       
       Let $E$ and $F$ be  $\sigma$-stable objects  not with character in the form of $(0,\dots,0,*)$. Assume that   $\phi_\sigma(E)=\phi_\sigma(F)$, then $\Hom(E\otimes \cO(mH),F)=0$ for every $m\in\Z_{\geq1}$.

       \item Let $\sigma=(\cA_{\ut},c_1\vd_{\us}+ic_2\vd_{\ut})\in\Stab_H^{*>d}(X)$ and $m\in\Z_{\geq1}$. Assume $\sep(\ell(\us,\ut))>m$ and $\sep(\ell(\ut',\ut'+m))>d$ for every $\vd_{\ut'}\in\ell(\us,\ut)$, then 
       \begin{align}\label{eq824}
           \sigma\otimes \cO_X(mH)\lsm \sigma[1].
       \end{align}
       In particular, if $\sep(\ell(\us,\ut))>\max\{m+d,2d\}$, then \eqref{eq824} holds.
   \end{enumerate}
\end{Prop}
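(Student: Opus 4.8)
The plan is to derive all four statements from the single comparison axiom (c) of Conjecture~\ref{conj:sbd} (available via Theorem~\ref{thm:equivalentconjs}) together with the formal calculus of $\lsm$ from Section~\ref{sec:order}. Throughout I use that, by the reverse implication in the proof of Theorem~\ref{thm:equivalentconjs}, $\cA_{\ut}\otimes\cO_X(H)=\cA_{\ut+1}$, so $\ts_{\ut}\otimes\cO_X(mH)=\ts_{\ut+m}$ for $m\in\Z$; that the relation $\lsm$ on $\sb_\Lambda(\cT)$ depends only on the heart and the slice at $1$, hence is unaffected by the scaling $\R$-action and is compatible with shifts; and that $\otimes\cO_X(-)$ commutes with the shift functor. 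Recall also that a reduced stability condition $\ts_{\ut}$ lies in $\sb_H^{*>d}(X)$ precisely when $\sep(\ut)>d$.

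\textbf{Parts (1) and (2).} For (2): $\sep(\ut)>m$ gives $\ut<\ut+m<\ut[1]$, and since $\vd_{\ut}$ lies on the pencil $\ell(\ut,\ut+m)$ we get $\sep(\ut)\ge\sep(\ell(\ut,\ut+m))>d$, so both $\ts_{\ut},\ts_{\ut+m}\in\sb_H^{*>d}(X)$; applying axiom (c) of Conjecture~\ref{conj:sbd} to the pair $(\ut,\ut+m)$ yields $\ts_{\ut}\lsm\ts_{\ut+m}\lsm\ts_{\ut}[1]$, whose second relation is \eqref{eq823} (when $t_n=+\infty$ one substitutes $\ut'=(t_1,\dots,t_{n-1})\in\sbr_{n-1}$ throughout). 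For (1): when $0<m<\sep(\ut)$ and $\sep(\ell(\ut,\ut+m))>d$ this is axiom (c) for $(\ut,\ut+m)$ directly; for a general $m>0$ with $t_n\neq+\infty$ I would interpolate by a finite chain $\ts_{\ut}=\ts_{\ww^{(0)}}\lsm\ts_{\ww^{(1)}}\lsm\cdots\lsm\ts_{\ww^{(k)}}=\ts_{\ut+m}$ with consecutive pairs interlacing and their pencils of separation $>d$, and conclude by transitivity of $\lsm$ (Lemma~\ref{lem:partialorder}); the existence of such a chain (and, for $d>0$, a sufficiently careful choice of the $\ww^{(i)}$) is a matter of the interlacing estimates of Appendix~\ref{sec:basicalgeb}. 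Taking $m=1$ gives \eqref{eq8221}. The ``in particular'' clauses of (2) and (4) come from the lower bound $\sep(\ell(\ut,\ut+m))\ge\min\{\sep(\ut)-m,\tfrac12\sep(\ut)\}$ supplied by Appendix~\ref{sec:basicalgeb}.

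\textbf{Part (3).} Write $\sigma=(\cA_{\ut},c_1\vd_{\us}+ic_2\vd_{\ut})\in\Stab_H^{*>d}(X)$. For $\theta\in(0,1]$ the imaginary part $\Im(e^{-i\pi\theta}Z_\sigma)=c_2\cos(\pi\theta)\vd_{\ut}-c_1\sin(\pi\theta)\vd_{\us}$ traces the whole projective pencil spanned by $\vd_{\us},\vd_{\ut}$, which lies in $\pm\BBB_n$ by Lemma~\ref{lem:Epoly}; hence $\pi_\sim(\sigma[\theta])$ equals some $\ts_{\ut(\theta)}$ up to scaling and an even shift, and $t_n(\theta)=+\infty$ for at most one value of $\theta$ (the pencil meets the hyperplane $\{f:f(0,\dots,0,1)=0\}$ in a single point, the contained case being the degenerate ``dimension $n-1$'' situation, treated separately). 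For every other $\theta$, part (1) gives $\pi_\sim(\sigma[\theta])\lsm\pi_\sim(\sigma[\theta])\otimes\cO_X(H)=\pi_\sim\big((\sigma\otimes\cO_X(H))[\theta]\big)$, so Lemma~\ref{lem:lsmeqlem} yields $\sigma\lsmeq\sigma\otimes\cO_X(H)$. For the vanishing, rotate so that $E,F\in\cP_{\sigma'}(1)$ with $\sigma'=\sigma[\phi_\sigma(E)-1]$, and set $\ts'=\pi_\sim(\sigma')=\ts_{\ut'}$ (up to scaling/shift). If $t'_n\neq+\infty$, part (1) gives $\ts'\lsm\ts'\otimes\cO_X(mH)$, and since $E\in\cP_{\ts'}(1)$ and $F\in\cP_{\ts'}(1)\subset\cA_{\ts'}[\leq 0]$, Lemma~\ref{lem:posaction} with $\Phi=\otimes\cO_X(mH)$ forces $\Hom(E\otimes\cO_X(mH),F)=0$. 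If $t'_n=+\infty$ — which happens exactly when $\phi_\sigma(E)$ agrees modulo $1$ with the skyscraper phase, so this is where the hypothesis that neither $E$ nor $F$ has character $(0,\dots,0,*)$ enters — then $\ts'$ identifies up to shift with a reduced stability condition furnished by the $\sb^d$ Conjecture one dimension lower (its reduced central charge factoring through $\Lambda_{n-1}$), so $\ts'\lsm\ts'\otimes\cO_X(mH)$ again holds and Lemma~\ref{lem:posaction} applies as before.

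\textbf{Part (4) and the main obstacle.} By item (7) of Lemma~\ref{lem:eqdefforlsmonstab}, the relation $\sigma\otimes\cO_X(mH)\lsm\sigma[1]$ is equivalent to $\ts_\theta\otimes\cO_X(mH)\lsm\ts_\theta[1]$ for all $\theta\in(0,1]$, where $\ts_\theta=\pi_\sim(\sigma[\theta])$. As in part (3), $\ts_\theta$ is $\ts_{\ww}$ up to scaling/shift with $\vd_{\ww}$ running over the pencil $\ell(\us,\ut)$; the hypotheses $\sep(\ell(\us,\ut))>m$ and $\sep(\ell(\ut',\ut'+m))>d$ for every $\vd_{\ut'}\in\ell(\us,\ut)$ say precisely that $\sep(\ww)>m$ and $\sep(\ell(\ww,\ww+m))>d$, while $\sigma\in\Stab_H^{*>d}(X)$ forces $\sep(\ell(\us,\ut))>d$ and hence $\sep(\ww)>d$, so $\ts_{\ww}\in\sb_H^{*>d}(X)$. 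Part (2) then gives $\ts_{\ww+m}\lsm\ts_{\ww}[1]$, i.e.\ $\ts_\theta\otimes\cO_X(mH)\lsm\ts_\theta[1]$, proving \eqref{eq824}; the ``in particular'' follows by feeding $\sep(\ell(\ut',\ut'+m))\ge\min\{\sep(\ut')-m,\tfrac12\sep(\ut')\}$ into the hypothesis for every $\vd_{\ut'}\in\ell(\us,\ut)$. Structurally this proposition is bookkeeping with $\lsm$, but two points carry the weight: the interlaced‑polynomial estimates of Appendix~\ref{sec:basicalgeb} bounding $\sep(\ell(\ut,\ut+m))$ from below (without which neither the ``in particular'' clauses nor the chaining in part (1) is available), and the degenerate case $t'_n=+\infty$ in the vanishing of part (3), which must be reduced to the same statement in dimension $n-1$ and is the reason the ``$(0,\dots,0,*)$'' hypothesis appears. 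I expect the latter to be the genuinely delicate step.
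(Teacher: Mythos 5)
Your treatment of parts (1) and (4) matches the paper's, and the first half of (3) (that $\sigma\lsmeq\sigma\otimes\cO_X(H)$) is also correct. There are, however, two genuine gaps in the cases $t_n=+\infty$.

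\textbf{Part (2), $t_n=+\infty$ case.} You write ``when $t_n=+\infty$ one substitutes $\ut'=(t_1,\dots,t_{n-1})\in\sbr_{n-1}$ throughout.'' This does not work: the only available axiom, Conjecture \ref{conj:sbd}.(c), compares reduced stability conditions indexed by $\sbr_n$, and requires the strict chain $\ut<\ut+m<\ut[1]$ in $\sbr_n$. When $t_n=+\infty$, also $(\ut+m)_n=+\infty$, and the leftmost inequality $\ut<\ut+m$ fails at the last slot ($+\infty<+\infty$ is false). So axiom (c) is simply not applicable to the pair $(\ut,\ut+m)$. There is no ``$\sb^d$ Conjecture in dimension $n-1$'' to pass to --- everything here lives on the fixed $n$-dimensional $X$. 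The paper instead applies Lemma \ref{lem:E6}: one picks $\vd_{\us'}\in\ell(\ut',\ut'+m)$ in $\sbr_{n-1}$ with $\ut'+m<\us'$, then prepends a very negative entry to get $\us=(-N,s_1',\dots,s_{n-1}')\in\sbr_n$ with $s_n<+\infty$ and
$\us<\ut<\ut+m<\us[1]$, $\sep(\ell(\ut,\us))>d$, $\sep(\ell(\ut+m,\us))>d$;
two applications of axiom (c), to $(\us,\ut+m)$ and to $(\us,\ut)$, give $\ts_{\ut+m}\lsm\ts_{\us}[1]\lsm\ts_{\ut}[1]$. This auxiliary $\us$ with finite $s_n$ is essential and is missing from your proposal.

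\textbf{Part (3), $t'_n=+\infty$ case.} The relation $\ts'\lsm\ts'\otimes\cO_X(mH)$ is not merely unproven here --- it is false when $t'_n=+\infty$. Already for $n=1$ (a curve), $\ts_{+\infty}\otimes\cO(mH)=\ts_{+\infty}$, and $\ts_{+\infty}\lsm\ts_{+\infty}$ would require $\cP_{\ts_{+\infty}}(1)=\emptyset$, yet $\cO_p[1]\in\cP_{\ts_{+\infty}}(1)$; for $n\ge2$ the same skyscraper objection applies to $\cO_p[\text{shift}]\in\cA_{\ut}\cap\cP_{\ts_{\ut+m}}(1)$. This is precisely why the proposition restricts part (1) to $t_n\neq+\infty$. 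The paper does something structurally different: rather than trying to salvage ``$\lsm$'' at the boundary, it \emph{perturbs away} from it. Since $E$, $F$ have $\vd_{\ut}(E)=\vd_{\ut}(F)=0$ with characters not of the form $(0,\dots,0,*)$, one deforms $\ut$ inside $\sbr_n$ to a nearby $\ut'$ with $t'_n\neq+\infty$ still satisfying $\vd_{\ut'}(E)=\vd_{\ut'}(F)=0$, so that $E,F\in\cP_{\ts_{\ut'}}(1)$; then part (1) applies with $\ut'$. Your ``one dimension lower'' device is not available, and the relation you want to invoke does not hold.

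Two smaller points: (i) the interlacing bound you cite, $\sep(\ell(\ut,\ut+m))\ge\min\{\sep(\ut)-m,\tfrac12\sep(\ut)\}$, is not what Lemma \ref{lem:E5} says (it gives $>\min\{m,\sep(\ut)-m\}$), and the paper's ``in particular'' clauses actually go through an auxiliary $m'=\max\{m,d\}$ to cope with the case $m<d$, which you cannot avoid by just quoting a stronger bound you haven't proved; (ii) for part (1) the paper's chain uses Corollary \ref{cor:E4}.(1) to pick the step size $\delta$ with $\sep(\ell(\ut,\ut+\delta))>d$ --- your ``a matter of the interlacing estimates'' should name this, since without it the chain is not licensed by axiom (c) when $d>0$.
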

\begin{proof}
    {(1)} By Lemma \ref{lem:E4}.{(1)}, there exist $\delta_0>0$ sufficiently small such that for every $0<\delta<\delta_0$ we have $\ut+\delta<\ut[1]$ and $\sep(\ell(\ut,\ut+\delta))>d$. By $\sb^d$ Conjecture \ref{conj:sbd}.(c), we have
    \begin{align*}
        \ts_{\ut}\lsm\ts_{\ut+\delta}.
    \end{align*}
    We may choose $\delta$ so that $m=M\delta$ for some $M\in\Z_{\geq1}$. 

    Note that for every $a\in\R$, it is clear that $\sep(\ell(\ut+a,\ut+a+\delta))=\sep(\ell(\ut,\ut+\delta))>d$. By  $\sb^d$ Conjecture \ref{conj:sbd}.(c), we have
    \begin{align*}
        \ts_{\ut+k\delta}\lsm \ts_{\ut+(k+1)\delta}\text{ for every }k\in\Z.
    \end{align*}
    It follows that $\ts_{\ut}\lsm\ts_{\ut+\delta}\lsm\ts_{\ut+2\delta}\lsm \dots\lsm\ts_{\ut+M\delta}=\ts_{\ut+m}$.

    By $\sb^d$ Conjecture \ref{conj:sbd}.(b), comparing the reduced central charge, we must have $\ts\otimes \cO_X(H)=\ts_{\ut+1}$. The relation \eqref{eq8221} holds.\\

   \noindent {(2)} When $t_n\neq+\infty$, by the assumption that $\sep(\ut)>m$, we have $\ut<\ut+m<\ut[1]$. By  $\sb^d$ Conjecture \ref{conj:sbd}.(c), the relation \eqref{eq823} holds. \\
 
When $t_n=+\infty$, then we may choose $\vd_{\us'}\in\ell(\ut',\ut'+m)$ with $\ut'+m<\us'$ in $\sbr_{n-1}$. By Lemma \ref{lem:E6}, there exists $N$ sufficiently large such that if we let $\us=(-N,s'_1,\dots,s'_{n-1})$ then
\begin{align*}
  \us<\ut<\ut+m<\us[1]\text{ and }  \sep(\ell(\ut,\us)),\sep(\ell(\ut+m,\us))>d.
\end{align*}

By $\sb^d$ Conjecture \ref{conj:sbd}.(c), we have
    \begin{align*}
        \ts_{\ut+m}\lsm\ts_{\us}[1]\lsm\ts_{\ut}[1].
    \end{align*}
    So the relation \eqref{eq823} holds.\\
    
    For the second part of statement, let $m'=\max\{m,d\}$, then by Lemma \ref{lem:E5}, 
    \begin{align}\label{eq8301}
        \sep(\ell(\ut,\ut+m))>\min\{m',\sep(f)-m'\}\geq d.
    \end{align}
    By statement {(1)} and the first part of the statement, we have
    \begin{align*}
        \ts_{\ut+m}\lsmeq\ts_{\ut+m'}\lsm \ts_{\ut}[1].
    \end{align*}
    Note that here we use the adhoc notion $\ts_{\ut+m}\lsmeq\ts_{\ut+m'}$ to mean $\cA_{\ut+m}\subset \cA_{\ut+m'}[\leq 0]$. When $t_n=+\infty$, the formula \eqref{eq8301} still holds by replacing $t$ with $t'$.\\

  \noindent  {(3)} Note that there is exactly one $\theta\in(0,1]$ such that $\pi_\sim(\sigma[\theta])=c\ts_{\ut}[0\text{ or }1]$ for some $\ut$ with $t_n=+\infty$. By \eqref{eq8221},  for all but only one $\theta\in(0,1]$, we have 
    \begin{align*}
        \pi_\sim(\sigma[\theta])\lsm \pi_{\sim}(\sigma[\theta])\otimes \cO_X(H)= \pi_{\sim}((\sigma\otimes \cO_X(H))[\theta]).
    \end{align*}
       By Lemma \ref{lem:lsmeqlem}, we have $\sigma\lsmeq \sigma\otimes \cO_X(H)$.\\

\noindent       For the second part of the statement, by taking $\sigma[\theta]$ instead if necessary, we may assume $\phi_\sigma(E)=\phi_\sigma(F)=1$. Assume that $\Im Z_\sigma= c \vd_{\ut}$ for some scalar $c\in\R$, if $t_n=+\infty$, then since the characters of $E$ and $F$ are not in the form of $(0,\dots,0,*)$, we may deforming $\ut$ to some $\ut'$ so that $t'_n\neq +\infty$ and $\vd_{\ut'}(E)=\vd_{\ut'}(F)=0$. In particular, both $E$ and $F$ are in $\cP_{\ut'}(1)$. By statement {(1)}, we have $F\in\cP_{\ut'+m}(<1)$. As $E\otimes \cO_X(mH)\in\cP_{\ut'+m}(1)$, we have $\Hom(E\otimes \cO_X(H),F)=0$.\\

  \noindent     {(4)} For every $\theta\in(0,1]$, $\pi_\sim(\sigma[\theta])=c\ts_{\ut'}[0\text{ or }1]$ for some $\vd_{\ut'}\in\ell(\ut,\us)$. By the assumption, we have $\sep(\ut')>m$ and $\sep(\ell(\ut',\ut'+m))>d$. By statement {(2)}, we have $\ts_{\ut'}\otimes \cO_X(mH)=\ts_{\ut'+m}\lsm \ts_{\ut'}[1]$. It follows that $\pi_\sim((\sigma\otimes\cO_X(H))[\theta])=\pi_\sim(\sigma[\theta])\otimes \cO_X(H)\lsm \pi_\sim(\sigma[\theta])[1]$ for every $\theta\in(0,1]$. By Lemma \ref{lem:eqdefforlsmonstab} the relation \eqref{eq824} holds.\\

 Assume that $\sep(\ell(\us,\ut))>\max\{m+d,2d\}$,  then by definition  $\sep(\ut')\geq \sep(\ell(\us,\ut))>\max\{m+d,2d\}$. It follows from the second part of statement {(2)} that $\ts_{\ut'+m}\lsm \ts_{\ut'}[1]$. So the relation \eqref{eq824} holds. 
\end{proof}

\begin{Cor}[Stability of skyscraper sheaves]\label{cor:stabdpt1}
     Let $d\geq0$ and  $(X,H)$ be with $mH$ very ample. Assume  $\Stab^d$ Conjecture  for $(X,H)$, then all skyscraper sheaves are $\sigma$-stable  for every $\sigma\in\Stab^{*>\max\{m+d,2d\}}_H(X)$.
\end{Cor}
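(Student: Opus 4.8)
The plan is to transport the ``geometric restriction'' machinery behind Corollary~\ref{cor:niceimplygeom} to the present setting, using the very ample divisor $mH$ in place of $H$. First note that $\max\{m+d,2d\}\ge d$, so by Remark~\ref{rem:onstabmodelconj}.(1) the $\Stab^d$ Conjecture for $(X,H)$ implies the $\Stab^{\max\{m+d,2d\}}$ Conjecture; in particular $\Stab_H^{*>\max\{m+d,2d\}}(X)$ is defined and $\Forg$ is a homeomorphism from it onto $\UUU_n^{>\max\{m+d,2d\}}$. Hence any $\sigma\in\Stab_H^{*>\max\{m+d,2d\}}(X)$ has the form $\sigma=(\cA_{\ut},c_1\vd_{\us}+ic_2\vd_{\ut})$ with, by the defining condition of $\UUU_n^{>\max\{m+d,2d\}}$ in~\eqref{eq812}, $\sep(\ell(\us,\ut))>\max\{m+d,2d\}$. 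Applying the ``in particular'' clause of Proposition~\ref{prop:stabdprop}.(4) with this value of $m$, we obtain
\begin{align*}
    \sigma\otimes\cO_X(mH)\lsm\sigma[1].
\end{align*}

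Next I would run the proof of Corollary~\ref{cor:niceimplygeom} essentially verbatim, but with $mH$ as the very ample class. Fix a closed point $p\in X$. By Bertini's theorem, applied to $|mH|$ and its successive restrictions, there is a flag of smooth subvarieties $p\in X_n\subset X_{n-1}\subset\cdots\subset X_1\subset X_0=X$ with $X_{i+1}\in|(mH)|_{X_i}|$ for each $i$, and with $X_n$ zero-dimensional and reduced (a general member of a very ample system on a smooth curve is a reduced finite set of points). By Proposition~\ref{prop:reststab}, the relation $\sigma\otimes\cO_X(mH)\lsm\sigma[1]$ lets us restrict $\sigma$ to $\Db(X_1)$; moreover, taking $D'=mH$ in the ``moreover'' part of that proposition, the restricted stability condition again satisfies $\sigma|_{\Db(X_1)}\otimes\cO_{X_1}(mH)\lsm\sigma|_{\Db(X_1)}[1]$. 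Iterating along the flag, $\sigma$ restricts all the way to a stability condition on $\Db(X_n)$. Since $X_n$ is a finite disjoint union of reduced points, $\cO_p$ is a simple object in $\Db(X_n)$ and hence stable for the restricted stability condition; by the ``$E$ is $\sigma|_{\Db(X_i)}$-stable if and only if $\iota_*E$ is $\sigma|_{\Db(X_{i-1})}$-stable'' statement of Proposition~\ref{prop:reststab}, applied step by step up the flag, $\cO_p$ is $\sigma$-stable on $X$. As $p$ was arbitrary, all skyscraper sheaves are $\sigma$-stable.

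There is no genuinely hard step here: the argument is an assembly of Proposition~\ref{prop:stabdprop}.(4), Proposition~\ref{prop:reststab}, and Bertini. The only points requiring care are (i) matching the separation constant $\max\{m+d,2d\}$, which is exactly what both the definition of $\UUU_n^{>\max\{m+d,2d\}}$ and the hypothesis of Proposition~\ref{prop:stabdprop}.(4) demand, and (ii) checking that restriction along $mH$ preserves the $\lsm$-inequality needed to iterate down the flag, which is precisely the content of the ``moreover'' clause of Proposition~\ref{prop:reststab}; one also uses the standard fact that the flag can be chosen through the prescribed point $p$ with reduced bottom stratum, so that $\cO_p$ is trivially stable there. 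If $\Stab_H^{*>\max\{m+d,2d\}}(X)$ happens to be empty the statement is vacuous, so no further nonemptiness check is needed.
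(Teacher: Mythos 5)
Your proof is correct and takes essentially the same route as the paper: the paper's one-line proof cites Proposition~\ref{prop:stabdprop}.(4) to get $\sigma\otimes\cO_X(mH)\lsm\sigma[1]$ from the separation bound $\sep(\ell(\us,\ut))>\max\{m+d,2d\}$, and then invokes Corollary~\ref{cor:niceimplygeom} with $mH$ playing the role of the very ample divisor. You reach the same two ingredients, and then unwind the Bertini-flag-plus-Proposition~\ref{prop:reststab} restriction argument inside Corollary~\ref{cor:niceimplygeom} explicitly rather than citing it as a black box.
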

\begin{proof}
    The statement follows from Corollary \ref{cor:niceimplygeom} and Proposition \ref{prop:stabdprop}.
\end{proof}

\begin{Cor}[Uniqueness of $\Stab^*$]\label{cor:stabdunique}
     Assume  $\Stab^d$ Conjecture \ref{conj:stabd}  for $(X,H)$, then the family of stability conditions $\Stab_H^{*>d}(X)$ (and $\sb_H^{*>d}(X)$) as that described in the conjecture is unique up to a homological shift.
\end{Cor}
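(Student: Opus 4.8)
\emph{Plan.} The plan is to prove the statement for $\Stab_H^{*>d}(X)$ and then deduce it for $\sb_H^{*>d}(X)$ from Theorem \ref{thm:equivalentconjs}. So suppose $\cF$ and $\cF'$ are two families of stability conditions on $\Db(X)$, both satisfying $\Stab^d$ Conjecture \ref{conj:stabd} for $(X,H)$. First I would fix an integer $m\geq 1$ with $mH$ very ample, set $d'=\max\{m+d,2d\}$, choose any central charge $Z_0\in\UUU_n^{>d'}\subset\UUU_n^{>d}$, and let $\sigma_0\in\cF$ and $\tau_0\in\cF'$ be the unique members with $\Forg(\sigma_0)=\Forg(\tau_0)=Z_0$. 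By Proposition \ref{prop:stabdprop}.(3) one has $\sigma\lsmeq\sigma\otimes\cO_X(H)$, and iterating (the relation $\lsmeq$ is transitive and is preserved by the autoequivalence $\otimes\cO_X(H)$, by Lemma \ref{lem:eqdefforlsmonstab}) gives $\sigma\lsmeq\sigma\otimes\cO_X(mH)$; since $Z_0\in\UUU_n^{>d'}$ has $\sep(\ell(\us,\ut))>\max\{m+d,2d\}$, Proposition \ref{prop:stabdprop}.(4) gives $\sigma\otimes\cO_X(mH)\lsm\sigma[1]$, for both $\sigma=\sigma_0$ and $\sigma=\tau_0$. Corollary \ref{cor:niceimplygeom} applied to $(X,mH)$ then shows $\sigma_0$ and $\tau_0$ are geometric, so all skyscraper sheaves are $\sigma_0$- and $\tau_0$-stable. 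As $Z_{\sigma_0}(\cO_p)=Z_0(\lambda_H([\cO_p]))=Z_{\tau_0}(\cO_p)$ is a fixed nonzero complex number, the phases $\phi_{\sigma_0}(\cO_p)$ and $\phi_{\tau_0}(\cO_p)$ differ by an even integer, say by $2k$. Replacing $\tau_0$ by $\tau_0[2k]$ leaves its central charge unchanged and leaves the two $\lsm$-inequalities valid (homological shift is an autoequivalence commuting with $\otimes\cO_X(mH)$, cf.\ Lemma \ref{lem:eqdefforlsmonstab}), and now $\phi_{\tau_0[2k]}(\cO_p)=\phi_{\sigma_0}(\cO_p)$; so Corollary \ref{cor:geodeterbyZ}, applied to $(X,mH)$, yields $\sigma_0=\tau_0[2k]$.

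Next I would propagate this equality over the whole parameter space. Put $S=\cF$ and $S'=\cF'[2k]$; both are subsets of $\Stab_H(X)$ on which the forgetful map $\Forg$ restricts to a homeomorphism onto $\UUU_n^{>d}$ (for $S'$ because $Z_{\tau[2k]}=Z_\tau$), with continuous inverse sections $s,s'$. The locus $A=\{Z\in\UUU_n^{>d}\colon s(Z)=s'(Z)\}$ contains $Z_0$; it is open because $\Forg$ is a local homeomorphism and $s,s'$ are continuous; and it is closed because $\Stab_H(X)$ is Hausdorff and $s,s'$ are continuous. Hence it suffices to check that $\UUU_n^{>d}$ is connected, which I would see as follows: the projection $\pi_{\Im}\colon\UUU_n^{>d}\to\BBB_n^{>d}$, $c_1\vd_{\us}+ic_2\vd_{\ut}\mapsto c_2\vd_{\ut}$, is a continuous open surjection (Lemma \ref{lem:Epoly7}) whose fibers are convex in the linear chart $\Hom(\Lambda_H,\R)$ (Lemma \ref{lem:Epolyconvex}), hence connected, and whose base $\BBB_n^{>d}$ is the continuous image of the connected space $\R_{>0}\times\{\ut\in\sbr_n\colon\sep(\ut)>d\}$; a continuous open surjection with connected base and connected fibers has connected total space (if $\UUU_n^{>d}=U\sqcup V$ with $U,V$ open and nonempty, then $\pi_{\Im}(U),\pi_{\Im}(V)$ are open, cover the connected $\BBB_n^{>d}$, hence meet in a point whose fiber is disconnected, a contradiction). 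Therefore $A=\UUU_n^{>d}$, so $S=S'$, i.e.\ $\cF=\cF'[2k]$.

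Finally, for $\sb_H^{*>d}(X)$: by the construction $T$ used in the proof of Theorem \ref{thm:equivalentconjs}, any family satisfying $\sb^d$ Conjecture \ref{conj:sbd} equals $\pi_\sim(T(\sb_H^{*>d}(X)))$ for the associated $\Stab^d$-family $T(\sb_H^{*>d}(X))$, and $\pi_\sim$ commutes with homological shift; so two $\sb^d$-families differ by some $[2k]$ as soon as their associated $\Stab^d$-families do, which is what we have shown. The hard part is the first paragraph: one must arrange, at a single sufficiently well-separated central charge, all the hypotheses of Corollary \ref{cor:geodeterbyZ} for both families at once — this is exactly where the $\lsm$-comparisons of Proposition \ref{prop:stabdprop} and the passage to a very ample multiple of $H$ are needed — so that the only remaining freedom is the phase of the skyscraper sheaves, which is precisely a homological shift by an even integer; the propagation step is then a routine clopen argument, modulo the (elementary) connectedness of $\UUU_n^{>d}$.
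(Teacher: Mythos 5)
Your proof is correct and follows essentially the same route as the paper: pick a single central charge in $\UUU_n^{>\max\{m+d,2d\}}$ with $mH$ very ample, use Proposition \ref{prop:stabdprop} and Corollary \ref{cor:geodeterbyZ} (applied with the polarization $mH$) to pin down the stability condition there up to a shift by $[2k]$, then propagate over all of $\UUU_n^{>d}$ by connectedness and the homeomorphism property of $\Forg$, and finally transfer to $\sb_H^{*>d}(X)$ via the construction $T$ from Theorem \ref{thm:equivalentconjs}. The only differences are presentational: you spell out the clopen propagation argument and the connectedness of $\UUU_n^{>d}$, both of which the paper treats as evident, and you go directly through Corollary \ref{cor:niceimplygeom} where the paper cites Corollary \ref{cor:stabdpt1} (which is itself a consequence of the former).
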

\begin{proof}
    Let $m\in\Z$ be sufficiently large so that $mH$ is very ample. By Lemma \ref{lem:Epoly7}, let $\us<\ut<\us[1]$ with $\ell(\us,\ut)>\{m+d,2d\}$. By Corollary \ref{cor:stabdpt1} and taking a homological shift, we may assume that the phase of all skyscraper points is in $[0,1)$. Then by Proposition \ref{prop:stabdprop} and Corollary \ref{cor:geodeterbyZ}, such a stability condition does not rely on the choice of the family $\Stab_H^{*>d}(X)$. Note that $\UUU^{>d}_n$ is connected, so the whole family $\Stab_H^{*>d}(X)$ is unique. By the construction of $T(\sb_H^{*>d}(X))$ as that in the proof of Theorem \ref{thm:equivalentconjs}, the space $\sb_H^{*>d}(X)$ must also be unique.
\end{proof}

\begin{Cor}[Restriction of stability conditions]\label{cor:restrictionall}
     Assume $\Stab^d$ Conjecture \ref{conj:stabd} for $(X,H)$. Let $Y$ be an irreducible smooth subvariety of $X$. Then there exists $M$ such that every stability condition $\sigma\in \Stab_H^{*>M}(X)$ restricts to $\Db(Y)$. In particular, the space $\Stab_{H|_Y}(Y)\neq \emptyset$.
\end{Cor}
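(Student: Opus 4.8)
The plan is to reduce the statement to an iterated application of the restriction theorem (Proposition \ref{prop:reststab}) along a Bertini flag, using Proposition \ref{prop:introbayerall}/\ref{thm:bayerall} to convert a non--complete--intersection subvariety into a complete intersection after twisting the polarization. First I would choose a positive integer $a$ such that $aH + D$ is very ample for enough divisor classes $D$ that we can cut out $Y$ by a flag; concretely, by Bertini's theorem we may find a chain of irreducible smooth subvarieties
\begin{align*}
    Y = Y_k \subset Y_{k-1} \subset \dots \subset Y_1 \subset Y_0 = X,
\end{align*}
with $\dim Y_{i} = \dim X - i$ and $Y_{i+1} \in |D_i|$ for some very ample divisor $D_i$ on $Y_i$. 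After completing the proof we can always arrange, by possibly enlarging the first few $D_i$, that each $D_i$ is of the form $(\text{positive multiple of }H|_{Y_i}) + (\text{restriction of a fixed divisor on }X)$; this is precisely the kind of divisor to which Proposition \ref{thm:bayerall} applies.

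The key mechanism is the following: a stability condition $\sigma \in \Stab_H^{*>d'}(X)$ with $d'$ large enough is geometric (Corollary \ref{cor:niceimplygeom} together with Proposition \ref{prop:stabdprop}.(3), which gives $\sigma \lsmeq \sigma\otimes\cO_X(H)$), and by Proposition \ref{prop:stabdprop}.(4) it satisfies $\sigma\otimes\cO_X(mH)\lsm\sigma[1]$ whenever $\sep(\ell(\us,\ut))>\max\{m+d,2d\}$ for its central charge $c_1\vd_{\us}+ic_2\vd_{\ut}$. So by Proposition \ref{thm:bayerall} there is an integer $m(D_0)$ with $\sigma\lsmeq\sigma\otimes\cO_X(m(D_0)H + D_0)$ — wait, more precisely I want $\sigma\otimes\cO_X(D_0)\lsm\sigma[1]$, which follows by first writing $D_0 = (m(D_0)H + D_0) - m(D_0)H$ is awkward; instead the clean route is: choose $\sigma$ with separation so large that simultaneously $\sigma\lsmeq\sigma\otimes\cO_X(H)$ and, after invoking Proposition \ref{thm:bayerall} to replace $H$ by $m(D_0)H + D_0 =: D_0'$ which can be taken very ample, we have $\sigma\lsmeq\sigma\otimes\cO_X(D_0')$; then enlarging the separation further (using Proposition \ref{prop:stabdprop}.(4) applied with the twist $\otimes\cO_X(D_0')$ in place of $\otimes\cO_X(H)$, after a $\glt$--normalization) gives $\sigma\otimes\cO_X(D_0')\lsm\sigma[1]$. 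Taking $D_0 = D_0'$ in the flag, Proposition \ref{prop:reststab} then produces $\sigma|_{\Db(Y_1)}$, a stability condition on $\Db(Y_1)$ which again satisfies $\sigma|_{\Db(Y_1)}\otimes\cO_{Y_1}(D') \lsm \sigma|_{\Db(Y_1)}[1]$ for the relevant $D'$ (the last clause of Proposition \ref{prop:reststab} propagates both the $\lsm$ and $\lsmeq$ relations to the restriction), so it is again geometric on $Y_1$ and Proposition \ref{thm:bayerall} applies on $Y_1$. Iterating down the flag $k$ times yields a stability condition on $\Db(Y)$.

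The bookkeeping I would carry out carefully is the explicit bound $M$: at each stage $i$ the required separation grows by a controlled amount, namely enough to absorb $\max\{m_i + d, 2d\}$ where $m_i$ is the coefficient produced by Proposition \ref{thm:bayerall} on $Y_i$ for the divisor $D_i$. Since there are only finitely many stages and each $m_i$ is a fixed integer depending only on $(X,H)$, $D$ and the chosen flag, one can set $M$ to be (say) the sum or maximum of these adjusted bounds; then $\UUU_n^{>M}$ is non-empty (Lemma \ref{lem:Epoly7}), so $\Stab_H^{*>M}(X)$ contains stability conditions with the needed large separation, and the argument applies to every $\sigma$ in $\Stab_H^{*>M}(X)$. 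Finally, the existence of a restricted stability condition on $\Db(Y)$ immediately gives $\Stab_{H|_Y}(Y)\neq\emptyset$ (the restricted central charge factors through $\lambda_{H|_Y}$ by the explicit push-forward formula, as recorded in Remark \ref{rem:differentlattice} and the surface computation).

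\textbf{Main obstacle.} The delicate point is not the restriction theorem itself but ensuring that the relations needed as its hypothesis are \emph{simultaneously} available: we need $\sigma\lsmeq\sigma\otimes\cO_X(D')$ (to run Lemma \ref{lem:63} and friends, i.e.\ the Bayer vanishing on the nose) \emph{and} $\sigma\otimes\cO_X(D')\lsm\sigma[1]$ (the strict inequality, which is what Proposition \ref{prop:reststab} literally requires), for a divisor $D'$ that actually cuts out the next member of the flag. Getting the strict relation $\lsm$ for a twist by a non-multiple of $H$ requires combining Proposition \ref{thm:bayerall} (which only gives $\lsmeq$) with the extra room coming from large separation in Proposition \ref{prop:stabdprop}.(4); I expect the cleanest formulation is to prove a lemma: \emph{if $\sigma\lsmeq\sigma\otimes\cO_X(D')$ and the central charge of $\sigma$ has $\sep(\ell(\us,\ut))$ bigger than an explicit function of $D'$ and $d$, then $\sigma\otimes\cO_X(D')\lsm\sigma[1]$}, proved exactly as Proposition \ref{prop:stabdprop}.(4) but with $\otimes\cO_X(D')$ replacing $\otimes\cO_X(H)$ throughout and $\cO_X(D')$ playing the role of an ample twist after a linear change of coordinates on the parameter space. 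Tracking how the separation bound degrades as one descends the flag is the one genuinely technical ingredient; everything else is a direct appeal to results already in the paper.
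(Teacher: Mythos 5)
Your overall scaffolding matches the paper: cut $Y$ out of $X$ by a Bertini flag, iterate Proposition \ref{prop:reststab} along the flag, use Proposition \ref{thm:bayerall} to convert the non-parallel divisor into something controllable, and appeal to Corollary \ref{cor:niceimplygeom} to keep the restricted stability conditions geometric so that Proposition \ref{thm:bayerall} can be invoked again at the next stage. Where you diverge is precisely in the step you flag as the ``main obstacle,'' and the route you call ``awkward'' and abandon is the one the paper actually uses, while the alternative you prefer has a genuine gap.

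The paper closes the gap by the chain
\begin{align*}
\sigma_i\otimes\cO_{Y_i}(D_i)\;\lsmeq\;\sigma_i\otimes\cO_{Y_i}(m_i H_i)\;\lsm\;\sigma_i[1],
\end{align*}
where the first relation comes from Proposition \ref{thm:bayerall} applied to $-D_i$ and then twisted by $\cO(D_i)$ (Lemma \ref{lem:eqdefforlsmonstab}.(5)), and the second from Proposition \ref{prop:stabdprop}.(4) once the separation is above $\max\{m_i+d,2d\}$; composing $\lsmeq$ with $\lsm$ yields $\lsm$, so Proposition \ref{prop:reststab} applies. Your preferred replacement --- a new lemma upgrading $\sigma\lsmeq\sigma\otimes\cO_X(D')$ to $\sigma\otimes\cO_X(D')\lsm\sigma[1]$ for a very ample $D'$ not proportional to $H$, ``proved exactly as Proposition \ref{prop:stabdprop}.(4) with $\otimes\cO_X(D')$ replacing $\otimes\cO_X(H)$'' --- does not transfer. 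Proposition \ref{prop:stabdprop}.(4) rests on the fact that $\otimes\cO_X(mH)$ acts on $\ut\in\sbr_n$ as the translation $\ut\mapsto\ut+m$, so $\ts_{\ut}\otimes\cO_X(mH)=\ts_{\ut+m}$ and the interlacing hypothesis of the $\sb^d$ Conjecture \ref{conj:sbd}.(c) can be checked directly. For $D'$ not a multiple of $H$, the autoequivalence $\otimes\cO_X(D')$ does not even preserve the $H$-polarized lattice $\Lambda_H$, $\cA_{\ut}\otimes\cO_X(D')$ need not be of the form $\cA_{\ut'}$ for any $\ut'\in\sbr_n$, and no ``linear change of coordinates on the parameter space'' repairs this. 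As written, your argument depends on an unproved and non-obvious claim.

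The other technicality you anticipate --- ``tracking how the separation bound degrades as one descends the flag'' --- does not arise. The paper fixes $M=\max\{2d,a,m_i+d:1\le i\le s\}$ once and for all; the last clause of Proposition \ref{prop:reststab} passes both $\sigma\lsmeq\sigma\otimes\cO(H_i)$ and $\sigma\otimes\cO(MH_i)\lsm\sigma[1]$ to the restriction \emph{for the same divisor multiple}, so the constant $M$ is preserved at every stage rather than degrading.
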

\begin{proof}
   We may choose a sequence of irreducible smooth varieties
   \begin{align*}
       Y=Y_0\subset Y_{1}\subset \dots\subset Y_s= X
   \end{align*}
   such that each $Y_{i-1}$ is in $|D_i|$ for some divisor $D_i$ in $Y_i$. Denote by $H_i$ the restricted divisor $H|_{Y_i}$. Let $m_i\coloneq m(D_i)$ be as that in Proposition \ref{thm:bayerall}. 

   By Proposition \ref{thm:bayerall} and Lemma \ref{lem:eqdefforlsmonstab}, for every geometric stability condition $\sigma_i$ on $Y_i$ with $\sigma_i\lsmeq\sigma_i\otimes \cO_{Y_i}(H_i)$, we have $\sigma_i\otimes  \cO_X(D)\lsmeq \sigma_i \otimes \cO_X(m_iH_i)$.

  Assume that $aH$ is very ample for some $a\in \Z_{\geq1}$. We may let $M=\max\{2d,a,m_i+d:1\leq i\leq s\}$. Then for every $\sigma\in \Stab_H^{*>M}(X)$, by Proposition \ref{prop:stabdprop}, we have 
  \begin{align*}
      \sigma\otimes \cO_X(D_s)\lsmeq\sigma\otimes \cO_X(m_sH)\lsm \sigma[1].
  \end{align*}
  By Corollary \ref{cor:niceimplygeom}, the stability condition $\sigma$ is geometric. By Proposition \ref{prop:reststab}, the stability condition $\sigma$ restricts to $\sigma_{s-1}=\sigma|_{\Db(Y_{s-1})}$ on $Y_{s-1}$ and inherits the following properties
  \begin{align*}
      \sigma_{s-1}\lsmeq \sigma_{s-1}\otimes \cO_{Y_{s-1}}(H_{s-1})\text{ and }\sigma_{s-1}\otimes \cO_{Y_{s-1}}(MH_{s-1})\lsm \sigma_{s-1}[1].
  \end{align*}
Moreover, all skyscraper sheaves are $\sigma_{s-1}$-stable with the same phase.  By Proposition \ref{thm:bayerall}, Lemma \ref{lem:eqdefforlsmonstab} and the choice of $M$, we have 
\begin{align*}
      \sigma_{s-1}\otimes \cO_{Y_{s-1}}(D_{s-1})\lsmeq\sigma_{s-1}\otimes \cO_{Y_{s-1}}(m_{s-1}H_{s-1})\lsm \sigma_{s-1}[1].
  \end{align*}
   By Proposition \ref{prop:reststab}, the stability condition $\sigma_{s-1}$ restricts to $Y_{s-2}$ and inherits the corresponding properties. Repeat this procedure, the stability condition $\sigma$ restricts to $Y$.
\end{proof}

\def\DF{\ensuremath{\Xi}}

\begin{Rem}[Central charge of restricted stability conditions]\label{rem:restcentralcharge}
For every $m>0$ and $\ut\in\sbr_n$ with $\sep(\ut)>m$, we denote by $\DF_m(\ut)$ the roots of \[\prod_{i}(x-t_i)-\prod_{i}(x-t_i-m)=0\] (drop the terms $x-t_n$ and $x-t_n-m$ when $t_n=+\infty$).

By Lemma \ref{lem:rootsandgap},  as $\sep(\ut)>m$, we have $\DF(\ut)\in\sbr_{n-1}$ with $\sep(\DF_m(\ut))>m$ (when $t_n=+\infty$, we set $\DF_m(\ut)_{n-1}=+\infty$).

For a collection of numbers $\um=(m_1,\dots,m_d)$ with $m=\max\{m_i\}$ and $\ut\in\sbr_n$ with $\sep(\ut)>m$, we define 
$\DF_{\um}(\ut)\coloneq (\DF_{m_1}\circ\DF_{m_2}\circ\dots\circ \DF_{m_d})(\ut).$
Note that $\DF_{m_i}(\DF_{m_j}(\ut))=\DF_{m_j}(\DF_{m_i}(\ut))$ for every $i$ and $j$, the definition of $\DF_{\um}$ does not rely on the order of $m_i$.\\

\noindent   When $Y$ is a smooth complete intersection in $X$, we can give a more accurate description for the restricted stability conditions as that in the threefold case Example \ref{cor:rest3to2}.
    
    Let  $Y\in|mH|$ be a smooth projective subvariety in $X$ with $\iota:Y\to X$ the inclusion map. Let $\sigma_{\us,\ut}$ be a stability condition on $\Db(X)$ as that in the Stab$^{d}$ Conjecture \ref{conj:stabd}. Assume $\sigma_{\us,\ut}\otimes \cO_X(mH)\lsm \sigma_{\us,\ut}[1]$, then by Proposition \ref{prop:reststab}, the restricted central charge on $\Db(Y)$ is given as $Z_{\us,\ut}\circ\iota_*$. In particular, it factors via the lattice: $\lambda_h:\Kn(Y)\to \Lambda_h:(h^{n-1}\rk,h^{n-2}\ch_1,\dots,\ch_{n-1})$, where $h=H|_{Y}$. The central charge is determined by its image of $\gamma_{n-1}(-)$, which can be computed as
    \begin{align*}
        Z_{\us,\ut}(\iota_*[\gamma_{n-1}(x)])&= Z_{\us,\ut}(\gamma_{n}(x))-Z_{\us,\ut}(\gamma_{n}(x-m))=\left(Z_{\us,\ut}-Z_{\us+m,\ut+m}\right)(\gamma_n(x)).
    \end{align*}
    In particular, $\Re Z(\iota_*[\gamma_{n-1}(x)])=0$ (resp. $\Im Z=0$) if and only if $x\in\DF_m(\us)$ (resp. $\DF_m(\ut)$). So up to scalars on the real and imaginary part, the restricted central charge is:
    \begin{align*}
     Z_{\us,\ut}\circ\iota_*=c_1\vd_{\DF_m(\us)}+ic_2\vd_{\DF_m(\ut)}.
    \end{align*}  
   When the dimension $n$ of $X$ is less than or equal to $4$, the map $\DF_m:\sbr_n^{>m}\to \sbr_{n-1}^{>m}$ is surjective. So assuming the $\Stab^0$ Conjecture \ref{conj:stabd} for $(X,H)$, we can get the whole family of reduced stability conditions with central charges in $\BBB^{*>m}_{n-1}$. By Theorem \ref{thm:equivalentconjs}, the $\Stab^{>m}$ Conjecture \ref{conj:stabd} holds for $(Y,H|_Y)$.
   
   However, when the dimension $n$ of $X$ is greater than or equal to $5$, the map $\DF_m:\sbr_n\to\sbr_{n-1}$ is not surjective anymore. We cannot get the whole family $\Stab^{*>m}_{H|_{Y}}(Y)$ by restriction. 

   For the complete intersection $Y=Y_1\cap\dots\cap Y_d$, the restricted central charge of $\sigma_{\us,\ut}$ is given as $\vd_{\DF_{\um}(\us)}+i\vd_{\DF_{\um}(\ut)}$ up to scalars on the real and imaginary parts.
\end{Rem}

\begin{Rem}\label{rem:abnrest}
    To restrict stability conditions from higher dimensional varieties, instead of $\bP^n$, one may also consider $X=E\times E\times \dots\times E$ for an elliptic curve $E$. The category $\Db(X)$ admits stability conditions  by \cite{yucheng:stabprodvar}.  It is worthwhile to study whether the $\sigma\otimes\cO(D)\lsm \sigma[1]$ assumption can be proved inductively in these cases. If so, one can then pull back the stability condition by  \'etale covers to get stability conditions on $X$ satisfying $\sigma\otimes \cO(mD)\lsm \sigma[1]$ for $m$ arbitrarily large. This will gives the existence of stability conditions on all smooth subvarieties in $X$.
\end{Rem}

\subsection{Bounds for the numerical characters}\label{sec:sub:bounds}
In this section, we describe the bounds for the numerical characters of stable objects assuming the Stab$^0$ Conjecture \ref{conj:stabd}.

\begin{Prop}\label{prop:boundsforsbd}
     Let $(X,H)$ be smooth polarized variety satisfying $\Stab^0$ Conjecture \ref{conj:stabd} and $\sb^0$ Conjecture \ref{conj:sbd}. Let $E\in\Db(X)$ be a $\ts_{\ut}$-semistable object, then 
     \begin{align}\label{eq837}
         \lambda_H(E)= \sum_{i=1}^n (-1)^ia_i\gamma_n(t_i)
     \end{align}
     with  coefficients $a_i\geq 0$ for all $i$ or $a_i\leq 0$ for all $i$.
\end{Prop}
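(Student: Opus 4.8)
The plan is to reduce the statement to the comparison relation $\ts_{\us}\lsm \ts_{\ut}\lsm \ts_{\us}[1]$ guaranteed by $\sb^0$ Conjecture \ref{conj:sbd}.(c), and to combine it with the linear-algebraic description of $\vd_{\ut}$-vanishing characters from Appendix \ref{sec:basicalgeb}. First I would record two basic facts. Since $E$ is $\ts_{\ut}$-semistable and $\ts_{\ut}$ sits on $\cP_{\ts_{\ut}}(1)$ (after the usual shift normalization), we have $\vd_{\ut}(\lambda_H(E)) = B_{\ts_{\ut}}(\lambda_H(E)) = 0$; hence by basic linear algebra (the kernel of $\vd_{\ut}$ is spanned by the $\gamma_n(t_i)$, because $\vd_{\ut}$ is, up to scaling, the unique linear functional vanishing on all $\gamma_n(t_i)$) we may write $\lambda_H(E) = \sum_{i=1}^n (-1)^i a_i \gamma_n(t_i)$ for unique real $a_i$. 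This already gives \eqref{eq837} with no sign control; the content is the sign claim. The strategy for the signs is to test against all nearby reduced central charges $\vd_{\us}$ with $\us \link \ut$: for each such $\us$, $E$ remains semistable with respect to a representative whose central charge has real part $\pm\vd_{\us}$ (this uses that $\vd_{\us} \in \ta(\ts_{\ut})$ or $-\vd_{\us} \in \ta(\ts_{\ut})$, which follows from $\sb^0$ Conjecture together with Proposition \ref{prop:lifttostab} applied to the comparison relation, exactly as in the threefold case Theorem \ref{thm:sb3}.(4)), so $Z_\sigma(\lambda_H(E)) \neq 0$, i.e. $\vd_{\us}(\lambda_H(E)) \neq 0$ for every $\us$ with $\us \link \ut$.

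Next I would extract the sign from the geometry. Expand $\vd_{\us}(\lambda_H(E)) = \sum_{i=1}^n (-1)^i a_i \vd_{\us}(\gamma_n(t_i))$. The sign pattern of $\vd_{\us}(\gamma_n(t_i))$ as $i$ varies, for a fixed $\us$ interlacing $\ut$, is governed by the interlacing combinatorics: $\vd_{\us}$ is (up to a positive constant) the functional vanishing on $\gamma_n(s_1), \dots, \gamma_n(s_n)$, and since the $s_j$ strictly separate the $t_i$, the values $\vd_{\us}(\gamma_n(t_i))$ alternate in sign in $i$ — this is the $n$-variable analogue of the sign computation behind \eqref{eqE55} and is precisely the kind of statement collected in Appendix \ref{sec:basicalgeb} (I would cite the relevant lemma, e.g. an $n$-dimensional version of Lemma \ref{lem:Epoly2}). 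Consequently $\vd_{\us}(\lambda_H(E)) = \pm\sum_i a_i \cdot (\text{positive})$ after absorbing the two alternating signs, so the non-vanishing of $\vd_{\us}(\lambda_H(E))$ for all interlacing $\us$, together with a continuity/limiting argument letting $\us$ degenerate so that individual weights $\vd_{\us}(\gamma_n(t_i))$ are made dominant, forces all $a_i$ to have the same sign: if some $a_k > 0$ and some $a_l < 0$, one can choose $\us$ interlacing $\ut$ for which the corresponding signed sum vanishes, contradicting $\vd_{\us}(\lambda_H(E)) \neq 0$. The final sign (all $a_i \geq 0$ versus all $a_i \leq 0$) is then pinned down, if desired, by testing against one specific $\us$ and using that the heart places $E$ or $E[1]$ on the correct side, exactly as in the last line of the proof of Theorem \ref{thm:sb3}.(3).

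I expect the main obstacle to be the $n$-variable interlacing sign lemma and the associated limiting argument: in the threefold case $n = 3$ the sign bookkeeping is short and explicit (Lemma \ref{lem:Epoly}, Lemma \ref{lem:Epoly2}), but for general $n$ one needs a clean statement that for $\us \link \ut$ the map $i \mapsto \operatorname{sgn} \vd_{\us}(\gamma_n(t_i))$ is strictly alternating, and that by choosing $\us$ appropriately (pushing all but one $s_j$ far away, or taking a limit $\us \to$ a configuration with repeated coordinates) one can realize any desired "witness" linear functional detecting a sign change among the $a_i$. This is where I would lean on the interlaced-polynomials material in Appendix \ref{sec:basicalgeb}; modulo that, the Bridgeland-theoretic input is entirely the comparison relation $\sb^0$ Conjecture \ref{conj:sbd}.(c) plus Proposition \ref{prop:lifttostab}, and the rest is the determinant identity \eqref{eq:Bvddef} together with elementary continuity of $\Forg$ on $\sb_H^{*}(X)$.
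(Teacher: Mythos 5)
Your proposal is essentially correct and follows the same route as the paper's proof, which is short: from $\ts_{\ut}$-semistability one gets $\vd_{\ut}(\lambda_H(E))=0$; then, because $E$ is $\sigma_{\us,\ut}$-semistable for all $\us<\ut<\us[1]$ (the paper cites Lemma~\ref{lem:nested} and the construction in Theorem~\ref{thm:equivalentconjs}, while you obtain the same via Proposition~\ref{prop:lifttostab} and the comparison relation; these are interchangeable since Theorem~\ref{thm:equivalentconjs} is itself proved by that route), one has $\lambda_H(E)\notin\Ker\vd_{\us}$ for all such $\us$, and Lemma~\ref{lem:Epoly2} then gives the sign constraint directly.

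The one place you hedge unnecessarily is your worry that Lemma~\ref{lem:Epoly2} might need to be generalized to an ``$n$-dimensional version'': it is already stated and proved for arbitrary $n$ in Appendix~\ref{sec:basicalgeb}, so the alternating-sign interlacing fact and the limiting/witness argument you sketch are precisely what its proof (via~\eqref{eqE55} and~\eqref{eqE552}) carries out, and you can simply cite it rather than re-derive it. Similarly, the final paragraph's discussion of pinning down the overall sign is not needed for the statement as phrased (the conclusion is ``all $\geq 0$ or all $\leq 0$''); that extra step only appears in Theorem~\ref{thm:sb3}.(3) where a definite sign is claimed.
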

\begin{proof}
    By definition, $\vd_{\ut}(\lambda_H(E))=0$, in other words, $\lambda_H(E)\in\Ker(\vd_{\ut})$.
    
    By Lemma \ref{lem:nested} and the construction of Theorem \ref{thm:equivalentconjs}, the object $E$ is $\sigma_{\us,\ut}$-semistable for all $\us<\ut<\us[1]$. In particular, the $H$-polarized character 
    \begin{align*}
        \lambda_H(E)\not\in \Ker Z_{\us,\ut}= \Ker \vd_{\us}\cap\Ker \vd_{\ut}\implies \lambda_H(E)\not\in\Ker\vd_{\us}
    \end{align*}
    for all $\us<\ut<\us[1]$. By Lemma \ref{lem:Epoly2}, the statement holds.
\end{proof}

\begin{Rem}\label{rem:boundstab}
    Assume that $E$ is $\sigma_{\us,\ut}$-semistable, then $\vd_{\ut}(E)\vd_{\us}-\vd_{\us}(E)\vd_{\ut}=c\vd_{\um}$ for some $\um\in\sbr_n$ and $c\in\R$. The $H$-polarized character $\lambda_H(E)= \sum_{i=1}^n (-1)^ia_i\gamma_n(m_i)$
     with  coefficients $a_i\geq 0$ for all $i$ or $a_i\leq 0$ for all $i$.

     For each $\ut\in\sbr_n$ and $1\leq i\leq n$, we denote $\hut_i\coloneq (t_1,\dots,t_{i-1},t_{i+1},\dots,t_n)\in\sbr_{n-1}$. Denote $\lambda_{H,n-1}(E)=(H^n\rk(E),\dots,H\ch_{n-1}(E))$ the truncated $H$-polarized character. Then the condition \eqref{eq837} can be equivalently described as $\vd_{\hut_i}(\lambda_{H,n-1}(E))\vd_{\hut_j}(\lambda_{H,n-1}(E))\geq0$ for all $i,j$.
     
     For example, when $n=2$, (ignoring the $H$ to reduce heavy notions) given that 
     \begin{align*}
         0=\vd_{t_1,t_2}(E)=2\ch_2(E)-(t_1+t_2)\ch_1(E)+t_1t_2\rk(E),
     \end{align*} then \eqref{eq837} says that
     \begin{align*}
         (\ch_1(E))^2-(t_1+t_2)\ch_1(E)\rk(E)+t_1t_2(\rk(E))^2=\vd_{t_1}(\ch_{\leq 1}(E))\vd_{t_2}(\ch_{\leq 1}(E))\geq 0.
     \end{align*} Combine them together, this is just always equivalent to the polarized Bogomolov inequality: $\Delta_H(E)=(H\ch_1(E))^2-2H^2\rk(E)\ch_2(E)\geq 0$.

     When $n\geq 3$,  the bound  cannot be summarized as a single quadratic form, but needs a family of quadratic forms as that in \cite[Theorem 8.7]{BMS:stabCY3s}, see also \eqref{eq117}. We may generalize this to higher dimensions as well. One application is that skyscraper sheaves are stable on $\Stab^*_H(X)$. 
\end{Rem}

\begin{Prop}   \label{prop:quadonstab0}
 Let $(X,H)$ be smooth polarized variety satisfying $\Stab^0$ Conjecture \ref{conj:stabd}. Then for every $\sigma_{\us,\ut}\in\Stab_H^*(X)$, there exists a (family) of  quadratic form(s) $\tilde\vq_{\us,\ut}$ on $\Lambda_\R$ giving the support property for $\sigma_{\us,\ut}$ such that
\begin{align}\label{eq842}
   & \tilde\vq_{\us,\ut}(\gamma_n(x),\gamma_n(x))=0, \;\forall\;x\in\R\cup\{+\infty\}.
\end{align}
\end{Prop}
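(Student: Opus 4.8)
The plan is to translate the existence of $\tilde\vq_{\us,\ut}$ into a problem about quadratic forms vanishing on the rational normal curve $\gamma_n(\R\cup\{+\infty\})$, fed by the character bound of Proposition \ref{prop:boundsforsbd}. Write $\sigma=\sigma_{\us,\ut}=(\cA_{\ut},Z)$ with $Z=\vd_{\us}+i\vd_{\ut}$, so that $\Ker Z=\Ker\vd_{\us}\cap\Ker\vd_{\ut}$ is a real codimension-two subspace of $\Lambda_\R$. A quadratic form $\tilde\vq$ does the job precisely when: (i) $\tilde\vq\in\cI_2$, the (by the Vandermonde relations, $\binom n2$-dimensional) space of quadratic forms whose projective quadric contains the whole curve; (ii) $\tilde\vq|_{\Ker Z}$ is negative definite; and (iii) $\tilde\vq(\lambda_H(E))\ge 0$ for every $\sigma$-semistable $E$. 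When $n\le 2$ there is nothing to choose: $\cI_2$ is spanned by the $H$-discriminant $\Delta_H$, which is negative definite on the two-dimensional $\Ker Z$ and non-negative on semistable characters by the Bogomolov inequality, recovering the computation in Remark \ref{rem:boundstab}.

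For the reduction of (iii) I would argue as follows. If $E\in\cP_\sigma(\phi)$, then after the rotation $[\phi-1]$ the object $E$ lies in $\cP_{\sigma[\phi-1]}(1)$; by the nested wall theorem (Lemma \ref{lem:nested}) and the construction in Theorem \ref{thm:equivalentconjs} this means $E$ is $\ts_{\ut'}$-semistable (up to a scaling and a shift) for the $\ut'\in\sbr_n$ with $\vd_{\ut'}$ the imaginary part of $Z_{\sigma[\phi-1]}$, which by Lemma \ref{lem:Epoly} is a member of the pencil $\ell(\us,\ut)$; in particular $\vd_{\ut'}(\lambda_H(E))=0$. Proposition \ref{prop:boundsforsbd} then forces $\lambda_H(E)=\sum_i(-1)^i a_i\gamma_n(t'_i)$ with all $a_i$ of one sign, i.e. $\pm\lambda_H(E)\in\Cone\{(-1)^i\gamma_n(t'_i)\}$. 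Since every $\tilde\vq\in\cI_2$ vanishes at each $\gamma_n(t'_i)$, condition (iii) is thus equivalent to the bilinear inequalities $(-1)^{i+j}\tilde\vq\big(\gamma_n(t'_i),\gamma_n(t'_j)\big)\ge 0$ for all $i<j$ and all $\vd_{\ut'}\in\ell(\us,\ut)$. So it remains to exhibit a single element of $\cI_2$ that is negative definite on $\Ker Z$ and satisfies these inequalities along the whole pencil.

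Finally I would build such a form explicitly, adapted to one representative $\ut_0$ of the pencil: complete $\{\gamma_n(t_{0,1}),\dots,\gamma_n(t_{0,n})\}$ to a basis of $\Lambda_\R$ by a vector transverse to $\Ker\vd_{\ut_0}$, prescribe $\tilde\vq(\gamma_n(t_{0,i}),\gamma_n(t_{0,j}))=(-1)^{i+j}c_{ij}$ with $c_{ii}=0$ and $c_{ij}>0$, put a large negative value on the transverse direction, and let membership in $\cI_2$ pin down the remaining entries; then one must check that the $c_{ij}$ can be tuned so that the form stays negative definite on all of $\Ker Z$ and non-negative on the cones attached to every $\vd_{\ut'}\in\ell(\us,\ut)$, not just to $\vd_{\ut_0}$. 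Any form obtained this way is a support-property form for every $\sigma_{\us',\ut}$ in the same fiber of $\pi_\sim$ by Proposition \ref{prop:convex}, which accounts for the ``family'' in the statement. The main obstacle is precisely this last verification: showing that one quadric in $\cI_2$ can be simultaneously negative definite on the codimension-two subspace $\Ker Z$ and non-negative on the one-parameter family of boundary cones $\Cone\{(-1)^i\gamma_n(t'_i)\}$, $\vd_{\ut'}\in\ell(\us,\ut)$. After projectivizing and rescaling this becomes a compactness statement about how these cones sit relative to $\Ker Z$, and I expect it to reduce to the positivity and interlacing estimates already developed in Appendix \ref{sec:basicalgeb} (e.g. Lemmas \ref{lem:Epoly2} and \ref{lem:Epolyconvex}).
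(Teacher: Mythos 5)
Your plan captures the right reduction: by Proposition \ref{prop:boundsforsbd} every $\sigma_{\us,\ut}$-semistable character lies in $\sc(\ell(\us,\ut))$, so a quadratic form $\tilde\vq$ that vanishes on $\gamma_n(\R\cup\{+\infty\})$, is negative definite on $\Ker Z_{\us,\ut}$, and satisfies $(-1)^{i+j}\tilde\vq(\gamma_n(t'_i),\gamma_n(t'_j))\geq 0$ for every $\vd_{\ut'}\in\ell(\us,\ut)$ and all $i\neq j$ gives the support property --- this is exactly how the paper's proof begins. The gap is that you never exhibit such a form, and the ad hoc construction you sketch is internally inconsistent: the space of quadrics vanishing on the rational normal curve has dimension $\binom{n}{2}$, and the restriction map from that space to quadratic forms on the hyperplane $\Ker\vd_{\ut_0}$ vanishing at the $n$ points $\gamma_n(t_{0,i})$ is an isomorphism onto a space of the same dimension (a quadric containing both this hyperplane and the irreducible nondegenerate curve would be a union of two hyperplanes, forcing the curve into a hyperplane). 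So prescribing the $\binom{n}{2}$ off-diagonal values $c_{ij}$ already pins down the form uniquely, leaving no freedom to independently ``put a large negative value on the transverse direction.'' More seriously, the crux you flag yourself --- proving the sign pattern persists along the entire pencil $\ell(\us,\ut)$, not just at one $\ut_0$ --- is precisely the hard part, and expecting it to follow from the interlacing estimates is not a proof.

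Proposition \ref{prop:Equadratic} supplies exactly this missing ingredient, by induction on $n$: with $\pi(\ell)$ the associated pencil in $\bP(\BBB_{n-1})$ of Notation \ref{not:E4}, one sets $\vq_\ell\coloneq\vd_\ell\widetilde{\vd_{\pi(\ell)}}-\vd_{\pi(\ell)}\widetilde{\vd_\ell}$, establishes the sign and definiteness behaviour of $\vq_\ell$ in Lemmas \ref{lem:E10} and \ref{lem:Eprequadric}, and then takes $\tilde\vq_\ell=\alpha\vq_\ell+\tilde\vq_{\pi(\ell)}$ for $\alpha\gg 0$, with uniformity along the whole pencil handled by a compactness argument. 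Given that result, the proposition you are proving is a short corollary along the lines of your first two paragraphs; without it, your argument has a genuine hole precisely where you expect one.
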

\begin{proof}
    Let $\tilde\vq_{\us,\ut}$ be $\tilde \vq_{\ell(\us,\ut)}$ as that in Proposition \ref{prop:Equadratic}. Then by Proposition \ref{prop:Equadratic}.{(a)}, the formula \eqref{eq842} holds. By Proposition \ref{prop:Equadratic}.{(c)}, the quadratic form $\tilde\vq_{\us,\ut}$ is negatively definite on $\Ker \ell(\us,\ut)=\Ker \vd_{\us}\cap \Ker \vd_{\ut}=\Ker Z_{\us,\ut}$.
    
    By Proposition \ref{prop:boundsforsbd}, for every $\sigma_{\us,\ut}$-semistable object $E$, we have $\lambda_H(E)\in\sc(\ell(\us,\ut))$ as that in \eqref{eq:defSCl}. By Proposition \ref{prop:Equadratic}.{(b)}, we have $\tilde\vq_{\us,\ut}(E)\geq0$. The statement holds.
\end{proof}

\begin{Prop}[Stability of points]\label{prop:stabdpt}
     Let $(X,H)$ be an irreducible smooth polarized variety satisfying  $\Stab^0$ Conjecture \ref{conj:sbd}.  Then for every $\sigma\in\Stab^*_H(X)$, an object $E$ with $\lambda_H(E)=(0,0,\dots,0,c)$ is $\sigma$-stable if and only if $E$ is a skyscraper sheaf up to a homological shift.
\end{Prop}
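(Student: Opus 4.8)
The statement is a ``geometricity from below'' result: the only semistable objects with character $(0,\dots,0,c)$ in $\Stab^*_H(X)$ are shifts of skyscraper sheaves. The key input is Corollary \ref{cor:restrictionall} together with Proposition \ref{prop:reststab}, which let us restrict $\sigma$ down a chain of very ample hypersurfaces. The plan is to reduce to the curve case, where geometricity of $(0,\dots,0,c)$-objects is essentially classical, and then lift back up.

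First I would observe that by the $\glt$-action we may shift $\sigma$ so that the phase of all skyscraper sheaves is $1$ (Corollary \ref{cor:stabdpt1} guarantees they are $\sigma$-stable for $\sigma$ in a suitable $\Stab^{*>d}_H(X)$; for a general $\sigma\in\Stab^*_H(X)$, one uses Corollary \ref{cor:niceimplygeom} via $\sigma\lsmeq\sigma\otimes\cO_X(H)\lsm\sigma[1]$ from Proposition \ref{prop:stabdprop}.(3) and .(4), noting that the conjecture supplies these relations after possibly enlarging the polarization — here one must be slightly careful and argue that $\sigma\in\Stab^*_H(X)$ is already geometric, which follows from Corollary \ref{cor:niceimplygeom} once we know $\sigma\otimes\cO_X(mH)\lsm\sigma[1]$ for some $m$ with $mH$ very ample, and Proposition \ref{prop:stabdprop}.(4) gives this). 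Next, since $\lambda_H(E)=(0,\dots,0,c)$, the object $E$ has $H\ch_i(E)=0$ for $i<n$, so in particular $E$ is supported in dimension zero (its $H$-degree polynomial vanishes); thus $E$ is an iterated extension of objects $\cO_p[k_j]$. The content is to show all these shifts $k_j$ agree and, when $c\neq 0$, that $E$ itself is a single $\cO_p[k]$.

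The main step is a restriction argument. By Bertini, choose a smooth curve $C = X_1 \subset X_2 \subset \dots \subset X_n = X$ cut out by members of very ample linear systems $|m_iH_{X_i}|$. By Proposition \ref{prop:stabdprop}.(4) and Corollary \ref{cor:restrictionall}, after possibly working in $\Stab^{*>M}_H(X)$ (which by Corollary \ref{cor:stabdunique} differs from $\Stab^*_H(X)$ only by a shift, so this costs nothing), the stability condition $\sigma$ restricts along the chain to a stability condition $\sigma|_{\Db(C)}$ on the curve $C$, and an object $F\in\Db(X)$ supported on $C$ is $\sigma$-(semi)stable iff $F|_{\Db(C)}$ is $\sigma|_{\Db(C)}$-(semi)stable (Proposition \ref{prop:reststab}). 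Since $E$ is $0$-dimensionally supported, a generic such $C$ through a given point $p\in\supp(E)$ meets $\supp(E)$, and in fact one can arrange $C$ to pass through any finite set of points; more precisely, for each point $p$ in the support, $E$ is $\sigma$-semistable and supported on a zero-dimensional scheme, which lies inside such a complete intersection curve $C$. Then $E = \iota_{C*}(E_C)$ for $E_C\in\Db(C)$ with $E_C$ supported in dimension zero and $\sigma|_{\Db(C)}$-semistable, with $\lambda_{H|_C}(E_C)=(0,\ast)$ by the computation $[\iota_{C*}]\gamma_{n-1}(\cdots)$ analogous to \eqref{eq6161} and Remark \ref{rem:restcentralcharge}. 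On a curve any stability condition is geometric (the curve case of the $\Stab^0$ conjecture holds, Example \ref{ex:sbcurve}), and a semistable object with $\rk = 0$, $\deg = c$ is, up to shift, a torsion sheaf of length $|c|$; it is stable iff $c=\pm1$, i.e. iff it is $\cO_q[k]$. Pulling back: $E_C=\cO_q[k]$ forces $E=\cO_p[k]$ where $p=\iota_C(q)$.

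The hard part will be the bookkeeping of which family $\Stab^{*>d}_H(X)$ one is allowed to restrict, and ensuring that the restriction is compatible with an \emph{arbitrary} $\sigma\in\Stab^*_H(X)$ rather than one with large separation parameter — this is handled by Proposition \ref{prop:stabdprop}.(4), which gives $\sigma\otimes\cO_X(mH)\lsm\sigma[1]$ once $\sep(\ell(\us,\ut))$ is large enough, combined with the observation that \emph{every} $\sigma_{\us,\ut}\in\Stab^*_H(X)$, after a shift $[\theta]$, has reduced stability condition $\ts_{\ut'}$ with $\ut'$ ranging over a one-parameter family, and for the restriction we only need a \emph{single} hyperplane section at a time, reducing the required bound. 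A second delicate point is passing from ``$E_C$ is a shift of a skyscraper for every curve $C$ through every point of $\supp E$'' to ``$E$ is a single shift of a skyscraper'': one argues that if $\supp(E)$ contained two points or $E$ were a non-split extension, one could choose $C$ through both points (or detect the non-split extension on $C$, using that $\iota_{C*}$ is fully faithful on the relevant $\Hom$'s by the restriction lemma), contradicting the curve-case conclusion; if $c = \pm1$ this forces $E=\cO_p[k]$, and the ``only if'' direction is Remark \ref{rem:stabX}.(2)/Corollary \ref{cor:stabdpt1}. Finally, the phases of all $\cO_p$ agreeing follows because any two points of $X$ are joined by a chain of such curves $C$, along each of which the restricted condition is geometric.
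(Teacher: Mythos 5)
Your overall framing is reasonable, but there is a genuine gap at the very first nontrivial step: you assert that $\lambda_H(E)=(0,\dots,0,c)$ already implies $E$ is supported in dimension zero and hence is an iterated extension of objects $\cO_p[k_j]$, before doing any work. For a general \emph{complex} $E\in\Db(X)$ this is false: on $\bP^1$ the object $\cO(1)\oplus\cO(-1)[1]$ has Chern character $(0,2)$ but is plainly not supported in dimension zero, and an analogous phenomenon can occur in any dimension. Showing that a $\sigma$-\emph{stable} object with this $H$-polarized Chern character really is supported on points is precisely the content of the proposition; your argument assumes it at the outset, which is circular. The paper's proof establishes this by a completely different route: it first shows (via Proposition \ref{prop:quadonstab0} and \cite[Prop.~A.8]{BMS:stabCY3s}) that stability of an object with character $(0,\dots,0,c)$ propagates across \emph{all} of $\Stab^*_H(X)$, and then uses the $\otimes\cO_X(mH)$-invariance of the family together with boundedness of phases (from the support property) and a $\Hom\neq 0$ argument to conclude that some $E\otimes\cO_X(mnH)\cong E$, which forces the zero-dimensional support.

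Two further points. First, Corollary \ref{cor:stabdunique} says the \emph{family} $\Stab^{*>d}_H(X)$ is unique up to shift for a fixed $d$; it does not say $\Stab^{*>M}_H(X)$ is a shift of $\Stab^*_H(X)$ — the former is a strictly smaller open subset, so your claim that replacing $\Stab^*_H(X)$ by $\Stab^{*>M}_H(X)$ ``costs nothing'' is not correct. Second, both directions of the proposition are stated for \emph{every} $\sigma\in\Stab^*_H(X)$; the restriction machinery (Proposition \ref{prop:reststab}, Corollary \ref{cor:restrictionall}, Corollary \ref{cor:stabdpt1}) only directly applies when the separation parameter is large. To reach all $\sigma$ you would still need the quadratic-form propagation argument that the paper uses, and without it the chain of curve restrictions alone does not close the argument. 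It would be worth revisiting the role of Proposition \ref{prop:quadonstab0} in the paper, since that is the real engine of this proof.
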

\begin{proof}
Let  $E$ be a $\tau$-stable object with character $\lambda_H(E)\in\{c\gamma_n(t):t\in\R\cup\{+\infty\}\}$ for some $\tau\in\Stab^*_H(X)$. We first show that all $E\otimes \cO_X(mH)$ are stable with respect to evert stability condition in $\Stab^*_H(X)$.

By Proposition \ref{prop:quadonstab0}, for every $\sigma\in \Stab^*_H(X)$, there exists a quadratic form $\tilde\vq_\sigma$ with $\tilde\vq_\sigma(\lambda_H(E))=0$ offering the support property for $\sigma$. By \cite[Proposition A.8]{BMS:stabCY3s}, if $E$ is stable with respect to one stability condition in $\Stab(\tilde\vq_\sigma,\sigma)$, then it is stable with respect to every stability condition in $\Stab(\tilde\vq_\sigma,\sigma)$. As the path connected set $\Stab_H^*(X)$ can be covered by such $\Stab(\tilde\vq_\sigma,\sigma)$, if $E$ is stable with respect to one stability condition $\sigma$ in $\Stab^*_H(X)$, then it is stable with respect to every stability condition in $\Stab_H^*(X)$. Moreover, note that $E\otimes \cO_X(H)$ is $\sigma\otimes \cO_X(H)$-stable. By the assumption  $\Stab^0$ Conjecture \ref{conj:stabd}.(2), $\sigma\otimes \cO_X(H)\in\Stab_H^*(X)$. So the object $E\otimes \cO_X(H)$ is  stable with respect to every stability condition in $\Stab^*_H(X)$.\\

  `$\impliedby$':  By Corollary \ref{cor:stabdpt1}, all skyscraper sheaves are $\sigma$-stable for some $\sigma\in\Stab^*_H(X)$. So they are stable with respect to every stability condition in $\Stab^*_H(X)$.\\

  `$\implies$': Let $E$ be a $\sigma$-stable object with $\lambda_H(E)=\gamma_n(+\infty)$, then by the observation in the first paragraph, the object  $E\otimes \cO_X(mH)$ is $\sigma$-stable for every $m\in\Z$. Note that $\lambda_H(E\otimes \cO_X(mH))=\lambda_H(E)$, and $\phi^\pm(E\otimes \cO_X(mH))$ is bounded, there there exists $m>0$ with $\phi_\sigma(E)=\phi_\sigma(E\otimes \cO_X(mH))$. It follows that $\phi_\sigma(E)=\phi_\sigma(E\otimes \cO_X(mnH))$ for every $n\in\Z$.  Note that $\Hom(E,E\otimes \cO_X(mnH))\neq0$ when $mnH$ is very ample, we must have $E\cong E\otimes \cO_X(mnH)$, so $E$ is supported on some points. As $E$ is $\sigma$-stable, it is supported on one point $p$, therefore extended by $\cO_p[i]$'s. As $\cO_p[i]$ is $\sigma$-stable, $E$ can only be $\cO_p[i]$ for some $i\in\Z$.
\end{proof}
\begin{Rem}
    The construction of $\tilde \vq_{\ell(\ut,\us)}=\tilde \vq_\ell$ in Proposition \ref{prop:Equadratic} is by induction from $\tilde \vq_{\pi(\ell)}$. 
    \begin{align*}
     \vq_\ell&=\vd_{\ell}\widetilde{\vd_{\pi(\ell)}}-\vd_{\pi(\ell)}\widetilde{\vd_\ell};\\
        \tilde \vq_\ell &= \alpha \vq_\ell +\tilde\vq_{\pi(\ell)}\text{ for some }\alpha\in(0,\alpha(\ell)).
    \end{align*}
   The leading terms in the expressions for $\vd$ are given by
    \begin{align*}
        \vd_\ell=&-H\ch_{n-1}+ a_2 H^2\ch_{n-2} + a_3H^3\ch_{n-3}+\dots\\
        \widetilde{\vd_\ell}=& -n\ch_{n}+  (n-1)a_2 H\ch_{n-1} + (n-2)a_3H^2\ch_{n-2}+\dots\\
        \vd_{\pi(\ell)}=& - H^2\ch_{n-2} + bH^3\ch_{n-3}+\dots\\
        \widetilde{\vd_{\pi(\ell)}}=&-(n-1)H\ch_{n-1}+  b H^2\ch_{n-2} +\dots.
    \end{align*}

One can readily verify that for $n \leq 3$, the expression $\tilde \vq_\ell$ coincides with the classical formulas.
    
When  $n=1$, we have $\vq_\ell =0$. When $n=2$, $\vq_\ell= \Delta_H=(H\ch_1)^2-2H^2\ch_0\ch_2$. 
When $n=3$, the computation yields
\begin{align*}
    \vq_\ell&=2(H\ch_2)^2-3(H^2\ch_1)\ch_3-bH^2\ch_1H\ch_2+3bH^3\ch_0\ch_3+(a_3+ba_2)\Delta_H\\
    &=\tfrac{1}{2}\nabla^b_H+(a_3+ba_2-\tfrac{b^2}{2})\Delta_H. 
\end{align*}

matching the expression appearing in \cite[Conjecture 4.1]{BMS:stabCY3s}. 
\end{Rem}

\appendix
\section{Degenerate Loci}
\label{sec:degenloci}
\subsection{Reduced stability conditions with a given heart}
In general we are interested in under what assumption a reduced stability condition $\ts$ can be determined by the data $(\cA_{\ts},B_{\ts})$, or even just by the heart $\cA_{\ts}$. This could potentially  lead to alternative definitions for reduced stability conditions that are independent of the stability condition.

Unfortunately, we are currently unable to provide satisfactory answers to either question. Regarding the data $(\cA_{\ts},B_{\ts})$, we believe that examples may exist where distinct reduced stability conditions share the same such data. However, we are not yet able to construct any explicit examples.

As for the heart $\cA_{\ts}$, one can roughly distinguish two types (with possible intermediate cases) of stability conditions or hearts of bounded t-structures. The first is the algebraic type, such as those arising from quiver representations. In this case, the image of stable characters under the central charge has a `gap', see \cite{Takeda:fukaya, Dongjian:JH}. Intuitively, as the kernel of the reduced charge deforms within this gap, the heart remains unchanged. This gives rise to a natural wall-and-chamber structure on $\sb(\cT)$ for such algebraic-type hearts. Moreover, as we will see in Corollary \ref{cor:stabdegen}, at an interior point of each chamber, the reduced stability condition $\ts$ is determined by $(\cA_{\ts}, B_{\ts})$.

The second is the geometric type, such as stability conditions $\sigma$ on $\Db(X)$ satisfying $\sigma \otimes \cO_X(H) \lsm \sigma[1]$. In this case, the reduced stability manifold behaves more like a parameter space for the hearts of bounded t-structures. That is, the reduced stability condition $\ts$ is expected to be determined solely by $\cA_{\ts}$. However, we are not yet able to give a rigorous proof of this intuitive expectation.

In this appendix, we focus on the algebraic type setting, establishing some foundational properties that may be useful in future developments.
\begin{Def}\label{def:sbA}
    Let $\cA$ be the  heart of a bounded t-structure of $\cT$, we denote \begin{align*}
    \sb^{}(\cA) & \coloneqq\{\ts\in\sb(\cT)\mid \cA_{\ts}=\cA\};\\
    \sbr(\cA) & \coloneqq \{f\in\lbdd\mid f(E)\geq 0\text{ for all }E\in \cA\}.
\end{align*}
\end{Def}

We denote by $\sbr^\circ(\cA)\coloneq \{f\in \sbr(\cA):\forall g\in \sbr(\cA), \exists \;\epsilon>0 \text{ such that }f-\epsilon g\in \sbr(\cA)\}$, the interior of $\sbr(\cA)$ in $\bP\lbdd$. Let $\sb^\circ(\cA)\coloneq (\Forg|_{\sbr(\cA)})^{-1}(\sbr^\circ(\cA))$ be the subset of reduced stability conditions with heart $\cA$ whose reduced central charge lies in $\sbr^\circ(\cA)$.

The following lemma shows that the reduced stability condition with heart $\cA$ can always deform to $\sb^\circ(\cA)$. 

\begin{Lem}\label{lem:sbrA}
     Let $\sigma=(\cA,f+ig)$ be a stability condition  and $h\in \sbr^\circ(\cA)$. Then $(\cA,f+i((1-t)g+th))$ is a stability condition for every $t\in[0,1]$.
\end{Lem}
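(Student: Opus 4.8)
\textbf{Proof proposal for Lemma~\ref{lem:sbrA}.}

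The plan is to show that each pair $\sigma_t\coloneqq(\cA,f+i((1-t)g+th))$ is a stability condition on $\cT$, and then invoke the convexity result Proposition~\ref{prop:convex} together with Proposition~\ref{prop:localisom} to conclude that the whole segment lies in a single fiber of $\pi_\sim$ over $\sb^\circ(\cA)$. First I would verify that each $\sigma_t$ is a \emph{pre-}stability condition on the heart $\cA$; that is, that $Z_t\coloneqq f+i((1-t)g+th)$ is a stability function on $\cA$, meaning $Z_t(E)\in\H$ for every nonzero $E\in\cA$. Writing $g_t\coloneqq(1-t)g+th$, the imaginary part $\Im Z_t=g_t$ is a convex combination of $g$ (which is strictly positive on $\cA\setminus\{E:\Im Z_\sigma(E)=0\}$ and vanishes only where $\Re Z_\sigma<0$) and $h\in\sbr(\cA)$ (which is $\geq 0$ on all of $\cA$). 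Hence $g_t(E)>0$ whenever $g(E)>0$; and when $g(E)=0$ we need $g_t(E)=t\,h(E)\geq 0$, with the further check that if $g_t(E)=0$ then $\Re Z_t(E)=f(E)=\Re Z_\sigma(E)<0$. For $t<1$ the latter is automatic since $g_t(E)=0$ forces $g(E)=0$. For $t=1$ one must use that $h\in\sbr^\circ(\cA)$, the \emph{interior}: I expect this forces $h(E)>0$ for every nonzero $E\in\cA$ except those $E$ for which $\cA$-subobjects and quotients force a degeneration, and in particular rules out $h(E)=0=g(E)$ with $f(E)=0$ (which would contradict $h$ being an interior point, since one could then subtract a small multiple of a suitable $g'\in\sbr(\cA)$ with $g'(E)>0$ and leave $\sbr(\cA)$). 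This is the step I would write out most carefully.

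Next I would check the Harder--Narasimhan property for $\sigma_t$. Since $\sigma_0=\sigma$ is a genuine stability condition with heart $\cA$ and $\sigma_1$-stability functions vary along a line in the space of central charges on the \emph{fixed} heart $\cA$, the standard argument (as in \cite{Bridgeland:Stab}, or the noetherian/artinian discreteness coming from the support property of $\sigma$) gives HN filtrations for each $\sigma_t$; the key point is that the support property quadratic form $Q$ for $\sigma$ continues to control $\sigma_t$, which I address next. For the support property, let $Q$ be a quadratic form witnessing the support property of $\sigma_0$. The kernel $\Ker Z_0$ is negative definite for $Q$. One must produce a (possibly $t$-dependent) quadratic form for $\sigma_t$; but here I would rather use Proposition~\ref{prop:convex}: once I know $\sigma_1=(\cA,f+ih)$ is itself a stability condition with $d(\cP_{\sigma_0},\cP_{\sigma_1})<1$ (which follows from $\cA_{\sigma_0}=\cA_{\sigma_1}=\cA$ and both central charges mapping $\cA\setminus\{0\}$ into $\H$ — this gives $d\leq 1$ — combined with the interior hypothesis on $h$ to rule out $d=1$, exactly as in the proof of {(3)}$\Rightarrow${(2)} in Proposition~\ref{prop:convex}), then Proposition~\ref{prop:convex}.{(1)} immediately gives that $(\cA,aZ_0+bZ_1)$ is a stability condition for all $a,b>0$, and these are precisely the $\sigma_t$ up to positive scaling.

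Therefore the crux reduces to two things: (i) $\sigma_1=(\cA,f+ih)$ is a stability condition, and (ii) $d(\cP_{\sigma_0},\cP_{\sigma_1})<1$. For (i), the stability-function part is the interior argument above, the HN property follows since $h$ differs from $g$ only in the imaginary direction on the fixed heart $\cA$ and one can run Bridgeland's HN existence argument using that $\cA$ has the needed finiteness from $\sigma_0$'s support property, and the support property for $\sigma_1$ can be extracted by an application of Lemma~\ref{lem:B10} (deforming $\Ker Z_0$ to $\Ker Z_1$ inside a region containing no $\sigma_0$-stable characters, using that $h\geq 0$ on $\cA$ so any $\sigma_0$-semistable $E\in\cA$ with $Q(\lambda(E))<0$ cannot have $h(\lambda(E))<0$ — this is the analogue of the argument in the {(2)}$\Rightarrow${(1)} part of Proposition~\ref{prop:convex}). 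For (ii), $\cA_{\sigma_0}=\cA_{\sigma_1}$ forces $d(\cP_{\sigma_0},\cP_{\sigma_1})\le 1$, and strict inequality is where I expect the main obstacle to lie: I would argue by contradiction using an infinite sequence of $\sigma_0$-stable objects $E_n\in\cA$ with $\phi_{\sigma_0}(E_n)\to 1$ and $\phi^+_{\sigma_1}(E_n)\to 0$ (or the mirror situation), showing via $h\in\sbr^\circ(\cA)$ that $h(\lambda(E_n))$ cannot tend to $0$ relative to $f(\lambda(E_n))$, again mirroring the proof of Proposition~\ref{prop:convex}. Once (i) and (ii) hold, Proposition~\ref{prop:convex} closes the argument and, since the reduced central charges $\Im Z_t=g_t$ all lie in $\sbr^\circ(\cA)$ by convexity of $\sbr^\circ(\cA)$, we get $\pi_\sim(\sigma_t)\in\sb^\circ(\cA)$ for all $t\in[0,1]$.
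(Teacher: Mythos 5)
Your plan misapplies Proposition~\ref{prop:convex}. That proposition is stated (and proved) under the standing hypothesis $\Im Z = \Im Z'$; here $\Im Z_0 = g$ and $\Im Z_1 = h$ differ, so you cannot invoke it directly. To repair this you would need to pass to the half-shifts $\sigma_0[\tfrac12]$, $\sigma_1[\tfrac12]$, whose central charges $g - if$ and $h - if$ do share the imaginary part $-f$, and then rotate back and use Lemma~\ref{lem:StabQconstantheart} to identify the resulting hearts with $\cA$; none of this appears in your write-up. More importantly, even granting that fix, your reduction to ``(i) $\sigma_1$ is a stability condition'' and ``(ii) $d(\cP_{\sigma_0},\cP_{\sigma_1})<1$'' does not actually shrink the problem: step (i) already requires producing a support-property quadratic form for $\sigma_1$, and you only gesture at Lemma~\ref{lem:B10} without identifying how the roles of $h,f_1,f_2$ would have to be assigned (they must be $f,g,h$, i.e.\ the common \emph{real} part plays the role of $h$ in Lemma~\ref{lem:B10}), nor do you carry out the check that no $\sigma_0$-semistable class lies in $M_d$. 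Step (ii) is likewise only sketched. In short, the proposal defers the hard work to steps that are as hard as the lemma itself.

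The paper's proof is direct and does not route through Proposition~\ref{prop:convex} at all. It produces a \emph{single} quadratic form that works for the entire family at once: starting from a form $Q_1 = K(f^2+g^2) - \sum x_i^2$ giving the support property for $\sigma$, it uses the interior hypothesis to pick $\epsilon>0$ with $h - \epsilon g \in \sbr(\cA)$ and sets $Q = Q_1 + N(h-\epsilon g)g$ for $N\gg K/\epsilon$. Because $(h-\epsilon g)$ and $g$ are both nonnegative on $\cA$ (up to a common sign under shift), one has $Q \geq Q_1 \geq 0$ on all $\sigma$-semistable classes, so $Q$ still witnesses the support property for $\sigma$; and on $\Ker(f + i(sg+h))$ one computes $Q = (K - N\epsilon - Ns)g^2 - \sum x_i^2 < 0$ for all $s\geq0$ by the choice of $N$. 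A single application of~\cite[Proposition A.5]{BMS:stabCY3s} then gives the whole family of stability conditions, and Lemma~\ref{lem:StabQconstantheart} (using the interior hypothesis once more to verify $Z_t(\cA\setminus\{0\})\subset\H$, including the boundary case $t=1$) shows the heart stays $\cA$. Your instinct that $h\in\sbr^\circ(\cA)$ must be exploited to rule out degeneracies is correct, but the way it enters the actual proof is in the construction of the auxiliary form $Q$ via the slack $\epsilon$, not in a distance estimate.
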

\begin{proof}
Let $\{x_i\}$ be a basis for $\lbdd$. Then there exists $K\gg 1$ such that the quadratic form $Q_1=K(f^2+g^2) -\sum x_i^2$ gives the support property for $\sigma$.

By the assumption that $h\in\sbr^\circ(\cA)$, there exist $\epsilon>0$ such that $h-\epsilon g\in\sbr(\cA)$. 
    
    Let $Q=Q_1+ N(h-\epsilon g)g$ for some $N\gg K/\epsilon$. Then for every $\sigma$-semistable object $E\in \cA$, as $h-\epsilon g,g\in\sbr(\cA)$, we have $(h(E)-\epsilon g(E))g(E)\geq 0$. It follows that $Q(E)\geq Q_1(E)\geq 0$. 

   Note that $g(\Ker(f+ig))=0$, it is clear that $Q|_{\Ker(f+ig)}=Q_1|_{\Ker(f+ig)}$ is negative definite. So $Q$ gives the support property for $\sigma$ as well.\\
    
    For every $s\geq 0$, the restricted form $Q|_{\Ker(f+i(sg+h))}=(K-N\epsilon-Ns)g^2-\sum x_i^2$ is negative definite by the choice of $N$.

    By \cite[Proposition A.5]{BMS:stabCY3s}, there is a family of stability conditions $\{(\cA_t,f+i((1-t)g+th))\}_{t\in[0,1]}$. The only remaining task is to show that the heart structures $\cA_t$ are all the same.
    
    For every non-zero object $E\in\cA$, as $g,h\in\sbr(\cA)$, we have $((1-t)g+th)(E)\geq 0$. When $((1-t)g+th)(E)= 0$, since $h-\epsilon g\in \sbr(\cA)$, we must have $g(E)=0$, which implies $f(E)<0$. Therefore, $(f+i((1-t)g+th))(\cA\setminus\{0\})\subset \H$. 
    
    By Lemma \ref{lem:StabQconstantheart}, the heart structures $\cA_t=\cA$ for all $t\in[0,1]$.  So the statement holds.
\end{proof}

\begin{Lem}\label{lem:StabQconstantheart}
   Let $\gamma:[0,1]\to \Stab(\cT)$ be a path. Assume that $Z_{\gamma(t)}(\cA_{\gamma(0)}\setminus\{0\})\subset \H$ for every $t\in[0,1]$, then $\cA_{\gamma(t)}=\cA_{\gamma(0)}$ for every $t\in[0,1]$.
\end{Lem}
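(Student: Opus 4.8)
The plan is to show that the heart, being a fixed bounded $t$-structure, cannot move continuously: the set of $t\in[0,1]$ for which $\cA_{\gamma(t)}=\cA_{\gamma(0)}$ is both open and closed, hence all of $[0,1]$. The key tool is the hypothesis $Z_{\gamma(t)}(\cA_{\gamma(0)}\setminus\{0\})\subset\H$, which says that the central charge at \emph{every} point along the path still assigns strictly positive imaginary part (or negative real part when the imaginary part vanishes) to every nonzero object of the \emph{starting} heart. Intuitively this forces every object of $\cA_{\gamma(0)}$ to have all its HN phases with respect to $\gamma(t)$ in the interval $(0,1]$; by symmetry (running the argument in reverse at an interior point) the two hearts will have to coincide.

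First I would set $S\coloneq\{t\in[0,1]\mid \cA_{\gamma(t)}=\cA_{\gamma(0)}\}$, which is nonempty since $0\in S$. For closedness and openness I would combine Lemmas \ref{lem:nested}, \ref{lem:sameImimpliesheart}, and Proposition \ref{prop:convex}, but in fact the cleanest route is a direct local argument. Fix $t_0\in[0,1]$ with $\cA_{\gamma(t_0)}=\cA_{\gamma(0)}\eqqcolon\cA$. For $t$ near $t_0$ the distance $d(\cP_{\gamma(t)},\cP_{\gamma(t_0)})$ is small, say $<\tfrac13$, so every nonzero $E\in\cA$ satisfies $\phi^\pm_{\gamma(t)}(E)\in(-\tfrac13,\tfrac43)$, i.e.\ $\cA\subset\cP_{\gamma(t)}((-\tfrac13,\tfrac43))\subset\cA_{\gamma(t)}[-1,0,1]$. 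Now the hypothesis $Z_{\gamma(t)}(\cA\setminus\{0\})\subset\H$ rules out the shifted pieces: if $E\in\cA$ had a nonzero HN factor of phase in $(1,\tfrac43)$ with respect to $\gamma(t)$, that factor $A$ would lie in $\cA_{\gamma(t)}[1]$, forcing $\Im Z_{\gamma(t)}(A)<0$, hence (since $A\in\cP_{\gamma(t)}((-\tfrac13,\tfrac43))$ and $A$ is built from objects of $\cA$ by the sub/quotient structure) a contradiction with $Z_{\gamma(t)}(\cA\setminus\{0\})\subset\H$ applied to the relevant subquotients of $E$. The argument for the lower end is symmetric. This shows $\cA\subset\cA_{\gamma(t)}$ for $t$ near $t_0$; applying the same reasoning with the roles of $\gamma(t)$ and $\gamma(t_0)$ interchanged — which is legitimate because $d(\cP_{\gamma(t)},\cP_{\gamma(t_0)})<\tfrac13<1$ lets us invoke Lemma \ref{lem:d<1} once we check $\Im Z_{\gamma(t)}=\Im Z_{\gamma(t_0)}$ is not needed, only $d<1$ — gives the reverse inclusion, so $\cA_{\gamma(t)}=\cA$ for all $t$ in a neighborhood of $t_0$. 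Hence $S$ is open; the same neighborhood argument applied at a limit point shows $S$ is closed. Connectedness of $[0,1]$ finishes it.

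The step I expect to be the main obstacle is making rigorous the claim "a negative value of $\Im Z_{\gamma(t)}$ on an HN factor contradicts $Z_{\gamma(t)}(\cA\setminus\{0\})\subset\H$." The subtlety is that the HN factors of $E\in\cA$ with respect to $\gamma(t)$ need not themselves lie in $\cA$; they lie in $\cA[-1,0,1]$. The fix is the standard torsion-pair bookkeeping used repeatedly in this paper (cf.\ the proof of Lemma \ref{lem:d<1}): write the distinguished triangle $E_+\to E\to E_-$ with $E_+\in\cP_{\gamma(t)}((1,\tfrac43])$ hence $E_+\in\cA_{\gamma(t)}[1]$, and $E_-\in\cP_{\gamma(t)}((-\tfrac13,1])$. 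Since $E\in\cA$ and $E_+[-1]\in\cA_{\gamma(t)}\subset\cP_{\gamma(t)}((0,1])$ which sits in $\cA[-1,0]$-controlled range, one gets $\Hom(E_+,E)\ne0$ (it is the maximal HN piece) while simultaneously $\Hom(E_+,E)$ must vanish by phase reasons once one knows $E_+\in\cA[0,1]$ with the wrong phases — exactly the contradiction engine of Lemma \ref{lem:sameImimpliesheart}. So in fact the most economical writeup is to reduce the lemma directly to Lemmas \ref{lem:nested} and \ref{lem:sameImimpliesheart}: the hypothesis $Z_{\gamma(t)}(\cA_{\gamma(0)}\setminus\{0\})\subset\H$ guarantees $\cP_{\gamma(t)}(1)\supseteq\{E\in\cA_{\gamma(0)}:\Im Z_{\gamma(t)}(E)=0\}$ and, after a short argument, that the path stays inside a region where $\cP_{\gamma(t)}(1)$ is constant, so Lemma \ref{lem:sameImimpliesheart} applies verbatim to conclude $\cA_{\gamma(t)}=\cA_{\gamma(0)}$. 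I would write the proof in that reduced form, spending the bulk of the words on verifying the hypotheses of Lemma \ref{lem:sameImimpliesheart} along the path.
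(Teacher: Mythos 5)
Your overall strategy (a local argument plus connectedness, exploiting the hypothesis to rule out the shifted pieces) is the same as the paper's, and you correctly identified the key technical obstacle: an HN factor $A$ of $E\in\cA_{\gamma(0)}$ taken with respect to $\gamma(t)$ need not itself lie in $\cA_{\gamma(0)}$, so the hypothesis $Z_{\gamma(t)}(\cA_{\gamma(0)}\setminus\{0\})\subset\H$ cannot be applied to $A$ directly. But your preferred resolution — reducing to Lemma \ref{lem:sameImimpliesheart} by arguing that $\cP_{\gamma(t)}(1)$ is constant along the path — is not repairable. The hypothesis of Lemma \ref{lem:sameImimpliesheart} requires the slice $\cP_\sigma(1)$ to be literally the same for all $\sigma$ in the path, and Lemma \ref{lem:nested} only delivers that when $\Im Z_\sigma$ is constant; here $\Im Z_{\gamma(t)}$ is allowed to vary (that is precisely what makes the lemma useful, e.g.\ in the proof of Lemma \ref{lem:sbrA}). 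As $\Im Z_{\gamma(t)}$ changes, the set of classes on which it vanishes changes, so $\cP_{\gamma(t)}(1)$ certainly moves; your claimed inclusion $\cP_{\gamma(t)}(1)\supseteq\{E\in\cA_{\gamma(0)}:\Im Z_{\gamma(t)}(E)=0\}$ also fails because those $E$ need not be $\gamma(t)$-semistable. Similarly, invoking Lemma \ref{lem:d<1} for the reverse inclusion is illegitimate for exactly the same reason (it too assumes equal imaginary parts), though this worry is unnecessary: once you have $\cA_{\gamma(0)}\subseteq\cA_{\gamma(t)}$, both being hearts of bounded $t$-structures forces equality.

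What the paper actually does is complete the torsion-pair argument you sketched as route (i): after observing that the offending HN factor $A:=\HN^-_{\gamma(t)}(E)$ (or $\HN^+_{\gamma(t)}(E)$ in the symmetric case) has $Z_{\gamma(t)}(A)\notin\H$ and hence $A\notin\cA_{\gamma(0)}$, it takes one \emph{further} HN filtration of $A$ with respect to $\gamma(0)$ to extract a piece $G=\HN^-_{\gamma(0)}(A)\in\cA_{\gamma(0)}[-1]$ with $\Hom(A,G)\neq0$, and then runs the long exact sequence for $\Hom(-,G)$ applied to the triangle $F\to E\to A\to F[1]$: the two outer terms vanish for phase reasons ($F[1]\in\cP_{\gamma(0)}(>0)$ and $E\in\cA_{\gamma(0)}$, while $G\in\cA_{\gamma(0)}[-1]$), so the middle term vanishes, contradicting $\Hom(A,G)\ne0$. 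Your sketch stops one HN filtration short: writing $E_+\in\cA[0,1]$ does not by itself give $\Hom(E_+,E)=0$, since $\Hom(\cA,\cA)$ is nonzero; you need to peel off the $\cA[1]$-part of $E_+$ to get the vanishing, and that is exactly the extra step.
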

\begin{proof}
    By cutting the path into pieces if necessary, we may assume that $d(  {\gamma(t_1)},  {\gamma(t_2)})<\tfrac{1}{4}$.

    Let $E$ be a non-zero object in $\cA_{\gamma(0)}$. Suppose $E\not\in \cA_{\gamma(t)}$, then we have $\phi^-_{  {\gamma(t)}}(E)\leq 0$ or $\phi^+_{  {\gamma(t)}}(E)>1$. 
    
    Assume $\phi^-_{  {\gamma(t)}}(E)\leq 0$, then as $d(  {\gamma(t_1)},  {\gamma(t_2)})<\tfrac{1}{4}$, we have $-\tfrac{1}{4}<\phi^-_{  {\gamma(t)}}(E)\leq 0$. In particular, $Z_{\gamma(t)}(\HN^-_{  {\gamma(t)}}(E))\not\in\H$. 

    By the assumption that $Z_{\gamma(t)}(\cA_{\gamma(0)})\subset \H$, the object $\HN^-_{{\gamma(t)}}(E)\not\in \cA_{{\gamma(0)}}$. Since $d(  {\gamma(t_1)},  {\gamma(t_2)})<\tfrac{1}{4}$ and $-\tfrac{1}{4}<\phi^-_{  {\gamma(t)}}(E)\leq 0$,  we have $\phi^\pm_{\gamma(0)}(\HN^-_{{\gamma(t)}}(E))\in (-\tfrac{1}{2},\tfrac{1}{4})$. It follows that $\phi^-_{\gamma(0)}(\HN^-_{{\gamma(t)}}(E))\in(-\tfrac{1}{2},0)$. In particular, the object $G\coloneq \HN^-_{\gamma(0)}(\HN^-_{{\gamma(t)}}(E))\in \cA_{\gamma(0)}[-1]$ satisfies 
    \begin{align}\label{eq:B22}
        \Hom(\HN^-_{{\gamma(t)}}(E),G)\neq 0.
    \end{align}
    Apply $\Hom(-,G)$ to the distinguished triangle $F\to E\xrightarrow{\ev}\HN^-_{{\gamma(t)}}(E)\to F[1]$, we get the long exact sequence 
    \begin{align*}
       \dots \to \Hom( F[1],G)\to\Hom(\HN^-_{{\gamma(t)}}(E),G)\to \Hom(E,G)\to \dots
    \end{align*}
    Note that $\HN^-_{\gamma(t)}(F)>\HN^-_{{\gamma(t)}}(E)>-\tfrac{1}{4}$ and $d(\gamma(0),\gamma(t))<\tfrac{1}{4}$, we have $\HN^-_{\gamma(0)}(F)>-\tfrac{1}{2}$. It follows that $\Hom(F[1],G)=0$. As $E\in\cA_{\gamma(0)}$, we have $\Hom(E,G)=0$. This leads to the contradiction with \eqref{eq:B22}.

    So we must have $E\in \cA_{\gamma(t)}$. It follows $\cA_{\gamma(0)}\subseteq \cA_{\gamma(t)}$. As both of them are bounded heart structures, we have $\cA_{\gamma(0)}=\cA_{\gamma(t)}$.
\end{proof}

\begin{Lem}\label{lem:smalldeform}
    The space $\ta(\ts)$ is open convex in $\lbdd$ and homeomorphic to the fiber $(\pi_\sim)^{-1}(\ts)$. 
    
    For every $f\in \ta(\ts)$, $c>0$, and $d\in\R$, the element $cf+dB_{\ts}\in \ta(\ts)$. 
    
    For every compact subset $S\subset \ta(\ts)$, there exists an open neighborhood $U\ni \ts$ such that $S\subset \sb(\ttau)$ for every $\ttau\in U$.
\end{Lem}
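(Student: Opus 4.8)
\textbf{Proof proposal for Lemma \ref{lem:smalldeform}.}

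The plan is to build everything on top of Proposition \ref{prop:convex} and Proposition \ref{prop:localisom}, which are the structural tools already at our disposal. First I would fix a representative $\sigma = (\cA_{\ts}, f_0 + iB_{\ts})$ of $\ts$. The openness and convexity of $\ta(\ts)$ in $\lbdd$: openness comes directly from Proposition \ref{prop:convex}, part (3), applied locally — if $(\cA_{\ts}, f + iB_{\ts})$ is a stability condition, then by Theorem \ref{thm:spaceasamfd} (local isomorphism of $\Forg_Z$) all nearby central charges $f' + iB_{\ts}$ with $f'$ close to $f$ still give stability conditions with the same heart, hence $f' \in \ta(\ts)$; the point is that the heart cannot jump since by Lemma \ref{lem:d<1} (or directly Proposition \ref{prop:convex}, part (2) $\Leftrightarrow$ (1)) staying in the same $\sim$-class forces the heart to be constant. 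Convexity is exactly Proposition \ref{prop:convex}, part (1) $\Leftrightarrow$ (4): given $f_1, f_2 \in \ta(\ts)$, the stability conditions $(\cA_{\ts}, f_i + iB_{\ts})$ have $\Im Z$ equal and are $\sim$-equivalent (both to $\sigma$, via the convexity of $\sim$-fibers in the real direction — the straight-line path has constant imaginary part), so $(\cA_{\ts}, (af_1 + bf_2) + iB_{\ts})$ is a stability condition for all $a, b > 0$; normalizing $a + b = 1$ gives convexity, and the cone statement $cf + dB_{\ts} \in \ta(\ts)$ for $c > 0, d \in \R$ follows because this central charge lies on the $\glt$-orbit of $(\cA_{\ts}, f + iB_{\ts})$ (a shear plus scaling), which preserves the property of being a stability condition and does not change the heart when $c>0$ keeps the imaginary part a positive multiple of $B_{\ts}$ — more precisely one checks $d(\cP, \cP') < 1$ directly since the slicings differ by a $\glt$-element with $g(0) = 0$.

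Next, the homeomorphism $\ta(\ts) \cong (\pi_\sim)^{-1}(\ts)$. The map is $f \mapsto (\cA_{\ts}, f + iB_{\ts})$; it is a bijection onto the fiber by the definition of $\ta(\ts)$ and because (as just noted) every stability condition $\sim$-equivalent to $\sigma$ has the same heart and the same $\Im Z = B_{\ts}$, so is determined by its $\Re Z$. Continuity in both directions: the forward map is the composition of $f \mapsto f + iB_{\ts}$ (continuous $\lbdd \to \Hom(\Lambda, \C)$) followed by $\Forg'^{-1}$ restricted to a neighborhood, which is a local homeomorphism by Theorem \ref{thm:spaceasamfd}; but the fiber might not be contained in a single such neighborhood, so I would argue instead that $\Forg' = \Forg_Z$ restricted to the fiber $(\pi_\sim)^{-1}(\ts)$ is a continuous injection whose image is $\{f + iB_{\ts} : f \in \ta(\ts)\}$, and that $\Forg'$ is an open map onto its image (being a local homeomorphism), hence restricts to a homeomorphism of the fiber onto $\ta(\ts) + iB_{\ts} \cong \ta(\ts)$. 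This uses that $\Stab(\cT)$ carries the subspace topology from $\Hom(\Lambda, \C)$ locally, and that the fiber, being closed in $\Forg'^{-1}(\lbdd\text{-slice})$ and the slice being homeomorphic to $\lbdd$ via $\Forg'$, inherits the right topology.

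The last statement — given compact $S \subset \ta(\ts)$, there is an open $U \ni \ts$ in $\sb(\cT)$ with $S \subset \ta(\ttau)$ for all $\ttau \in U$ — is the part I expect to be the main obstacle, because it requires uniformity of the deformation over a compact set. The strategy: for each $f \in S$, the stability condition $\sigma_f := (\cA_{\ts}, f + iB_{\ts})$ has, by \cite[Proposition A.5]{BMS:stabCY3s}, an open neighborhood in $\Stab(\cT)$ of stability conditions obtained by deforming the central charge within a region where a fixed quadratic form $Q_f$ remains negative-definite on the kernel; in particular there is $\epsilon_f > 0$ such that $(\cA_{\ts}, f + i(B_{\ts} + g))$ is a stability condition with heart $\cA_{\ts}$ whenever $\|g\| < \epsilon_f$ (using Lemma \ref{lem:StabQconstantheart} to keep the heart constant, as $f(\cA_{\ts} \setminus \{0\}) \subset$ negative reals forces $Z(\cA_{\ts} \setminus \{0\}) \subset \H$ for small $g$). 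Cover $S$ by finitely many such balls and take $\epsilon = \min \epsilon_{f_i} > 0$; then $U := \Forg^{-1}(\{B_{\ts} + g : \|g\| < \epsilon\}) \cap (\text{local chart of } \ts)$ works, because for $\ttau \in U$ with $B_{\ttau} = B_{\ts} + g$, each $f \in S$ gives a stability condition $(\cA_{\ts}, f + iB_{\ttau})$, and one must check its $\pi_\sim$-class is $\ttau$ — this follows since it is connected through stability conditions (along $(\cA_{\ts}, f + i(B_{\ts} + tg))$, $t \in [0,1]$) with constant imaginary-part-trajectory matching the path defining $\ttau$ in the local chart, so by Proposition \ref{prop:localisom} and the convexity of $\sim$-fibers it is indeed $\ttau$. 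Hence $f \in \ta(\ttau)$, i.e. $S \subset \ta(\ttau)$. The delicate point is matching the $\pi_\sim$-class, which requires care with the local chart homeomorphism of Proposition \ref{prop:localisom} and the fact that deformation paths with the same reduced-central-charge endpoint land in the same $\sim$-class; I would handle this by working entirely inside one chart $U$ where $\Forg|_U$ is a homeomorphism onto an open subset of $\lbdd$, so that $\ttau$ is unambiguously the unique preimage of $B_{\ts} + g$.
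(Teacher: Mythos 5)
The first four claims of the lemma — openness, convexity, the cone property $cf + dB_{\ts} \in \ta(\ts)$, and the homeomorphism with $(\pi_\sim)^{-1}(\ts)$ — you handle in essentially the same way as the paper (the paper cites Lemmas \ref{lem:nested} and \ref{lem:sameImimpliesheart} for openness where you cite Lemma \ref{lem:d<1}, but the content is the same), so those parts are fine.

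The treatment of a compact $S \subset \ta(\ts)$ contains a genuine gap. You assert that for each $f \in S$ there is $\epsilon_f > 0$ such that $(\cA_{\ts}, f + i(B_{\ts} + g))$ is a stability condition \emph{with heart $\cA_{\ts}$} whenever $\|g\| < \epsilon_f$, justified by Lemma \ref{lem:StabQconstantheart} and the statement that $f(\cA_{\ts} \setminus \{0\})$ lies in the negative reals. The latter is false: being a representative only forces $f(E) < 0$ when $B_{\ts}(E) = 0$, and for $E$ with $B_{\ts}(E) > 0$ the value $f(E)$ can have either sign. Worse, the heart genuinely changes when you perturb the imaginary part: take $E \in \cP_{\ts}(1)$, so $B_{\ts}(E) = 0$ and $f(E) < 0$, and a direction $g$ with $g(E) < 0$. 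Then $(B_{\ts} + g)(E) < 0$, so $f(E) + i(B_{\ts} + g)(E)$ lies in the third quadrant, outside $\H$; the pair $(\cA_{\ts}, f + i(B_{\ts} + g))$ is not even a pre-stability condition, and $\cA_{\ts}$ is \emph{not} the heart of the deformed stability condition. Concretely on a curve with $\ts = \ts_m$, $m \in \Z$: a line bundle $\cL$ of degree $m$ has $\cL[1] \in \cA_m$ with $\vd_m(\cL[1]) = 0$, but $\vd_{m-\epsilon}(\cL[1]) = -\epsilon < 0$, so $\cL[1] \notin \cA_{m-\epsilon}$. Hence Lemma \ref{lem:StabQconstantheart} does not apply (its hypothesis $Z_{\gamma(t)}(\cA_{\gamma(0)} \setminus \{0\}) \subset \H$ fails along the ray), and the subsequent ``$\pi_\sim$-class matching'' step inherits the same problem since it also keeps the heart $\cA_{\ts}$ along the path.

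The paper avoids this by never trying to hold the heart fixed. For each $f \in S$ it takes an open $U' \ni (\cA_{\ts}, f + iB_{\ts})$ in $\Stab(\cT)$ on which $\Forg'$ is a homeomorphism onto a product $V_f \times W_f \subset \lbdd \times i\lbdd$ with both factors connected. The heart of $(\Forg'|_{U'})^{-1}(v + iw)$ is allowed to vary with $w$ — it is simply whatever the chart gives. Because $V_f$ is path-connected, all the lifts $(\Forg'|_{U'})^{-1}(v + iw)$ with $v \in V_f$ and $w$ fixed are joined by a path of constant imaginary part, hence $\sim$-equivalent, so they all represent the single $\ttau \in \pi_\sim(U')$ with $B_{\ttau} = w$; this gives $V_f \subset \ta(\ttau)$ with no claim about the heart. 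Your high-level plan (cover $S$ finitely, intersect the neighborhoods) is correct, but the pointwise step must go through the $\Stab(\cT)$-chart and the path-connectedness of $V_f$ rather than through Lemma \ref{lem:StabQconstantheart}.
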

\begin{proof}
For every $f\in \ta(\ts)$, as $\Forg:\Stab(\cT)\to \Hom(\Lambda,\C)$ is local homeomorphism,  there exists an open path-connected neighborhood $U$ of $f$ such that the stability condition $(\cA_{\ts},f+iB_{\ts}$ deforms with real part of central charge in $U$. By Lemmas \ref{lem:nested} and \ref{lem:sameImimpliesheart}, the heart $\cA_{\ts}$ is unchanged. So $\ta(\ts)$ contains $U$. It is therefore open. By Proposition \ref{prop:convex}, the space $\ta(\ts)$ is convex.

Note that a stability condition is determined by $(\cA,Z)$, so the map $\Forg:\pi^{-1}_\sim(\ts)\to \lbdd$ is injective. It follows that  $\ta(\ts)$ is homeomorphic to $\pi^{-1}(\ts)$.\\
    
     By taking the $\glt$-action,  we have $\sigma_{c,d}=\left(\cA_{\ts},(cf+dB_{\ts})+iB_{\ts}\right)\in \Stab(\cT)$. By definition, $\tilde \sigma_{c,d}=\ts$. So $cf+dB_{\ts}\in \ta(\ts)$.

 For every $f\in S$, as $\Forg:\Stab(\cT)\to \Hom(\Lambda,\C)$ is local homeomorphic, there exists an open neighborhood $U'$ of $(\cA_{\ts},f+iB_{\ts})$ in $\Stab(\cT)$ on which $\Forg$ is homeomorphic. Assume that $U'=V_f\times W_f$ for some open connected subsets $V_f,W_f\subset \lbdd$ under the decomposition $\Hom(\Lambda,\C)=\lbdd\times i\lbdd$. U'y Proposition \ref{prop:localisom}, there exists an open neighborhood $W'_f\ni\ts$ such that $V_f\subset \ta(\ttau)$ for every $\ttau\in W'_f$. As $S$ is assumed to be compact, it can be covered by finitely many such $V_f$'s. Let $U$ be the intersection of $W_f'$'s of such $f$'s, the statement holds.
\end{proof}

\def\SbA{\ensuremath{\mathcal S}}

\begin{Prop}\label{prop:sbrA}
    Let $\cA$ be the  heart of a bounded t-structure on $\cT$. Then $\sbr(\cA)$ is a closed convex cone which does not contain any line in $\lbdd$.  
    
    Let  $\SbA$ be a connected component of $\sb(\cA)$. Then
    \begin{align}\label{eqA44}
        \sbr^\circ(\cA)\subseteq \Forg(\SbA)\subseteq \sbr(\cA).
    \end{align} 
    Moreover, for every $\ts,\ttau\in \SbA$, if $\Forg(\ttau)\in \sbr^\circ(\cA)$, then $\ta(\ts)\subseteq \ta(\ttau)$. The map $\Forg$ is homeomorphic from $(\Forg|_{\SbA})^{-1}(\sbr^\circ(\cA))$ to $\sbr^\circ(\cA)$. 
\end{Prop}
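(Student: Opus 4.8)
\textbf{Proof plan for Proposition \ref{prop:sbrA}.}

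The plan is to establish the statements in order, starting from the elementary convex-geometry facts about $\sbr(\cA)$ and building up to the homeomorphism claim. First I would verify that $\sbr(\cA)$ is a closed convex cone: closedness and the cone property are immediate from the defining inequalities $f(E)\geq 0$ for all $E\in\cA$, and convexity follows since the set of inequalities is preserved under positive linear combinations. To see that $\sbr(\cA)$ contains no line, suppose $\pm f\in\sbr(\cA)$; then $f(E)=0$ for every $E\in\cA$, which forces $f\circ\lambda=0$ on $K(\cT)$, hence $f=0$ since $\lambda$ is surjective onto $\Lambda$. (Here I use that $\cA$ generates $\cT$ and hence its classes span $\Lambda_\R$; if $\cA$ did not span, one works in the span, but the statement is for a genuine heart on $\cT$ with $\lambda$ surjective, so $\Lambda_\R$ is spanned.) This shows $\sbr(\cA)$ is a proper (salient) closed convex cone, so its interior $\sbr^\circ(\cA)$ in $\bP\lbdd$ is nonempty and equals the set of $f$ such that $f(E)>0$ for every nonzero $E\in\cA$.

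Next I would prove the two inclusions in \eqref{eqA44}. For $\Forg(\SbA)\subseteq\sbr(\cA)$: if $\ts\in\SbA$ with representative $\sigma=(\cA,Z)$, then $\Im Z(\cA_\sigma\setminus\{0\})\geq 0$ always holds for a stability condition (recall $Z(\cA\setminus\{0\})\subset\H$), so $B_{\ts}=\Im Z\in\sbr(\cA)$. For $\sbr^\circ(\cA)\subseteq\Forg(\SbA)$: given $h\in\sbr^\circ(\cA)$, pick any $\ts_0\in\SbA$ with representative $\sigma_0=(\cA,f_0+ig_0)$; by Lemma \ref{lem:sbrA} (applied with this $h$), $(\cA,f_0+i((1-t)g_0+th))$ is a stability condition for all $t\in[0,1]$, giving a path in $\Stab(\cA)$ from $\sigma_0$ to a representative of a reduced stability condition with reduced central charge $h$. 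Projecting this path under $\pi_\sim$ (using that the path has constant heart $\cA$, so by Definition \ref{def:sb} each $\pi_\sim$-image has heart $\cA$, and the path is connected) shows the endpoint lies in the same connected component $\SbA$, hence $h\in\Forg(\SbA)$.

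Then comes the monotonicity of $\ta(\cdot)$: for $\ts,\ttau\in\SbA$ with $\Forg(\ttau)=B_{\ttau}\in\sbr^\circ(\cA)$, I claim $\ta(\ts)\subseteq\ta(\ttau)$. Take $f\in\ta(\ts)$, so $\sigma=(\cA,f+iB_{\ts})$ is a stability condition. I would apply Lemma \ref{lem:sbrA} with this $\sigma$ and with $h=B_{\ttau}\in\sbr^\circ(\cA)$ to conclude $(\cA,f+i((1-t)B_{\ts}+tB_{\ttau}))$ is a stability condition for all $t\in[0,1]$; taking $t=1$ gives $(\cA,f+iB_{\ttau})\in\Stab(\cT)$, i.e.\ $f\in\ta(\ttau)$. (This is the natural place the hypothesis $B_{\ttau}\in\sbr^\circ(\cA)$ is used — interiority of the target charge is exactly what Lemma \ref{lem:sbrA} demands of its second argument.)

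Finally, for the homeomorphism statement, I would show $\Forg$ restricted to $\SbA^\circ\coloneq(\Forg|_{\SbA})^{-1}(\sbr^\circ(\cA))$ is a continuous bijection onto $\sbr^\circ(\cA)$ with continuous inverse. Surjectivity is the inclusion $\sbr^\circ(\cA)\subseteq\Forg(\SbA)$ just proved, combined with the fact that any preimage lands in $\SbA^\circ$ by definition. Injectivity: if $\ts,\ttau\in\SbA^\circ$ have $B_{\ts}=B_{\ttau}=:h$, then on the connected component $\SbA$ both have heart $\cA$ and the same reduced central charge; I would argue they must coincide by using that $\ta(\ts)=\ta(\ttau)$ (both inclusions follow from the monotonicity step since both target charges are in $\sbr^\circ(\cA)$), that $\ta(\ts)$ is convex and homeomorphic to the fiber $\pi_\sim^{-1}(\ts)$ by Lemma \ref{lem:smalldeform}, and that within a single connected component of $\sb(\cA)$ with fixed reduced charge $h$ the representative is unique up to the equivalence — more precisely, the representatives $(\cA,f+ih)$ with $f\in\ta(\ts)$ form one $\sim$-equivalence class by Proposition \ref{prop:convex}.(1) (convexity of $\ta$) and hence correspond to a single point of $\sb(\cA)$. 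Continuity of $\Forg$ is built in; local homeomorphy follows from Proposition \ref{prop:localisom}, so the inverse is continuous. \textbf{The main obstacle} I anticipate is the injectivity/local-homeomorphism step: one must rule out that two distinct reduced stability conditions in the \emph{same} connected component of $\sb(\cA)$ share the same reduced central charge in $\sbr^\circ(\cA)$. The key is that Proposition \ref{prop:localisom} gives $\Forg$ is a local homeomorphism globally on $\sb(\cT)$, so it suffices to check that the fibers of $\Forg|_{\SbA^\circ}$ are discrete and that a connected, simply-connected-enough target forces single-valuedness — here the convexity of $\sbr^\circ(\cA)$ does the work, since a local homeomorphism over a convex (hence simply connected) base that is injective on one fiber is injective, and injectivity on a fiber is precisely the Proposition \ref{prop:convex} statement that the $\ta(\ts)$-family is a single $\sim$-class.
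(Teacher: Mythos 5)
Your treatment of the elementary facts (the cone is closed, convex, salient; both inclusions in \eqref{eqA44}) is correct and matches the paper. The difficulty, as you anticipate, is concentrated in the last two claims, and here there is a genuine gap.

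\textbf{First, a precision issue in the monotonicity step.} From $f\in\ta(\ts)$ you apply Lemma \ref{lem:sbrA} to obtain that $(\cA,f+iB_{\ttau})\in\Stab(\cT)$, and you write ``i.e.\ $f\in\ta(\ttau)$.'' But membership of $(\cA,f+iB_{\ttau})$ in $\Stab(\cT)$ only tells you $f\in\ta(\ttau'')$ for $\ttau''\coloneq\pi_\sim\bigl((\cA,f+iB_{\ttau})\bigr)$, which is \emph{some} reduced stability condition with heart $\cA$ and reduced charge $B_{\ttau}$ lying in $\SbA$ (the interpolating path keeps the heart equal to $\cA$, so its $\pi_\sim$-image stays in $\SbA$). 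Identifying $\ttau''$ with $\ttau$ is exactly the uniqueness you are trying to prove later, so this reasoning is circular as written. The honest conclusion from this step is the inclusion $T(g)\subseteq T(h)$ in the notation of the paper's proof, where $T(g)=\coprod_{\ts\in\SbA,\,B_{\ts}=g}\ta(\ts)$ --- which is the same ingredient the paper uses, just not yet the assertion $\ta(\ts)\subseteq\ta(\ttau)$.

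\textbf{Second, and more seriously, the injectivity argument is false as stated.} You write that ``a local homeomorphism over a convex (hence simply connected) base that is injective on one fiber is injective.'' This is not true: the real line with doubled origin maps to $\R$ by a local homeomorphism from a connected total space to a convex base, is injective on every fiber except one, yet fails to be injective. This example is not a pathology to be brushed aside here --- $\sb(\cT)$ is explicitly allowed to be non-Hausdorff (see the paper's abstract and the $\bP^1$ example in Remark \ref{rem:sbp1nonhausd}, where precisely this kind of doubled-point behavior occurs), so you cannot invoke Hausdorffness to upgrade the local homeomorphism to a covering map. Since both the monotonicity step and the injectivity step depend on this uniqueness, the proof as written does not close.

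\textbf{What the paper does instead.} After establishing $T(g)\subseteq T(h)$ for all $g\in\sbr(\cA)$ and $h\in\sbr^\circ(\cA)$ (your Lemma \ref{lem:sbrA} step, correctly interpreted), the paper observes that $\pi_\sim^{-1}(\SbA)$ is path-connected and is identified under $\Forg'$ with the set $\{f+ig:g\in\Forg(\SbA),\,f\in T(g)\}$. If $T(h)$ had two or more connected components $T_1\sqcup T_2\sqcup\cdots$, then intersecting with the open sets $T_i\times\lbdd$ would disconnect $\pi_\sim^{-1}(\SbA)$, because \emph{every} $T(g)$ sits inside $T(h)$ and hence every fiber is partitioned among the $T_i$; this contradicts path-connectedness. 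Thus $T(h)$ is connected, hence (being a disjoint union of open convex sets) a single convex set, which forces a unique $\ttau\in\SbA$ with $B_{\ttau}=h$ and $T(h)=\ta(\ttau)$. Uniqueness then gives both $\ta(\ts)\subseteq T(B_{\ts})\subseteq T(h)=\ta(\ttau)$ and the injectivity of $\Forg$ on $\SbA^\circ$. The crucial point you are missing is this global use of the uniform inclusion $T(g)\subseteq T(h)$ over \emph{all} $g$, not just $g\in\sbr^\circ(\cA)$ --- that is what makes the disconnection argument bite and is the genuine replacement for the (false) covering-space heuristic.
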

\begin{proof}
For every $f,g\in\sbr(\cA)$ and $a,b\geq0$, it is clear that $(af+bg)(E)\geq 0$ for every object $E\in \cA$. Note that $\sbr(\cA)=\cap_{E\in\cA}\{f:f(E)\geq0\}$ is the union of closed subsets, so $\sbr(\cA)$ is closed. Note that the objects in $\cA$ generate the whole category $\cT$, so $\mathrm{span}_{\R}\{[E]:E\in\cA\}=\Lambda_\R$. Therefore, there is no line in  $\sbr(\cA)$. \\

\noindent    Let $\ts\in\SbA$, then $B_{\ts}(E)\geq 0$ by definition. So $\Forg(\SbA)\subseteq\sbr(\cA)$. 
    
    For every $f\in \ta(\ts)$ and $h\in \sbr^\circ(\cA)$, by Lemma \ref{lem:sbrA}, the reduced stability condition $\pi_\sim(\cA,f+ih)$ is in $\SbA$. The relation \eqref{eqA44} holds. \\

\noindent    For every $g\in\sbr(\cA)$, we denote by $T(g)\coloneq \coprod_{\ts\in\SbA,B_{\ts}=g}\ta(\ts) $ the subset in $\lbdd$. By Proposition \ref{prop:convex} and Lemma \ref{lem:smalldeform}, this is indeed a disjoint union of open convex subsets. For every $h\in \sbr^\circ(\cA)$, by Lemma \ref{lem:sbrA}, we have $T(g)\subseteq T(h)$.
    
    Note that $\pi_\sim^{-1}(\SbA)$ is path-connected and homeomorphic to $\cup_{g\in \sbr(\cA)}T(g)$, so there is exactly one connected component of $T(h)$ for every $h\in\sbr^\circ(\cA)$. In other words, there is exactly one $\ttau\in\SbA$ with $B_{\ttau}=h$. 
    
    So for every $\ts,\ttau\in\SbA$ with $B_{\ttau}\in\sbr^\circ(\cA)$, we have $\ta(\ts)\subseteq T(B_{\ts})\subseteq T(B_{\ttau})=\ta(\ttau)$. The map $\Forg$ is homeomorphic from $(\Forg|_{\SbA})^{-1}(\sbr^\circ(\cA))$ to $\sbr^\circ(\cA)$.
\end{proof}

\subsection{Degenerate reduced stability conditions}
\begin{Prop}\label{prop:degeneratesb}
    Let $\ts\in\sb(\cT)$, then the following statements are equivalent:
    \begin{enumerate}[(1)]
        \item $0\in \ta(\ts)$;
        \item $\ta(\ts)=\lbdd$;
        \item There exists a quadratic form $Q$ on $\Lambda_\R$ that is negative definite on $\Ker B_{\ts}$ and $Q(E)\geq 0$ for every object $E\in \cA$;
        \item There exists an open neighborhood $U$ of $\ts$ in $\sb(\cT)$ such that for every $\ttau\in U$, $\cA_{\ttau}=\cA$.
        \item There exists an open subset $U\subset \lbdd$ such that $B_{\ts}\in U\subset \sbr(\cA)$.
    \end{enumerate}
\end{Prop}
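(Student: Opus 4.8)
\textbf{Proof proposal for Proposition \ref{prop:degeneratesb}.}

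The plan is to establish the equivalences by a cycle of implications, using the foundational results already in place: Proposition \ref{prop:convex} (four equivalent characterizations of $\sigma\sim\sigma'$), Lemma \ref{lem:smalldeform} (the space $\ta(\ts)$ is open convex and $cf+dB_{\ts}\in\ta(\ts)$ whenever $f\in\ta(\ts)$, $c>0$, $d\in\R$), and Proposition \ref{prop:sbrA} (structure of $\sbr(\cA)$ and the homeomorphism $\Forg$ over $\sbr^\circ(\cA)$). A natural route is $(1)\Rightarrow(2)\Rightarrow(3)\Rightarrow(1)$ for the first three, then bridge to $(4)$ and $(5)$.

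First I would prove $(1)\Rightarrow(2)$: if $0\in\ta(\ts)$, then by Lemma \ref{lem:smalldeform} any $cf+dB_{\ts}$ with $c>0$ is in $\ta(\ts)$; taking $f=0$ gives $dB_{\ts}\in\ta(\ts)$ for all $d$, and since $\ta(\ts)$ is open and convex containing $0$, together with the observation that $0$ being interior forces $\ta(\ts)$ to be all of $\lbdd$ — more carefully, for any $g\in\lbdd$, since $0\in\ta(\ts)$ and $\ta(\ts)$ is open, $\epsilon g\in\ta(\ts)$ for small $\epsilon>0$, and then $g = \frac{1}{\epsilon}(\epsilon g)\in\ta(\ts)$ by the scaling property ($c=1/\epsilon>0$, $d=0$). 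So $(1)\Rightarrow(2)$; the reverse $(2)\Rightarrow(1)$ is trivial. For $(2)\Rightarrow(3)$: if $\ta(\ts)=\lbdd$, then in particular $(\cA,0+iB_{\ts})$ is (the datum of) a genuine stability condition — wait, more precisely we should pick $f\in\ta(\ts)$ so that $\sigma=(\cA,f+iB_{\ts})$ is a stability condition, and then deform; actually the cleanest approach is: since $0\in\ta(\ts)$, the pair $(\cA, iB_{\ts})$ underlies a stability condition $\sigma$ (central charge purely imaginary, so $\Re Z_\sigma=0$), and the support property for $\sigma$ furnishes a quadratic form $Q$ negative definite on $\Ker Z_\sigma=\Ker B_{\ts}$ with $Q(E)\geq 0$ on semistable objects; but when $\Re Z = 0$, the heart $\cA=\cP_\sigma(1)$ so every nonzero object of $\cA$ is semistable, giving $(3)$. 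Then $(3)\Rightarrow(1)$: given such $Q$, apply \cite[Proposition A.5]{BMS:stabCY3s} to any stability condition representing $\ts$ to deform its central charge along the line toward $0+iB_{\ts}$, staying inside the locus where $Q$ controls the support property (using that $Q|_{\Ker B_{\ts}}$ is negative definite persists along the deformation since $\Re$ part only shrinks), and Lemma \ref{lem:StabQconstantheart} (or Lemmas \ref{lem:nested}, \ref{lem:sameImimpliesheart}) to see the heart is unchanged, so $(\cA, iB_{\ts})$ is a stability condition and $0\in\ta(\ts)$.

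Next I would connect to $(4)$ and $(5)$. For $(3)\Leftrightarrow(5)$: condition $(5)$ says $B_{\ts}$ is an interior point of $\sbr(\cA)$, i.e. $B_{\ts}\in\sbr^\circ(\cA)$; by Proposition \ref{prop:sbrA} this is where $\Forg$ is a homeomorphism onto $\sbr^\circ(\cA)$ from the relevant component of $\sb(\cA)$, and one expects an open neighborhood of $\ts$ on which the heart is constantly $\cA$, which is exactly $(4)$ — so $(5)\Rightarrow(4)$ via Proposition \ref{prop:sbrA}, and $(4)\Rightarrow(5)$ because if the heart is constant $\cA$ on an open $U\ni\ts$ then $\Forg(U)\subset\sbr(\cA)$ is open (using Proposition \ref{prop:localisom}) and contains $B_{\ts}$. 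Finally $(5)\Rightarrow(3)$ follows because $B_{\ts}\in\sbr^\circ(\cA)$ lets us run Lemma \ref{lem:sbrA}'s construction of an explicit quadratic form $Q = Q_1 + N(h-\epsilon B_{\ts})B_{\ts}$ nonnegative on $\cA$ and negative definite on $\Ker B_{\ts}$; and $(3)\Rightarrow(5)$: the negative-definiteness of $Q$ on $\Ker B_{\ts}$ together with $Q\geq 0$ on $\cA$ forces $B_{\ts}$ away from the boundary of $\sbr(\cA)$ (a boundary point would admit $E\in\cA$ with $B_{\ts}(E)=0$ whose class is not controlled, contradicting negative-definiteness on the kernel).

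\textbf{Main obstacle.} The step I expect to be most delicate is $(3)\Rightarrow(1)$: passing from the \emph{existence} of a good quadratic form to the \emph{existence of a genuine stability condition} with purely imaginary central charge and heart exactly $\cA$. The issue is that deforming the central charge all the way to $\Re Z=0$ requires $Q$ to give the support property not just at the endpoint but along the entire segment, which needs a uniform negative-definiteness statement for $Q|_{\Ker Z_t}$; combined with the need to invoke Lemma \ref{lem:StabQconstantheart} to pin down the heart (checking that $Z_t(\cA\setminus\{0\})\subset\H$ throughout, which uses $B_{\ts}\in\sbr(\cA)$ and the sign of $\Re$ on objects with $B_{\ts}=0$). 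The bookkeeping here parallels the proof of Lemma \ref{lem:sbrA}, so I would model that argument closely, but care is needed because we do not start with $B_{\ts}$ in the interior of $\sbr(\cA)$ — that is precisely what we are trying to avoid assuming — so the argument must only use negative-definiteness on $\Ker B_{\ts}$, which $(3)$ provides directly.
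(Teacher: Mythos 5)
Your cycle $(1)\Rightarrow(2)\Rightarrow(3)\Rightarrow(1)$ is a valid reshuffling of the paper's $(1)\Rightarrow(3)\Rightarrow(2)\Rightarrow(1)$, and the direct $(1)\Rightarrow(2)$ shortcut via Lemma \ref{lem:smalldeform} (open $+$ scaling closed under $c>0$ forces all of $\lbdd$) is a clean simplification the paper does not bother with. However, your flagged ``main obstacle'' at $(3)\Rightarrow(1)$ is not an obstacle at all, and you have missed the observation that collapses it: for \emph{any} real functional $Z_R$, the kernel $\Ker(Z_R+iB_{\ts})$ sits inside $\Ker B_{\ts}$, where $Q$ is negative definite by hypothesis $(3)$. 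So the support property along the entire ray from $Z_R+iB_{\ts}$ to $0+iB_{\ts}$ is automatic --- there is no uniformity to check, and this is exactly what lets the paper get the \emph{a priori} stronger conclusion $(2)$ for free. You should also note that $(3)$ itself rules out nonzero $E\in\cA$ with $B_{\ts}(E)=0$ (such an $E$ would have $\lambda(E)\in\Ker B_{\ts}$ nonzero, so $Q(\lambda(E))<0$, contradicting $Q(E)\geq0$), which is what makes the heart-constancy check via Lemma \ref{lem:StabQconstantheart} go through at the endpoint $Z_R=0$.

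The genuine gap is in your $(5)\Rightarrow(3)$. The quadratic form from Lemma \ref{lem:sbrA} is $Q=Q_1+N(h-\epsilon g)g$ with $Q_1=K(f^2+g^2)-\sum x_i^2$ and $g=B_{\ts}$, and this $Q$ is \emph{not} negative definite on $\Ker B_{\ts}=\Ker g$: the correction term $N(h-\epsilon g)g$ vanishes there, leaving $Q|_{\Ker g}=Q_1|_{\Ker g}=Kf^2-\sum x_i^2$, which is positive on the locus where $f\neq0$. That $Q$ only controls $\Ker(f+ig)$, a codimension-two space, as required for the support property of the representative $\sigma$ --- condition $(3)$ asks for control on the codimension-one space $\Ker B_{\ts}$, and that is strictly stronger. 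The paper instead proves $(4)\Rightarrow(3)$ by a dual-cone construction: pick a quadratic form $Q^*$ of signature $(1,\rho-1)$ on $\lbdd$ with $Q^*(B_{\ts})>0$ and $Q^*<0$ outside $\Cone(U')$ (where $U'$ is the neighborhood on which the heart is constant), then dualize; the annihilators $\lambda(E)^\perp$ of classes of objects in $\cA$ avoid $\Cone(U')$, so $Q^*$ is negative definite on each, which forces $Q(\lambda(E))>0$. Your $(3)\Rightarrow(5)$ is similarly under-argued: the absence of $E\in\cA$ with $B_{\ts}(E)=0$ does not place $B_{\ts}$ in the interior of the convex cone $\sbr(\cA)$ (non-polyhedral cones have boundary points not witnessed by a single vanishing). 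The robust route is $(3)\Rightarrow(4)$ by deforming the \emph{imaginary} part near $B_{\ts}$ while $Q$ continues to control the kernel, then $(4)\Leftrightarrow(5)$ from Proposition \ref{prop:sbrA}, which you already invoke correctly.
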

\begin{proof}
    {(1)}$\implies${(3)}: Let $Q$ be a quadratic form for the support property of $\sigma=(\cA,iZ_I)$. Note that  all non-zero objects $ E\in \cA$ are with phase $\tfrac{1}{2}$, so they are all $\sigma$-semistable. It follows that $Q(E)\geq 0$ for every object $E\in \cA$.

    {(3)}$\implies${(2)}: Note that for every central charge $Z=Z_R+iZ_I$, the quadratic form $Q$ is negative on $\Ker Z$. By \cite[Proposition A.5]{BMS:stabCY3s},  we have deformed stability conditions $(*,Z_R+iZ_I)$ for all $Z_R\in\lbdd$. By Lemma \ref{lem:nested} and \ref{lem:d<1}, the heart structure is constantly $\cA$.

    {(2)}$\implies${(1)}: This is obvious by letting $Z_R=0$.\\

    {(3)}$\implies${(4)}: There is an open neighborhood $U'$ of $Z_I$ in $\lbdd$ such that for every $Z\in U'$, $Q|_{\Ker Z}$ is negative definite. By \cite[Proposition A.5]{BMS:stabCY3s} and Lemma \ref{lem:StabQconstantheart}, the datum $(\cA,Z)$ is a stability condition for every $Z\in U'$. By Proposition \ref{prop:localisom}, the statement holds.

    {(4)}$\implies${(3)}: By Proposition \ref{prop:localisom}, there is an open neighborhood $U'$ of $Z_I$ in $\lbdd$ such that for every $f\in U'$ and $0\neq E\in\cA$, we have $f(E)\geq 0$. There exists a quadratic form $Q^*$ with signature $(1,\rho-1)$ on $\lbdd$ such that
    \begin{itemize}
        \item $Q^*(Z_I)>0$ and
        \item $Q^*(g)<0$ for every $g\not\in \Cone(U')\coloneq  \{f': f'=cf$, $f\in U', c\in \R\}$.
    \end{itemize}
    Then the dual form $Q$ of $Q^*$ on $\Lambda_\R$ is negative definite on $\Ker Z_I$. For every $0\neq E\in \cA$, as $\lambda(E)^*\cap\Cone(U')=\{0\}$, the form $Q^*$ is negative definite on $\lambda(E)^*$. It follows that $Q(E)>0$. The statement holds.

    {(4)}$\iff${(5)} follows from Proposition \ref{prop:sbrA}.
\end{proof}

\begin{Def}\label{def:degeneratesb}
We call a reduced stability condition \emph{degenerate} if it is of the form as that in Proposition \ref{prop:degeneratesb}. Denote the subset of all degenerate reduced stability conditions as $\sb^{\text{degen}}(\cT)$. We call a bounded heart structure $\cA$ of $\cT$ \emph{degenerate} if it is the heart structure of some degenerate reduced stability conditions.
 
 We call a reduced stability condition \emph{non-degenerate} if it is not in the closure of $\sb^{\text{degen}}(\cT)$. 
\end{Def}

\begin{Cor}\label{cor:stabdegen} 
Let $\cA$ be a degenerate heart structure.  Then the map $\Forg:\sb^\circ(\cA)\to \sbr^\circ(\cA)$ is a homeomorphism, with $\sbr^\circ(\cA)$ being an open convex cone in \lbdd. 

The space 
    \begin{align}\label{eq2171}
        \sb^{\text{degen}}(\cT)=\coprod_{\cA\text{ degenerate}}\sb^\circ(\cA).
    \end{align} 
\end{Cor}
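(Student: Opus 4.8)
\textbf{Proof plan for Corollary \ref{cor:stabdegen}.}

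The strategy is to chain together the structural results already established: Proposition \ref{prop:degeneratesb} (which gives equivalent characterizations of degeneracy), Proposition \ref{prop:sbrA} (which describes $\Forg$ on connected components of $\sb(\cA)$), and Lemma \ref{lem:sbrA} together with Lemma \ref{lem:smalldeform}. First I would fix a degenerate heart $\cA$. By Proposition \ref{prop:degeneratesb}.(5) there is a point $B_0\in\sbr^\circ(\cA)$, so $\sbr^\circ(\cA)$ is non-empty; by Proposition \ref{prop:sbrA} it is the interior (in $\bP\lbdd$, equivalently the open cone in $\lbdd$) of the closed convex cone $\sbr(\cA)$, hence an open convex cone in $\lbdd$ as claimed. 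The key point is that $\sb^\circ(\cA)$ is \emph{connected}: by Lemma \ref{lem:sbrA}, any two reduced stability conditions with heart $\cA$ and reduced central charges in $\sbr^\circ(\cA)$ can be joined by deforming the imaginary part within $\sbr^\circ(\cA)$ (the lemma shows $(\cA,f+i((1-t)g+th))$ stays a stability condition for $h\in\sbr^\circ(\cA)$), so $\sb^\circ(\cA)$ lies inside a single connected component $\SbA$ of $\sb(\cA)$. Then the last sentence of Proposition \ref{prop:sbrA} says precisely that $\Forg$ restricts to a homeomorphism from $(\Forg|_\SbA)^{-1}(\sbr^\circ(\cA))=\sb^\circ(\cA)$ onto $\sbr^\circ(\cA)$.

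For the decomposition \eqref{eq2171}, the inclusion $\supseteq$ is immediate: for every degenerate $\cA$ and every $\ts\in\sb^\circ(\cA)$, the reduced central charge $B_{\ts}\in\sbr^\circ(\cA)$ means there is an open neighborhood of $B_{\ts}$ inside $\sbr(\cA)$, so by Proposition \ref{prop:degeneratesb}.(5)$\Rightarrow$(1) we get $0\in\ta(\ts)$, i.e. $\ts$ is degenerate. For $\subseteq$, take any degenerate $\ts\in\sb^{\text{degen}}(\cT)$ with heart $\cA_{\ts}=\cA$; then $\cA$ is a degenerate heart by definition, and I must check $\ts\in\sb^\circ(\cA)$, i.e. $B_{\ts}\in\sbr^\circ(\cA)$. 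This follows from Proposition \ref{prop:degeneratesb}.(1)$\Rightarrow$(5): degeneracy of $\ts$ gives an open $U$ with $B_{\ts}\in U\subset\sbr(\cA)$, which is exactly the statement that $B_{\ts}$ is an interior point of $\sbr(\cA)$, hence lies in $\sbr^\circ(\cA)$. Finally, the union is disjoint because the heart $\cA_{\ts}$ is an invariant of $\ts$ (it does not depend on the representative, by Lemmas \ref{lem:nested} and \ref{lem:sameImimpliesheart}), so a reduced stability condition belongs to $\sb^\circ(\cA)$ for at most one $\cA$.

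The main obstacle I anticipate is purely bookkeeping rather than conceptual: one must be careful that $\sb^\circ(\cA)$, as defined just before Lemma \ref{lem:sbrA} via $(\Forg|_{\sbr(\cA)})^{-1}(\sbr^\circ(\cA))$, really does sit inside a \emph{single} connected component $\SbA$ of $\sb(\cA)$ — a priori $\sb(\cA)$ could have several components, and Proposition \ref{prop:sbrA} is phrased one component at a time. The resolution is the connectivity argument above using Lemma \ref{lem:sbrA}: the fibers $T(g)=\coprod_{\ts\in\SbA,B_\ts=g}\ta(\ts)$ over $g\in\sbr^\circ(\cA)$ are single convex sets (this is in the proof of Proposition \ref{prop:sbrA}), which forces uniqueness of the preimage and rules out extra components meeting $\sbr^\circ(\cA)$. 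With that pinned down, everything else is a direct appeal to the cited statements.
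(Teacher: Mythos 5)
Your overall strategy matches the paper's: both routes hinge on Propositions \ref{prop:degeneratesb} and \ref{prop:sbrA} together with Lemma \ref{lem:sbrA}, and your handling of the decomposition \eqref{eq2171} via the equivalences (1)$\iff$(5) of Proposition \ref{prop:degeneratesb} plus invariance of the heart is the same argument the paper gives (the paper additionally cites \ref{prop:degeneratesb}.(4), presumably for openness of each $\sb^\circ(\cA)$ in $\sb(\cT)$, which you leave implicit).

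There is, however, a genuine gap in the step you flag as ``bookkeeping.'' You want to conclude that $\sb^\circ(\cA)$ sits inside a single connected component of $\sb(\cA)$, and your stated resolution is that the fibers $T(g)=\coprod_{\ts\in\SbA,\,B_\ts=g}\ta(\ts)$ over $g\in\sbr^\circ(\cA)$ are single convex sets by the proof of Proposition \ref{prop:sbrA}. But that uniqueness statement is established \emph{within a fixed component} $\SbA$; it says nothing about whether two distinct components $\SbA_1,\SbA_2$ can both satisfy $\sbr^\circ(\cA)\subseteq\Forg(\SbA_i)$ (which Proposition \ref{prop:sbrA} in fact guarantees they would). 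So the cited fact does not ``rule out extra components'' on its own. Similarly, the path produced by Lemma \ref{lem:sbrA} from $\ts_1$ (with representative $(\cA,f_1+iB_{\ts_1})$) ends at $\pi_\sim(\cA,f_1+iB_{\ts_2})$, and you still owe an argument that this equals $\ts_2$.

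The missing ingredient is precisely Proposition \ref{prop:degeneratesb}.(2), which the paper's proof invokes at this point and you never cite. Since $B_{\ts}\in\sbr^\circ(\cA)$ forces $\ts$ degenerate, (2) gives $\ta(\ts)=\lbdd$, i.e.\ the $\pi_\sim$-fiber over $\ts$ is \emph{all} of $\{(\cA,f+iB_{\ts}):f\in\lbdd\}$. This pins down $\ts$ uniquely among all reduced stability conditions with heart $\cA$ and reduced central charge $B_{\ts}$ --- not merely among those in a given component. With that in hand, every component of $\sb(\cA)$ must contain this unique degenerate point, so $\sb(\cA)$ is path-connected (this is exactly how the paper phrases it), the path from Lemma \ref{lem:sbrA} necessarily terminates at $\ts_2$, and the rest of your argument goes through unchanged.
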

\begin{proof}
By Propositions \ref{prop:degeneratesb}.{(2)} and \ref{prop:sbrA}, the space $\sb(\cA)$ is path-connected. By Proposition \ref{prop:sbrA} again, the map $\Forg$ restricted on the $\sb^\circ(\cA)$ is a homeomorphism. By Proposition \ref{prop:degeneratesb}.{(5)}, the subset $\sbr^\circ(\cA)$ is open in \lbdd.

By Proposition \ref{prop:degeneratesb}.{(5)}, a reduced stability condition $\ts\in\sb(\cA)$ is degenerate if and only if $\ts\in\sb^\circ(\cA)$. The formula \eqref{eq2171} then follows from  Proposition \ref{prop:degeneratesb}.{(4)}.
\end{proof}

\begin{Lem}\label{lem:degenerateno1phase}
    Let $\sigma$ be a stability condition such that $\cP_\sigma((-\theta,\theta))=\emptyset$ for some $\theta>0$. Then $\pi_\sim(\sigma)$ is degenerate.
\end{Lem}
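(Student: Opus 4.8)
The plan is to produce a quadratic form witnessing degeneracy via criterion (3) of Proposition \ref{prop:degeneratesb}, using the gap $\cP_\sigma((-\theta,\theta))=\emptyset$ to control both the heart and the kernel of the central charge. First I would reduce to a convenient normalization: after a $\glt$-action (which changes neither $\pi_\sim(\sigma)$ up to the obvious rescaling, nor the degeneracy type by Proposition \ref{prop:degeneratesb}), I may assume the central charge is $Z_\sigma = f+iB$ with $f=\Re Z_\sigma$, $B=\Im Z_\sigma = B_{\ts}$ where $\ts\coloneq\pi_\sim(\sigma)$, and the hypothesis $\cP_\sigma((-\theta,\theta))=\emptyset$ says there are no $\sigma$-semistable objects of phase in $(-\theta,\theta)$; equivalently, every nonzero $E$ with $\Im Z_\sigma(E)=0$ actually has $\phi$ outside $(-\theta,\theta)$, and for semistable $E$ near phase $0$ the ``forbidden cone'' is genuinely empty. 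The key consequence I want to extract is: there is a constant $c>0$ (depending on $\theta$ and the support-property constant $C$ of $\sigma$) such that every $\sigma$-semistable object $E$ satisfies $|\Im Z_\sigma(E)| \geq c\,\|\lambda([E])\|$ — this is exactly the statement that $Z_\sigma(E)$ stays in the cone $\{|\arg z - \tfrac\pi2|\leq \tfrac\pi2-\theta\}$, i.e. away from the real axis, combined with the norm bound $|Z_\sigma(E)|\geq C^{-1}\|\lambda([E])\|$.

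Second, I would upgrade this to a statement about the heart $\cA_{\ts}=\cA_\sigma$. By Lemma \ref{lem:nested} and Lemma \ref{lem:sameImimpliesheart} (or directly), any object $F\in\cA_\sigma$ with $B(F)=0$ lies in $\cP_\sigma(1)\cap\cP_\sigma(\le 1)$; the gap hypothesis applied after a half-shift forces $\cP_\sigma(1)$ to be empty unless... — more carefully, I would argue that $B(F)\ge 0$ for all $F\in\cA_\sigma$ (standard), and that the locus $B=0$ inside $\cA_\sigma$ is severely restricted. Actually the cleanest route is to bypass this and go straight to constructing $Q$: take $Q^*$ on $\lbdd$ to be a quadratic form of signature $(1,\rho-1)$ with $Q^*(B)>0$ and $Q^*$ negative on the complement of a small cone $\Cone(U')$ around $B$, where $U'$ is chosen so that $f'(\cA_\sigma\setminus\{0\})> 0$ for all $f'\in U'$ — such $U'$ exists precisely because the gap condition gives the uniform estimate above, so that $f+tg$ remains a stability function on $\cA_\sigma$ for all small perturbations $g$, hence $\ts$ satisfies condition (4) of Proposition \ref{prop:degeneratesb}. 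Then the dual form $Q$ on $\Lambda_\R$ is negative definite on $\Ker B$ and $Q(E)>0$ for every nonzero $E\in\cA_\sigma$, which is condition (3), so $\ts$ is degenerate.

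Concretely, the step I expect to be the crux is verifying that the deformed data $(\cA_\sigma, f+i(B+sh))$ (equivalently $(\cA_\sigma, (f+tg)+iB)$ after a $\glt$-rotation) remains a bona fide stability condition on the \emph{same} heart $\cA_\sigma$ for all small $h$, i.e. checking the hypotheses of \cite[Proposition A.5]{BMS:stabCY3s} together with Lemma \ref{lem:StabQconstantheart}. The gap hypothesis $\cP_\sigma((-\theta,\theta))=\emptyset$ is exactly what makes this work: it guarantees that $Z_\sigma(E)$ of every semistable $E$ stays in a closed subcone of $\H$ bounded away from $\R_{>0}$ and $\R_{<0}$ (after the appropriate shift, bounded away from the positive real axis on which new phase-$1$ objects would have to appear), so a small tilt of the real part of the central charge cannot rotate any $Z(E)$ out of $\H$ and cannot create objects of phase crossing $0$ or $1$; hence by Lemma \ref{lem:StabQconstantheart} the heart is unchanged under the deformation. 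Once that is in place, Proposition \ref{prop:degeneratesb}.(4) (or (5)) applies verbatim and the conclusion follows. The routine parts — writing down $Q^*$ explicitly, checking signatures, and the duality between $Q$ and $Q^*$ — I would only sketch, referring to the analogous construction already carried out in the proof of Proposition \ref{prop:degeneratesb}, implication (4)$\Rightarrow$(3).
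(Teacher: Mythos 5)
Your proposal is correct, but it reaches the conclusion by a different branch of Proposition~\ref{prop:degeneratesb} than the paper does. The paper's proof writes down an explicit witness for criterion~(3): setting $Z_\sigma=g+ih$, the phase gap translates into the quadratic inequality $Q_1(E)\coloneq h^2(E)-\theta_1 g^2(E)\geq 0$ for every $\sigma$-semistable $E$, and then $Q\coloneq Q_2+NQ_1$ (with $Q_2=K(g^2+h^2)-\sum f_i^2$ a support-property form and $N>K/\theta_1$) is negative definite on $\Ker h$ and nonnegative on semistable classes — three lines, entirely on the $\Lambda_\R$ side. You instead convert the phase gap into the linear lower bound $B(E)\geq c\|\lambda(E)\|$ for all nonzero $E\in\cA_\sigma$ (obtained for HN factors from $|\sin\pi\phi|\geq\sin\pi\theta$ plus the support constant, then summed), conclude that $B$ lies in the interior of the cone $\sbr(\cA_\sigma)$, i.e.\ criterion~(5) holds, and then lean on the already-proved implication $(4)\Rightarrow(3)$ to manufacture $Q^*$ on $\lbdd$ and its dual $Q$. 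Both are valid; the paper's route is shorter and avoids the detour through $\lbdd$ and the dual form, while yours is arguably more transparent about \emph{why} degeneracy holds (the reduced central charge sits in the open cone of stability functions on the fixed heart). Two small imprecisions worth flagging: the opening $\glt$-normalization is unnecessary and the claim that it ``changes neither $\pi_\sim(\sigma)$'' is not literally true (the $\glt$-action generally changes the heart, hence $\pi_\sim$); and the ``crux'' you identify — checking that $(\cA_\sigma,(f+tg)+iB)$ remains a stability condition on the same heart via \cite[Proposition A.5]{BMS:stabCY3s} and Lemma~\ref{lem:StabQconstantheart} — is more than you need: once the bound $B(E)\geq c\|\lambda(E)\|$ is in hand, the neighborhood $\{f'\colon \|f'-B\|<c/2\}\subset\sbr(\cA_\sigma)$ is immediate, which is already criterion~(5); the heart-constancy under deformation is baked into the equivalence of the criteria in Proposition~\ref{prop:degeneratesb} and does not have to be reverified here.
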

\begin{proof}
Denote the central charge of $\sigma$ as $g+ih$. Then by the assumption, there exists $\theta_1>0$ such that every $\sigma$-semistable object $E$ satisfies the inequality \begin{align*}
    Q_1(E)\coloneq h^2(E)-\theta_1g^2(E)\geq 0.
\end{align*}
By choosing a suitable basis $\{g,h,f_1,\dots,f_{\rho-2}\}$ for $\lbdd$ and $K>0$ sufficiently large, we may assume the quadratic form $Q_2=K(g^2+h^2)-\sum f_i^2$ satisfies the support property with respect to $\sigma$.

Let $N>K/\theta_1$ be sufficiently large and consider $Q\coloneq Q_2 + N Q_1$. Then $Q$ is with negative definite on $\Ker h=\Ker Z_\sigma$ and $Q(E)\geq 0$ for every $\sigma$-semistable object $E$. By Proposition \ref{prop:degeneratesb}.{(3)}, the reduced stability condition $\pi_\sim(\sigma)$ is degenerate.
\end{proof}  

\subsection{Example: Space of reduced stability conditions on \texorpdfstring{$\bP^1$}{Lg}}\label{sec:stabp1}
 As that studied in \cite{Okada:P1}, the heart structure of a stability condition on $\Db(\bP^1)$ is either $\Coh(\bP^1)$ or  one of the following forms up to a homological shift:
\begin{align*}
    \cA_{m,k}\coloneq \langle \cO(m)[k],\cO(m+1)\rangle,
\end{align*}
where $m,k\in \Z$ with $k\geq1$. 

 It follows that the space 
\begin{align*}
    \sb(\bP^1)=\left(\coprod_{m,n\in\Z,k\in\Z_{\geq1}}\sb(\cA_{m,k})[n]\right)\coprod\left(\coprod_{n\in\Z}\sb(\Coh(\bP^1))[n]\right).
\end{align*}
Each space $\sb(\cA_{m,k})$ is mapped   homeomorphically onto $\sbr(\cA_{m,k})\subset \lbdd\cong \R^2$ by the forgetful map. More precisely, we have
\begin{align}\label{eq566}
    \sb(\cA_{m,k})=\left\{\ts^{m,k}_{c_1,c_2}=\left(\cA_{m,k},c_1(-1)^k((m+1)\rk-\deg)+c_2(\deg-m\rk\right):c_i{\geq0}\right\}.
\end{align}
The heart $\Coh(\bP^1)$ only admits the rank function up to a positive scalar as its reduced central charge.\begin{align*}
     \sb(\Coh(\bP^1))=\{(\Coh(\bP^1),c\rk):c>0\}. 
\end{align*}

To compare with the space of reduced stability conditions as that in Example \ref{ex:sbcurve}, for every $m\in\Z$ and $t\in[m,m+1)$, the heart $\cA_t=\cA_{m,1}$. More precisely, we have following decomposition for $\sb^*(\bP^1)$:
\begin{align}\label{eq577}
    \sb^*(\bP^1)=\left(\coprod_{m\in\Z}\sb^{c_1>0}(\cA_{m,1})\right)\coprod\sb(\Coh(\bP^1)[1])
\end{align}
Here we denote $\sb^{c_i>0}(\cA_{m,k})$ for the reduced stability conditions with $c_i>0$ as that in \eqref{eq566}.
\begin{Rem}[$\sb(\bP^1)$ is Non-Hausdorff]\label{rem:sbp1nonhausd}
 The non-Hausdorff locus of $\sb(\bP^1)$ is contained in the boundaries of $\sb(\cA_{m,k})$. Each $\sb(\cA_{m,k})$ has three types of boundary points, namely, $\{\ts^{m,k}_{c_1,0}:c_1\neq0\}$, $\{\ts^{m,k}_{0,c_2}:c_2\neq0\}$, and $\{\ts^{m,k}_{0,0}\}$.

For the first type, each $\ts^{m,k}_{c_1,0}$ has a small open neighborhood $U_{m,k,\mathrm{I}}$ where $\cO(m)[k]$ is in the heart and either $\cO(m+1)$ or $\cO(m+1)[-1]$ is in the heart. The heart structure is therefore uniquely determined as $\cA_{m,k}$ or $\cA_{m,k+1}[-1]$. It follows that the open neighborhood $U_{m,k,\mathrm{I}}\subset \sb(\cA_{m,k})\coprod\sb(\cA_{m,k+1})[-1]$. In particular, it cannot be separated from $\ts^{m,k+1}_{c_1,0}[-1]$  in $\sb(\cA_{m,k+1}[-1])$.

 For the second type,  each $\ts^{m,k}_{0,c_2}$ has a small open neighborhood $U_{m,k,\mathrm{II}}$ where $\cO(m+1)$ is in the heart and either $\cO(m)[k]$ or $\cO(m)[k-1]$ is in the heart. 
 When $k\geq 2$, the open neighborhood $U_{m,k,\mathrm{II}}\subset \sb(\cA_{m,k})\coprod\sb(\cA_{m,k-1})$. In particular,  it cannot be separated from $\ts^{m,k-1}_{0,c_2}$.
 
 When $k=1$, the reduced stability condition $\ts^{m,k}_{0,c_2}\in\sb^*(\bP^1)$. 

 For the third type, let $U_{m,k,0}$ be the open neighborhood of $\ts^{m,k}_{0,0}$ that is mapped homeomorphically onto $\lbdd$. Then in $U_{m,k,0}$, either $\cO(m+1)$ or $\cO(m+1)[-1]$ is in the heart and either $\cO(m)[k]$ or $\cO(m)[k-1]$ is in the heart. 
 When $k\geq 2$, the open neighborhood 
 \begin{align*}
     U_{m,k,0}= \sb(\cA_{m,k})\coprod\sb^\circ(\cA_{m,k})[-1]\coprod\sb^{c_1>0}(\cA_{m,k-1}) \coprod\sb^{c_2>0}(\cA_{m,k+1})[-1].
 \end{align*}
In particular, it cannot be separated from $\ts_{0,0}^{m,k}[-1]$, $\ts_{0,0}^{m,k-1}$, and $\ts_{0,0}^{m,k+1}[-1]$.

 When 
 $k=1$, if both $\cO(m)$ and $\cO(m+1)$ are in the heart, then the reduced stability condition is in $\sb^*(\bP^1)$ or $\sb^*(\bP^1)[-1]$. Together with  \eqref{eq577}, we have
  \begin{align*}
     U_{m,1,0}=& \sb(\cA_{m,1})\coprod\sb^\circ(\cA_{m,1})[-1]\coprod\sb^{c_2>0}(\cA_{m,2})[-1] \coprod \\
    & \left(\coprod_{ n\in\Z_{\leq m-1}}\sb^{c_1>0}(\cA_{n,1})\right)\coprod \sb(\Coh(\bP^1))\coprod\left(\coprod_{ n\in\Z_{\geq m+1}}\sb^{c_1>0}(\cA_{n,1})[-1]\right). 
 \end{align*}
\end{Rem}
So $\ts^{m,1}_{0,0}$ cannot be separated from  $\ts_{0,0}^{m,1}[-1]$, and $\ts_{0,0}^{m,2}[-1]$.

\section{Local chart on reduced stability space}
In \cite{Arend:shortproof} and \cite[Appendix A]{BMS:stabCY3s}, an effective version of Bridgeland's deformation theorem for stability conditions is developed. We have made use of this result at several points in the main body of the paper.

In this section, we establish an analogous result for reduced stability conditions. As an application, we use it to compare reduced stability conditions on an unpolarized abelian surface, and to prove both the Bayer Vanishing Lemma and a restriction theorem in this context.

\subsection{Local chart given by quadratic form} \label{sec:qudform}
We begin by recalling the effective deformation theorem for stability conditions.

Let $\sigma$ be a non-degenerate stability condition on $\cT$, and let $Q$ be a quadratic form on $\Lambda_\R$ with signature $(2, \rho - 2)$ that provides a support property for $\sigma$. The effective deformation theorem asserts that the central charge of $\sigma$ can be deformed to any other $Z \in \Hom(\Lambda, \C)$ for which the restriction $Q|_{\Ker Z}$ is negative definite. More precisely, one can define an open neighborhood of $\sigma$ as in \cite[Appendix A]{BMS:stabCY3s} using this condition on the central charge.

\begin{PropDef}[{\!\cite[Proposition A.5]{BMS:stabCY3s}}] \label{propdef:stabQsigma}
     Consider the open subset of $\Hom(\Lambda, \C)$ consisting of central charges whose kernels are negative definite with respect to $Q$, and let $W=W(Q,Z_\sigma)$ be the connected component of this subset containing the central charge $Z_\sigma$ of $\sigma$.
     
     Let $\Stab(Q,\sigma,\cT)\subset \Stab(\cT)$ denote the connected component of the preimage $\Forg^{-1}(W)$ that contains $\sigma$. Then the following properties hold.
     \begin{enumerate}
         \item The map $\Forg|_{\Stab(Q,\sigma,\cT)}:\Stab(Q,\sigma,\cT)\to W$ is a universal cover.
         \item Any stability condition $\sigma'\in \Stab(Q,\sigma,\cT)$ satisfies the support property with respect to the same quadratic form $Q$.
     \end{enumerate}
\end{PropDef}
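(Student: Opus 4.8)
The statement is the effective form of Bridgeland's deformation theorem, and the plan is to reconstruct its proof in two layers: an analytic core that produces nearby stability conditions with an \emph{explicitly controlled} support property, and a topological wrap-up that identifies the resulting map as a covering and then as the universal one.

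For the analytic core, I would first record, by Theorem~\ref{thm:spaceasamfd}, that $\Forg$ is a local homeomorphism, so $\Forg|_{\Stab(Q,\sigma,\cT)}\colon \Stab(Q,\sigma,\cT)\to W$ is too. The key quantitative lemma to establish, following the argument of \cite[Appendix~A]{BMS:stabCY3s}, is this: there is a radius $r=r(Q)>0$ (depending only on $Q$ and a fixed norm on $\Lambda_\R$) such that for every stability condition $\tau=(\cP_\tau,Z_\tau)$ satisfying the support property with respect to the \emph{same} $Q$, and every $Z'\in\Hom(\Lambda,\C)$ with $\|Z'-Z_\tau\|<r$ and $Q|_{\Ker Z'}$ negative definite, there is a unique stability condition $\tau'=(\cP_{\tau'},Z')$ with $d(\cP_\tau,\cP_{\tau'})<\tfrac14$, and $\tau'$ again satisfies the support property with respect to $Q$. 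The construction runs as usual: one checks that $Z'$ is a stability function on a suitable tilt of $\cA_\tau$ (here $Q$-negativity of $\Ker Z'$ together with the smallness of $Z'-Z_\tau$ prevents $Z'$ from vanishing on the relevant semistable classes and controls their phases), one obtains Harder--Narasimhan filtrations from a locally finite mass estimate bounded through the quadratic form, and one observes that the inequality $Q(\lambda([E]))\ge 0$ for $\tau'$-semistable $E$ is inherited, since such an $E$ is an extension of $\tau$-semistable objects of nearby phase whose classes avoid $\nega(Q)$ in the appropriate sense. \emph{This lemma is the main obstacle}: it is the entire technical content of the effective theorem, and the uniformity (dependence of $r$ on $Q$ alone, and preservation of the \emph{same} $Q$) is exactly what makes the rest formal.

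Granting the lemma, the topological wrap-up is routine. Given a path $\gamma\colon[0,1]\to W$ with $\gamma(0)=Z_\sigma$, I would cover $\gamma$ by finitely many balls of radius $<r$ and iterate the lemma starting from $\sigma$, obtaining a lift $\tilde\gamma$ into $\Forg^{-1}(W)$; at each step the support property with respect to $Q$ persists, so $\tilde\gamma$ never leaves the set of stability conditions satisfying it, and since $\Stab(\cT)$ is a manifold its connected and path components coincide, so $\tilde\gamma$ stays in $\Stab(Q,\sigma,\cT)$. Running the same argument over $[0,1]^2$ gives homotopy lifting. Hence $\Forg|_{\Stab(Q,\sigma,\cT)}$ is a local homeomorphism onto the (locally contractible) open set $W\subset\C^{\mathrm{rk}\Lambda}$ with the unique path- and homotopy-lifting properties, i.e.\ a covering map; and since every point of the path-connected component $\Stab(Q,\sigma,\cT)$ is reached by such a lift, every stability condition in it satisfies the support property with respect to $Q$, which is part~(2). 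For the universal-cover claim I would exhibit the deck group: the subgroup of $\glt$ of elements $(g,\mathrm{Id})$ (the kernel of $\glt\to\mathrm{GL}^+(2,\R)$) acts on $\Stab(\cT)$ fixing every central charge, hence acts on $\Stab(Q,\sigma,\cT)$ freely by deck transformations over $W$, generated by the shift $[2]$. On the other hand $W$ fibers over the contractible space of maximal $Q$-negative subspaces of $\Lambda_\R$ with fibre $\mathrm{GL}^+(2,\R)\simeq S^1$, so $\pi_1(W)\cong\Z$; a covering of $W$ whose deck group is infinite therefore has trivial image on $\pi_1$, so $\Forg|_{\Stab(Q,\sigma,\cT)}$ is the universal cover (equivalently, $\Stab(Q,\sigma,\cT)$ is contractible). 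The secondary difficulty here is the identification $\pi_1(W)\cong\Z$ with generator matched to $[2]$; if one wishes to avoid the period-domain picture, one can instead retract $\Stab(Q,\sigma,\cT)$ onto a point directly along the $\glt$-action, using the convexity of the relevant fibres as in Proposition~\ref{prop:convex}.
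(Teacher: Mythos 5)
Your proposal is correct and matches the paper's approach: the paper cites \cite[Proposition A.5]{BMS:stabCY3s} for the covering-map claim and, in the remark following the statement, deduces universality from the contractibility of $W/\mathrm{GL}^+(2,\R)$ (Lemma~\ref{lem:quadraticform}), which is precisely the computation $\pi_1(W)\cong\Z$ that you carry out via the fibration over the Grassmannian of maximal $Q$-negative subspaces. Your additional step of exhibiting the deck group as the kernel of $\glt\to\mathrm{GL}^+(2,\R)$ (generated by $[2]$) and concluding universality from its infinitude is a clean way to finish the argument that is compatible with, and slightly more explicit than, what the paper records.
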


\begin{Rem}
    The only difference with \cite[Proposition A.5]{BMS:stabCY3s} is that we state in ({a}) that the covering map $\Forg|_{\Stab(Q,\sigma,\cT)}$ is universal. This is by noticing the space $W/\GL^+(2,\R)$ is contractible as $Q$ is with signature $(2,\rho-2)$, see the argument in Lemma \ref{lem:quadraticform}. 
\end{Rem}

For a non-degenerate reduced stability condition, we may define a similar neighborhood as the  image of $\Stab(Q,\sigma,\cT)$. Moreover, we can make the following corresponding notions.

\begin{Not}\label{not:UQ}
Let $\sigma$ be a non-degenerate stability condition with $\ts=\pi_\sim(\sigma)$ and $Q$ be a quadratic form on $\Lambda_\R$ with signature $(2,\rho-2)$ offering the support property for $\sigma$. We denote
\begin{align*}
    \sb(Q,\ts,\cT)&\coloneq \pi_\sim\left(\Stab(Q,\sigma,\cT)\right)\\
 U(Q)&\coloneq \{f\in\lbdd\;:\; Q|_{\Ker f}\text{ is with signature }(1,\rho-2)\}.
\end{align*} 

For a subset $S\subset \lbdd$, we denote  \begin{align*}\tgr(2,S)\coloneq \{(f,g)\;|\; f, g\text{ linear independent and } \mathrm{span}_\R\{f,g\}\subset S\cup\{0\}\}\subset \lbdd\times \lbdd.   
\end{align*}
\end{Not}

\begin{Prop}\label{prop:sbQsigma}
   Let $\ts$ be a reduce stability condition with $0\notin \ta(\ts)$. Then for every non-degenerate representative stability condition $\sigma$ with quadratic form $Q$ giving the support property, the following statements hold:
   \begin{enumerate}
       \item  The map $\Forg|_{\sb(Q,\ts,\cT)}:\sb(Q,\ts,\cT)\to U(Q)$ is a universal cover.
       \item  The following diagram commutes.
   \begin{center}
	\begin{tikzcd}
		\Stab(Q,\sigma,\cT) \arrow{d}{\Forg'} \arrow{r}{\Pi}[swap]{\cong}
		& \Pi(\sb(Q,\ts,\cT)) \arrow{d}{\Forg\times \Forg} \arrow[r, phantom,"\subset"] & \sb(Q,\ts,\cT)\times \sb(Q,\ts,\cT)\arrow{d}{\Forg\times \Forg}\\
		W(Q,Z_\sigma) \arrow{r}{(\Im,-\Re)}[swap]{\cong} & \tgr(2,U(Q))_+
		 \arrow[r, phantom,"\subset"]  & U(Q)\times U(Q).
	\end{tikzcd}
\end{center}
   \end{enumerate}
  
\end{Prop}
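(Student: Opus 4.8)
\textbf{Proof plan for Proposition \ref{prop:sbQsigma}.}

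The strategy is to deduce both statements from Proposition and Definition \ref{propdef:stabQsigma} by carefully analyzing how the quotient map $\pi_\sim$ and the forgetful maps interact with the deformation region $W(Q,Z_\sigma)$. First I would fix a non-degenerate representative $\sigma$ of $\ts$ with quadratic form $Q$ of signature $(2,\rho-2)$, and recall that $\Forg'|_{\Stab(Q,\sigma,\cT)}:\Stab(Q,\sigma,\cT)\to W(Q,Z_\sigma)$ is a universal cover. The key observation is that $W(Q,Z_\sigma)$ is exactly the set of central charges $Z=Z_R+iZ_I$ such that $Q|_{\Ker Z}$ is negative definite; writing $Z$ in terms of its real and imaginary parts, and using that $\Ker Z = \Ker Z_R \cap \Ker Z_I$ has codimension $2$ generically, the condition $Q|_{\Ker Z}$ negative definite is equivalent to: $\mathrm{span}_\R\{Z_R,Z_I\}$ is a $2$-dimensional subspace on which $Q^*$ (the dual form) is positive definite, i.e. $(Z_I,-Z_R)\in \tgr(2,U(Q))$ with an appropriate orientation/connectedness condition encoded by the subscript ``$+$''. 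This gives the homeomorphism $W(Q,Z_\sigma)\xrightarrow{(\Im,-\Re)}\tgr(2,U(Q))_+$ at the bottom of the diagram; the ``$+$'' component is the one containing $(Z_{\sigma,I},-Z_{\sigma,R})$, and one must check that $\GL^+(2,\R)$ acting on a $2$-plane keeps one inside the same connected component, which follows since $\GL^+(2,\R)$ is connected.

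For part (1), I would observe that $\pi_\sim$ restricted to $\Stab(Q,\sigma,\cT)$ has fibers that are precisely the $\GL^+(2,\R)$-orbits intersected appropriately — more precisely, by Proposition \ref{prop:convex} and Lemma \ref{lem:nested}, two stability conditions in $\Stab(Q,\sigma,\cT)$ with the same imaginary part of central charge are $\sim$-equivalent (they lie in a convex, hence path-connected, fiber of $\Forg_{\Im}$ since $Q$ gives a common support property and Lemma \ref{lem:B10}-type deformation keeps the kernel negative definite along the segment). Hence $\sb(Q,\ts,\cT)=\pi_\sim(\Stab(Q,\sigma,\cT))$ is the quotient of $\Stab(Q,\sigma,\cT)$ by the relation ``same $\Im Z$'', and the induced map to $U(Q)=\{f : Q|_{\Ker f} \text{ has signature }(1,\rho-2)\}$ is obtained from the universal cover $\Forg'$ by quotienting both sides by the forgetting-the-real-part map. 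Since $W(Q,Z_\sigma)\to U(Q)$ (sending $Z\mapsto \Im Z$) is itself a fibration with contractible fibers (the fiber over $f\in U(Q)$ is the set of $g$ with $\mathrm{span}\{g,f\}$ positive for $Q^*$, which is an open convex-like set, contractible by the signature hypothesis — this is the content of the computation in Lemma \ref{lem:quadraticform}), the map $\sb(Q,\ts,\cT)\to U(Q)$ inherits the universal covering property from $\Stab(Q,\sigma,\cT)\to W(Q,Z_\sigma)$. I would spell this out via the commutative square relating the two covers and the two fibrations, using that a pullback/pushout of a universal cover along a map with contractible fibers is again a universal cover (because $\pi_1(W)=\pi_1(U(Q))$ under these hypotheses).

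For part (2), the diagram is essentially bookkeeping once part (1) and the description of $W(Q,Z_\sigma)$ are in hand. I would define $\Pi:\Stab(Q,\sigma,\cT)\to \sb(Q,\ts,\cT)\times\sb(Q,\ts,\cT)$ by $\tau\mapsto(\pi_\sim(\tau),\pi_\sim(\tau[\tfrac12]))$; its image lands in the ``graph-like'' subset $\Pi(\sb(Q,\ts,\cT))$ parametrizing pairs of reduced stability conditions arising as the two ``axes'' of a single stability condition. The map $\Pi$ is injective because $\tau$ is recovered from $(\Im Z_\tau, -\Re Z_\tau)$ together with the heart $\cA_\tau$, which is determined by $\pi_\sim(\tau)$; it is surjective onto $\Pi(\sb(Q,\ts,\cT))$ by construction; and it is a homeomorphism since $\Forg'$ is a local homeomorphism and the vertical maps $\Forg\times\Forg$ on the right are local homeomorphisms by Proposition \ref{prop:localisom}. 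Commutativity of the left square then says $(\Im,-\Re)\circ\Forg' = (\Forg\times\Forg)\circ\Pi$, which is immediate: the real part of $\tau[\tfrac12]$ is $\Im Z_\tau$ up to sign as recorded in Notation \ref{not:glt}. The main obstacle I anticipate is the careful identification of the connected-component condition (the ``$+$'' subscript) and verifying that quotienting a universal cover by the ``forget real part'' relation genuinely yields a universal cover rather than merely a covering — this requires knowing $\pi_1(U(Q))$ is trivial (equivalently $U(Q)$ is simply connected), which again uses that $Q$ has signature $(2,\rho-2)$ and that $U(Q)$ deformation retracts onto a contractible set of hyperplanes; I would import this from the signature computation underlying Lemma \ref{lem:quadraticform} and the remark following Proposition and Definition \ref{propdef:stabQsigma}.
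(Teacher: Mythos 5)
Your overall strategy—reduce to Proposition and Definition \ref{propdef:stabQsigma} together with the linear algebra in Lemma \ref{lem:quadraticform}, then treat part (2) as bookkeeping—is the same one the paper intends (the paper itself only says ``the proposition follows from Proposition and Definition \ref{propdef:stabQsigma} and the following properties of linear algebra'' and leaves the details implicit). However, there is a concrete error in your treatment of part (1) that would lead you to the wrong conclusion.

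You claim that ``two stability conditions in $\Stab(Q,\sigma,\cT)$ with the same imaginary part of central charge are $\sim$-equivalent,'' reasoning that they lie in a convex fiber of $\Forg_{\Im}$, and at the end you state that $\pi_1(U(Q))$ is trivial. Both are false, and for the same reason. The fiber of $\Im\colon W(Q,Z_\sigma)\to U(Q)$ over $f$ is indeed convex (one component of the cone $\{g : Q^*(g^\perp)>0\}$ where $g^\perp$ is the $Q^*$-projection of $g$ onto $f^\perp$), but $\Forg'\colon \Stab(Q,\sigma,\cT)\to W(Q,Z_\sigma)$ is a nontrivial $\Z$-cover, not a homeomorphism, so the preimage in $\Stab(Q,\sigma,\cT)$ of this convex fiber has $\Z$-many connected components. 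Concretely, $\sigma'$ and $\sigma'[2]$ both lie in $\Stab(Q,\sigma,\cT)$, have identical central charges (hence identical $\Im Z$), but have hearts $\cA_{\sigma'}$ and $\cA_{\sigma'}[2]$ and hence are \emph{not} $\sim$-equivalent. Relatedly, writing $Q^*=x_1^2+x_2^2-x_3^2-\cdots-x_\rho^2$ in suitable coordinates, $U(Q)=\{Q^*>0\}$ deformation retracts onto $\{x_1^2+x_2^2>0,\,x_3=\cdots=x_\rho=0\}\simeq S^1$, so $\pi_1(U(Q))=\Z$. If $U(Q)$ were simply connected and your $\sim$-equivalence claim held, $\Forg|_{\sb(Q,\ts,\cT)}$ would be an injective local homeomorphism, i.e.\ an open embedding—not a genuine universal cover—contradicting the very statement you are trying to prove (and contradicting, e.g., Example \ref{ex:sbcurve}).

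The correct picture: two elements of $\Stab(Q,\sigma,\cT)$ with the same $\Im Z$ are $\sim$-equivalent exactly when they lie in the same component of the fiber of $\Forg_{\Im}|_{\Stab(Q,\sigma,\cT)}$, and the components over a fixed $f$ are permuted simply transitively by the deck group $\Z$ of $\Forg'$, generated by $[2]$. Thus the fiber of $\Forg|_{\sb(Q,\ts,\cT)}$ over each $f\in U(Q)$ is a $\Z$-torsor, matching $\pi_1(U(Q))=\Z$. Simple connectedness of $\sb(Q,\ts,\cT)$ then follows because $\pi_\sim$ restricted to $\Stab(Q,\sigma,\cT)$ has contractible (convex) fibers, as you observe, so $\sb(Q,\ts,\cT)$ is weakly equivalent to the simply connected space $\Stab(Q,\sigma,\cT)$, and a simply connected covering space is the universal cover. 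Note also that you correctly write $\pi_1(W)=\pi_1(U(Q))$ in the middle of your argument, which already contradicts your later assertion of triviality; resolving that internal inconsistency is the essential repair.

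Your treatment of part (2) is fine once part (1) is corrected: $\Pi=(\pi_\sim,\pi_\sim\circ[\tfrac12])$ is injective because a stability condition is determined by its heart and central charge, and commutativity of the left square is exactly the computation $Z_{\tau[\tfrac12]}=\Im Z_\tau-i\Re Z_\tau$ from Notation \ref{not:Caction}, combined with the identifications \eqref{eq213}--\eqref{eq2166} of Lemma \ref{lem:quadraticform}.
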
 

Here the map $\Pi=(\pi_\sim,\pi_\sim\circ[\tfrac{1}{2}]):\Stab\to \sb\times \sb$. The space $\tgr(U(Q))_+$ is the connected component of $\tgr(U(Q))$ that contains the image of $Z_\sigma$. As $0\notin\ta(\ts)$, by Proposition \ref{prop:convex}.{(1)}, the maps $Z_\sigma$ and $-Z_\sigma$ cannot both be the central charge on $\cA$. So $\tgr(U(Q))_+$ is determined by $\ts$.

The proposition follows from Proposition and Definition \ref{propdef:stabQsigma} and the following properties of linear algebra.
\begin{Lem}\label{lem:quadraticform}
    Let $Q$ be a bilinear form on $\Lambda_\R$ with signature $(2,\rho-2)$ and $Z_0\in\Hom(\Lambda,\C)$ with $Q|_{\Ker Z_0}$ being negative definite. Then $U(Q)$ is  connected open and equal to the following subsets:
    \begin{align}
        U(Q)
     \label{eq211}   =& \{f\in\lbdd\;|\; \exists Z\in W(Q,Z_0)\text{ such that } \Ker f\supset \Ker Z\}\\
      \label{eq212}  =&\{\Im Z\;|\; Z\in W(Q,Z_0)\}.
    \end{align}
    Moreover, \begin{align}
       W(Q)\coloneq  W(Q,Z_0)\coprod W(Q,\overline Z_0)&=\{g+if\;|\; f,g\in U(Q), Q|_{\Ker f\cap \Ker g}\text{ is negative definite}\} \label{eq213}\\
      \label{eq214} & = \{g+if\;|\; (f,g)\in \tgr(2,U(Q))\};\\
      \label{eq2166} & = \{g+if\;|\; (Q^*(f,g))^2<Q^*(f)Q^*(g), \;Q^*(f)>0\}
    \end{align}
    where $Q^*$ is dual form of $Q$ on $\lbdd$.
\end{Lem}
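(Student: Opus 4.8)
\textbf{Proof plan for Lemma \ref{lem:quadraticform}.}

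The plan is to work on the projectivization $\bP(\lbdd)$ and $\bP(\Lambda_\R)$, where the signature condition on $Q$ becomes a statement about a smooth quadric hypersurface and the position of linear subspaces relative to it. First I would fix a basis diagonalizing $Q$, so that $Q = x_1^2 + x_2^2 - x_3^2 - \dots - x_\rho^2$, and let $Q^*$ be the dual form on $\lbdd$ (same signature). The kernel $\Ker f$ of a nonzero $f \in \lbdd$ is a hyperplane in $\Lambda_\R$; the restriction $Q|_{\Ker f}$ has signature $(1,\rho-2)$ precisely when $[f] \in \bP(\lbdd)$ lies in the region where $Q^*(f) > 0$ (the ``inside'' of the dual quadric), by the standard duality between a hyperplane being tangent/secant/external to a quadric and the value of the dual form on its defining covector. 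This identifies $U(Q)$ with (the cone over) one of the two components cut out by $Q^* > 0$; I must check it is a single component and that it is open, which is immediate since $\{Q^* > 0\}$ in $\bP(\lbdd)$ for signature $(2,\rho-2)$ is connected (its complement $\{Q^*\le 0\}$ is connected and the region is the interior of a real quadric of this signature — topologically an open ball bundle over a projective space, cf.\ the contractibility remark after Proposition and Definition \ref{propdef:stabQsigma}). This simultaneously gives \eqref{eq2166} for the ``$\Im$'' factor and sets up the Grassmannian description.

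Next I would prove the three equalities for $U(Q)$ itself. The inclusion ``\eqref{eq211} $\subseteq U(Q)$'' and ``\eqref{eq212} $\subseteq U(Q)$'' are formal: if $\Ker f \supseteq \Ker Z$ with $Z \in W(Q,Z_0)$, then $Q|_{\Ker f}$ is negative definite on the codimension-one subspace $\Ker Z \subset \Ker f$ hence has signature $(1,\rho-2)$ (it cannot be negative definite on all of $\Ker f$ since $f \ne 0$ means $\Ker f$ is a genuine hyperplane, and a hyperplane on which $Q$ is negative definite would contradict signature $(2,\rho-2)$). Conversely, given $f \in U(Q)$, I need to produce $Z \in W(Q,Z_0)$ with $\Ker Z \subseteq \Ker f$ and $\Im Z = f$: choosing $Z = g + if$ amounts to finding $g$ with $(f,g)$ spanning a plane $P^* \subset \lbdd$ whose annihilator $P^\perp = \Ker f \cap \Ker g$ has $Q|_{P^\perp}$ negative definite, and such that the resulting $Z$ lies in the correct connected component $W(Q,Z_0)$ rather than $W(Q,\bar Z_0)$. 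Existence of such a $g$ is again a quadric-position statement: inside the hyperplane $\Ker f$ (which has signature $(1,\rho-2)$), the cone $\{Q<0\}$ is nonempty and the set of hyperplanes-within-$\Ker f$ avoiding a chosen positive vector is open and nonempty; dually this is the statement that the set of $g$ with $(Q^*|_{\text{plane}})$ of signature $(1,1)$ is nonempty, which holds because $Q^*(f)>0$. The component issue is handled by connectedness of $U(Q)$ together with continuity: the map $W(Q,Z_0) \to U(Q)$, $Z \mapsto \Im Z$, is continuous, open, and its image is a nonempty open and closed (by the equality just being established up to components) subset of the connected set $U(Q)$, hence surjective; the complex-conjugate component $W(Q,\bar Z_0)$ contributes the same image, so no ambiguity arises for $U(Q)$ as a set. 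Connectedness of $U(Q)$ as asserted in the statement then follows from the projective picture above.

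Finally, for the three descriptions \eqref{eq213}, \eqref{eq214}, \eqref{eq2166} of $W(Q) = W(Q,Z_0) \sqcup W(Q,\bar Z_0)$: the equality with \eqref{eq213} is essentially the definition of $W(Q,-)$ unwound — $g+if$ has $Q|_{\Ker(g+if)}$ negative definite iff $Q|_{\Ker f \cap \Ker g}$ negative definite, and one checks $f, g \in U(Q)$ is then automatic (each contains $\Ker f \cap \Ker g$ as a codimension-one subspace on which $Q$ is negative definite, so has signature $(1,\rho-2)$) — and conversely the three listed conditions force the kernel to be negative definite. The equality with \eqref{eq214} is just the translation of ``$Q|_{\Ker f \cap \Ker g}$ negative definite'' into ``$\mathrm{span}_\R\{f,g\} \subseteq U(Q) \cup \{0\}$'': every nonzero element $af+bg$ of the plane has kernel containing $\Ker f \cap \Ker g$, hence lies in $U(Q)$ by the \eqref{eq211} description; conversely if the whole plane lies in $U(Q)\cup\{0\}$ then in particular $Q^*$ restricted to that plane is positive somewhere and one deduces signature $(1,1)$, i.e.\ $Q|_{\Ker f \cap \Ker g}$ negative definite. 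The equality with \eqref{eq2166} is the dual-quadric computation: $(f,g)$ spans a plane $P^*$ in $\lbdd$ with $Q^*|_{P^*}$ of signature $(1,1)$ iff the discriminant condition $Q^*(f,g)^2 < Q^*(f)Q^*(g)$ holds together with $Q^*(f) > 0$ (the latter pins down the signature rather than $(0,2)$ or allowing degeneracy). I expect the main obstacle to be the \textbf{connected-component bookkeeping} — cleanly arguing that the ``$\Im$'' image of a single component $W(Q,Z_0)$ is all of $U(Q)$ and that the union of the two conjugate components $W(Q,Z_0) \sqcup W(Q,\bar Z_0)$ is exactly $\{g+if : (f,g)\in\tgr(2,U(Q))\}$ with no stray components of $\tgr(2,U(Q))$ or of $\{Q^*(f,g)^2 < Q^*(f)Q^*(g),\, Q^*(f)>0\}$; this requires knowing that $\tgr(2,U(Q))$, equivalently the set of ``spacelike'' (signature $(1,1)$) planes in a signature-$(2,\rho-2)$ space, has exactly two connected components swapped by orientation-reversal, which is the same fact underlying the two components $W(Q,Z_0)$ and $W(Q,\bar Z_0)$ and should be extracted from the structure of the Grassmannian of such planes (a bundle over the Grassmannian of positive $2$-planes, which is connected and simply connected, the two components coming from the choice of orientation on the positive $2$-plane, i.e.\ from $\GL^+$ vs $\GL^-$).
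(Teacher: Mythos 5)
Your overall framework — diagonalize $Q$, identify $U(Q)$ with the cone $\{Q^*>0\}$, and read off the various equivalences as statements about positive $2$-planes in signature $(2,\rho-2)$ — is the right one and matches the paper. But there are two real problems in the middle of your plan.

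First, a local error: when you produce $g$ from a given $f\in U(Q)$, the requirement that $Q|_{\Ker f\cap\Ker g}$ be negative definite translates by duality into $Q^*|_{\mathrm{span}\{f,g\}}$ being \emph{positive definite} (signature $(2,0)$), not signature $(1,1)$ as you write. The existence of such a $g$ is still clear from $Q^*(f)>0$ and the signature $(2,\rho-2)$ of $Q^*$, so the conclusion survives, but the stated duality is wrong.

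Second, and more seriously, your ``open and closed'' argument for the surjectivity of $\Im\colon W(Q,Z_0)\to U(Q)$ does not close. Openness is fine. For closedness you appeal to ``the equality just being established up to components,'' i.e.\ that every $f\in U(Q)$ is $\Im Z$ for \emph{some} $Z$ with $Q|_{\Ker Z}$ negative definite. That tells you $U(Q)$ is the union of the images $\Im(W_i)$ over all connected components $W_i$, but those images are open and need not be disjoint, so the complement of $\Im(W(Q,Z_0))$ is not a priori a union of them and you get no closedness. The difficulty is genuine: $\Im\colon W'(Q)\to U(Q)$ is a fibration whose fiber over a fixed $f$ has two components (it is a bundle over the $\GL^+$ vs.\ $\GL^-$ choice), and $\pi_1(U(Q))\cong\Z$ since $U(Q)$ retracts onto a circle, so one must actively rule out monodromy connecting the two fiber components. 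Put differently, you cannot deduce $\Im(W(Q,Z_0))=U(Q)$ before you know $W'(Q)$ has exactly two components — which is precisely the content of \eqref{eq213} that you planned to prove afterwards.

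This is why the paper proves \eqref{eq213} \emph{first}: it passes to the quotient $W'(Q)/\GL(2,\R)$, identifies it with the space of $(\rho-2)$-dimensional negative definite subspaces $V\subset\Lambda_\R$, and then exhibits an explicit deformation retraction onto a fixed $V_0$ (scaling the first two coordinates to zero), concluding this quotient is contractible and hence that the principal $\GL(2,\R)$-bundle $W'(Q)$ has $\pi_0=\pi_0(\GL(2,\R))=\Z/2$. With two components in hand, \eqref{eq211}, \eqref{eq212}, \eqref{eq214}, \eqref{eq2166} all follow by the bookkeeping you sketch. Your plan names this input (``connected and simply connected Grassmannian of positive $2$-planes'') but defers it; it is the one non-formal step, and the explicit retraction is what you would need to supply. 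The remaining equivalences \eqref{eq213}$\Leftrightarrow$\eqref{eq214}$\Leftrightarrow$\eqref{eq2166} in your last paragraph are correct and essentially identical to the paper's unwinding.
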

\begin{proof}
  We may assume that there exists a basis $\be_1,\dots \be_\rho$ under which $Q(x,y,z_1,\dots,z_{\rho-2})=x^2+y^2-z_1^2-\dots-z_{\rho-2}^2$. Denote the dual basis as $\be_1^*,\dots,\be_\rho^*$. It follows that \begin{align*}
      U(Q)=\{a\be_1^*+b\be_2^*-c_1\be_3^*-\dots-c_{\rho-2}\be^*_\rho\;|\;Q(a,b,c_1,\dots,c_{\rho-2})>0\}=\{f\in\lbdd:Q^*(f)>0\}
  \end{align*} which is clearly a connected open subset. \\

 \noindent We then prove \eqref{eq213}.  Denote by $W'(Q)$ the set on the right hand side as that in \eqref{eq213}.  Then by taking $\Ker(g+if)$, the quotient space $W'(Q)/\GL(2,\R)$ is  identified as $\{V\subset \lbdd\mid \dim V=\rho-2, Q|_V$ is negative definite$\}$. 
    
    For every $V\in W'(Q)/\GL(2,\R)$, by the choice of the basis, the projection of $V$ onto the subspace $V_0\coloneq \{(0,0,*,\dots,*)\}$ is isomorphism since otherwise $V$ is contained in a codimension one linear subspace with signature $(2,\rho-3)$. It follows that $V_t\coloneq \{(ta,tb,c_1,\dots,c_{\rho-2}): (a,b,c_1,\dots,c_{\rho-2})\in V\}$ is in $W'(Q)/\GL(2,\R)$ when $t\in[0,1]$. So the space $W'(Q)/\GL(2,\R)$ contracts to $V_0$. Therefore, $W'(Q)$ has two connected components corresponding to $\GL^\pm(2,\R)$. So \eqref{eq213} holds.\\

\noindent For every $f\in U(Q)$, as $Q^*$ is with signature $(2,\rho-2)$ and $Q^*(f)>0$, there exists $g\in \lbdd$ linear independent with $f$ such that $Q^*$ is positive definite on $\mathrm{span}_\R\{f,g\}$. By \eqref{eq213}, we have $\pm g+if\in W(Q,Z_0)$, the formula \eqref{eq211} and \eqref{eq212} hold.

 Note that: $Q|_{\Ker f\cap \Ker g}$ is negative definite $\iff$ $\dim(\Ker f\cap \Ker g)=\rho-2$ and $Q^*|_{(\Ker f\cap \Ker g)^*}$ is positive definite $\iff$ $f,g$ are linear independent and $Q^*(af+bg)>0$ for every $af+bg\neq 0$ $\iff$ $(f,g)\in \tgr(2,U(Q))$. The formula \eqref{eq214} holds.

 Note that: $f, g$ are linear independent and $Q^*(af+bg)>0$ for every $af+bg\neq 0$ $\iff$ $a^2Q^*(f)+2abQ^*(f,g)+b^2Q^*(g)>0$ for every $[a:b]\in \bP_\R^1$ $\iff$ $(Q^*(f,g))^2<Q^*(f)Q^*(g)$ and $Q^*(f)>0$. The formula \eqref{eq2166} holds.
 \end{proof}

\subsection{Example: Reduced stability conditions on an unpolarized abelian surface} \label{sec:appDunpolaized}  
In general, the space $\sb^*(S)$ of reduced stability conditions on an unpolarized surface is difficult to describe as there might exist a curve $C$ with negative self-intersection, in other words, the discriminant $\Delta(\cO_C)<0$. One needs a modified version of quadratic form for the support property of the stability condition.

As the paper is not on this topic, we just study one simple case, the abelian surface case, when this issue does not involve. 
\begin{Asp}
In this section, we always let $S$ be a smooth abelian surface and the lattice $\Lambda=\kn(S)$, the full numerical Grothendieck group.
\end{Asp}

One particular advantage of the abelian surface is that   every semistable (in whatever sense) object $E$ in $\Db(S)$ satisfies the Bogomolov inequality $\Delta(E)\geq 0$, see Remark \ref{lem:negdefonZ}. By \cite[Theorem 15.2]{Bridgeland:K3}, a connected component of the stability manifold is constructed. By \cite{hannah:scfreequotients}, see also \cite{HMS:Orientation,FLZ:ab3,Nick:stab}, this is the only component of the whole manifold. We first briefly recap its construction as follows.

Let $\Coh^{\sharp0}_H(S) \coloneq \langle\Coh^{>0}_H(S),\Coh^{\leq 0}_H(S)[1]\rangle$ be the heart of a bounded t-structure and the central charge be $Z\coloneq -\ch_2+\rk+iH\ch_1$. Then $\sigma_0\coloneq (\Coh^{\sharp0}_H(S),Z )$ is a stability condition on $\Db(S)$, see \cite[Corollary 2.1]{AB:Ktrivial} for reference.

 The discriminant $\Delta$ is a quadratic form on $\Lambda_\R$, more precisely, for every $v=(r,D,s)$ and $v'=(r',D',s')\in \Lambda_\R$, the form is given as\begin{align*}
     \Delta(v,v')=DD'-rs'-r's.
 \end{align*}
 By Hodge Index Theorem, the signature of $\Delta$ is $(2,\rho)$, where $\rho$ is the rank of the N\'eron--Severi group of $S$. 
 \begin{Rem}\label{lem:negdefonZ}
     The discriminant $\Delta$ gives the support property for $\sigma_0$.
 \end{Rem}
\begin{proof}
 We first show that the restricted quadratic form $\Delta|_{\Ker Z}$ is negative definite. For every non-zero character $v=(r,D,s)\in\Kn(S)$ in  $\Ker Z$ as above, we have 
\begin{align*}
    s=r \text{ and } DH=0
\end{align*}
It follows that 
\begin{align*}
    H^2\Delta(v)=H^2D^2-2rsH^2\leq (HD)^2-2r^2H^2\leq0
\end{align*}
Note that if the second inequality holds, then $r=s=0$. It follows that $D\neq 0$, so the first inequality must be strict. In other words, we have $\Delta(v)<0$. The quadratic form $\Delta$ is negative definite on $\Ker Z$.\\

\noindent    Let $E\in\Db(S)$ be a $\sigma_{0}$-stable object. We show that $\Delta(E)\geq0$.

When $\dim\supp(E)\neq0$, there exists $\cL\in\Pic^0(S)$ such that $F\coloneq E\otimes \cL\not\cong E$. Otherwise, we may choose a non-zero automorphism $g$ of the abelian surface such that $F\coloneq g^*E\not\cong E$. In any case, there exists $F$ satisfying
    \begin{align*}
        F\not\cong E, [F]_{\mathrm{num}}=[E]_{\mathrm{num}},\text{ and }F\text{ is $\sigma_0$-stable.}
    \end{align*}
    If follows that 
    \begin{align*}
        0=\hom(E,F)+\hom(F,E)=\hom(E,F)+\hom(E,F[2])\geq \chi(E,F)=\chi(E,E)=-\Delta(E).
    \end{align*}
  As $\Delta$ is with signature $(2,\rho)$ and $\Delta|_{\Ker Z}$ is negative definite, by \cite[Appendix A]{BMS:stabCY3s}, when  $E$ is $\sigma_0$-semistable, we also have $\Delta(E)\geq 0$.
\end{proof}

As that in Proposition and Definition \ref{propdef:stabQsigma}, we have the space $\Stab(\Delta,\sigma_0,\Db(S))$. By  \cite{Bridgeland:K3,hannah:scfreequotients}, the space  $\Stab(S)=\Stab(\Delta,\sigma_0,\Db(S))$.\\

Denote by \begin{align*}
    U(\Delta)\coloneq \{B\in\lbdd:\Delta|_{\Ker B}\text{ is with signature }(1,\rho)\}=\{B:\Delta^*(B)>0\}
\end{align*}
as that in Notation \ref{not:UQ}. By Proposition \ref{prop:sbQsigma}, we may describe the space of reduced stability conditions on $S$ as follows:
\begin{Not}[Reduced stability conditions on abelian surfaces]\label{eg:sbnpsurface}
    The forgetful map \begin{align*}
        \Forg: \sb(S)\to U(\Delta)
    \end{align*}
    is a universal cover. In terms of a parametrized space, we may write
    \begin{align}\label{eqD11}
        &\sb^*(S)=\left\{\ts_{(r,D,s)}\;\middle|\;\begin{aligned}
            r\in \R_{\geq0},s\in\R,D\in\NS_R(S), D^2-2rs>0; \text{ when } r=0, D\in\overline\Eff(S)
        \end{aligned}\right\} \\ 
        &\sb(S)  =\coprod_{n\in \Z}\sb^*(S)[n].\notag
    \end{align}
    The reduced central charge of $\ts_{(r,D,s)}$ is given as $B_{(r,D,s)}=r\ch_2-D\ch_1+s\rk$. When $r>0$, the heart $\cA_{(r,D,s)}$ contains all skyscraper sheaves.
    When $r=0$, all skyscraper sheaves are in $\cP_{\ts}(0)$.
 \end{Not}   
 \begin{Prop}\label{prop:deltresneg}
    Let $S$ be an abelian surface, $v=(r,D,s)$ and $v'=(r',D',s')$ be two parameters  as that in \eqref{eqD11}. Then the restricted quadratic form $\Delta|_{\Ker B_v\cap \Ker B_{v'}}$ is negative definite if and only if  \begin{align}\label{eqD601}
       (\Delta(v,v'))^2<\Delta(v)\Delta(v').
   \end{align}
  In particular, this always implies $(rD'-r'D)^2>0$. If $rD'-r'D$ is effective, then $-B_{v'}\in\ta(\ts_v)$ and $B_v\in\ta(\ts_{v'})$.
\end{Prop}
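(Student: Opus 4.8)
The plan is to translate the statement into linear algebra on the dual space $\lbdd$ using the dual quadratic form $\Delta^*$, and then to invoke the general structural results of Section~\ref{sec:qudform} (Proposition~\ref{prop:sbQsigma} and Lemma~\ref{lem:quadraticform}) together with Proposition~\ref{prop:lifttostab}. First I would recall that $\Delta$ has signature $(2,\rho)$, so its dual $\Delta^*$ on $\lbdd$ also has signature $(2,\rho)$; by the last part of Lemma~\ref{lem:quadraticform}, for linearly independent $f,g\in U(\Delta)$ one has $\Delta|_{\Ker f\cap \Ker g}$ negative definite if and only if $(\Delta^*(f,g))^2<\Delta^*(f)\Delta^*(g)$. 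Applying this with $f=B_v$, $g=B_{v'}$ and using that $B_v$, $B_{v'}$ are obtained from $v$, $v'$ via the isomorphism $w\mapsto \Delta(w,-)$ (which is precisely the pairing appearing in Remark~\ref{rem:sd}), the quantities $\Delta^*(B_v)$, $\Delta^*(B_{v'})$, $\Delta^*(B_v,B_{v'})$ become $\Delta(v)$, $\Delta(v')$, $\Delta(v,v')$ respectively. This yields the stated equivalence \eqref{eqD601}, once one checks that $B_v,B_{v'}\in U(\Delta)$ — but that is immediate from the parametrization \eqref{eqD11}, since $\Delta(v)=D^2-2rs>0$ (and the limiting $r=0$ case as well), so $\Delta^*(B_v)=\Delta(v)>0$, which is exactly the defining condition of $U(\Delta)$.

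Next I would extract the inequality $(rD'-r'D)^2>0$. Expanding $(\Delta(v,v'))^2<\Delta(v)\Delta(v')$ with $\Delta(v,v')=DD'-rs'-r's$, $\Delta(v)=D^2-2rs$, $\Delta(v')=D'^2-2r's'$, one gets after rearrangement that the difference $\Delta(v)\Delta(v')-(\Delta(v,v'))^2$ is a manifestly nonnegative combination whose vanishing locus forces $rD'-r'D=0$ in $\NS_\R(S)$; so the strict inequality \eqref{eqD601} forces $(rD'-r'D)^2>0$. (This is the one genuinely computational point, but it is a short Hodge-index-type manipulation: the quantity $\Delta(v)\Delta(v')-(\Delta(v,v'))^2$ is, up to sign, a Gram determinant with respect to $\Delta$, and its negativity encodes that the plane spanned by $v,v'$ meets the positive cone, which in particular means $rD'-r'D$ — a vector visibly orthogonal to a natural hyperplane — is nonisotropic.)

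Finally, to conclude that $-B_{v'}\in\ta(\ts_v)$ and $B_v\in\ta(\ts_{v'})$ when $rD'-r'D$ is effective, I would appeal to Proposition~\ref{prop:sbQsigma} and formula \eqref{eq213}: when $\Delta|_{\Ker B_v\cap\Ker B_{v'}}$ is negative definite, the central charge $g+if$ lies in $W(\Delta)=W(\Delta,Z_{\sigma_0})\coprod W(\Delta,\overline{Z_{\sigma_0}})$ for a suitable choice of signs of $(f,g)\in\{(B_v,\pm B_{v'}),(B_{v'},\pm B_v)\}$, hence the pair $(\cA,g+if)$ is a genuine stability condition on one of the two hearts; the effectivity hypothesis on $rD'-r'D$ is what selects the correct component $W(\Delta,Z_{\sigma_0})$ (as opposed to its conjugate), because — as in Notation~\ref{eg:sbnpsurface} — the heart $\cA_{(r,D,s)}$ is pinned down by the requirement that skyscraper sheaves have positive imaginary part, and the sign of $rD'-r'D$ against $H$ records on which side of the phase-$1$ slice the deformation lands. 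Once $(\cA_v, -B_{v'}+iB_v)$ is known to be a stability condition representing $\ts_v$, the definition of $\ta(\ts_v)$ in \eqref{eq:tadef} gives $-B_{v'}\in\ta(\ts_v)$ directly; symmetrically $B_v\in\ta(\ts_{v'})$. The main obstacle I anticipate is bookkeeping the signs: verifying that the effectivity of $rD'-r'D$ (rather than of $r'D-rD'$) is precisely the condition that the relevant central charge lies in $W(\Delta,Z_{\sigma_0})$ and produces the heart $\cA_v$ — this amounts to tracking the geometric stability condition $\sigma_0$ and the skyscraper phase through the universal cover of Lemma~\ref{lem:quadraticform}, in parallel with the surface computation in Lemma~\ref{lem:comparesurfacesb}.(1).
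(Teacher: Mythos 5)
Your handling of the equivalence \eqref{eqD601} matches the paper: both you and the paper invoke \eqref{eq2166} from Lemma~\ref{lem:quadraticform}, and your explicit observation that $\Delta^*(B_v)=\Delta(v)$, $\Delta^*(B_v,B_{v'})=\Delta(v,v')$ under the identification $w\mapsto \Delta(w,-)$ (up to sign) is the right mechanism. The check that $\Delta^*(B_v)>0$ is automatic from \eqref{eqD11} is also correct.

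Your argument for $(rD'-r'D)^2>0$ gets the conclusion right but the description is muddled: the Gram determinant $\Delta(v)\Delta(v')-(\Delta(v,v'))^2$ is \emph{not} ``manifestly nonnegative,'' and its \emph{positivity} (the hypothesis) does not say the plane ``meets the positive cone'' but rather that $\Delta$ restricted to $\mathrm{span}(v,v')$ is positive definite. The clean argument is: $w\coloneq rv'-r'v=(0,rD'-r'D,*)$ lies in $\mathrm{span}(v,v')$, is nonzero (if $r=r'=0$ then $\Delta(v,v')^2=\Delta(v)\Delta(v')$, contradicting \eqref{eqD601}; if $v,v'$ were parallel, equality would hold as well), and $\Delta(w)=(rD'-r'D)^2$; positive definiteness of $\Delta$ on the span then forces $\Delta(w)>0$. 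You should rewrite this step.

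The genuine gap is in the last claim. You correctly reduce, via Proposition~\ref{prop:sbQsigma} and \eqref{eq213}, to knowing that exactly one of $\pm B_{v'}$ lies in $\ta(\ts_v)$, and you propose to determine the sign by deciding which of $(B_v,\pm B_{v'})$ gives the oriented component $\tgr(2,U(\Delta))_+$ of Lemma~\ref{lem:quadraticform}. That is a legitimate strategy and genuinely different from what the paper does, but the whole content of the claim is precisely the assertion that this orientation condition coincides with effectivity of $rD'-r'D$; you defer that verification to ``bookkeeping'' without saying how to do it, and it is not a routine sign check (one has to compare the orientation of a positive $\Delta^*$-plane $\mathrm{span}(B_v,B_{v'})$ against the reference plane spanned by $\Im Z_{\sigma_0}$ and $-\Re Z_{\sigma_0}$, across the whole connected family). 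The paper instead uses Lemma~\ref{lem:partialorder} to get that one of $\ts_v\lsm\ts_{v'}$ or $\ts_{v'}\lsm\ts_v$ holds, and then pins down the direction by a concrete test: writing down which shifted line bundles $\cO(E)[k]$ lie in $\cA_{\ts_v}$ versus $\cA_{\ts_{v'}}$ and observing that effectivity of $rD'-r'D$ forbids $\ts_{v'}\lsm\ts_v$. That is a verifiable, object-level argument, whereas your proposal leaves the essential equivalence (orientation $\iff$ effectivity) unproved. Until that is filled in, the final sentence of the proposition is not established.
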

\begin{proof}
    The criterion \eqref{eqD601} follows from \eqref{eq2166} in Lemma \ref{lem:quadraticform} immediately.

    By Proposition \ref{prop:sbQsigma}, \eqref{eq213}, and Lemma \ref{lem:partialorder}, either $B_{v'}$ or $-B_{v'}$ is in $\ta(\ts_v)$ depending on whether $\ts_{v'}\lsm \ts_v$ or $\ts_{v}\lsm \ts_{v'}$. Note that a line bundle $\cO(E)[1]\in \cA_{\ts_v}$ when $B_v(E)<0$. A line bundle $\cO(F)\in \cA_{\ts_v}$ if $B_v(F)>0$ and $F-E$ is effective for some $\cO(E)[1]\in\cA_{\ts_v}$. So when $rD'-r'D$ is effective, we can only have $\ts_{v}\lsm\ts_{v'}$. In particular, we must have $-B_{v'}\in\ta(\ts_v)$.
\end{proof}

Now we can set up a more general version of Bayer Lemma for a surface without polarization.
\begin{Prop}\label{prop:bayersurf}
   Let $v=(r,D,s)$  be a parameter  as that in \eqref{eqD11} with $r\neq 0$. Then 
    \begin{align*}
        \ts_{v}\lsm \ts_{v}\otimes \cO_S(H)
    \end{align*}
    for every ample divisor $H$. 
\end{Prop}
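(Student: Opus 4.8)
The plan is to reduce the statement to the comparison criterion established in Proposition \ref{prop:deltresneg} together with Lemma \ref{lem:partialorder} and Lemma \ref{lem:posaction}, exactly as in the polarized case (Proposition \ref{prop:bayerlemsurface}, via Lemma \ref{lem:comparesurfacesb}). First I would record that the $\otimes\cO_S(H)$-action on reduced central charges is computed by $\ts_v\otimes\cO_S(H)=\ts_{v'}$, where if $v=(r,D,s)$ then $v'=(r,D+rH,s+DH+\tfrac{r}{2}H^2)$; this is just the twisted Chern character formula applied to $B_v$, and it uses $r\neq 0$ in the sense that it is the $\ch_0$-coefficient that governs how $D$ shifts. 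The key point to verify is then that the pair $v$ and $v'$ satisfies the negative-definiteness hypothesis of Proposition \ref{prop:deltresneg}, i.e.\ $(\Delta(v,v'))^2<\Delta(v)\Delta(v')$, and moreover that the orientation is the correct one so that $\ts_v\lsm\ts_{v'}$ rather than $\ts_{v'}\lsm\ts_v$.

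For the inequality: since $v'$ differs from $v$ by a ``$\otimes\cO_S(H)$-translation'', a direct computation should give $\Delta(v')=\Delta(v)$ (the discriminant is invariant under twisting by a line bundle), and $\Delta(v,v')$ expands as $\Delta(v)$ plus a term involving $r^2$, $D$, $H$; concretely I expect $\Delta(v,v')=\Delta(v)+\tfrac{r^2}{2}H^2$ after simplification, using $\Delta(v,v)=\Delta(v)$. Then $(\Delta(v,v'))^2<\Delta(v)^2=\Delta(v)\Delta(v')$ would need $\Delta(v)<\Delta(v,v')$ and $\Delta(v,v')>-\Delta(v)$; the first is immediate from $r\neq 0$ and $H$ ample (so $H^2>0$), and one must check $\Delta(v)+\Delta(v,v')>0$, i.e.\ $2\Delta(v)+\tfrac{r^2}{2}H^2>0$, which holds since $\Delta(v)=D^2-2rs>0$ is assumed in \eqref{eqD11}. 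For the orientation, Proposition \ref{prop:deltresneg} says that if $rD'-r'D$ is effective then $\ts_v\lsm\ts_{v'}$; here $r'=r$ and $D'=D+rH$, so $rD'-r'D=r^2 H$, which is ample hence effective (as $r\neq 0$). This gives $\ts_v\lsm\ts_v\otimes\cO_S(H)$.

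The main obstacle I anticipate is the boundary/degenerate bookkeeping: one must make sure that both $v$ and $v'$ genuinely lie in the parametrizing region \eqref{eqD11} (so that $\ts_v$ and $\ts_{v'}$ are honest reduced stability conditions in $\sb^*(S)$), in particular that $\Delta(v')>0$ — which follows from $\Delta(v')=\Delta(v)>0$ — and that the universal-cover/shift ambiguity in Notation \ref{eg:sbnpsurface} is resolved in favour of $\ts_v\otimes\cO_S(H)\in\sb^*(S)$ (no extra $[n]$), which is forced by continuity of the $\otimes\cO_S(H)$-action and the $r>0$ condition being preserved. Once the inequality and the effectivity of $r^2H$ are in hand, the conclusion is immediate; if one also wants the corollary-style vanishing $\Hom(E(mH),F)=0$ for $E,F\in\cP_{\ts_v}(1)$ and $m>0$, it follows by iterating $\ts_v\lsm\ts_v\otimes\cO_S(mH)$ and applying Lemma \ref{lem:posaction}, just as in Proposition \ref{prop:bayerlemsurface}.
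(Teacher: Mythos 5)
There is a genuine gap, caused by a sign error that conceals the central difficulty. Writing $v'=v\cdot e^H=(r,\,D+rH,\,s+DH+\tfrac{1}{2}rH^2)$, the correct computation of the bilinear form is
\begin{align*}
\Delta(v,v')&=D(D+rH)-r\bigl(s+DH+\tfrac{1}{2}rH^2\bigr)-rs=\Delta(v)-\tfrac{1}{2}r^2H^2,
\end{align*}
with a \emph{minus} sign, not the $\Delta(v)+\tfrac{1}{2}r^2H^2$ you guessed. Combined with $\Delta(v')=\Delta(v)$, the hypothesis \eqref{eqD601} of Proposition \ref{prop:deltresneg} becomes $\bigl(\Delta(v)-\tfrac{1}{2}r^2H^2\bigr)^2<\Delta(v)^2$, i.e.\ $0<r^2H^2<4\Delta(v)$. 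The lower bound is automatic from $r\neq 0$ and $H$ ample, but the upper bound $r^2H^2<4\Delta(v)$ is \emph{not}: for fixed $v$ with $\Delta(v)>0$ small and $H$ with $H^2$ large, the criterion of Proposition \ref{prop:deltresneg} simply fails when applied directly to the pair $(v,v\cdot e^H)$. Your sign error made the dangerous inequality read $2\Delta(v)+\tfrac{1}{2}r^2H^2>0$, which is trivially true, and so you concluded the one-shot application goes through; it does not.

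The paper fixes exactly this by subdivision. One shows $\Delta(v\cdot e^G)=\Delta(v)$ for any $G\in\NS_\R(S)$ and $(v\cdot e^{G_1})\cdot e^{G_2}=v\cdot e^{G_1+G_2}$, chooses $m\in\Z_{\geq1}$ large enough that $r^2(H/m)^2=r^2H^2/m^2<4\Delta(v)$, applies Proposition \ref{prop:deltresneg} to each consecutive pair $(v\cdot e^{kH/m},\,v\cdot e^{(k+1)H/m})$ (for which the criterion now holds, and $r^2\cdot(H/m)$ is still ample hence effective, giving the correct orientation), and then uses transitivity of $\lsm$ to chain
\begin{align*}
\ts_v\lsm\ts_{v\cdot e^{H/m}}\lsm\cdots\lsm\ts_{v\cdot e^{H}}=\ts_v\otimes\cO_S(H).
\end{align*}
The rest of your outline (the role of $r\neq 0$, effectivity of $r^2H$ giving the orientation, $\Delta(v')=\Delta(v)$ keeping $v'$ in the region \eqref{eqD11}, and deriving the $\Hom$-vanishing by iterating and invoking Lemma \ref{lem:posaction}) is sound and matches the paper; it is only the missing subdivision step, caused by the sign error, that needs to be repaired.
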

\begin{proof}   
Given $v=(r,D,s)$ and $G\in \NS_\R(S)$, we denote by 
\[v\cdot e^G\coloneq (r,D+rG,s+DG+\tfrac{1}{2}rG^2).\]
Then we have the following simple properties:
\begin{itemize}
    \item $(v\cdot e^{G_1})\cdot e^{G_2}=v\cdot e^{G_1+G_2}$ for every $G_i\in\NS_\R(S)$.
    \item $\Delta(v\cdot e^G)=(D+rG)^2-2r(s+DG+\tfrac{1}{2}rG^2)=\Delta(v)$.
    \item For every divisor $H$, we have $\ts_{v}\otimes \cO_S(H)=\ts_{v\cdot e^G}$.
\end{itemize}
Substitute $(r',D',s')=v^G$ into \eqref{eqD601}, the difference between the right hand side and left hand side is 
\begin{align*}
    & \Delta(v)\Delta(v\cdot e^G)-(D(D+rG)-rs-r(s+DG+\tfrac{1}{2}rG^2))\\
    =& (D^2-2rs)^2-(D^2-2rs-\tfrac{1}{2}r^2G^2)^2=r^2G^2(\Delta(v)-\tfrac{1}{4}r^2G^2).
\end{align*}
This is positive when and only when $r\neq 0$ and
\begin{align}\label{eqD17}
    0<r^2G^2<4\Delta(v).
\end{align}

\noindent Back to the proof of the proposition. We may let $m\in\Z_{\geq1}$ be large enough so that $4m^2\Delta(v)>H^2>0$. Then by Proposition \ref{prop:deltresneg} and the observation above, we have
\begin{align*}
    \ts_{v}\lsm \ts_{v\cdot e^{\frac{H}{m}}}\lsm \ts_{v\cdot e^{\frac{H}{m}}\cdot e^{\frac{H}{m}}}=\ts_{v\cdot e^{\frac{2H}{m}}}\lsm \dots\lsm \ts_{v\cdot e^H}=\ts_v\otimes \cO_S(H).
\end{align*}
The statement holds.
\end{proof}

The Hom vanishing version follows immediately.
\begin{Cor}\label{cor:bayersurf2}
     Let $v=(r,D,s)$  be a parameter  as that in \eqref{eqD11} with $r\neq 0$ and $H$ be an effective divisor with $H^2>0$. Then for any objects $E_1,E_2\in \cP_{\ts_v}(1)$, we have the vanishing
    \begin{align*}
        \Hom(E_1\otimes \cO_S(H),E_2)=0.
    \end{align*}
\end{Cor}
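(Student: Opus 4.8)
The proof is a direct consequence of Proposition~\ref{prop:bayersurf} combined with Lemma~\ref{lem:posaction}. The plan is to observe that the hypothesis $E_1,E_2\in\cP_{\ts_v}(1)$ places both objects in the slice of phase one for the reduced stability condition $\ts_v$, that Proposition~\ref{prop:bayersurf} gives the comparison $\ts_v\lsm\ts_v\otimes\cO_S(H)$, and that $\otimes\cO_S(H)$ is the autoequivalence $\Phi$ appearing in Lemma~\ref{lem:posaction}. There is one small gap to bridge: Proposition~\ref{prop:bayersurf} is stated for ample $H$, whereas here $H$ is only assumed effective with $H^2>0$; I would first record that the argument of Proposition~\ref{prop:bayersurf} only uses the numerical condition \eqref{eqD17}, that is $0<r^2H^2<4\Delta(v)$ (after replacing $H$ by $H/m$), which holds whenever $H^2>0$ regardless of ampleness, and that effectivity of $H$ is what is needed so that $\cO_S(H)$ and its positive multiples sit in the heart in the right homological position (cf.\ the effectivity clause in Proposition~\ref{prop:deltresneg}). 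So the first step is to note $\ts_v\lsm\ts_v\otimes\cO_S(H)$ under the stated hypotheses.

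Next I would pass from the relation $\ts_v\lsm\ts_v\otimes\cO_S(H)$ to the $\Hom$-vanishing. Writing $\Phi=\otimes\cO_S(H)\in\Aut(\Db(S))$, we have $\Phi(\ts_v)=\ts_v\otimes\cO_S(H)$ by Remark~\ref{rem:Ractiononsb}, so the relation reads $\ts_v\lsm\Phi(\ts_v)$. Now $E_2\in\cP_{\ts_v}(1)\subset\cA_{\ts_v}\subset\cA_{\ts_v}[\leq 0]$, and $E_1\in\cP_{\ts_v}(1)$, so Lemma~\ref{lem:posaction} (with $\ts=\ts_v$, $E=E_1$, $F=E_2$, $\Phi=\otimes\cO_S(H)$) yields $\Hom(\Phi(E_1),E_2)=\Hom(E_1\otimes\cO_S(H),E_2)=0$. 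This completes the proof.

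I do not expect any genuine obstacle: every ingredient has already been established in the excerpt. The only point requiring a sentence of care is the ampleness-versus-effectivity discrepancy noted above; alternatively, one can sidestep it entirely by iterating Proposition~\ref{prop:deltresneg} along $\ts_v\lsm\ts_{v\cdot e^{H/m}}\lsm\cdots\lsm\ts_{v\cdot e^{H}}=\ts_v\otimes\cO_S(H)$ exactly as in the proof of Proposition~\ref{prop:bayersurf}, where the chain only needs $H/m$ to be effective (so that the line bundles lie in the heart) and $(H/m)^2$ small enough to satisfy \eqref{eqD17}, both of which follow from $H$ effective with $H^2>0$ after choosing $m$ large. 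Once the comparison $\ts_v\lsm\ts_v\otimes\cO_S(H)$ is in hand, the vanishing is immediate from Lemma~\ref{lem:posaction}, and moreover the same argument applied to $\ts_v\lsm\ts_v\otimes\cO_S(mH)$ gives $\Hom(E_1\otimes\cO_S(mH),E_2)=0$ for all $m\geq 1$ if one wishes to record that slightly stronger statement.
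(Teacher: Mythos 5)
Your proof is correct and is essentially the paper's argument: the paper gives no explicit proof beyond the remark that the vanishing follows immediately from Proposition~\ref{prop:bayersurf}, and the mechanism it intends is precisely the combination with Lemma~\ref{lem:posaction} that you spell out (with $E=E_1$, $F=E_2\in\cP_{\ts_v}(1)\subset\cA_{\ts_v}\subset\cA_{\ts_v}[\leq 0]$, and $\Phi=\otimes\cO_S(H)$). On your worry about ampleness versus effectivity: your workaround --- re-running the chain from the proof of Proposition~\ref{prop:bayersurf} and observing that each step only uses the numerical condition $0<r^2(H/m)^2<4\Delta(v)$ from~\eqref{eqD17} together with effectivity of $r^2H/m$ in Proposition~\ref{prop:deltresneg} --- is valid, but the discrepancy can also be dispatched more quickly by noting that on an abelian surface an effective divisor with $H^2>0$ is automatically ample (effective divisors on abelian varieties are nef, and nef with positive self-intersection is ample by Nakai--Moishezon since there are no curves of negative self-intersection), so the corollary's hypotheses in fact fall directly under Proposition~\ref{prop:bayersurf} as stated.
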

We can also state this with respect to stability conditions as follows.

\begin{Not}\label{not:stabonsurf}
Let $v=(1,D_1,s_1)$ and $w=(0,D_2,s_2)$ be parameters as that in  \eqref{eqD11} satisfying \eqref{eqD601}. More precisely, 
\begin{align*}
    D^2_1-2s_1>0, D_2^2>0, D_2\in\overline{\Eff}(S)\text{  and } (D_1D_2-s_2)^2<(D_1^2-2s_1)D_2^2.
\end{align*}
By Proposition \ref{prop:deltresneg}, there is a stability condition 
\begin{align*}
    \sigma_{v,w}=(\cA_w,B_v+iB_w).
\end{align*}    
\end{Not}
Moreover,  every stability condition on $\Db(S)$ is of the form $\sigma_{v,w}\cdot \tilde g$ for some $\tilde g\in\glt$. 

\begin{Cor}\label{cor:bayersurf3}
    Let $\sigma_{v,w}$ be a stability condition as above, and $E_1,E_2\in\Db(S)$ be $\sigma_{v,w}$-semistable objects with $\phi_{\sigma_{v,w}}(E_1)\geq\phi_{\sigma_{v,w}}(E_2)$ and $B_w(E_i)\neq 0$. Then for every ample divisor $H$,  we have the vanishing        $\Hom(E_1\otimes \cO_S(H),E_2)=0$.
\end{Cor}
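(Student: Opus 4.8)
The plan is to reduce Corollary \ref{cor:bayersurf3} to Corollary \ref{cor:bayersurf2} by passing from stability conditions to reduced stability conditions, exactly as in the proof of the ``claim'' at the end of Remark \ref{rem:surfacestabtosb}. First I would use the $\glt$-action together with Lemma \ref{lem:eqdefforlsmonstab} to normalize: since $E_1,E_2$ are $\sigma_{v,w}$-semistable with $\phi_{\sigma_{v,w}}(E_1)\geq \phi_{\sigma_{v,w}}(E_2)$, after replacing $\sigma_{v,w}$ by $\sigma_{v,w}[\theta]$ for a suitable $\theta\in(0,1]$ and shifting $E_1$, I may assume $E_1\in\cP_{\sigma_{v,w}[\theta]}(\theta')$ for some $\theta'$ with $\phi(E_1)\geq \phi^+(E_2)$, and then rotate so that $E_1\in \cP_\tau(1)$ and $E_2\in\cP_\tau(\leq 1)$ for $\tau=\sigma_{v,w}[\theta]$. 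The condition $B_w(E_i)\neq 0$ is exactly what guarantees that the rotation lands in the non-degenerate regime, i.e. that $\tau$ is not of the form $c\,\ts_{(0,D,s)}$-type on the nose; equivalently, $\pi_\sim(\tau)=\ts_{v'}$ for a parameter $v'=(r',D',s')$ with $r'\neq 0$.

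Second I would make this precise: write $\ts \coloneq \pi_\sim(\tau)$. By Notation \ref{eg:sbnpsurface} and Proposition \ref{prop:sbQsigma}, $\ts=\ts_{v'}\cdot c$ for some $c\in\R$, some homological shift, and some parameter $v'$ as in \eqref{eqD11}; the point $\Ker Z_\tau$ in $\bP(\Lambda_\R)$ corresponds to $[1,D,s]$-type data coming from $v$ and $w$, and since we rotated by a generic angle (allowed because $B_w(E_i)\neq 0$ forces $Z_\tau(E_i)\notin i\R$, so $\theta\notin\Z$ can be arranged), the reduced stability condition $\ts$ has $r'\neq 0$. Then $E_1\in\cP_{\ts}(1)$ and $E_2\in\cP_{\ts}(\leq 1)\subset \cA_{\ts}$, hence $E_2\in\cP_{\ts}(1)\cup\cP_{\ts}(<1)\subset \cA_{\ts}[\leq 0]$. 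Now Proposition \ref{prop:bayersurf} gives $\ts\lsm\ts\otimes\cO_S(H)$ (here I use $r'\neq 0$, which is the hypothesis of Proposition \ref{prop:bayersurf}), and since $\otimes\cO_S(mH)$ for $m\geq 1$ is obtained by iterating $\otimes \cO_S(H)$ and $\lsm$ is transitive (Lemma \ref{lem:partialorder}), we get $\ts\lsm\ts\otimes\cO_S(H)$. Then Lemma \ref{lem:posaction}, applied with $\Phi=\otimes\cO_S(H)$, $E=E_1\in\cP_{\ts}(1)$, $F=E_2\in\cA_{\ts}[\leq 0]$, yields $\Hom(E_1\otimes\cO_S(H),E_2)=\Hom(\Phi(E_1),E_2)=0$, which is the claim.

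Alternatively, and perhaps more transparently, I would invoke Corollary \ref{cor:bayersurf2} directly: after the rotation and shift, $E_1,E_2\in\cP_{\ts}(1)$ up to replacing $E_2$ by a filtration argument. Strictly, $E_2$ only lies in $\cP_{\ts}(\leq 1)$, not necessarily in $\cP_{\ts}(1)$; to handle this one takes the Harder--Narasimhan/Jordan--H\"older factors of $E_2$ with respect to a representative $\sigma$ of $\ts$ and checks that each factor $G$ with $\phi_\sigma(G)<1$ satisfies $\Hom(E_1\otimes\cO_S(H),G)=0$ automatically because $E_1\otimes \cO_S(H)\in\cP_{\ts\otimes\cO_S(H)}(1)\subset\cP_{\ts}(>1)$ by $\ts\lsm\ts\otimes\cO_S(H)$, so there are no morphisms from a phase-$>1$ object to a phase-$<1$ object; and for factors of phase exactly $1$ one applies Corollary \ref{cor:bayersurf2}. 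Then one reassembles via the long exact sequences.

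The main obstacle I anticipate is the bookkeeping in the normalization step: verifying that the hypothesis $B_w(E_i)\neq 0$ translates precisely into ``$\pi_\sim(\sigma_{v,w}[\theta])$ has first coordinate $r'\neq 0$ for the relevant $\theta$'', i.e. that the rotation needed to bring $E_1$ to phase $1$ does not land on the boundary locus $\{r'=0\}$ of $\sb^*(S)$ where Proposition \ref{prop:bayersurf} does not apply. This requires tracking how the kernel line $\Ker Z_{\sigma_{v,w}}$ in $\bP(\Lambda_\R)$ relative to the curve $\Delta^{-1}(0)$ determines, for each $\theta$, whether the reduced central charge $\Im(e^{-i\pi\theta}Z_{\sigma_{v,w}})$ lies in the $r>0$ region of $U(\Delta)$, together with the observation that $B_w(E_i)\neq 0$ rules out the single exceptional $\theta$ (the one with $e^{-i\pi\theta}Z_{\sigma_{v,w}}(E_i)\in\R$ and the reduced condition degenerating). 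Once this is checked, the rest is a direct application of Proposition \ref{prop:bayersurf}, Lemma \ref{lem:posaction}, and transitivity of $\lsm$, in the same style as Proposition \ref{prop:bayerlemsurface} and Remark \ref{rem:surfacestabtosb}.
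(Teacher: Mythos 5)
Your proposal is correct and follows the paper's intended argument: the paper states Corollary \ref{cor:bayersurf3} without proof as the stability-condition restatement of Proposition \ref{prop:bayersurf} and Corollary \ref{cor:bayersurf2}, and the argument it has in mind is exactly the rotate-to-phase-$1$-then-apply-Lemma-\ref{lem:posaction} reduction you carry out, mirroring the proof of the Claim at the end of Remark \ref{rem:surfacestabtosb}. The only cosmetic slips are that the iteration-over-$m\geq 1$ remark in your second paragraph is unnecessary (the statement only asks about a single ample $H$), and the ``alternative'' paragraph's HN/JH decomposition of $E_2$ is superfluous since $E_2$ is already $\tau$-semistable of a single phase.
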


\begin{Lem}\label{lem:surfacesbrest}
    Let $v=(r,D,s)$ be a parameter as that in \eqref{eqD11} and $H\in\NS(S)$ with $r^2H^2<4\Delta(v)$. Then $\ts_v\otimes \cO(H)\lsm \ts_v[1]$.
\end{Lem}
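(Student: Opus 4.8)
\textbf{Proof proposal for Lemma \ref{lem:surfacesbrest}.}

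The plan is to show that the stability condition $\sigma_v[\theta]$ — or rather its reduced image — lies, for every $\theta\in(0,1]$ with at most one exception, below $\sigma_v[1]$ in the $\lsm$ order, and then invoke Lemma \ref{lem:lsmeqlem} and Lemma \ref{lem:eqdefforlsmonstab}. Concretely, first I would observe that by Proposition \ref{prop:deltresneg} and the effective version \eqref{eqD17} of the criterion \eqref{eqD601}, the hypothesis $r^2H^2<4\Delta(v)$ (together with $\Delta(v)>0$, which is automatic from \eqref{eqD11}) guarantees that $\Delta|_{\Ker B_v\cap \Ker B_{v\cdot e^H}}$ is negative definite, so $B_{v\cdot e^H}=B_v\otimes \cO_S(H)$ lies in $\ta(\ts_v)$ up to sign. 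Since $v\cdot e^H=(r,D+rH,s+DH+\tfrac12 rH^2)$ and the divisor part difference is $r H$, which is effective (as $H$ is a divisor on a surface we may, after replacing $H$ by a numerically equivalent effective one when $r>0$, assume effectivity; when $r=0$ the hypothesis forces $H^2>0$ and one argues directly), Proposition \ref{prop:deltresneg} gives $-B_{v\cdot e^H}\in\ta(\ts_v)$, i.e. the datum $\sigma\coloneq(\cA_{\ts_v},-B_{v\cdot e^H}+iB_v)$ is a stability condition representing $\ts_v$, with $\sigma[\tfrac12]$ representing $\ts_{v\cdot e^H}=\ts_v\otimes\cO_S(H)$.

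Next I would analyse the pencil $\ell=\{c_1 B_v+c_2 B_{v\cdot e^H}\}$. The key point is that as $\theta$ ranges over $(0,1]$, the reduced central charge of $\sigma[\theta]$ traces this pencil, and by Lemma \ref{lem:quadraticform} (the description \eqref{eq213}–\eqref{eq2166} of $W(\Delta)$) every nonzero member of the pencil has positive $\Delta^*$-value except possibly where it degenerates to a central charge with $r$-coordinate zero. For each non-exceptional $\theta$, the reduced stability condition $\pi_\sim(\sigma[\theta])$ equals $\ts_{v_\theta}[m_\theta]\cdot c_\theta$ for some $v_\theta=(r_\theta,D_\theta,s_\theta)$ with $r_\theta>0$ and some $m_\theta\in\{0,1\}$; crucially the divisor part moves monotonically along the effective direction $rH$, so $D_\theta$ and $D_\theta+r_\theta H$ differ by an effective class, and a renewed application of Proposition \ref{prop:deltresneg} (using that $r_\theta^2H^2<4\Delta(v_\theta)=4\Delta(v)$, since $\Delta$ is constant on the pencil up to the relevant scaling) yields $\ts_{v_\theta}\lsm\ts_{v_\theta}\otimes\cO_S(H)\lsm\ts_{v_\theta}[1]$ — the second inequality being exactly the content I am trying to establish globally, but now available locally because the local parameter $v_\theta$ again satisfies the same numerical inequality. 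This shows $\pi_\sim((\sigma\otimes\cO_S(H))[\theta])=\pi_\sim(\sigma[\theta])\otimes\cO_S(H)\lsm\pi_\sim(\sigma[\theta])[1]$ for all but finitely many $\theta$.

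Finally, by Lemma \ref{lem:lsmeqlem} I conclude $\sigma\lsmeq\sigma[1]\otimes$-twist, but to upgrade $\lsmeq$ to the strict $\lsm$ of the statement I would instead argue directly: Proposition \ref{prop:bayersurf} already gives $\ts_v\lsm\ts_v\otimes\cO_S(H)$, so combining with the chain $\ts_v\otimes\cO_S(H)=\ts_{v\cdot e^H}$ and the relation $\ts_{v\cdot e^H}\lsm\ts_v[1]$ — which is the surface analogue of the ``$s_1<t_3$'' case of Theorem \ref{thm:sb3}.(5), obtained by inserting intermediate parameters $v\cdot e^{tH}$ for $t\in(0,1)$ and noting each consecutive pair satisfies \eqref{eqD601} — one gets $\ts_v\lsm\ts_v[1]$ after twisting; transitivity of $\lsm$ (Lemma \ref{lem:partialorder}) closes the argument. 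The main obstacle I anticipate is the bookkeeping in the middle step: verifying that the numerical inequality $r_\theta^2 H^2<4\Delta(v_\theta)$ genuinely persists for every parameter $v_\theta$ appearing on the pencil, since both $r_\theta$ and $\Delta(v_\theta)$ transform under the $\glt$-rescaling and one must track that the ratio $r_\theta^2/\Delta(v_\theta)$ — a $\glt$-invariant quantity once one quotients by scaling — is controlled by the original ratio $r^2/\Delta(v)$; this is a finite but slightly delicate linear-algebra check that I would isolate as a short sublemma about the pencil $\ell(v,v\cdot e^H)$.
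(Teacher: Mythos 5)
Your proposal misreads what the statement asks: Lemma \ref{lem:surfacesbrest} asserts a single $\sb$-level relation $\ts_{v\cdot e^H}\lsm\ts_v[1]$, i.e.\ one inclusion of the form $\cA_{\ts_{v\cdot e^H}}\subset\cP_{\ts_v[1]}(<1)$, whereas the $\theta$-by-$\theta$ analysis and the appeal to Lemmas \ref{lem:lsmeqlem} and \ref{lem:eqdefforlsmonstab} belong to the proof of the much stronger $\Stab$-level relation $\sigma\otimes\cO_S(H)\lsm\sigma[1]$, which is Proposition \ref{prop:D13}, not this lemma. Those two lemmas pass from a family of $\sb$-level checks to a $\Stab$-level conclusion; using them here goes the wrong way, and the detour lands you in a circle that you yourself flag: the relation $\ts_{v_\theta}\otimes\cO_S(H)\lsm\ts_{v_\theta}[1]$ you invoke for each $v_\theta$ on the pencil is precisely an instance of the lemma you are supposed to be proving, not a known fact, so the argument never closes. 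Your effectivity step is also unjustified --- not every class in $\NS(S)$ can simply be replaced by a numerically equivalent effective divisor, and in any case the paper's $r\neq 0$ argument does not rely on effectivity of $H$.

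The intended proof for $r\neq 0$ is a single application of the local deformation machinery: the hypothesis $r^2H^2<4\Delta(v)$ is exactly condition \eqref{eqD17}, so by the computation preceding it and Proposition \ref{prop:deltresneg} the form $\Delta|_{\Ker B_v\cap\Ker B_{v\cdot e^H}}$ is negative definite; Proposition \ref{prop:sbQsigma} together with \eqref{eq213} then places one of $\pm B_{v\cdot e^H}$ in $\ta(\ts_v)$, and one application of Lemma \ref{lem:partialorder}.(2) to the representative $(\cA_{\ts_v},\pm B_{v\cdot e^H}+iB_v)$ yields $\ts_{v\cdot e^H}\lsm\ts_v[1]$. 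No pencil and no appeal to Proposition \ref{prop:bayersurf} are needed. Finally, your treatment of $r=0$ is wrong on both counts: the hypothesis degenerates to $0<4\Delta(v)$ and places no constraint whatsoever on $H^2$; and the direct argument genuinely fails there, because the skyscraper class $(0,0,1)$ lies in $\Ker B_v\cap\Ker B_{v\cdot e^H}$ with $\Delta(0,0,1)=0$, so the restricted form is only semi-definite. The paper instead routes through an auxiliary parameter $w=(1,0,s_0)$ with $s_0\ll 0$, chosen so that $-B_v$ and $-B_{v\cdot e^H}$ both lie in $\ta(\ts_w)$, giving the chain $\ts_{v\cdot e^H}\lsm\ts_w[1]\lsm\ts_v[1]$; this intermediate device is missing from your proposal.
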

\begin{proof}
    When $r\neq 0$, by \eqref{eqD17} and Proposition \ref{prop:deltresneg}, the restricted quadratic form $\Delta|_{\Ker B_v\cap \Ker B_{v\cdot e^H}}$ is negative. By Proposition \ref{prop:sbQsigma}, \eqref{eq213} and Lemma \ref{lem:partialorder}.{(2)}, we have $\ts_{v\cdot e^H}\lsm \ts_v[1]$.

   If $r=0$, then $v\cdot e^H=(0,D,t)$ for some $t\in \R$. There exists $s_0<0$ such that $s_0^2D^2>\max\{(DT-t)^2,(DT-s)^2\}$. We may let $w=(1,0,s_0)$, then by Proposition \ref{prop:deltresneg}, we have $-B_v,-B_{v\cdot e^H}\in\ta(\ts_w)$. It follows that $\ts_{v\cdot e^H}\lsm \ts_w[1]\lsm \ts_v[1] $. The statement follows.
\end{proof}
\begin{Prop}\label{prop:D13}
    Let $\sigma_{v,w}$ be a stability condition as that in Notation \ref{not:stabonsurf}. Let $C\subset S$ be a smooth curve with $C\in |H|$ for some divisor $H\in\NS(S)$ such that \begin{align}\label{eqD22}
        D_2^2H^2+(D_1D_2-s_2)^2<(D_1^2-2s_1)D^2_2.
    \end{align}
    Then $\sigma_{v,w}\otimes \cO_S(H)\lsm \sigma_{v,w}[1]$. The stability restricts to $\sigma_{v,w}|_{\Db(C)}$. A vector bundle $E$ on $C$ is slope stable if and only if $\iota_*E$ is $\sigma_{v,w}|_{\Db(C)}$-stable.
\end{Prop}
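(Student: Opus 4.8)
The strategy mirrors the polarized surface case treated in Section~\ref{eg:cs}, replacing the polarization $H$ by the class of the curve $C$ and using the criteria developed in this appendix. First I would verify the relation $\sigma_{v,w}\otimes \cO_S(H)\lsm \sigma_{v,w}[1]$. By Lemma~\ref{lem:eqdefforlsmonstab}.{(7)}, it suffices to show that $\pi_\sim(\sigma_{v,w}[\theta])\otimes \cO_S(H)\lsm \pi_\sim(\sigma_{v,w}[\theta])[1]$ for every $\theta\in(0,1]$. Writing $\pi_\sim(\sigma_{v,w}[\theta])=\ts_{v'}\cdot c$ for some parameter $v'=(r',D',s')$ lying in the pencil spanned by $B_v$ and $B_w$ (with $\Delta(v')=\Delta(\ell(v,w))$ essentially fixed along the pencil, up to the usual normalization), the task reduces—via Lemma~\ref{lem:surfacesbrest}—to checking the inequality $(r')^2H^2<4\Delta(v')$ uniformly over the pencil. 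This is where the hypothesis \eqref{eqD22} enters: it is precisely the statement that the ``worst'' member of the pencil, the one with $r'=0$ (which is $w$ itself, up to scalar, since $w=(0,D_2,s_2)$), together with all nearby members, still satisfies the required bound. More precisely, I expect that \eqref{eqD22} rearranges to $D_2^2 H^2 < (D_1^2-2s_1)D_2^2 - (D_1D_2-s_2)^2 = D_2^2\cdot\Delta(v,w)\text{-type expression}$, i.e. it encodes $(r')^2H^2 < 4\Delta(v')$ at the critical point of the pencil; continuity and the open nature of the condition then give it on an interval covering all of $(0,1]$. I would check this algebraic rearrangement carefully, using the formulas $\Delta(v\cdot e^G)=\Delta(v)$ and the bilinear expansion of $\Delta$ from Proposition~\ref{prop:deltresneg} and Lemma~\ref{lem:surfacesbrest}.

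Once $\sigma_{v,w}\otimes \cO_S(H)\lsm \sigma_{v,w}[1]$ is established, the restriction to $\Db(C)$ is immediate from Proposition~\ref{prop:reststab} (the heart-version restriction theorem), applied with $Y=S$, $X=C$, $D=H$: we obtain a stability condition $\sigma_{v,w}|_{\Db(C)}=(\cA_w|_{\Db(C)},\,(B_v+iB_w)\circ[\iota_*])$ on $\Db(C)$, together with the characterization that $E\in\Db(C)$ is $\sigma_{v,w}|_{\Db(C)}$-(semi)stable if and only if $\iota_*E$ is $\sigma_{v,w}$-(semi)stable.

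The last assertion—that $\sigma_{v,w}|_{\Db(C)}$ agrees with slope stability on vector bundles—is handled exactly as in the polarized surface example following Example~\ref{eg:stabsurface}. Since $C$ is a curve of genus $\geq 2$ (as $C^2=H^2>0$ on an abelian surface forces $2g-2=C^2>0$), the space $\Stab(C)$ is a single $\glt$-orbit of the slope stability condition by Example~\ref{ex:sbcurve}; hence it is enough to identify the restricted central charge $Z_{v,w}\circ[\iota_*]$ up to the $\glt$-action. A direct Grothendieck–Riemann–Roch computation gives $H^{2-\bullet}\ch_i(\iota_*-)$ in terms of $(\rk,\deg)$ on $C$—the key point being that $\ch_0(\iota_*E)=0$ and $\ch_1(\iota_*E)=(\rk E)\,[C]$, $\ch_2(\iota_*E)=\deg E - \tfrac12 (\rk E)\,C^2$—so that $B_v\circ[\iota_*]$ and $B_w\circ[\iota_*]$ become explicit $\R$-linear combinations of $\rk$ and $\deg$. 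Computing the determinant of the resulting $2\times 2$ coefficient matrix and checking it is nonzero (this uses \eqref{eqD22} again, which guarantees the restricted central charge is non-degenerate, i.e. $B_v\circ[\iota_*]$ and $B_w\circ[\iota_*]$ are linearly independent) shows that $\sigma_{v,w}|_{\Db(C)}$ lies in the $\glt$-orbit of $(\Coh(C),-\deg+i\rk)$, and since the heart $\cA_w|_{\Db(C)}$ contains the skyscraper sheaves (by Notation~\ref{eg:sbnpsurface} applied to $w$, all skyscrapers are in $\cP_{\ts_w}(0)$, which after the appropriate rotation lands inside $\Coh(C)$), the stability is geometric and so matches slope stability. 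The main obstacle I anticipate is the first step: tracing the single algebraic inequality \eqref{eqD22} through the entire pencil $\ell(B_v,B_w)$ and confirming that it is exactly the uniform version of $(r')^2H^2<4\Delta(v')$—in particular handling the boundary parameter with $r'=0$, where Lemma~\ref{lem:surfacesbrest} needs the auxiliary comparison object $w'=(1,0,s_0)$ and one must check the relevant determinant/effectivity conditions survive. Everything after that is bookkeeping with results already in hand.
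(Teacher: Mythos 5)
Your overall route matches the paper's: reduce via Lemma~\ref{lem:eqdefforlsmonstab} to the pencil of reduced central charges, apply Lemma~\ref{lem:surfacesbrest} uniformly, and invoke Proposition~\ref{prop:reststab} for the restriction. The rearrangement $D_2^2 H^2 < D_2^2\Delta(v)-\Delta(v,w)^2$ is also correct. However, two of the steering claims you make about the pencil are wrong, and a third is imprecise in a way that would send you down the wrong road if you executed the plan literally.

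First, $\Delta$ is \emph{not} constant along the pencil. The paper computes
\[
\Delta(v+tw)=\Delta(v)+2t\,\Delta(v,w)+t^2\Delta(w),
\]
a quadratic in $t$ with positive leading coefficient $D_2^2$. Second, and more importantly, your identification of the ``worst'' member of the pencil as $r'=0$ (the parameter $w$ itself) is backwards: at $r'=0$ the required inequality $(r')^2 H^2<4\Delta(v')$ reads $0<4\Delta(w)$, which is automatic. The actual critical member, where the bound is tightest, is the one at $t^{*}=(s_2-D_1D_2)/D_2^2$, where the quadratic above attains its minimum value $\Delta(v)-\Delta(v,w)^2/D_2^2$. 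Hypothesis~\eqref{eqD22} is precisely the statement that this minimum exceeds $H^2$. Third, because the bound holds at the minimum, it holds for every $t$ by the elementary property of a downward-opening parabola flipped upward — no continuity or open-cover argument is needed, and invoking ``continuity and the open nature of the condition'' to get from one point to the whole interval would only ever give a local statement. You do flag $r'=0$ as needing separate treatment because Lemma~\ref{lem:surfacesbrest}'s proof distinguishes $r=0$; that is a correct and worthwhile observation, but it is orthogonal to which member is numerically tightest.

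Two small remarks on the last part. Since $K_S=0$ and a smooth $C\in|H|$ satisfies $2g-2=C^2=H^2\geq0$, you only get $g\geq1$, not $g\geq2$ (take $H^2=0$, $C$ elliptic); this is still enough for Example~\ref{ex:sbcurve}. And the linear independence of $B_v\circ\iota_*$ and $B_w\circ\iota_*$ reduces to $D_2\cdot H\neq0$; this is not deduced from~\eqref{eqD22} but from the Hodge index theorem together with $D_2^2>0$ from Notation~\ref{not:stabonsurf} (if $D_2\cdot H=0$ then $H$ would lie in the negative-definite orthogonal of $D_2$, forcing $H^2<0$, contradicting $H^2\geq0$). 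Worth spelling out, since the paper itself leaves this implicit.
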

\begin{proof}
    For the first statement, by Lemma \ref{lem:eqdefforlsmonstab}, we only need to show that
    \begin{align}\label{eqD23}
       \pi_\sim(\sigma_{v,w}[\theta])\otimes \cO_S(H)= \pi_\sim((\sigma_{v,w}\otimes \cO_S(H))[\theta])\lsm \pi_\sim(\sigma_{v,w}[\theta+1])
    \end{align}
    for every $\theta\in(-1,0]$. 
    
    When $\theta=0$, $\pi_\sim(\sigma_{v,w}[\theta])=\ts_w$, By Lemma \ref{lem:surfacesbrest}, the formula \eqref{eqD23} holds.

    When $\theta\neq 0$, $\pi_\sim(\sigma_{v,w}[\theta])=\ts_{v+tw}\cdot c$ for some $t,c\in \R$. Note that \begin{align*}
        &\Delta(v+tw)=(D_1+tD_2)^2-2s_1-2ts_2\\
        =& D_1^2+2tD_1D_2+t^2D_2^2-2s_1-2ts_2 = \Delta(v)+2t(D_1D_2-s_2)+t^2D_2^2\\
        \geq & \Delta(v)-\frac{(D_1D_2-s_2)^2}{D_2^2}>H^2.
    \end{align*}
    Here the `$\geq$' in the last line is by substituting $t=\frac{s_2-D_1D_2}{D_2^2}$. The `$>$' is by \eqref{eqD22}.

    The rest of the statement follows from Proposition \ref{prop:reststab}.
\end{proof}

\section{Basic algebra: polynomial with distinct real roots}\label{sec:basicalgeb}In this section, we study the space of real polynomials with distinct real roots, which serves as a parameter space for certain reduced stability conditions. Although many of the properties discussed here may be known in the literature (see, for example, \cite{fisk:interlaced}), we were unable to find explicit statements of these results in the form we require. For the sake of completeness, we provide detailed and self-contained proofs using elementary methods.

Fix a positive integer $n$, in this section, we  denote by 
\begin{align*}
    P_n&\coloneq \{f(x)\in\R[x]\;|\;\deg f(x)=n\text{ and }f(x)=0\text{ has } n \text{ distinct real roots}\}.\\
    B_n&\coloneq P_n\bigcup P_{n-1}.
\end{align*}

We regard each polynomial in $B_n$ by its coefficients, thus embedding $B_n$ as a subset of $\R^{n+1}$. Endowed with the induced Euclidean topology, $B_n$ forms an open cone in $\R^{n+1}$. In particular, the projective space $\bP(B_n)$ is well-defined.

We denote by $\sbr_n$ the complement of the big diagonal in $\mathrm{Sym}^n(\bP^1_\R)$, in other words, the space of ordered $n$-tuples of distinct points in $\bP^1_\R$. More explicitly,
\begin{align*}
    \sbr_n\coloneq \{(s_1<s_2\dots<s_n):s_i\in\bP_\R^1\}.
\end{align*}

Here the order $<$ is by viewing $\bP_\R^1=\R\cup\{+\infty\}$ and define $s<+\infty=[1:0]$ for every $s\in \R$.

For every $f(x)\in B_n$, the ordered set of its roots $\Root(f)\coloneq \{s_1<s_2<\dots<s_n:f(s_i)=0\}$ is in $\sbr_n$. Here when $f(x)\in P_{n-1}$, we set $s_n=+\infty$. It is clear that map $\Root$ is well-defined from $\bP(B_n)$ to $\sbr_n$.  On the other hand, for every $\us\in \sbr_n$, we have $\Psi(\us)\coloneq f_{\us}(x)\coloneq \prod (x-s_i) \in B_n$. Here when $s_n=+\infty$, the product is from $i=1$ to $i=n-1$.

This gives us a homeomorphism between $\bP(B_n)$ and $\sbr_n$:
\begin{align*}
    \Root:\bP(B_n)&\overset{\simeq}{\longleftrightarrow}\sbr_n: \Psi\\
    f& \;\longmapsto \text{ ordered roots of $f$}\\
    \prod (x-s_i) & \;\longmapsfrom\;\; \us
\end{align*}

We define the following relation on elements in $\sbr_n$:
\begin{align}
  \notag  \us<\ut&:\iff s_i<t_i \text{ for every }i=1,\dots,n. \;\;\\
   \notag \us<\ut[1]&:\iff s_i<t_{i+1}\text{ for every }i=1,\dots,n-1.\\
  \label{eq:defuslinkut}  \us\link\ut&:\iff \us<\ut<\us[1]\text{ or }\ut<\us<\ut[1].
\end{align}

\subsection{Lines on $\bP(B_n)$}
\begin{Lem}\label{lem:rootsandgap}
    Let $f,g\in B_n$, then the following two statements are equivalent:
    \begin{enumerate}[(1)]
     \item $af+bg\in B_n$ for every $[a:b]\in \mathbf P_\R^1$.
        \item $\Root(f)\link\Root(g)$, in other words, $f$ and $g$ have strict interlaced roots.
    \end{enumerate}
\end{Lem}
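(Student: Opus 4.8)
The equivalence is a classical interlacing criterion, and the plan is to prove both directions by an elementary argument using the intermediate value theorem, being careful about the point $+\infty$ (i.e.\ the case where $f$ or $g$ has degree $n-1$). For the direction (2)$\Rightarrow$(1), suppose $\Root(f) \bowtie \Root(g)$; without loss of generality assume $\us = \Root(f) < \ut = \Root(g) < \us[1]$. Fix $[a:b] \in \mathbf{P}^1_{\mathbb{R}}$ with $b \neq 0$ (the cases $b=0$ or $a=0$ being trivial), and consider $h = af + bg$. The key observation is that on each open interval $(s_i, s_{i+1})$ (for $i = 1, \dots, n-1$) the polynomial $f$ has constant sign, and since $t_i \in (s_{i-1}, s_i)$ is a root of $g$ strictly between consecutive roots of $f$ (using $\us < \ut < \us[1]$), the signs of $f$ alternate in a way that forces $g$ to change sign on a subinterval where $f$ keeps its sign; more precisely, one shows $f(t_i) f(t_{i+1}) < 0$ for each valid $i$, so that $h(t_i) = b\cdot$(sign changes) alternates and hence $h$ has at least $n-1$ real roots, one in each $(t_i, t_{i+1})$. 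A separate argument at the ends (using the leading behavior of $f$ and $g$ as $x \to \pm\infty$, and handling $s_n = +\infty$ or $t_n = +\infty$ by passing to the reciprocal polynomial or by a degree count) produces the remaining root(s) and shows $h$ has degree $n$ or $n-1$ with all roots real and simple, i.e.\ $h \in B_n$.

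For the direction (1)$\Rightarrow$(2), I would argue by contraposition: if $\Root(f)$ and $\Root(g)$ are \emph{not} strictly interlaced, then either they share a common root — in which case $f$ and $g$ have a common linear factor $(x - c)$, and the pencil member $af + bg$ chosen to kill the leading term (if $\deg f = \deg g$) or some interior root has a repeated root or a complex root, contradicting (1) — or there is some interval between consecutive roots of one polynomial containing no root (respectively two or more roots) of the other. In the latter situation I would exhibit an explicit ratio $[a:b]$ for which $af + bg$ fails to have $n$ (or $n-1$) distinct real roots: for instance, if two roots $t_i < t_{i+1}$ of $g$ lie in a single sign-interval of $f$, then the function $x \mapsto -g(x)/f(x)$ is defined and continuous there, attains both signs or has a local extremum, and choosing $a/b$ equal to a value \emph{not} in the range of $-g/f$ on the relevant portion of $\mathbb{R}$ yields an $h = af+bg$ with fewer than the required number of sign changes, hence with complex roots by a count. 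The bookkeeping of how many real roots $h$ can have as $[a:b]$ varies — essentially tracking the graph of the rational function $g/f$ and its poles — is the mechanism that makes the contrapositive work.

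The main obstacle is the careful handling of the boundary between $P_n$ and $P_{n-1}$ inside $B_n$: when $\deg f = \deg g = n$, a generic pencil member has degree $n$, but exactly one ratio $[a:b]$ (killing the leading coefficients) drops the degree to $n-1$, and the interlacing condition must guarantee that \emph{this} member still lands in $P_{n-1} \subset B_n$, i.e.\ still has $n-1$ distinct real roots. Likewise when one of $f, g$ already has degree $n-1$ (so $s_n = +\infty$ or $t_n = +\infty$), the ``root at infinity'' participates in the interlacing, and one must check the sign-change count near $\pm\infty$ is consistent with a root having escaped to infinity. I expect to dispatch this by the homeomorphism $\Root \colon \bP(B_n) \xrightarrow{\sim} \sbr_n$ established above, viewing everything projectively: a line in $\bP(B_n)$ joining $[f]$ and $[g]$ either lies entirely in $\bP(B_n)$ or meets its boundary, and the interlacing condition $\bowtie$ on $\sbr_n$ is exactly the condition that the corresponding projective line stays within the open region — so the statement becomes a convexity/configuration statement about the image of lines, which can be verified by the continuity of roots plus the two-sided sign-counting argument sketched above. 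A clean way to organize the whole proof is to first treat the case $\deg f = \deg g = n$ with all $2n$ roots finite, then deduce the cases involving $+\infty$ by a limiting argument or the substitution $x \mapsto 1/x$ together with a projective change of coordinates on $\bP^1_{\mathbb{R}}$.
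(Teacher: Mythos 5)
Your direction (2)$\Rightarrow$(1) matches the paper's: normalize to monic, evaluate at the roots of $g$ to get alternating signs of $f$, apply the intermediate value theorem to get $n-1$ simple roots, then observe that a degree-$\le n$ polynomial with $n-1$ simple real roots lies in $B_n$. For (1)$\Rightarrow$(2), both you and the paper argue by contraposition, but the mechanisms differ and yours is weaker as stated. The paper finds neighboring roots $s_i<s_{i+1}$ of $f$ with $g\neq 0$ on $(s_i,s_{i+1})$, applies Rolle to $h=f/g$ on $[s_i,s_{i+1}]$ (where $h(s_i)=h(s_{i+1})=0$) to get $t$ with $h'(t)=0$, and then shows $g(t)f-f(t)g$ has a double root at $t$ — this directly exhibits a pencil member outside $B_n$. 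Your proposal instead tracks the range of $-g/f$ and suggests choosing $a/b$ to be ``a value not in the range,'' but that phrasing is off: the exceptional value you want is a \emph{critical} value of $-g/f$, which is attained (with multiplicity), giving a repeated root — or a value just past it, giving a complex-conjugate pair — rather than a value the function skips. As written, ``fewer sign changes, hence complex roots by a count'' needs a global bookkeeping you acknowledge but don't carry out, and the count has to be done carefully around the one pencil member whose degree drops to $n-1$. Your shared-root subcase is also muddled: the issue is not that one can ``kill the leading term,'' but that if $c$ is a common simple zero of $f$ and $g$ then $[a:b]=[g'(c):-f'(c)]$ is well-defined and makes $af+bg$ vanish to order $\ge 2$ at $c$. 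Finally, you devote a lot of effort to the $+\infty$ endpoint; in the paper this is absorbed into the convention that $\Root$ takes values in $\sbr_n\subset\mathrm{Sym}^n(\bP^1_\R)$, so no separate substitution $x\mapsto 1/x$ is needed. The underlying picture — that interlacing is equivalent to the line through $[f],[g]$ in $\bP(B_n)$ lying entirely in $\bP(B_n)$ — is right, and your global outline is sound, but the Rolle/double-root argument is the cleaner, complete version of the contrapositive.
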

\begin{proof}
    {(1)}$\implies${(2)}: Suppose the roots of $f$ and $g$ are not strictly interlaced, then there exist neighbor roots $s_i<s_{i+1}\in\Root(f)$ such that on the interval $(s_i,s_{i+1})$, the polynomial $g(x)\neq 0$. If $g(s_i)=0$ or $g(s_{i+1})=0$, then there exists $[a:b]\in\bP_\R^1$ such that $af'(s_i)+bg'(s_i)=0$ (resp. $s_{i+1}$). In particular, $af+bg$ has a double root at $s_i$ (resp. $s_{i+1}$) and cannot be in $B_n$.

    Therefore, the function $h(x)\coloneq \frac{f(x)}{g(x)}$ is well-defined on the interval $[s_i,s_{i+1}]$ with $h(s_i)=h(s_{i+1})=0$. It follows that $h'(t)=0$ for some $t\in(s_i,s_{i+1})$. Let $a=g(t)$ and $b=-f(t)$, then $(af+bg)(t)=0$ and $(af+bg)'(t)=h'(t)g(t)^2=0$. So $t$ is a double root of $af+bg$, which leads to the contradiction.\\

    {(2)}$\implies${(1)}: Without loss of generality, we may assume that $\Root(f)=\us<\Root(g)=\ut<\us[1]$ and both $f$ and $g$ are monic, then $(-1)^{n-i}f(t_i)>0$ for every $i$. So for every $a\neq 0$, the polynomial $(af+bg)(x)$ has at least one root in the interval $(t_i,t_{i+1})$ for every $1\leq i\leq n-1$. Counting the multiplicity of roots, the number of roots of $(af+bg)(x)$ in the interval $(t_i,t_{i+1})$ is odd. Therefore, the polynomial $(af+bg)(x)$ has exactly one single root in each interval $(t_i,t_{i+1})$ for every $1\leq i\leq n-1$. A polynomial with degree at most $n$ and at least $n-1$ single real roots must be in $B_n$. 
\end{proof}

\begin{Not}
    For every pair of polynomials $f,g\in B_n$ satisfying the properties in Lemma \ref{lem:rootsandgap}, we will write $f\link g$ and denote by
    \begin{align*}
        \ell(f,g)\coloneq \{[af+bg]\in\bP(B_n):[a:b]\in\bP_\R^1\}
    \end{align*}
    the projective line in $\bP(B_n)$. 
    
   When $f\link g$,  the pair induces an $n$-to-$1$ `real \'etale map' $\tfrac{f}{g}$ from $\bP_\R^1$ to $\bP_\R^1$.

Let $\ell$ be a projective line contained in $\bP(B_n)$. Then for any different points $[f],[g]\in\ell$, we have $f\link g$. For every $t\in\R$, there is a unique monic polynomial $h$ with $[h]\in\ell$ and $h(t)=0$.
    
    There is a unique monic polynomial in $\ell$ with degree $n-1$. We denote this polynomial as $f_{\ell}(x)$.
\end{Not}

\begin{Lem}\label{lem:Epi}
    Let $\ell\subset \bP(B_n)$ be a projective line and  $f(x)$ be a monic polynomial with degree $n$ and $[f(x)]\in\ell$. Then $f(x)-xf_{\ell}(x)\in B_{n-1}$. 
    
    Moreover, $f(x)-xf_{\ell}(x)\link f_{\ell}(x)$ in $B_{n-1}$. The projective line $\ell(f(x)-xf_{\ell}(x),f_{\ell}(x))$ in $\bP(B_{n-1})$ does not depend on the choice of $f(x)$.
\end{Lem}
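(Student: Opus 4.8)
The statement concerns a line $\ell \subset \bP(B_n)$, a monic degree-$n$ polynomial $f$ with $[f] \in \ell$, and the distinguished monic degree-$(n-1)$ polynomial $f_\ell$. We must show three things: (i) $g(x) \coloneq f(x) - x f_\ell(x) \in B_{n-1}$; (ii) $g \link f_\ell$ in $B_{n-1}$; and (iii) the line $\ell(g, f_\ell) \subset \bP(B_{n-1})$ is independent of the choice of $f$ with $[f] \in \ell$.

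My plan is to begin with the root-interlacing structure of $\ell$ itself. Write $\Root(f_\ell) = (u_1 < \dots < u_{n-1})$ (so $f_\ell$ has degree $n-1$, i.e.\ its ``$n$-th root'' is $+\infty$). By Lemma \ref{lem:rootsandgap}, since $f \link f_\ell$, the roots of $f$ and $f_\ell$ are strictly interlaced; as $f$ has degree $n$ and $f_\ell$ has degree $n-1$, the pattern must be $\Root(f) = (s_1 < u_1 < s_2 < u_2 < \dots < s_{n-1} < u_{n-1} < s_n)$ for real $s_1, \dots, s_n$. Now evaluate $g = f - x f_\ell$ at the points $u_i$: since $f_\ell(u_i) = 0$, we get $g(u_i) = f(u_i)$. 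Because $f$ is monic of degree $n$ with the listed interlaced roots, the signs $(-1)^{n-i} f(u_i)$ are all positive (each $u_i$ lies strictly between the consecutive roots $s_i$ and $s_{i+1}$ of $f$), so $g$ alternates in sign at $u_1, \dots, u_{n-1}$. This forces $g$ to have at least $n-2$ real roots, one in each interval $(u_i, u_{i+1})$. Since $\deg g \le n-1$ (the leading term of $f$ cancels against $x$ times the leading term $x^{n-1}$ of $f_\ell$, giving $\deg g \le n-1$; in fact examining the next coefficient shows $\deg g$ is exactly $n-1$ or $n-2$), and a polynomial of degree $\le n-1$ with $n-2$ strictly interlaced sign changes relative to the $u_i$ actually has exactly one simple root in each $(u_i, u_{i+1})$ — and this exhausts its degree — we conclude $g \in B_{n-1}$ with $\Root(g)$ strictly interlacing $\Root(f_\ell)$, i.e.\ $g \link f_\ell$. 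This simultaneously establishes (i) and (ii).

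For (iii), the key observation is that if $f$ and $f'$ are two monic degree-$n$ polynomials with $[f], [f'] \in \ell$, then $f' = f + c f_\ell$ for some scalar $c$ (any two monic degree-$n$ members of $\ell$ differ by a scalar multiple of the unique degree-$(n-1)$ member, since $\ell$ is two-dimensional as a subspace of $\R^{n+1}$ and the degree drop is pinned down by $f_\ell$). Then
\begin{align*}
    g'(x) = f'(x) - x f_\ell(x) = f(x) + c f_\ell(x) - x f_\ell(x) = g(x) + c f_\ell(x),
\end{align*}
so $[g'] \in \ell(g, f_\ell)$ and hence $\ell(g', f_\ell) = \ell(g, f_\ell)$. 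This is the clean algebraic heart of the independence claim.

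I expect the main obstacle to be the degenerate bookkeeping around degrees: one must carefully handle the case where $f_\ell$ itself could have degree $n-1$ versus the (impossible here, but worth ruling out) case of lower degree, and one must confirm that $g$ genuinely lands in $B_{n-1} = P_{n-1} \cup P_{n-2}$ rather than dropping further in degree or acquiring a multiple root. The sign-alternation argument handles the root count robustly, but I should be explicit that when $\deg g = n-2$ the ``missing'' root is at $+\infty$, consistent with the convention $s_n = +\infty$ being a root of elements of $P_{n-1}$; and I should double-check that $g$ is not identically a constant times $f_\ell$ (which would collapse the line) — this is ruled out because $g$ and $f_\ell$ have strictly interlaced roots and so are linearly independent. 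A brief lemma-free remark citing Lemma \ref{lem:rootsandgap} in the reverse direction will package the interlacing into the conclusion $g \link f_\ell$.
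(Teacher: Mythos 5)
Your argument for parts (i) and (iii) matches the paper's, but your route to part (ii) — the interlacing $g \link f_\ell$ — is genuinely different, and it has a small but real gap. The paper never inspects where the roots of $g$ actually lie: it observes that $f+cf_\ell$ is again a monic degree-$n$ member of $\ell$ for every $c$, applies (i) to each such polynomial to conclude $a(f-xf_\ell)+bf_\ell \in B_{n-1}$ for all $[a:b] \in \bP^1_\R$, and then invokes the (1)$\implies$(2) direction of Lemma~\ref{lem:rootsandgap}. This is the systematic route, and it neatly sidesteps any case analysis on $\deg g$. You instead try to read off the interlacing directly from the sign alternation of $g$ at the $u_i$. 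That is a legitimate strategy, but your phrase ``and this exhausts its degree'' is false when $\deg g = n-1$: the sign changes place only $n-2$ simple roots in the intervals $(u_i,u_{i+1})$, so one root of $g$ is still unaccounted for. You need an additional sentence: that remaining root must be real (it is the unique leftover among $n-1$ roots of a real polynomial with $n-2$ real ones already found), it cannot lie at any $u_i$ (since $g(u_i)=f(u_i)\ne 0$), and it cannot lie inside any $(u_i,u_{i+1})$ (the odd-count-plus-degree argument already forces exactly one root per interval), so it lands in $(-\infty,u_1)$ or $(u_{n-1},+\infty)$. Only then do the roots of $g$ strictly interlace those of $f_\ell$ in $\sbr_{n-1}$; when $\deg g=n-2$ the ``$+\infty$ root'' convention handles it, as you noted. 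With that supplement your direct argument is correct; the paper's indirect argument is shorter precisely because it converts the interlacing claim into the already-proved membership claim $af+bg \in B_{n-1}$ rather than tracking roots.
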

\begin{proof}
    Let $\Root(f_{\ell})=(s_1<s_2<\dots<s_{n-1}<s_n=+\infty)$, then by Lemma \ref{lem:rootsandgap}, we have $(-1)^{n-i}f(s_i)>0$ for every $1\leq i\leq n-1$. It follows that $(-1)^{n-i}(f-xf_{\ell})(s_i))>0$. So for each $1\leq i\leq n-2$, in the interval $(s_{i},s_{i+1})$, the polynomial $f-xf_{\ell}$ has at least one root, and counting multiplicity, the number of roots of $f-xf_{\ell}$ is odd. Note that $\deg(f-xf_{\ell})\leq n-1$. Therefore, for each $1\leq i\leq n-2$, in the interval $(s_{i},s_{i+1})$, the polynomial $f-xf_{\ell}$ has exactly one single root. A polynomial with degree at most $n-1$ and at least $n-2$ single real roots must be in $B_{n-1}$. \\

    For every $c\in\R$, $f+cf_{\ell}$ is a monic polynomial in $\ell$ with degree $n$. By the first part of the statement, the polynomial \begin{align*}
      (f-xf_{\ell})+cf_{\ell}= (f+cf_{\ell})-xf_{\ell}\in B_{n-1}.
    \end{align*}
    As $f_{\ell}\in B_{n-1}$, for every $[a:b]\in\bP_\R^1$, we have $a(f-xf_{\ell})+bf_{\ell}\in B_{n-1}$. Therefore, the relation $f-xf_{\ell}\link f_{\ell}$ holds in $B_{n-1}$.\\
    
    Let $g(x)$ be another monic polynomial with degree $n$ in $\ell$, then $g(x)=f(x)+cf_{\ell}(x)$ for some $c\in\R$. It is clear that $g-xf_{\ell}=(f-xf_{\ell})+cf_{\ell}\in\ell(f-xf_{\ell},f_{\ell})$. So $\ell(g-xf_{\ell},f_{\ell})=\ell(f-xf_{\ell},f_{\ell})$.
\end{proof}

\begin{Not}\label{not:E4}
    We denote by $\pi(\ell)$ the line $\ell(f-xf_{\ell},f_{\ell})$ in $\bP(B_{n-1})$ as that in Lemma \ref{lem:Epi}. In particular, $f_{\pi(\ell)}(x)$ is the unique monic polynomial in $\pi(\ell)$ with degree $n-2$.
\end{Not}

\begin{Lem}\label{lem:E10}
    Let $\ell\subset \bP(B_n)$ be a projective line and denote by
    \begin{align*}
        Q_\ell(x,y)\coloneq (y-x)\left(f_{\ell}(x)f_{\pi(\ell)}(y)-f_{\ell}(y)f_{\pi(\ell)}(x)\right)
    \end{align*}
    Then for every $[f_{\ut}]\in \ell$ with $t_n\neq+\infty$ and $1\leq i\neq j\leq n$, we have $(-1)^{i+j}Q_\ell({t_i,t_j})>0$.
\end{Lem}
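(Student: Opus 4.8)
The key is to exploit the inductive structure already built into the definitions of $f_\ell$ and $\pi(\ell)$. First I would write $\ut = (t_1, \dots, t_n)$ with $t_n \neq +\infty$ for the roots of a chosen monic degree-$n$ polynomial $f_{\ut} \in \ell$, and unpack $Q_\ell(t_i, t_j)$ for two distinct roots. Since $f_{\ut}(t_i) = f_{\ut}(t_j) = 0$, and since $f_\ell$ and $f_{\pi(\ell)}$ are particular combinations inside $\ell$ and $\pi(\ell)$, the plan is to express everything in terms of $f_\ell$ evaluated at the $t_k$'s. Concretely, from Lemma \ref{lem:Epi} we have $f_{\ut} - x f_\ell \in \pi(\ell)$; since $f_{\pi(\ell)}$ is the monic degree-$(n-2)$ representative, there is a scalar relation expressing $f_{\pi(\ell)}$ as an affine combination of $f_{\ut} - x f_\ell$ and $f_\ell$. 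Evaluating at $t_i$ (a root of $f_{\ut}$) collapses $(f_{\ut} - x f_\ell)(t_i) = -t_i f_\ell(t_i)$, so $f_{\pi(\ell)}(t_i)$ becomes a linear expression in $f_\ell(t_i)$ with coefficients linear in $t_i$. Substituting this into $Q_\ell(t_i, t_j)$, the $f_{\pi(\ell)}$-terms should combine so that $Q_\ell(t_i, t_j)$ is, up to a positive or sign-controlled factor, equal to $(t_i - t_j)^2 f_\ell(t_i) f_\ell(t_j)$ times a positive constant --- the cross terms cancelling by antisymmetry.

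Granting that reduction, the sign of $Q_\ell(t_i, t_j)$ is governed by the sign of $f_\ell(t_i) f_\ell(t_j)$, since $(t_i - t_j)^2 > 0$. Now $f_\ell \in B_{n-1}$ is the monic degree-$(n-1)$ member of $\ell$, and by Lemma \ref{lem:rootsandgap} the roots of $f_\ell$ strictly interlace those of $f_{\ut}$; equivalently, $\Root(f_\ell) = \us$ with $\us \link \ut$. Since $f_\ell$ is monic of degree $n-1$ with $n-1$ simple real roots interlacing $t_1 < \dots < t_n$, the standard sign pattern of a monic polynomial between consecutive roots gives $\operatorname{sgn} f_\ell(t_k) = (-1)^{n-1-k} \cdot \varepsilon$ for a fixed sign $\varepsilon$ independent of $k$ (one checks this by noting $f_\ell$ has exactly one root strictly between $t_k$ and $t_{k+1}$ and none equal to any $t_k$). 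Therefore $\operatorname{sgn}\big(f_\ell(t_i) f_\ell(t_j)\big) = (-1)^{(n-1-i)+(n-1-j)} = (-1)^{i+j}$, and hence $(-1)^{i+j} Q_\ell(t_i,t_j) > 0$ as claimed, provided the positive constant from the first step is genuinely positive.

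I would organize the write-up as: (i) fix $f_{\ut}$ monic degree $n$ in $\ell$ and record $f_\ell$, $f_{\pi(\ell)}$ with their normalizations; (ii) derive the affine relation $f_{\pi(\ell)} = \alpha(f_{\ut} - x f_\ell) + \beta f_\ell$ for scalars $\alpha, \beta$ determined by matching leading coefficients, and note $\alpha > 0$ (since both $f_{\ut} - x f_\ell$ rescaled and $f_{\pi(\ell)}$ are "upward-pointing" representatives of $\pi(\ell)$; more carefully, $\alpha$ is the ratio of the leading coefficient of $f_\ell$ to that of $f_{\ut} - x f_\ell$, both of which have a sign I can pin down); (iii) evaluate $Q_\ell(t_i, t_j)$ using $f_{\ut}(t_i) = 0$ to get $Q_\ell(t_i,t_j) = \alpha (t_j - t_i)\big(f_\ell(t_i)(-t_j f_\ell(t_j)) - f_\ell(t_j)(-t_i f_\ell(t_i))\big) = \alpha (t_j - t_i)^2 f_\ell(t_i) f_\ell(t_j)$; (iv) apply the interlacing sign pattern of $f_\ell$ relative to $\ut$ to conclude.

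\textbf{Main obstacle.} The delicate point is controlling the sign of the scalar $\alpha$ (and making sure no hidden sign flip enters from the normalization conventions in Notation \ref{not:E4} and Lemma \ref{lem:Epi}), together with rigorously establishing the sign pattern $\operatorname{sgn} f_\ell(t_k) = (-1)^{n-1-k}\varepsilon$ --- this needs the fact, from Lemma \ref{lem:rootsandgap}, that $f_\ell$ has exactly one simple root in each open interval $(t_k, t_{k+1})$ and no root coinciding with any $t_k$, so that $f_\ell$ does not vanish at the $t_k$ and changes sign exactly $n-2$ times across $t_1 < \dots < t_n$. Once these two sign bookkeeping issues are handled, the rest is the short algebraic identity in step (iii).
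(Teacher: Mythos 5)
Your proposal has exactly the right structure and correctly identifies the two ingredients the paper uses: the affine relation $f_{\pi(\ell)} = a\bigl(f_{\ut}-x f_\ell\bigr)+b f_\ell$ from Lemma~\ref{lem:Epi}, and the sign pattern $\operatorname{sgn} f_\ell(t_k)=(-1)^{n+k}$ from strict interlacing. However, your step (iii) and your claim about the sign of $\alpha$ both contain errors, and they happen to cancel --- so you arrive at the right conclusion by an incorrect route.

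First, the algebraic identity. Substituting $f_{\pi(\ell)}(t_k)=\alpha\bigl(-t_k f_\ell(t_k)\bigr)+\beta f_\ell(t_k)$ into $Q_\ell(t_i,t_j)$, the $\beta$-terms cancel by antisymmetry as you note, but the inner bracket is
\begin{align*}
f_\ell(t_i)\bigl(-t_j f_\ell(t_j)\bigr)-f_\ell(t_j)\bigl(-t_i f_\ell(t_i)\bigr) = (t_i-t_j)\,f_\ell(t_i)f_\ell(t_j) = -(t_j-t_i)\,f_\ell(t_i)f_\ell(t_j),
\end{align*}
so $Q_\ell(t_i,t_j)=-\alpha\,(t_j-t_i)^2 f_\ell(t_i)f_\ell(t_j)$, not $+\alpha\,(t_j-t_i)^2 f_\ell(t_i)f_\ell(t_j)$. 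Second, the sign of $\alpha$ is in fact \emph{negative}, not positive. Your suggested justification (``$\alpha$ is the ratio of the leading coefficient of $f_\ell$ to that of $f_{\ut}-x f_\ell$'') does not compute $\alpha$: the scalar $a$ is pinned down by the requirement that the $x^{n-1}$ coefficient vanish \emph{and} that the $x^{n-2}$ coefficient equal $1$, a two-step normalization that does not reduce to a ratio of leading coefficients, and whose sign is not transparent from that viewpoint.

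The paper's way around this is cleaner: evaluate the affine relation at $s_{n-1}$, the largest root of $f_\ell$. Since $f_\ell(s_{n-1})=0$, the $b$-term drops out and the $-x f_\ell$ piece drops out, giving $f_{\pi(\ell)}(s_{n-1})=a\, f_{\ut}(s_{n-1})$. Because the roots of $f_{\pi(\ell)}$ strictly interlace those of $f_\ell$, all roots of $f_{\pi(\ell)}$ are less than $s_{n-1}$, so $f_{\pi(\ell)}(s_{n-1})>0$ as $f_{\pi(\ell)}$ is monic; and because $t_{n-1}<s_{n-1}<t_n$, the monic $f_{\ut}$ has exactly one root to the right of $s_{n-1}$, so $f_{\ut}(s_{n-1})<0$. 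Hence $a<0$. Then $-a>0$, and with the corrected identity
\begin{align*}
Q_\ell(t_i,t_j)=-a\,(t_i-t_j)^2 f_\ell(t_i)f_\ell(t_j),
\end{align*}
the interlacing sign pattern $\operatorname{sgn}\bigl(f_\ell(t_i)f_\ell(t_j)\bigr)=(-1)^{i+j}$ gives $(-1)^{i+j}Q_\ell(t_i,t_j)>0$. You flagged the sign of $\alpha$ as the delicate point, which was the right instinct; but the assertion $\alpha>0$ is wrong, and the compensating sign slip in the identity is what makes your final sentence come out correct anyway.
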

\begin{proof}
    By Lemma \ref{lem:Epi}, $f_{\pi(\ell)}(x)=a(f_{\ut}(x)-xf_{\ell}(x))+bf_{\ell}(x)$ for some $[a:b]\in\bP_\R^1$. Let $s_{n-1}$ be the greatest root of $f_\ell$, then it is greater than all roots of $f_{\pi(\ell)}$. As $f_{\pi(\ell)}$ is monic, we have $0<f_{\pi(\ell)}(s_{n-1})=a f_{\ut}(s_{n-1})$. As $t_{n-1}<s_{n-1}<t_n$ and $f_{\ut}$ is monic, $f_{\ut}(s_{n-1})<0$. It follows that $a<0$.
    
    Substitute $x=t_i$ and $y=t_j$ into $Q_\ell$, we get \begin{align*}
      Q_\ell(t_i,t_j)=  &(t_j-t_i)\left(f_{\ell}(t_i)(a(f_{\ut}(t_j)-t_jf_{\ell}(t_j))+bf_{\ell}(t_j))-f_{\ell}(t_j)(a(f_{\ut}(t_i)-t_if_{\ell}(t_i))+bf_{\ell}(t_i))\right)\\
       = &-a(t_i-t_j)^2f_{\ell}(t_i)f_{\ell}(t_j)
    \end{align*}
    Note that $(-1)^{n+i}f_{\ell}(t_i)>0$, the statement follows.
\end{proof}

\subsection{Roots separation}
\begin{Not}
    For every $f\in\R[x]$, the roots separation of $f$ is defined as
\begin{align*}
    \sep(f)\coloneq \min\{|s-t|:s\neq t,\;f(s)=f(t)=0\}.
\end{align*}
For every  $\ell\subset \bP(B_n)$, we define its root separation as
\begin{align*}
    \sep(\ell)\coloneq \min\{\sep(f):f\in\ell\}.
\end{align*}
Note that $\ell$ is compact and $\sep$ is a continuous function on $\bP(B_n)$, we have $\sep(\ell)=\sep(f)$ for some $f\in\ell$.

For every $d\geq 0$, we denote $B_n^{>d}\coloneq \{f\in B_n: \sep(f)>d\}$.
\end{Not}

\begin{Lem}\label{lem:E5}
    Let $f\in B_n$ with degree $n$ and $0<m<\sep(f)$, then $f(x)\link f(x+m)$ and 
    \begin{align*}
        \sep(\ell(f(x),f(x+m)))>\min\{m,\sep(f)-m\}.
    \end{align*}
\end{Lem}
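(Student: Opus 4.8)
The plan is to first verify the interlacing claim $f(x) \link f(x+m)$, and then establish the separation bound by a careful analysis of where the roots of $af(x) + bf(x+m)$ can lie. Write $\Root(f) = (s_1 < s_2 < \cdots < s_n)$, so $f$ has degree $n$ with all roots real and $s_{i+1} - s_i \geq \sep(f) > m$ for all $i$. The polynomial $f(x+m)$ then has roots $s_i - m$, and the condition $m < \sep(f)$ gives precisely $s_i - m < s_i < s_{i+1} - m < s_{i+1}$, so the two root sequences strictly interlace: $\Root(f(\cdot+m)) < \Root(f) < \Root(f(\cdot+m))[1]$ after relabeling (or the mirror statement). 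By Lemma \ref{lem:rootsandgap}, this gives $f(x) \link f(x+m)$ and the line $\ell \coloneq \ell(f(x), f(x+m))$ is a genuine projective line in $\bP(B_n)$. In particular every $h \in \ell$ (of degree $n$) has exactly one simple root in each open interval between consecutive interlacing roots; I will use this to localize roots.

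For the separation bound, fix $h = af(x) + bf(x+m) \in \ell$ of degree $n$, with roots $r_1 < r_2 < \cdots < r_n$. From the interlacing data, the roots $r_i$ are sandwiched between consecutive members of the merged sorted list of $\{s_j\}$ and $\{s_j - m\}$. The merged list, in order, is $s_1 - m < s_1 < s_2 - m < s_2 < \cdots$ (using $m < s_{j+1} - s_j$ for every $j$), and $r_i$ lies in the $i$-th ``gap'' $(s_i - m, s_i)$ or $(s_i, s_{i+1}-m)$ depending on orientation. Consecutive roots $r_i, r_{i+1}$ are therefore separated by at least the distance between the corresponding gaps. Concretely, if $r_i \in (s_i - m, s_i)$ and $r_{i+1} \in (s_{i+1}-m, s_{i+1})$, then $r_{i+1} - r_i > (s_{i+1} - m) - s_i = (s_{i+1} - s_i) - m \geq \sep(f) - m$; and in the complementary configuration where $r_i \in (s_i, s_{i+1} - m)$ and $r_{i+1} \in (s_{i+1}, s_{i+2}-m)$ one gets $r_{i+1} - r_i > s_{i+1} - (s_{i+1} - m) = m$. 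Taking the minimum over the two possible orientations (which depend on the sign of $b/a$) yields $\sep(h) > \min\{m, \sep(f) - m\}$. One must also handle the two degenerate members $[f(x)] = [1:0]$ and $[f(x+m)] = [0:1]$ of $\ell$ separately: for these $\sep$ equals $\sep(f) > \min\{m, \sep(f)-m\}$ trivially, and any degree-$(n-1)$ member $f_\ell$ of $\ell$ (if present) has its $n-1$ finite roots interlacing accordingly and one adds the ``root at infinity'' which does not affect $\sep$.

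The main obstacle is bookkeeping the two orientations cleanly: depending on whether $ab > 0$ or $ab < 0$, the root $r_i$ of $h$ falls into a different interval of the merged list, and the lower bound switches between $m$ and $\sep(f) - m$. I expect the cleanest route is to argue uniformly: for \emph{any} $h \in \ell$ of degree $n$ and any two roots $r < r'$ of $h$, no root of $f$ and no root of $f(\cdot + m)$ lies strictly between $r$ and $r'$ except possibly one of each, and a short sign-chase on $f$ forces $r$ and $r'$ to be separated by a full step of $\Root(f)$ minus $m$, or by $m$ — whichever the local configuration dictates — hence always at least $\min\{m, \sep(f)-m\}$. Once the orientation dichotomy is organized, the inequality is immediate and the lemma follows.
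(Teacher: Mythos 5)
Your proposal is correct and takes essentially the same approach as the paper's proof. The paper also first uses Lemma~\ref{lem:rootsandgap} to establish $f(x)\link f(x+m)$ from the inequalities $t_i - m < t_i < t_{i+1} - m < t_{i+1}$, and then simply writes out the three possible interlacing patterns for the roots $r_1 < \dots < r_n$ of $af(x) + bf(x+m)$ (the pattern depends on the sign of $ab$ and which tail, left or right, picks up the extremal root), reading off the separation $>\sep(f)-m$ or $>m$ directly from each pattern. The one organizational ``obstacle'' you flag at the end --- the orientation dichotomy --- is exactly what the paper resolves by simply listing the three root configurations explicitly; no further sign-chase is needed once those are written down.
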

\begin{proof}
   Let $\Root(f)=(t_1<\dots<t_n)$, then $\Root(f(x+m))=(t_1-m<\dots<t_n-m)$. It is clear that $\Root(f(x+m))<\Root(f)<\Root(f(x+m))[1]$, so $f(x)\link f(x+m)$. 
    
    Let $\Root(af(x)+bf(x+m))=(r_1<\dots<r_n)$, then by Lemma \ref{lem:rootsandgap}, we have either
    \begin{align*}
        \text{}&t_1-m<r_1 <t_1<t_2-m<r_2<t_2<\dots<t_n-m<r_n<t_n;\\
        \text{or }& r_1<t_1-m<t_1<r_2<t_2-m<t_2<\dots<r_n<t_n-m<t_n;\\
        \text{or }& t_1-m<t_1<r_1<t_2-m<t_2<r_2<\dots<t_n-m<t_n<r_n.
    \end{align*}
    The statement follows.
\end{proof}
\begin{Lem}\label{lem:E6}
    Let $f,g\in B_{n}$ with degree $n$ and $\Root(f)<\Root(g)<\Root(f)[1]$.  Then for every $d<\sep(\ell(f,g))$, there exists $N$ sufficiently large such that
    \begin{align*}
        \sep\left(\ell(f(x),(x+N)g(x))\right)>d.
    \end{align*}
\end{Lem}
\begin{proof}
 Viewing $f(x)$ as an element in $B_{n+1}$, it is clear that when $N$ is sufficiently large, we have $\Root((x+N)g(x))<\Root(f(x))<\Root((x+N)g(x))[1]$. By Lemma \ref{lem:rootsandgap}, the line $\ell(f(x),(x+N)g(x))$ is well defined. It is clear that $\ell(f(x),(x+N)g(x))=\ell(f(x),(1+\tfrac{x}{N})g(x))$.

For every $[a:b]\in\bP_\R^1$, we have $\lim_{N\to +\infty}af(x)+b(1+\frac{x}{N})g(x)=af(x)+bg(x)$. So there exists $N_{a,b}$ and an open neighborhood $U$ of $[a:b]$ in $\bP_\R^1$ such that for every $N>N_{a,b}$ and $(a_0,b_0)\in U$, we have $\sep(a_0f(x)+b_0(1+\frac{x}{N})g(x))>d$. (Note that this is also the case when $b=0$.) As $\bP_\R^1$ is compact, the statement holds.
\end{proof}
\begin{Lem}\label{lem:E3}
    Let $f(x)\in B_n$, then 
    \begin{align}\label{eq:sep}
        \sep(\ell(f(x),f'(x)))\geq \sep(f(x)).
    \end{align}
\end{Lem}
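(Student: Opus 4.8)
The goal is the inequality $\sep(\ell(f,f'))\geq \sep(f)$ for $f\in B_n$ (we may assume $\deg f = n$, since for $f\in P_{n-1}$ one restates the claim in $B_{n-1}$ or treats $f$ as a polynomial whose ``$n$-th root'' sits at $+\infty$). Since $\sep$ is continuous on the compact line $\ell(f,f')\subset\bP(B_n)$, it suffices to show that for every monic polynomial $h=f+cf'$ with $c\in\R$ (plus the boundary case $h=f'$ itself, and $h = f$), we have $\sep(h)\geq\sep(f)$. So the core statement I would prove is: \emph{for every $c\in\R$, any two consecutive real roots of $f(x)+cf'(x)$ differ by at least $\sep(f)$.}

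The natural device is the logarithmic derivative. Writing $f(x)=\prod_{i=1}^n(x-t_i)$ with $t_1<\dots<t_n$, one has $f'(x)/f(x)=\sum_i 1/(x-t_i)$, so that away from the $t_i$ the roots of $f+cf'$ are exactly the solutions of $\sum_i \frac{1}{x-t_i} = -\frac1c$ (and the two endpoints $c=0$, $c=\infty$ give the roots of $f$ and of $f'$ respectively). The function $\varphi(x)\coloneqq \sum_i \frac{1}{x-t_i}$ is strictly decreasing on each interval $(t_i,t_{i+1})$, running from $+\infty$ to $-\infty$; hence on each such interval there is exactly one root of $f+cf'$ for each value of $-1/c$, which re-proves the interlacing $f\link f'$ (already implicit in Lemma~\ref{lem:Epi}/\ref{lem:rootsandgap}) and shows that consecutive roots $r_i<r_{i+1}$ of $f+cf'$ satisfy $r_i\in(t_i,t_{i+1})$, $r_{i+1}\in(t_{i+1},t_{i+2})$. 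I would then fix a level $L=-1/c$ and study the two roots $r=r_i\in(t_i,t_{i+1})$ and $r'=r_{i+1}\in(t_{i+1},t_{i+2})$ with $\varphi(r)=\varphi(r')=L$, and aim to prove $r'-r\geq t_{i+1}-t_i$ and $r'-r\geq t_{i+2}-t_{i+1}$, which together give $r'-r\geq \min_j(t_{j+1}-t_j)\geq\sep(f)$; in fact proving $r'-r\ge t_{i+1}-t_i$ for all $i$ already suffices after relabelling, but it is cleanest to get both.

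The key estimate is a comparison/shift argument: compare $\varphi$ with its translate. Set $m\coloneqq t_{i+1}-t_i$ and consider $\psi(x)\coloneqq \varphi(x) - \varphi(x+m)$ near the relevant interval; equivalently compare the root $r$ of $\varphi=L$ in $(t_i,t_{i+1})$ with the root of $\varphi(\cdot+m)=L$, which lies in $(t_i-m,t_{i+1}-m)=(t_i-m,t_i)$ — not directly what is wanted. A more promising route: pair up the terms of $\varphi(r')-\varphi(r)$. Write $\varphi(r')-\varphi(r)=0$ as $\sum_j\big(\tfrac{1}{r'-t_j}-\tfrac1{r-t_j}\big)=0$, i.e. $\sum_j \frac{r-r'}{(r'-t_j)(r-t_j)}=0$, so $\sum_j \frac{1}{(r'-t_j)(r-t_j)}=0$. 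Now I would split the sum according to the sign of $(r'-t_j)(r-t_j)$: since $r\in(t_i,t_{i+1})$ and $r'\in(t_{i+1},t_{i+2})$, the factor $(r'-t_j)(r-t_j)$ is negative precisely for $j=i+1$ (where $r-t_{i+1}<0<r'-t_{i+1}$) and positive for all other $j$. Hence $\frac{1}{(t_{i+1}-r)(r'-t_{i+1})} = \sum_{j\neq i+1}\frac{1}{(r'-t_j)(r-t_j)}$. I would then bound the right-hand side and manipulate the left to force $r'-r$ to be at least the gap; the AM–GM-type inequality $(t_{i+1}-r)(r'-t_{i+1})\le \big(\tfrac{r'-r}{2}\big)^2$ together with a lower bound on $\sum_{j\neq i+1}\frac1{(r'-t_j)(r-t_j)}$ obtained by keeping only the two dominant terms $j=i$ and $j=i+2$ (or by a convexity/rearrangement argument) should close the gap. \textbf{The main obstacle} is precisely this last inequality: one must show the positive tail $\sum_{j\neq i+1}\frac{1}{(r'-t_j)(r-t_j)}$ is large enough — controlling it cleanly requires either a clever telescoping among the $t_j$'s using their strict monotonic spacing, or an induction on $n$ reducing to the case $n=2$ (where $f(x)=(x-t_1)(x-t_2)$ and $f+cf'$ is an explicit quadratic whose two roots are symmetric about $\tfrac{t_1+t_2}{2}$ at distance $\ge t_2-t_1$, giving equality); I expect the induction, peeling off the extreme roots $t_1$ or $t_n$ and using $\pi(\ell)$ from Notation~\ref{not:E4} as the inductive object, to be the soundest way to organize the estimate, with the base case $n=2$ furnishing the equality that shows \eqref{eq:sep} is sharp.
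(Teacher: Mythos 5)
Your starting point --- the logarithmic derivative $\varphi=f'/f$, reading the roots of $f+cf'$ as the level set $\varphi^{-1}(-1/c)$ --- is the right tool, but the two target inequalities you reduce to are \emph{both false in general}, not merely difficult. With $f(x)=x(x-99)(x-100)$ (so $\sep(f)=1$) and $c$ small, the roots of $f+cf'$ lying in $(0,99)$ and $(99,100)$ are within $O(c)$ of $99$ and $100$, so their gap is near $1\ll 99=t_2-t_1$, refuting $r'-r\geq t_{i+1}-t_i$. With $f(x)=x(x-1)(x-100)$, the expansion $r_j(c)=t_j-c+c^2\sum_{l\neq j}\tfrac{1}{t_j-t_l}+O(c^3)$ shows the gap between the roots near $1$ and near $100$ is $99+Cc^2+O(c^3)$ with $C=\tfrac{1}{100}+\tfrac{2}{99}-1<0$, so it drops below $99=t_3-t_2$, refuting $r'-r\geq t_{i+2}-t_{i+1}$. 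Only the weaker $r'-r\geq\min_j(t_{j+1}-t_j)$ is true, and the identity $\sum_j\tfrac{1}{(r'-t_j)(r-t_j)}=0$ does not reach it by the route you sketch: AM--GM lower-bounds the isolated negative reciprocal, after which you would need an \emph{upper} bound on the positive tail, whereas ``keeping the two dominant terms'' in a sum of positives only lower-bounds it. You also never address the root lying outside $(t_1,t_n)$ and its gap to its neighbour, which is not of the form $r\in(t_i,t_{i+1})$, $r'\in(t_{i+1},t_{i+2})$.

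The paper sidesteps all of this by dualizing the roles of $c$ and the gap: fix $0<d<\sep(f)$ and prove that \emph{no} member $af+bf'$ of the pencil has two roots at distance exactly $d$, which is equivalent to the pointwise nonvanishing of
\[ g(x)\coloneq f'(x)f(x+d)-f(x)f'(x+d), \]
since $g(x_0)=0$ forces $(f(x_0),f'(x_0))$ and $(f(x_0+d),f'(x_0+d))$ to be proportional. As $g$ has degree $2n-2$ with leading coefficient $nd>0$, it suffices to show $g>0$; dividing by $f(x)f(x+d)$ converts this to a sign condition on $h(x)=\varphi(x)-\varphi(x+d)$, a sum of $2n$ simple fractions with interlaced poles $t_1-d<t_1<t_2-d<\cdots<t_n-d<t_n$ (this is precisely where $d<\sep(f)$ enters) carrying alternating residues. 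On each interval between consecutive poles an appropriate pairing of the fractions (pairing $\tfrac{1}{x-t_j}$ with $\tfrac{1}{x-(t_j-d)}$ on the intervals $(t_i,t_{i+1}-d)$; pairing $\tfrac{1}{x-t_j}$ with $\tfrac{1}{x-(t_{j+1}-d)}$, whose combined numerator $t_j-t_{j+1}+d$ is negative, on the intervals $(t_i-d,t_i)$) makes every term carry the sign of $f(x)f(x+d)$, hence $g>0$ there, and the poles are checked directly. Continuity of $\sep$ on the compact line $\ell(f,f')$ then gives $\sep(\ell(f,f'))\geq\sep(f)$. This is exactly the ``telescoping among the $t_j$'s using their strict spacing'' you gestured at; but it only closes after the dualization, which removes any need to enumerate consecutive root pairs and makes the boundary root and your (false) pointwise gap inequalities irrelevant.
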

\begin{proof}
   The statement does not depend on the degree of $f$ and we may assume that $\deg f=n$.
   
   Let  $0<d<\sep(f)$, we claim that 
   \begin{align}\label{eqE4}
      g(x)\coloneq  f'(x)f(x+d)-f(x)f'(x+d)> 0,\; \forall x\in\R.
   \end{align}

   To see this,  we  denote the roots $\Root(f)=(t_1<\dots<t_n)$. Dividing \eqref{eqE4} by $f(x)f(x+d)$, the function becomes:
   \begin{align}\label{eqE5}
    h(x)\coloneq   - \frac{1}{x-(t_1-d)}+\frac{1}{x-t_1}-\frac{1}{x-(t_2-d)}+\frac{1}{x-t_2}-\dots -\frac{1}{x-(t_n-d)}+\frac{1}{x-t_n}.
   \end{align}
   By the assumption that $\sep(f)>d$, we have 
   \begin{align*}
       t_1-d<t_1<t_2-d<t_2<\dots t_n-d<t_n.
   \end{align*}
   It is then clear that 
   \begin{align*}
      & h(x)>0\text{ and }f(x)f(x+d)>0, \text{ when }x\in(t_i,t_{i+1}-d);\\
      & h(x)<0 \text{ and }  f(x)f(x+d)<0, \text{ when }x\in(t_i-d,t_{i});
   \end{align*}
  for every $1\leq i\leq n$. Here we set $t_0=-\infty$ and $t_{n+1}=+\infty$.
   
  It follows that $g(x)>0$ when $x\neq t_i,t_i-d$. For $x=t_i$, as $d<\sep(f)$, we have $f(t_i+d)\neq0$. As $f(x)\in U$, we have $f'(t_i)\neq0$. For $x=t_i-d$, we have $f(t_i-d)\neq0$ and $f'((t_i-d)+d)\neq0$. So $g(x)\neq 0$ for all $x\in \R$. In particular, $g(x)>0$.

  It follows that $\tfrac{f'(t)}{f(t)}\neq \tfrac{f'(t+d)}{f'(t+d)}$ for any $t\in\R$. So for every $[a:b]\in\bP_\R^1$, there is no $t\in\R$ and $0<d<\sep(f)$ satisfying 
  \begin{align*}
      af(t)+bf'(t)=af(t+d)+bf'(t+d)=0.
  \end{align*}
If follows that $\sep(af+bf')\neq d$. Note that $\sep(-)$ is a continuous function on $\ell(f,f'))$ and $\sep(f)>d$, we must have $\sep(\ell(f,f'))>d$. The statement holds.
\end{proof}

\begin{Cor}\label{cor:E4}
    Let $f\in B_n$ with $\sep (f)>d$, then 
    \begin{enumerate}[(1)]
        \item there exists $\delta>0$, such that for every $0\neq c<|\delta|$, we have $\sep(\ell(f(x),f(x+c)))>d$.
        \item there exists $g\link f$ in $B_n$ such that $\sep(\ell(f,g))>d$.
    \end{enumerate}
\end{Cor}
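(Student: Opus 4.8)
\textbf{Proof proposal for Corollary \ref{cor:E4}.}

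The plan is to deduce both statements from Lemma \ref{lem:E3} and Lemma \ref{lem:E5}, using compactness of the relevant projective lines and continuity of $\sep$. For part \emph{(1)}, the key observation is that $f(x+c)$ is a small perturbation of $f(x)$ as $c\to 0$, while $f'(x)$ is precisely the first-order term: $\frac{1}{c}\bigl(f(x+c)-f(x)\bigr)\to f'(x)$ uniformly on compact sets. First I would invoke Lemma \ref{lem:E3} to get $\sep(\ell(f,f'))\geq\sep(f)>d$; since $\sep(f)>d$, in fact $\sep(\ell(f,f'))>d$ because $\sep$ attains its minimum on the compact line $\ell(f,f')$ and that minimum is $\geq \sep(f)$, but actually we need the strict inequality — here I would use the argument already run inside the proof of Lemma \ref{lem:E3}, which shows directly that $\sep(af+bf')\neq d'$ for \emph{every} $d'<\sep(f)$ and every $[a:b]$, hence $\sep(\ell(f,f'))\geq \sep(f)>d$. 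Then, since $\sep$ is continuous on $B_n$ (viewed inside $\R^{n+1}$) and the line $\ell(f,f(x+c))$ depends continuously on $c$ and converges, as a subset of $\bP(B_n)$, to $\ell(f,f')$ as $c\to 0$, a standard compactness/continuity argument gives $\delta>0$ such that $\sep(\ell(f,f(x+c)))>d$ for all $0<|c|<\delta$. To make the convergence of lines rigorous I would parametrize: the line $\ell(f(x),f(x+c))$ equals the line spanned by $f(x)$ and $\frac{1}{c}(f(x+c)-f(x))$, and the latter spanning vector tends to $f'(x)$; then I would apply the same uniform-continuity argument as in Lemma \ref{lem:E6}, covering the compact parameter space $\bP^1_\R$ of $[a:b]$ by finitely many neighborhoods on each of which $\sep(af(x)+b\cdot\frac{1}{c}(f(x+c)-f(x)))>d$ for $c$ small.

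For part \emph{(2)}, once \emph{(1)} is established it is immediate: pick any $c$ with $0<|c|<\delta$ and set $g(x)\coloneq f(x+c)$. Then $\Root(g)=\Root(f)-c$ interlaces $\Root(f)$ strictly (as noted in the proof of Lemma \ref{lem:E5}), so $g\link f$ in $B_n$, and $\sep(\ell(f,g))>d$ by \emph{(1)}. One should note $g$ has the same degree as $f$; if $f\in P_{n-1}$ (i.e. $t_n=+\infty$) one still has $f(x+c)\in P_{n-1}$ and the interlacing $\Root(f(x+c))\link\Root(f)$ holds in $\sbr_n$, so nothing changes.

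The main obstacle I anticipate is making the "convergence of lines" in part \emph{(1)} precise enough to transfer the strict inequality $\sep(\ell(f,f'))>d$ to $\sep(\ell(f,f(x+c)))>d$. The subtlety is that $\sep$ is only \emph{lower} semicontinuous in the naive sense one might worry about (roots can merge in the limit), so one cannot simply say "$\sep$ is continuous, done". The clean way around this is the finite-cover argument borrowed verbatim from the proof of Lemma \ref{lem:E6}: for each $[a_0:b_0]\in\bP^1_\R$, since $\lim_{c\to 0}\bigl(a_0 f(x)+b_0\tfrac{1}{c}(f(x+c)-f(x))\bigr)=a_0 f(x)+b_0 f'(x)$ has root separation $>d$, there is a neighborhood $U_{[a_0:b_0]}$ and $\delta_{[a_0:b_0]}>0$ with $\sep\bigl(af(x)+b\tfrac{1}{c}(f(x+c)-f(x))\bigr)>d$ for all $(a,b)\in U_{[a_0:b_0]}$ and $0<|c|<\delta_{[a_0:b_0]}$; compactness of $\bP^1_\R$ yields a finite subcover and a uniform $\delta$. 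This is routine but needs to be stated, and it is the only place where any real care is required; everything else is bookkeeping with the definitions of $\link$, $\ell(-,-)$, and $\sep$.
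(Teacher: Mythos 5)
For part (1), your route is essentially the paper's: reduce to $\deg f=n$, note that $\bigl(f(x+c)-f(x)\bigr)/c\to f'(x)$ as $c\to 0$, invoke Lemma~\ref{lem:E3} for $\sep(\ell(f,f'))\geq\sep(f)>d$, and finish by continuity. The paper simply cites continuity of $\sep$ on the (open) Grassmannian of lines contained in $\bP(B_n)$ rather than re-running the finite-cover argument of Lemma~\ref{lem:E6}; your concern about semicontinuity is unfounded, since $\sep$ is genuinely continuous on $B_n$ (all roots are distinct there) and a line $\ell\subset\bP(B_n)$ is compact.

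The genuine gap is in part (2), in the case $\deg f=n-1$, i.e.\ $f\in P_{n-1}$ with $\Root(f)=(t_1<\dots<t_{n-1}<+\infty)$. Your parenthetical asserts that $g(x)=f(x+c)$ still has $\Root(g)\link\Root(f)$ in $\sbr_n$, but this is false: both $n$-tuples end in $+\infty$, and the relation $\us<\ut$ in $\sbr_n$ requires strict inequality in every slot, so $+\infty<+\infty$ fails and neither $\Root(g)<\Root(f)$ nor $\Root(f)<\Root(g)$ holds. Equivalently, $f$ and $f(x+c)$ both have degree $n-1$, so some nontrivial real linear combination of them drops to degree $\leq n-2$ and lies outside $B_n=P_n\cup P_{n-1}$; hence $\ell(f,f(x+c))$ is not a line in $\bP(B_n)$, and $g\link f$ fails in $B_n$. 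Statement (2) explicitly demands $g\link f$ in $B_n$, so the degree reduction that saves part (1) is not available here. The paper handles this with a different construction: choose $h\in\ell(f,f')$ of degree $n-1$ with $f\link h$ inside $B_{n-1}$ (Lemma~\ref{lem:E3} gives $\sep(\ell(f,h))>d$), then set $g(x)=(x+N)h(x)$ for $N\gg 0$ and apply Lemma~\ref{lem:E6} to preserve the separation bound while raising the degree to $n$, so that $g\link f$ now holds in $B_n$.
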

\begin{proof}
{(1)} The statement does not depend on the degree of $f$ and we may assume that $\deg f=n$. Note that $\sep(-)$ is a continuous function on the set $\{\ell\mid \ell\subset \bP(B_n)\}$ with respect to the Euclidean topology on $\Gr(2,\R^{n+1})$. By Lemma \ref{lem:E3}, there exists an open neighborhood $W_d$ of $f'(x)$ in $\bP(B_n)$ such that for every line $\ell$ satisfying  $f(x)\in\ell$ and  $\ell\cap W_d\neq \emptyset$, we have $\sep (\ell)>d$.

    Note that $f(x)-f(x+c)=cf'(x)+O(c^2)$, so there exists $\delta>0$ small enough such that for every $0\neq c<\delta$, the polynomial $(f(x)-f(x+c))/c$ is in $W_d$. The statement holds.\\

\noindent    {(2)} When $\deg f=n$, the statement follows from Lemma \ref{lem:E3}.
    
    When $\deg f=n-1$, we may choose $g\in \ell(f,f')$ in $B_{n-1}$ such that $f\link h$ and $\deg h=n-1$. By Lemma \ref{lem:E6}, there exists $N$ such that $\sep(\ell(f(x),(x+N)h(x)))>d$. Let $g(x)=(x+N)h(x)$, then $f\link g$ in $B_n$, the statement holds.
\end{proof}

For every $f,g\in B_n$,  we denote by $f\linkless  g$ if $f\link g$ and $g(t_n)<0$, where $t_n$ is the largest root of $f$; when $\deg f=n-1$, $g(t_n)$ is set to be the leading coefficient of $g$.

\begin{Lem}\label{lem:E4}
    Let $f,g,h\in B_n$ and $d\geq0$ such that $f\linkless g,f\linkless h$ and $\sep(\ell(f,g)),\sep(\ell(f,h))>d$. Then $g+h\in B_n$, $f\linkless g+h$ and $\sep(\ell(f,g+h))>d$.
\end{Lem}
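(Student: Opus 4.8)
\textbf{Proof plan for Lemma \ref{lem:E4}.}

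The plan is to work on the affine chart of $\bP(B_n)$ consisting of monic polynomials of degree $n$, together with the limit points of degree $n-1$. We may normalize $f$, $g$, $h$ to be monic; since $f\linkless g$ means $g$ is interlaced with $f$ and $g(t_n)<0$ where $t_n=\max\Root(f)$, the monic normalization is the natural one (and for $\deg = n-1$ the "value at $t_n$" convention is exactly the leading coefficient, so a monic degree $(n-1)$ polynomial has this value $1>0$, hence cannot occur as $g$ or $h$ — but their sum might still drop degree, which we must track). First I would record the sign pattern: writing $\Root(f)=(t_1<\dots<t_n)$, Lemma \ref{lem:rootsandgap} and the condition $g(t_n)<0$ pin down $\Root(g)$ to satisfy $g<f$ in the interlacing order, i.e. $t_i-\text{(something)}<t_i$, more precisely the unique interlacing with $(-1)^{n-i}g(t_i)>0$ and $g(t_n)<0$; concretely $g$ has exactly one root strictly between consecutive $t_i$'s (for $i=1,\dots,n-1$) and one root less than $t_1$, and the same for $h$.

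Next I would prove $g+h\in B_n$ with the correct interlacing by the sign-counting argument already used twice in the section (e.g. in the proof of Lemma \ref{lem:rootsandgap}, $(2)\Rightarrow(1)$, and in Lemma \ref{lem:Epi}). The key computation: for each $i=1,\dots,n$ we have $(-1)^{n-i}g(t_i)>0$ and $(-1)^{n-i}h(t_i)>0$, hence $(-1)^{n-i}(g+h)(t_i)>0$; therefore on each interval $(t_i,t_{i+1})$, $i=1,\dots,n-1$, the polynomial $g+h$ changes sign, so it has an odd number of roots there, and similarly it has a root in $(-\infty,t_1)$ since the sign at $-\infty$ (governed by the leading behaviour) is opposite to $\mathrm{sgn}(g+h)(t_1)$. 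Since $\deg(g+h)\le n$ and we have located at least $n-1$ sign changes plus the one below $t_1$, $g+h$ has at least $n-1$ distinct real roots, so $g+h\in B_n$; moreover its largest root lies below $t_n$ when $\deg(g+h)=n$ (forcing $(g+h)(t_n)<0$), and when $\deg(g+h)=n-1$ its leading coefficient is $2>0$ — wait, this is the subtle point: if $\deg(g+h)=n-1$ the "value at $t_n$" is the leading coefficient, which would be $2>0$, contradicting $f\linkless(g+h)$. So I must rule this out: since $g,h$ are both monic of degree $n$, $g+h$ has degree exactly $n$ (leading coefficient $2\ne 0$), so the degree never drops and $(g+h)(t_n)=g(t_n)+h(t_n)<0$. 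Thus $f\linkless g+h$. (Had $g,h$ been allowed degree $n-1$ this would fail, but the hypothesis $f\linkless g$ with $g(t_n)<0$ rules that out when we normalize $f$ monic of degree $n$; if $\deg f=n-1$ I would first reduce to the degree-$n$ case by the device in Corollary \ref{cor:E4}(2), replacing $f$ by a nearby degree-$n$ polynomial, or argue directly with the leading-coefficient convention — this is the bookkeeping I would need to be careful about.)

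For the separation bound $\sep(\ell(f,g+h))>d$, the idea is that every monic polynomial on the line $\ell(f,g+h)$ other than $f$ is of the form $f + c(g+h) = (f+cg)/1 \cdot \text{(reorganized)}$ — more usefully, $af+b(g+h) = \tfrac12\big(2af + b g + b h\big)$, which I want to exhibit as lying "between" elements of $\ell(f,g)$ and $\ell(f,h)$. The cleanest route: for $a\in\R$, $b>0$, write $af+b(g+h) = (af+bg)+(af+bh) - af$; this is not obviously on either line. Instead I would use the roots directly. Fix $[a:b]$ and suppose for contradiction $p:=af+b(g+h)$ has two roots at distance $\le d$. Using that $af+bg$ and $af+bh$ each have separation $>d$ and are each interlaced with $f$ (so their roots sit one-per-gap of $f$, at controlled positions), and that the root of $p$ in each gap $(t_i,t_{i+1})$ lies between the roots of $af+bg$ and $af+bh$ in that gap (a convexity/intermediate-value statement: on $(t_i,t_{i+1})$, $p/f = b(g/f+h/f)$ and each of $g/f$, $h/f$ is monotone — actually strictly monotone as in the proof of Lemma \ref{lem:E3} via the partial-fraction expansion $\sum \pm 1/(x-\cdot)$), I can bound the location of $p$'s roots gap-by-gap and conclude the separation is $>\min$ of the two, hence $>d$. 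The \textbf{main obstacle} I anticipate is precisely this last step: making rigorous the claim that the root of $af+b(g+h)$ in each gap of $f$ lies between the corresponding roots of $af+bg$ and $af+bh$, and then converting that "betweenness in each gap" into a genuine lower bound on consecutive root differences of $p$ (since consecutive roots of $p$ lie in adjacent gaps, one must compare a root near the right end of gap $i$ with one near the left end of gap $i+1$, and the bound must survive). I expect this works by the same monotonicity of $g/f$ and $h/f$ used in Lemma \ref{lem:E3}, combined with continuity of $\sep$ on the compact line $\ell(f,g+h)$ to upgrade a strict "$\ne d$" statement to "$>d$", exactly as in Lemma \ref{lem:E3} and Corollary \ref{cor:E4}.
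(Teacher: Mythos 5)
Your first step (showing $g+h\in B_n$ with $f\linkless(g+h)$ via the sign pattern at the $t_i$) is the same as the paper's; the sign you record, $(-1)^{n-i}g(t_i)>0$, is off by a sign from what $f\linkless g$ actually forces (it should be $<0$), but since the error is consistent for $g$, $h$, and $g+h$ it does not affect the sign-counting conclusion. The monic-normalization you attempt is not actually available (for $\deg f=n$ and $\deg g=n-1$, the condition $f\linkless g$ forces the leading coefficient of $g$ to be \emph{negative}, and for $\deg f=n-1$ the convention sends you to the leading coefficient of $g$ which must again be negative); the paper sidesteps all of this by working only with signs at $t_1,\dots,t_n$ and the fact that $\deg(g+h)\le n$, which is the cleaner route.

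The separation bound is where your plan has a genuine gap: the ``betweenness'' claim you rely on is \emph{false}. The root of $af+b(g+h)$ in a gap of $f$ need not lie between the roots of $af+bg$ and $af+bh$ in that gap. Concretely, take $n=2$, $f=x^2-1$, $g=-x$, $h=\tfrac12-x$ (so $f\linkless g$, $f\linkless h$), and $a=b=1$. Then $f+g=x^2-x-1$ has root $\approx -0.618$ in $(-1,1)$, $f+h=x^2-x-\tfrac12$ has root $\approx -0.366$ there, while $f+(g+h)=x^2-2x-\tfrac12$ has root $\approx -0.225$ there, which is outside the interval $[-0.618,-0.366]$. (The reason: $c_i$ solves $\tfrac{g}{f}+\tfrac{h}{f}=-\tfrac{a}{b}$, not $\tfrac{g}{f}=\tfrac{h}{f}=-\tfrac{a}{b}$, so the intermediate-value argument pins $c_i$ between the roots of $2af+bg$ and $2af+bh$, not of $af+bg$ and $af+bh$; and even that weaker fact is not enough to chain consecutive gaps into a bound $>d$.) So the monotonicity-of-$g/f$ route as you sketch it does not close.

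What the paper does instead is a local recentering trick which you are missing. First it proves $\sep(g+h)>d$ directly: fix a root $p$ of $g+h$ (note $f(p)\ne0$ by the sign pattern at the $t_i$), and replace $g,h$ by $G=g-\tfrac{g(p)}{f(p)}f\in\ell(f,g)$ and $H=h-\tfrac{h(p)}{f(p)}f\in\ell(f,h)$. Since $g(p)+h(p)=0$, one has $G+H=g+h$ \emph{and} $G(p)=H(p)=0$. Now $G,H$ still satisfy $f\linkless G$, $f\linkless H$ and $\sep(G),\sep(H)>d$, so neither has another root in $(p,p+d)$, and the interlacing with $f$ forces $G$ and $H$ to have the \emph{same sign} on that interval (there is a unique $t_i$ in it, and $(-1)^{n-i}G(t_i)<0$, $(-1)^{n-i}H(t_i)<0$). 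Hence $G+H=g+h$ has no zero in $(p,p+d)$. This gives $\sep(g+h)>d$ for the original $g,h$. The full line is then handled by a reduction rather than a fresh argument: for $b>0$ write $af+b(g+h)=(af+bg)+(bh)$, observe $f\linkless af+bg$ (using $f(t_n)=0$ so that $(af+bg)(t_n)=bg(t_n)<0$), $f\linkless bh$, $\ell(f,af+bg)=\ell(f,g)$ and $\ell(f,bh)=\ell(f,h)$, and apply the just-proved statement to this new pair. The ability to translate $G,H$ by multiples of $f$ so that both vanish at the chosen root $p$ is the key idea; without it the per-gap estimates you are aiming for do not combine into a separation bound.
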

\begin{proof}
    Let $\Root(f)=(t_1<t_2<\dots<t_n)$. Then by the assumption that $f\linkless g,h$ and Lemma \ref{lem:rootsandgap}, we have  $(-1)^{n-i}(g+h)(t_i)<0$ for every $1\leq i\leq n$. It follows that for each $1\leq i\leq n-1$, in the interval $(t_i,t_{i+1})$, counting the multiplicity of the roots, the polynomial $(g+h)(x)$ has odd number of roots. As the degree of $g+h$ is not greater than $n$, the polynomial $g+h$ has exactly one single root on each of the interval $(t_i,t_{i+1})$. Therefore, the polynomial $g+h$ is in $B_n$ and $f\linkless (g+h)$.\\

    We then show that $\sep(g+h)>d$. Let $p\in\R$ be a root of $g+h$, then $f(p)\neq 0$ since $(-1)^{n-i}(g+h)(t_i)<0$ for every $1\leq i\leq n$. We modify the two polynomials as:
    \begin{align*}
        G(x)\coloneq g(x)-\frac{g(p)}{f(p)}f(x)\text{ and }H(x)\coloneq h(x)-\frac{h(p)}{f(p)}f(x)=h(x)+\frac{g(p)}{f(p)}f(x).
    \end{align*}
    Then as $G\in\ell(f,g)$, we have $G\linkless f$ and $\sep(G)>d$. Similar properties hold for $H$. It is also clear that $G(p)=H(p)=0$ and  $G+H=g+h$.
    
    Let $q$ (resp. $q_G,q_H$) be the smallest number $>p$ satisfying $(G+H)(q)=0$ (resp. $G(q_G)=0$, $H(q_H)=0$). Then $q_G-p\geq \sep(G)>d$ and $q_H-p>d$. By Lemma \ref{lem:rootsandgap}, there exists a unique root $t_i$ such that $p<t_i<q_G,q_H$. Since $G\linkless f$ and $H\linkless f$, by Lemma \ref{lem:rootsandgap}, we have $(-1)^{n-i}G(t_i)<0$ and $(-1)^{n-i}H(t_i)<0$, the polynomials $G$ and $H$ have the same sign in the interval $(p,\min\{q_G,q_H\})$. Therefore, the polynomial $G+H=g+h$ has no root in the interval $(p,\min\{q_G,q_H\})\supset(p,p+d)$.

    As we can choose any root $p$ of $g+h$, it follows that $\sep(g+h)>d$.\\

    Finally, we show that  $\sep(\ell(f,g+h))>d$. We only need to show $\sep(af+b(g+h))>d$ for every $[a:b]\in\bP_\R^1$ with $b\geq 0$. When $b=0$, it is clear that $\sep(af)=\sep(f)>d$ by the assumption. When $b>0$, as $f\linkless g$, by Lemma \ref{lem:rootsandgap}, we have $f\linkless  af+bg$ and $\ell(f,g)=\ell(f,af+bg)$. Also, we have $f\linkless  bh$ and $\ell(f,h)=\ell(f,bh)$. By the second part of the proof, we have $\sep((af+bg)+bh)>d$.  The statement holds.
\end{proof}

\subsection{Reduced central charge}
Let $n\in\Z_{\geq1}$ and $\Lambda_n\cong \R^{n+1}$ be a real $(n+1)$-dimensional space. Fix a basis $\{\be^*_0,\be^*_1,\dots,\be^*_n\}$ for the dual space $\Lambda_n^*$
We denote by \begin{align*}
    \BBB_n\coloneq \{c\vd_{\ut}:c> 0,\ut\in\sbr_n\}\subset\Lambda_n^*\text{ and }\pm\BBB_n\coloneq \{c\vd_{\ut}:c\neq 0,\ut\in\sbr_n\},
\end{align*} where $\vd_{\ut}$ is defined as that in \eqref{eq:Bvddef}.

\begin{Rem}\label{rem:BvdB}
    The correspondence
    \begin{align*}
        B_n\subset \R[x]_{\leq n} &\longleftrightarrow \Lambda_n^*\supset \pm\BBB_n\\
        a_nx^n+a_{n-1}x^{n-1}+\dots +a_0& \longleftrightarrow n!a_n\be_n^*+(n-1)!a_{n-1}\be_{n-1}^*+\dots +a_0\be_{0}^*
    \end{align*}
    is a linear isomorphism and identifies $B_n$ with $\pm\BBB_n$. In particular, it identifies the projective spaces $\bP(B_n)$ and $\bP(\pm\BBB_n)$. By abuse of notions, we will denote $\bP(\BBB_n)$ instead of $\bP(\pm\BBB_n)$ for simplicity.
    
 By the property of Vandermonde determinant, for every $t\in\R$ and $\us\in\sbr_n$ with $s_n\neq+\infty$, we have 
 \begin{align}\label{eq:E23}
     f_{\us}(t)=\prod_{1\leq i\leq n} (t-s_i)=n!\vd_{\us}(\gamma_n(t));
 \end{align}
  and $ f_{\us}(t)=-(n-1)!\vd_{\us}(\gamma_n(t))$ when $s_n=+\infty$. 
 
 The identification is compatible with the parameter in $\sbr_n$. In other words, the following diagram commutes:
     \[
        \begin{tikzcd}
            \bP(B_n) \arrow[rr,leftrightarrow]\arrow[dr,leftrightarrow]  && \bP(\BBB_n) \arrow[dl,leftrightarrow]\\
           & \sbr_n
        \end{tikzcd}\;\;
        \begin{tikzcd}
            \text{$[$}f_{\us}\text{$]$} \arrow[rr,leftrightarrow]\arrow[dr,leftrightarrow]  && \text{$[$}\vd_{\us}\text{$]$}\arrow[dl,leftrightarrow]\\
           & \us
        \end{tikzcd}
    \]
\end{Rem}

Statements and notations on elements and lines in $\bP(B_n)$ can be interpreted as those in $\bP(\BBB_n)$. In particular, Lemma \ref{lem:rootsandgap} can be restated as follows.
\begin{Lem}\label{lem:Epoly}
    Let $\us\neq\ut\in\sbr_n$, then $[a\vd_{\us}+b\vd_{\ut}]\in \bP(\mathfrak B_n)$ for all $[a:b]\in\bP^1_\R$ if and only if $\us\link \ut$.

    Every projective line $\ell\subset \bP(\BBB_n)$ contains a unique point $[\vd_{\underline r}]$ with $r_n=+\infty$.  For every $r\in\R$, the line $\ell$ contains a unique point $[B_{\underline q}]$ with $q_i=r$, where $i$ is index such that $r_{i-1}<r\leq r_i$.
\end{Lem}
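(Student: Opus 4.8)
The statement of Lemma \ref{lem:Epoly} consists of two parts. The first part — that $[a\vd_{\us}+b\vd_{\ut}]\in\bP(\BBB_n)$ for all $[a:b]\in\bP^1_\R$ if and only if $\us\link\ut$ — is essentially a translation of Lemma \ref{lem:rootsandgap} through the linear isomorphism described in Remark \ref{rem:BvdB}. My plan is therefore to invoke that identification directly: under the correspondence $[\vd_{\us}]\leftrightarrow[f_{\us}]$, the pencil $\{a\vd_{\us}+b\vd_{\ut}\}$ corresponds to the pencil $\{af_{\us}+bf_{\ut}\}$ (linearity of the identification), and the condition ``$\us\link\ut$'' was \emph{defined} on $\sbr_n$ in exactly the same way on both sides via $\Root$ and $\Psi$. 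So the claim $af_{\us}+bf_{\ut}\in B_n$ for all $[a:b]$ $\iff$ $\Root(f_{\us})\link\Root(f_{\ut})$ $\iff$ $\us\link\ut$ is precisely Lemma \ref{lem:rootsandgap}, and the first sentence follows. One small point to check is the degenerate boundary case $s_n=+\infty$ or $t_n=+\infty$: here $\vd_{\us}$ has a zero $v_n$-coefficient, and I should confirm the identification of Remark \ref{rem:BvdB} still sends $\vd_{\us}$ to the degree $(n-1)$ polynomial $f_{\us}$, so that ``$\in\bP(\mathfrak B_n)=\bP(B_n)$'' accommodates $P_{n-1}$ as well; this is exactly how $B_n=P_n\cup P_{n-1}$ was set up, so nothing new is needed.

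For the second part, I would use the homeomorphism $\Root\colon\bP(B_n)\xrightarrow{\sim}\sbr_n$ together with the $\Root$–$\bP(\BBB_n)$ compatibility diagram in Remark \ref{rem:BvdB}. A projective line $\ell\subset\bP(\BBB_n)$ corresponds to a projective line in $\bP(B_n)$, and by the existing discussion (the notation block after Lemma \ref{lem:rootsandgap}, restated at the level of $\bP(B_n)$) such a line contains a unique monic polynomial $f_\ell$ of degree $n-1$ — equivalently a unique point whose tuple of roots has $r_n=+\infty$. Translating through the identification gives the unique $[\vd_{\underline r}]\in\ell$ with $r_n=+\infty$. For the ``for every $r\in\R$'' clause: given $r$, I want the unique $[h]\in\ell$ with $h(r)=0$. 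This again is stated in the notation block (``for every $t\in\R$, there is a unique monic polynomial $h$ with $[h]\in\ell$ and $h(t)=0$''); its proof is the observation that the evaluation map $\bP(\ell)\to\bP^1$, $[af_{\us}+bf_{\ut}]\mapsto[(af_{\us}+bf_{\ut})(r)]$ is well-defined and, since two distinct members of $\ell$ have interlaced (hence distinct) roots, no two members share the root $r$; so evaluation-at-$r$ vanishes at exactly one point of $\bP(\ell)$ unless it vanishes identically, which cannot happen as $f_\ell(r)\neq0$ for $r$ not a root of $f_\ell$ and one can pick a member not vanishing at $r$. Finally, to pin down the index $i$ with $r_{i-1}<r\le r_i$: interlacing of the roots of any two members of $\ell$ forces the root tuples to ``sweep'' monotonically through $\bP^1_\R$, so the unique member vanishing at $r$ has its $i$-th root equal to $r$ where $i$ is determined by where $r$ sits among the roots of any reference member such as $f_{\underline r}$; I would spell this out by comparing with $f_\ell$ (whose roots are $r_1<\dots<r_{n-1}<+\infty$) and using Lemma \ref{lem:rootsandgap} to locate the shifted root.

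The main obstacle, such as it is, is purely bookkeeping: making the translation between the polynomial picture ($B_n$, $f_{\us}$, $\Root$, $\link$, lines in $\bP(B_n)$) and the central-charge picture ($\BBB_n$, $\vd_{\us}$, $\gamma_n$, $\link$ on $\sbr_n$, lines in $\bP(\BBB_n)$) airtight, especially around the $t_n=+\infty$ boundary where $\vd_{\ut}$ is defined by a separate inductive formula and the normalizing constant $C_{\ut}$ changes shape. I expect no genuine difficulty, only the need to check that every clause of the statement is the faithful image of an already-established clause about $B_n$. I will also verify the sign/positivity convention: the statement concerns $\bP(\BBB_n)$, i.e. lines are considered up to scalar, so the distinction between $\BBB_n$ (the $c>0$ cone) and $\pm\BBB_n$ is irrelevant here, consistent with the ``abuse of notion'' declared in Remark \ref{rem:BvdB}.

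In summary, the proof is: (1) transport Lemma \ref{lem:rootsandgap} across the linear isomorphism of Remark \ref{rem:BvdB} to get the first sentence; (2) transport the structural facts about projective lines in $\bP(B_n)$ — existence and uniqueness of the degree-$(n-1)$ member, and of the member vanishing at a prescribed $r$ — across the same isomorphism, using the commuting triangle with $\sbr_n$ and the interlacing property to identify the index $i$. No new calculations beyond checking the boundary case $r_n=+\infty$ are required.
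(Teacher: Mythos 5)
Your proposal is correct and is exactly the argument the paper intends: the lemma is a direct transport of Lemma~\ref{lem:rootsandgap} and the facts about lines in $\bP(B_n)$ stated in the surrounding notation block across the linear identification of Remark~\ref{rem:BvdB}, and indeed the paper gives no separate proof for this reason. Your handling of the projective/scalar ambiguity, the $t_n=+\infty$ boundary case, and the index-locating step via the interlacing inclusions $q_j\in(r_{j-1},r_j)$ for any $\underline q\in\ell$ with $q_n\neq+\infty$ all match what is needed.
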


\begin{Not}
We will write $\vd_{\us}\link\vd_{\ut}$ if $\us\link \ut$. 
For $\vd_{\us}\link\vd_{\ut}$, we denote by $\ell(\us,\ut)$ the projective line through $[\vd_{\us}]$ and $[\vd_{\ut}]$ in $\bP(\BBB_n)$. We denote $\sep(c\vd_{\ut})\coloneq \min\{t_{i+1}-t_i\}$.
\end{Not}

\begin{Lem}\label{lem:Epoly7}
    Let  $\vd_{\ut}\in\mathfrak B_n$ with $\sep(\vd_{\ut})>d$, then there exists a line $\ell\subset \bP(\mathfrak B_n)$ containing $[\vd_{\ut}]$ such that $\sep(\ell)>d$.
\end{Lem}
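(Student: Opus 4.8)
The statement is essentially a translation of Corollary~\ref{cor:E4}.(2) from the polynomial side $\bP(B_n)$ to the central-charge side $\bP(\mathfrak B_n)$, via the linear isomorphism of Remark~\ref{rem:BvdB}. So the plan is to transport everything through that identification. First I would set $f_{\ut}\in B_n$ to be the polynomial corresponding to $\vd_{\ut}$ under Remark~\ref{rem:BvdB}; since $\vd_{\ut}\in\mathfrak B_n$ we have $\ut\in\sbr_n$, so $f_{\ut}$ is a real polynomial of degree $n$ or $n-1$ with $n$ (resp.\ $n-1$) distinct real roots, and its root separation equals $\sep(\vd_{\ut})$, hence exceeds $d$. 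Next I would apply Corollary~\ref{cor:E4}.(2): there exists $g\in B_n$ with $g\link f_{\ut}$ and $\sep(\ell(f_{\ut},g))>d$, where $\ell(f_{\ut},g)\subset\bP(B_n)$ is the projective line through $[f_{\ut}]$ and $[g]$. Finally I would push this line forward through the isomorphism $\bP(B_n)\leftrightarrow\bP(\mathfrak B_n)$ of Remark~\ref{rem:BvdB}: its image $\ell$ is a projective line in $\bP(\mathfrak B_n)$ containing $[\vd_{\ut}]$, and because the root-separation function $\sep$ is intrinsically defined in terms of the roots (which are preserved by the identification, by the commuting triangle with $\sbr_n$ in Remark~\ref{rem:BvdB}), we get $\sep(\ell)=\sep(\ell(f_{\ut},g))>d$. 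This proves the claim.

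The only thing to be careful about is that Corollary~\ref{cor:E4}.(2) is stated for $f\in B_n$ and already subsumes both the $\deg f=n$ case (handled by Lemma~\ref{lem:E3}) and the $\deg f = n-1$ case (handled via Lemma~\ref{lem:E6}), so no case distinction is needed here — I just need to verify that the hypothesis $\sep(f_{\ut})>d$ is exactly $\sep(\vd_{\ut})>d$, which is immediate from \eqref{eq:E23} since scaling a polynomial does not change its roots, and that $[\vd_{\ut}]$ lies on the transported line because $[f_{\ut}]$ lies on $\ell(f_{\ut},g)$. I do not anticipate a genuine obstacle; the "hard part", such as it is, is purely bookkeeping: making sure that the line $\ell(f_{\ut},g)\subset\bP(B_n)$ really is contained in $\bP(B_n)$ (which is guaranteed by $g\link f_{\ut}$ together with Lemma~\ref{lem:rootsandgap}, equivalently Lemma~\ref{lem:Epoly}), so that its image is a genuine line inside $\bP(\mathfrak B_n)$ rather than merely a line in the ambient $\bP(\Lambda_n^*)$ that strays outside $\mathfrak B_n$.

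In short: the proof is a one-line reduction,
\[
\sep(\vd_{\ut})>d \ \Longrightarrow\ \sep(f_{\ut})>d \ \xRightarrow{\ \text{Cor.~\ref{cor:E4}.(2)}\ } \ \exists\,\ell\ni[f_{\ut}],\ \sep(\ell)>d \ \xRightarrow{\ \text{Rem.~\ref{rem:BvdB}}\ }\ \exists\,\ell\ni[\vd_{\ut}]\text{ in }\bP(\mathfrak B_n),\ \sep(\ell)>d,
\]
and I would write it out at roughly that level of detail, spelling out only the identification of $\sep$ under Remark~\ref{rem:BvdB} and the fact that lines correspond to lines.
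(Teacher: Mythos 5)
Your proposal is correct and follows exactly the same route as the paper: the paper's proof is the one-liner ``By Remark~\ref{rem:BvdB}, the statement follows from Corollary~\ref{cor:E4},'' which is precisely the transport-through-the-identification argument you spell out. Your extra care about lines staying inside $\bP(\mathfrak B_n)$ and about $\sep$ being preserved is exactly the bookkeeping that Remark~\ref{rem:BvdB} is invoked to cover.
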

\begin{proof}
    By Remark \ref{rem:BvdB}, the statement follows from Corollary \ref{cor:E4}.
\end{proof}
Lemma \ref{lem:E4} can be restated as follows. 
\begin{Lem}\label{lem:Epolyconvex}
For every $\ut\in\sbr_n$ and $d\geq 0$, the space
\begin{align*}
    \ta^{>d}(\ut)\coloneq \{c\vd_{\us}:\us<\ut<\us[1],\sep(\ell(\us,\ut))>d,c>0\}\cup\{c\vd_{\us}:\ut<\us<\ut[1],\sep(\ell(\us,\ut))>d,c<0\} 
\end{align*}
is a convex subset in $\Lambda_n^*$.
\end{Lem}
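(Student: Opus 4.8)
\textbf{Proof proposal for Lemma \ref{lem:Epolyconvex}.}

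The plan is to reduce the statement entirely to Lemma \ref{lem:E4} via the dictionary of Remark \ref{rem:BvdB}, which identifies $\bP(\BBB_n)$ with $\bP(B_n)$ compatibly with the parametrization by $\sbr_n$. Under this identification $\vd_{\ut}$ corresponds to the monic polynomial $f_{\ut}(x) = \prod(x - t_i)$ (up to the positive scaling by $n!$ or $-(n-1)!$ recorded in \eqref{eq:E23}), and the relation $\us \link \ut$ becomes the strict interlacing relation $f_{\us}\link f_{\ut}$ of Lemma \ref{lem:rootsandgap}. The first thing to check is that $\ta^{>d}(\ut)$ is exactly the image, under this linear identification, of a set of polynomials adapted to the relation $\linkless$. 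Concretely: a point $c\vd_{\us}$ with $\us < \ut < \us[1]$ and $c > 0$ corresponds to a polynomial $g$ which, after normalizing its leading coefficient appropriately, satisfies $f_{\ut}\linkless g$ — one verifies the sign condition $g(t_n) < 0$ (where $t_n$ is the largest root of $f_{\ut}$, or the leading coefficient is used when $t_n = +\infty$) by using the interlacing $\us < \ut$ together with $f_{\us}$ being monic, as in the computations in the proofs of Lemma \ref{lem:rootsandgap} and Lemma \ref{lem:Epi}. The case $\ut < \us < \ut[1]$ with $c < 0$ should give the same normalized polynomial $g$ with $f_{\ut}\linkless g$, since negating the central charge flips the sign back; this is the reason the two pieces in the definition of $\ta^{>d}(\ut)$ glue into one convex set rather than two.

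Second, I would observe that $\ta^{>d}(\ut)$ is a cone: scaling $c\vd_{\us}$ by a positive real keeps us inside, and multiplying the polynomial representative by a positive scalar does not change $\sep$, the relation $\linkless$, or the line $\ell(\us,\ut)$. So convexity of the cone reduces to showing closure under addition: given $c_1\vd_{\us^{(1)}}$ and $c_2\vd_{\us^{(2)}}$ in $\ta^{>d}(\ut)$, their sum lies in $\ta^{>d}(\ut)$. Translating through Remark \ref{rem:BvdB}, this is precisely the assertion that if $f_{\ut}\linkless g$, $f_{\ut}\linkless h$, and $\sep(\ell(f_{\ut},g)), \sep(\ell(f_{\ut},h)) > d$, then $g + h \in B_n$, $f_{\ut}\linkless (g+h)$, and $\sep(\ell(f_{\ut},g+h)) > d$ — which is exactly Lemma \ref{lem:E4} with $f = f_{\ut}$. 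One must be slightly careful that $\sep(\ell(\us,\ut)) > d$ translates to $\sep(\ell(f_{\ut},g)) > d$: this is immediate because the line $\ell(\us,\ut)$ in $\bP(\BBB_n)$ corresponds to the line $\ell(f_{\ut}, g)$ in $\bP(B_n)$ and $\sep$ is intrinsic to the line.

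The main obstacle I anticipate is bookkeeping rather than mathematics: carefully matching the sign conventions so that the ``$c > 0$ for $\us < \ut$'' versus ``$c < 0$ for $\ut < \us$'' dichotomy in the definition becomes the single uniform condition $f_{\ut}\linkless g$ on the polynomial side, including the degenerate sub-case $t_n = +\infty$ where $\vd_{\ut}$ is defined inductively and where ``largest root'' is replaced by ``leading coefficient'' (this degenerate case is precisely the one already flagged in the definition of $\linkless$ preceding Lemma \ref{lem:E4}). Once the translation is pinned down, the proof is a one-line invocation of Lemma \ref{lem:E4} together with the observation that a cone closed under addition is convex. I would also note in passing that the special case $d = 0$ recovers the convexity of $\ta(\ut)$ used implicitly elsewhere (e.g.\ in Lemma \ref{lem:72} and Theorem \ref{thm:equivalentconjs}), via Lemma \ref{lem:Epoly}, which makes the condition $\sep(\ell(\us,\ut)) > 0$ automatic.
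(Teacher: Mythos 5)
Your plan matches the paper's own (one-line) proof, which simply asserts that Lemma~\ref{lem:Epolyconvex} restates Lemma~\ref{lem:E4}; the expansion via Remark~\ref{rem:BvdB} and the cone-plus-additivity observation is exactly the right scaffolding. However, the sign verification in your second paragraph is backwards, and as written that step fails. When $\us<\ut<\us[1]$ with $s_n,t_n$ finite, the interlacing reads $s_1<t_1<\cdots<s_n<t_n$, so $t_n$ exceeds every root of $f_\us$ and the monic $f_\us$ satisfies $f_\us(t_n)=\prod_i(t_n-s_i)>0$; indeed the computation $(-1)^{n-i}f(t_i)>0$ that you cite from the proof of Lemma~\ref{lem:rootsandgap} gives, at $i=n$, exactly $f(t_n)>0$, not $<0$. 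So a \emph{positive} multiple of $f_\us$ does not satisfy $f_\ut\linkless g$. The correct normalization is the negative of the dictionary you quote: identify $c\vd_\us$ (for $s_n$ finite) with $-\tfrac{c}{n!}f_\us$, i.e.\ use $\vd\mapsto -\vd(\gamma_n(\cdot))$ rather than $\vd\mapsto\vd(\gamma_n(\cdot))$. Under that sign, $c>0$ with $\us<\ut<\us[1]$ gives $g(t_n)=-\tfrac{c}{n!}f_\us(t_n)<0$; $c<0$ with $\ut<\us<\ut[1]$ gives $f_\us(t_n)<0$ (now $t_n<s_n$), hence again $g(t_n)<0$; and the $s_n=+\infty$ or $t_n=+\infty$ sub-cases check out against the leading-coefficient clause in the definition of $\linkless$ in the same way. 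Since negation is a linear isomorphism it does not affect convexity, and with this fix your reduction to Lemma~\ref{lem:E4} closes the argument — including the return trip, which you gloss over: $f_\ut\linkless(g+h)$ forces the correct sign of $c'$ for $\us'=\Root(g+h)$ depending on which side of $\ut$ it interlaces, so the sum does land back in $\ta^{>d}(\ut)$.
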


\begin{Lem}\label{lem:Epoly2}
    Let $\ut\in\sbr_n$, then the space $\Ker \vd_{\ut}$ in $\Lambda_n$ is spanned by $\gamma(t_i)$, $1\leq i\leq n$. Let $\bv=\sum_{i=1}^n (-1)^ia_i\gamma(t_i)$ be a non-zero vector in $\Ker \vd_{\ut}$, then \begin{align*}
        \bv\notin \bigcup_{\us<\ut<\us[1]}\Ker \vd_{\us}\iff \bv\notin \bigcup_{\us\link\ut}\Ker \vd_{\us}\iff \text{either } a_i\geq 0\text{ for all }i \text{ or }  a_i\leq 0\text{ for all }i.
    \end{align*}
\end{Lem}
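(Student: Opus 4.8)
\textbf{Proof proposal for Lemma \ref{lem:Epoly2}.}

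The plan is to first dispose of the purely linear-algebraic facts and then reduce the main equivalence to Lemma \ref{lem:Epoly}. That $\Ker\vd_{\ut}$ is spanned by the $\gamma_n(t_i)$ follows immediately from \eqref{eq:E23} (or the defining determinant \eqref{eq:Bvddef}): when $t_n\neq+\infty$, each $\gamma_n(t_i)$ is a column-relation of the $(n+1)\times(n+1)$ matrix whose last row is the argument of $\vd_{\ut}$, hence $\vd_{\ut}(\gamma_n(t_i))=0$; the vectors $\gamma_n(t_1),\dots,\gamma_n(t_n)$ are linearly independent by the Vandermonde property, and $\dim\Ker\vd_{\ut}=n$ since $\vd_{\ut}$ is a nonzero functional. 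The case $t_n=+\infty$ is handled the same way using the inductive definition $\vd_{\ut}(\bv)=-\vd_{t_1,\dots,t_{n-1}}(v_0,\dots,v_{n-1})$ and $\gamma_n(+\infty)=(0,\dots,0,1)$.

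The first equivalence in the displayed statement, $\bv\notin\bigcup_{\us<\ut<\us[1]}\Ker\vd_{\us}$ iff $\bv\notin\bigcup_{\us\link\ut}\Ker\vd_{\us}$, will follow from Lemma \ref{lem:Epoly}: the condition $\us\link\ut$ means $\us<\ut<\us[1]$ or $\ut<\us<\ut[1]$, and for the purposes of the union these two families are interchangeable — if $\ut<\us<\ut[1]$ then replacing $\us$ by a slight perturbation, or more cleanly observing that $\ell(\us,\ut)=\ell(\ut,\us)$ contains a point $[\vd_{\us'}]$ with $\us<\us'<\us[1]$ by Lemma \ref{lem:Epoly} (the line contains a unique point with $r_n=+\infty$ and points with any prescribed root), gives $\Ker\vd_{\us}\supseteq\Ker\vd_{\us'}\cap\Ker\vd_{\ut}$... actually the cleanest route is: membership of $\bv$ in $\Ker\vd_{\us}$ for \emph{some} $\us\link\ut$ with $\bv\in\Ker\vd_{\ut}$ is equivalent to $\bv$ lying in the kernel of some functional on the pencil $\ell(\us,\ut)$ other than $\vd_{\ut}$, and every line through $[\vd_{\ut}]$ in $\bP(\BBB_n)$ arises from some $\us$ with $\us<\ut<\us[1]$ (again Lemma \ref{lem:Epoly}). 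So both unions equal $\bigcup_{\ell\ni[\vd_{\ut}]}\Ker\vd_{\ell_0}$ where $\ell_0$ ranges over points of $\ell$ distinct from $[\vd_{\ut}]$, i.e. the set of $\bv\in\Ker\vd_{\ut}$ annihilated by a second independent functional from a pencil of interlaced polynomials.

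The substantive part is the second equivalence: writing $\bv=\sum_{i=1}^n(-1)^ia_i\gamma_n(t_i)$ in the basis of $\Ker\vd_{\ut}$, show that $\bv$ avoids all $\Ker\vd_{\us}$ with $\us\link\ut$ iff the $a_i$ are all $\geq 0$ or all $\leq 0$. The key computation is to evaluate $\vd_{\us}(\bv)=\sum_i(-1)^ia_i\,\vd_{\us}(\gamma_n(t_i))$. By \eqref{eq:E23}, $\vd_{\us}(\gamma_n(t_i))$ is proportional to $f_{\us}(t_i)=\prod_j(t_i-s_j)$, and when $\us\link\ut$ — say $\us<\ut<\us[1]$ so that $s_1<t_1<s_2<t_2<\dots$ — the sign of $f_{\us}(t_i)$ is determined: $t_i$ lies strictly between $s_i$ and $s_{i+1}$, so exactly $n-i$ of the factors $(t_i-s_j)$ are negative, giving $\mathrm{sign}\,f_{\us}(t_i)=(-1)^{n-i}$. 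Hence $(-1)^i\vd_{\us}(\gamma_n(t_i))$ has constant sign $(-1)^n$ in $i$, so $\vd_{\us}(\bv)=(-1)^n(\text{positive const})\sum_i a_i|\vd_{\us}(\gamma_n(t_i))|$ up to a uniform positive scalar. If all $a_i\geq0$ (not all zero) this is nonzero, and symmetrically for $\us[1]<\ut$ — wait, one must also run the case $\ut<\us<\ut[1]$, which by the first equivalence is already covered, or can be checked directly with $s_{i-1}<t_i<s_i$ giving $\mathrm{sign}\,f_{\us}(t_i)=(-1)^{n-i+1}$ and the same conclusion after a global sign. Conversely, if the $a_i$ have mixed signs, one must \emph{produce} an interlaced $\us$ with $\vd_{\us}(\bv)=0$: since $\sum_i a_i w_i=0$ has a solution with all $w_i>0$ precisely when the $a_i$ are not all of one sign (and not all zero), and since the vector $(|\vd_{\us}(\gamma_n(t_1))|,\dots,|\vd_{\us}(\gamma_n(t_n))|)$ can be made to realize any ray of positive coordinates as $\us$ varies over interlaced tuples — this last surjectivity is the one point that needs care. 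I expect this realization step (showing the map $\us\mapsto$ ratios of $|f_{\us}(t_i)|$ hits all positive ratios) to be the main obstacle; I would prove it by a continuity/degeneration argument, letting individual $s_j\to t_j$ or $s_j\to t_{j-1}$ to blow up or kill the corresponding coordinate, together with an intermediate value argument, or alternatively by invoking Lemma \ref{lem:Epoly} to note that the pencil $\ell(\us_0,\ut)$ sweeps out all such $\us$ and the function $\us'\mapsto\vd_{\us'}(\bv)$ along the pencil is a ratio of linear forms in the pencil parameter, changing sign exactly when $\bv$ genuinely meets a kernel.
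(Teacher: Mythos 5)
Your overall plan matches the paper's quite closely: the spanning statement is handled exactly as you say, the first equivalence is disposed of by noting every interlaced pencil through $[\vd_{\ut}]$ meets $\{\us:\us<\ut<\us[1]\}$, and the ``$\impliedby$'' direction of the second equivalence rests on precisely the sign computation you give — for $\us<\ut<\us[1]$ one has $\mathrm{sign}\,f_{\us}(t_i)=(-1)^{n-i}$, so each term $(-1)^ia_i f_{\us}(t_i)$ carries the uniform sign $(-1)^n\,\mathrm{sign}(a_i)$ and the sum cannot vanish if the $a_i$ have one sign. So far so good, and this is word-for-word the paper's argument.

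Where you waver is exactly where you say you do: the ``$\implies$'' direction. You first reach for the surjectivity claim that $\us\mapsto(|f_{\us}(t_1)|,\dots,|f_{\us}(t_n)|)$ realises every positive ray — this is much stronger than what is needed, nontrivial to prove, and the paper does not use it. The fallback you then offer (degenerate $\us$ to isolate a coordinate, then apply the intermediate value theorem over the connected parameter space $\{\us:\us<\ut<\us[1]\}\cong\R^n$) is the right move and \emph{is} the paper's argument, but you leave the degeneration vague. The concrete choice that makes it work, given $a_k a_l<0$, is a simultaneous degeneration: take $s_q=t_q-\epsilon$ for all $q\neq k$ and $s_k=t_{k-1}+\epsilon$. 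Then every $|f_{\us}(t_i)|$ for $i\neq k$ picks up the small factor $|t_i-s_i|=\epsilon$, while $|f_{\us}(t_k)|$ stays bounded below, so the $k$-th term dominates and $\mathrm{sign}\,\vd_{\us}(\bv)=(-1)^n\mathrm{sign}(a_k)$. Repeating with $k$ replaced by $l$ produces a $\us'$ with the opposite sign; connectedness of the admissible region then yields a $\us''$ with $\vd_{\us''}(\bv)=0$. Phrasing it as ``letting individual $s_j\to t_j$ or $s_j\to t_{j-1}$'' misses that the point is to collapse \emph{all but one} coordinate at once, not to control them one at a time; ``blow up'' never happens, since the admissible region is bounded in the relevant directions. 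You should also treat the $t_n=+\infty$ case separately — the expansion \eqref{eqE552} has an extra $(-1)^na_n$ term with no factor to kill, and the paper handles it by sending $s_n\to+\infty$ (via $s_n=1/\epsilon$), which your sketch does not account for.

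So: correct approach, same route as the paper, but the decisive ``$\implies$'' step is left at the level of a hopeful sketch — you correctly identify the IVT strategy but do not exhibit the two parameter values of opposite sign, which is the entire content of that implication.
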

\begin{proof}
    Note that for every $\ut<\us<\ut[1]$, there exist $\us'<\ut<\us'[1]$ so that $\vd_{\us'}\in\ell(\ut,\us)$. So we have
    \begin{align*}
        \Ker \vd_{\ut}\cap (\bigcup_{\us<\ut<\us[1]}\Ker \vd_{\us})=  \Ker \vd_{\ut}\cap (\bigcup_{\us\link\ut}\Ker \vd_{\us}).
    \end{align*}
The first `$\iff$' holds.

 \noindent   By \eqref{eq:E23}, when $t_n\neq +\infty$, we have
    \begin{align}\label{eqE55}
        \vd_{\us}(\bv)=\sum_{i=1}^n(-1)^i a_i\vd_{\us}(\gamma(t_i)) = n!\sum_{i=1}^n(-1)^i a_i\prod_{j=1}^n(t_i-s_j).
    \end{align}
    when $t_n= +\infty$, we have
    \begin{align}\label{eqE552}
        \vd_{\us}(\bv)=\sum_{i=1}^n(-1)^i a_i\vd_{\us}(\gamma(t_i)) = (-1)^n a_n+n!\sum_{i=1}^{n-1}(-1)^i a_i\prod_{j=1}^n(t_i-s_j).
    \end{align}

\noindent  `$\impliedby$': When $\us<\ut<\us[1]$, each term $(-1)^{n-i} \prod_{j=1}^n(t_i-s_j)> 0$. Note that $\bv$ is assumed to be non-zero, so if all $a_i\geq0$ or all $a_i\leq 0$, the formula \eqref{eqE55} (or \eqref{eqE552}) is always non-zero.\\

\noindent `$\implies$': Assume that $\bv\notin \bigcup_{\us<\ut<\us[1]}\Ker \vd_{\us}$.
 
 \noindent When $t_n\neq +\infty$, suppose $a_k\cdot a_l<0$ for some $1\leq k,l\leq n$.  Let $s_q=t_q-\epsilon$ for all $q\neq k$ and $s_k=t_{k-1}+\epsilon$, where $\epsilon>0$ is sufficiently small. Then $\us<\ut<\us[1]$ and \eqref{eqE55} is equal to
   \begin{align*}
       n!\cdot\left( (-1)^ka_k\prod_{j=1}^n(t_k-s_j)+\sum_{i\neq k}(-1)^k a_i\epsilon\cdot\prod_{j\neq i}(t_j-s_i)\right),
   \end{align*}
   which has the same signature of $(-1)^na_k$.

   Similarly, we may also let $\us'$ be with $s'_q=t_q-\epsilon$ for all $q\neq l$ and $s'_l=t_{l-1}+\epsilon$ for some $\epsilon>0$ sufficiently small. Then  $\vd_{\us'}(\bv)$ has the same signature of $(-1)^na_l$.
   
   As $\vd_{\us}$ is a continuous function with respect to $\us$ and the set $\{\us:\us<\ut<\us[1]\}\cong \R^n$ is connected, there exist $\us$ with $\us<\ut<\us[1]$ and $\vd_{\us}(\bv)=0$, which leads to the contradiction.\\

 \noindent  When $t_n=+\infty$, if $a_n=0$, then the statement follows the same argument as that for $t_n\neq +\infty$. Otherwise, we may assume $(-1)^na_n>0$.  Let $s_q=t_q-\epsilon$ for all $q\neq n$ and fix an $s_n>t_{n-1}$. Then $\vd_{\us}(\bv)>0$ when $\epsilon>0$ is sufficiently small. 

   Suppose $(-1)^na_k<0$ for some $1\leq k\leq n-1$, then we may let $s'_q=t_q-\epsilon$ for all $q\neq k,n$, $s'_k=t_{k-1}+\epsilon$, and $s'_n=1/\epsilon$. Then when $\epsilon>0$ is sufficiently small, we have $\vd_{\us'}(\bv)<0$. By the continuity of $\vd_{\us}(\bv)$ with respect to $\us$, we get the contradiction.
\end{proof}
\subsection{Quadratic form}
For every projective line $\ell\subset \bP(\BBB_n)$,  we set $\vd_{\ell}\coloneq \vd_{\underline r}$, where  $[\vd_{\underline r}]$ is the unique point on $\ell$ satisfying $r_n=+\infty$. We denote by 
  \begin{align*}
      \Ker \ell\coloneq \{p\in \Lambda_n:\forall\ut\in\ell, \vd_{\ut}(p)=0\}
  \end{align*}
  the codimension $2$ subspace in $\Lambda_n$. We denote by  $\sc(\ell)$ for the set of vectors as that in Lemma \ref{lem:Epoly2}:
  \begin{align}\label{eq:defSCl}
      \sc(\ell)\coloneq \left\{p\in\Lambda_n:p\in \Ker\vd_{\ut}\setminus\left(\cup_{\us\link\ut}\Ker\vd_{\us}\right) \text{ for some }\ut\in\ell\right\}.
  \end{align}

  Note that $\Ker \ell\cap\sc(\ell)=\emptyset$. For every $p\notin \Ker\ell$, there is a unique $\ut\in\ell$ such that $\vd_{\ut}(p)=0$. By Lemma \ref{lem:Epoly2}, $p$ is in the form of $\sum(-1)^ia_i\gamma_n(t_i)$ for some $a_i$ all $\geq0$ or $\leq0$.\\
  
\noindent  When $n\geq 2$, by Remark \ref{rem:BvdB}, there is a corresponding projective line $\pi(\ell)\subset \bP(B_{n-1})$. It corresponds to a line $\pi(\ell)\subset \bP(\BBB_{n-1})$. Here $\Lambda_{n-1}^*$ is spanned by $\{\be_{n-1}^*,\dots,\be_0^*\}$. By fixing this basis,  one can view $\Lambda_{n-1}^*$ as a subspace of $\Lambda_n^*$. The element $\vd_{\pi(\ell)}$ in $\Lambda_{n-1}^*$ becomes a function on $\Lambda_n$. By abuse of notion, we still denote it as $\vd_{\pi(\ell)}$, which is with `leading term' $-\be_{n-2}^*$. In particular, for every $t\in\R$, we have 
  \begin{align}\label{eq:E45}
     & \vd_\ell(\gamma_n(t))=-\frac{1}{(n-1)!}f_\ell(t)\text{ and }\vd_{\pi(\ell)}(\gamma_n(t))=-\frac{1}{(n-2)!}f_{\pi(\ell)}(t).
  \end{align}
  
\begin{Not}   
For a linear map $\vd=\sum_{i\geq0}^na_i\be_i^*$, we set $\widetilde{\vd}\coloneq \sum_{k\geq1}^nka_{k-1}\be_k^*$. In particular, when $a_n=0$, for every $t\in\R$, we have 
\begin{align}\label{eqE30}
    \widetilde \vd(\gamma_n(t))=t\vd(\gamma_n(t))
\end{align}
For $t=+\infty$, we have
\begin{align}\label{eqE29}
  \vd_\ell(\gamma_n(+\infty))=\vd_{\pi(\ell)}(\gamma_n(+\infty))=\widetilde{\vd_{\pi(\ell)}}(\gamma_n(+\infty))=0 \text{ and }\widetilde{\vd_{\ell}}(\gamma_n(+\infty))=-n.
\end{align}
\end{Not}

For every $\ell\subset\bP(\BBB_n)$, we define the quadratic form on $\Lambda_n$ as 
\begin{align}\label{eq:CdefQ}
    \vq_\ell\coloneq \vd_{\ell}\widetilde{\vd_{\pi(\ell)}}-\vd_{\pi(\ell)}\widetilde{\vd_\ell}
\end{align}
For every $s,r\in\R$, by \eqref{eq:E45} and \eqref{eqE30}, the value of $\vq_\ell(\gamma_n(s),\gamma_n(r))$ is equal to
    \begin{align}
       \notag &\vd_{\ell}(\gamma(s))\widetilde{\vd_{\pi(\ell)}}(\gamma(r))+\vd_{\ell}(\gamma(r))\widetilde{\vd_{\pi(\ell)}}(\gamma(s))-\vd_{\pi(\ell)}(\gamma(s))\widetilde{\vd_\ell}(\gamma(r))-\vd_{\pi(\ell)}(\gamma(r))\widetilde{\vd_\ell}(\gamma(s))\\
     \notag   =& \frac{1}{(n-1)!(n-2)!}\left(f_{\ell}(s)rf_{\pi(\ell)}(r)+f_{\ell}(r)sf_{\pi(\ell)}(s)-f_{\pi(\ell)}(s)rf_\ell(r)-f_{\pi(\ell)}(r)sf_\ell(s)\right)\\
        =&\frac{1}{(n-1)!(n-2)!}Q_\ell(s,r).\label{eqE33}
    \end{align}

\begin{Lem}\label{lem:Eprequadric}
    Let $\ell$ be a projective line in $\bP(\BBB_n)$. Then the quadratic form satisfies the following properties:
    \begin{enumerate}[(1)]
        \item $\vq_\ell(\gamma_n(t))=0$, $\forall$ $t\in\R\cup\{+\infty\}$.
        \item For every $\bv\in\sc(\ell)$, we have $\vq_\ell(\bv)\geq0$. The `$=$' holds when and only when $\bv\in\Ker \vd_\ell\cap \Ker \widetilde{\vd_\ell}$ or is $\gamma_n(t)$ for some $t\in\R\cup\{+\infty\}$ up to a scalar.
        \item  For every $\bv\in\Ker\ell$, we have $\vq_\ell(\bv)\leq 0$. The `$=$' holds when and only when $\bv\in \Ker \widetilde{\vd_\ell}$.
    \end{enumerate}
\end{Lem}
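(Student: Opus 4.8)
The plan is to reduce everything to the three explicit computations already prepared in Lemma~\ref{lem:Epoly2}, \eqref{eqE33} and Lemma~\ref{lem:E10}, and to run an induction on $n$ via the projection $\ell \mapsto \pi(\ell)$. For $(1)$ the identity $\vq_\ell(\gamma_n(t))=0$ for $t\in\R$ is immediate from \eqref{eqE33}, since $Q_\ell(s,s)=0$ by the antisymmetry of the bracket $f_\ell(s)\,s\,f_{\pi(\ell)}(s)-f_{\pi(\ell)}(s)\,s\,f_\ell(s)$; for $t=+\infty$ one plugs $\gamma_n(+\infty)$ into \eqref{eq:CdefQ} and uses \eqref{eqE29}: $\vd_\ell(\gamma_n(+\infty))=\vd_{\pi(\ell)}(\gamma_n(+\infty))=\widetilde{\vd_{\pi(\ell)}}(\gamma_n(+\infty))=0$, so all four terms of $\vq_\ell$ vanish. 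This is the routine part.

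For $(2)$, take $\bv\in\sc(\ell)$. By definition of $\sc(\ell)$ there is a unique $\ut\in\ell$ with $\vd_{\ut}(\bv)=0$, and by Lemma~\ref{lem:Epoly2} we may write $\bv=\sum_{i=1}^n(-1)^ia_i\gamma_n(t_i)$ with all $a_i\ge 0$ (or all $\le 0$); after scaling assume all $a_i\ge 0$. Using part $(1)$ (so that the diagonal terms $\vq_\ell(\gamma_n(t_i))$ drop out), bilinearity gives
\begin{align*}
\vq_\ell(\bv)=\sum_{1\le i\ne j\le n}(-1)^{i+j}a_ia_j\,\vq_\ell(\gamma_n(t_i),\gamma_n(t_j)).
\end{align*}
When $t_n\ne+\infty$ each cross term $(-1)^{i+j}\vq_\ell(\gamma_n(t_i),\gamma_n(t_j))$ is strictly positive by Lemma~\ref{lem:E10} together with the normalization \eqref{eqE33} (which differs from $Q_\ell$ only by a positive constant), so $\vq_\ell(\bv)\ge 0$, with equality iff at most one $a_i$ is nonzero, i.e.\ $\bv$ is a multiple of some $\gamma_n(t_i)$. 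The case $t_n=+\infty$ (so $s_n=+\infty$ for the relevant $\ut$, meaning $\gamma_n(t_n)=\gamma_n(+\infty)$) needs a separate but parallel argument: here $\gamma_n(+\infty)\in\Ker\vd_\ell\cap\Ker\widetilde{\vd_\ell}$ by \eqref{eqE29}, so the $i=n$ or $j=n$ terms of the double sum vanish identically, the remaining sum is over $1\le i\ne j\le n-1$, and one applies Lemma~\ref{lem:E10} to $\ell$ at a nearby point of $\ell$ with finite top root (using continuity of $\vq_\ell(\gamma_n(\cdot),\gamma_n(\cdot))$, or equivalently directly checking signs). The equality clause then reads: $\vq_\ell(\bv)=0$ iff $\bv$ lies on the span of a single $\gamma_n(t_i)$ with $t_i$ finite, or $\bv\in\langle\gamma_n(+\infty)\rangle\subset\Ker\vd_\ell\cap\Ker\widetilde{\vd_\ell}$ — which is exactly the stated description.

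For $(3)$, take $\bv\in\Ker\ell=\Ker\vd_\ell\cap\Ker\vd_{\pi(\ell)}$ (using that $\vd_\ell,\vd_{\pi(\ell)}$ span the functions vanishing on $\Ker\ell$, as $\vd_{\pi(\ell)}$ is the image of the degree $n-2$ monic polynomial on $\pi(\ell)$ and hence linearly independent from $\vd_\ell$ on $\Lambda_n$). On such $\bv$ the first and third terms of \eqref{eq:CdefQ} vanish, so $\vq_\ell(\bv)=\vd_\ell(\bv)\widetilde{\vd_{\pi(\ell)}}(\bv)-\vd_{\pi(\ell)}(\bv)\widetilde{\vd_\ell}(\bv)=-\vd_{\pi(\ell)}(\bv)\widetilde{\vd_\ell}(\bv)$? — wait: both $\vd_\ell(\bv)$ and $\vd_{\pi(\ell)}(\bv)$ vanish, hence $\vq_\ell(\bv)=0$. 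So in fact part $(3)$ forces $\vq_\ell(\bv)=0$ on all of $\Ker\ell$, and the stated inequality is an equality throughout; I expect the intended reading of $(3)$ is subtler, namely $\bv$ ranges over $\Ker\vd_\ell$ only (a codimension-$1$ space), not $\Ker\ell$. Under that reading, for $\bv\in\Ker\vd_\ell$ we get $\vq_\ell(\bv)=-\vd_{\pi(\ell)}(\bv)\widetilde{\vd_\ell}(\bv)$, and one must show this is $\le 0$, with equality iff $\bv\in\Ker\widetilde{\vd_\ell}$. Since $\Ker\vd_\ell$ is spanned by $\gamma_{n-1}$-type vectors together with $\gamma_n(+\infty)$ (by the Vandermonde structure: $\vd_\ell(\gamma_n(s))=0$ exactly for the $n-1$ finite roots $s$ of $f_\ell$ and for $s=+\infty$), writing $\bv=\sum_{i}(-1)^ib_i\gamma_n(s_i)$ over those roots and expanding $\vd_{\pi(\ell)}(\bv)\widetilde{\vd_\ell}(\bv)$ via \eqref{eq:E45}–\eqref{eqE30}, one recognizes a sum whose sign is controlled, root by root, by the sign alternation of $f_\ell$ and $f_{\pi(\ell)}$ — and $f_{\pi(\ell)}$ interlaces $f_\ell$ by Lemma~\ref{lem:Epi}, so the signs line up to give $\le 0$. \textbf{This last sign bookkeeping in $(3)$ is the step I expect to be the main obstacle}: unlike $(2)$, there is no ready-made positivity lemma, so one has to carefully track, using the interlacing from Lemma~\ref{lem:Epi} and the derivative relation $\widetilde{\vd_\ell}\leftrightarrow$ (essentially) $f_\ell'$ up to lower-order corrections, that $\vd_{\pi(\ell)}(\bv)$ and $\widetilde{\vd_\ell}(\bv)$ have opposite signs on $\Ker\vd_\ell$, and to pin down exactly when the product vanishes ($\widetilde{\vd_\ell}(\bv)=0$, since $\vd_{\pi(\ell)}$ restricted to $\Ker\vd_\ell$ is nondegenerate by interlacing). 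I would organize this by first treating $\bv=\gamma_n(s_i)$ for a single finite root $s_i$ of $f_\ell$, computing $\widetilde{\vd_\ell}(\gamma_n(s_i))=s_i\cdot 0=0$ from \eqref{eqE30}, hence $\vq_\ell(\gamma_n(s_i))=0$ consistently with $(1)$, then handling general $\bv$ by bilinear expansion and invoking a one-variable interlacing sign lemma (provable by the same Rolle-type argument as Lemma~\ref{lem:E3}).
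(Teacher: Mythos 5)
Part (1) is fine. For part (2), your $t_n\neq+\infty$ argument is the paper's computation \eqref{eqC31}. But your $t_n=+\infty$ case has the vanishing exactly backwards: in that case the $t_i$ ($i\le n-1$) are the roots of $f_\ell$, so the cross terms $\vq_\ell(\gamma_n(t_i),\gamma_n(t_j))$ with $1\le i\neq j\le n-1$ all \emph{vanish} by \eqref{eqE33} (both $f_\ell(t_i)$ and $f_\ell(t_j)$ are zero inside $Q_\ell$), while the terms you discard --- those involving $\gamma_n(+\infty)$ --- are the only ones that survive, precisely because $\widetilde{\vd_\ell}(\gamma_n(+\infty))=-n\neq 0$ by \eqref{eqE29}; this is how the paper arrives at \eqref{eqE37}. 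Relatedly, $\gamma_n(+\infty)\notin\Ker\widetilde{\vd_\ell}$, so your assertion $\langle\gamma_n(+\infty)\rangle\subset\Ker\vd_\ell\cap\Ker\widetilde{\vd_\ell}$ is false; that clause in the equality description is there to cover the case $a_n=0$, namely $\bv$ supported only on the finite $t_i$'s and not proportional to any single $\gamma_n(t)$, which your characterization omits.

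The serious gap is part (3). The identification $\Ker\ell=\Ker\vd_\ell\cap\Ker\vd_{\pi(\ell)}$ is wrong: $\vd_{\pi(\ell)}$ is not a point of $\ell$ (its $\be_n^*$ and $\be_{n-1}^*$ components both vanish, whereas the unique element of $\ell$ with vanishing $\be_n^*$ component is $\vd_\ell$, and $\vd_{\pi(\ell)}\neq\vd_\ell$). Consequently $\vq_\ell$ is \emph{not} identically zero on $\Ker\ell$, so there is no paradox, and the statement is genuinely about $\Ker\ell$ rather than $\Ker\vd_\ell$ --- on $\Ker\vd_\ell$ the form $\vq_\ell$ takes both signs, since by part (2) it is $\geq 0$ on $\sc(\ell)\cap\Ker\vd_\ell$ and $\leq 0$ on $\Ker\ell$. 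The correct description is $\Ker\ell=\Ker\vd_\ell\cap\Ker\vd_{\ut}$ for any $\ut\in\ell$ with $t_n\neq+\infty$; on this space the first term of \eqref{eq:CdefQ} drops and $\vq_\ell(\bv)=-2\vd_{\pi(\ell)}(\bv)\widetilde{\vd_\ell}(\bv)$. The missing idea is the linear relation obtained from Lemma~\ref{lem:Epi} via \eqref{eq:E45} and \eqref{eqE30}, namely $-(n-2)!\,\vd_{\pi(\ell)}=a'n!\,\vd_{\ut}+a(n-1)!\,\widetilde{\vd_\ell}$ for a suitable $\ut\in\ell$ with $a<0$ (the sign of $a$ being determined as in the first paragraph of the proof of Lemma~\ref{lem:E10}). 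Restricting to $\Ker\vd_{\ut}$ gives $\vd_{\pi(\ell)}(\bv)=-a(n-1)\widetilde{\vd_\ell}(\bv)$, hence $\vq_\ell(\bv)=2a(n-1)\widetilde{\vd_\ell}(\bv)^2\leq 0$ with equality iff $\widetilde{\vd_\ell}(\bv)=0$. No root-by-root interlacing bookkeeping is required; the sign comes entirely from $a<0$.
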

\begin{proof}
    {(1)} When $t\in\R$, the statement follows from \eqref{eqE33}. When $t=+\infty$, the statement follows from \eqref{eqE29}.

    \noindent {(2)} Let $\ut$ be the unique $\ut\in\ell$ satisfying $\vd_{\ut}(\bv)=0$, then by Lemma \ref{lem:Epoly2}, we may assume $\bv=\sum_{i=1}^n(-1)^ia_i\gamma(t_i)$ for some $a_i\geq 0$. 

    When $t_n\neq +\infty$, by Lemma \ref{lem:E10} and \eqref{eq:E45}, we have
    \begin{align}\label{eqC31}
       \vq_\ell(\bv)=\sum_{i,j=1}^n \vq_\ell\left((-1)^ia_i\gamma(t_i),(-1)^ja_j\gamma(t_j)\right)
      = \frac{1}{ (n-1)!(n-2)!}\sum_{i,j=1}^n (-1)^{i+j}a_ia_j Q_\ell\left(t_i,t_j\right)\geq 0
    \end{align}
    By Lemma \ref{lem:E10}, the `$=$' holds when and only when there is exactly one $a_i\neq0$. In other words, the vector $\bv$ equals to $\gamma(t_i)$ up to a scalar.

    When $t_n=+\infty$, $\bv\in \Ker \vd_{\ut}=\Ker \vd_{\ell}$. By \eqref{eq:E45} and \eqref{eqE30}, 
    \begin{align}
      \notag  \vq_\ell(\bv)&=-\sum_{i,j=1}^n (\vd_{\pi(\ell)}\widetilde{\vd_\ell})\left((-1)^ia_i\gamma(t_i),(-1)^ja_j\gamma(t_j)\right)=-\sum_{i=1}^{n-1} (-1)^{i+n}a_ia_n\vd_{\pi(\ell)}(\gamma(t_i))\widetilde{\vd_\ell}(\gamma(\infty))\\
        &=\frac{-na_n}{(n-2)!}\sum_{i=1}^{n-1}(-1)^{i+n}a_i f_{\pi(\ell)}(t_i)\geq0 \label{eqE37}
    \end{align}
    The `$\geq$' is due to the observation that $(-1)^{n-1+i}f_{\pi(\ell)}(t_i)>0$ for every $1\leq i\leq n-1$. The `$=$' holds when and only when $a_n=0$ or $a_1=\dots=a_{n-1}=0$. By \eqref{eqE30}, $a_n=0$ if and only if $\widetilde{\vd_\ell}(\bv)=0$. The statement follows.\\

    \noindent {(3)} By Lemma \ref{lem:Epi}, Notation \ref{not:E4}, \eqref{eq:E45}, \eqref{eqE30} and the first paragraph in the proof for Lemma \ref{lem:E10}, there exists a unique $\ut\in\ell$ with $t_n\neq+\infty$ such that 
    \begin{align*}
        -(n-2)!\vd_{\pi(\ell)}=a'n!\vd_{\ut}+a(n-1)!\widetilde{\vd_\ell}
    \end{align*}for some $a<0$. Note that $\bv\in\Ker\ell=\Ker\vd_\ell\cap\Ker \vd_{\ut}$. Therefore, we have
    \begin{align}\label{eqE38}
        \vq_\ell(\bv)=-2\vd_{\pi(\ell)}(\bv)\widetilde {\vd_{\ell}}(\bv)=2a(n-1)\widetilde {\vd_{\ell}}(\bv)^2\leq 0.
    \end{align}
    The `$=$' holds if and only if $\bv\in\Ker \widetilde {\vd_{\ell}}$.
\end{proof}
\begin{Prop}\label{prop:Equadratic}
     Let $\ell$ be a projective line in $\bP(\BBB_n)$. Then there exists a (family of) quadratic form(s) $\tilde \vq_{\ell}$ on $\Lambda_n$ satisfying:
    \begin{enumerate}
        \item $\tilde \vq_\ell(\gamma_n(t))=0$, $\forall$ $t\in\R\cup\{+\infty\}$.
        \item For every $\bv\in\sc(\ell)$, we have $\tilde \vq_\ell(\bv)\geq0$. The `$=$' holds when and only when $\bv=c\gamma_n(t)$ for some $t\in\R\cup\{+\infty\}$ and $c\in\R$.
        \item  $\tilde \vq_\ell$ is negatively definite on $\Ker \ell$.
    \end{enumerate}
\end{Prop}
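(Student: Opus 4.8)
\textbf{Proof sketch for Proposition \ref{prop:Equadratic}.}

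The plan is to construct $\tilde\vq_\ell$ by induction on $n$, correcting the quadratic form $\vq_\ell$ from \eqref{eq:CdefQ} using the inductively-constructed form $\tilde\vq_{\pi(\ell)}$ on the subspace $\Lambda_{n-1}$ (viewed inside $\Lambda_n$ via the fixed basis $\{\be_{n-1}^*,\dots,\be_0^*\}$). Concretely, I would set
\begin{align*}
    \tilde\vq_\ell\coloneq \alpha\,\vq_\ell+\tilde\vq_{\pi(\ell)}
\end{align*}
for a sufficiently small $\alpha>0$, and then show that for $\alpha\in(0,\alpha(\ell))$ with $\alpha(\ell)$ chosen appropriately, all three properties hold. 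The base cases $n=1$ (where $\vq_\ell=0$ and one takes $\tilde\vq_\ell$ to be a negative-definite form on the line $\Ker\ell$, extended by zero on $\gamma_1$) and $n=2$ (where $\vq_\ell=\Delta_H$ already works) are immediate from Lemma \ref{lem:Eprequadric} and the fact that $\sc(\ell)$ consists precisely of characters satisfying the Bogomolov-type sign condition.

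For the inductive step, property (a) is the easiest: by Lemma \ref{lem:Eprequadric}.(1) we have $\vq_\ell(\gamma_n(t))=0$, and I must check $\tilde\vq_{\pi(\ell)}(\gamma_n(t))=0$. Since $\tilde\vq_{\pi(\ell)}$ only involves the coordinates $\be_{n-1}^*,\dots,\be_0^*$, evaluating it on $\gamma_n(t)$ is the same as evaluating it on the truncated vector $(1,t,\dots,t^{n-1}/(n-1)!)$, which is precisely $\gamma_{n-1}(t)$ (up to the $+\infty$ case, handled separately via \eqref{eqE29}); so this vanishes by the inductive hypothesis (a) for $\pi(\ell)$. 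For property (c), note $\Ker\ell\subset\Lambda_n$ has codimension $2$; I would use Lemma \ref{lem:Eprequadric}.(3), which gives $\vq_\ell\leq 0$ on $\Ker\ell$ with equality exactly on $\Ker\widetilde{\vd_\ell}\cap\Ker\ell$, and combine this with the fact that $\tilde\vq_{\pi(\ell)}$ restricted to $\Ker\ell$ is negative on the locus $\Ker\widetilde{\vd_\ell}\cap\Ker\ell$ (this is a one-dimensional subspace, and it projects isomorphically onto $\Ker\pi(\ell)\subset\Lambda_{n-1}$ where $\tilde\vq_{\pi(\ell)}$ is negative definite by inductive hypothesis (c)). A compactness/eigenvalue argument on the projectivization $\bP(\Ker\ell)$ then yields an $\alpha(\ell)>0$ so small that $\alpha\vq_\ell+\tilde\vq_{\pi(\ell)}<0$ on all of $\Ker\ell\setminus\{0\}$ for $\alpha<\alpha(\ell)$.

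The main obstacle will be property (b): showing $\tilde\vq_\ell\geq 0$ on the semistability cone $\sc(\ell)$ with the sharp equality characterization. For $\bv=\sum_{i=1}^n(-1)^ia_i\gamma_n(t_i)\in\sc(\ell)$ (all $a_i\geq 0$, with $\ut$ the unique member of $\ell$ annihilating $\bv$), Lemma \ref{lem:Eprequadric}.(2) gives $\vq_\ell(\bv)\geq 0$, vanishing only when $\bv$ is proportional to some $\gamma_n(t_i)$ or lies in $\Ker\vd_\ell\cap\Ker\widetilde{\vd_\ell}$. The difficulty is that on exactly this degeneracy locus $\vq_\ell$ gives no positivity, so I must extract positivity of $\tilde\vq_{\pi(\ell)}(\bv)$ there. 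The key observation is that $\tilde\vq_{\pi(\ell)}$ evaluated on $\bv$ equals $\tilde\vq_{\pi(\ell)}$ evaluated on the truncated character $\lambda_{n-1}(\bv)=\sum(-1)^ia_i\gamma_{n-1}(t_i)$, and the degeneracy locus of $\vq_\ell$ is precisely where this truncated character lies in $\sc(\pi(\ell))$ — so the inductive hypothesis (b) for $\pi(\ell)$ applies and gives $\tilde\vq_{\pi(\ell)}(\bv)\geq 0$, with equality only when $\lambda_{n-1}(\bv)$ is proportional to $\gamma_{n-1}(t)$. Chasing through both equality cases simultaneously forces $\bv$ itself to be proportional to some $\gamma_n(t)$, which is the desired characterization. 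The remaining care is a uniform choice of $\alpha(\ell)$: one splits $\bP(\sc(\ell))$ into a neighborhood of the degeneracy locus (where $\tilde\vq_{\pi(\ell)}$ alone is bounded below away from the $\gamma_n(t)$ points, by a compactness argument) and its complement (where $\vq_\ell$ is bounded below by a positive constant, so any small $\alpha>0$ suffices), then takes $\alpha(\ell)$ to control the transition region. I expect the bookkeeping in this last uniformity argument, together with correctly tracking the $t_n=+\infty$ boundary behaviour via \eqref{eqE37} and \eqref{eqE38}, to be the most delicate part of the write-up.
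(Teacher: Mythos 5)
Your proposal reproduces the paper's core construction $\tilde\vq_\ell = \alpha\vq_\ell + \tilde\vq_{\pi(\ell)}$, the induction on $n$, and the division of labor among Lemma \ref{lem:Eprequadric}.(1)--(3). However, you take $\alpha$ \emph{small}, while the paper's proof requires $\alpha$ \emph{sufficiently large}, and the direction matters: for small $\alpha$ your argument for property (c) breaks down. The issue is that the (extended) form $\tilde\vq_{\pi(\ell)}$ on $\Lambda_n$ has maximal negative-definite subspace of dimension $n-2$ --- the same as $\dim\Ker\pi(\ell)$, since the extension is by zero --- whereas $\Ker\ell$ has dimension $n-1$. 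So $\tilde\vq_{\pi(\ell)}|_{\Ker\ell}$ cannot be negative definite: it is negative definite on the codimension-one subspace $\Ker\ell\cap\Ker\vd_{\pi(\ell)}=\Ker\ell\cap\Ker\widetilde{\vd_\ell}$, but on a complementary direction $\be\in\Ker\ell$ with $\vd_{\pi(\ell)}(\be)\neq 0$ (chosen $\tilde\vq_{\pi(\ell)}$-orthogonal to $\Ker\pi(\ell)$), an interlacing/signature count forces $\tilde\vq_{\pi(\ell)}(\be)\geq 0$. On that direction one has $\vq_\ell(\be)=2a(n-1)\widetilde{\vd_\ell}(\be)^2<0$ (with $a<0$ as in the paper), so it is the $\alpha\vq_\ell$ term that makes $\vq_\alpha$ negative there, and this requires $\alpha$ large enough to overcome $\tilde\vq_{\pi(\ell)}(\be)\geq 0$. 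As $\alpha\to 0$, $\vq_\alpha|_{\Ker\ell}\to\tilde\vq_{\pi(\ell)}|_{\Ker\ell}$, which is not negative definite, so your bound $\alpha\in(0,\alpha(\ell))$ has (c) failing exactly where you claim it holds.

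Two further points. First, your dimension count ``$\Ker\widetilde{\vd_\ell}\cap\Ker\ell$ is a one-dimensional subspace'' is only correct for $n=3$; in general it is $(n-2)$-dimensional, and this is precisely the subspace where you correctly invoke induction --- but the orthogonal complement in $\Ker\ell$ is where the argument must be completed with $\alpha$ large. Second, your treatment of property (b) via the ``truncated character'' $\lambda_{n-1}(\bv)=\sum(-1)^ia_i\gamma_{n-1}(t_i)$ and the claim that the degeneracy locus of $\vq_\ell$ within $\sc(\ell)$ maps into $\sc(\pi(\ell))$ is not established: the $t_i$ are the roots of $\vd_{\ut}\in\ell$, not roots of any functional in $\pi(\ell)$, so the interlacing that defines $\sc(\pi(\ell))$ does not obviously hold for $\lambda_{n-1}(\bv)$. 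The paper avoids this by instead proving $(-1)^{i+j}\vq_\alpha(\gamma_n(t_i),\gamma_n(t_j))>0$ for all $i\neq j$ and all $\ut\in\ell$, via asymptotic control at the $t_n\to+\infty$ and $t_1\to-\infty$ boundary of $\ell$ together with a compactness argument in the interior --- and this again needs $\alpha$ large so that $\alpha\vq_\ell$ dominates on a compact region.
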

\begin{proof}
    We prove the statement by induction on $n$. When $n=1$, we may just let $\tilde \vq_\ell=0$. 
    
    Assume the statement holds for the lower dimensional case, then there exists a quadratic form $\tilde \vq_{\pi(\ell)}$ on $\Lambda_{n-1}$, which is the kernel space of $\be_n^*$, satisfying properties {(a)}, {(b)} and {(c)}. The quadratic form extends to $\Lambda_n$ by setting $\tilde \vq_{\pi(\ell)}(\be_n,\Lambda_n)=0$.

    We may consider the quadratic forms \begin{align}\label{eq:CdefQa}
        \vq_{\alpha}\coloneq \alpha\vq_\ell+\tilde \vq_{\pi(\ell)}
    \end{align}  for some $\alpha>0$. We show that $\vq_{\alpha,\beta}$ satisfies properties {(a)}, {(b)}, and {(c)} when $\alpha$ is sufficiently large. \\
    
  \noindent  {(a)} For every $t\in\R$, $\tilde \vq_{\pi(\ell)}(\gamma_n(t))=\tilde \vq_{\pi(\ell)}(\gamma_{n-1}(t))=0$. When $t=+\infty$, $\tilde \vq_{\pi(\ell)}(\be_n)=0$. The property then follows from Lemma \ref{lem:Eprequadric}.\\

  \noindent  {(c)} On the space $\Ker \ell$, by \eqref{eqE38}, the quadratic form
  \begin{align*}
      \vq_\alpha|_{\Ker \ell}=-\frac{2\alpha}{a(n-1)}(\vd_{\pi(\ell)}|_{\Ker \ell})^2+\tilde \vq_{\pi(\ell)}|_{\Ker \ell}.
  \end{align*} By induction, $\tilde \vq_{\pi(\ell)}|_{\Ker \ell}$ is negative definite on ${\Ker \ell}\cap\Ker\pi(\ell)={\Ker \ell}\cap \Ker \vd_{\pi(\ell)}$. So $\vq_\alpha|_{\Ker \ell}$ is negative definite when $\alpha$ is sufficiently large. More precisely, let $\be\in\Ker \ell$ such that $\vq_{\pi(\ell)}(\be,\Ker \pi(\ell))=0$ and $\vd_{\pi(\ell)}(\be)=1$, then when $\alpha>\frac{a(n-1)}{2}\tilde \vq(\be,\be)$, the form $\vq_\alpha$ is negative definite on $\Ker \ell$.\\

   \noindent {(b)} We will show that when $\alpha$ is sufficiently large,  the inequality  $(-1)^{i+j}\vq_\alpha(\gamma_n(t_i),\gamma_n(t_j))>0$ holds for  every $\ut\in\ell$ and $i\neq j$. Once this is proved, the statement then follows from Lemma \ref{lem:Epoly2}.\\
   
We first deal with the case when $t_n=+\infty$.

 \noindent   For $\us\in\ell$ with $s_n=+\infty$, we have $\tilde \vq_{\pi(\ell)}(\gamma_n(s_n),-)=0$.  By \eqref{eqE37}, we have $(-1)^{i+n}\vq_\alpha(\gamma_n(s_n),\gamma_n(s_i))>0$ for every $i\neq n$ and $\alpha>0$.
 
 For $1\leq i\neq j\leq n-1$,  viewing the vector $\bv_{ij}\coloneq (-1)^i\gamma_{n-1}(s_i)+(-1)^j\gamma_{n-1}(s_j)\in \Ker \vd_\ell$  as an element in $\Lambda_{n-1}$, by Lemma \ref{lem:Epoly2}, we have $\bv_{ij}\in \sc(\pi(\ell))$. By induction, we have\begin{align}\label{eqC35}
       0< \tilde \vq_{\pi(\ell)}(\bv_{ij})= \tilde \vq_{\pi(\ell)}\left((-1)^i\gamma_{n}(s_i)+(-1)^j\gamma_{n}(s_j)\right)=2(-1)^{i+j}\tilde \vq_{\pi(\ell)}\left(\gamma_{n-1}(s_i),\gamma_{n-1}(s_j)\right).
    \end{align}
   By  \eqref{eq:E45}, \eqref{eqE30} and \eqref{eq:CdefQ}, we get $\vq_\ell(\gamma_n(s_i),\gamma_n(s_j))=0$ when $i\neq j\leq n-1$.  By \eqref{eqC35} and \eqref{eq:CdefQa}, we have $(-1)^{i+j}\vq_\alpha(\gamma_n(s_i),\gamma_n(s_j))>0$ for every $i\neq j\leq n-1$ and $\alpha>0$.\\

We then deal with the case that $\ut$ is in the neighborhood of the $\us$ above.

\noindent   For $\ut\in\ell$, when $t_n$ tends to  $+\infty$, the value $(-1)^{i+n}\frac{n!}{t_n^n}\vq_\alpha(\gamma_n(t_n),\gamma_n(t_i))$ tends to $ (-1)^{i+n}\vq_\alpha(\be_n,\gamma_n(s_i))\allowbreak>0$ for every $i\neq n$; and $(-1)^{i+j}\vq_\alpha(\gamma_n(t_i),\gamma_n(t_j))$ tends to $ (-1)^{i+j}\vq_\alpha(\gamma_n(s_i),\gamma_n(s_j))>0$ for every $1\leq i\neq j\leq n-1$. When $t_1$ tends to  $-\infty$, the value $(-1)^{i+1}\frac{n!}{t_1^n}\vq_\alpha(\gamma_n(t_1),\gamma_n(t_i))$ tends to $ (-1)^{i+n}\vq_\alpha(\be_n,\gamma_n(s_{i-1}))>0$ for every $i\neq 1$; and $(-1)^{i+j}\allowbreak\vq_\alpha(\gamma_n(t_i),\gamma_n(t_j))$ tends to $ (-1)^{i+j}\vq_\alpha\allowbreak(\gamma_n(s_{i-1}),\allowbreak\gamma_n(s_{j-1}))>0$ for every $2\leq i\neq j\leq n$.

    So for every $\alpha_0>0$, there exist $N_{\alpha_0}$ such that when $\ut\in\ell$, $t_n>N_{\alpha_0}$ or $t_1<-N_{\alpha_0}$,  the inequality  $(-1)^{i+j}\vq_\alpha(\gamma_n(t_i),\gamma_n(t_j))>0$ holds for  every $i\neq j$. \\

Finally, we deal with the rest cases of $\ut$, which form a compact set.

 \noindent   For $\ut\in\ell$ satisfying $t_1\geq -N_{\alpha_0}$ and $t_n\leq N_{\alpha_0}$, by Lemma \ref{lem:Eprequadric}.{(2)} or more precisely the inequality \eqref{eqC31},  we have $(-1)^{i+j}\vq_\ell(\gamma_n(t_i),\gamma_n(t_j))\allowbreak>0$ for every $i\neq j$. As the all functions are continuous when $t_n\neq +\infty$ and the region $\{\ut\in\ell:t_1\geq -N_{\alpha_0}, t_n\leq N_{\alpha_0}\}$ is compact, there exists $M_{\alpha_0}>0$ such that $(-1)^{i+j}M_{\alpha_0}\vq_\ell(\gamma_n(t_i),\gamma_n(t_j))>(-1)^{i+j+1}\tilde \vq_{\pi(\ell)}(\gamma_n(t_i),\gamma_n(t_j))$ for every $i\neq j$.

    As a summary, when $\alpha>\max\{\alpha_0,M_{\alpha_0}\}$, the inequality  $(-1)^{i+j}\vq_\alpha(\gamma_n(t_i),\gamma_n(t_j))>0$ holds for  every $\ut\in\ell$ and $i\neq j$. Property {(b)} holds.
\end{proof}

\bibliography{rstab}                      
\bibliographystyle{halpha}  
\end{document}